\definecolor{winered}{rgb}{0.6,0,0}
\definecolor{lessblue}{rgb}{0,0,0.7}
\newcommand{\myitem}[3]{\item[#2]\def\@currentlabel{#3}\label{#1}}
\def\@tocline#1#2#3#4#5#6#7{
\begingroup
  \par
    \parindent\z@ \leftskip#3 \relax \advance\leftskip\@tempdima\relax
                  \rightskip\@pnumwidth plus 4em \parfillskip-\@pnumwidth
    % extra indent
    \ifcase #1 % sections
       \vskip 0.6em \hskip 0em % add a little vspace before
       \or
       \or \hskip 0em % subsections
       \or \hskip 1em % subsubsections
    \fi%
    %
    % write content line
    #6
    %
    % dots
    \nobreak\relax{\leavevmode\leaders\hbox{\,.}\hfill}
    \hbox to\@pnumwidth {\@tocpagenum{#7}}
  \par
\endgroup
}
 \def\l@section{\@tocline{0}{0pt}{0pc}{}{}}
\renewcommand{\tocsection}[3]{%
  \indentlabel{\@ifnotempty{#2}{ % for numbered sections
    \ignorespaces\bfseries{#2. #3}}}
  \indentlabel{\@ifempty{#2}{\ignorespaces\bfseries{#3}}{}} % for unnumbered sections
    \vspace{1.5pt}}
\renewcommand{\tocsubsection}[3]{%
  \indentlabel{\@ifnotempty{#2}{
    \ignorespaces#2. #3}}
  \indentlabel{\@ifempty{#2}{\ignorespaces #3}{}}
    \vspace{1.5pt}}
\renewcommand{\tocsubsubsection}[3]{%
  \indentlabel{\@ifnotempty{#2}{
    \ignorespaces#2. #3}}
  \indentlabel{\@ifempty{#2}{\ignorespaces #3}{}}
    \vspace{1.5pt}}
\def\@nomenstarted{0}
\newlength{\@nomenoldtabcolsep}
\newcommand{\nomenstart}
  {%
    \def\@nomenstarted{1}%
    \setlength{\@nomenoldtabcolsep}{\tabcolsep}%
    \setlength{\tabcolsep}{3.5pt}%
    \begin{longtable}{p{0.11\textwidth} p{0.86\textwidth}}%found by hand
  }
\newcommand{\nomenitem}[2]{%
    \ifcase\@nomenstarted%
      \or % if nomenstarted=1, do nothing
      \or \\ % if nomenstarted=2, add newline to previous one
    \fi%
    #1\,{\leavevmode\leaders\hbox{\,.}\hfill} & #2%
    \def\@nomenstarted{2}%
  }%
\newcommand{\nomenend}
  {\\%
      \end{longtable}%
      \setlength{\tabcolsep}{\@nomenoldtabcolsep}%
      \def\@nomenstarted{0}%
  }
\newcommand{\bigish}{\bBigg@{0}}
\newcommand{\vast}{\bBigg@{4}}
\newcommand{\Vast}{\bBigg@{5}}
\newcommand{\VAST}[1]{\bBigg@{#1}}
\numberwithin{equation}{section}
\numberwithin{figure}{section}
\newtheorem{thm}{Theorem}[section]
\newtheorem{prop}[thm]{Proposition}
\newtheorem{lemma}[thm]{Lemma}
\newtheorem{cor}[thm]{Corollary}
\newtheorem*{thm*}{Theorem}
\newtheorem*{prop*}{Proposition}
\newtheorem*{cor*}{Corollary}
\newtheorem*{conj*}{Conjecture}
\theoremstyle{definition}
\newtheorem{definition}[thm]{Definition}
\theoremstyle{remark}
\newtheorem{rmk}[thm]{Remark}
\newtheorem{example}[thm]{Example}
\newcommand{\fakephantomsection}{%
  \Hy@MakeCurrentHref{\@currenvir.\the\Hy@linkcounter}
  \Hy@raisedlink{\hyper@anchorstart{\@currentHref}\hyper@anchorend}%
  \Hy@GlobalStepCount\Hy@linkcounter%
}
\newcommand{\mc}{\mathcal}
\newcommand{\cA}{\mc A}
\newcommand{\cB}{\mc B}
\newcommand{\cC}{\mc C}
\newcommand{\cF}{\mc F}
\newcommand{\cH}{\mc H}
\newcommand{\cI}{\mc I}
\newcommand{\cL}{\mc L}
\newcommand{\cM}{\mc M}
\newcommand{\cN}{\mc N}
\newcommand{\cO}{\mc O}
\newcommand{\cP}{\mc P}
\newcommand{\cR}{\mc R}
\newcommand{\cU}{\mc U}
\newcommand{\cV}{\mc V}
\newcommand{\cW}{\mc W}
\newcommand{\ms}{\mathscr}
\newcommand{\sC}{\ms C}
\newcommand{\scri}{\ms I}
\newcommand{\C}{\mathbb{C}}
\newcommand{\N}{\mathbb{N}}
\newcommand{\R}{\mathbb{R}}
\newcommand{\Z}{\mathbb{Z}}
\newcommand{\Sph}{\mathbb{S}}
\newcommand{\sfH}{\mathsf{H}}
\newcommand{\fm}{\mathfrak{m}}
\newcommand{\ft}{\mathfrak{t}}
\newcommand{\slg}{\slashed{g}{}}
\newcommand{\slomega}{\slashed{\omega}{}}
\newcommand{\Err}{{\mathrm{Err}}{}}
\newcommand{\End}{\operatorname{End}}
\newcommand{\Hom}{\operatorname{Hom}}
\renewcommand{\Re}{\operatorname{Re}}
\renewcommand{\Im}{\operatorname{Im}}
\newcommand{\supp}{\operatorname{supp}}
\newcommand{\sgn}{\operatorname{sgn}}
\newcommand{\tr}{\operatorname{tr}}
\newcommand{\dv}{\operatorname{div}}
\newcommand{\rank}{\operatorname{rank}}
\newcommand{\diag}{\operatorname{diag}}
\newcommand{\eps}{\epsilon}
\newcommand{\hra}{\hookrightarrow}
\newcommand{\la}{\langle}
\newcommand{\ol}{\overline}
\newcommand{\pa}{\partial}
\newcommand{\dd}{{\mathrm d}}
\newcommand{\ra}{\rangle}
\newcommand{\spec}{\operatorname{spec}}
\newcommand{\ul}[1]{\underline{#1}{}}
\newcommand{\wh}{\widehat}
\newcommand{\wt}{\widetilde}
\newcommand{\xra}{\xrightarrow}
\newcommand{\ubar}[1]{\underaccent{\bar}#1}%{\mkern1.5mu\underline{\mkern-1.5mu #1\mkern-1.5mu}\mkern1.5mu}
\newcommand{\pfstep}[1]{$\bullet$\ \underline{\textit{#1}}}
\newcommand{\pfsubstep}[2]{{\bf#1}\ \textit{#2}}
\newcommand{\bop}{{\mathrm{b}}}
\newcommand{\scop}{{\mathrm{sc}}}
\newcommand{\ebop}{{\mathrm{e,b}}}
\newcommand{\eop}{{\mathrm{e}}}
\newcommand{\semi}{\hbar}
\newcommand{\ff}{\mathrm{ff}}
\newcommand{\lb}{{\mathrm{lb}}}
\newcommand{\rb}{{\mathrm{rb}}}
\newcommand{\cp}{{\mathrm{c}}}
\newcommand{\Diff}{\mathrm{Diff}}
\DeclareMathOperator{\Op}{Op}
\newcommand{\Vb}{\cV_\bop}
\newcommand{\Vz}{\cV_0}
\newcommand{\Ve}{\cV_\eop}
\newcommand{\Diffb}{\Diff_\bop}
\newcommand{\Veb}{\cV_\ebop}
\newcommand{\Diffeb}{\Diff_\ebop}
\newcommand{\Psieb}{\Psi_\ebop}
\newcommand{\Vsc}{\cV_\scop}
\newcommand{\WF}{\mathrm{WF}}
\newcommand{\Ell}{\mathrm{Ell}}
\newcommand{\Char}{\mathrm{Char}}
\newcommand{\Omegab}{{}^{\bop}\Omega}
\newcommand{\Omegaeb}{{}^{\ebop}\Omega}
\newcommand{\Omegasc}{{}^{\scop}\Omega}
\newcommand{\Tb}{{}^{\bop}T}
\newcommand{\Tsc}{{}^\scop T}
\newcommand{\Teb}{{}^{\ebop}T}
\newcommand{\Seb}{{}^{\ebop}S}
\newcommand{\WFeb}{\WF_{\ebop}}
\newcommand{\Elleb}{\mathrm{Ell}_\ebop}
\newcommand{\half}{{\tfrac{1}{2}}}
\newcommand{\sigmaeb}{{}^\ebop\upsigma}
\newcommand{\loc}{{\mathrm{loc}}}
\newcommand{\CI}{\cC^\infty}
\newcommand{\CIc}{\cC^\infty_\cp}
\newcommand{\Hb}{H_{\bop}}
\newcommand{\Hext}{\bar H}
\newcommand{\Hsupp}{\dot H}
\newcommand{\Heb}{H_{\ebop}}
\newcommand{\Hsc}{H_{\scop}}
\newcommand{\Ric}{\mathrm{Ric}}
\newcommand{\bhm}{\fm}
\newcommand{\openbigpmatrix}[1]
  {%
    \def\@bigpmatrixsize{#1}%
    \addtolength{\arraycolsep}{-#1}%
    \begin{pmatrix}%
  }
\newcommand{\closebigpmatrix}
  {%
    \end{pmatrix}%
    \addtolength{\arraycolsep}{\@bigpmatrixsize}%
  }
\newlength{\enummargin}\setlength{\enummargin}{1.5em}
\newcommand{\usref}[1]{{\upshape\ref{#1}}}
\begin{document}

%%%%%%%%%%%%%%%%%%%%%%%%%%%%%%%%%%%%%%%%%%%%%%%%%%%%%%%%%%%%%%%%%%%%%%
% title page
\title[Microlocal analysis near null infinity]{Microlocal analysis near null infinity in asymptotically flat spacetimes}

\date{\today}

% 35L05: wave equation
% 58J47: propagation of singularities
% 35B40: asymptotic behavior of solutions
\subjclass[2010]{Primary 35L05, Secondary 58J47, 35B40}

\author{Peter Hintz}
\address{Department of Mathematics, ETH Z\"urich, R\"amistrasse 101, 8092 Z\"urich, Switzerland}
\email{peter.hintz@math.ethz.ch}

\author{Andr\'as Vasy}
\address{Department of Mathematics, Stanford University, CA 94305-2125, USA}
\email{andras@math.stanford.edu}

\begin{abstract}
  We present a novel approach to the analysis of regularity and decay for solutions of wave equations in a neighborhood of null infinity in asymptotically flat spacetimes of any dimension. The classes of metrics and wave type operators we consider near null infinity include those arising in nonlinear stability problems for Einstein's field equations in $1+3$ dimensions. In a neighborhood of null infinity, in an appropriate compactification of the spacetime to a manifold with corners, the wave operators are of edge type at null infinity and totally characteristic at spacelike and future timelike infinity. On a corresponding scale of Sobolev spaces, we demonstrate how microlocal regularity propagates across or into null infinity via a sequence of radial sets. As an application, inspired by work of the second author with Baskin and Wunsch, we prove regularity and decay estimates for forward solutions of wave type equations on asymptotically flat spacetimes which are asymptotically homogeneous with respect to scaling in the forward timelike cone and have an appropriate structure at null infinity. These estimates are new even for the wave operator on Minkowski space.

  The results obtained here are also used as black boxes in a global theory of wave type equations on asymptotically flat and asymptotically stationary spacetimes developed by the first author.
\end{abstract}

\maketitle

\setlength{\parskip}{0.00in}
\tableofcontents
\setlength{\parskip}{0.05in}

%%%%%%%%%%%%%%%%%%%%%%%%%%%%%%%%%%%%%%%%%%%%%%%%%%%%%%%%%%%%%%%%%%%%%%
\section{Introduction}
\label{SI}

In this paper, we introduce a new point of view for the analysis of linear waves on asymptotically flat spacetimes near null infinity. The main novelty is the fully microlocal nature of our approach (apart from a simple energy estimate). This allows one to combine the estimates proved here in the usual modular microlocal fashion with microlocal estimates far from null infinity; a first implementation of this is given in \cite{HintzNonstat}.

Our regularity theory appears to be new even on Minkowski space. Let us work on $\R^{1+n}=\R_t\times\R_x^n$ (with $n\geq 1$) equipped with the Minkowski metric $g_0=-\dd t^2+\sum_{j=1}^n(\dd x^j)^2$ and volume density $|\dd g_0|=|\dd t\,\dd x^1\,\cdots\,\dd x^n|$, and introduce polar coordinates $x=r\omega$, $r\geq 0$, $\omega\in\Sph^{n-1}$, on $\R^n$. In the exterior domain
\[
  \Omega = \{ (t,x)\in\R^{1+n} \colon 0\leq t\leq r-1 \},
\]
and for weights $\alpha_0,\alpha_{\!\scri}\in\R$, we set\footnote{The factor of $2$ in the second order is introduced so that $\alpha_{\!\scri}$ corresponds to the amount of $r$-decay at null infinity ($|t-r|\lesssim 1$, $r\to\infty$) as measured by powers of $r^{-1}$. The normalization of the orders of the Sobolev space on the other hand corresponds to the geometric singular analysis structure, i.e.\ the edge-b-structure which is explained further below.}
\begin{equation}
\label{EqIHeb0}
\begin{split}
  &\Heb^{0,(\alpha_0,2\alpha_{\!\scri})}(\Omega) = \rho_0^{\alpha_0}x_{\!\scri}^{2\alpha_{\!\scri}} L^2(\Omega,|\dd g_0|) = \bigl\{ \rho_0^{\alpha_0}x_{\!\scri}^{2\alpha_{\!\scri}}u \colon u\in L^2(\Omega,|\dd g_0|) \bigr\}, \\
  &\qquad \rho_0:=\frac{1}{r-t},\quad
  x_{\!\scri}:=\sqrt{\frac{r-t}{r}}\,.
\end{split}
\end{equation}
Define the vector fields (which we will refer to as \emph{edge-b-vector fields})\footnote{One can replace $V_0$ here by $2 V_0-\frac{t+r}{r}V_1=(t-r)(\pa_t-\pa_r)$, which is a weighted derivative along incoming light cones.}
\begin{equation}
\label{EqIVfs}
\begin{alignedat}{3}
  V_0&=-\rho_0\pa_{\rho_0}&&=t\pa_t+r\pa_r &&\quad \text{(scaling)}, \\
  V_1&=-\half x_{\!\scri}\pa_{x_{\!\scri}}&&=r(\pa_t+\pa_r) &&\quad \text{(weighted derivative along outgoing light cones)}, \\
  V_a&=x_{\!\scri}\Omega_a&&\ \ (a=2,\ldots,N) &&\quad\text{(where the $\Omega_a$ span $\cV(\Sph^{n-1})$ over $\CI(\Sph^{n-1})$)}.
\end{alignedat}
\end{equation}
We then define \emph{weighted edge-b-Sobolev spaces} for $s\in\N_0$ by
\[
  \Heb^{s,(\alpha_0,2\alpha_{\!\scri})}(\Omega) = \{ u\in\Heb^{0,(\alpha_0,2\alpha_{\!\scri})}(\Omega) \colon V^\beta u\in\Heb^{0,(\alpha_0,2\alpha_{\!\scri})}(\Omega)\ \forall\,\beta\in\N_0^N,\ |\beta|\leq s \}.
\]

\begin{thm}[Edge-b-regularity and decay of waves in the exterior domain]
\label{ThmI}
  Suppose $\alpha_{\!\scri}<\min(-\half,\alpha_0+\half)$, and let $s\in\N$. Let $f\in\Heb^{s-1,(\alpha_0+2,2\alpha_{\!\scri}+2)}(\Omega)$. Then the forward solution $u$ of $\Box_{g_0}u=f$ (that is, the unique solution of $(-D_t^2+\sum_{j=1}^n D_{x^j}^2)u=f$ with $u|_{t<0}=0$) satisfies $u\in\Heb^{s,(\alpha_0,2\alpha_{\!\scri})}(\Omega)$.
\end{thm}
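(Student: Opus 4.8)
The strategy is the standard microlocal positive-commutator / propagation-of-singularities scheme adapted to the edge-b-calculus near null infinity, combined with a single energy estimate at spacelike infinity to get started. First I would compactify $\Omega$ to a manifold with corners $\ol\Omega$, with boundary hypersurfaces $\scri$ (null infinity, where $x_{\!\scri}=0$, $\rho_0\sim 1$), $I^0$ (spacelike infinity, $\rho_0=0$), and possibly their corner; here the natural structure is the edge-b-structure whose vector fields are spanned over $\CI(\ol\Omega)$ by $V_0,\dots,V_N$ in \eqref{EqIVfs}. The key computation is to rewrite $\Box_{g_0}$ in these coordinates: near $\scri$ it should be, modulo lower-order terms, a quadratic expression in $V_0,\dots,V_N$ — specifically I expect a leading term like $\rho_0^2 x_{\!\scri}^{-2}(\text{something like } V_1 V_0 + \text{angular part})$, i.e. of edge-b-type with the normal operator at $\scri$ being essentially $2 V_1 V_0 - \sldelta$ (the angular Laplacian contributing $V_a^2$). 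I would verify the footnote's normalization: conjugating by the weights $\rho_0^{\alpha_0}x_{\!\scri}^{2\alpha_{\!\scri}}$ shifts the relevant indicial roots, and the condition $\alpha_{\!\scri}<\min(-\half,\alpha_0+\half)$ is exactly what places the target weight strictly between the two families of indicial roots (one family from $\scri$, one from $I^0$) so that the inverse of the normal operator acts on the relevant weighted $L^2$ spaces.

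**The main steps.** With the operator identified, the proof proceeds in the usual modular fashion. (1) \emph{Elliptic regularity} in the edge-b-sense away from the characteristic set: wherever $\Box_{g_0}$ is edge-b-elliptic, $f\in\Heb^{s-1}$ and $u\in\Heb^{-\infty}$ give $u\in\Heb^{s}$ microlocally. (2) \emph{Real principal type propagation} along the null-bicharacteristic flow lifted to the edge-b-cotangent bundle, within the interior of $\scri$ and in $\Omega^\circ$. (3) \emph{Radial point estimates} at the relevant radial sets — the paper's abstract already signals ``a sequence of radial sets'' — at null infinity and at the corner $\scri\cap I^0$; here the threshold condition on the Sobolev order $s$ versus the weights is where $\alpha_{\!\scri}<-\half$ and $\alpha_{\!\scri}<\alpha_0+\half$ enter as the high-regularity/low-regularity dichotomy, letting regularity propagate \emph{into} (or across) $\scri$. (4) A \emph{b-normal-operator / indicial-root argument} at $I^0$ and at the corner to upgrade the decay weights to the stated $(\alpha_0,2\alpha_{\!\scri})$, using that the target weights avoid the $\bop$-spectrum of the normal operators — this is what the gap condition buys. (5) The base case $u\in\Heb^{-\infty}$ (or some fixed finite weighted-$L^2$ regularity) comes from a \emph{standard energy estimate} for the forward problem on $\Omega$: multiply by an appropriate multiplier adapted to the foliation by $\{t=\text{const}\}$ (or the level sets of $\rho_0$), integrate, use $u|_{t<0}=0$, to get $u$ in a weighted energy space that embeds into $\Heb^{0,(\alpha_0',2\alpha_{\!\scri}')}$ for some weights — the microlocal machine then bootstraps from there.

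**The main obstacle.** The hardest part, I expect, is step (3)–(4): the analysis at null infinity $\scri$ itself and at the corner $\scri\cap I^0$, where the operator degenerates in the edge-b-sense and one must track the flow of the rescaled Hamilton vector field across the radial sets sitting over $\scri$. Unlike scattering-theory radial points (sources/sinks of the flow at fiber infinity), here one has a whole \emph{sequence} of radial sets at finite frequency in the edge-b-cotangent bundle, and propagation must be organized so that regularity gained at one radial set feeds the estimate at the next. Getting the signs in the positive commutator right — so that the threshold inequality on $s$ relative to $\alpha_{\!\scri}$ (and relative to $\alpha_0$ at the corner) has the correct direction for the forward problem — and ensuring the error terms are controlled by already-established microlocal regularity, is the technical crux. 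A secondary subtlety is that $s\in\N$ only (not general real $s$), so the argument at integer orders may be done by commuting the vector fields $V_0,\dots,V_N$ through the equation directly and running the $s=0$ estimate on $V^\beta u$, which requires that the commutators $[\Box_{g_0},V^\beta]$ land in the right weighted edge-b-Sobolev spaces — this is the algebraic heart of why the edge-b-vector fields, and not some other frame, are the correct ones.
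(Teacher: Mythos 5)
Your overall architecture---a weighted energy estimate to get started, followed by edge-b-microlocal propagation (elliptic estimates, real principal type propagation, and radial point estimates at the radial sets over $\scri^+$, with the weight inequalities entering as thresholds)---is indeed the route the paper takes: the case $s=1$ of Theorem~\ref{ThmI} is an energy estimate with the multiplier $\rho_0^{-2(\alpha_0+1)}x_{\!\scri}^{-2(2\alpha_{\!\scri}+1)}\bigl(-x_{\!\scri}\pa_{x_{\!\scri}}+(2-c)\rho_0\pa_{\rho_0}\bigr)$, and higher $s$ follows from the radial point estimates at $\cR_{\rm c}$, $\cR_{{\rm in},-}$, $\cR_{\rm out}$ together with real principal type propagation (this is Theorem~\ref{ThmE}\eqref{ItEFw} specialized to $\Box_{g_0}$, where $\ubar p_1=0$).

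There is, however, a genuine gap in how you propose to obtain the decay orders. You run the energy estimate only ``for some weights $(\alpha_0',2\alpha_{\!\scri}')$'' and then plan to upgrade to the stated weights by a b-normal-operator/indicial-root argument at $I^0$ and the corner (your step (4)), letting the microlocal machine ``bootstrap from there.'' This cannot close as stated: the symbolic edge-b-estimates improve differentiability at a \emph{fixed} weight but never the weight itself ($\WFeb^{s,\alpha}$ controls regularity, not decay), and the paper inverts no normal operator in the proof of this theorem---nor is that step cheap, since the Mellin-transformed normal operator at $I^0$ is again a non-elliptic (wave-type) family whose mapping properties on the relevant spaces are a separate substantial problem, while the edge normal operator at $\scri^+$ is only inverted much later (for the analysis near $I^+$), with loss of derivatives and after a genuinely global argument in the fibers. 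Note also that Theorem~\ref{ThmI} imposes no condition on $\alpha_0$ alone, which is already inconsistent with a ``weight in a gap between indicial roots at $I^0$'' picture. What actually happens is that the $s=1$ estimate is performed directly in the target space $\Heb^{1,(\alpha_0,2\alpha_{\!\scri})}$: the hypotheses $\alpha_{\!\scri}<-\half$ and $\alpha_{\!\scri}<\alpha_0+\half$ are exactly the positivity conditions for the deformation tensor $K_{i Z}$ of the weighted multiplier above (trace and determinant of its $2\times 2$ minor, plus the angular block), and they recur as the thresholds at $\cR_{\rm out}$ and $\cR_{{\rm in},-}$ in the microlocal step; at $\cR_{\rm c}$ there is in addition the threshold $s>s_0>\tfrac12-\alpha_0+2\alpha_{\!\scri}$, which is satisfied with $s_0=1$ precisely because of the stated hypotheses---this is where the a priori regularity furnished by the energy estimate is consumed. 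Two smaller corrections: the radial sets here sit at fiber infinity in $\Seb^*M$ (sources/sinks/saddles of the rescaled flow), not at finite frequency; and your fallback of commuting $V_0,\ldots,V_N$ through the equation to treat integer $s$ does not work naively, since $[\Box_{g_0},V]$ for $V\in\Veb(M)$ is again a full second-order weighted edge-b-operator with no gain in order or decay---avoiding exactly this loss is the point of the microlocal radial point estimates (commuting vector fields is used in the paper only for the \emph{additional} b-regularity on top of sharp edge-b-regularity).
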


As we will discuss in~\S\ref{SsIeb}, a key property of the operator $\Box_{g_0}$ used in Theorem~\ref{ThmI} is the fact that, up to an overall weight $\rho_0^2 x_{\!\scri}^2$, it is to leading order a nondegenerate (Lorentzian signature) quadratic form in the edge-b-vector fields~\eqref{EqIVfs}, uniformly as $r\to\infty$ in $\Omega$. We also remark that these vector fields are related to a parabolic scaling near null hypersurfaces, the null hypersurface of interest here being null infinity in an appropriate compactification of $\Omega$; see Remark~\ref{RmkIParabolic}.

The restriction $\alpha_{\!\scri}<-\half$ in Theorem~\ref{ThmI} is sharp, as it precisely guarantees that the edge-b-Sobolev space for $u$ permits the well-known pointwise $r^{-\frac{n-1}{2}}$ decay along light cones $t-r=\rm{const}.$ towards null infinity $\scri^+$. The restriction $\alpha_{\!\scri}<\alpha_0+\half$ is likewise necessary since a forcing term $f$ which is large (i.e.\ $\alpha_0<-1$, roughly corresponding to less than pointwise $r^{-\frac{n-1}{2}}$ decay) near spacelike infinity $I^+$ (i.e.\ for $t/r\in[0,1)$ with $r$ large) produces a wave $u$ which is large at $I^0$ and $\scri^+$. Roughly speaking, $u$ has two orders of decay (as measured by powers of $r^{-1}$) less than $f$ at $I^0\setminus\scri^+$, and one order of decay less than $f$ at $\scri^+\setminus I^0$.

Theorem~\ref{ThmI} remains valid on a large class of generalizations of Minkowski space, including those arising in $n+1=3+1$ dimensions as solutions of the Einstein field equations in the context of the nonlinear stability of Minkowski space, or more generally in the context of the existence of a piece of null infinity for asymptotically flat data sets \cite{ChristodoulouKlainermanStability,KlainermanNicoloEvolution,LindbladRodnianskiGlobalStability,HintzVasyMink4}. (Whether the spacetimes constructed in \cite{BieriZipserStability,BieriChruscielADMBondi} under minimal assumptions on the initial data lie in this class is not clear at this point.) In fact, the companion paper \cite{HintzMink4Gauge} revisits this latter problem from the edge-b-perspective. We discuss such geometric generalizations in~\S\ref{SsIeb}; see also Theorem~\ref{ThmSA}.

Our proof of Theorem~\ref{ThmI} proceeds via an energy estimate in the case $s=1$ (using a weighted linear combination of $V_0,V_1$ as the vector field multiplier)---see Theorem~\ref{ThmMEFw}---and follows for general $s$ (including real orders above a certain negative threshold) from results on the microlocal propagation of edge-b-regularity; see~\S\ref{SsIeb}. If the forcing term $f$ remains in the stated space upon application of up to $k\in\N_0$ vector fields $V_0,V_1,\Omega_a$ (we shall refer to this as $k$ orders of \emph{b-regularity}\footnote{This is the general terminology for regularity under repeated application of smooth vector fields, on a manifold with corners, which are tangent to all boundary hypersurfaces. The relevant manifold in the present setting is depicted in Figure~\ref{FigIIBlowup} below.}), then the solution $u$ has the same extra regularity. Discarding the edge-b-regularity information on $u$, and taking the $\Omega_a$ to be generators of rotations (which are thus symmetries of the spacetime), this is essentially the original starting point of Klainerman's vector field method \cite{KlainermanUniformDecay} (see also \cite{DafermosRodnianskiRp,MoschidisRp} for recent developments). A minor novelty of our approach is that we can prove b-regularity using arbitrary spherical vector fields; correspondingly, the underlying metric or operator under study need not have any (asymptotic) spherical symmetry at $\scri^+$.

We already remark here that there is a significant difference between edge-b- and b-regularity (besides b-regularity being stronger as far as regularity in the spherical directions is concerned): roughly speaking, edge-b-regularity can be tracked \emph{microlocally} using by now essentially off-the-shelf microlocal techniques, namely, symbolic positive commutator arguments (see~\S\ref{SM}). On the other hand, a satisfactory microlocal framework for b-regularity at $\scri^+$ remains elusive; we explain the structural reason in Remark~\ref{RmkINob}.

In light of the success of vector field methods, we shall attempt to provide further justification for our insistence on developing a microlocal approach. In order to do so, we turn from the exterior region $\Omega$ to the forward causal cone. (There, we can define edge-b-Sobolev spaces in an analogous manner, now using weights in $\rho_+:=\frac{1}{t-r}$ and $x_{\!\scri}:=\sqrt{\frac{t-r}{r}}$ and testing regularity using $\rho_+\pa_{\rho_+}$, $x_{\!\scri}\pa_{x_{\!\scri}}$, and $x_{\!\scri}\Omega_a$.) Suppose that we are given a metric on $g$ on $\R^{1+n}$ which is (approximately) equal to $g_0$ in a neighborhood $r/t\in(1-\eps,1+\eps)$, $r\gg 1$, of $\scri^+$. Crucially note then that one \emph{cannot} solve the wave equation $\Box_g u=f$ \emph{locally} near $\scri^+$ (or even just in regions such as $t-r\geq 1$, $r/t\in(1-\eps,1]$), since the behavior of $u$ in such a region is global in character: it depends on the spacetime geometry (or more generally on the coefficients of the wave type operator under consideration), and on the forcing $f$, in a full neighborhood of future timelike infinity $I^+$---which includes regions far from $\scri^+$. A concrete example to keep in mind is the case that $g$ is a Schwarzschild or Kerr metric, or a perturbation thereof. Thus, if even \emph{just one} part of the analysis of $\Box_g$ near future timelike infinity (i.e.\ in a region where $r/t<1-\eps$ and $t\gg 1$) uses microlocal tools, it is desirable to have a microlocal perspective in a full neighborhood of $I^+$, including near (a future affine complete part of) $\scri^+$. More specifically, microlocal regularity results near critical or invariant sets of the null-geodesic flow (lifted to the cotangent bundle), such as radial points or normally hyperbolic trapping, take the form: \emph{if regularity is known on the stable manifold of the critical/invariant set, then it holds at the set itself and thus also on its unstable manifold}. (In the noncompact setting of interest here, `regularity' entails uniform ($L^2$-)integrability of $u$ and its derivatives.) A microlocal perspective near $\scri^+$ is exactly what allows for a clean separation of those null-geodesics, lifted to phase space, which are incoming (i.e.\ which are on their way to a region far away from $\scri^+$ where they may encounter, say, a black hole and normally hyperbolic trapping), and those which are outgoing (i.e.\ which tend towards $\scri^+$).

With this in mind, we briefly describe the microlocal propagation of edge-b-regularity through $\scri^+$ in~\S\ref{SsIeb}. In order to illustrate the way in which the microlocal analysis near $\scri^+$ fits into global analysis in the forward cone, we prove an extension of Theorem~\ref{ThmI} which proves the membership of waves in weighted edge-b-spaces also near $I^+$ under suitable assumptions on the global geometry away from $\scri^+$ (which are satisfied by the Minkowski metric); see Theorem~\ref{ThmSA}. The assumptions on the spacetime away from $\scri^+$ are inspired by work of the second author with Baskin and Wunsch \cite{BaskinVasyWunschRadMink}, the relationship to which we discuss in detail in \S\ref{SsII}. In this introduction, we content ourselves with the Minkowski setting of Theorem~\ref{ThmSA} (see also Example~\ref{ExSAMink}), and with extra b-regularity:

\begin{thm}[Global edge-b-regularity of waves]
\label{ThmIGlobal}
  Let $\Omega=\{t\geq 0\}$. Suppose $\alpha_++\frac12<\alpha_{\!\scri}<\min(-\half,\alpha_0+\half)$, and let $s\in\N$. Let $f\in\Heb^{s-1,(\alpha_0+2,2\alpha_{\!\scri}+2,\alpha_++2)}(\Omega)$. Then the forward solution $u$ of the wave equation $\Box_{g_0}u=f$ on Minkowski space satisfies $u\in\Heb^{s,(\alpha_0,2\alpha_{\!\scri},\alpha_+)}(\Omega)$. If $f$ enjoys additional $k$ orders of b-regularity,\footnote{That is, $V_1\cdots V_j f\in\Heb^{s-1,(\alpha_0+2,2\alpha_{\!\scri}+2,\alpha_++2)}(\Omega)$ for all $j=0,\ldots,k$, where the $V_i$ are b-vector fields on $\tilde M$: in $r>1$ and $0\leq t\leq 2 r$, a basis of these is given by $\la t-r\ra(\pa_t-\pa_r)$, $r(\pa_t+\pa_r)$, $\Omega_a$; and in $r=|x|\leq\frac{t}{2}$, one can take $\la t\ra\pa_t$ and $\la t\ra\pa_x$.} then $u$ enjoys additional $k$ degrees of b-regularity as well.
\end{thm}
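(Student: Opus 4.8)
The plan is to derive Theorem~\ref{ThmIGlobal} from the general geometric statement Theorem~\ref{ThmSA}: its hypotheses on the spacetime in a full neighborhood of $I^+$ away from $\scri^+$---modeled on the work \cite{BaskinVasyWunschRadMink} of the second author with Baskin and Wunsch---are verified for the Minkowski metric in Example~\ref{ExSAMink}, so the real content is the proof of Theorem~\ref{ThmSA}. The structural setup is to compactify $\Omega=\{t\geq 0\}$ to a manifold with corners $\tilde M$ with three boundary hypersurfaces: null infinity $\scri^+$ ($x_{\!\scri}\to 0$), spacelike infinity $I^0$ ($\rho_0\to 0$), and future timelike infinity $I^+$ ($\rho_+\to 0$); on $\tilde M$, after multiplying by the overall weight $\rho_0^2 x_{\!\scri}^2$ (resp.\ $\rho_+^2 x_{\!\scri}^2$) the operator $\Box_{g_0}$ is, to leading order, a nondegenerate Lorentzian quadratic form in the edge-b-vector fields at $\scri^+$ and is totally characteristic at $I^0$ and $I^+$. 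One then combines the exterior estimate of Theorem~\ref{ThmI} near $\scri^+\cap I^0$ with a new analysis in a full neighborhood of $I^+$, using microlocal propagation of edge-b-regularity in the overlap.

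First I would handle the base case $s=1$ by an energy estimate in the spirit of Theorem~\ref{ThmMEFw}: take a vector field multiplier which near $\scri^+$ is a weighted linear combination of $V_0$ and $V_1$, as in the exterior region, and which near $I^+$ is the future-timelike scaling edge-b-vector field $\rho_+\pa_{\rho_+}$, dressed with the square of the weight defining $\Heb^{0,(\alpha_0,2\alpha_{\!\scri},\alpha_+)}(\Omega)$. Pairing $\Box_{g_0}u=f$ against this multiplier over $\tilde M$ and integrating by parts, the conditions $\alpha_{\!\scri}<-\half$ and $\alpha_{\!\scri}<\alpha_0+\half$ produce positive bulk and boundary terms at $\scri^+$ and $I^0$ exactly as in Theorem~\ref{ThmI}, while the new condition $\alpha_++\half<\alpha_{\!\scri}$ is precisely what gives the $I^+$ boundary term and the $I^+$ portion of the bulk term the correct sign; one concludes $u\in\Heb^{1,(\alpha_0,2\alpha_{\!\scri},\alpha_+)}(\Omega)$.

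For $s>1$ (indeed for real orders above a negative threshold) I would pass to the microlocal picture of \S\ref{SsIeb}: lift $\rho_+^2 x_{\!\scri}^2\Box_{g_0}$ to the calculus which is of edge-b type at $\scri^+$ and of b (totally characteristic) type at $I^0$ and $I^+$ on $\tilde M$, and propagate regularity through the sequence of radial sets lying over the corners. The mechanism is the standard modular one: interior elliptic regularity, real principal type propagation along the null-bicharacteristic flow, and radial point (positive commutator) estimates at each radial set, with the negative threshold on $s$ arising from the skew-adjoint/subprincipal contributions there; the $s=1$ energy estimate supplies the background regularity needed to open the propagation into $I^+$ from the complement of $\scri^+$. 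For the additional $k$ orders of b-regularity one commutes $\Box_{g_0}u=f$ with the b-vector fields of the footnote (rotations $\Omega_a$, $r(\pa_t+\pa_r)$, $\la t-r\ra(\pa_t-\pa_r)$, and $\la t\ra\pa_t,\la t\ra\pa_x$ near the core), whose commutators with $\Box_{g_0}$ on Minkowski space are again b-operators preserving the relevant weighted spaces, and bootstraps in the number of commuted vector fields using the estimate just proved.

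I expect the main obstacle to be the analysis at the two corners $\scri^+\cap I^0$ and $\scri^+\cap I^+$: one must fit the edge calculus at $\scri^+$ and the totally characteristic calculus at $I^0,I^+$ into a single compatible calculus on $\tilde M$, locate the radial sets precisely in the associated compactified cotangent bundle, and run the radial point estimates there uniformly, all while tracking the three families of weights $\alpha_0,\alpha_{\!\scri},\alpha_+$ consistently across the faces. The second delicate point is the gluing itself---checking that the microlocal neighborhood of $I^+$ and the region covered by the exterior estimate of Theorem~\ref{ThmI} overlap, and that the thresholds required on both sides are mutually compatible in the open range $\alpha_++\half<\alpha_{\!\scri}<\min(-\half,\alpha_0+\half)$.
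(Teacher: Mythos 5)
Your overall route is the paper's: Theorem~\ref{ThmIGlobal} is obtained as the Minkowski special case of Theorem~\ref{ThmSA}, with Example~\ref{ExSAMink} verifying the geometric hypotheses, and the scheme ``energy estimate $+$ microlocal edge-b-propagation through the radial sets $+$ commutation with b-vector fields for the extra $k$ orders'' is indeed the skeleton of that proof. The b-regularity step and the propagation step are fine in spirit (and near $I^+$ there is in fact no upper restriction on $s$ at all, since the only $s$-threshold comes from $\cR_{\rm c}$ over $I^0\cap\scri^+$).

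The genuine gap is in your treatment of $I^+$. You claim that a single global energy estimate, with multiplier built from $\rho_+\pa_{\rho_+}$ dressed by the weight $\rho_+^{-2(\alpha_++1)}$, directly yields $u\in\Heb^{1,(\alpha_0,2\alpha_{\!\scri},\alpha_+)}$, with the condition $\alpha_++\half<\alpha_{\!\scri}$ supplying the sign of the $I^+$ bulk and boundary terms. That condition is the threshold for the corner $\scri^+\cap I^+$ (the radial point estimate at $\cR_{\rm in,+}$ and the edge normal operator estimate of Proposition~\ref{PropNe}); it has no bearing on positivity over the interior of $I^+$. Over $(I^+)^\circ$ the operator $\rho_+^{-2}x_{\!\scri}^{-2}\Box_{g_0}$ is a dilation-invariant b-operator, and a commutator argument with the scaling multiplier is only positive for \emph{sufficiently negative} $\alpha_+$ (growing solutions); this is exactly what the paper proves in step (2.i). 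The admissible range here allows, e.g., $\alpha_+$ just below $\alpha_{\!\scri}-\half$, i.e.\ genuine decay (pointwise roughly $t^{-\frac{n-1}{2}}$), and no weighted positive commutator can produce that: the attainable weight at $I^+$ is governed by the resonances of the Mellin-transformed normal operator family $\wh{P_+}(\lambda)$, not by a sign condition. The paper's proof therefore upgrades the very negative weight to $\alpha_+$ by inverting $N_{I^+}(\rho_+^{-2}x_{\!\scri}^{-2}P)$ via the Mellin transform: invertibility of $\wh{P_+}(\lambda)$ on the weight line (Theorem~\ref{ThmNpB}, plus the kernel-triviality hypothesis~\eqref{EqSAOp3}, verified for Minkowski through the hyperbolic-space spectral family) together with high-frequency semiclassical 0-estimates (Theorems~\ref{ThmNpH}--\ref{ThmNpHAdj}, using non-trapping), followed by a contour shift and recovery of the lost derivative by the propagation estimates. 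Your sketch omits this entire normal-operator/resonance ingredient, and without it the claimed membership with weight $\alpha_+$ at $I^+$ is not justified — indeed it would be false for operators with the same principal and $\scri^+$-structure but a different $I^+$-normal operator, which shows the energy argument alone cannot close the proof.
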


The best pointwise decay that follows directly from this result via Sobolev embedding is the bound $|u|\lesssim \la t\ra^{-\frac{n-1}{2}+\eps}$ for all $\eps>0$ (provided $f$ has appropriate decay itself); see Remark~\ref{RmkSADecay}.

A more elaborate setting in which the results of the present paper play a crucial role is described in \cite{HintzNonstat}: the spacetimes considered there have asymptotically stationary regions, in which case the analysis far from $\scri^+$ requires yet different microlocal tools which are developed in \cite{Hintz3b,HintzNonstat}.

%%%%%%%%%%%%%%%%%%%%%%%%%%%%%%%%%%%%%%%%%%%%%%%%%%
\subsection{Prior work on microlocal analysis near null infinity}
\label{SsII}

The radial compactification of $\R^{n+1}$ is the smooth manifold
\begin{equation}
\label{EqIRadComp}
  \ol{\R^{n+1}} = \Bigl( \R^{n+1} \sqcup \bigl([0,\infty)\times\Sph^n\bigr) \Bigr) / \sim,\qquad \R^{n+1}\setminus\{0\}\ni z=R\varpi \sim (R^{-1},\varpi)\in (0,\infty)\times\Sph^n,
\end{equation}
with boundary `at infinity' given by $\pa\ol{\R^{n+1}}=\{\tilde\varrho=0\}\cong\Sph^n$, $\tilde\varrho=R^{-1}$. (Here, $R^2=|z|^2$ is defined using any fixed positive definite quadratic form on $\R^{n+1}$, such as the Euclidean one.) All future null-geodesics on Minkowski space $(\R^{n+1},g_0)$ limit to the same codimension $1$ submanifold $Y\subset\pa\ol{\R^{n+1}}$; in local coordinates
\[
  \varrho=(t+r)^{-1},\quad
  v=\frac{t-r}{t+r},\quad
  \omega\in\Sph^{n-1},
\]
on $\ol{\R^{n+1}}$ near $t/r\in(0,\infty)$, $t>0$, this submanifold is given by $\{\varrho=v=0\}$. See Figure~\ref{FigIIRad}.

\begin{figure}[!ht]
\centering
\includegraphics{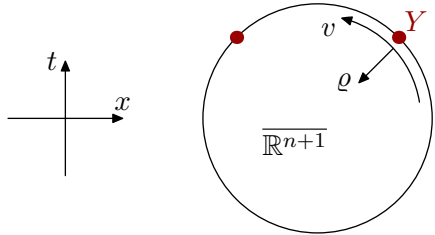}
\caption{The radial compactification of $\R^{n+1}$, and the light cone at infinity $Y\cong\Sph^{n-1}$, in the case $n=1$.}
\label{FigIIRad}
\end{figure}

In \cite[\S5]{VasyMicroKerrdS} and \cite{BaskinVasyWunschRadMink}, the analysis of the wave operator $\Box_{g_0}$ on Minkowski space $(\R^{n+1},g_0)$ focuses on its homogeneity of degree $-2$ under spacetime dilations $(t,x)\mapsto(\lambda t,\lambda x)$, $\lambda>0$. More specifically, the rescaling $\tilde\varrho^2\Box_{g_0}$ is invariant under dilations in $\tilde\varrho$ in the collar neighborhood $[0,1)_{\tilde\varrho}\times\Sph^n$ of $\pa\ol{\R^{n+1}}$. In fact, it is a hyperbolic totally characteristic operator, or \emph{b-differential operator} in the terminology of \cite{MelroseAPS}, which in the local coordinates $\varrho,v,\omega$ from before means that it is constructed from the b-vector fields $\varrho\pa_\varrho$, $\pa_v$, $\pa_\omega$ (spherical vector fields)---or more invariantly (and globally) smooth vector fields tangent to the boundary of $\ol{\R^{n+1}}$---and smooth functions on $\ol{\R^{n+1}}$. (See Example~\ref{ExOpMink} for the explicit expression.) One can more generally consider metrics $g$ on $\R^{n+1}$ (and the corresponding rescaled wave operators $\tilde\varrho^2\Box_g$) which have an \emph{asymptotic} homogeneity under dilations and an appropriate structure near $Y$; these are called \emph{Lorentzian scattering metrics} in~\cite{BaskinVasyWunschRadMink}.

Working with the Minkowski case for definiteness, the operator $\tilde\varrho^2\Box_{g_0}$ is analyzed in \cite{BaskinVasyWunschRadMink} (see also \cite[\S5]{VasyMicroKerrdS} and \cite{VasyMinkDSHypRelation}) using microlocal techniques in the b-cotangent bundle, which is an extension of $T^*\R^{n+1}$ to a bundle $\Tb^*\ol{\R^{n+1}}\to\ol{\R^{n+1}}$ with smooth frame given by the 1-forms dual to the aforementioned b-vector fields. In the characteristic set of $\tilde\varrho^2\Box_{g_0}$, the Hamiltonian vector field of the principal symbol (i.e.\ the generator of the lifted null-geodesic flow) has a sink over $Y$, corresponding to the outgoing null-geodesics which tend to $Y$. Null-bicharacteristics over $\pa\ol{\R^{n+1}}$ may instead also cross $Y$ first\footnote{That is, they are incoming null-geodesics lifted to phase space, or more precisely limits of families of such geodesics. A concrete example, projected to the base $\ol{\R^{n+1}}$, is the limit of $(0,1)\ni s\mapsto(t,r,\omega)=(T s,T-T s,\omega_0)$ as $T\nearrow\infty$.} and only at a later time tend to $Y$. Importantly, one can track microlocal b-regularity\footnote{This is b-regularity on $\ol{\R^{1+n}}$, and thus differs from b-regularity on the manifold $\tilde M$ in Figure~\ref{FigIIBlowup} near null infinity. In the coordinates $\varrho,v,\omega$ from above, b-regularity on $\ol{\R^{1+n}}$ tests for regularity using $\varrho\pa_\varrho$, $\pa_v$, $\pa_\omega$, whereas b-regularity on $\tilde M$ uses $\varrho\pa_\varrho$, $\varrho\pa_v$, $v\pa_v$, $\pa_\omega$; note that the $v$-derivatives here come with a prefactor that vanishes at null infinity.} in the b-phase space over a full neighborhood of $Y$, as demonstrated in~\cite[\S4]{BaskinVasyWunschRadMink}; the amount of b-regularity at $Y$ is necessarily limited (the issue being limited regularity upon differentiation along the weighted incoming vector field $\pa_v\sim r(\pa_t-\pa_r)$). One can furthermore obtain additional (integer amounts of) \emph{module regularity} at $Y$ (using techniques going back to \cite{HassellMelroseVasySymbolicOrderZero}).\footnote{This module regularity is in fact equivalent to the b-regularity on $\tilde M$ mentioned earlier.} In~\cite{VasyWrochnaPropagators,HintzVasySemilinear}, such a regularity theory on $\ol{\R^{n+1}}$ was used to describe asymptotic data for Feynman propagators and to solve semilinear wave equations.

Equipped with this b- and module regularity, the second author with Baskin and Wunsch \cite{BaskinVasyWunschRadMink} (see \cite{BaskinVasyWunschRadMink2} for a more general class of metrics) obtains a full asymptotic expansion (i.e.\ the polyhomogeneity) of solutions $u$ of the wave equation on a resolution of $\ol{\R^{n+1}}$ defined by blowing up $Y$ \cite{MelroseDiffOnMwc}. Recall that passage to the blow-up
\[
  \tilde M := [ \ol{\R^{n+1}}; Y ]
\]
of $\ol{\R^{n+1}}$ at $Y$ amounts to the introduction of polar coordinates around $Y$; see Figure~\ref{FigIIBlowup} for an illustration. Local coordinates near the interior of the front face\footnote{We use tildes here for consistency with the notation used in~\ref{SsIeb} and in the main part of the paper.} $\tilde\scri^+$ are $\varrho=(t+r)^{-1}$, $\frac{v}{\varrho}=t-r$, and $\omega\in\Sph^{n-1}$; at $\varrho=0$, this is the usual parameterization of null infinity. The asymptotic expansion of $u$ in particular captures its radiation field, which is (a derivative of) the restriction to $\tilde\scri^+$ of the rescaling $r^{\frac{n-1}{2}}u$. Coordinates near the past boundary $\tilde\scri^+\cap\tilde I^0$ of null infinity are
\begin{equation}
\label{EqIICoord}
  \rho_0=\frac{1}{r-t}\geq 0,\qquad
  \rho_{\!\scri}=\frac{r-t}{r},\qquad
  \omega;
\end{equation}
thus, level sets of $\rho_0$ are outgoing null cones, whereas level sets $\rho_{\!\scri}=c\in(0,1)$ are spacelike hypersurfaces with boundary at infinity contained in $\tilde I^0$. See~\cite[\S1.1.1]{HintzVasyMink4} for an extensive discussion.

\begin{figure}[!ht]
\centering
\includegraphics{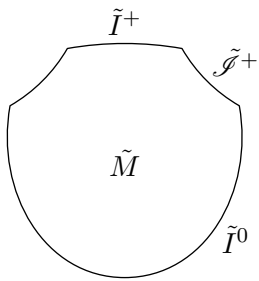}
\caption{The blow-up $\tilde M$ of $\ol{\R^{n+1}}$ at $Y$, and labels for the boundary hypersurfaces.}
\label{FigIIBlowup}
\end{figure}

Now, spacetime metrics $g$ arising from the solution of \emph{quasilinear} wave equations on Minkowski space---a key example being the solutions of the Einstein vacuum equations $\Ric(g)=0$ with initial data close to those of Minkowski space \cite{ChristodoulouKlainermanStability,LindbladRodnianskiGlobalStability,HintzVasyMink4}---typically are approximately dilation-invariant on spacetime \emph{except} near null infinity; there, the quasilinear waves or perturbed metrics $g$ are instead regular only when described on the resolution $\tilde M$ since the radiation field couples back into the metric. Put differently, rescaled wave operators $\tilde\varrho^2\Box_g$ associated with such $(\R^{n+1},g)$ (arising e.g.\ via linearization of the quasilinear equation) have highly \emph{singular} coefficients if one regards them as b-differential operators on $\ol{\R^{n+1}}$. Such operators are not b-microlocal at $Y$, and indeed they typically create many extra singularities at $Y$ (in the sense of b-wave front sets), rendering a precise microlocal regularity theory very delicate, if not impossible.\footnote{This is analogous to how, for example, the solution of a wave equation on $\R^{1+n}$, even if it is initially smooth, typically develops singularities at places where the coefficients of the wave operator are singular.} The point of the present paper is thus to describe a point of view which, unlike the b-setting on $\ol{\R^{n+1}}$, \emph{is} microlocal near null infinity.

\begin{rmk}[Geometric singular analysis on $\tilde M$]
\label{RmkIGeoSing}
  The analysis of the Einstein equations by Wang \cite{WangThesis} and the authors \cite{HintzVasyMink4,HintzMink4Gauge}, while inspired by the perspective of geometric singular analysis (in particular \cite{MelroseAPS,MazzeoEdge}) and taking place on $\tilde M$, is not microlocal at $\scri^+$, but rather fully relies on energy estimates and adaptations of the vector field method.
\end{rmk}

%%%%%%%%%%%%%%%%%%%%%%%%%%%%%%%%%%%%%%%%%%%%%%%%%%
\subsection{The edge-b-perspective near null infinity}
\label{SsIeb}

The geometric setup for Theorem~\ref{ThmI} involves the smooth manifold $M$ with corners which is obtained from $\tilde M=[\ol{\R^{n+1}};Y]$ via performing a \emph{square root blow-up} of its front face $\tilde\scri^+$; that is, $M=\tilde M$ as sets, but a defining function of $\scri^+=\tilde\scri^+$ is now given by the square root of a defining function of $\tilde\scri^+\subset\tilde M$. In the local coordinates~\eqref{EqIICoord} on $\tilde M$, this amounts to regarding $\rho_0=\frac{1}{r-t}\geq 0$, $x_{\!\scri}=\sqrt{\rho_{\!\scri}}=\sqrt{\frac{r-t}{r}}\geq 0$, $\omega\in\Sph^{n-1}$ as smooth local coordinates on $M$; cf.\ \eqref{EqIHeb0}.

Following Melrose \cite{MelroseAPS} and Mazzeo \cite{MazzeoEdge}, we then write $\Veb(M)$ for the Lie algebra of smooth vector fields on $M$ which are tangent to all boundary hypersurfaces, and which at $\scri^+$ are in addition tangent to the fibers of the (blow-down) map $\scri^+\to Y$ given in local coordinates by $(\rho_0,\omega)\mapsto\omega$. This Lie algebra is spanned, over $\CI(M)$, by the vector fields in~\eqref{EqIVfs}. The corresponding classes of (pseudo)differential operators and Sobolev spaces are discussed in~\S\ref{SEB}.

An explicit calculation shows that the wave operator on Minkowski space satisfies
\begin{equation}
\label{EqIebOp}
  2\rho_0^{-2}x_{\!\scri}^{-2}\Box_{g_0} \equiv \bigl(x_{\!\scri}\pa_{x_{\!\scri}} - (n-1)\bigr)\bigl(x_{\!\scri}\pa_{x_{\!\scri}}-2\rho_0\pa_{\rho_0}) + 2 x_{\!\scri}^2\Delta_{g_{\Sph^{n-1}}},
\end{equation}
where $\Delta_{g_{\Sph^{n-1}}}$ is the non-negative Laplacian on the standard $(n-1)$-sphere, and where we write `$\equiv$' for equality modulo the space $x_{\!\scri}\Diffeb^2(M)$ of linear combinations of up to twofold products of the vector fields~\eqref{EqIVfs}, with coefficients vanishing at $x_{\!\scri}=0$. Intimately related to this is the fact that the rescaled Minkowski metric $\rho_0^2 x_{\!\scri}^2 g_0$ is a Lorentzian edge-b-metric, i.e.\ a nondegenerate Lorentzian signature quadratic form in the 1-forms $\frac{\dd\rho_0}{\rho_0}$, $\frac{\dd x_{\!\scri}}{x_{\!\scri}}$, $\frac{\dd\omega}{x_{\!\scri}}$ dual to the vector fields~\eqref{EqIVfs}. (See~\eqref{EqOGeoMetI0g0} for the computation of the dual metric.)

A systematic microlocal analysis of the operator~\eqref{EqIebOp} is performed in~\S\ref{SM}. The null-bicharacteristic flow of its (edge-b-)principal symbol---which is the same as the lifted null-geodesic flow for the metric $\rho_0^2 x_{\!\scri}^2 g_0$---has a rather intricate structure involving three radial sets having different source/sink/saddle point structures. (There is a fourth radial set over the future boundary $\scri^+\cap I^+$.) See Figure~\ref{FigIMF}. Crucially, the linearization of the Hamiltonian vector field at each of these radial sets is nondegenerate in the normal directions, which allows for a proof of microlocal radial point propagation estimates through (or into) them by means of standard positive commutator methods, see \cite[\S9]{MelroseEuclideanSpectralTheory}, \cite[\S2]{VasyMicroKerrdS}, \cite[Appendix~E.4]{DyatlovZworskiBook}. Combined with a simple energy estimate on edge-b-spaces, which appeared already in~\cite[\S4.1]{HintzVasyMink4} (albeit in less generality, and without identifying the underlying singular geometric structure), we can then prove Theorem~\ref{ThmI} and its generalizations in~\S\ref{SE}.

\begin{figure}[!ht]
\centering
\includegraphics{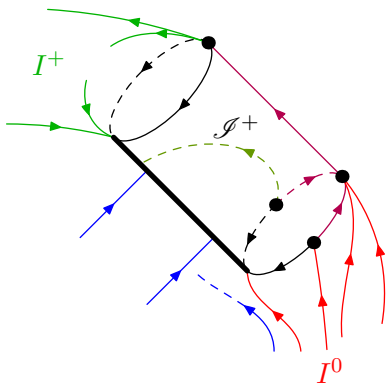}
\caption{Structure of the null-bicharacteristic flow near null infinity in $2+1$ spacetime dimensions. The cross sections of the cylinder are cross sections of the future light cones inside of each fiber of the eb-phase space over $\scri^+$. The thick black sets are the radial sets (the two antipodal points over $\scri^+\cap I^0$ forming a connected radial set in higher dimensions). See~\S\ref{SsMF} and Figure~\ref{FigMF} for details.}
\label{FigIMF}
\end{figure}

The class of operators (or metrics) to which our analysis applies is the natural generalization of $\rho_0^{-2}x_{\!\scri}^{-2}\Box_{g_0}$ within the class of edge-b-operators, in that one can allow for the operators (or metrics) to have additional lower order terms (in the sense of decay at $x_{\!\scri}=0$). We more generally consider operators acting on sections of vector bundles, as long as their principal part is that of a scalar wave operator still. Furthermore, one can allow for the presence of subprincipal terms at $x_{\!\scri}=0$; a particularly important example for applications to nonlinear stability problems (see for example \cite[Proposition~3.29 and \S3.6]{HintzMink4Gauge}) is the replacement of $n-1$ in~\eqref{EqIebOp} to another constant (or bundle endomorphism), this constant (or the eigenvalues) governing the decay rate(s) towards $\scri^+=\{x_{\!\scri}=0\}$. (Note that the constant $n-1$ is exactly the exponent in the $x_{\!\scri}^{n-1}\sim r^{-\frac{n-1}{2}}$ (for $|t-r|\lesssim 1$) decay rate towards null infinity of waves on Minkowski space.) Our general setup encompasses a large class of asymptotically Minkowskian spacetimes, allowing in particular for the presence of long range mass terms (as present in Schwarzschild or Kerr metrics) and radiation field type decay for certain metric coefficients in $3+1$ dimensions compatible with nonlinear stability problems; see also Example~\ref{ExOGeoAsy}.

\begin{rmk}[Parabolic scaling]
\label{RmkIParabolic}
  Via the Friedlander coordinate change \cite{FriedlanderRadiation}, or via the conformal embedding of Minkowski space into the Einstein cylinder $(\R_s\times\Sph^n,-\dd s^2+g_{\Sph^n})$, the Minkowski metric can be conformally rescaled to a metric on $\tilde M$ which is smooth and nondegenerate down to $(\tilde\scri^+)^\circ$, with $(\tilde\scri^+)^\circ$ becoming a null hypersurface. The edge perspective at null infinity of Minkowski space is then directly related to the following general setup: consider a null hypersurface $\cI$ in an $(n+1)$-dimensional Lorentzian manifold. Near a point $p\in\cI$, one can then find local coordinates $u,v,y$ so that $u,v$ are null, the hypersurface $\cI$ is given by $u=0$, and at $p$ the spacetime metric is $g_p=\dd u\,\dd v+\dd y^2$. Thus, $g_p$ is homogeneous of degree $2$ under the $\cI$-preserving parabolic scaling $(u,v,y)\mapsto(\lambda^2 u,v,\lambda y)$. The rescaling $u^{-1}g_p=\frac{\dd u}{u}\,\dd v+u^{-1}\dd y^2$ is scaling-invariant, and passing to $x:=\sqrt u$ in $u>0$ gives the metric $x^{-2}g_p=2\frac{\dd x}{x}\dd v+\frac{\dd y^2}{x^2}$ which is invariant under the homogeneous scaling $(x,v,y)\mapsto(\lambda x,v,\lambda y)$. Thus, in $u>0$, the rescaling $x^{-2}g_p$ is an edge metric (i.e.\ a non-degenerate Lorentzian signature expression in $\frac{\dd x}{x}$, $\dd v$, $\frac{\dd y}{x}$) on the manifold $[0,1)_x\times\R_v\times\R^{n-1}_y$, with the boundary $\cI=x^{-1}(0)$ fibered via $(v,y)\mapsto y$.
\end{rmk}

\begin{rmk}[Conormal coefficients at null infinity]
\label{RmkIdHUV}
  The conformal perspective on Minkowski space mentioned in the Remark~\ref{RmkIParabolic} largely breaks down when applied to perturbations of Minkowski space in the context of the stability problem in $3+1$ dimensions. (See \cite{ChristodoulouNoPeeling,DafermosChristodoulouExpose} and \cite[Remark~8.12]{HintzVasyMink4} and the references therein; but see also \cite{FriedrichStability} for a more restrictive setting in which a conformal approach does succeed.) However, the conformally rescaled metric typically does have some conormal regularity down to $\tilde\scri^+$. It is thus conceivable that one can adapt the methods used in \cite{DeHoopUhlmannVasyDiffraction} (see also~\cite{GannotWunschPotential}) for the diffraction of singularities by mildly singular timelike boundaries to the lightlike case.
\end{rmk}

\begin{rmk}[Klein--Gordon equation]
\label{RmkIKG}
  The structure near null infinity of the Klein--Gordon operator $\Box_{g_0}+m^2$, with $m\in\R\setminus\{0\}$, is altogether different; for example, unlike in~\eqref{EqIebOp} one cannot factor out $\rho_0^{-2}x_{\!\scri}^{-2}$ from $\Box_{g_0}+m^2$. Sussman \cite{SussmanKG} thus develops a different microlocal framework for studying regularity and decay of solutions of the Klein--Gordon equation near null infinity (and also globally) on Minkowski space, and generalizations thereof; this framework is based on the Lie algebra of `double edge, scattering' vector fields, which have an additional order of vanishing at $I^0$, $\scri^+$, and $I^+$ compared to the edge-b-vector fields used in the present paper.
\end{rmk}

The edge-b-microlocal analysis of $\Box_{g_0}$ and its generalizations is supplemented in~\S\ref{Sb} with propagation estimates on edge-b-spaces which carry an additional integer amount of b-regularity on $\tilde M$ (or equivalently on $M$). This extra b-regularity is captured via testing with vector fields, much as in the aforementioned Klainerman vector field method. 

\begin{rmk}[b-perspective]
\label{RmkINob}
  No matter how one changes the smooth structure of $\tilde M$, the operator $\Box_{g_0}$ is not a nondegenerate (weighted) b-differential operator. (For example, the spherical Laplacian term in~\eqref{EqIebOp} is of lower order in the sense of decay at $x_{\!\scri}=0$ than the first term.) This means that a b-microlocal analysis of the operator $\Box_{g_0}$ and its perturbations, if possible at all, is delicate due to the degenerate nature of the operator as a b-differential operator.
\end{rmk}

Besides the microlocal edge-b-regularity theory and the solvability theory away from future timelike infinity $I^+$, we also show the invertibility of the \emph{edge normal operator} of $\Box_{g_0}$ at $\scri^+$, which is an invariant (cf.\ Remark~\ref{RmkIParabolic}) model at each fiber of $\scri^+$; see~\S\ref{SNe}. The inversion of normal operators is the main ingredient in the development of Fredholm theory for elliptic operators \cite{MazzeoMelroseHyp,MelroseAPS,MazzeoEdge,MazzeoMelroseFibred} and also for nonelliptic operators \cite[\S2]{HintzVasySemilinear}, \cite{GellRedmanHaberVasyFeynman}.

Edge-b-operators such as (a weighted version of) $\Box_{g_0}$ have a dilation-invariant normal operator also at the boundary hypersurface $I^+\subset M$. In~\S\ref{SNp}, we prove microlocal estimates for this normal operator which only use the structure of the underlying wave type operator near $\scri^+$. We indicate how these edge normal operator inverses and normal operator estimates, together with our microlocal regularity theory, can be put to use in the \emph{global} analysis of a wave equation on a class of asymptotically Minkowski spaces in (the proof of) Theorem~\ref{ThmSA}. (A significantly more elaborate setting is discussed in \cite{HintzNonstat}.)

%%%%%%%%%%%%%%%%%%%%%%%%%%%%%%%%%%%%%%%%%%%%%%%%%%
\subsection{Outline of the paper}
\label{SsIO}

Edge-b-vector fields, (pseudo)differential operators, Sobolev spaces, and related concepts are introduced in~\S\ref{SEB}. In~\S\ref{SO}, we describe the Minkowski metric and its generalizations, called \emph{admissible metrics} in this paper, from the edge-b-perspective, and also introduce the class of wave type operators that our methods can handle. In~\S\ref{SM} then, the microlocal heart of the paper, we analyze the admissible operators of~\S\ref{SO} from an edge-b-microlocal point of view. Estimates on spaces capturing higher order b-regularity are proved in~\S\ref{Sb}. The solvability of wave equations away from the future boundary of $\scri^+$ (with Theorem~\ref{ThmI} being a special case) is proved using energy estimates and microlocal propagation results in~\S\ref{SE}.

Estimates for the normal operators associated with the edge-b-nature of admissible wave operators are proved in~\S\S\ref{SNe}--\ref{SNp}. An application to the global existence, regularity, and decay of waves on a class of asymptotically Minkowskian spacetimes is given in~\S\ref{SA}.

%%%%%%%%%%%%%%%%%%%%%%%%%%%%%%%%%%%%%%%%%%%%%%%%%%
\subsection*{Acknowledgments}

Part of this research was conducted during the time P.H.\ held Clay and Sloan Research Fellowships. P.H.\ and A.V.\ gratefully acknowledge support from the U.S.\ National Science Foundation under Grants No.\ DMS-1955614 (P.H.) as well as DMS-1664683 and DMS-1953987 (A.V.). This material is furthermore based upon work supported by the NSF under Grant No.\ DMS-1440140 while the authors were in residence at the Mathematical Sciences Research Institute in Berkeley, California, during the Fall 2019 semester.

%%%%%%%%%%%%%%%%%%%%%%%%%%%%%%%%%%%%%%%%%%%%%%%%%%%%%%%%%%%%%%%%%%%%%%
\section{Edge-b-geometry and analysis}
\label{SEB}

We begin by recalling, in~\S\ref{SsEBS}, the Lie algebra of edge-b-vector fields on a manifold with corners and a fibered boundary hypersurface. The corresponding spaces of differential operators are defined in~\S\ref{SsEBS} as well, and their normal operators are described in~\S\ref{SsEBO}. The algebra of edge-b-pseudodifferential operators is recalled in \S\ref{SsEBP}. We will use these operator algebras for the (microlocal) analysis near null infinity. Sobolev spaces are discussed in~\S\ref{SsEBH}, and various notions required for the study of certain normal operators of edge-b-operators are defined in~\S\S\ref{SsEBI}--\ref{SsEB0}. The material in this section is largely based on \cite{MelroseAPS,MazzeoEdge,MelroseVasyWunschDiffraction}.

%%%%%%%%%%%%%%%%%%%%%%%%%%%%%%%%%%%%%%%%%%%%%%%%%%
\subsection{Vector fields, differential operators, bundles}
\label{SsEBS}

Let $M$ denote an $n$-di\-men\-sion\-al manifold with corners. Denote by $H_1,\ldots,H_N\subset\pa M$, $N\in\N$, the collection of its boundary hypersurfaces which we require to be embedded submanifolds of $M$. Recall then:

\begin{definition}[Lie algebras of vector fields]
\label{DefEBSbsc}
  Denote by $\cV(M)=\CI(M;T M)$ the Lie algebra of smooth vector fields on $M$.
  \begin{enumerate}
  \item The space $\Vb(M)\subset\cV(M)$ of \emph{b-vector fields} \cite{MelroseTransformation,MelroseAPS} consists of all $V\in\cV(M)$ which are tangent to $\pa M$, i.e.\ to $H_j$ for each $j=1,\ldots,N$.
  \item If $N=1$, then the space $\Vz(M)\subset\cV(M)$ of \emph{0-vector fields} \cite{MazzeoMelroseHyp} is defined as $\Vz(M)=\{\rho V\colon V\in\cV(M)\}$, where $\rho\in\CI(M)$ is a boundary defining function (that is, $\rho\geq 0$ vanishes only at $\pa M$ and has nonvanishing differential there).
  \item If $N=1$, then we define the space $\Vsc(M)\subset\Vb(M)$ of \emph{scattering vector fields} \cite{MelroseEuclideanSpectralTheory} as $\Vsc(M)=\{\rho V\colon V\in\Vb(M)\}$, where $\rho$ is a boundary defining function.
  \item If $N=1$ and $\pa M=H_1$ is the total space of a smooth fibration $Z-\pa M\to Y$, then the space $\Ve(M)\subset\Vb(M)$ of \emph{edge vector fields} \cite{MazzeoEdge} consists of all b-vector fields which at $\pa M$ are tangent to the fibers of $\pa M$.
  \item If $N\geq 2$ and some (but not all) $H_j$ are total spaces of fibrations, the space $\Veb(M)\subset\Vb(M)$ of \emph{edge-b-vector fields} consists of all b-vector fields which at each fibered boundary hypersurface are tangent to the fibers; see also \cite{MelroseVasyWunschDiffraction,AlbinGellRedmanDirac,Hintz3b}.
  \end{enumerate}
\end{definition}

In local coordinates $x\in[0,\infty)^k$ and $y\in\R^{n-k}$ near a point inside a codimension $k$ corner, with $\pa M$ locally given by $x=0$, b-vector fields are linear combinations with smooth coefficients of
\begin{equation}
\label{EqEBSb}
  x^i\pa_{x^i}\ (i=1,\ldots,k),\qquad \pa_{y^j}\ (j=1,\ldots,n-k).
\end{equation}
When $N=1$ and $k=1$, this frame becomes $x\pa_x,\pa_{y^j}$; the space of scattering vector fields is then spanned by
\begin{equation}
\label{EqEBSsc}
  x^2\pa_x,\qquad x\pa_{y^j}\ (j=1,\ldots,n-1).
\end{equation}
Correspondingly, there are smooth vector bundles
\[
  \Tb M\to M,\qquad \Tsc M\to M,
\]
and bundle maps $\Tb M\to T M$, $\Tsc M\to T M$, which over the interior $M^\circ$ are isomorphisms, so that $\Vb(M)=\CI(M;\Tb M)$ and $\Vsc(M)=\CI(M;\Tsc M)$; a smooth basis of the fibers of $\Tb M$ and $\Tsc M$ is given, in the respective settings, by~\eqref{EqEBSb} and \eqref{EqEBSsc}. The dual bundles are denoted $\Tb^*M$ and $\Tsc^*M$, and the corresponding density bundles by $\Omegab M$ and $\Omegasc M$.

As an important special case, consider the radial compactification $\ol{\R^n}$ of $\R^n$, defined in~\eqref{EqIRadComp}. Denoting by $x^1,\ldots,x^n$ standard coordinates on $\R^n$, then in the closure of the region where, say, $x^1$ is relatively large, meaning $x^1\geq c|x^j|$ for $j=2,\ldots,n$ and some $c>0$, we can use $\rho=\frac{1}{x^1}$, $y^j=\frac{x^j}{x^1}$ as smooth local coordinates on $\ol{\R^n}$; since $\pa_{x^1}=-\rho^2\pa_\rho-\rho\sum_{j=2}^n y^j\pa_{y^j}$, $\pa_{x^j}=\rho\pa_{y^j}$ one finds that translation invariant vector fields on $\R^n$ are special case of scattering vector fields on $\ol{\R^n}$, and indeed $\Vsc(\ol{\R^n})$ is spanned over $\CI(\ol{\R^n})$ by such translation-invariant vector fields. Similarly then, the coordinate differentials $\dd x^1,\ldots,\dd x^n$ extend by continuity from $\R^n$ to give a basis of $\Tsc^*\ol{\R^n}$, and a basis of $\Omegasc\ol{\R^n}$ is given by the Euclidean volume density $|\dd x^1\ldots\dd x^n|$.

For the sake of notational simplicity, we discuss the edge-b-setting only in the special case of interest in this paper. Thus, we assume that $H_2\subset M$ is the total space of a fibration $Z-H_2\xra{\phi} Y$, where $Z\cong[-1,1]$ is a closed interval, and $Y$ is a compact $(n-2)$-dimensional manifold without boundary. Moreover, we assume that $H_j\cap H_2=\emptyset$ except for $j=1,3$, and $H_j$ for $j\neq 2$ is not fibered.\footnote{Equivalently, $H_j$, $j\neq 2$, is equipped with the trivial fibration whose base is a singleton set. But we prefer to speak of b-behavior at $H_j$, $j\neq 2$, rather than of an extreme type of edge behavior.} See Figure~\ref{FigEBS}. A local coordinate description is as follows:
\begin{enumerate}
\item near the interior $H_2^\circ$ of $H_2$, we can choose local coordinates $x\in[0,\infty)$, $y\in\R^{n-2}$, $z\in\R$ on $M$, with $y,z$ local coordinates on $Y,Z$, so that $H_2$ is locally given by $x^{-1}(0)$ and the fibration of $H_2$ takes the form $(y,z)\mapsto y$. Edge-b-vector fields on $M$ are then smooth linear combinations of
  \begin{subequations}
  \begin{equation}
  \label{EqEBSe1}
    x\pa_x,\qquad x\pa_{y^j}\ (j=1,\ldots,n-2),\qquad \pa_z;
  \end{equation}
\item near $H_1\cap H_2$ (and analogously near $H_3\cap H_2$), we can choose local coordinates $x,z\in[0,\infty)$ and $y\in\R^{n-2}$ with the same properties as above, and so that in addition $H_1$ is locally given by $z=0$. Since b-vector fields are spanned by $x\pa_x$, $\pa_{y^j}$, $z\pa_z$, the space of edge-b-vector fields is now spanned by
  \begin{equation}
  \label{EqEBSe2}
    x\pa_x,\qquad x\pa_{y^j}\ (j=1,\ldots,n-2),\qquad z\pa_z.
  \end{equation}
  \end{subequations}
\end{enumerate}
Again, we conclude that $\Veb(M)$ is equal to the space of sections of a smooth vector bundle $\Teb M\to M$, with local frames near $H_2$ given by~\eqref{EqEBSe1}--\eqref{EqEBSe2}. The \emph{edge-b-cotangent bundle} $\Teb^*M$ will play a key role in the present paper as the phase space for the microlocal analysis of edge-b-differential operators.

\begin{figure}[!ht]
\centering
\includegraphics{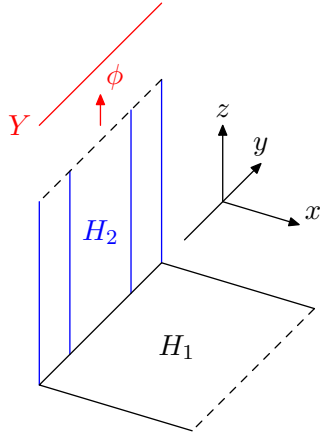}
\caption{A neighborhood of $H_1\cap H_2$ inside $M$, and the local coordinates $x,y,z$ used in~\eqref{EqEBSe2}. The fibration $\phi$ of $H_2$ and the base $Y$ are indicated in red, and the fibers of $H_2$ in blue.}
\label{FigEBS}
\end{figure}

The spaces of vector fields in Definition~\ref{DefEBSbsc} are Lie algebras; for
\[
  \bullet=\bop,\quad 0,\quad\scop,\quad\eop,\quad \ebop.
\]
The space of locally finite linear combinations of up to $m$-fold compositions of elements of $\cV_\bullet(M)$ is denoted $\Diff_\bullet^m(M)$; we put $\Diff_\bullet(M)=\bigoplus_{m\in\N_0}\Diff_\bullet^m(M)$. Given a weight $\alpha\in\R$, or in the case that $M$ has $N\geq 2$ boundary hypersurfaces a vector $\alpha=(\alpha_1,\ldots,\alpha_N)\in\R^N$ of weights, we put
\[
  \Diff_\bullet^{m,\alpha}(M) := \{ \rho^{-\alpha}P \colon P \in \Diff_\bullet^m(M) \},
\]
where $\rho$ is a boundary defining function when $N=1$, and a collection $\rho=(\rho_1,\ldots,\rho_N)$ where $\rho_j$ is a defining function of $H_j$ when $N\geq 2$; in the latter case, we use the notation $\rho^{-\alpha}:=\prod_{j=1}^N \rho_j^{-\alpha_j}$. Since for any b-vector field $V\in\Vb(M)$ one has $\rho^\alpha V(\rho^{-\alpha})\in\CI(M)$ (cf.~\eqref{EqEBSb}), it follows that one compose elements of $\bigoplus_{m,\alpha}\Diff_\bullet^{m,\alpha}(M)$, and the orders of a composition are the sums of the orders of the individual factors.

More generally, given a weight (vector) $\alpha$, we can consider the space of $L^\infty$-conormal functions on $M$,
\[
  \cA^\alpha(M) := \{ u \in \rho^\alpha L^\infty(M^\circ) \colon P u \in \rho^\alpha L^\infty(M^\circ)\ \forall\,P\in\Diffb(M) \}.
\]
One can then define the space
\[
  \cA^{-\alpha}\Diff_\bullet^m(M)
\]
of $\bullet$-differential operators with coefficients in $\cA^{-\alpha}(M)$ to consist of locally finite linear combinations of operators of the form $w P$ where $w\in\cA^{-\alpha}(M)$ and $P\in\Diff_\bullet(M)$. Since $\rho^{-\alpha}\in\cA^{-\alpha}(M)$, we have $\cA^{-\alpha}\Diff_\bullet^m(M)\supset\Diff_\bullet^{m,\alpha}(M)$. The space $\bigoplus_{\alpha,m}\cA^{-\alpha}\Diff_\bullet^m(M)$ is an algebra, and the orders are additive under composition.

We shall encounter further variants of the spaces $\cA^\alpha(M)$: at some hypersurfaces $H_{i_1}$, $\ldots$, $H_{i_k}$, we may require classical conormality, i.e.\ smoothness upon multiplication by $\rho_i^{-\alpha_i}$. Ordering indices so that $i_j=j$, $j=1,\ldots,k$, we thus introduce the notation
\[
  \cA^{((\alpha_1,0),(\alpha_2,0),\ldots,(\alpha_k,0),\alpha_{k+1},\ldots,\alpha_N)}(M) := \Biggl(\prod_{j=1}^N \rho_j^{\alpha_j}\Biggr)\cA^{((0,0),\ldots,(0,0),0,\ldots,0)}(M)
\]
for spaces of mixed conormal and classical conormal functions; here $\cA^{((0,0),\ldots,(0,0),0,\ldots,0)}(M)\subset\cA^0(M)$ consists of all $u\in\cA^0(M)$ so that $V_1\cdots V_J u\in L^\infty(M)$ for all $J\in\N$ and $V_j\in\cV(M)$ which are tangent to $H_{k+1},\ldots,H_N$ (but not necessarily to $H_1,\ldots,H_k$). Thus, elements of $\cA^{((0,0),\ldots,(0,0),0,\ldots,0)}(M)$ are smooth down to $H_j$ for $j=1,\ldots,k$, and bounded conormal at $H_j$ for $j=k+1,\ldots,N$. One can then also consider spaces of differential operators with coefficients in these mixed conormal spaces.

Finally, if $E,F\to M$ are two smooth vector bundles, one can consider the space
\[
  \Diff_\bullet^m(M;E,F) = \Diff_\bullet^m(M) \otimes_{\CI(M)} \CI(M;\Hom(E,F))
\]
of differential operators mapping sections of $E$ into sections of $F$; in local trivializations of $E$ and $F$, these are simply $(\rank F)\times(\rank E)$ matrices of elements of $\Diff_\bullet^m(M)$. Versions of these spaces with weights or conormal coefficients are defined similarly.

%%%%%%%%%%%%%%%%%%%%%%%%%%%%%%%%%%%%%%%%%%%%%%%%%%
\subsection{Symbols and normal operators of edge-b-differential operators}
\label{SsEBO}

Let $M$ denote a compact $n$-di\-men\-sion\-al manifold with corners, with embedded hypersurfaces $H_1,\ldots,H_N$, $N\geq 3$, and with $H_2$ the total space of a fibration $Z-H_2\to Y$ where $Z\cong[-1,1]$, and $Y$ is compact without boundary, and with $H_j\cap H_2=\emptyset$ unless $j=1,3$. This is the setting discussed around~\eqref{EqEBSe1}--\eqref{EqEBSe2} and illustrated in Figure~\ref{FigEBS}. Let us denote defining functions of $H_j$ by $\rho_j\in\CI(M)$. Note that the boundary hypersurfaces of the edge-b-cotangent bundle $\Teb^*M$ are $\Teb_{H_j}^*M$, and the fibration $\phi\colon H_2\to Y$ induces a fibration $\Teb_{H_2}^*M\to Y$ (with fibers given by the restriction of $\Teb^*M$ to the fibers of $\phi$) via composition with the projection map $\Teb_{H_2}^*M\to H_2$; thus, there is a natural notion of edge-b-vector fields on $\Teb^*M$.

Let us work in the local coordinates~\eqref{EqEBSe2}, so $x,z$ are defining functions of $H_2,H_1$, respectively, and $y$ denotes local coordinates on $Y$. A local frame of $\Teb^*M$ is thus
\[
  \frac{\dd x}{x},\qquad \frac{\dd y^j}{x}\ (j=1,\ldots,n-2),\qquad \frac{\dd z}{z}.
\]
Writing the canonical 1-form on $T^*M^\circ$ in the form
\begin{equation}
\label{EqEBOCoord}
  \xi\frac{\dd x}{x}+\sum_{j=1}^{n-2}\eta_j\frac{\dd y^j}{x} + \zeta\frac{\dd z}{z}
\end{equation}
defines smooth fiber-linear coordinates on $T^*M^\circ$ which extend by continuity to fiber-linear coordinates on $\Teb^*M$ over the local coordinate patch. Edge-b-vector fields on $\Teb^*M$ are linear combinations, with $\CI(\Teb^*M)$ coefficients, of the vector fields~\eqref{EqEBSe2} and $\pa_\xi,\pa_{\eta_j},\pa_\zeta$. We first elucidate the symplectic structure of $T^*M^\circ$ from the edge-b-perspective:

\begin{lemma}[Hamiltonian vector field on $\Teb^*M$]
\label{LemmaEBOHam}
  Let $p\in\CI(\Teb^*M)$. Then the Hamiltonian vector field $H_p$ satisfies $H_p\in\Veb(\Teb^*M)$. The map $p\mapsto H_p$ is a first order edge-b-differential operator,
  \begin{equation}
  \label{EqEBOHamOp}
    H_{(-)} \in \Diffeb^1\bigl(\Teb^*M;\ul\C,\Teb(\Teb^*M)\bigr),
  \end{equation}
  where $\ul\C$ denotes the trivial bundle $M\times\C$. In the local coordinates~\eqref{EqEBOCoord}, we have
  \begin{equation}
  \label{EqEBOHam}
  \begin{split}
    H_p &= (\pa_\xi p)\Bigl(x\pa_x+\sum_{j=1}^{n-2}\eta_j\pa_{\eta_j}\Bigr) + \sum_{j=1}^{n-2} (\pa_{\eta_j}p) x\pa_{y^j} + (\pa_\zeta p)z\pa_z \\
      &\qquad - \Bigl(\Bigl(x\pa_x+\sum_{j=1}^{n-2}\eta_j\pa_{\eta_j}\Bigr)p\Bigr)\pa_\xi - \sum_{j=1}^{n-2} (x\pa_{y^j}p)\pa_{\eta_j} - (z\pa_z p)\pa_\zeta.
  \end{split}
  \end{equation}
\end{lemma}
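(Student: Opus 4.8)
The plan is to verify everything by a direct computation in the local coordinates~\eqref{EqEBOCoord}, and then read off the coordinate-free statements from the resulting formula. First I would recall the standard expression for the Hamiltonian vector field on $T^*M^\circ$ with respect to the smooth cotangent coordinates: writing the canonical $1$-form as $\tilde\xi\,\dd x+\sum_j\tilde\eta_j\,\dd y^j+\tilde\zeta\,\dd z$, one has $H_p=\sum(\pa_{\tilde\xi}p)\pa_x+\dots-\sum(\pa_x p)\pa_{\tilde\xi}-\dots$ in the usual way. The point is simply to change variables: the edge-b-fiber coordinates of~\eqref{EqEBOCoord} are related to the standard ones by $\tilde\xi=\xi/x$, $\tilde\eta_j=\eta_j/x$, $\tilde\zeta=\zeta/z$. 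Substituting these relations (and their consequences for the partial derivative operators $\pa_x$, $\pa_{y^j}$, $\pa_z$, $\pa_{\tilde\xi}$, etc., acting on functions of the new variables) into the standard formula is the core of the argument. One finds, for instance, $\pa_{\tilde\xi}=x\pa_\xi$ at fixed $x,y,z$, while $\pa_x$ at fixed $\tilde\xi,\tilde\eta,\tilde\zeta$ becomes $\pa_x+\frac{1}{x}(\xi\pa_\xi+\sum_j\eta_j\pa_{\eta_j})$ at fixed $\xi,\eta,\zeta$ (since $\tilde\xi=\xi/x$ means $\xi$ must vary to keep $\tilde\xi$ fixed), and similarly for the $z$-direction. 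Collecting terms and using that, e.g., $x\pa_x$ applied to $\xi$ is $0$ in the $(x,y,z,\xi,\eta,\zeta)$ chart, this produces exactly~\eqref{EqEBOHam}. In particular the factor-of-$x$ and factor-of-$z$ bookkeeping is what converts the would-be $\pa_{y^j}$ into $x\pa_{y^j}$ and the $\pa_x$ into $x\pa_x$, which is precisely the degeneration that makes $H_p$ an edge-b-vector field rather than merely a smooth one.

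Granting the formula~\eqref{EqEBOHam}, the remaining claims are immediate. Every coefficient appearing there is a smooth function on $\Teb^*M$: the factors $\pa_\xi p$, $\pa_{\eta_j}p$, $\pa_\zeta p$, and the expressions $(x\pa_x+\sum_j\eta_j\pa_{\eta_j})p$, $(x\pa_{y^j}p)$, $(z\pa_z p)$ are all obtained from $p\in\CI(\Teb^*M)$ by applying edge-b-vector fields on $\Teb^*M$ (the fiber derivatives $\pa_\xi,\pa_{\eta_j},\pa_\zeta$ are manifestly such, being vertical vector fields tangent to the fibers, and the combinations in the second line are exactly the edge-b-vector fields~\eqref{EqEBSe2} lifted to the total space together with the extra $\eta_j\pa_{\eta_j}$ term which is again tangent to all boundary faces). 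Each such coefficient multiplies one of the basis edge-b-vector fields $x\pa_x$, $x\pa_{y^j}$, $z\pa_z$, $\pa_\xi$, $\pa_{\eta_j}$, $\pa_\zeta$ of $\Teb(\Teb^*M)$; hence $H_p\in\Veb(\Teb^*M)=\CI(\Teb^*M;\Teb(\Teb^*M))$. Moreover, inspecting the structure of~\eqref{EqEBOHam}, the map $p\mapsto H_p$ is built from $p$ by applying exactly one edge-b-vector field (in either the fiber or the base directions) and multiplying by a basis section; this is the statement that $H_{(-)}\in\Diffeb^1(\Teb^*M;\ul\C,\Teb(\Teb^*M))$ as in~\eqref{EqEBOHamOp}. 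Finally, the transformation law verifies that the expression~\eqref{EqEBOHam} is independent of the choice of adapted coordinates (it must be, being a rewriting of the invariant object $H_p$), which also confirms the frame-independence needed to patch the local formula into the global bundle statement.

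The main obstacle is none of the conceptual content but rather the bookkeeping of the change of variables: one has to be careful that partial derivatives are taken with the correct variables held fixed, since $\xi=x\tilde\xi$ mixes a base variable with a fiber variable. A clean way to organize this is to compute $H_p$ by its defining property $\dd p=-\iota_{H_p}\omega$ with the symplectic form $\omega=\dd\tilde\xi\wedge\dd x+\sum_j\dd\tilde\eta_j\wedge\dd y^j+\dd\tilde\zeta\wedge\dd z$ rewritten in the edge-b-coordinates as $\omega=\dd(\xi/x)\wedge\dd x+\sum_j\dd(\eta_j/x)\wedge\dd y^j+\dd(\zeta/z)\wedge\dd z=\frac{1}{x}\dd\xi\wedge\dd x+\sum_j\bigl(\frac{1}{x}\dd\eta_j\wedge\dd y^j-\frac{\eta_j}{x^2}\dd x\wedge\dd y^j\bigr)+\frac{1}{z}\dd\zeta\wedge\dd z$; expanding $\iota_{H_p}\omega$ with $H_p$ written in the edge-b-frame and matching coefficients against $\dd p$ then yields~\eqref{EqEBOHam} with the factors of $x$ and $z$ appearing automatically. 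I would present the computation in this intrinsic form, which avoids any ambiguity about what is held fixed, and then simply observe the smoothness and order claims as above.
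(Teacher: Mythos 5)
Your proposal is correct and, in its final organized form (computing $\omega$ as the exterior derivative of the canonical $1$-form written in the edge-b frame and matching coefficients in $\omega(-,H_p)=\dd p$), it is essentially the paper's own proof; the paper likewise reads off \eqref{EqEBOHamOp} from the resulting local formula, noting as an aside the more invariant argument via the nondegeneracy of $\omega$ as an edge-b $2$-form and $\dd\in\Diffeb^1$.
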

\begin{proof}
  Taking the exterior derivative of~\eqref{EqEBOCoord}, the symplectic form $\omega$ on $T^*M^\circ$ is
  \[
    \omega = \Bigl(\dd\xi+\sum_{j=1}^{n-2}\frac{\eta_j\,\dd y^j}{x}\Bigr)\wedge\frac{\dd x}{x} + \sum_{j=1}^{n-2} \dd\eta_j\wedge\frac{\dd y^j}{x} + \dd\zeta\wedge\frac{\dd z}{z}.
  \]
  The definition $\omega(-,H_p)=\dd p=(x\pa_x p)\frac{\dd x}{x}+\sum (x\pa_{y^j}p)\frac{\dd y^j}{x}+(z\pa_z p)\frac{\dd z}{z}+(\pa_\xi p)\dd\xi+(\pa_\eta p)\dd\eta+(\pa_\zeta p)\dd\zeta$ of $H_p$ then gives the expression~\eqref{EqEBOHam}. This expression also implies~\eqref{EqEBOHamOp}; more invariantly, \eqref{EqEBOHamOp} follows from the facts that $\omega\in\CI(\Teb^*M;\Lambda^2\,\Teb^*(\Teb^*M))$ is a nondegenerate edge-b-2-form and the exterior derivative $\dd$ satisfies $\dd\in\Diffeb^1(M;\ul\C,\Teb^*M)$.
\end{proof}

We pause to describe $H_p$ for homogeneous $p$; for this purpose, we denote by $\ol{\Teb^*}M$ the fiber-radial compactification of $\Teb^*M$. (Since the $GL(n)$-action on $\R^n$ extends to an action on $\ol{\R^n}$ by diffeomorphisms, $\ol{\Teb^*}M$ carries the structure of a smooth closed ball bundle over $M$.) The only one of its boundary hypersurfaces which we regard as the total space of a fibration is $\ol{\Teb^*_{H_2}}M$; the boundary $\Seb^*M$ at fiber infinity is not fibered, i.e.\ edge-b-vector fields on $\ol{\Teb^*}M$ are merely required to be tangent to $\Seb^*M$.

\begin{cor}[Rescaled Hamiltonian vector field]
\label{CorEBOHamResc}
  Let $\rho_\infty\in\CI(\ol{\Teb^*}M)$ be a defining function of $\Seb^*M$, and suppose $p\in\CI(\Teb^*M\setminus o)$ is homogeneous of degree $s$ with respect to (positive) dilations in the fibers. Then $\rho_\infty^{s-1}H_p\in\Veb(\ol{\Teb^*}M\setminus o)$.
\end{cor}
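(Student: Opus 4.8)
The plan is to localize the claim near fiber infinity $\Seb^*M\subset\ol{\Teb^*}M$, since elsewhere there is nothing to prove. Indeed, over any compact subset of $\Teb^*M\setminus o$ the function $p$ is smooth, so by the coordinate formula~\eqref{EqEBOHam} of Lemma~\ref{LemmaEBOHam} the vector field $H_p$ is a smooth edge-b-vector field there, and multiplication by the smooth positive factor $\rho_\infty^{s-1}$ changes nothing; thus only a neighborhood of $\Seb^*M$ requires attention. There I would first replace $\rho_\infty$ by a homogeneous representative: in the local fiber-linear coordinates~\eqref{EqEBOCoord}, the function $\la(\xi,\eta,\zeta)\ra^{-1}$ is a defining function of $\Seb^*M$ that is homogeneous of degree $-1$ under the fiber dilations $m_\lambda\colon(\xi,\eta,\zeta)\mapsto(\lambda\xi,\lambda\eta,\lambda\zeta)$, and any two defining functions of $\Seb^*M$ differ by a positive factor in $\CI(\ol{\Teb^*}M\setminus o)$, so it suffices to treat this homogeneous choice.

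The next step is to record the homogeneity of $H_p$. Since the canonical $1$-form (hence $p\mapsto\dd p$) and $p$ are homogeneous of degrees $1$ and $s$ under $m_\lambda$, the Hamiltonian vector field $H_p$ is homogeneous of degree $s-1$; this also follows directly by inspecting~\eqref{EqEBOHam}, using that the fiber vector fields $\pa_\xi,\pa_{\eta_j},\pa_\zeta$ lower fiber-homogeneity by one, while $x\pa_x$, $\eta_j\pa_{\eta_j}$, $x\pa_{y^j}$, $z\pa_z$ preserve it, so that each of the six groups of terms there is homogeneous of degree $s-1$. Consequently $\rho_\infty^{s-1}H_p$ — the product of the degree $1-s$ function $\rho_\infty^{s-1}$ with the degree $s-1$ vector field $H_p$ — is homogeneous of degree $0$ under $m_\lambda$.

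The final step, and the only point needing a little care, is the elementary fact that a smooth vector field on $\Teb^*M\setminus o$ which is homogeneous of degree $0$ under the fiber dilations extends to a smooth vector field on $\ol{\Teb^*}M\setminus o$ which is tangent to $\Seb^*M$. Concretely, in fiber polar coordinates $r=|(\xi,\eta,\zeta)|$ together with the angular variables, such a field is a linear combination, with coefficients that are smooth functions of $r^{-1}$ and of the angular variables and hence smooth up to $\Seb^*M$, of $r\pa_r=-\rho_\infty\pa_{\rho_\infty}$, of the angular derivatives, and of the base vector fields $x\pa_x$, $x\pa_{y^j}$, $z\pa_z$, all of which are tangent to $\{\rho_\infty=0\}$; this is just the statement that dilation-invariant vector fields on $\R^k\setminus\{0\}$ restrict to b-vector fields on $\ol{\R^k}$. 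Applying this to $\rho_\infty^{s-1}H_p$ shows it extends smoothly and tangentially to $\Seb^*M$. Finally, $H_p\in\Veb(\Teb^*M\setminus o)$ is tangent to each $\Teb^*_{H_j}M$ and to the fibers of the fibration of $\Teb^*_{H_2}M$ over $Y$; these properties are preserved under multiplication by $\rho_\infty^{s-1}$ and, being closed conditions, under continuous extension to $\ol{\Teb^*}M\setminus o$. Hence $\rho_\infty^{s-1}H_p$ is tangent to every boundary hypersurface of $\ol{\Teb^*}M$ and to the relevant fibers, i.e.\ $\rho_\infty^{s-1}H_p\in\Veb(\ol{\Teb^*}M\setminus o)$, as claimed. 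The main obstacle is just this bookkeeping — checking that the edge-b-tangency and fibration conditions over $\Teb^*M$ survive passage to the fiber-radial compactification and are unaffected by the choice of $\rho_\infty$.
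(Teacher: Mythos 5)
Your proof is correct, but it follows a different route than the paper's. The paper argues by direct computation: in a region of fiber space where $\xi$ dominates, it takes $\rho_\infty=\xi^{-1}$, $\hat\eta=\eta/\xi$, $\hat\zeta=\zeta/\xi$, writes $p=\rho_\infty^{-s}p_0$ with $p_0$ smooth in $(x,y,z,\hat\eta,\hat\zeta)$, and computes $\rho_\infty^{s-1}H_p$ explicitly as a smooth combination of $x\pa_x$, $x\pa_y$, $z\pa_z$, $\rho_\infty\pa_{\rho_\infty}$, $\pa_{\hat\eta}$, $\pa_{\hat\zeta}$, from which the membership in $\Veb(\ol{\Teb^*}M\setminus o)$ is read off. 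You instead exploit the dilation structure abstractly: $H_p$ is fiber-homogeneous of degree $s-1$, so after rescaling by a fiber-homogeneous defining function the field is dilation-invariant, and an invariant element of $\Veb(\Teb^*M\setminus o)$ extends to the fiber-radial compactification as a smooth vector field tangent to $\Seb^*M$ (the polar-coordinate/``invariant fields are b-fields on $\ol{\R^k}$'' lemma), while the lateral edge-b tangency conditions pass to the closure since they are closed conditions. Both arguments rest on the same local facts; yours is more conceptual and avoids the computation, whereas the paper's computation additionally produces the explicit coordinate form of the rescaled Hamiltonian vector field in projective coordinates, which is precisely what gets reused in the radial-set analysis of \S\ref{SsMF} (cf.\ \eqref{EqMFRinmCoord}, \eqref{EqMFRcCoord}). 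Two small points to tidy in your write-up: $\la(\xi,\eta,\zeta)\ra^{-1}$ is not homogeneous, so your homogeneous representative should be $|(\xi,\eta,\zeta)|^{-1}$ (harmless, since the zero section is excluded and any two defining functions differ by a factor which is smooth and positive on $\ol{\Teb^*}M\setminus o$); and when invoking the polar-coordinate decomposition it is worth stating that dilation invariance forces the base coefficients to be homogeneous of degree $0$ and the coefficients of $\pa_\xi,\pa_\eta,\pa_\zeta$ to be homogeneous of degree $1$, which is exactly why the coefficients in your decomposition are independent of $r$ and hence smooth up to $\Seb^*M$.
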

\begin{proof}
  The conclusion is independent of the choice of $\rho_\infty$. We shall work in local coordinates in which~\eqref{EqEBOHam} is valid, and in a region where $\xi$ is relatively large, i.e.\ $\xi>c(|\eta|+|\zeta|)$ for some $c>0$. (Regions where some $\eta_j$ or $\zeta$ are relatively large are analyzed similarly.) There, we shall take
  \[
    \rho_\infty = \xi^{-1},\quad \hat\eta=\frac{\eta}{\xi},\quad \hat\zeta=\frac{\zeta}{\xi}.
  \]
  Writing $p=\rho_\infty^{-s}p_0$ where $p_0$ is a smooth function of $(x,y,z,\hat\eta,\hat\zeta)$, we then compute
  \begin{align*}
    \rho_\infty^{s-1}H_p &= -\bigl(((\rho_\infty\pa_{\rho_\infty}-s)+\hat\eta\pa_{\hat\eta}+\hat\zeta\pa_{\hat\zeta})p_0\bigr)(x\pa_x+\hat\eta\pa_{\hat\eta}) + (\pa_{\hat\eta}p_0)\cdot x\pa_y + (\pa_{\hat\zeta}p_0)z\pa_z \\
      &\qquad + \bigl((x\pa_x+\hat\eta\pa_{\hat\eta})p_0\bigr)(\rho_\infty\pa_{\rho_\infty}+\hat\eta\pa_{\hat\eta}+\hat\zeta\pa_{\hat\zeta}) - (x\pa_y p_0)\cdot\pa_{\hat\eta} - (z\pa_z p_0)\pa_{\hat\zeta}.
  \end{align*}
  Since $\Veb(\ol{\Teb^*}M)$ is locally spanned by the vector fields $x\pa_x$, $\pa_y$, $z\pa_z$, $\rho_\infty\pa_{\rho_\infty}$, $\pa_{\hat\eta}$, and $\pa_{\hat\zeta}$, this proves the Corollary.
\end{proof}

Let now $P\in\Diffeb^m(M)$. In the above local coordinates, we have
\begin{equation}
\label{EqEBOOp}
  P = \sum_{j+k+|\alpha|\leq m} a_{j k\alpha}(x,y,z) (x D_x)^j (x D_y)^\alpha (z D_z)^k,\qquad a_{j k\alpha}\in\CI(M).
\end{equation}
It has a well-defined (independent of the choice of local coordinates $x,y,z$) principal symbol
\[
  \sigmaeb^m(P) := \sum_{j+k+|\alpha|=m} a_{j k\alpha}(x,y,z) \xi^j\eta^\alpha\zeta^k \in P^{[m]}(\Teb^*M),
\]
where $P^{[m]}(\Teb^*M)$ denotes the space of smooth functions on $\Teb^*M$ which are homogeneous polynomials of degree $m$ in the fibers. We have a short exact sequence
\begin{equation}
\label{EqEBOSeqSymb}
  0 \to \Diffeb^{m-1}(M) \hra \Diffeb^m(M) \xra{\sigmaeb^m} P^{[m]}(\Teb^*M) \to 0.
\end{equation}
Given $A_j\in\Diffeb^{m_j}(M)$ with principal symbols $a_j=\sigmaeb^{m_j}(A_j)$, we have the usual properties
\[
  \sigmaeb^{m_1+m_2}(A_1 A_2)=a_1 a_2,\qquad
  \sigmaeb^{m_1+m_2-1}(i[A_1,A_2])=H_{a_1}a_2.
\]

Note that, in general, the symbol of a commutator is nonzero at $\pa M$. Thus, just like the b- and edge-algebras, the algebra of edge-b-differential operators is commutative to leading order only in the differential order sense, but not in the sense of decay at any of the boundary hypersurfaces of $\pa M$. We thus proceed to recall the definitions of the normal operators of $P\in\Diffeb^m(M)$. We shall only discuss the normal operators at $H_1$ and $H_2$; the normal operator at $H_3$ is defined exactly as the one at $H_1$.

The normal operator at $H_1$ is defined by freezing coefficients at $H_1$; in terms of the local coordinates used in~\eqref{EqEBOOp}, this means setting
\begin{equation}
\label{EqEBONormH1}
  N_{H_1}(P) := \sum_{j+k+|\alpha|\leq m} a_{j k\alpha}(x,y,0)(x D_x)^j(x D_y)^\alpha(z D_z)^k.
\end{equation}
This can be defined invariantly as an element $N_{H_1}(P)\in\Diff_{\bop,I}^m({}^+N H_1)$, where ${}^+N H_1$ is the (nonstrictly) inward pointing normal bundle of $H_1$, and the subscript `$I$' restricts to operators which are invariant under the dilation action in the fibers of ${}^+N H_1$. Indeed, $N_{H_1}(P)$ can be defined for general b-differential operators $P\in\Diffb^m(M)$ as the multiplicative extension of the map $\Vb(M)\to\Vb(M)/z\Vb(M)=\CI(H_1;\Tb_{H_1}M)\cong\cV_{\bop,I}({}^+N H_1)$ \cite[\S4.15]{MelroseAPS}. In order to sharpen the description of $N_{H_1}(P)$ for edge-b-differential operators, note that the restriction of the fibration $H_2\to Y$ to $H_1\cap H_2$ is a diffeomorphism $H_1\cap H_2\xra{\cong}Y$, and hence the fibers of $H_1\cap H_2$ are points. Correspondingly, the boundary hypersurface ${}^+N_{H_1\cap H_2}H_1$ of ${}^+N H_1$ is the total space of a fibration given by the base projection ${}^+N_{H_1\cap H_2}H_1\to H_1\cap H_2\cong Y$. We can thus consider the spaces
\[
  \cV_{\ebop,I}({}^+N H_1),\qquad \Diff_{\ebop,I}^m({}^+N H_1)
\]
of dilation-invariant edge-b-vector fields and edge-b-operators. The local coordinate expression~\eqref{EqEBONormH1} shows directly:

\begin{definition}[b-normal operator at $H_1$]
\label{DefEBONormH1}
  The map $N_{H_1}\colon\Diffeb(M)\to\Diff_{\ebop,I}({}^+N H_1)$ is an algebra homomorphism and for each $m\in\N_0$ fits into the short exact sequence
  \begin{equation}
  \label{EqEBONormH1Seq}
    0 \to \rho_1\Diffeb^m(M) \hra \Diffeb^m(M) \xra{N_{H_1}} \Diff_{\ebop,I}^m({}^+N H_1) \to 0,
  \end{equation}
  where we recall that $\rho_1\in\CI(M)$ is a defining function of $H_1$.
\end{definition}

Upon fixing a trivialization
\begin{equation}
\label{EqEBOTrivNH1}
  {}^+N H_1 \cong H_1 \times [0,\infty)_{\rho_1},
\end{equation}
we can consider the action of $N_{H_1}(P)$ on functions of the form $\rho_1^{i\zeta}u$, $u\in\CI(H_1)$,
\[
  \wh{N_{H_1}}(P,\zeta)u := \Bigl(\rho_1^{-i\zeta}N_{H_1}(P)\bigl(\rho_1^{i\zeta}u\bigr)\Bigr)\Big|_{\rho_1=0}.
\]
In the local coordinates~\eqref{EqEBONormH1}, and with $\rho_1=z$, we have
\[
  \wh{N_{H_1}}(P,\zeta) = \sum_{j+k+|\alpha|\leq m} a_{j k\alpha}(x,y,0)\zeta^k(x D_x)^j(x D_y)^\alpha \in \Diff_0^m(H_1),
\]
which is thus a 0-differential operator \cite{MazzeoMelroseHyp} (i.e.\ an edge differential operator with respect to the fibration of the boundary given by the identity map $\pa  H_1\to\pa H_1$).

\begin{definition}[Mellin-transformed normal operator family]
\label{DefEBONormH1M}
  Fix a trivialization~\eqref{EqEBOTrivNH1}. For $P\in\Diffeb^m(M)$, the family $\wh{N_{H_1}}(P,\zeta)$, $\zeta\in\C$, of 0-differential operators on $H_1$ is the \emph{Mellin-transformed normal operator family} of $P$ at $H_1$.
\end{definition}

We now turn to the edge normal operator at $H_2$, which is a family of operators on the fiber $Z$, parameterized by the base $Y$ of the fibration $\phi\colon H_2\to Y$. It is defined by freezing coefficients at a fiber $\phi^{-1}(y_0)$, $y_0\in Y$; here, we are following \cite[Equation~(2.17)]{MazzeoEdge}. In local coordinates as in~\eqref{EqEBOOp}, this means setting
\begin{equation}
\label{EqEBONormH2y0}
  {}^\eop N_{H_2,y_0}(P) := \sum_{j+k+|\alpha|\leq m} a_{j k\alpha}(0,y_0,z)(x D_x)^j(x D_y)^\alpha(z D_z)^k.
\end{equation}
Here $(x,y)$ is now allowed to range over all of $[0,\infty)\times\R^{n-2}$; note that ${}^\eop N_{H_2,y_0}(P)$ is invariant under dilations in $(x,y)$ and translations in $y$. To define ${}^\eop N_{H_2,y_0}(P)$ invariantly, consider the (nonstrictly) inward pointing normal bundle
\[
  {}^+N\phi^{-1}(y_0)={}^+T_{\phi^{-1}(y_0)}M/T\phi^{-1}(y_0).
\]
Note that ${}^+N\phi^{-1}(y_0)$ is equipped with a natural $\R_+$-action given by dilations in the fibers; and moreover the image of $T_{\phi^{-1}(y_0)}H_2\subset{}^+T_{\phi^{-1}(y_0)}M$ in the quotient ${}^+N\phi^{-1}(y_0)$ acts by translations. Lastly, we can lift $\phi\colon H_2\to Y$ to a fibration $T H_2\to T Y$ and restrict to the tangent bundle over $\phi^{-1}(y_0)$, giving a fibration $T_{\phi^{-1}(y_0)}H_2\to T_{y_0}Y$; since $T\phi^{-1}(y_0)$ lies in the kernel of this fibration, this descends to a fibration ${}^+N\phi^{-1}(y_0)\to T_{y_0}Y$.

Given this edge-b-structure on ${}^+N\phi^{-1}(y_0)$, there is a natural bundle isomorphism
\begin{equation}
\label{EqEBONormH2eIso}
  \Teb_p M \cong \Teb_{(p,0)}({}^+N\phi^{-1}(y_0)),
\end{equation}
where $(p,0)$ denotes the unique point in the zero section of ${}^+N_p\phi^{-1}(y_0)$; in local coordinates in which ${}^+T_{\phi^{-1}(y_0)}M=\{ (z,\dot x\pa_x+\dot y\pa_y+\dot z\pa_z) \colon (y_0,z)\in\phi^{-1}(y_0),\ \dot x\geq 0\}$ and ${}^+N\phi^{-1}(y_0)=\{(z,\dot x\pa_x+\dot y\pa_y)\colon (y_0,z)\in\phi^{-1}(y_0),\ \dot x\geq 0\}$, this isomorphism takes $x\pa_x$, $x\pa_y$, $z\pa_z$ to $\dot x\pa_{\dot x}$, $\dot x\pa_{\dot y}$, $z\pa_z$.

We may furthermore consider the space $\cV_{\ebop,I}({}^+N\phi^{-1}(y_0))$ of edge-b-vector fields on ${}^+N\phi^{-1}(y_0)$ (with `b'-behavior at the normal bundle ${}^+N_{\pa\phi^{-1}(y_0)}\phi^{-1}(y_0)$ over the boundary of the fiber, which in our setting has two connected components) which are invariant under both the dilation and translation actions; we then have an isomorphism
\begin{equation}
\label{EqEBOInvariant}
  \CI(\phi^{-1}(y_0);\Teb_{\phi^{-1}(y_0)}M) \cong \cV_{\ebop,I}({}^+N\phi^{-1}(y_0))
\end{equation}
given by applying~\eqref{EqEBONormH2eIso} and passing to the unique invariant extension. The multiplicative extension of this map gives invariant meaning to
\[
  {}^\eop N_{H_2,y_0}(P) \in \Diff_{\ebop,I}^m({}^+N\phi^{-1}(y_0)),
\]
where the subscript `$I$' restricts to dilation- and translation-invariant operators.

\begin{definition}[Edge normal operator at $H_2$]
\label{DefEBONormH2}
  For $y_0\in Y$, the map ${}^\eop N_{H_2,y_0}\colon\Diffeb(M)\to\Diff_{\ebop,I}({}^+N\phi^{-1}(y_0))$ is an algebra homomorphism and for each $m\in\N_0$ fits into the short exact sequence
  \begin{equation}
  \label{EqEBONormH2Seq}
    0 \to \cI_{\phi^{-1}(y_0)}\Diffeb^m(M) \hra \Diffeb^m(M) \xra{{}^\eop N_{H_2,y_0}} \Diff_{\ebop,I}^m({}^+N\phi^{-1}(y_0)) \to 0,
  \end{equation}
  where $\cI_{\phi^{-1}(y_0)}\subset\CI(M)$ is the ideal of smooth functions vanishing along $\phi^{-1}(y_0)$. The collection ${}^\eop N_{H_2}=({}^\eop N_{H_2,y_0})_{y_0\in Y}$ fits into the short exact sequence
  \[
    0 \to \rho_2\Diffeb^m(M) \hra \Diffeb^m(M) \xra{{}^\eop N_{H_2}} \Diff_{\ebop,I}^m({}^+N\phi) \to 0,
  \]
  where ${}^+N\phi=\bigsqcup_{y\in Y} {}^+N_p(\phi^{-1}(y))$ is the total space of a fibration ${}^+N\phi\to T Y$, and the space $\Diff_{\ebop,I}^m({}^+N\phi)$ consists of differential operators (with smooth coefficients) which are tangent to each fiber ${}^+N\phi^{-1}(y_0)$, $y_0\in Y$, and whose restriction to ${}^+N\phi^{-1}(y_0)$ is an element of $\Diff_{\ebop,I}^m({}^+N\phi^{-1}(y_0))$.\footnote{Thus, $\Diff_{\ebop,I}^m({}^+N\phi)$ consists of collections of invariant edge-b-operators, indexed by $y_0\in Y$, which depend \emph{smoothly} on the parameter $y_0$.}
\end{definition}

A less precise normal operator can be defined by first regarding $P$ simply as a b-differential operator, $P\in\Diffb^m(M)$, and then considering the b-normal operator
\begin{equation}
\label{EqEBONormH2b}
  {}^\bop N_{H_2}(P) \in \Diff_{\bop,I}^m({}^+N H_2).
\end{equation}
In local coordinates as in~\eqref{EqEBOOp}, this means
\[
  {}^\bop N_{H_2}(P) := \sum_{j+k\leq m} a_{j k 0}(0,y,z)(x D_x)^j(z D_z)^k.
\]
We remark that ${}^\bop N_{H_2}(P)$ is uniquely determined by ${}^\eop N_{H_2}(P)$. (The conjugation of ${}^\bop N_{H_2}(P)$ by the Mellin transform in $x$ is the indicial family in the terminology of \cite[Definition~(2.18)]{MazzeoEdge}.) It is often the case that the properties of the simpler model ${}^\bop N_{H_2}(P)$ determine the asymptotic behavior of solutions of $P$ near $H_2$; see \cite[\S1.1.1]{HintzVasyMink4} or \cite[\S3.6]{HintzMink4Gauge}.

The extension of the above definitions to operators acting on sections of vector bundles, $P\in\Diffeb^m(M;E,F)$, only requires notational changes: for $P\in\Diffeb^m(M;E,F)$, we have
\begin{align*}
  \sigmaeb^m(P) &\in P^{[m]}(\Teb^*M;\pi^*\Hom(E,F)), \\
  N_{H_1}(P) &\in \Diff_{\ebop,I}^m({}^+N H_1;\pi^*E|_{H_1},\pi^*F|_{H_1}), \\
  {}^\eop N_{H_2,y_0} &\in \Diff_{\ebop,I}^m({}^+N\phi^{-1}(y_0);\pi^*E|_{\phi^{-1}(y_0)},\pi^*F|_{\phi^{-1}(y_0)}), \qquad y_0\in Y,
\end{align*}
where in each line $\pi$ denotes the relevant base projection, and we have corresponding short exact sequences mirroring \eqref{EqEBOSeqSymb}, \eqref{EqEBONormH1Seq}, \eqref{EqEBONormH2Seq}.

%%%%%%%%%%%%%%%%%%%%%%%%%%%%%%%%%%%%%%%%%%%%%%%%%%
\subsection{Edge-b-pseudodifferential operators}
\label{SsEBP}

We now restrict attention to the case of compact $M$ for simplicity. We denote by $S^s(\Teb^*M)$ the space of symbols of order $s\in\R$. In the local coordinates~\eqref{EqEBOCoord}, this means that $a\in S^s(\Teb^*M)$ if and only if
\begin{equation}
\label{EqEBPSymbEst}
  |\pa_x^j\pa_y^\alpha\pa_z^k\pa_\xi^p\pa_\eta^\beta\pa_\zeta^q a(x,y,z,\xi,\eta,\zeta)| \leq C_{j\alpha k p\beta q}(1+|\xi|+|\eta|+|\zeta|)^{s-(p+|\beta|+q)}
\end{equation}
for all $j,k,p,q\in\N_0$ and $\alpha,\beta\in\N_0^{n-2}$. The condition~\eqref{EqEBPSymbEst} can be phrased invariantly using the radial compactification $\ol{\Teb^*}M$: let $\rho_\infty\in\CI(\ol{\Teb^*}M)$ denote a defining function of $\Seb^*M$; then $a\in S^s(\Teb^*M)$ if and only if $P a\in\rho_\infty^{-s}L^\infty(\Teb^*M)$ for all $P\in\Diff(\ol{\Teb^*}M)$ which are linear combinations of up to $m$-fold compositions of vector fields on $\ol{\Teb^*}M$ which are tangent to $\Seb^*M$. We also need to consider weighted symbols. Recall the notation $\rho=(\rho_1,\ldots,\rho_N)$, with $\rho_j$ a defining function of $H_j$; for $\alpha\in\R^N$, we then put
\[
  S^{s,\alpha}(\Teb^*M) := \rho^{-\alpha}S^s(\Teb^*M).
\]
More generally still, we can consider symbols with conormal regularity in the base; thus, we define the space
\[
  \cA^{-\alpha}S^s(\Teb^*M)
\]
to consist of all smooth functions $a$ on $T^*M^\circ$ so that $P a\in\rho^{-\alpha}\rho_\infty^{-s}L^\infty(T^*M^\circ)$ for all $P\in\Diffb(\ol{\Teb^*}M)$ (with b-behavior at \emph{all} boundary hypersurfaces of $\ol{\Teb^*}M$, not just at $\Seb^*M$). This contains the space of finite linear combinations of functions on $T^*M^\circ$ of the form $w a$ where $w\in\cA^{-\alpha}(M)$ (pulled back to $\Teb^*M$ along the base projection) and $a\in S^s(\Teb^*M)$.

Our main tool for the study of edge-b-differential operators will be the algebra
\[
  \Psieb(M) = \bigcup_{s\in\R} \Psieb^s(M)
\]
of edge-b-pseudodifferential operators, which we describe geometrically following \cite[Appendix~B]{MelroseVasyWunschDiffraction}, followed further below by a description in terms of quantization maps. Recall that $H_2$ is equipped with a fibration $\phi\colon H_2\to Y$. By \cite[Lemma~B.1]{MelroseVasyWunschDiffraction}, the fiber diagonal $(H_2)_\phi^2:=H_2\times_\phi H_2$ is a p-submanifold of $M^2$; it is moreover transversal to, or disjoint from, the remaining boundary diagonals $H_j^2$, $j\neq 2$. Collect the latter in the set $\cB:=\{H_j^2\colon j\neq 2\}$. The edge-b-double space is then the iterated blow-up
\begin{equation}
\label{EqEBPDouble}
  M^2_\ebop := [M^2; (H_2)_\phi^2; \cB].
\end{equation}
Let $\upbeta\colon M^2_\ebop\to M^2$ denote the blow-down map, let $\pi_{L/R}\colon M^2\to M$ denote the projection onto the left/right factor, and denote by $\pi_{\ebop,L/R}=\pi_{L/R}\circ\upbeta$ the stretched projection. Denote by $\diag_\ebop\subset M^2_\ebop$ the edge-b-diagonal, defined as the lift of the diagonal in $M^2$. Then the space $\Psieb^s(M)$ is defined on the level of Schwartz kernels by
\[
  \Psieb^s(M) = \bigl\{ \kappa\in I^s(M^2_\ebop,\diag_\ebop;(\pi_{\ebop,R})^*(\Omegaeb M)) \colon \kappa\equiv 0\ \text{at}\ \pa M^2_\ebop\setminus\ff(\upbeta) \bigr\},
\]
where $I^s$ denotes the space of conormal distributions \cite{HormanderFIO1} (with smooth coefficients down to $\pa\diag_\ebop$), `$\equiv 0$' denotes equality in Taylor series (i.e.\ infinite order of vanishing), and $\ff(\upbeta)$ denotes the union of all boundary hypersurfaces produced by the blow-ups in the definition~\eqref{EqEBPDouble} of $M^2_\ebop$.

Operators between vector bundles $E,F\to M$ arise by tensoring the bundle in which the conormal distributions take values with $\upbeta^*\Hom(\pi_R^*E,\pi_L^*F)$. Schwartz kernels of weighted edge-b-ps.d.o.s are defined by
\[
  \Psieb^{s,\alpha}(M) = \rho^{-\alpha}\Psieb^s(M) := \{ (\pi_{\ebop,L}^*\rho^{-\alpha})\kappa \colon \kappa\in\Psieb^s(M) \}.
\]
More generally, one can consider spaces of ps.d.o.s with conormal coefficients,
\[
  \cA^{-\alpha}\Psieb^s(M),
\]
by requiring their Schwartz kernels to be conormal distributions whose symbols are conormal (rather than smooth) down to $\diag_\ebop$, with weight $-\alpha_j$ at the lift of $(H_j)^2$ ($j\neq 2$) or $(H_2)^2_\phi$ ($j=2$).

Turning to the explicit description, $\Psieb^s(M)$ can be defined in terms of quantizations of symbols $a\in S^s(\Teb^*M)$. Concretely, in the local coordinates~\eqref{EqEBOCoord}, and for $u$ with support in the local coordinate patch, a typical element of $\Psieb^s(M)$ is $A=\Op_\ebop(a)$, acting on $u$ via
\begin{align*}
  A u(x,y,z) &= (2\pi)^{-n}\int \exp\Bigl(i\bigl((x-x')\xi+(y-y')\cdot\eta+(z-z')\zeta\bigr)\Bigr) \\
    &\hspace{6em} \times \chi\Bigl(\frac{x-x'}{x}\Bigr)\chi\Bigl(\frac{|y-y'|}{x}\Bigr)\chi\Bigl(\frac{z-z'}{z}\Bigr) \\
    &\hspace{6em} \times a(x,y,z;x\xi,x\eta,z\zeta)u(x',y',z')\,\dd\xi\,\dd\eta\,\dd\zeta\,\dd x'\,\dd y'\,\dd z';
\end{align*}
here $\chi\in\CIc((-1,1))$ is identically $1$ near $0$. In general, one defines $\Op_\ebop(a)$ using a partition of unity. Thus,
\[
  \Op_\ebop\colon S^s(\Teb^*M)\to\Psieb^s(M),
\]
and this map is surjective modulo the space $\Psieb^{-\infty}(M)$ of residual operators. (The space $\Psieb^{-\infty}(M)$ consists of all smooth right edge-b-densities on $M^2_\ebop$ which vanish to infinite order at $\pa M^2_\ebop\setminus\ff(\upbeta)$.) In fact, any finite number of compositions of quantizations is again a quantization upon enlarging the size of the cutoffs; the space $\Psieb^{-\infty}(M)$ is merely a simple device to capture all (symbolically trivial) off-diagonal terms. Spaces of edge-b-ps.d.o.s with weighted or conormal coefficients, and possibly acting on vector bundles, can be defined similarly.

The principal symbol of edge-b-ps.d.o.s fits into the short exact sequence
\[
  0 \to \Psieb^{s-1}(M) \hra \Psieb^s(M) \xra{\sigmaeb^s} (S^s/S^{s-1})(\Teb^*M) \to 0;
\]
it is defined by $\sigmaeb^s(\Op_\ebop(a))=[a]$ and $\sigmaeb^s|_{\Psieb^{-\infty}(M)}=0$. The principal symbol map is multiplicative; moreover, for $A_j\in\Psieb^{s_j}(M)$ with principal symbol $a_j$, we have
\[
  \sigmaeb^{s_1+s_2-1}(i[A_1,A_2])=H_{a_1}a_2.
\]
As usual, for $A\in\Psieb^s(M)$, we denote by\footnote{The elliptic set of $A$ depends on the order of the space of ps.d.o.s of which one regards $A$ as an element of; since this order is always clear from the context, we shall write $\Elleb(A)$ simply instead of the more cumbersome $\Elleb^s(A)$.}
\[
  \Elleb(A) \subset \Teb^*M\setminus o,\qquad
  \text{resp.}\qquad \WFeb'(A)\subset\Teb^*M\setminus o
\]
the elliptic set, resp.\ operator wave front set, consisting of points $\varpi\in\Teb^*M\setminus o$ so that $\sigmaeb^s(A)$ is invertible in a conic neighborhood of $\varpi$, resp.\ the full symbol of $A$ (in any local coordinate chart) is not of order $-\infty$ (at fiber infinity) in any conic neighborhood of $\varpi$. Identifying these sets with their boundaries at fiber infinity inside $\ol{\Teb^*}M$, we shall often regard
\[
  \Elleb(A),\ \WFeb'(A) \subset \Seb^*M.
\]
(Carefully note that $\WFeb'(A)=\emptyset$ implies $A\in\Psieb^{-\infty}(M)$, which is thus trivial in the differential order sense, but not in the sense of decay at $\pa M$.) We furthermore write $\Char_\ebop(A)=\Seb^*\setminus\Elleb(A)$ for the characteristic set.

%%%%%%%%%%%%%%%%%%%%%%%%%%%%%%%%%%%%%%%%%%%%%%%%%%
\subsection{Sobolev spaces}
\label{SsEBH}

We continue assuming that $M$ is compact. Fixing a weight vector $w\in\R^N$, we shall work with a weighted edge-b-density
\[
  \mu \in \rho^w\CI(M;\Omegaeb M)
\]
which is positive, i.e.\ $\rho^{-w}\mu>0$ as an edge-b-density. In the local coordinates~\eqref{EqEBSe2}, this means
\[
  \mu = a(x,y,z)z^{w_1}x^{w_2}\bigl|\tfrac{\dd x}{x}\tfrac{\dd y}{x^{n-2}}\tfrac{\dd z}{z}\bigr|,\qquad 0<a\in\CI.
\]
We then set
\[
  \Heb^0(M,\mu) := L^2(M,\mu).
\]
Whenever the density $\mu$ is clear from the context, we shall omit it from the notation. For integer $s\in\N_0$, we define $\Heb^s(M)$ to consist of all $u\in\Heb^0(M)$ so that $P u\in\Heb^0(M)$ for all $P\in\Diffeb^s(M)$. For $s\in\R$, we can define $\Heb^s(M)$ via duality and interpolation. Alternatively, for $s\geq 0$, we have $\Heb^s(M)=\{u\in\Heb^0(M)\colon A u\in\Heb^0(M)\}$ where $A\in\Psieb^s(M)$ is any fixed elliptic operator; this is thus a Hilbert space with squared norm
\[
  \|u\|_{\Heb^s(M)}^2 = \|u\|_{\Heb^0(M)}^2 + \|A u\|_{\Heb^0(M)}^2.
\]
By elliptic regularity, any two choices of $A$ give equivalent norms. For $s<0$ we have $\Heb^s(M)=\{u_1+A u_2\colon u_1,u_2\in\Heb^0(M)\}$ where $A\in\Psieb^{-s}(M)$ is elliptic; since this is isomorphic to $(\Heb^{-s}(M))^*$ via the $L^2(M,\mu)$-pairing, it is a Hilbert space as well. Weighted versions of these spaces are defined by
\[
  \Heb^{s,\alpha}(M) := \rho^\alpha\Heb^s(M) = \{\rho^\alpha u\colon u\in\Heb^s(M) \}.
\]
These are Hilbert spaces as well, and $(\Heb^{s,\alpha}(M))^*=\Heb^{-s,-\alpha}(M)$ with respect to $L^2(M)$. When $\Omega\subset M$ is an open set, we denote spaces of extendible and supported distributions by
\[
  \bar H_\ebop^{s,\alpha}(\Omega) := \{ u|_\Omega \colon u\in\Heb^{s,\alpha}(M) \}, \qquad
  \dot H_\ebop^{s,\alpha}(\ol\Omega) := \{ u \in \Heb^{s,\alpha}(M) \colon \supp u\subset\ol\Omega \},
\]
following the notation of \cite[Appendix~B]{HormanderAnalysisPDE3}.

We also need spaces of sections of vector bundles $E\to M$: for $s=\alpha=0$, and fixing any positive definite fiber metric on $E$, we set $\Heb^{0,0}(M;E)=\Heb^0(M;E)=L^2(M,\mu;E)$, any two choices of fiber metrics giving the same space (up to equivalence of norms) since $M$ is compact. For general $s,\alpha$, the space $\Heb^{s,\alpha}(M;E)$ is then defined as above.

Edge-b-ps.d.o.s $A\in\Psieb^0(M)$ act continuously on $\Heb^0(M)$, see \cite[Appendix~B]{MelroseVasyWunschDiffraction}. Using the algebra properties of $\Psieb(M)$, this can be shown to imply that every $A\in\Psieb^{m,\beta}(M)$ defines a bounded linear map $A\colon\Heb^{s,\alpha}(M)\to\Heb^{s-m,\alpha-\beta}(M)$.

We define
\[
  \Heb^{-\infty,\alpha}(M) := \bigcup_{s\in\R} \Heb^{s,\alpha}(M),\qquad
  \Heb^{\infty,\alpha}(M) := \bigcap_{s\in\R} \Heb^{s,\alpha}(M).
\]
Given a distribution $u\in\Heb^{-\infty,\alpha}(M)$, we can define its wave front set in the usual manner: for $s\in\R$, we define
\[
  \WFeb^{s,\alpha}(u) \subset \Seb^*M
\]
as the complement of the set of $\varpi\in\Seb^*M$ for which there exists an operator $A\in\Psieb^s(M)$ which is elliptic at $\varpi$ and so that $A u\in\Heb^{0,\alpha}(M)=\rho^\alpha\Heb^0(M)$. We stress that we need to assume a priori that $u$ lies in an edge-b-Sobolev space with weight $\alpha$ in order to guarantee that $A'u\in\Heb^{0,\alpha}(M)$ as well for all $A'\in\Psieb^s(M)$ with $\WFeb'(A')\subset\Elleb(A)$. In particular,
\[
  u\in\Heb^{-\infty,\alpha}(M),\ \WFeb^{s,\alpha}(u)=\emptyset\ \implies\ u\in\Heb^{\infty,\alpha}(M),
\]
i.e.\ the wave front set controls edge-b-regularity, but not decay.

We shall also need to use Sobolev spaces associated with other Lie algebras, in particular b-Sobolev spaces $\Hb^{s,\alpha}(M)$ on manifolds with boundary or corners, scattering Sobolev spaces $\Hsc^{s,\alpha}(M)$ on manifolds with boundary, and 0-Sobolev spaces $H_0^{s,\alpha}(M)$ on manifolds with boundary; these spaces are defined in complete analogy with the edge-b-spaces above. Detailed discussions of these spaces (as well as of the corresponding calculi of pseudodifferential operators) can be found in \cite{MazzeoEdge,MelroseAPS,MelroseEuclideanSpectralTheory,MazzeoMelroseFibred,Hintz3b}.

%%%%%%%%%%%%%%%%%%%%%%%%%%%%%%%%%%%%%%%%%%%%%%%%%%
\subsection{Invariant edge-b-operators and edge-b-Sobolev spaces}
\label{SsEBI}

We now turn to notions related to the analysis of edge normal operators of edge-b-differential operators. Fix $y_0\in Y$, and identify $Z=\phi^{-1}(y_0)$ with the zero section of
\[
  \cN:={}^+N\phi^{-1}(y_0) = {}^+T_{\phi^{-1}(y_0)}M / T\phi^{-1}(y_0).
\]
Recall on $\cN$ the space $\cV_{\ebop,I}(\cN)$ of edge-b-vector fields which are invariant with respect to the dilation and translation actions introduced in the paragraph preceding~\eqref{EqEBOInvariant}. In terms of coordinates $x\geq 0$, $y\in\R^{n-2}$ on the fibers of
\[
  \pi \colon \cN\to \phi^{-1}(y_0)=Z,
\]
so $\cN=Z\times[0,\infty)_x\times\R^{n-2}_y$, the space $\cV_{\ebop,I}(\cN)$ is thus spanned over $\CI(Z)$ by $x\pa_x$, $x\pa_y$, $\Vb(Z)$, with the dilation action given by $(x,y,z)\mapsto(\lambda x,\lambda y,z)$, $\lambda\in\R_+$, and the translation action given by $(x,y,z)\mapsto(x,y+y',z)$, $y'\in\R^{n-2}$. Here, we identify $\CI(Z)\subset\CI(\cN)$ via pullback. (Thus, $\CI(Z)$ is the space of dilation- and translation-invariant smooth functions.) We also recall the notation $\Diff_{\ebop,I}^m(\cN)$ for up to $m$-fold compositions of elements of $\cV_{\ebop,I}(\cN)$.

Following Mazzeo \cite[\S5]{MazzeoEdge}, one may analyze elements of $\Diff_{\ebop,I}^m(\cN)$ (see~\eqref{EqEBONormH2y0} for a local coordinate expression) by passing to the Fourier transform in $y$, with dual momentum denoted $\eta$, and then changing variables to $(\hat x,\hat\eta,z)$ where $\hat x=x|\eta|$ and $\hat\eta=\eta/|\eta|$, thus obtaining a smooth family (in $(y_0,\hat\eta)\in S^*Y$) of operators of Bessel type on $[0,\infty)_{\hat x}\times Z$ (or indeed of weighted b-scattering type on $[0,\infty]_{\hat x}\times Z$, with scattering behavior at the boundary hypersurface $\{\infty\}\times Z$). In this paper, we instead work directly with invariant edge-b-operators, as in the wave equation setting of interest here this makes the proofs of their mapping and regularity properties straightforward modifications of the proofs for the original edge-b-operator. We thus proceed to define spaces and ps.d.o.s for the analysis of invariant edge-b-operators on $\cN$.

The principal symbol $\sigmaeb^m(P)\in P^{[m]}(\Teb^*\cN)$ of $P\in\Diff_{\ebop,I}^m(\cN)$ is invariant under the lifts of the dilation and translation actions; in local coordinates on $Z$ and using fiber-linear coordinates $\xi,\eta_j,\zeta$ as in~\eqref{EqEBOCoord}, this simply means that $\sigmaeb^m(P)$ is independent of $x,y$. Thus, the principal symbol short exact sequence reads
\[
  0 \to \Diff_{\ebop,I}^{m-1}(\cN) \hra \Diff_{\ebop,I}^m(\cN) \xra{{}^{\ebop,I}\sigma^m} P_I^{[m]}(\Teb^*\cN) \to 0,
\]
where the subscript `$I$' restricts to invariant symbols. (Equivalently, restriction to $Z$ gives an isomorphism $P_I^{[m]}(\Teb^*\cN)\cong P^{[m]}(\Teb^*_Z\cN)$ with the inverse given by invariant extension.)

Note moreover that since $Z\cong[-1,1]$, the manifold $\cN$ has three boundary hypersurfaces which we denote
\[
  \cH_j := {}^+N_{\phi^{-1}(y_0)\cap H_j}\phi^{-1}(y_0),\quad j=1,3, \qquad
  \cH_2 := T_{\phi^{-1}(y_0)}H_2 / T\phi^{-1}(y_0).
\]
Thus, $\cH_1,\cH_3$ are the fibers of $\cN\to\phi^{-1}(y_0)$ over the two points of $\pa Z$, while $\cH_2$ is the fibered boundary of $\cN$ (given in the above coordinates by $x=0$). Any two defining functions $\rho_2,\rho_2'$ of $\cH_2$ which are invariant are related via $\rho_2'=a\rho_2$ where $0<a\in\CI(Z)$. Invariant defining functions of $\cH_1,\cH_3$ are pullbacks of defining functions of the two boundary points of $Z$.

\begin{definition}[Invariant edge-b-Sobolev spaces]
\label{DefEBIH}
  Denote by $\rho_1,\rho_2,\rho_3\in\CI(\cN)$ invariant defining functions of $\cH_1,\cH_2,\cH_3$, and let $\alpha=(\alpha_1,\alpha_2,\alpha_3)\in\R^3$. Fix an invariant edge-b-density $0<\mu_0\in\CI(\cN;\Omegaeb\cN)$, a weight $w\in\R^3$, and fix $\mu=\rho^w\mu_0$. Then $H_{\ebop,I}^0(\cN,\mu):=L^2(\cN,\mu)$. We drop $\mu$ from the notation from now on. For $s\in\N$, we set $H_{\ebop,I}^s(\cN)=\{u\in L^2(\cN)\colon P u\in L^2(\cN)\ \forall\,P\in\Diff_{\ebop,I}^k(\cN)\}$,\footnote{Simply put, the space $H_{\ebop,I}^s(\cN)$ consists of all elements $u=u(x,y,z)\in L^2(\cN)$ (where $x\geq 0$, $y\in\R^{n-2}$, $z\in Z$) so that its up to $s$-fold derivatives along $x\pa_x$, $x\pa_y$, and elements of $\cV(Z)$ remain in $L^2(\cN)$.} and we define $H_{\ebop,I}^s(\cN)$ for general $s\in\R$ via interpolation and duality. Finally, we set
  \[
    H_{\ebop,I}^{s,\alpha}(\cN) := \rho^\alpha H_{\ebop,I}^s(\cN).
  \]
  Given a vector bundle $E\to M$, invariant weighted edge-b-Sobolev spaces of sections of the bundle $\pi^*E|_{\phi^{-1}(y_0)}\to\cN$ are defined analogously.\footnote{This requires the choice of a connection on $E|_{\phi^{-1}(y_0)}$, but any two choices give the same space. Note that over any fiber $\pi^{-1}(z_0)$, $z_0\in\phi^{-1}(y_0)$, of $\cN$, the bundle $\pi^*E|_{\phi^{-1}(y_0)}$ is canonically trivial, and hence sections of $\pi^*E|_{\phi^{-1}(y_0)}\to\cN$ can be differentiated along the generators of the translation and dilation actions without any further choices.}
\end{definition}

In compact subsets of $\cN$, these are standard weighted edge-b-Sobolev spaces. The only reason for including the subscript `$I$' in the notation for these spaces is that $\cN$ is noncompact; invariant edge-b-Sobolev spaces provide a means to measure edge-b-regularity and square integrability uniformly on $\cN$. Given an open subset $\cU\subset Z$, the preimage $\Omega:=\pi^{-1}(\cU)\subset\cN$ is invariant; we shall then consider spaces of supported and extendible distributions on $\Omega$,
\[
  \bar H_{\ebop,I}^{s,\alpha}(\Omega) := \{ u|_\Omega \colon u\in H_{\ebop,I}^{s,\alpha}(\cN) \},\qquad
  \dot H_{\ebop,I}^{s,\alpha}(\ol\Omega) := \{ u\in H_{\ebop,I}^{s,\alpha}(\cN) \colon \supp u\subset\ol\Omega\},
\]
equipped as usual with the quotient, resp.\ subspace topology.

We shall also use invariant edge-b-pseudodifferential operators, i.e.\ edge-b-ps.d.o.s on $\cN$ which are invariant under the translation and dilation actions. From a Schwartz kernel perspective, they are thus uniquely determined by the restriction of their Schwartz kernels to the edge front face (the lift of $(\cH_2)^2_\phi$ to $\cN_\ebop^2=[\cN^2;(\cH_2)^2_\phi;\cH_1^2,\cH_3^2]$). Explicitly, consider a symbol $a\in S^s(\Teb^*_Z\cN)$, or equivalently an invariant symbol $a\in S_I^s(\Teb^*\cN)$ (the subspace of $S^s(\Teb^*\cN)$ consisting of invariant elements); in local coordinates $z\geq 0$ on $Z$, we thus have $a=a(z,\xi,\eta,\zeta)$, and the quantization of $a$ as an invariant edge-b-ps.d.o.\ is
\begin{align*}
  (\Op_\ebop(a)u)(x,y,z) &= (2\pi)^{-n}\int \exp\Bigl(i\Bigl(\frac{x-x'}{x}\xi+\frac{y-y'}{x}\cdot\eta+(z-z')\zeta\Bigr)\Bigr) \\
    &\hspace{6em} \times \chi\Bigl(\frac{x-x'}{x}\Bigr)\chi\Bigl(\frac{|y-y'|}{x}\Bigr)\chi\Bigl(\frac{z-z'}{z}\Bigr) \\
    &\hspace{6em} \times a(z;\xi,\eta,z\zeta)u(x',y',z')\,\dd\xi\,\dd\eta\,\dd\zeta\,\frac{\dd x'}{x}\,\frac{\dd y'}{x^{n-2}}\,\dd z'.
\end{align*}
Denoting the space of invariant edge-b-ps.d.o.s by $\Psi_{\ebop,I}^s(\cN)$, we correspondingly have a principal symbol short exact sequence
\[
  0 \to \Psi_{\ebop,I}^{s-1}(\cN) \hra \Psi_{\ebop,I}^s(\cN) \xra{{}^{\ebop,I}\sigma^s} (S_I^s/S_I^{s-1})(\Teb^*\cN) \to 0.
\]
One can also consider classes of weighted operators
\[
  \Psi_{\ebop,I}^{s,\alpha}(\cN) = \rho^{-\alpha}\Psi_{\ebop,I}^s(\cN),
\]
whose Schwartz kernels are homogeneous of degree $-\alpha_2$ with respect to the dilation action, and invariant under the translation action. (More generally, one can allow for conormal behavior at $\cH_1,\cH_3$.) Their principal symbols are elements of $(S_I^{s,\alpha}/S_I^{s-1,\alpha})(\Teb^*\cN)$, where $S_I^{s,\alpha}(\Teb^*\cN)$ is the subspace of $S^{s,\alpha}(\Teb^*\cN)$ consisting of symbols which are homogeneous of degree $-\alpha_2$ with respect to the dilation action, and invariant under the translation action.

One can then characterize the spaces $H_{\ebop,I}^{s,\alpha}(\cN)$ via testing with invariant edge-b-ps.d.o.s, analogously to the standard edge-b-setting in~\S\ref{SsEBH}. Moreover, an element $A\in\Psi_{\ebop,I}^{m,\beta}(\cN)$ defines a bounded linear map $H_{\ebop,I}^{s,\alpha}(\cN)\to H_{\ebop,I}^{s-m,\alpha-\beta}(\cN)$. We define the invariant wave front set
\begin{equation}
\label{EqEBIWF}
  \WF_{\ebop,I}^{s,\alpha}(u) \subset \Seb^*_Z\cN
\end{equation}
(which one can equivalently regard as an invariant subset of $\Seb^*\cN$) for $u\in H_{\ebop,I}^{-\infty,\alpha}(\cN)$ in the usual manner by testing with elliptic invariant edge-b-ps.d.o.s.

%%%%%%%%%%%%%%%%%%%%%%%%%%%%%%%%%%%%%%%%%%%%%%%%%%
\subsection{Semiclassical 0-analysis}
\label{SsEB0}

The following material will be used only in~\S\ref{SNp}.

\begin{definition}[Semiclassical 0-differential operators, Sobolev spaces]
\label{DefNpHDiff}
  Let $X$ denote an $n$-dimensional manifold with boundary. Then $\Diff_{0,\semi}^m(X)$ denotes the space of $h$-dependent differential operators $P$ on $X^\circ$, $h\in(0,1)$, which in local coordinates $[0,\infty)_x\times\R^{n-1}_y$ on $X$ are of the form
  \[
    \sum_{j+|\alpha|\leq m} a_{j\alpha}(h,x,y) (h x D_x)^j (h x D_y)^\alpha,\qquad a_{j\alpha}\in\CI([0,1)_h\times[0,\infty)\times\R^{n-1}).
  \]
  If $X$ is compact and equipped with a positive weighted b-density (omitted from the notation), and if $\rho\in\CI(X)$ denotes a boundary defining function, then we define semiclassical Sobolev spaces $H_{0,h}^{s,\alpha}(X)=\rho^\alpha H_{0,h}^s(X)$ to be equal to $H_0^{s,\alpha}(X)$ as sets, but with norm for $s\in\N_0$ given by
  \[
    \|u\|_{H_{0,h}^{s,\alpha}}^2 := \sum_j \| P_j u \|_{L^2}^2,
  \]
  where $\{P_j\}\subset\Diff_{0,\semi}^s(X)$ is a finite spanning set of $\Diff_{0,h}^s(X)$ over $\CI([0,1)\times X)$. For real $s$, the norm on $H_{0,h}^{s,\alpha}(X)$ is defined via duality and interpolation.
\end{definition}

For $u$ supported in a coordinate patch, and for $s\in\N_0$, an equivalent norm on $H_{0,h}^{s,\alpha}(X)$ is given by
\[
  \|u\|_{H_{0,h}^{s,\alpha}}^2 := \sum_{j+|\alpha|\leq s} \|x^{-\alpha}(h x D_x)^j(h x D_y)^\alpha u\|_{L^2}^2.
\]
That is, every 0-derivative is weighted by a factor of $h$. There is an associated algebra of \emph{semiclassical 0-pseudodifferential operators} which we describe in~\S\ref{SssEB0Psdo} below.

There is a close relationship between 0-analysis on $X$ and edge-b-analysis on
\[
  M=[0,\infty)_{\rho_+}\times X,
\]
where the boundary hypersurface $[0,\infty)\times\pa X$ of $M$ is fibered by $[0,\infty)\times\pa X\to\pa X$.

\begin{lemma}[The Mellin transform, edge-b-Sobolev spaces on $M$ and 0-Sobolev spaces on $X$]
\label{LemmaNpHMellin}
  Fix a positive b-density $\mu_\bop$ on $X$ and the b-density $|\frac{\dd\rho_+}{\rho_+}|\mu_\bop$ on $M$. Let $s,\gamma_{\!\scri},\gamma_+\in\R$. Then the Mellin transform in $\rho_+$, defined by
  \begin{equation}
  \label{EqNpHMellin}
    (\cM u)(\lambda,x_{\!\scri},y) = \int_0^\infty \rho_+^{-i\lambda}u(\rho_+,x_{\!\scri},y)\,\frac{\dd\rho_+}{\rho_+},
  \end{equation}
  is an isomorphism
  \[
    \cM \colon \Heb^{s,(2\gamma_{\!\scri},\gamma_+)}(M) \xra{\cong} L^2\bigl(\{\Im\lambda=-\gamma_+\}; \la\lambda\ra^{-s}H_{0,\la\lambda\ra^{-1}}^{s,2\gamma_{\!\scri}}(X) \bigr),
  \]
  where $\Heb^{s,(2\gamma_{\!\scri},\gamma_+)}(M)=x_{\!\scri}^{2\gamma_{\!\scri}}\rho_+^{\gamma_+}\Heb^s(M)$, with $x_{\!\scri}\in\CI(X)$ denoting a boundary defining function of $X$. 
\end{lemma}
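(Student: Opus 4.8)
The plan is to reduce the statement to the one-dimensional Mellin--Plancherel theorem, combined with the descriptions of edge-b- and semiclassical $0$-Sobolev norms in terms of spanning sets of vector fields. The structural fact we exploit is that $M=[0,\infty)_{\rho_+}\times X$ is a product and the b-density is the product density $|\tfrac{\dd\rho_+}{\rho_+}|\mu_\bop$ with $\mu_\bop$ independent of $\rho_+$, so that all norms can be computed using a $\rho_+$-independent spanning set, for which the Mellin transform in $\rho_+$ and the differential operators on $X$ commute.

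First I would establish that $\Veb(M)$ is generated over $\CI(M)$ by $\rho_+\partial_{\rho_+}$ together with $\Vz(X)$, the latter regarded as a space of $\rho_+$-independent vector fields on $M$: near the corner $\{\rho_+=0\}\cap\{x_{\!\scri}=0\}$ the edge-b-vector fields are precisely $x_{\!\scri}\partial_{x_{\!\scri}}$, $x_{\!\scri}\partial_y$, $\rho_+\partial_{\rho_+}$, the first two of which span $\Vz(X)$, and away from this corner the same conclusion holds using smoothness of $\rho_+^{\pm1}$ there. Fixing a finite $\CI(X)$-generating set $\{V_i\}_{i=1}^k$ of $\Vz(X)$ ($X$ being compact) and using $[\rho_+\partial_{\rho_+},V_i]=0$, the set $\{(\rho_+\partial_{\rho_+})^j V^\beta\colon j+|\beta|\le s\}$ generates $\Diffeb^s(M)$ over $\CI(M)$; conjugating this spanning set by the weight $x_{\!\scri}^{2\gamma_{\!\scri}}\rho_+^{\gamma_+}$ (which preserves it modulo lower order) yields, for $s\in\N_0$,
\[
  \|u\|_{\Heb^{s,(2\gamma_{\!\scri},\gamma_+)}(M)}^2 \simeq \sum_{j+|\beta|\le s}\bigl\|(\rho_+\partial_{\rho_+})^j V^\beta u\bigr\|_{x_{\!\scri}^{2\gamma_{\!\scri}}\rho_+^{\gamma_+}L^2(M)}^2 .
\]
Next I would record two elementary properties of the Mellin transform~\eqref{EqNpHMellin}: it intertwines $\rho_+\partial_{\rho_+}$ with multiplication by $i\lambda$, and it commutes with each $V_i$ (which acts on the $X$-factor only, with $\rho_+$-independent coefficients). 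Together with the one-dimensional identity $\int_0^\infty|w|^2\rho_+^{-2\gamma_+}\tfrac{\dd\rho_+}{\rho_+}=\frac{1}{2\pi}\int_{\Im\lambda=-\gamma_+}|\cM w(\lambda)|^2\,|\dd\lambda|$ (Fourier--Plancherel in $\log\rho_+$), applied with $L^2(X,\mu_\bop)$-values and the weight $x_{\!\scri}^{2\gamma_{\!\scri}}$ carried along, the display above becomes
\[
  \|u\|_{\Heb^{s,(2\gamma_{\!\scri},\gamma_+)}(M)}^2 \simeq \frac{1}{2\pi}\int_{\Im\lambda=-\gamma_+}\Bigl(\sum_{j+|\beta|\le s}|\lambda|^{2j}\,\bigl\|V^\beta(\cM u)(\lambda)\bigr\|_{x_{\!\scri}^{2\gamma_{\!\scri}}L^2(X)}^2\Bigr)|\dd\lambda| .
\]

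What then remains is to identify the inner sum, uniformly in $\lambda$ on the contour, with $\|(\cM u)(\lambda)\|_{\la\lambda\ra^{-s}H_{0,\la\lambda\ra^{-1}}^{s,2\gamma_{\!\scri}}(X)}^2$, and this I expect to be the only delicate point (though still routine): it amounts to checking that the semiclassical rescaling $h=\la\lambda\ra^{-1}$ is exactly the one that matches the powers of $\lambda$ emitted by $(\rho_+\partial_{\rho_+})^j$. Using the equivalent norm on $H_{0,h}^{s,2\gamma_{\!\scri}}(X)$ recorded after Definition~\ref{DefNpHDiff}, in which every $0$-derivative carries a factor of $h$, one gets $\|w\|_{\la\lambda\ra^{-s}H_{0,\la\lambda\ra^{-1}}^{s,2\gamma_{\!\scri}}}^2\simeq\sum_{|\beta|\le s}\la\lambda\ra^{2(s-|\beta|)}\|V^\beta w\|_{x_{\!\scri}^{2\gamma_{\!\scri}}L^2(X)}^2$, whereas for fixed $\beta$ the geometric sum $\sum_{j=0}^{s-|\beta|}|\lambda|^{2j}$ is comparable to $\la\lambda\ra^{2(s-|\beta|)}$ uniformly in $\lambda$; the two agree. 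Inserting this into the last display proves the asserted isomorphism for $s\in\N_0$.

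Finally, for general $s\in\R$ I would pass from the integer case by duality and complex interpolation: $\cM$ intertwines the $L^2(M,|\tfrac{\dd\rho_+}{\rho_+}|\mu_\bop)$-duality $(\Heb^{s,(2\gamma_{\!\scri},\gamma_+)}(M))^*=\Heb^{-s,(-2\gamma_{\!\scri},-\gamma_+)}(M)$ with the natural pairing of the target spaces along the reflected contours $\Im\lambda=\mp\gamma_+$; $(H_{0,h}^{s,2\gamma_{\!\scri}}(X))^*=H_{0,h}^{-s,-2\gamma_{\!\scri}}(X)$ uniformly in $h\in(0,1)$; and $\{H_{0,h}^{s,2\gamma_{\!\scri}}(X)\}_{s\in\R}$ is a complex interpolation scale uniformly in $h$; both uniform facts are standard for the semiclassical $0$-calculus, and the $L^2(\{\Im\lambda=\,\cdot\,\};\,\cdot\,)$-valued construction respects duality and interpolation fiberwise in $\lambda$.
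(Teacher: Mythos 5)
Your proposal is correct and follows essentially the same route as the paper's proof: reduce to $s\in\N_0$ (handling general real $s$ by duality and interpolation), use Mellin--Plancherel together with the fact that $\cM$ intertwines $\rho_+D_{\rho_+}$ with $\lambda$ while commuting with the $\rho_+$-independent $0$-derivatives on $X$, and then match the resulting powers of $\la\lambda\ra$ with the semiclassical rescaling $h=\la\lambda\ra^{-1}$. The only cosmetic differences are that you carry the weights along rather than reducing to $\gamma_{\!\scri}=\gamma_+=0$ first, and you organize the integer-$s$ case via a spanning set of edge-b-vector fields and a geometric-sum comparison, which the paper leaves implicit in "the case of general $s\in\N$ is similar."
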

\begin{proof}
  It suffices to consider the case $\gamma_{\!\scri}=\gamma_+=0$. Via interpolation and duality, it suffices to consider the case $s\in\N_0$. For $s=0$, the claim is then a re-statement of Plancherel's theorem. For $s=1$, observe that, in local coordinates $(x,y)$ on $X$, the Mellin transforms intertwines the operators $1,x D_x,x D_y,\rho_+ D_{\rho_+}$ with $1,x D_x,x D_y,\lambda$. Therefore, writing $\hat u(\lambda;x,y)=(\cM u)(\lambda;x,y)$, we have
  \begin{align*}
    &\|u\|_{L^2(M)}^2 + \|x D_x u\|_{L^2(M)}^2 + \|x D_y u\|_{L^2(M)}^2 + \|\rho_+ D_{\rho_+}u\|_{L^2(M)}^2 \\
    &\quad = (1+|\lambda|^2)\Bigl(\|\hat u(\lambda)\|_{L^2(\R_\lambda;L^2(X))}^2 + \|\la\lambda\ra^{-1}x D_x\hat u(\lambda)\|_{L^2(\R_\lambda;L^2(X))}^2 \\
    &\quad\hspace{16em} + \|\la\lambda\ra^{-1}x D_y\hat u(\lambda)\|_{L^2(\R_\lambda;L^2(X))}^2\Bigr).
  \end{align*}
  This proves the Lemma for $s=1$; the case of general $s\in\N$ is similar. See also \cite[\S3.1]{VasyMicroKerrdS}.
\end{proof}

There is a close relationship also on the phase space level. This is discussed in \cite[\S3.3.4]{HintzThesis} in the simpler setting where $X$ is a closed manifold. We describe this relationship from the perspective of differential operators:

\begin{lemma}[Phase space relationship]
\label{LemmaNpHOp}
  Let $P\in\Diffeb^m(M)$.\footnote{One can also consider $P$ acting on sections of a bundle $E\to M$, in which case $P_{h,\hat\lambda}$ acts on sections of $E|_X$. We leave the necessary notational changes to the reader.} Define the semiclassical rescaling of its Mellin-transformed normal operator family as $P_{h,\hat\lambda}:=h^m\wh{N_X}(P,h^{-1}\hat\lambda)$, where we identify $X$ with the boundary hypersurface $\{0\}\times X$ of $M$. Then $P_{h,\hat\lambda}\in\Diff_{0,\semi}^m(X)$, and its principal symbol $p_{\hat\lambda}=\sigma_{0,\semi}^m(P_{h,\hat\lambda})\in P^m({}^0 T^*X)$ is related to $p=\sigmaeb^m(P)$ by
  \[
    p_{\hat\lambda}(\varpi) = p\Bigl(\hat\lambda\frac{\dd\rho_+}{\rho_+}+\varpi\Bigr),\qquad \varpi\in{}^0 T^*X \subset \Teb^*_X M.
  \]
  Here, the inclusion ${}^0 T^*X\subset\Teb^*_X M$ is the adjoint of the map $\Teb_X M\to{}^0 T X$ induced by restriction of vector fields to $X$.
\end{lemma}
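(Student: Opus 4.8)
The plan is to carry out everything in local coordinates near a point of $X=\{\rho_+=0\}$, using that the edge-b-principal symbol $\sigmaeb^m$, the Mellin-transformed normal operator family $\wh{N_X}(\,\cdot\,,\lambda)$ (formed with respect to the canonical trivialization ${}^+N X\cong X\times[0,\infty)_{\rho_+}$ coming from $M=[0,\infty)_{\rho_+}\times X$), and the semiclassical $0$-principal symbol $\sigma_{0,\semi}^m$ are all defined coordinate-independently by \S\ref{SsEBO}, \S\ref{SsEBH} and Definition~\ref{DefNpHDiff}; hence the claimed identity, once checked in one chart, holds invariantly, and the passage to bundle-valued operators is purely notational. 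Near a point of the interior corner $X\cap([0,\infty)\times\pa X)$ I would pick coordinates $\rho_+,x_{\!\scri}\in[0,\infty)$ (defining functions of $X$ and of $[0,\infty)\times\pa X$) and $y\in\R^{n-2}$ on $\pa X$, in which an edge-b-operator has the normal form $P=\sum_{j+k+|\alpha|\le m}a_{j k\alpha}(x_{\!\scri},y,\rho_+)\,(x_{\!\scri}D_{x_{\!\scri}})^j(x_{\!\scri}D_y)^\alpha(\rho_+ D_{\rho_+})^k$ with $a_{j k\alpha}\in\CI(M)$; near an interior point of $X$ one has instead $P=\sum_{k+|\beta|\le m}\tilde a_{k\beta}(w,\rho_+)\,D_w^\beta(\rho_+ D_{\rho_+})^k$, and the two cases are treated identically (with $0$-derivatives replaced by ordinary ones in the second). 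By Definition~\ref{DefEBONormH1M}, freezing $\rho_+=0$ and Mellin transforming in $\rho_+$ replaces $\rho_+ D_{\rho_+}$ by $\lambda$, so $\wh{N_X}(P,\lambda)=\sum a_{j k\alpha}(x_{\!\scri},y,0)\,\lambda^k(x_{\!\scri}D_{x_{\!\scri}})^j(x_{\!\scri}D_y)^\alpha$, a $0$-differential operator on $X$.

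Next I would substitute $\lambda=h^{-1}\hat\lambda$, multiply by $h^m$, and reorganize each term:
\[
  P_{h,\hat\lambda}=h^m\,\wh{N_X}(P,h^{-1}\hat\lambda)=\sum_{j+k+|\alpha|\le m}a_{j k\alpha}(x_{\!\scri},y,0)\,\hat\lambda^k\,h^{\,m-k-j-|\alpha|}\,(h x_{\!\scri}D_{x_{\!\scri}})^j(h x_{\!\scri}D_y)^\alpha .
\]
The decisive point is that $P$ is a genuine edge-b-operator, so the orders add: $j+k+|\alpha|\le m$, whence every exponent $m-k-j-|\alpha|$ is $\ge 0$. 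Consequently the coefficients of $P_{h,\hat\lambda}$, namely $\sum_k a_{j k\alpha}(x_{\!\scri},y,0)\hat\lambda^k h^{\,m-k-j-|\alpha|}$, lie in $\CI([0,1)_h\times X)$ and are polynomial in $\hat\lambda$, so $P_{h,\hat\lambda}\in\Diff_{0,\semi}^m(X)$ as asserted. (Were $P$ merely a b-operator the $h$-power could be negative and this would fail; this is precisely why the lemma concerns edge-b-operators.) Setting $h=0$ kills every term except the one with $k=m-j-|\alpha|$, giving the semiclassical $0$-principal symbol $p_{\hat\lambda}(\varpi)=\sum_{j+k+|\alpha|=m}a_{j k\alpha}(x_{\!\scri},y,0)\,\hat\lambda^k\,\xi^j\eta^\alpha$, where $\varpi=\xi\frac{\dd x_{\!\scri}}{x_{\!\scri}}+\eta\cdot\frac{\dd y}{x_{\!\scri}}\in{}^0 T^*X$.

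Finally I would compare with $p=\sigmaeb^m(P)=\sum_{j+k+|\alpha|=m}a_{j k\alpha}(x_{\!\scri},y,\rho_+)\,\xi^j\eta^\alpha\zeta^k$, writing the canonical $1$-form on $\Teb^*M$ as $\xi\frac{\dd x_{\!\scri}}{x_{\!\scri}}+\eta\cdot\frac{\dd y}{x_{\!\scri}}+\zeta\frac{\dd\rho_+}{\rho_+}$. The only remaining check is that $\hat\lambda\frac{\dd\rho_+}{\rho_+}+\varpi\in\Teb^*_X M$ is exactly the covector with fiber coordinates $(\xi,\eta,\zeta)=(\xi,\eta,\hat\lambda)$: the inclusion ${}^0 T^*X\subset\Teb^*_X M$ is the adjoint of the surjection $\Teb_X M\to{}^0 T X$ given by restriction of vector fields to $X$, which annihilates $\rho_+\pa_{\rho_+}$ and fixes $x_{\!\scri}\pa_{x_{\!\scri}}$, $x_{\!\scri}\pa_{y^j}$; dualizing, it fixes $\frac{\dd x_{\!\scri}}{x_{\!\scri}}$, $\frac{\dd y^j}{x_{\!\scri}}$, i.e.\ it realizes ${}^0 T^*X$ as the hyperplane $\{\zeta=0\}\subset\Teb^*_X M$. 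Adding $\hat\lambda\frac{\dd\rho_+}{\rho_+}$ then just sets $\zeta=\hat\lambda$, and evaluating $p$ at $\rho_+=0$, $\zeta=\hat\lambda$ reproduces the formula for $p_{\hat\lambda}(\varpi)$ from the previous paragraph, proving $p_{\hat\lambda}(\varpi)=p\bigl(\hat\lambda\frac{\dd\rho_+}{\rho_+}+\varpi\bigr)$. The argument is bookkeeping throughout; the only places requiring genuine care are the nonnegativity of the $h$-exponent in the second paragraph (which uses the edge-b-structure essentially) and, to a lesser extent, pinning down the sign and normalization conventions in the identification of $\hat\lambda\frac{\dd\rho_+}{\rho_+}+\varpi$ — so that is what I would treat as the crux.
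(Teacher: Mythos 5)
Your proposal is correct and follows essentially the same route as the paper: reduce to local coordinates (the paper does it term-by-term by linearity, you keep the full sum), note that Mellin transforming and freezing at $\rho_+=0$ replaces $\rho_+D_{\rho_+}$ by $\lambda$, and observe that after substituting $\lambda=h^{-1}\hat\lambda$ and multiplying by $h^m$ every term carries the nonnegative power $h^{m-j-k-|\alpha|}$, so that at $h=0$ only the terms with $j+k+|\alpha|=m$ survive and the resulting polynomial in $(\xi,\eta)$ agrees with $p$ evaluated at $\zeta=\hat\lambda$, $\rho_+=0$. Your additional remarks (nonnegativity of the $h$-exponent as the point where the edge-b-structure enters, and the identification of ${}^0T^*X$ with $\{\zeta=0\}\subset\Teb^*_XM$) are accurate and consistent with the paper's conventions.
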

\begin{proof}
  In local coordinates and by linearity, it suffices to consider
  \begin{equation}
  \label{EqNpHOpTerm}
    P = a(\rho_+,x,y)(\rho_+ D_{\rho_+})^j (x D_x)^k(x D_y)^\alpha
  \end{equation}
  where $j+k+|\alpha|\leq m$. Then $\wh{N_X}(P,\lambda)$ is obtained by restricting $a$ to $\rho_+=0$ and formally replacing $\rho_+ D_{\rho_+}$ by $\lambda$; hence
  \begin{equation}
  \label{EqNpHOpTerm2}
    P_{h,\hat\lambda} = h^{m-(j+k+|\alpha|)}a(0,x,y)\hat\lambda^j(h x D_x)^k(h x D_y)^\alpha.
  \end{equation}
  Thus, unless $j+k+|\alpha|=m$, the respective principal symbols of $P$ and $P_{h,\hat\lambda}$ vanish, whereas if $j+k+|\alpha|=m$, we have, at a point $(x,y)$ on $X$ and at the corresponding point $(0,x,y)\in\{0\}\times X\subset M$,
  \[
    \sigma_{0,\semi}^m(P_{h,\hat\lambda})\Bigl(\xi\frac{\dd x}{x}+\eta\frac{\dd y}{x}\Bigr) = a(0,x,y)\hat\lambda^j\xi^k\eta^\alpha = \sigmaeb^m(P)\Bigl(\hat\lambda\frac{\dd\rho_+}{\rho_+}+\xi\frac{\dd x}{x}+\eta\frac{\dd y}{x}\Bigr).\qedhere
  \]
\end{proof}

The case of main interest is when $\hat\lambda=\pm 1+\cO(h)$, which arises when $h=\la\lambda\ra^{-1}$ and $\hat\lambda=\frac{\lambda}{\la\lambda\ra}$ and $\lambda$ is restricted to a line of constant imaginary part. Since in the notation of Lemma~\ref{LemmaNpHOp}, the Hamiltonian vector field $H_p$ is tangent to the level sets of $\rho_+\pa_{\rho_+}$ inside $\Teb^*_X M$ (cf.\ Lemma~\ref{LemmaEBOHam} where $z$ and $\zeta$ play the roles of $\rho_+$ and $\rho_+\pa_{\rho_+}(\cdot)$), the restriction of $H_p$ to $\Teb^*_X M\supset\pm\frac{\dd\rho_+}{\rho_+}+{}^0 T^*X\cong{}^0 T^*X$ is equal to the Hamiltonian vector field $H_{p_{\pm 1}}$.

In particular, $H_{p_{\pm 1}}$ has a critical point at $\varpi\in{}^0 T^*X$ if $H_p$ has a critical point at $\pm\frac{\dd\rho_+}{\rho_+}+\varpi$, and the linearizations of both vector fields have a simple relationship (namely, one drops the $\rho_+\pa_{\rho_+}$ term of the latter). Microlocal estimates at such critical points require control of subleading terms; we record:

\begin{lemma}[Relationship of subprincipal terms]
\label{LemmaNpHOpSub}
  Let $P,P_{h,\hat\lambda}$ be as in Lemma~\usref{LemmaNpHOp}. Fix positive b-densities on $X,M$ as in Lemma~\usref{LemmaNpHMellin}. Suppose that $P$ has a real scalar principal symbol, and let $\hat\lambda=\pm 1+\cO(h)$. Then for $\varpi\in {}^0 T^*X \subset \Teb^*_X M$, we have
  \begin{equation}
  \label{EqNpHOpSub}
    \sigma_{0,\semi}^{m-1}\biggl(\frac{P_{h,\hat\lambda}-P_{h,\hat\lambda}^*}{2 i h}\biggr)(\varpi) = \biggl(\sigmaeb^{m-1}\Bigl(\frac{P-P^*}{2 i}\Bigr) + \frac{\Im\hat\lambda}{h}\rho_+^{-1}H_p\rho_+\biggr)\bigg|_{\pm\tfrac{\dd\rho_+}{\rho_+}+\varpi}.
  \end{equation}
\end{lemma}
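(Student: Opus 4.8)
\emph{Overview.} The plan is to deduce the identity from Lemma~\ref{LemmaNpHOp}, applied now to $P-P^*$, together with the compatibility of the Mellin-transformed normal operator with formal adjoints, which avoids computing $P_{h,\hat\lambda}^*$ by hand. I would first record that, with the densities fixed as in Lemma~\ref{LemmaNpHMellin} (so that $M$ carries the product-type b-density $|\tfrac{\dd\rho_+}{\rho_+}|\mu_\bop$ with $\mu_\bop$ the fixed b-density on $X$),
\[
  \wh{N_X}(P,\lambda)^* = \wh{N_X}(P^*,\bar\lambda),\qquad \lambda\in\C,
\]
where the left-hand adjoint is taken with respect to $L^2(X,\mu_\bop)$. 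This is standard (cf.\ \cite{MelroseAPS}), and is checked directly in the coordinates of~\eqref{EqEBOOp}: the density has no $\rho_+$-dependence, so $(\rho_+ D_{\rho_+})^*=\rho_+ D_{\rho_+}$, while $(\rho_+ D_{\rho_+})^l$ applied to a smooth function vanishes at $\rho_+=0$ for $l\geq 1$; hence writing $P^*$ in standard form, freezing its coefficients at $\rho_+=0$, and replacing $\rho_+ D_{\rho_+}$ by the spectral parameter produces exactly the $L^2(X,\mu_\bop)$-adjoint of $\wh{N_X}(P,\lambda)=\sum a_{j k\alpha}(0,x,y)\lambda^j(x D_x)^k(x D_y)^\alpha$ evaluated at the conjugate spectral parameter.

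\emph{Decomposition.} Since $P_{h,\hat\lambda}=h^m\wh{N_X}(P,h^{-1}\hat\lambda)$, the identity above gives
\begin{align*}
  \frac{P_{h,\hat\lambda}-P_{h,\hat\lambda}^*}{2 i h}
  &= \frac{h^m\wh{N_X}(P-P^*,h^{-1}\hat\lambda)}{2 i h} \\
  &\qquad + \frac{h^m\bigl(\wh{N_X}(P^*,h^{-1}\hat\lambda)-\wh{N_X}(P^*,h^{-1}\overline{\hat\lambda})\bigr)}{2 i h}.
\end{align*}
For the first summand, the assumption that $P$ has real scalar principal symbol gives $P-P^*\in\Diffeb^{m-1}(M)$, so $h^m\wh{N_X}(P-P^*,h^{-1}\hat\lambda)=h\cdot(P-P^*)_{h,\hat\lambda}$ with $(P-P^*)_{h,\hat\lambda}:=h^{m-1}\wh{N_X}(P-P^*,h^{-1}\hat\lambda)\in\Diff_{0,\semi}^{m-1}(X)$. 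Applying Lemma~\ref{LemmaNpHOp} with $m-1$ in place of $m$, and using $\hat\lambda=\pm 1+\cO(h)$ to evaluate the $\zeta$-variable at $\pm 1$, identifies $\sigma_{0,\semi}^{m-1}$ of $\tfrac{1}{2 i}(P-P^*)_{h,\hat\lambda}$ with $\sigmaeb^{m-1}\bigl(\tfrac{P-P^*}{2 i}\bigr)$ restricted to $\pm\tfrac{\dd\rho_+}{\rho_+}+\varpi$, which is the first term on the right of~\eqref{EqNpHOpSub}.

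\emph{The spectral-parameter correction.} For the second summand, expand $\wh{N_X}(P^*,\mu)=\sum_{l=0}^m\mu^l Q_l$ with $Q_l\in\Diff_0^{m-l}(X)$; since $\sigmaeb^m(P^*)=\overline p=p$, the order-$(m-l)$ principal symbol of $Q_l$ is the coefficient $p_l$ of $\zeta^l$ in $p|_{\rho_+=0}$. Then $h^m\bigl(\wh{N_X}(P^*,h^{-1}\hat\lambda)-\wh{N_X}(P^*,h^{-1}\overline{\hat\lambda})\bigr)=\sum_{l=1}^m (\hat\lambda^l-\overline{\hat\lambda}^l)\,h^{m-l}Q_l$, and $\hat\lambda=\pm 1+\cO(h)$ gives $\hat\lambda^l-\overline{\hat\lambda}^l=2 i l(\pm 1)^{l-1}\Im\hat\lambda+\cO(h^2)$, while $h^{m-l}Q_l$ is a family, bounded as $h\to 0$, in $\Diff_{0,\semi}^{m-l}(X)$ with principal symbol $p_l$. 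Dividing by $2 i h$ and passing to $\sigma_{0,\semi}^{m-1}$ therefore yields $\tfrac{\Im\hat\lambda}{h}\sum_{l\geq 1} l(\pm 1)^{l-1}p_l=\tfrac{\Im\hat\lambda}{h}\,(\partial_\zeta p)\big|_{\rho_+=0,\ \zeta=\pm 1}$. Finally, Lemma~\ref{LemmaEBOHam} (with $z,\zeta$ there in the roles of $\rho_+$ and its $\tfrac{\dd\rho_+}{\rho_+}$-dual coordinate) gives $H_p\rho_+=(\partial_\zeta p)\rho_+$, hence $\rho_+^{-1}H_p\rho_+=\partial_\zeta p$, so this matches the second term on the right of~\eqref{EqNpHOpSub}. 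Summing the two contributions proves the lemma.

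\emph{Main difficulty.} The substance here is bookkeeping rather than ideas: one must read $\sigma_{0,\semi}^{m-1}$ throughout as the $h\to 0$ limit of the \emph{full} semiclassical symbol (a polynomial of degree $\leq m-1$ in the fibers), so that restricting a homogeneous degree-$(m-1)$ edge-b symbol to $\zeta=\pm 1$ produces the correct non-homogeneous $0$-symbol, and one must verify that every remainder---the commutator terms arising when forming adjoints, the $\cO(h^2)$ in $\hat\lambda^l-\overline{\hat\lambda}^l$, and the $\cO(h)$ gap between $\hat\lambda$ and $\pm 1$---is genuinely $\cO(h)$ after division by $h$ and hence invisible at the level of $\sigma_{0,\semi}^{m-1}$. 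The adjoint-compatibility identity of the first step is precisely what keeps this light; circumventing it would require computing $P_{h,\hat\lambda}^*$ term by term in local coordinates, which is considerably more cumbersome.
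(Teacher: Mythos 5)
Your proof is correct, but it is organized differently from the paper's. The paper proves the lemma by a direct local-coordinate computation: it reduces by linearity to a single monomial $a(\rho_+,x,y)(\rho_+D_{\rho_+})^j(x D_x)^k(x D_y)^\alpha$, splits into the cases $j+k+|\alpha|\leq m-2$, $=m-1$, $=m$, and in the principal case computes both $\tfrac{P-P^*}{2 i}$ and $\tfrac{P_{h,\hat\lambda}-P_{h,\hat\lambda}^*}{2 i h}$ by hand (writing $\hat\lambda=\pm1+h\kappa$), finally matching the $\Im\kappa$ term with $j\zeta^{-1}p|_{\zeta=\pm1}=\rho_+^{-1}H_p\rho_+$. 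You instead use the exact compatibility $\wh{N_X}(P,\lambda)^*=\wh{N_X}(P^*,\bar\lambda)$, valid because the b-density $|\tfrac{\dd\rho_+}{\rho_+}|\mu_\bop$ is an exact product (so forming the normal operator commutes with taking adjoints), and then split the operator into the genuinely subprincipal piece $h^{m-1}\wh{N_X}\bigl(\tfrac{P-P^*}{2 i},h^{-1}\hat\lambda\bigr)$, handled by quoting Lemma~\ref{LemmaNpHOp} at order $m-1$, and the spectral-parameter difference $\wh{N_X}(P^*,h^{-1}\hat\lambda)-\wh{N_X}(P^*,h^{-1}\overline{\hat\lambda})$, which you Taylor expand in $\hat\lambda-\overline{\hat\lambda}$ and identify with $\tfrac{\Im\hat\lambda}{h}\,\pa_\zeta p|_{\zeta=\pm1}=\tfrac{\Im\hat\lambda}{h}\rho_+^{-1}H_p\rho_+$. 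The two sources of the right hand side (the imaginary part of $P$ itself, and the non-reality of $\hat\lambda$) are of course the same in both arguments, but your decomposition makes the split at the level of operators rather than monomial by monomial, reuses Lemma~\ref{LemmaNpHOp} as a black box, and avoids recomputing adjoints of $(x D_x)^k(x D_y)^\alpha$; the price is the (standard, but still coordinate-based) verification of the adjoint-compatibility identity and the bookkeeping that all remainders — the $\cO(h^2)$ in $\hat\lambda^l-\overline{\hat\lambda}^l$ and the $\cO(h)$ shift from $\hat\lambda$ to $\pm1$ — land in $h\Diff_{0,\semi}^{m-1}$, which you address correctly. The paper's computation is more pedestrian but entirely self-contained. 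Both readings of $\sigma_{0,\semi}^{m-1}$ (full symbol at $h=0$, modulo $h$) agree, so there is no gap.
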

\begin{proof}
  It suffices to consider $P$ of the form~\eqref{EqNpHOpTerm}. When $j+k+|\alpha|\leq m-2$, both sides of~\eqref{EqNpHOpSub} vanish. When $j+k+|\alpha|=m-1$, the principal symbol of $(2 i)^{-1}(P-P^*)$ at a point $\hat\lambda\frac{\dd\rho_+}{\rho_+}+\varpi$, with $\varpi=\xi\frac{\dd x}{x}+\eta\frac{\dd y}{x}$, over $\rho_+=0$ is $\Im(a(0,x,y)\hat\lambda^j)\xi^k\eta^\alpha$, which matches the principal symbol of $(2 i h)^{-1}(P_{h,\hat\lambda}-P_{h,\hat\lambda}^*)$ at $\varpi$ in view of~\eqref{EqNpHOpTerm2}.

  When $j+k+|\alpha|=m$, then $a$ is real-valued by assumption, and we compute
  \begin{align*}
    \frac{P - P^*}{2 i} &\equiv (\rho_+ D_{\rho_+})^j P_1 \bmod \rho_+\Diff_0^{m-1}(M), \\
    &\quad P_1 = \frac{1}{2 i}\bigl(a(0,x,y)(x D_x)^k(x D_y)^\alpha-((x D_x)^k(x D_y)^\alpha)^*a(0,x,y)\bigr) \in \Diff_0^{m-1}(M).
  \end{align*}
  On the other hand, writing $\hat\lambda=\pm 1+h\kappa$ (thus with $\kappa=\cO(1)$ as $h\to 0$) and noting that $\hat\lambda^j=(\pm 1)^j+(\pm 1)^{j-1}j h\kappa+\cO(h^2)$, we find
  \begin{align*}
    \frac{P_{h,\hat\lambda}-P_{h,\hat\lambda}^*}{2 i h} &\equiv (\pm 1)^{j-1} j(\Im\kappa)a(0,x,y)(h x D_x)^k(h x D_y)^\alpha \\
      &\quad\hspace{5em} + h^{k+|\alpha|-1}(\pm 1)^j P_1 \bmod h\Diff_{0,\semi}^{m-1}(M).
  \end{align*}
  It then remains to observe that the Hamiltonian vector field of $p=a\xi^k\eta^\alpha\zeta^j$ satisfies $\rho_+^{-1}H_p\rho_+=j\zeta^{-1}p$, which at $\zeta=\pm 1$ evaluates to $(\pm 1)^{j-1}j a\xi^k\eta^\alpha$.
\end{proof}

%%%%%%%%%%%%%%%%%%%%%%%%%%%%%%
\subsubsection{Semiclassical 0-pseudodifferential operators}
\label{SssEB0Psdo}

Aspects of semiclassical 0-ps.d.o.s were described in \cite[\S3]{MelroseSaBarretoVasyResolvent}; we shall only need the small semiclassical 0-calculus here. We first describe the salient properties and basic applications of this ps.d.o.\ algebra in geometric terms before giving a local coordinate description and proving the composition property. We work with compact $X$ for simplicity. The space
\[
  \Psi_{0,\semi}^{s,\alpha}(X) = \rho^{-\alpha}\Psi_{0,\semi}^s(X)
\]
can then be defined in a geometric manner as follows: Schwartz kernels of elements of $\Psi_{0,\semi}^s(X)$ are distributions on the semiclassical 0-double space $X^2_{0,\semi}:=[[0,1)_h\times X^2;[0,1)\times\diag_{\pa X};\{0\}\times\diag_X]$ (with $\diag_{\pa X}\subset(\pa X)^2$ and $\diag_X\subset X^2$ denoting the diagonals) which are conormal distributions (with values in the right semiclassical 0-density bundle) of order $s-\frac14$ to the lift $\diag_{0,\semi}$ of $[0,1)\times\diag_X$ (in particular, their restriction to $h=h_0>0$ is a distribution on $X^2_0=[X^2;\diag_{\pa X}]$ which is conormal of order $s$ at the lift of $\diag_X$) and which vanish to infinite order at all boundary hypersurfaces of $X^2_{0,\semi}$ which are disjoint from $\diag_{0,\semi}$.

The principal symbol short exact sequence is
\[
  0 \to h\Psi_{0,\semi}^{s-1,\alpha}(X) \hra \Psi_{0,\semi}^{s,\alpha}(X) \to (S^{s,\alpha}/h S^{s-1,\alpha})([0,1)\times{}^0 T^*X) \to 0.
\]
Correspondingly, the elliptic set of an operator $A=(A_h)_{h\in(0,1)}\in\Psi_{0,\semi}^{s,\alpha}(X)$ is the open subset $\Ell_{0,\semi}^{s,\alpha}(A)\cup{}^+\Ell_{0,\semi}^{s,\alpha}(A)$ of $(\{0\}\times\ol{{}^0 T^*}X)\cup([0,1]\times{}^0 S^*X)$, where $\Ell_{0,\semi}^{s,\alpha}(A)\subset\{0\}\times\ol{{}^0 T^*X}$ and ${}^+\Ell_{0,\semi}^{s,\alpha}(A)\subset[0,1]\times{}^0 S^*X$; the two elliptic sets agree in $\{0\}\times{}^0 S^*X$. We shall only be concerned with the elliptic set $\Ell_{0,\semi}^{s,\alpha}(A)$ over $h=0$, which we shall thus simply regard as a subset
\[
  \Ell_{0,\semi}^{s,\alpha}(A) \subset \ol{{}^0 T^*}X;
\]
it captures the ellipticity of $A$ to leading order at $h=0$ and thus arises from the simplified principal symbol map $\sigma_{0,\semi}^{s,\alpha}$,
\[
  0 \to h\Psi_{0,\semi}^{s,\alpha}(X) \hra \Psi_{0,\semi}^{s,\alpha}(X) \xra{\sigma_{0,\semi}^{s,\alpha}} S^{s,\alpha}({}^0 T^*X) \to 0.
\]
Note that if $\varpi\in\Ell_{0,\semi}^{s,\alpha}(A)$, then also $\{h,\varpi\}\in{}^+\Ell_0^{s,\alpha}(A)$ for small $h>0$. Thus, the elliptic parametrix construction gives, for elliptic $A$, an element $B\in\Psi_{0,\semi}^{-s,-\alpha}(X)$ so that $A B-I,B A-I\in h^\infty\Psi_{0,\semi}^{-\infty,\alpha}(X)$ upon restricting $h$ to $(0,h_0)$ for sufficiently small $h_0>0$. Restricted to the space $\Diff_{0,\semi}^s(X)$ of \emph{differential} operators, $\sigma_{0,\semi}^s=\sigma_{0,\semi}^{s,0}$ maps onto the space
\[
  P^s({}^0 T^*X) \subset S^s({}^0 T^*X)
\]
of symbols which are (not necessarily homogeneous) polynomials of degree $s$ in the fibers.

We also have an operator wave front set $\WF_{0,\semi}'(A)\subset\ol{{}^0 T^*}X$ for $A\in h^{-N}\Psi_{0,\semi}^{s,\alpha}(X)$ which is the complement of all points in $\ol{{}^0 T^*}X$ near which the full symbol of $A$ vanishes to infinite order; thus, $\WF_{0,\semi}'(A)=\emptyset$ if and only if $A\in h^\infty\Psi_{0,\semi}^{-\infty,\alpha}(X)$.

Semiclassical 0-ps.d.o.s act on weighted semiclassical 0-Sobolev spaces in the expected manner. We also define the corresponding wave front set:

\begin{definition}[Semiclassical 0-Sobolev wave front set]
\label{DefNpHWF}
  Let $s,\alpha$, and suppose that $u=(u_h)_{h\in(0,1)}\in h^{-N}H_{0,h}^{-N,\alpha}(X)$ for some $N$. Then
  \[
    \WF_{0,\semi}^{s,\alpha}(u) \subset \ol{{}^0 T^*}X
  \]
  is the complement of the set of all $\varpi\in\ol{{}^0 T^*X}$ for which there exists $A\in\Psi_{0,\semi}^{s,\alpha}(X)$, elliptic at $\varpi$, so that $A u$ is uniformly bounded (as $h\to 0$) in $L^2(X)$.
\end{definition}

Thus, if $u\in h^{-N}H_{0,h}^{-N,\alpha}(X)$ satisfies $\WF_{0,\semi}^{s,\alpha}(u)=\emptyset$, then $u\in H_{0,h}^{s,\alpha}(X)$ upon restricting to $h\in(0,h_0)$ with $h_0>0$ sufficiently small. The part $\WF_{0,\semi}^{s,\alpha}(u)\cap{}^0 T^*X$ of the semiclassical 0-wave front set of $u$ at finite 0-momenta is independent of $s$.

For the commutator of $A_j\in\Psi_{0,\semi}^{s_j,\alpha_j}(X)$, $j=1,2$, with $a_j=\sigma_{0,\semi}^{s_j,\alpha_j}(A_j)$, we have
\[
  \tfrac{i}{h}[A_1,A_2]\in\Psi_{0,\semi}^{s_1+s_2-1,\alpha_1+\alpha_2}(X),\quad
  \sigma_{0,\semi}^{s_1+s_2-1,\alpha_1+\alpha_2}\bigl(\tfrac{i}{h}[A_1,A_2]\bigr) = H_{a_1}a_2.
\]
Here, in terms of local coordinates on ${}^0 T^*X$ defined by writing the canonical 1-form as
\[
  \xi\frac{\dd x}{x} + \sum_{j=1}^{n-1}\eta_j\frac{\dd y^j}{x},
\]
the Hamiltonian vector field of $p=p(x,y,\xi,\eta)$ takes the form
\begin{align*}
  H_a &= (\pa_\xi p)\Bigl(x\pa_x+\sum_{j=1}^{n-1}\eta_j\pa_{\eta_j}\Bigr) + \sum_{j=1}^{n-1}(\pa_{\eta_j}p)x\pa_{y^j} \\
    &\qquad - \Bigl(\Bigl(x\pa_x+\sum_{j=1}^{n-1}\eta_j\pa_{\eta_j}\Bigr)p\Bigr)\pa_\xi - \sum_{j=1}^{n-1}(x\pa_{y^j}p)\pa_{\eta_j}.
\end{align*}

We now proceed to prove the above statements about compositions and principal symbols. With $X$ denoting a manifold with embedded boundary, the 0-double space is the blow-up of $X^2$ at the boundary $\pa\diag_X$ of the diagonal. If $x,y$ are local coordinates on the manifold $X$, with $x$ a boundary defining function and $y\in\R^{n-1}$ denoting local coordinates on $\pa X$, let us write $x,y$ also for their pullbacks to $X^2$ from the left factor; and we write $x',y'$ for their pullbacks from the right factor. This blow-up is then that of $x=x'=0$, $y=y'$, and in the interior of the front face, called the \emph{0-front face}, one can use
\[
  x,\quad y,\quad \frac{x-x'}{x},\quad \frac{y-y'}{x}
\]
as coordinates, with the lifted diagonal being given by
\[
  \Bigl\{\frac{x-x'}{x}=0,\ \frac{y-y'}{x}=0\Bigr\}.
\]
Note that near the boundary of the lifted diagonal one can indeed always assume that the coordinates from the left and right factors on $\partial X$ are identical, i.e.\ one is on the same coordinate chart in both factors, since one is in a neighborhood of the diagonal of $\partial X\times\partial X$. Equivalently, we could use coordinates
\[
  x',\quad y',\quad \frac{x-x'}{x'},\quad \frac{y-y'}{x'}.
\]

For semiclassical families one considers $X^2\times[0,1)_h$, and blows up the boundary of the diagonal still, so in the interior of the front face (still called the 0-front face) one can simply add $h$ to the list coordinates, i.e.\ they are
\[
  x,\quad y,\quad \frac{x-x'}{x},\quad \frac{y-y'}{x},\quad h.
\]
The semiclassical 0-double space the blows up the lifted diagonal at $h=0$, i.e.
\[
  \Bigl\{\frac{x-x'}{x}=0,\ \frac{y-y'}{x}=0,\ h=0\Bigr\}.
\]
The resulting front face is the \emph{semiclassical front face}, and local coordinates in the interior are
\begin{equation}
\label{Eq0hCoord}
  x,\quad y,\quad X=\frac{x-x'}{h x},\quad Y=\frac{y-y'}{h x},\quad h.
\end{equation}
While these are valid only in the interior, in a neighborhood of the semiclassical front face, smooth functions on the semiclassical double space which vanish to infinite order at the lift of the $h=0$ face are equivalently smooth functions of the above coordinates which are {\em Schwartz} in $X,Y$. Indeed $\langle (X,Y)\rangle^{-1}$ is a defining function of the lift of this face in this region, since e.g.\ where $\frac{x-x'}{x}$ is large relative to $\frac{y-y'}{x}$ and $h$, local coordinates are
\[
  x,\quad y,\quad \frac{x-x'}{x},\quad \hat Y=\frac{y-y'}{x-x'},\quad \frac{h x}{x-x'}=X^{-1},
\]
with the latter defining the lift of the $h=0$ face. Thus,
\[
  \la (X,Y)\ra^{-1}=(1+X^2+Y^2)^{-1/2}=X^{-1}(1+X^{-2}+\hat Y^2)^{-1/2}
\]
shows that $X^{-1}$ is equivalent to $\la (X,Y)\ra^{-1}$ in this region. Instead of~\eqref{Eq0hCoord}, one can equivalently use
\[
  x',\quad y',\quad \frac{x-x'}{x'h},\quad \frac{y-y'}{x'h},\quad h;
\]
infinite order vanishing at the lift of $h^{-1}(0)$ now corresponds to Schwartz decay in the third and fourth variables.

Semiclassical 0-pseudodifferential operators $\Psi_{0,\semi}^{s,\alpha,k}=h^{-k}\Psi_{0,\semi}^{s,\alpha,0}=h^{-k}\Psi_{0,\semi}^{s,\alpha}$, with the orders being the differential, decay and semiclassical orders, are defined on the semiclassical 0-double space with Schwartz kernels demanded to be conormal to the lifted diagonal (corresponding to the differential order $s$), and have rapid decay at all faces but the 0-face (corresponding to the decay order $\alpha$) and the semiclassical face (corresponding to the semiclassical order $k$) where they are conormal. In fact, regularity in $h$ is not necessary here, and will not be imposed in what follows. However, our arguments below only need the full control of the differential order and semiclassical behavior, i.e.\ we work modulo $\Psi_{0,\semi}^{-\infty,\alpha,-\infty}$. Due to this, we may impose that the Schwartz kernels are compactly supported in a neighborhood of the semiclassical front face in the sense that the support does not intersect any of the boundary hypersurfaces except the necessary ones, i.e.\ the 0-face, the semiclassical face and the lift of $h=0$, rapidly vanishing at the latter. Correspondingly, elements of $\Psi_{0,\semi}^{-\infty,\alpha,k}$ {\em with the stated support condition} are of the form
\[
  (A v)(x,y)=h^{-n}\int K(x,y,X,Y,h) v(x',y')\frac{\dd x'\,\dd y'}{(x')^n}
\]
where $K$ is a conormal function (i.e.\ a symbol) of order $\alpha,k$, with rapid decay in $(X,Y)$, i.e.\ it satisfies estimates
\begin{equation}
\label{eq:kernel-form}
  |(x D_x)^\eps D_y^\beta D_X^\gamma D_Y^\delta K(x,y,X,Y,h)|\leq C_{\eps\beta\gamma\delta N} h^{-k}x^{-\alpha}\la(X,Y)\ra^{-N},
\end{equation}
with support in
\[
  \Bigl|\frac{x-x'}{x}\Bigr|<c,\quad \Bigl|\frac{y-y'}{x}\Bigr|<c,
\]
where $c$ can be taken small (and in any case should be $<1/2$, so $\frac{x'}{x}$ is bounded and bounded away from $0$, i.e.\ $x,x'$ are comparable). Note that this can be rewritten using
\begin{equation}
\label{Eq0hCoord2}
  x'=x-h x X,\quad y'=y-h x Y,\quad \frac{x'}{x}=1-h X
\end{equation}
as
\begin{equation}
\label{eq:kernel-resolved-action}
  (A v)(x,y)=\int K(x,y,X,Y,h) v(x-h x X,y-h x Y)(1-h X)^{-n}\,\dd X\,\dd Y,
\end{equation}
with $K$ supported in
\begin{equation}
\label{eq:semicl-kernel-big-support}
  |X|<\frac{c}{h},\quad |Y|<\frac{c}{h}.
\end{equation}

We remark that in order to obtain arbitrary elements of $\Psi_{0,\semi}^{-\infty,\alpha,k}$ from these, we just need to add elements of $\Psi_{0,\semi}^{-\infty,\alpha,-\infty}$, which in view of the infinite order vanishing at the semiclassical front face are in fact simply $\cO(h^\infty)$ families of elements of $\Psi_0^{-\infty,\alpha}$, and hence have simple properties on semiclassical 0-Sobolev spaces.

Now, for general $s$, semiclassical 0-pseudodifferential operators $\Psi_{0,\semi}^{s,\alpha,k}$ can be considered as the sum of $\Psi_{0,\semi}^{-\infty,\alpha,k}$ and elements of $\Psi_{0,\semi}^{s,\alpha,k}$ whose support only intersects the faces that intersect the lifted diagonal, i.e.\ the semiclassical and 0-faces, and thus have compact support in $X,Y$.  Now (standard) symbols $a$ of compact support in $x,y,X,Y$ and of order $s,\alpha,k$ satisfy estimates
\begin{equation}
\label{eq:full-symbol-estimates}
  |(x D_x)^\eps D_y^\beta D_X^\gamma D_Y^\delta D_\xi^\mu D_\eta^\nu a(x,y,X,Y,h,\xi,\eta)|\leq C_{\eps\beta\gamma\delta\mu\nu} h^{-k}x^{-\alpha}\la(\xi,\eta)\ra^{s-|\mu|-|\nu|},
\end{equation}
so $s$ is the usual symbolic, $\alpha$ is the growth at the boundary, and $k$ the semiclassical order, with the sign convention that the space grows as $s,\alpha,k$ grow.  Hence, using the definition of conormal distributions, locally, the operators with Schwartz kernel supported near the diagonal acting on test functions $v$ are of the form
\[
  (A v)(x,y)=(2\pi h)^{-n}\int e^{i(X\xi+Y\eta)} a(x,y,X,Y,h,\xi,\eta) v(x',y')\,\dd\xi\,\dd\eta\,\frac{\dd x'\,\dd y'}{(x')^n},
\]
interpreted as an oscillatory integral (i.e.\ initially for symbols of sufficiently negative order, then extended by continuity in appropriate seminorms and density). The support may be taken to lie in $|X|,|Y|<C$, $C>0$, $x<x_0$, and one may even take $C,x_0$ small (though $C>0$ small is not important here, while $x_0$ small is implicit already from the region to which we are localizing in this whole discussion), where the powers of $x'$ and $h$ enter as the normalization so that a symbol of order $s,\alpha,k$ defines a pseudodifferential operator of the same order, as we shall momentarily see. Note that by~\eqref{Eq0hCoord2}, on the support of $a$, $\frac{x'}{x}$ differs from $1$ by an $\cO(h)$ term, and similarly $y$ and $y'$ differ by an $\cO(h x)$ term. The integral can be rewritten as
\begin{equation}
\begin{aligned}
\label{eq:joint-quantization-formula}
    (Av)(x,y)=(2\pi)^{-n}\int e^{i(X\xi+Y\eta)} &a(x,y,X,Y,h,\xi,\eta) \\
    &\times v(x-h x X,y-h x Y)(1-h X)^{-n}\,\dd\xi\,\dd\eta\,\dd X\,\dd Y.
\end{aligned}
\end{equation}
Note that the singular powers of $x'$ and $h$ have disappeared, so an operator of order $(s,0,0)$ is indeed a `standard' order\footnote{the shift by $-\frac14$ being the conventional shift due to the presence of the parametric variable $h$} $s-\frac14$ conormal distribution on the semiclassical 0-double space, explaining the order convention above; in general there is a weight $\alpha$ at the zero face and $k$ at the semiclassical face.

We can combine the two classes we discussed into a single quantization formula.\footnote{While our quantization formula is not completely global since we localize to the region $|X|<c/h$, $|Y|<c/h$, it could be made global by a slight twist. In order to do this, one should work with $\bar X=\frac{x-x'}{(x x')^{1/2}h}$, $\bar Y=\frac{y-y'}{(x x')^{1/2}h}$, in place of $X,Y$, in which case no cutoff $\chi$ is needed below in \eqref{eq:semicl-zero-quantization}. However, the formulae become more cumbersome, and as we do not need the global results, we stick with our choices for the simplicity of presentation.} Namely, if in \eqref{eq:full-symbol-estimates} we allow $a$ to be supported in $|X|<c/h$, $|Y|<c/h$, as in \eqref{eq:semicl-kernel-big-support} for the kernel $K$, without changing the estimate \eqref{eq:full-symbol-estimates} itself (so it is simply a uniform estimate in $X,Y$, subject to this support condition, though the precise estimate is not too important, and even polynomial growth in $(X,Y)$ could be allowed for the Schwartzness reason explained below), then the joint quantization formula \eqref{eq:joint-quantization-formula} and the kernel action \eqref{eq:kernel-resolved-action} show that the distributional kernel is in fact simply the inverse Fourier transform of $a$ in the last two variables, evaluated at $(X,Y)$. Calling the inverse Fourier variables $(\tilde X,\tilde Y)$ for a moment for clarity, given the symbolic estimates for $a$ this inverse Fourier transform is a conormal distribution to $\tilde X=0$, $\tilde Y=0$, with Schwartz behavior at infinity in $(\tilde X,\tilde Y)$, thus in $|(\tilde X,\tilde Y)|>1$, say, is $C^\infty$ and satisfies estimates of the form
\begin{equation}
\label{eq:full-symbol-estimates-IFT}
  |(x D_x)^\eps D_y^\beta D_X^\gamma D_Y^\delta D_{\tilde X}^\mu D_{\tilde Y}^\nu (\cF_{\xi,\eta}^{-1}a)(x,y,X,Y,h,\tilde X,\tilde Y)|\leq C_{\eps\beta\gamma\delta\mu\nu,N} h^{-k}x^{-\alpha}\la(\tilde X,\tilde Y)\ra^{-N}.
\end{equation}
Hence under pullback by the map
\[
  (x,y,X,Y,h)\mapsto (x,y,X,Y,h,\tilde X=X,\tilde Y=Y)
\]
away from $X=0,Y=0$, i.e.\ the lifted diagonal, the result is that the Schwartz kernel is $C^\infty$ away from the diagonal with estimates
\begin{equation}
\label{eq:full-symbol-estimates-IFT-pullback}
  |(x D_x)^\eps D_y^\beta D_X^\gamma D_Y^\delta(\cF_{\xi,\eta}^{-1}a)(x,y,X,Y,h,X,Y)|\leq C_{\eps\beta\gamma\delta,N}h^{-k}x^{-\alpha}\la(X,Y)\ra^{-N},
\end{equation}
which is exactly of the form \eqref{eq:kernel-form}. On the other hand, in $|(\tilde X,\tilde Y)|<2$, hence after pullback in $|(X,Y)|<2$, this is a conormal distribution (with $\tilde X D_{\tilde X}$, $\tilde X D_{\tilde Y}$, etc., i.e.\ vector fields tangent to $\tilde X=0$, $\tilde Y=0$, preserving regularity after the inverse Fourier transform, hence $X D_X$, $X D_Y$, etc., after the pull back), and hence exactly of the oscillatory integral form discussed above. Conversely, any kernel of the form \eqref{eq:kernel-form} can be divided up by a partition of unity to one supported in $|(X,Y)|>1$, and one in $|(X,Y)|<2$. Write the latter piece as $\chi(h(X,Y))\chi_2(X,Y)K$ (with the first factor identically $1$ on the support of $K$); then the Fourier transform of $\chi_2(X,Y)K$ in $X,Y$ gives rise to an amplitude, call it
\[
  a_2=\chi(h(X,Y))\cF_{(X,Y)\to(\xi,\eta)}(\chi_2(X,Y)K),
\]
which satisfies both the support conditions and the estimates \eqref{eq:full-symbol-estimates}.

We can mostly eliminate the $X,Y$ dependence of $a$ in the following sense: if we fix $\phi\in\CIc(\R^n)$ which is identically $1$ near $0$, we can require
\[
  a(x,y,X,Y,h,\xi,\eta)=\phi(X,Y)a_0(x,y,h,\xi,\eta)+a_N(x,y,X,Y,h,\xi,\eta).
\]
where $a_N$ has $N$ lower order in the differential order sense. Indeed, this follows by simply Taylor expanding the original $a$ in $X,Y$ around $0$, and using that $X^\eps Y^\beta$, after integration by parts, converts into $D_\xi^\eps D_\eta^\beta$. In particular, to leading order, i.e.\ modulo $S^{s-1,\alpha,k}$,
\[
  a_0(x,y,h,\xi,\eta)=a(x,y,0,0,h,\xi,\eta),
\]
where $a$ on the right hand side is the original $a$. An asymptotic summation argument then allows one to replace $a_N$ by $a_\infty$ which is order $-\infty$ in the differential order sense. One can then replace $\phi(X,Y)$ in front of $a_0$ by $\chi(h(X,Y))$, $\chi$ identically $1$ near $0$, for the above arguments show that the difference is of order $-\infty$ in the differential order sense and thus can be absorbed into $a_\infty$. Finally, the above arguments using the Fourier transform also show that $a_\infty$ gives rise to an operator of order $-\infty$ which locally on $\supp\chi(h(X,Y))$ can be written as the oscillatory integral for a symbol in $S^{-\infty,\alpha,k}$ which in fact is independent of $X,Y$. Thus, modulo $\Psi_{0,\semi}^{-\infty,\alpha,-\infty}$, semiclassical zero pseudodifferential operators are of the form
\begin{equation}
\begin{aligned}
\label{eq:semicl-zero-quantization}
  (A v)(x,y)&=(q_{L,0,\semi}(a)v)(x,y)\\
    &=(2\pi h)^{-n}\int e^{i(X\xi+Y\eta)} \chi(h(X,Y))a(x,y,h,\xi,\eta) v(x',y')\,\dd\xi\,\dd\eta\,\frac{\dd x'\,\dd y'}{(x')^n},
\end{aligned}
\end{equation}
where $\chi$ is as above, i.e.\ smooth, compactly supported and identically $1$ near $0$.

We next consider the composition of two such operators. For the second operator, we use the second systems of local coordinates mentioned previously. Since the product Schwartz kernel needs to be evaluated on $X^3$, we use $x'$ as the output and $x''$ as the input variables of the second operator, so
\begin{align*}
  B u(x',y')=(2\pi h)^{-n}\int e^{i(X'\xi'+Y'\eta')} &b(x'',y'',X',Y',h,\xi',\eta') \\
    & u(x'',y'')\,\dd\xi'\,\dd\eta'\,\frac{\dd x''\,\dd y''}{(x'')^n},
\end{align*}
where
\[
  X'=\frac{x'-x''}{h x''},\quad Y'=\frac{y'-y''}{h x''}.
\]
Thus, $\frac{x'}{x''}=1+h X'$ differs from $1$ by $\cO(c)$.

The Schwartz kernel of $AB$ then is of the form
\begin{align*}
  A B u(x,y)=(2\pi h)^{-2 n}\int &e^{i(X\xi+Y\eta+X'\xi'+Y'\eta')}\\
    & \times a(x,y,X,Y,h,\xi,\eta) b(x'',y'',X',Y',h,\xi',\eta')\\
    & \times u(x'',y'') \,\dd\xi\,\dd\eta\,\frac{\dd x'\,\dd y'}{(x')^n}\,\dd\xi'\,\dd\eta'\,\frac{\dd x''\,\dd y''}{(x'')^n};
\end{align*}
our goal is to show that this indeed is the Schwartz kernel of a pseudodifferential operator of the desired type.  First, let us express the integrals in terms of the expected coordinates
\[
  x,\quad y,\quad X''=\frac{x-x''}{h x},\quad Y''=\frac{y-y''}{h x},\quad h;
\]
note that
\[
  x''=x-h x X'',\quad y''=y-h x Y'',\quad \frac{x''}{x}=1-h X''.
\]
We will keep the variables of integration $X,Y,\xi,\eta,\xi',\eta'$, and express $X',Y'$ in terms of $X,Y,X'',Y''$. Namely,
\[
  \frac{x'}{x''}=\frac{x'}{x}\cdot\frac{x}{x''}=\frac{1-h X}{1-h X''},
\]
so
\[
  X'=h^{-1}\Big(\frac{x'}{x''}-1\Big)=\frac{X''-X}{1-h X''},\qquad
  Y'=\frac{x}{x''}(Y''-Y)=\frac{Y''-Y}{1-h X''}.
\]
Moreover, the density $\frac{\dd x'\,\dd y'}{(x')^n}$ becomes $(1-h X)^{-n}h^n\,\dd X\,\dd Y$ as above.  Hence, our integral takes the form (suppressing the input test function, and writing out the Schwartz kernel)
\begin{align*}
  K_{AB}(x,y,X'',Y'') & \\
    =(2\pi)^{-2n} h^{-n}\int
    &\, \exp\Bigl[i\Bigl(X\xi+Y\eta+\frac{X''-X}{1-h X''}\xi'+\frac{Y''-Y}{1-h X''}\eta'\Bigr)\Bigr] \\
    & \times a(x,y,X,Y,h,\xi,\eta)\\
    & \times b\Bigl(x-h x X'', y-h x Y'', \frac{X''-X}{1-h X''}, \frac{Y''-Y}{1-h X''},h,\xi',\eta'\Bigr) \\
    & \times (1-h X)^{-n}\,\dd\xi\,\dd\eta\,\dd X\,\dd Y\,\dd\xi'\,\dd\eta'.
\end{align*}
Note that on the support of $a b$, $\frac{x'}{x}$, $\frac{x'}{x''}$ are both $1+\cO(c)$, and hence their ratio $\frac{x}{x''}$ is also $1+\cO(c)$, so $h X''$ is indeed $\cO(c)$, and thus $hY''$ is also $\cO(c)$.  This is {\em almost} a slightly non-standard parameterization of a conormal distribution (in the region where $X''$, $Y''$ are bounded), except that $a b$ is not a symbol (though it has the desired support properties), rather a product type symbol. This is, however, easily remedied by noting that the phase is non-stationary where the symbol type behavior fails, i.e.\ where $(\xi',\eta')$ is small relative to $(\xi,\eta)$, or vice versa. To see this, note that the derivative of the phase in $(X,Y)$ is
\[
  \Bigl(\xi-\frac{\xi'}{1-h X''}\Bigr)\,\dd X+\Bigl(\eta-\frac{\eta'}{1-h X''}\Bigr)\,\dd Y,
\]
which, if $h X''$ is small (to which region we can focus as already noted), is non-zero under either of these two scenarios. Thus, integration by parts in $(X,Y)$ allows us to rewrite the integral, when the integrand is localized to this a priori troublesome region, as one with an amplitude that is in fact rapidly decaying, and thus for which one easily sees that the composite operator behaves as desired. Thus, modulo such smoothing operators, the integrand can be assumed to be localized away from this region, hence $ab$ can be regarded as a symbol, and then the integral is a usual parameterization of a Lagrangian distribution, albeit in the slightly broader interpretation that we are localizing to $h X'',hY''$ being $\cO(c)$, and not to the standard interpretation where $X'',Y''$ are considered bounded.

One can then rewrite this oscillatory integral in the usual (though again non-standard as we are working in a larger region as above) conormal parameterization formulation by using the standard stationary phase lemma (rescaling the symbolic variables to formally get into its compactly supported form, which can be done as we may assume based on the above discussion that $|(\xi,\eta)|\sim|(\xi',\eta')|$), noting that the critical points of the phase in $(X,Y,\xi',\eta')$ are $\xi=\frac{\xi'}{1-h X''}$, $\eta=\frac{\eta'}{1-h X''}$, $X''=X$, $Y''=Y$, while the determinant of the Jacobian is $(1-h X'')^{-2n}$. This shows that
\[
  K_{A B}(x,y,X'',Y'')=(2\pi h)^{-n}\int e^{i(X''\xi+Y''\eta)}c(x,y,X'',Y'',h,\xi,\eta)\,\dd\xi\,\dd\eta,
\]
where $c$ is a symbol given by
\[
  a(x,y,X'',Y'',h,\xi,\eta)b(x, y, 0, 0,h,\xi,\eta)
\]
modulo terms with extra $h(X'',Y'')$ vanishing, i.e.\ keeping in mind that extra $(X'',Y'')$ can be used to lower the symbolic order, at the operator level modulo $\Psi_{0,\semi}^{s-1,\alpha,k-1}$.

Recall also that $a$ can be arranged to be independent of $X'',Y''$ up to an overall compactly supported factor $\chi(h(X'',Y''))$, and hence if $A=q_{L,0,\semi}(a)$, $B=q_{L,0,\semi}(b)$ then $AB=q_{L,0,\semi}(c)$, where $c$ is given by
\[
  a(x,y,h,\xi,\eta)b(x, y,h,\xi,\eta)
\]
modulo $S^{s-1,\alpha,k-1}$, and thus at the operator level modulo $\Psi_{0,\semi}^{s-1,\alpha,k-1}$.

%%%%%%%%%%%%%%%%%%%%%%%%%%%%%%%%%%%%%%%%%%%%%%%%%%%%%%%%%%%%%%%%%%%%%%
\section{Geometric and analytic setting}
\label{SO}

The goal of this section is to define the class of wave type operators which we shall study in this paper (see Definition~\ref{DefOp}). We discuss the underlying geometry in~\S\ref{SsOGeo} before turning to the differential operators of interest in~\S\ref{SsOp}.

%%%%%%%%%%%%%%%%%%%%%%%%%%%%%%%%%%%%%%%%%%%%%%%%%%
\subsection{Admissible metrics}
\label{SsOGeo}

Before describing the general setup in~\S\ref{SssOGeo2}, we consider as a model case the Minkowski metric.

%%%%%%%%%%%%%%%%%%%%%%%%%%%%%%
\subsubsection{The Minkowski metric near null infinity.}
\label{SssOGeoMink}

In polar coordinates, the Minkowski metric on $\R^{1+n}$ takes the form
\begin{equation}
\label{EqOMink}
  g_0 = -\dd t^2+\dd r^2 + r^2\slg,
\end{equation}
where $t\in\R$, $r>0$, and $\slg$ is the standard metric on $\Sph^{n-1}$. Since this can equivalently be written as $g_0=-\dd t^2+\sum_{j=1}^n(\dd x^j)^2$ where $(t,x^1,\ldots,x^n)$ are standard coordinates on $\R^{1+n}$, we find that $g_0$ is a Lorentzian scattering metric, by which we mean
\[
  g_0 \in \CI(\ol{\R^{n+1}};S^2\,\Tsc^*\ol{\R^{n+1}}),\qquad
  g_0^{-1}\in\CI(\ol{\R^{n+1}};S^2\,\Tsc\ol{\R^{n+1}}).
\]

We are interested in the structure of $g_0$ near the large end of the future light cone; thus, we introduce
\begin{equation}
\label{EqOGeoMinkRhoV}
  \varrho := \frac{1}{t+r},\qquad v:=\frac{t-r}{t+r}.
\end{equation}
Inserting these into~\eqref{EqOMink} gives
\begin{equation}
\label{EqOMinkBdy}
  g_0 = -v\frac{\dd\varrho^2}{\varrho^4} + \frac{\dd\varrho}{\varrho^2}\otimes_s \frac{\dd v}{\varrho} + \Bigl(\frac{1-v}{2\varrho}\Bigr)^2\slg.
\end{equation}
The closure of every future light cone in $\R^{n+1}$ intersects $\pa\ol{\R^{n+1}}$ in the $(n-1)$-dimensional set
\[
  Y:=\{\varrho=v=0\}\subset\ol{\R^{n+1}}.
\]
Thus, to resolve different light cones also at infinity, we blow up $Y$ and obtain the manifold with corners
\[
  \tilde M = [\ol{\R^{n+1}}; Y].
\]
This is defined as $(\ol{\R^{n+1}}\setminus Y)\sqcup S N^+Y$, with $S N^+Y=N^+Y/\R_+$ denoting the (non-strictly) inward pointing spherical normal bundle, and equipped with the minimal smooth structure in which polar coordinates around $Y$ are smooth. We denote the front face (i.e.\ $S N^+Y$) of $\tilde M$ by $\tilde\scri^+$, the `north cap'---the closure of $\{\varrho=0,v>0\}$---by $\tilde I^+$, and the closure of $\{\varrho=0,v<0\}$ by $\tilde I^0$. (The latter terminology is imprecise and should only be taken seriously in $t>-r$, as we are not resolving past null infinity here. See also \cite[\S3]{BaskinVasyWunschRadMink} and \cite[\S2]{HintzVasyMink4}.) The restriction of the blow-down map $\tilde M\to\ol{\R^{n+1}}$ (the identity on $\ol{\R^{n+1}}\setminus Y$, and the base projection on $S N^+Y$) restricts to a fibration $\tilde\scri^+\to Y$, with fibers diffeomorphic to a closed interval. See Figure~\ref{FigOGeo}.

Near the interior of $\tilde\scri^+$, local coordinates are given by
\[
  \varrho=\frac{1}{t+r},\qquad t_*:=\frac{v}{\varrho}=t-r,\qquad\omega\in\Sph^{n-1},
\]
in which the metric takes the double null form
\[
  g_0 = \frac{\dd\varrho}{\varrho^2}\otimes_s\dd t_* + \Bigl(\frac14+\cO(\varrho)\Bigr)\varrho^{-2}\slg.
\]
Freezing coefficients at a single fiber of $\tilde\scri^+$, let us consider the model metric
\[
  \frac{\dd\varrho}{\varrho^2}\otimes_s\dd t_* + \frac{\dd y^2}{\varrho^2}
\]
where $y\in\R^{n-1}$. This is homogeneous of degree $-2$ under pullback by the parabolic scaling $(0,\infty)\ni\lambda\colon(\varrho,y)\mapsto(\lambda^2\varrho,\lambda y)$ (see also Remark~\ref{RmkIParabolic}); it is thus natural to change the homogeneity of the metric by multiplying it by $\varrho$, and subsequently introducing $x_{\!\scri}:=\varrho^{1/2}$. This produces the metric $2\frac{\dd x_{\!\scri}}{x_{\!\scri}}\otimes_s\dd t_*+\frac{\dd y^2}{x_{\!\scri}^2}$, which is an edge metric on $\R_{t_*}\times[0,1)_{x_{\!\scri}}\times\R_y^{n-1}$. To capture this invariantly, one defines the manifold
\begin{equation}
\label{EqOMinkM}
  M = [\tilde M;\tilde\scri^+,\half]
\end{equation}
as the \emph{square root blow-up} of $\tilde M$ at $\tilde\scri^+$, defined to be $\tilde M$ as a set, but with the square root of a defining function of $\tilde\scri^+$ adjoined to the smooth structure; denoting the lifts of $\tilde I^0,\tilde\scri^+,\tilde I^+$ by $I^0,\scri^+,I^+$, null infinity $\scri^+$ is the total space of a fibration with base $\Sph^{n-1}$ and closed intervals as fibers as before, and $\varrho g_0$ is a nondegenerate Lorentzian edge metric on $M\setminus(I^0\cup I^+)$.

\begin{figure}[!ht]
\centering
\includegraphics{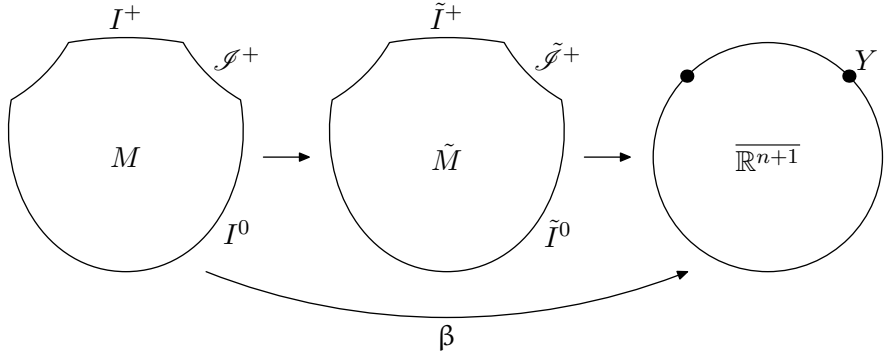}
\caption{The resolution $\tilde M$ of the radial compactification $\ol{\R^{n+1}}$ at the light cone $Y$ at future infinity, and the boundary hypersurfaces of $\tilde M$. The manifold $M$ is the square root blow-up of $\tilde M$ at $\tilde\scri^+$, and $\upbeta$ is the total blow-down map $M\to\ol{\R^{n+1}}$.}
\label{FigOGeo}
\end{figure}

Away from $\scri^+\subset M$, i.e.\ away from $Y\subset\pa\ol{\R^{n+1}}$ on the other hand, the degree $-2$ homogeneity of $g_0$ in~\eqref{EqOMinkBdy} under scaling in $\varrho$ suggests working with $\varrho^2 g_0$, which is a nondegenerate Lorentzian b-metric on $\ol{\R^{n+1}}\setminus Y$.\footnote{While the nondegeneracy and homogeneity are valid on all of $\ol{\R^{n+1}}$, we ignore this fact here, as we are ultimately interested in metrics and operators which have smooth or conormal coefficients only on $M$, but not on $\ol{\R^{n+1}}$---working on the resolved space $M$ with its edge-b-structure will be the appropriate framework for this task.} Globally on $M$, the rescaling $\varrho^2 x_{\!\scri}^{-2} g_0$, where $x_{\!\scri}\in\CI(M)$ is a defining function of $\tilde\scri^+$, is a smooth nondegenerate Lorentzian edge-b-metric on $M$, as we proceed to demonstrate.

To study $g_0$ near the transition regions $I^0\cap\scri^+$ and $\scri^+\cap I^+$, we first pass to the coordinates $t_*=t-r$, $r$, $\omega\in\Sph^{n-1}$, in which the Minkowski metric and its dual metric take the form
\[
  g_0 = -\dd t_*^2 - 2\dd t_*\,\dd r + r^2\slg,\qquad
  g_0^{-1} = -2\pa_{t_*}\otimes_s\pa_r + \pa_r^2 + r^{-2}\slg^{-1}.
\]
We then introduce:

\begin{definition}[Open sets containing parts of $\scri^+$; local coordinates]
\label{DefOGeoU0p}
  For $T\in\R$, we define
  \[
    \cU_0(T):=\Bigl\{r>1,\ \ t_*<T-1\Bigr\}, \qquad
    \cU_+(T):=\Bigl\{r>1,\ \ t_*>T+1\Bigr\}\subset M.
  \]
  On $\cU_0(T)$, we define
  \begin{equation}
  \label{EqOGeoCoordI0}
    \rho_0 = \frac{1}{T-t_*},\qquad
    x_{\!\scri} = \sqrt{\frac{T-t_*}{r}},\qquad
    \rho_+ = 1,
  \end{equation}
  while on $\cU_+(T)$ we put
  \begin{equation}
  \label{EqOGeoCoordIp}
    \rho_0 = 1,\qquad
    x_{\!\scri} = \sqrt{\frac{t_*-T}{r}},\qquad
    \rho_+ = \frac{1}{t_*-T}\,.
  \end{equation}
\end{definition}

Thus, $[0,1)_{\rho_0}\times[0,1)_{x_{\!\scri}}\times\Sph^{n-1}\subset\cU_0(T)$ and $[0,1)_{x_{\!\scri}}\times[0,1)_{\rho_+}\times\Sph^{n-1}\subset\cU_+(T)$ are two coordinate charts on $M$. The union $\cU_0(T)\cup\cU_+(T')$ contains a neighborhood of $\scri^+$ provided $T'+1<T-1$. The functions $\rho_0,x_{\!\scri}$, and $\rho_+$ defined in the two charts do not agree on the overlap of the two charts (and which definition we use will always be made explicit); they are local defining functions of $I^0$, $\scri^+$, and $I^+$, respectively. Note also that $\varrho$ is a smooth positive multiple of $\rho_0 x_{\!\scri}^2\rho_+=r^{-1}$ on either coordinate chart. Now, on $\cU_0(T)$, we have
\begin{subequations}
\begin{equation}
\label{EqOGeoI0VF}
  \pa_{t_*} = \rho_0\Bigl(\rho_0\pa_{\rho_0}-\frac12 x_{\!\scri}\pa_{x_{\!\scri}}\Bigr),\qquad
  \pa_r = -\frac12\rho_0 x_{\!\scri}^3\pa_{x_{\!\scri}},
\end{equation}
and therefore
\begin{equation}
\label{EqOGeoMetI0g0}
  \rho_0^{-2}x_{\!\scri}^{-2}g_0^{-1} \equiv -\frac12 x_{\!\scri}\pa_{x_{\!\scri}}\otimes_s(x_{\!\scri}\pa_{x_{\!\scri}}-2\rho_0\pa_{\rho_0}) + x_{\!\scri}^2\slg^{-1} \bmod x_{\!\scri}\CI(\cU_0(T);S^2\,\Teb M),
\end{equation}
\end{subequations}
with $\Teb M$ spanned by $x_{\!\scri}\pa_{x_{\!\scri}}$, $\rho_0\pa_{\rho_0}$, $x_{\!\scri}\,T\Sph^{n-1}$. On $\cU_+(T)$, we similarly compute
\begin{subequations}
\begin{equation}
\label{EqOGeoIpVF}
  \pa_{t_*} = -\rho_+\Bigl(\rho_+\pa_{\rho_+}-\frac12 x_{\!\scri}\pa_{x_{\!\scri}}\Bigr),\qquad
  \pa_r = -\frac12\rho_+ x_{\!\scri}^3\pa_{x_{\!\scri}},
\end{equation}
and therefore
\begin{equation}
\label{EqOGeoMetIpg0}
\begin{split}
  x_{\!\scri}^{-2}\rho_+^{-2}g_0^{-1} &\equiv \frac12 x_{\!\scri}\pa_{x_{\!\scri}}\otimes_s(x_{\!\scri}\pa_{x_{\!\scri}}-2\rho_+\pa_{\rho_+}) + x_{\!\scri}^2\slg^{-1} \bmod x_{\!\scri}\CI(\cU_+(T);S^2\,\Teb M),
\end{split}
\end{equation}
\end{subequations}
with $\Teb M$ now spanned by $x_{\!\scri}\pa_{x_{\!\scri}}$, $\rho_+\pa_{\rho_+}$, $x_{\!\scri}\,T\Sph^{n-1}$.

%%%%%%%%%%%%%%%%%%%%%%%%%%%%%%
\subsubsection{General setup}
\label{SssOGeo2}

We proceed to generalize the above example of the Minkowski metric. Thus, suppose $M_0$ is an $(n+1)$-dimensional manifold with boundary $\pa M_0\neq\emptyset$, and suppose $Y\subset\pa M_0$ is a compact and boundaryless embedded $1$-codimensional submanifold whose normal bundle (inside $\pa M_0$) is orientable; thus, there exists a collar neighborhood $[0,\eps)_\varrho\times(-\eps,\eps)_v\times Y$ of $Y$ in $M_0$. Recalling~\eqref{EqOMinkBdy}, we make the following definition. We define $\tilde M=[M_0;Y]$ and $M=[\tilde M;\tilde\scri^+,\half]$ (with $\tilde\scri^+$ the lift of $Y$) as in~\eqref{EqOMinkM}. We call $I^0$ (spacelike infinity), resp.\ $I^+\subset M$ (future timelike infinity) the boundary hypersurface of $M$ where $v<0$, resp.\ $v>0$ (and we assume that $I^0\neq I^+$), and $\scri^+\subset M$ (null infinity) denotes the lift of $\tilde\scri^+$.

\begin{definition}[Admissible metrics]
\label{DefOGeoAdm}
  Let $\ell_0,\ell_+\in(0,1]$ and $\ell_{\!\scri}\in(0,\half]$. A Lorentzian metric $g\in\CI(M_0^\circ;S^2 T^*M_0^\circ)$ is called an \emph{$(\ell_0,2\ell_{\!\scri},\ell_+)$-admissible metric} (or simply \emph{admissible metric}) if
  \begin{equation}
  \label{EqOGeoAdmDef}
  \begin{split}
    &\rho_0^2 x_{\!\scri}^2\rho_+^2\biggl[g - \Bigl(-v\frac{\dd\varrho^2}{\varrho^4}+\frac{\dd\varrho}{\varrho^2}\otimes_s\frac{\dd v}{\varrho}+\frac{k(y,\dd y)}{4\varrho^2}\Bigr)\biggr] \\
    &\hspace{10em} \in (x_{\!\scri}\CI+\cA^{(\ell_0,2\ell_{\!\scri},\ell_+)})(M;S^2\,\Teb^*M),
  \end{split}
  \end{equation}
  where $k$ is a Riemannian metric on $Y$, and $\rho_0,x_{\!\scri}$, and $\rho_+$ are defining functions of $I^0$, $\scri^+$, and $I^+$, respectively.
\end{definition}

\begin{rmk}[Weaker error term: geometry]
\label{RmkOGeoAdmWeaker}
  The remainder of this section as well as the analysis of the null-bicharacteristic flow of admissible metrics in the eb-phase space in~\S\ref{SsMF} go through with only notational modifications if we relax~\eqref{EqOGeoAdmDef} to membership in the space $\cA^{\bigish(((0,0),\ell_0),\,2\ell_{\!\scri},\,((0,0),\ell_+)\bigish)}(M;S^2\,\Teb^*M)$. Admissible metrics arising in applications (e.g.\ in nonlinear stability problems) typically have the stronger form~\eqref{EqOGeoAdmDef}, which in particular entails the smoothness (rather than mere conormality), as an eb-metric, of the restriction of $\rho_0^2 x_{\!\scri}^2\rho_+^2 g$ to $I^0$, resp.\ $I^+$ down to $I^0\cap\scri^+$, resp.\ $I^+\cap\scri^+$. See also Remark~\ref{RmkOpWeak}.
\end{rmk}

The inclusion of the normalization factor $2$ in the weight at $\scri^+$ is merely a matter of convention, motivated by the square root blow-up (so $x_{\!\scri}^{2\ell_{\!\scri}}=\rho_{\!\scri}^{\ell_{\!\scri}}$ where $\rho_{\!\scri}=x_{\!\scri}^2$ is a defining function of $\tilde\scri^+\subset\tilde M$). The factor $\frac14$ is inserted so as to make $k=\slg$ in the case of the Minkowski metric (cf.\ \eqref{EqOMinkBdy}). Since the leading order term in~\eqref{EqOGeoAdmDef} is the Minkowski metric from~\eqref{EqOMinkBdy} (possibly with a different metric on $Y\cong\Sph^{n-1}$), we see from~\eqref{EqOGeoMetI0g0} and \eqref{EqOGeoMetIpg0} that Definition~\ref{DefOGeoAdm} can be stated in a number of equivalent ways.
\begin{enumerate}
\item (Dual metric.) The dual metric $g^{-1}$ is of the form
  \begin{equation}
  \label{EqOGeoAdmDual}
  \begin{split}
    &\rho_0^{-2}x_{\!\scri}^{-2}\rho_+^{-2}\Bigl[g^{-1}-\varrho^2\Bigl(4(\varrho\pa_\varrho+v\pa_v)\otimes_s\pa_v+4 k^{-1}(y,\pa_y)\Bigr)\Bigr] \\
    &\hspace{10em} \in (x_{\!\scri}\CI+\cA^{(\ell_0,2\ell_{\!\scri},\ell_+)})(M;S^2\,\Teb M).
  \end{split}
  \end{equation}
\item (Near $I^0\cap\scri^+$, resp.\ $\scri^+\cap I^+$.) On $\cU_0(T)$, resp.\ $\cU_+(T)$ (see Definition~\ref{DefOGeoU0p}) and in the coordinates\footnote{Here, $\rho_0,x_{\!\scri}$, resp.\ $x_{\!\scri},\rho_+$ are defined using $t_*=t-r$, with $t$ and $r$ in turn determined by $\varrho$ and $v$ via~\eqref{EqOGeoMinkRhoV}; that is, $t=\frac{1+v}{2\varrho}$ and $r=\frac{1-v}{2\varrho}$.}~\eqref{EqOGeoCoordI0}, resp.\ \eqref{EqOGeoCoordIp},\footnote{In terms of the metric $g$, this is equivalent to $\rho_0^2 x_{\!\scri}^2 g \equiv 2\frac{\dd\rho_0}{\rho_0}\otimes_s\bigl(\frac{\dd\rho_0}{\rho_0}+2\frac{\dd x_{\!\scri}}{x_{\!\scri}}\bigr) + k\bigl(y,\frac{\dd y}{x_{\!\scri}}\bigr)$, resp.\ $x_{\!\scri}^2\rho_+^2 g \equiv -2\frac{\dd\rho_+}{\rho_+}\otimes_s\bigl(\frac{\dd\rho_+}{\rho_+}+2\frac{\dd x_{\!\scri}}{x_{\!\scri}}\bigr) + k\bigl(y,\frac{\dd y}{x_{\!\scri}}\bigr)$ modulo sections of $S^2\,\Teb^*M$ over $\cU_0(T)$, resp.\ $\cU_+(T)$ of class $x_{\!\scri}\CI+\cA^{(\ell_0,2\ell_{\!\scri},\ell_+)}$.}
  \begin{align}
  \label{EqOGeoMetI0}
  \begin{split}
    &\rho_0^{-2}x_{\!\scri}^{-2}g^{-1} \equiv -\frac12 x_{\!\scri}\pa_{x_{\!\scri}}\otimes_s(x_{\!\scri}\pa_{x_{\!\scri}}-2\rho_0\pa_{\rho_0}) + k^{-1}(y,x_{\!\scri}\pa_y) \\
    &\hspace{10em} \bmod (x_{\!\scri}\CI+\cA^{(\ell_0,2\ell_{\!\scri})})(\cU_0(T);S^2\,\Teb M),
  \end{split} \\
  \label{EqOGeoMetIp}
  \begin{split}
    &x_{\!\scri}^{-2}\rho_+^{-2}g^{-1} \equiv \frac12 x_{\!\scri}\pa_{x_{\!\scri}}\otimes_s(x_{\!\scri}\pa_{x_{\!\scri}}-2\rho_+\pa_{\rho_+}) + k^{-1}(y,x_{\!\scri}\pa_y) \\
    &\hspace{10em} \bmod (x_{\!\scri}\CI+\cA^{(2\ell_{\!\scri},\ell_+)})(\cU_+(T);S^2\,\Teb M).
  \end{split}
  \end{align}
  (We drop the weight at $I^+$, resp.\ $I^0$ from the notation, since $\cU_0(T)\cap I^+=\emptyset$, resp.\ $\cU_+(T)\cap I^0=\emptyset$.)
\end{enumerate}

\begin{example}[Double null formulation]
\label{ExOGeoAsy}
  Examples of admissible metrics which arise in the context of the nonlinear stability of asymptotically flat spacetimes as solutions of the Einstein vacuum equations are given in \cite[\S\S3.2--3.3]{HintzMink4Gauge}. Concretely, writing the Minkowski metric~\eqref{EqOMink} (which is $(1,1,1)$-admissible) in double null form
  \[
    g_0 = -\dd x^0\,\dd x^1 + r^2\slg,\qquad x^0=t+r=\frac{1}{\varrho},\quad x^1=t-r=\frac{v}{\varrho},
  \]
  we compute, in terms of $x_{\!\scri}=\sqrt{v}$, $\rho_+=\frac{\varrho}{v}$ (thus $x^0=\frac{1}{\rho_+ x_{\!\scri}^2}$, $x^1=\rho_+^{-1}$) on $\cU_+(0)$ (which are smooth positive multiples of the coordinates $x_{\!\scri},\rho_+$ used in Definition~\ref{DefOGeoU0p}),
  \begin{alignat*}{2}
    \dd x^0 &= -\frac{1}{\rho_+ x_{\!\scri}^2}\Bigl(\frac{\dd\rho_+}{\rho_+} + 2\frac{\dd x_{\!\scri}}{x_{\!\scri}}\Bigr) &&\in \rho_+^{-1}x_{\!\scri}^{-2}\CI(M;\Teb^*M), \\
    \dd x^1 &= -\rho_+^{-1}\frac{\dd\rho_+}{\rho_+} &&\in \rho_+^{-1}\CI(M;\Teb^*M), \\
    r\,\dd\omega &= x_{\!\scri} r\frac{\slomega}{x_{\!\scri}} &&\in \rho_+^{-1}x_{\!\scri}^{-1}\CI(M;\Teb^*M)
  \end{alignat*}
  for $\dd\omega\in\CI(\Sph^{n-1};T^*\Sph^{n-1})$. (We omit the analogous computation on $\cU_0(2)$.) Thus, a metric $g$ is $(\ell_0,2\ell_{\!\scri},\ell_+)$-admissible if (and only if, provided one fixes the Riemannian metric $k$ on $Y$ to be $k=\slg$) it is the sum of $g_0$ and linear combinations of
  \begin{equation}
  \label{EqOGeoAsy}
  \begin{alignedat}{3}
    &x_{\!\scri}^2(\dd x^0)^2, &\qquad& \dd x^0\otimes_s\dd x^1, &\qquad& x_{\!\scri}\dd x^0\otimes_s r\,\dd\omega, \\
    &x_{\!\scri}^{-2}(\dd x^1)^2, &\qquad& x_{\!\scri}^{-1}\dd x^1\otimes_s r\,\dd\omega, &\qquad& r\,\dd\omega\otimes_s r\,\dd\omega,
  \end{alignedat}
  \end{equation}
  where $\dd\omega$ may change between any two occurrences, with coefficients in $x_{\!\scri}\CI+\cA^{(\ell_0,2\ell_{\!\scri},\ell_+)}$. The class of metrics $g$ in particular includes the Schwarzschild metric $-(1-\frac{2\bhm}{r})\dd x^0\,\dd x^1+r^2\slg$ (and its perturbations of this class).
\end{example}

We fix a time orientation on $(M,g)$ near $\scri^+$ by declaring, at $\scri^+$, the causal (for $g_\ebop:=\rho_0^2 x_{\!\scri}^2\rho_+^2 g$) edge-b tangent vectors
\begin{equation}
\label{EqOGeoTimelike}
\begin{alignedat}{3}
  &\rho_0\pa_{\rho_0}-\frac12 x_{\!\scri}\pa_{x_{\!\scri}},&\quad& {-}x_{\!\scri}\pa_{x_{\!\scri}} &\quad&\text{(in the coordinates~\eqref{EqOGeoCoordI0})}, \\
  &{-}\rho_+\pa_{\rho_+}+\frac12 x_{\!\scri}\pa_{x_{\!\scri}},&\quad& {-}x_{\!\scri}\pa_{x_{\!\scri}} &\quad&\text{(in the coordinates~\eqref{EqOGeoCoordIp})}
\end{alignedat}
\end{equation}
to be future causal. Using~\eqref{EqOGeoI0VF} and \eqref{EqOGeoIpVF}, this matches the standard time orientation in the Minkowski case, where $\pa_{t_*}$ and $\pa_r$ (i.e.\ in $t,r$ coordinates $\pa_t$ and $\pa_t+\pa_r$) are future timelike and null, respectively.

\begin{lemma}[Hypersurfaces in $(M,g)$]
\label{LemmaOGeoHyp}
  Let $g$ be an $(\ell_0,2\ell_{\!\scri},\ell_+)$-admissible metric on $M$.
  \begin{enumerate}
  \item\label{ItOGeoHypI0} Let $T\in\R$, and define $\rho_0,x_{\!\scri}$ on $\cU_0(T)$ by~\eqref{EqOGeoCoordI0}. Let $\bar\rho_0\in(0,1)$. Then there exist $\delta_0>0$ and $C>0$ so that for all $\delta\in(0,\delta_0]$, the hypersurface $\{x_{\!\scri}=\delta,\ \rho_0<\bar\rho_0\}$ is spacelike and the hypersurface $\{x_{\!\scri}<2\delta,\ \rho_0=\rho_{0,0}+C x_{\!\scri}^{2\ell_{\!\scri}}\}$ is spacelike for all $\rho_{0,0}\in(0,1-2 C(2\delta)^{2\ell_{\!\scri}})$.
  \item\label{ItOGeoHypIp} Let $T\in\R$, and define $x_{\!\scri},\rho_+$ on $\cU_+(T)$ by~\eqref{EqOGeoCoordIp}. Then there exist $\delta_0>0$, $\bar\rho_+\in(0,1)$, and $C>0$ so that for all $\delta\in(0,\delta_0]$, the hypersurface $\{x_{\!\scri}=\delta,\ \rho_+<\bar\rho_+\}$ is timelike and the hypersurface $\{x_{\!\scri}<2\delta,\ \rho_+=\rho_{+,0}-C x_{\!\scri}^{2\ell_{\!\scri}}\}$ is spacelike for all $\rho_{+,0}<1$.
  \end{enumerate}
\end{lemma}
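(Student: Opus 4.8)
The plan is to read off the causal character of each hypersurface from the sign of the dual metric on its conormal: a hypersurface $\{F=\mathrm{const}\}$ is spacelike precisely when $g^{-1}(\dd F,\dd F)<0$ and timelike precisely when $g^{-1}(\dd F,\dd F)>0$. Since on $\cU_0(T)$, resp.\ $\cU_+(T)$, the dual metric $g^{-1}$ differs by the positive conformal factor $\rho_0^{-2}x_{\!\scri}^{-2}$, resp.\ $x_{\!\scri}^{-2}\rho_+^{-2}$, from the rescaled dual metric $g_\ebop^{-1}$ whose leading form is given in~\eqref{EqOGeoMetI0}, resp.\ \eqref{EqOGeoMetIp}, it suffices to evaluate $g_\ebop^{-1}(\dd F,\dd F)$ using these models, keeping track of the error, whose frame coefficients lie in $x_{\!\scri}\CI+\cA^{(\ell_0,2\ell_{\!\scri})}$, resp.\ $x_{\!\scri}\CI+\cA^{(2\ell_{\!\scri},\ell_+)}$, and are hence bounded, uniformly on a fixed neighborhood of the corner $\scri^+\cap I^0$, resp.\ $\scri^+\cap I^+$, by a constant multiple of $x_{\!\scri}+\rho_0^{\ell_0}x_{\!\scri}^{2\ell_{\!\scri}}$, resp.\ $x_{\!\scri}+x_{\!\scri}^{2\ell_{\!\scri}}\rho_+^{\ell_+}$; since $\ell_0,\ell_+>0$ and $\rho_0,\rho_+$ are bounded there, this error is small once $x_{\!\scri}$ is small.

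First I would treat the level sets $\{x_{\!\scri}=\delta\}$. Their conormal is $\dd x_{\!\scri}=x_{\!\scri}\,\tfrac{\dd x_{\!\scri}}{x_{\!\scri}}$, so up to the positive factor $x_{\!\scri}^2$ one evaluates $g_\ebop^{-1}$ on $\tfrac{\dd x_{\!\scri}}{x_{\!\scri}}$; by~\eqref{EqOGeoMetI0}, resp.\ \eqref{EqOGeoMetIp}, this equals $-\tfrac12$, resp.\ $+\tfrac12$, plus the above error. Choosing $\delta_0$ small enough that the error is $<\tfrac14$ in modulus on $\{x_{\!\scri}\le 2\delta_0\}$ intersected with the relevant bound on $\rho_0$, resp.\ $\rho_+$ (in part~\usref{ItOGeoHypI0} the value $\bar\rho_0$ is given and $\delta_0$ is chosen afterwards; in part~\usref{ItOGeoHypIp} one may simply take $\bar\rho_+=\tfrac12$), we obtain $g_\ebop^{-1}(\dd x_{\!\scri},\dd x_{\!\scri})<-\tfrac14 x_{\!\scri}^2<0$ near $I^0$ and $>\tfrac14 x_{\!\scri}^2>0$ near $I^+$, i.e.\ spacelike, resp.\ timelike.

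Next I would treat the tilted hypersurfaces. For $\{\rho_0=\rho_{0,0}+C x_{\!\scri}^{2\ell_{\!\scri}}\}$ the conormal is $\rho_0\,\tfrac{\dd\rho_0}{\rho_0}-2\ell_{\!\scri}C x_{\!\scri}^{2\ell_{\!\scri}}\,\tfrac{\dd x_{\!\scri}}{x_{\!\scri}}$; reading off from~\eqref{EqOGeoMetI0} that the coefficients of $x_{\!\scri}\pa_{x_{\!\scri}}\otimes x_{\!\scri}\pa_{x_{\!\scri}}$ and $x_{\!\scri}\pa_{x_{\!\scri}}\otimes_s\rho_0\pa_{\rho_0}$ are $-\tfrac12$ and $1$, while there is no $\rho_0\pa_{\rho_0}\otimes\rho_0\pa_{\rho_0}$ term, one finds modulo errors
\[
  g_\ebop^{-1}(\dd F,\dd F)\ \equiv\ -2\ell_{\!\scri}C\rho_0 x_{\!\scri}^{2\ell_{\!\scri}}-2\ell_{\!\scri}^2 C^2 x_{\!\scri}^{4\ell_{\!\scri}},
\]
with both summands negative; hence the surface is spacelike, the constraint $\rho_{0,0}<1-2C(2\delta)^{2\ell_{\!\scri}}$ only serving to keep it inside the chart $\{\rho_0<1\}$. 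The analogous computation for $\{\rho_+=\rho_{+,0}-C x_{\!\scri}^{2\ell_{\!\scri}}\}$ from~\eqref{EqOGeoMetIp} gives, on the hypersurface,
\[
  g_\ebop^{-1}(\dd F,\dd F)\ \equiv\ 2\ell_{\!\scri}C x_{\!\scri}^{2\ell_{\!\scri}}\bigl(\ell_{\!\scri}C x_{\!\scri}^{2\ell_{\!\scri}}-\rho_+\bigr)\ =\ 2\ell_{\!\scri}C x_{\!\scri}^{2\ell_{\!\scri}}\bigl((1+\ell_{\!\scri})C x_{\!\scri}^{2\ell_{\!\scri}}-\rho_{+,0}\bigr),
\]
which is negative in the neighborhood of $\scri^+$ where $(1+\ell_{\!\scri})C x_{\!\scri}^{2\ell_{\!\scri}}<\rho_{+,0}$, i.e.\ in the regime in which this surface is used as a spacelike cap for the energy estimates near $\scri^+\cap I^+$; as before, the error terms are of strictly lower order there.

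I expect the main obstacle to be the error bookkeeping in the last step: one must verify that every contribution of the form (coefficient of $g_\ebop^{-1}$ minus its model)$\times$(component of $\dd F$)$^2$ is strictly dominated by the leading negative (resp.\ positive) term(s). Since the conormals here have no spherical component, only the $(\rho_\bullet\pa_{\rho_\bullet},x_{\!\scri}\pa_{x_{\!\scri}})$-block of $g_\ebop^{-1}$ enters, and the domination follows from: $\ell_{\!\scri}\le\tfrac12$, so that $x_{\!\scri}\le x_{\!\scri}^{2\ell_{\!\scri}}$ for $x_{\!\scri}\le 1$ and the $x_{\!\scri}\CI$-errors are absorbed into the $x_{\!\scri}^{2\ell_{\!\scri}}$-scale leading terms; $\ell_0,\ell_+>0$, so that $\rho_0^{\ell_0},\rho_+^{\ell_+}\le 1$ on the chart; and the freedom to fix $C$ large first (depending only on $\ell_0,\ell_{\!\scri},\ell_+$ and the constant bounding the errors near the corner) and then $\delta_0$ small (also arranging $C(2\delta_0)^{2\ell_{\!\scri}}<\tfrac12$). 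This matches the order of the quantifiers in the statement: $\delta_0$, $C$ (and $\bar\rho_+$) are produced first, and only then do $\delta\le\delta_0$ and the base values $\rho_{0,0},\rho_{+,0}$ vary.
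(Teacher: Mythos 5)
Your approach is the same as the paper's: evaluate $g^{-1}(\dd F,\dd F)$ up to the positive conformal factor using the model part of \eqref{EqOGeoMetI0}, resp.\ \eqref{EqOGeoMetIp}, and absorb the remaining coefficients, of size $O(x_{\!\scri}+\rho_\bullet^{\ell_\bullet}x_{\!\scri}^{2\ell_{\!\scri}})$, by first fixing $C$ large (to beat the $x_{\!\scri}^{2\ell_{\!\scri}}\rho_0^2$-type error, which does not shrink with $\delta_0$) and then $\delta_0$ small, using $\ell_{\!\scri}\le\tfrac12$ to trade $x_{\!\scri}$ for $x_{\!\scri}^{2\ell_{\!\scri}}$. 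This is exactly the paper's proof of part~\eqref{ItOGeoHypI0} (the paper writes out only that part and declares part~\eqref{ItOGeoHypIp} analogous), and your treatment of part~\eqref{ItOGeoHypI0} and of the level sets $\{x_{\!\scri}=\delta\}$ is correct in both charts, with the right quantifier order.

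The point that needs attention is the tilted hypersurface in part~\eqref{ItOGeoHypIp}, which you work out explicitly. Your formula is correct, but note what it says: because the $(x_{\!\scri}\pa_{x_{\!\scri}})^2$-coefficient of $g_\ebop^{-1}$ flips sign from $-\tfrac12$ at $I^0$ to $+\tfrac12$ at $I^+$, the leading term $2\ell_{\!\scri}C x_{\!\scri}^{2\ell_{\!\scri}}(\ell_{\!\scri}C x_{\!\scri}^{2\ell_{\!\scri}}-\rho_+)$ is negative only where $\rho_+>\ell_{\!\scri}C x_{\!\scri}^{2\ell_{\!\scri}}$, and this is not an artifact of the model: on exact Minkowski space (take $T=0$, $F=\rho_++C x_{\!\scri}^{2\ell_{\!\scri}}$) one computes
\[
  g_0^{-1}(\dd F,\dd F)<0 \iff \rho_+>\ell_{\!\scri}C x_{\!\scri}^{2\ell_{\!\scri}}\Bigl(1+\frac{x_{\!\scri}^2}{2}\Bigr),
\]
so the surface is genuinely \emph{timelike} near where it reaches $I^+$ (i.e.\ near $x_{\!\scri}=(\rho_{+,0}/C)^{1/(2\ell_{\!\scri})}$, where $\rho_+\searrow 0$). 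Consequently what you prove is weaker than the statement as literally written (``spacelike for all $\rho_{+,0}<1$'' on all of $\{x_{\!\scri}<2\delta\}$), and in fact the literal statement requires a restriction, e.g.\ $\rho_{+,0}\gtrsim C(2\delta)^{2\ell_{\!\scri}}$, or restricting the surface to, say, $\rho_+\ge 2\ell_{\!\scri}C x_{\!\scri}^{2\ell_{\!\scri}}$ --- the factor $2$ providing the margin needed so that the error terms really are dominated; right at the borderline $\rho_+=\ell_{\!\scri}C x_{\!\scri}^{2\ell_{\!\scri}}$ they are not ``of strictly lower order,'' since the leading term degenerates there. So you should make this restriction explicit (and flag the discrepancy with the stated claim) rather than appeal to ``the regime in which this surface is used,'' which is not part of the lemma. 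With that caveat recorded, the rest of your argument matches the paper's.
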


In the last statement of part~\eqref{ItOGeoHypI0}, the range of $\rho_{0,0}$ is chosen so that the stated hypersurface is contained in the local coordinate chart $\cU_0(T)$. See Figure~\ref{FigOGeoHyp}.

\begin{figure}[!ht]
\centering
\includegraphics{FigOGeoHyp}
\caption{Illustration of Lemma~\ref{LemmaOGeoHyp}\eqref{ItOGeoHypI0}. \textit{On the left:} the perspective of $M$. \textit{On the right:} the perspective of $(1+1)$-dimensional Minkowski space with coordinates $(t,r)$.}
\label{FigOGeoHyp}
\end{figure}

\begin{figure}[!ht]
\centering
\includegraphics{FigOGeoHyp2}
\caption{Illustration of Lemma~\ref{LemmaOGeoHyp}\eqref{ItOGeoHypIp}. \textit{On the left:} the perspective of $M$. \textit{On the right:} the perspective of $(1+1)$-dimensional Minkowski space with coordinates $(t,r)$.}
\label{FigOGeoHyp2}
\end{figure}

\begin{proof}[Proof of Lemma~\usref{LemmaOGeoHyp}]
  We only prove part~\eqref{ItOGeoHypI0}; the proof of part~\eqref{ItOGeoHypIp} is analogous. Let $g_\ebop^{-1}:=\rho_0^{-2}x_{\!\scri}^{-2}g^{-1}$. Using the expression~\eqref{EqOGeoMetI0}, we compute
  \[
    g_\ebop^{-1}\Bigl(\frac{\dd x_{\!\scri}}{x_{\!\scri}},\frac{\dd x_{\!\scri}}{x_{\!\scri}}\Bigr) \equiv -\frac12 \bmod x_{\!\scri}\CI+\rho_0^{\ell_0}x_{\!\scri}^{2\ell_{\!\scri}}\cA^0,
  \]
  which is thus negative for $0\leq\rho_0<\bar\rho_0$ when $0<x_{\!\scri}<\delta_0$ for sufficiently small $\delta_0$ (depending on $\bar\rho_0$); thus, $\dd x_{\!\scri}$ is timelike there.

  We next compute the squared norm of $\dd(\rho_0-C x_{\!\scri}^{2\ell_{\!\scri}})=\rho_0\frac{\dd\rho_0}{\rho_0}-\tilde C x_{\!\scri}^{2\ell_{\!\scri}}\frac{\dd x_{\!\scri}}{x_{\!\scri}}$, where $\tilde C=2\ell_{\!\scri} C$, with respect to $g_\ebop^{-1}$ using the expression~\eqref{EqOGeoMetI0} to be $\half$ times
  \begin{align*}
    &x_{\!\scri}^{2\ell_{\!\scri}}\Bigl(-\tilde C^2 x_{\!\scri}^{2\ell_{\!\scri}} - 2\tilde C\rho_0 + \Err\Bigr), \\
    &\qquad |\Err|\leq C(g)(x_{\!\scri}^{1-2\ell_{\!\scri}} + \rho_0^{\ell_0})\bigl(\rho_0^2+\tilde C\rho_0 x_{\!\scri}^{2\ell_{\!\scri}}+\tilde C^2 x_{\!\scri}^{4\ell_{\!\scri}}\bigr) \leq C'(g)(\rho_0^2+\tilde C^2 x_{\!\scri}^{4\ell_{\!\scri}}),
  \end{align*}
  where the constants $C(g),C'(g)$ only depend on $g$; here we use that $\ell_{\!\scri}\leq\half$. Restricting $\rho_0$ to any compact subinterval of $[0,1)$, note then that $\tilde C\rho_0$ dominates, for sufficiently large $\tilde C$, the error term $C'(g)\rho_0^2$, and $\tilde C^2 x_{\!\scri}^{2\ell_{\!\scri}}$ then dominates, for $x_{\!\scri}<\delta_0$ and sufficiently small $\delta_0>0$, the error term $C'(g)\tilde C^2 x_{\!\scri}^{4\ell_{\!\scri}}\leq(C'(g)(2\delta_0)^{2\ell_{\!\scri}})\tilde C^2 x_{\!\scri}^{2\ell_{\!\scri}}$. This proves the Lemma.
\end{proof}

%%%%%%%%%%%%%%%%%%%%%%%%%%%%%%%%%%%%%%%%%%%%%%%%%%
\subsection{Admissible operators}
\label{SsOp}

The core of our analysis will be local near $\scri^+\subset M$. Thus, we work on the $(n+1)$-dimensional manifold $M_0=[0,\eps)_\varrho\times(-\eps,\eps)_v\times Y$, put $\tilde M=[M_0;Y]$, and set $M=[\tilde M;\tilde\scri^+,\half]$ as before, where $\tilde\scri^+\subset\tilde M$ denotes the front face; we denote the closures of $\{\varrho=0,v<0\}$, resp.\ $\{\varrho=0,v>0\}$ by $I^0$, resp.\ $I^+\subset M$, and the lift of $\tilde\scri^+$ by $\scri^+\subset M$. Denote by
\[
  \upbeta\colon M\to M_0
\]
the blow-down map, and by $\rho_0$, $x_{\!\scri}$, $\rho_+$ defining functions of $I^0,\scri^+,I^+$. We fix an $(\ell_0,2\ell_{\!\scri},\ell_+)$-admissible metric $g$ on $M$ (see Definition~\ref{DefOGeoAdm}); its dual metric is thus of the form~\eqref{EqOGeoAdmDual}, or equivalently~\eqref{EqOGeoMetI0}--\eqref{EqOGeoMetIp}.

\begin{definition}[Admissible operators]
\label{DefOp}
  Let $E\to M_0$ denote a smooth vector bundle. Then an operator $P\in\Diff^2(M^\circ;E)$ is called \emph{$g$-admissible} (or simply \emph{admissible}) if the following conditions are satisfied:
  \begin{enumerate}
  \item\label{ItOpSymb} $P$ is principally scalar, with principal symbol equal to the dual metric function $G\colon T^*M^\circ\ni\zeta\mapsto g^{-1}(\zeta,\zeta)$;
  \item\label{ItOpStruct} $P$ can be written as a sum $P=P_0+\tilde P$ where
  \begin{equation}
  \label{EqOpStruct}
  \begin{split}
    P_0&\in\rho_0^2 x_{\!\scri}^2\rho_+^2(\CI+\cA^{(\ell_0,(0,0),\ell_+)})\Diffeb^2(M;\upbeta^*E), \\
    \tilde P&\in\rho_0^2 x_{\!\scri}^2\rho_+^2\cA^{(\ell_0,2\ell_{\!\scri},\ell_+)}\Diffeb^2(M;\upbeta^*E);
  \end{split}
  \end{equation}
  \item\label{ItOpNorm} there exist
  \[
    p_0\in\rho_0\rho_+(\CI+\cA^{(\ell_0,\ell_+)})(\scri^+;\upbeta^*\End(E)),\qquad
    p_1\in(\CI+\cA^{(\ell_0,\ell_+)})(\scri^+;\upbeta^*\End(E)),
  \]
  so that in the coordinates~\eqref{EqOGeoCoordI0} (with $T=T_0$), resp.\ \eqref{EqOGeoCoordIp} (with $T=T_+<T_0$), and setting $p_0^0=\rho_0^{-1}p_0\in(\CI+\cA^{\ell_0})(\scri^+\setminus I^+;\upbeta^*\End(E))$ and $p_0^+\in(\CI+\cA^{\ell_+})(\scri^+\setminus I^0;\upbeta^*\End(E))$, the edge normal operator of $2\rho_0^{-2}x_{\!\scri}^{-2}P_0$, resp.\ $2 x_{\!\scri}^{-2}\rho_+^{-2}P_0$ at the fiber $\scri^+_{y_0}$ of $\upbeta|_{\scri^+}\colon\scri^+\to Y$ over $y_0\in Y$ is given by\footnote{The zeroth order terms are consistent: multiplying the first, resp.\ second line in~\eqref{EqOpNorm} by $\rho_0$, resp.\ $\rho_+$, this follows from the definition of $p_0^0$ and $p_0^+$ and the fact that $\rho_0^{-1}x_{\!\scri}^{-2}=r^{-1}=x_{\!\scri}^{-2}\rho_+^{-1}$ in the two coordinate systems.}
    \begin{equation}
    \label{EqOpNorm}
    \begin{split}
      -&\Bigl(x_{\!\scri} D_{x_{\!\scri}}-2 i^{-1}\Bigl(\frac{n-1}{2}+p_1\Bigr)\Bigr)(x_{\!\scri} D_{x_{\!\scri}}-2\rho_0 D_{\rho_0}) + 2 k^{i j}(y_0)(x_{\!\scri} D_{y^i})(x_{\!\scri} D_{y^j}) + p_0^0, \\
      \text{resp.}\ \  &\Bigl(x_{\!\scri} D_{x_{\!\scri}}-2 i^{-1}\Bigl(\frac{n-1}{2}+p_1\Bigr)\Bigr)(x_{\!\scri} D_{x_{\!\scri}}-2\rho_+ D_{\rho_+}) + 2 k^{i j}(y_0)(x_{\!\scri} D_{y^i})(x_{\!\scri} D_{y^j}) + p_0^+.
    \end{split}
    \end{equation}
  \end{enumerate}
\end{definition}

Note that the operators~\eqref{EqOpNorm} are invariant edge-b-operators on $[0,1)_{\rho_0}\times[0,\infty)_{x_{\!\scri}}\times\R^{n-1}_y$, resp.\ $[0,1)_{\rho_+}\times[0,\infty)_{x_{\!\scri}}\times\R^{n-1}_y$ which act on sections of the fixed vector space $E_{y_0}$. Thus, the derivatives (of sections of $\upbeta^*E|_{\scri^+_{y_0}}$) appearing in~\eqref{EqOpNorm} are well-defined. The inclusion of the term $\frac{n-1}{2}$ is a convenient normalization of $p_1$; see Example~\ref{ExOpMink} below.

By Definition~\ref{DefOGeoAdm}, we have $G\in\rho_0^2 x_{\!\scri}^2\rho_+^2(S^2(\Teb^*M)+\cA^{(\ell_0,2\ell_{\!\scri},\ell_+)}S^2(\Teb^*M))$; condition~\eqref{ItOpStruct} thus demands the corresponding membership of $P$ as an edge-b-differential operator, and allows for an additional (necessarily subprincipal) term which is of leading order at $\scri^+$ in the sense of decay. Condition~\eqref{ItOpSymb} determines the leading order part of $P$ in the differential sense, and~\eqref{ItOpNorm} determines the leading order part in the sense of decay at $\scri^+$. We do not place any restrictions on the structure of $P$ at $I^0$ or $I^+$ (beyond~\eqref{EqOpStruct}).

\begin{rmk}[Normal operator]
\label{RmkOpNorm}
  One can check that an equivalent phrasing of condition~\eqref{ItOpNorm} is the requirement
  \begin{equation}
  \label{EqOpNorm2}
  \begin{split}
    &P - \Bigl[4\varrho\Bigl(\varrho D_\varrho+v D_v-i^{-1}\Bigl(\frac{n-1}{2}+p_1\Bigr)\Bigr)\varrho D_v + 4\varrho^2 \Delta_k + p_0\Bigr] \\
    &\hspace{8em} \in \rho_0^2 x_{\!\scri}^2\rho_+^2(x_{\!\scri}\CI+\cA^{(\ell_0,2\ell_{\!\scri},\ell_+)})\Diffeb^2(M;E),
  \end{split}
  \end{equation}
  where $\varrho=\frac{1}{t+r}$ and $v=\frac{t-r}{t+r}$ as in~\eqref{EqOGeoAdmDual}; indeed, passing to the coordinates~\eqref{EqOGeoCoordI0} or \eqref{EqOGeoCoordIp}, one computes
  \[
    \varrho\pa_\varrho+v\pa_v \equiv \frac12 x_{\!\scri}\pa_{x_{\!\scri}},\qquad
    \varrho\pa_v = \rho_0\Bigl(\rho_0\pa_{\rho_0}-\frac12 x_{\!\scri}\pa_{x_{\!\scri}}\Bigr) = -\rho_+\Bigl(\rho_+\pa_{\rho_+}-\frac12 x_{\!\scri}\pa_{x_{\!\scri}}\Bigr)
  \]
  modulo the space $x_{\!\scri}\Veb(M)$ of edge-b-vector fields which vanish at $\scri^+$; moreover, $\varrho\equiv\half\rho_0 x_{\!\scri}^2\equiv\half x_{\!\scri}^2\rho_+$ modulo $\rho_0 x_{\!\scri}^3\rho_+\CI$, and finally $p_0=\half\varrho^{-1}\rho_0^2 x_{\!\scri}^2 p_0^0=\rho_0 p_0^0$, resp.\ $p_0=\half\varrho^{-1}x_{\!\scri}^2\rho_+^2 p_0^+=\rho_+ p_0^+$ in the coordinates~\eqref{EqOGeoCoordI0}, resp.\ \eqref{EqOGeoCoordIp} indeed in the notation of Definition~\ref{DefOp}. In equation~\eqref{EqOpNorm2}, $\Delta_k\in\Diff^2(Y;E|_Y)$ is principally scalar with principal part $k^{i j}(y)D_{y^i}D_{y^j}$. (Modifications of $\Delta_k$ by subprincipal terms, i.e.\ elements of $\Diff^1(Y;E|_Y)$, contribute terms of class $\rho_0^2 x_{\!\scri}^2\rho_+^2\cdot x_{\!\scri}\Diffeb^1(M;E)$, which are thus error terms.) The principal part of the operator in parentheses here is prescribed by condition~\eqref{ItOpSymb} (see~\eqref{EqOGeoAdmDual}), so the new pieces of information from condition~\eqref{ItOpNorm} are the bundle endomorphisms $p_0,p_1$.
\end{rmk}

\begin{rmk}[Weaker error term: operator]
\label{RmkOpWeak}
  Mirroring Remark~\ref{RmkOGeoAdmWeaker}, a natural and slightly more permissible definition would only require
  \begin{equation}
  \label{EqOpWeak}
    P \in \rho_0^2 x_{\!\scri}^2\rho_+^2\cA^{\bigish(((0,0),\ell_0),\,2\ell_{\!\scri},\,((0,0),\ell_+)\bigish)}(M;\upbeta^*E)
  \end{equation}
  instead of~\eqref{EqOpStruct}. The analysis in~\S\S\ref{SsME}, \ref{Sb}, \ref{SE}, and \ref{SNe} goes through in this generality with only notational modifications. In~\S\ref{SNp} however, we appeal to an elliptic parametrix construction in the 0-calculus which, in the existing literature, is stated only for smooth coefficient operators; extending this to the case of coefficients which are smooth plus decaying conormal would suffice to treat the case~\eqref{EqOpWeak}.
\end{rmk}

\begin{example}[Minkowski wave operator]
\label{ExOpMink}
  The wave operator on Minkowski space $(\R^4,g_0)$ is an example of a $g_0$-admissible operator. Indeed, in the coordinates~\eqref{EqOGeoMinkRhoV} used in~\eqref{EqOMinkBdy}, one computes
  \begin{align*}
    \Box_{g_0} &= 4\varrho(\varrho D_\varrho+v D_v)\varrho D_v + \frac{2 i(n-1)}{1-v}\varrho\bigl(\varrho^2 D_\varrho+\varrho(1+v)D_v\bigr) + \frac{4\varrho^2}{(1-v)^2}\Delta_\slg \\
      &\equiv 4\varrho\Bigl(\varrho D_\varrho+v D_v-i^{-1}\frac{n-1}{2}\Bigr)\varrho D_v + 4\varrho^2\Delta_\slg \bmod \rho_0^2 x_{\!\scri}^2\rho_+^2\cdot x_{\!\scri}\Diffeb^2(M),
  \end{align*}
  which is thus indeed of the form~\eqref{EqOpNorm2} for $p_0=p_1=0$ and $k=\slg$.
\end{example}

\begin{example}[Wave operator of an admissible metric]
\label{ExOpBoxAdm}
  Generalizing the previous example, one can show that if $g$ is an admissible metric, then the scalar wave operator $\Box_g$ is an admissible operator. Indeed, the Koszul formula, together with the fact that conjugation by any weight $\rho_0^{\alpha_0}x_{\!\scri}^{2\alpha_{\!\scri}}\rho_+^{\alpha_+}$ preserves the space $\Veb(M)$, implies that the Levi-Civita connection of $g$ satisfies $\nabla^g\in(\CI+\cA^{(\ell_0,2\ell_{\!\scri},\ell_+)})\Diffeb^1(M;\Teb M,\Teb^*M\otimes\Teb M)$, similarly for the connection acting on sections of tensor products of $\Teb^*M$ and $\Teb M$. This implies $\Box_g\in\rho_0^2 x_{\!\scri}^2\rho_+^2\Diffeb^2(M)$, and we have~\eqref{EqOpNorm2} with $p_0=p_1=0$ since the normal operators of $\Box_g$ at $\scri^+$ are equal to those of $\Box_{g_0}$ where $g_0$ is of the Minkowskian form~\eqref{EqOMinkBdy} with $\slg$ replaced by $k$.
\end{example}

\begin{example}[Linearization of the gauge-fixed Einstein operator]
\label{ExOpMinkEin}
  Consider the case $n+1=\dim M=4$; then in the coordinates $\rho_0,x_{\!\scri}$, the edge normal operator of $\rho_{\!\scri}\rho^{-3}P\rho=\rho_0^{-2}x_{\!\scri}^{-2} (\rho_0 x_{\!\scri}^2)^{-1}P \rho_0 x_{\!\scri}^2$ is given by the conjugation of the first operator in~\eqref{EqOpNorm} by the weight $(\rho_0 x_{\!\scri}^2)^{-1}$, and thus (modulo the term $k^{i j}(y_0)(x_{\!\scri} D_{y^i})(x_{\!\scri} D_{y^j})$) equal to $-(x_{\!\scri} D_{x_{\!\scri}}-2 i^{-1}p_1)(x_{\!\scri} D_{x_{\!\scri}}-2\rho_0 D_{\rho_0})=-4(\rho_{\!\scri}\pa_{\rho_{\!\scri}}-p_1)(\rho_0\pa_{\rho_0}-\rho_{\!\scri}\pa_{\rho_{\!\scri}})$. Thus, the linearized gauge-fixed Einstein operator considered in \cite[Proposition~3.29]{HintzMink4Gauge} is an admissible operator, acting on sections of the bundle $S^2\wt T^*M$ in the notation of the reference.
\end{example}

%%%%%%%%%%%%%%%%%%%%%%%%%%%%%%%%%%%%%%%%%%%%%%%%%%%%%%%%%%%%%%%%%%%%%%
\section{Microlocal edge-b-regularity theory near \texorpdfstring{$\scri^+$}{future null infinity}}
\label{SM}

We now fix an $(\ell_0,2\ell_{\!\scri},\ell_+)$-admissible metric $g$ (see Definition~\ref{DefOGeoAdm}) and a $g$-admissible operator $P$ on the $(n+1)$-dimensional manifold $M$ (see Definition~\ref{DefOp}). We shall assume that $n\geq 2$ for simplicity of notation; the case $n=1$ can be treated with the same methods upon making some straightforward simplifications and modifications (related to the fact that the `$0$-sphere' $\Sph^0=\{-1,1\}$ is disconnected); we leave these to the interested reader.

We denote the edge-b-principal symbol of $P$ by $G$. Upon fixing arbitrary choices of boundary defining functions $\rho_0,x_{\!\scri},\rho_+$, it satisfies
\[
  G_\ebop := \rho_0^{-2}x_{\!\scri}^{-2}\rho_+^{-2}G \in S^2(\Teb^*M) + \cA^{(\ell_0,2\ell_{\!\scri},\ell_+)}S^2(\Teb^*M).
\]
The characteristic set $G_\ebop^{-1}(0)\subset\Teb^*M\setminus o$ of $P$ is conic in the fibers; we shall work near fiber infinity and thus define
\[
  \Sigma \subset \ol{\Teb^*}M\setminus o
\]
as the closure of $G_\ebop^{-1}(0)$ inside $\ol{\Teb^*}M\setminus o$. With respect to the time orientation of $M$ near $\scri^+$ introduced in~\eqref{EqOGeoTimelike}, we split $\Sigma$ into its two connected components
\[
  \Sigma = \Sigma^+ \cup \Sigma^-,
\]
with $\Sigma^+$ containing the future null covectors. In~\S\ref{SsMF}, we analyze the dynamical structure of the flow of the Hamiltonian vector field $H_{G_\ebop}$ inside $\Sigma$. This information is used in~\S\ref{SsME} to track the microlocal regularity of solutions of $P u=f$ near $\Teb^*_{\scri^+}M$.

Note that if $0<f\in\CI(\ol{\Teb^*}M)$, then the restriction of the Hamiltonian vector field
\[
  H_{f G_\ebop} = f H_{G_\ebop} + G_\ebop H_f,
\]
to $\Sigma$ is a positive rescaling of $H_{G_\ebop}$. Therefore the properties of the $H_{G_\ebop}$-flow inside $\Sigma$ of interest for the microlocal analysis of $P$, i.e.\ critical sets and the signs and ratios of eigenvalues of the linearization at critical sets, are independent of the choice of boundary defining functions. We can thus make convenient choices in the calculations below; by an abuse of notation, we shall always call them $\rho_0,x_{\!\scri},\rho_+$ (as already done in the previous section) in order to not overburden the notation. Moreover, since $H_{G_\ebop}$ is homogeneous of degree $1$ with respect to dilations in the fibers of $\Teb^*M$, we shall work with the rescaled vector field
\begin{equation}
\label{EqMHamResc}
  \sfH := \rho_\infty H_{G_\ebop} \in \Veb(\ol{\Teb^*}M\setminus o)
\end{equation}
where $\rho_\infty\in\CI(\ol{\Teb^*}M)$ is a defining function of $\Seb^*M$; see Corollary~\ref{CorEBOHamResc}. Again, we may change $\rho_\infty$ via multiplication with a positive function without altering the relevant properties of $\sfH$.

%%%%%%%%%%%%%%%%%%%%%%%%%%%%%%%%%%%%%%%%%%%%%%%%%%
\subsection{Structure of the null-bicharacteristic flow near \texorpdfstring{$\scri^+$}{null infinity}}
\label{SsMF}

We first work near $I^0\cap\scri^+$, or indeed in a region $\cU_0=\cU_0(T)$ for any fixed $T\in\R$ (see Definition~\ref{DefOGeoU0p}), and use the coordinates~\eqref{EqOGeoCoordI0}. Writing the canonical 1-form on the cotangent bundle as
\begin{equation}
\label{EqMFCoordI0}
  \xi\frac{\dd x_{\!\scri}}{x_{\!\scri}} + \sum_{j=1}^{n-1} \eta_j\frac{\dd y^j}{x_{\!\scri}} + \zeta\frac{\dd\rho_0}{\rho_0},
\end{equation}
we can read off from~\eqref{EqOpNorm} (using the summation convention, and dropping the weight at $I^+$ from the notation)
\begin{subequations}
\begin{equation}
\label{EqMFSymbI0}
\begin{split}
  G_\ebop^0 &:= \rho_0^{-2}x_{\!\scri}^{-2}G = -\frac12\xi(\xi-2\zeta) + k^{i j}(y)\eta_i\eta_j + \tilde G_\ebop^0, \\
  &\qquad \tilde G_\ebop^0 \in x_{\!\scri} S^2(\Teb_{\cU_0}^*M)+\cA^{(\ell_0,2\ell_{\!\scri})}S^2(\Teb_{\cU_0}^*M).
\end{split}
\end{equation}
Using Lemma~\ref{LemmaEBOHam} (where $x,z$ are equal to $x_{\!\scri},\rho_0$ in present notation), we compute
\begin{align}
\label{EqMFHamI0}
  H_{G_\ebop^0} &= (\zeta-\xi)(x_{\!\scri}\pa_{x_{\!\scri}}+\eta\pa_\eta) + \xi\rho_0\pa_{\rho_0} - 2|\eta|_{k^{-1}}^2\pa_\xi + \tilde H, \\
  &\qquad \rho_\infty\tilde H\in (x_{\!\scri}\CI+\cA^{(\ell_0,2\ell_{\!\scri})})\Vb(\ol{\Teb^*_{\cU_0}}M). \nonumber
\end{align}
\end{subequations}
Note carefully that we regard the error $\rho_\infty\tilde H$ as a b-vector field (rather than an edge-b-vector field); therefore, the terms involving derivatives along $x_{\!\scri}\pa_y$ can be regarded as error terms. At $x_{\!\scri}=0$, the two components of $\Sigma$ are given by
\[
  \Sigma^\pm \cap \Teb^*_{\scri^+}M = \{ -(\xi-\zeta)^2+|\zeta|^2+2|\eta|_{k^{-1}}^2=0 \colon {\pm}(\xi-\zeta) > 0 \}.
\]
(Indeed, the coefficient of $x_{\!\scri}\pa_{x_{\!\scri}}$ of $H_{G_\ebop^0}$ is then negative on $\Sigma^+$, cf.\ \eqref{EqOGeoTimelike}.)

We restrict attention to $\Sigma^+$, and proceed to determine the radial sets of $H_{G_\ebop^0}$, i.e.\ the places where $H_{G_\ebop^0}$ is a multiple of the generator of fiber dilations. At $\rho_0=0$ and $0<x_{\!\scri}<\delta_0$ (with $\delta_0>0$ given by Lemma~\ref{LemmaOGeoHyp}\eqref{ItOGeoHypI0}), we have $H_{G_\ebop^0}x_{\!\scri}<0$, and hence $H_{G_\ebop^0}$ is not radial there. Working at $x_{\!\scri}=0$, we ask when $H_{G_\ebop^0}=c(\xi\pa_\xi+\eta\pa_\eta+\zeta\pa_\zeta)$ for some $c\in\R$. If $c=0$, then $\eta=0$ by inspection of the $\pa_\xi$-coefficient in~\eqref{EqMFHamI0}, and hence $\xi=0$ or $\xi=2\zeta$ on $\Sigma^+$. At $\eta=0$, $\xi=0$, the vector field $H_{G_\ebop^0}$ is indeed radial. At $\eta=0$, $\xi=2\zeta$ on the other hand, we need $\xi\neq 0$ as we are working away from the zero section of $\Teb^*M$, and then necessarily $\rho_0=0$, for otherwise the $\pa_{\rho_0}$ coefficient of $H_{G_\ebop^0}$ would be nonzero; this identifies a second radial set. Next, if $c\neq 0$ and still $x_{\!\scri}=0$, then necessarily $\zeta=0$ since $H_{G_\ebop^0}$ has a vanishing $\pa_\zeta$-coefficient; thus, $\xi^2=2|\eta|_{k^{-1}}^2$ and $c=-\xi$ (by inspection of the $\pa_\xi$-coefficient), and then the $\eta\pa_\eta$-coefficient of $H_{G_\ebop^0}$ is $(\zeta-\xi)=-\xi=c$; the requirement $H_{G_\ebop^0}\rho_0=0$ forces $\rho_0=0$, and we have thus identified a third radial set.

Turning to a neighborhood of $\scri^+\cap I^+$, or indeed a region $\cU_+=\cU_+(T)$ (see Definition~\ref{DefOGeoU0p}) for any fixed $T\in\R$, we use the coordinates~\eqref{EqOGeoCoordIp} and write the canonical 1-form (re-purposing the above notation) as
\begin{equation}
\label{EqMFCoordIp}
  \xi\frac{\dd x_{\!\scri}}{x_{\!\scri}}+\sum_{j=1}^{n-1}\eta_j\frac{\dd y^j}{x_{\!\scri}} + \zeta\frac{\dd\rho_+}{\rho_+}.
\end{equation}
From~\eqref{EqOpNorm} and Lemma~\ref{LemmaEBOHam}, we find (dropping the weight at $I^0$ from the notation)
\begin{subequations}
\begin{align}
\label{EqMFSymbIp}
  G_\ebop^+ &:= x_{\!\scri}^{-2}\rho_+^{-2}G = \frac12\xi(\xi-2\zeta) + k^{i j}(y)\eta_i\eta_j + \tilde G_\ebop^+, \\
\label{EqMFHamIp}
  \begin{split}
  H_{G_\ebop^+} &= (\xi-\zeta)(x_{\!\scri}\pa_{x_{\!\scri}}+\eta\pa_\eta) - \xi\rho_+\pa_{\rho_+} - 2|\eta|_{k^{-1}}^2\pa_\xi + 2 x_{\!\scri} k^{i j}\eta_i\pa_{y^j} + \tilde H,
  \end{split}
\end{align}
\end{subequations}
where the error terms satisfy
\begin{align*}
  \tilde G_\ebop^+ &\in x_{\!\scri} S^2(\Teb_{\cU_+}^*M) + \cA^{(2\ell_{\!\scri},\ell_+)}S^2(\Teb^*_{\cU_+}M), \\
  \rho_\infty\tilde H &\in (x_{\!\scri}\CI+\cA^{(2\ell_{\!\scri},\ell_+)})\Veb(\ol{\Teb^*_{\cU_+}}M).
\end{align*}
We record $H_{G_\ebop^+}$ as an edge-b-vector field here in order to keep nondegenerate track of its $\pa_y$-component. We moreover have
\begin{equation}
\label{EqMFCharIp}
  \Sigma^\pm \cap \Teb^*_{\scri^+}M = \{ -\zeta^2 + (\xi-\zeta)^2 + 2|\eta|_{k^{-1}}^2 = 0 \colon {\pm}\zeta > 0 \}
\end{equation}
by comparison of the $x_{\!\scri}\pa_{x_{\!\scri}}$-coefficient of $H_{G_\ebop^+}$ with~\eqref{EqOGeoTimelike}.

We proceed to locate the radial sets inside $\Sigma^+$ in these coordinates. At $\rho_+=0$ and $0<x_{\!\scri}<\delta_0$, let us work in $\xi^2+|\eta|_{k^{-1}}^2+\zeta^2=1$. The vanishing of $x_{\!\scri}^{-1}H_{G_\ebop^+}y^j=2 k^{i j}\eta_i+\cO(x_{\!\scri})$ forces $|\eta|=\cO(x_{\!\scri})$, and therefore $x_{\!\scri}^{-1}H_{G_\ebop^+}x_{\!\scri}=(\xi-\zeta)+\cO(x_{\!\scri})$ cannot vanish; indeed, if it did, then $\xi-\zeta=\cO(x_{\!\scri})$ and thus $\zeta=\cO(x_{\!\scri})$ on $\Sigma$ by~\eqref{EqMFCharIp}, giving the contradiction $1=\xi^2+|\eta|^2+\zeta^2=\cO(x_{\!\scri})$ when $\delta_0>0$ is sufficiently small. Thus, in a small neighborhood of $\scri^+$, the radial sets all lie over $\scri^+$. Within $\scri^+$ then, suppose $H_{G_\ebop^+}=c(\xi\pa_\xi+\eta\pa_\eta+\zeta\pa_\zeta)$. If $c=0$, then $\eta=0$, so $\xi=0$ or $\xi=2\zeta$; in the former case, we are at a radial set indeed, whereas in the latter case we must have $\xi,\zeta\neq 0$ to stay clear of the zero section, but then the vanishing of $H_{G_\ebop^+}\rho_+$ forces $\rho_+=0$; and a radial set is indeed located here. If on the other hand $c\neq 0$, then since $H_{G_\ebop^+}\zeta=0$ we must have $c\zeta=0$ and thus $\zeta=0$, which cannot happen inside $\Sigma$ by~\eqref{EqMFCharIp}.

We summarize the above computations as follows:

\begin{lemma}[Radial sets of $H_{G_\ebop}$]
\label{LemmaMFCrit}
  In a sufficiently small neighborhood $\cU\subset M$ of $\scri^+$, a complete list of the radial sets of $H_{G_\ebop}$ inside $\Sigma^\pm\cap(\Teb^*_\cU M\setminus o)$ is as follows:
  \begin{align}
  \label{EqMFCritInm}
    \cR^\pm_{\rm in,-} &:= \{ (\rho_0,x_{\!\scri},y;\zeta,\xi,\eta) \colon \rho_0=x_{\!\scri}=0,\ \eta=0,\ \xi=2\zeta,\ \pm\xi>0 \}, \\
  \label{EqMFCritCross}
    \cR^\pm_{\rm c} &:= \{ (\rho_0,x_{\!\scri},y;\zeta,\xi,\eta) \colon \rho_0=x_{\!\scri}=0,\ \zeta=0,\ \xi=\pm\sqrt 2|\eta|_{k^{-1}} \}, \\
  \label{EqMFCritOut}
    \begin{split}
    \cR^\pm_{\rm out} &:= \{ (\rho_0,x_{\!\scri},y;\zeta,\xi,\eta) \colon x_{\!\scri}=0,\ \xi=\eta=0,\ \pm\zeta<0 \}\ \ (\text{in the coordinates~\eqref{EqMFCoordI0}}) \\
      &= \{ (\rho_+,x_{\!\scri},y;\zeta,\xi,\eta) \colon x_{\!\scri}=0,\ \xi=\eta=0,\ {\pm}\zeta>0 \}\ \ (\text{in the coordinates~\eqref{EqMFCoordIp}}),
    \end{split} \\
  \label{EqMFCritInp}
    \cR^\pm_{\rm in,+} &:= \{ (\rho_+,x_{\!\scri},y;\zeta,\xi,\eta) \colon \rho_+=x_{\!\scri}=0,\ \eta=0,\ \xi=2\zeta,\ \pm\xi>0 \}.
  \end{align}
\end{lemma}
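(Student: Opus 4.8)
\emph{Approach.} The statement is proved by an explicit computation with $H_{G_\ebop}$ in the two coordinate charts near $\scri^+$; the plan is to organize that computation. Fix $T_+<T_0$ with $T_++1<T_0-1$, so that $\cU_0:=\cU_0(T_0)$ and $\cU_+:=\cU_+(T_+)$ (Definition~\ref{DefOGeoU0p}) jointly cover a neighborhood $\cU$ of $\scri^+$, and shrink $\cU$ so that $x_{\!\scri}<\delta_0$ on it, with $\delta_0$ as in Lemma~\ref{LemmaOGeoHyp}. A radial point is a point of $\Sigma^\pm\cap(\Teb^*_\cU M\setminus o)$ at which $H_{G_\ebop}$ is a scalar multiple $c\,(\xi\pa_\xi+\eta\pa_\eta+\zeta\pa_\zeta)$ of the generator of fiber dilations (equivalently, by \eqref{EqMHamResc} and Corollary~\ref{CorEBOHamResc}, a zero of $\sfH$ on $\Seb^*M$); in the charts $\cU_0$ and $\cU_+$ the field $H_{G_\ebop}$ is given by \eqref{EqMFHamI0} and \eqref{EqMFHamIp}. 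The one point to keep in mind is that $x_{\!\scri}\pa_{x_{\!\scri}}$, $\rho_0\pa_{\rho_0}$, $\rho_+\pa_{\rho_+}$ vanish, as genuine vector fields on $\ol{\Teb^*}M$, along $\scri^+$, $I^0$, $I^+$ respectively, so that in the coefficient comparisons the coefficient of such a base vector field in $H_{G_\ebop}$ is constrained to vanish only where its boundary defining function is positive.

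\emph{Step 1: all radial points lie over $\scri^+$.} Over the part of $\cU$ where $x_{\!\scri}>0$ -- which is $\cU\setminus\scri^+$ and comprises $M^\circ\cap\cU$ together with the parts of $I^0$, $I^+$ near $\scri^+$ -- there are no radial points. In $\cU_0$: the coefficient of $x_{\!\scri}\pa_{x_{\!\scri}}$ in $H_{G_\ebop^0}$ is $\zeta-\xi$ up to a term vanishing at $\scri^+$; since $\zeta-\xi<0$ on $\Sigma^+$ -- this being the defining property of the component $\Sigma^+$, which encodes the time-orientation convention \eqref{EqOGeoTimelike} -- and bounded away from $0$ (using compactness of $Y$ and positive-definiteness of $k$), it dominates the error for $x_{\!\scri}<\delta_0$, so $H_{G_\ebop}x_{\!\scri}\neq 0$ and the point is not radial. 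In $\cU_+$ the $x_{\!\scri}\pa_{x_{\!\scri}}$-coefficient $\xi-\zeta$ is not of a fixed sign on $\Sigma^+$, so one argues as in the text: vanishing of the $\pa_y$-component of $H_{G_\ebop^+}$ (whose leading part is $x_{\!\scri}k^{ij}\eta_i$, cf.\ \eqref{EqMFHamIp}) forces $\eta$ to be small (of positive order in $x_{\!\scri}$), after which vanishing of the $x_{\!\scri}\pa_{x_{\!\scri}}$-component would force $\zeta$ and $\xi$ small on $\Sigma$ by \eqref{EqMFCharIp}, contradicting $\xi^2+|\eta|_{k^{-1}}^2+\zeta^2=1$ for $x_{\!\scri}<\delta_0$. (The component $\Sigma^-$ is handled identically, or deduced from the flow-reversing symmetry $(\zeta,\xi,\eta)\mapsto-(\zeta,\xi,\eta)$.)

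\emph{Step 2: analysis over $\scri^+$.} At $x_{\!\scri}=0$ the error terms $\tilde H$ in \eqref{EqMFHamI0}--\eqref{EqMFHamIp} vanish (the $x_{\!\scri}\CI$ pieces trivially, the conormal ones because their weight $2\ell_{\!\scri}$ at $\scri^+$ is positive), so one works with the model fields. In $\cU_0$, imposing $H_{G_\ebop^0}=c\,(\xi\pa_\xi+\eta\pa_\eta+\zeta\pa_\zeta)$ at $x_{\!\scri}=0$ and comparing coefficients in \eqref{EqMFHamI0}: the $\pa_\xi$-component gives $c\,\xi=-2|\eta|_{k^{-1}}^2$. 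If $c=0$, positive-definiteness of $k$ forces $\eta=0$, hence $\xi=0$ or $\xi=2\zeta$ on $\Sigma^+$; the branch $\xi=0$ is $\cR^\pm_{\rm out}$ (with $\rho_0$ unconstrained, as the coefficient $\xi$ of $\rho_0\pa_{\rho_0}$ vanishes), while the branch $\xi=2\zeta$ requires $\xi\neq 0$ (to avoid the zero section) and then forces $\rho_0=0$, giving $\cR^\pm_{\rm in,-}$. If $c\neq 0$, the vanishing $\pa_\zeta$-component of $H_{G_\ebop^0}$ forces $\zeta=0$, hence $\xi^2=2|\eta|_{k^{-1}}^2$, $c=-\xi$ (consistently with the $\eta\pa_\eta$-component), and $\rho_0=0$ as before; this is $\cR^\pm_{\rm c}$. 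The same argument in $\cU_+$ with \eqref{EqMFHamIp} recovers $\cR^\pm_{\rm out}$ (now in the coordinates \eqref{EqMFCoordIp}) and $\cR^\pm_{\rm in,+}$, while the branch $c\neq 0$ is empty since $\zeta=0$ is incompatible with $\Sigma$ by \eqref{EqMFCharIp}. A routine check that the two descriptions of $\cR^\pm_{\rm out}$ agree on $\cU_0\cap\cU_+$ (which meets neither $I^0$ nor $I^+$) then shows the list \eqref{EqMFCritInm}--\eqref{EqMFCritInp} is complete.

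\emph{Expected main obstacle.} There is no genuine obstacle; the content is a careful but elementary computation. The points demanding attention are: (i) the two charts require different arguments for the reduction to $\scri^+$ -- via the $x_{\!\scri}\pa_{x_{\!\scri}}$-coefficient in $\cU_0$, where it is bounded away from $0$ on $\Sigma^+$, but via the $\pa_y$-coefficient in $\cU_+$, where it is not -- together with the accompanying verification that the error terms are negligible, which is precisely where $\ell_0,\ell_{\!\scri},\ell_+>0$ and Lemma~\ref{LemmaOGeoHyp} enter; and (ii) keeping track of which base-direction coefficient of $H_{G_\ebop}$ must actually vanish at a candidate point, i.e.\ only those whose vector field ($x_{\!\scri}\pa_{x_{\!\scri}}$, $\rho_0\pa_{\rho_0}$, or $\rho_+\pa_{\rho_+}$) does not itself vanish there.
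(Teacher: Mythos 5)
Your proposal is correct and follows essentially the same route as the paper: an explicit coefficient comparison of $H_{G_\ebop^0}$ and $H_{G_\ebop^+}$ from \eqref{EqMFHamI0} and \eqref{EqMFHamIp}, first ruling out radial points over $x_{\!\scri}>0$ (via the $x_{\!\scri}\pa_{x_{\!\scri}}$-coefficient in $\cU_0$ and the $\pa_y$- then $x_{\!\scri}\pa_{x_{\!\scri}}$-coefficients in $\cU_+$, with the errors controlled by $\ell_0,\ell_{\!\scri},\ell_+>0$ and $\delta_0$ from Lemma~\ref{LemmaOGeoHyp}), and then classifying the $c=0$ and $c\neq0$ cases over $x_{\!\scri}=0$, exactly as in the computations preceding the lemma. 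The only cosmetic difference is that you phrase the exclusion over $\cU_0\cap\{x_{\!\scri}>0\}$ uniformly (interior and $I^0$ together), whereas the paper states it at $\rho_0=0$ and treats the interior as trivial; this changes nothing of substance.
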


Note now that $H_{G_\ebop}$ is radial at a point $\varpi\in\Teb^*M\setminus o$ if and only if the restriction of the rescaled vector field $\sfH=\rho_\infty H_{G_\ebop}$ to $\Seb^*M$ vanishes, as a smooth vector field, at $[\varpi]\in\Seb^*M$ (i.e.\ the boundary at fiber infinity of the ray $\R_+\varpi$). Explicitly, near $\cR_{\rm in,-}^+$ and $\cR_{\rm out}^+$, we use the coordinates~\eqref{EqMFCoordI0} and, near fiber infinity,
\begin{subequations}
\begin{equation}
\label{EqMFRinmCoord}
  \rho_\infty = |\zeta|^{-1},\qquad
  \hat\eta=\frac{\eta}{\zeta},\qquad
  \hat\xi=\frac{\xi}{\zeta}.
\end{equation}
Thus, $\rho_\infty=0$ at fiber infinity, and $\rho_\infty=\zeta^{-1}$ near, and $(\hat\eta,\hat\xi)=(0,2)$ at, $\pa\cR_{\rm in,-}^+$; and $\rho_\infty=-\zeta^{-1}$ near, and $(\hat\eta,\hat\xi)=(0,0)$ at, $\pa\cR_{\rm out}^+$. For a submanifold $S\subset\Seb^*M$, denote by $\cI_S\subset\CI(\ol{\Teb^*}M)$ the ideal of functions vanishing at $S$. Inserting the coordinates~\eqref{EqMFRinmCoord} into~\eqref{EqMFHamI0}, we find that
\begin{align}
\label{EqMFLinInm}
  \rho_\infty H_{G_\ebop^0} &\equiv 2\rho_0\pa_{\rho_0} - x_{\!\scri}\pa_{x_{\!\scri}} - \hat\eta\pa_{\hat\eta} \bmod (\cI_{\pa\cR_{\rm in,-}^+}+\cA^{(\ell_0,2\ell_{\!\scri})})\Vb(\ol{\Teb^*_{\cU_0}}M), \\
\label{EqMFLinOut1}
  \rho_\infty H_{G_\ebop^0} &\equiv -x_{\!\scri}\pa_{x_{\!\scri}}-\hat\eta\pa_{\hat\eta} \bmod (\cI_{\pa\cR_{\rm out}^+}+\cA^{(\ell_0,2\ell_{\!\scri})})\Vb(\ol{\Teb^*_{\cU_0}}M).
\end{align}
\end{subequations}
Near $\cR_{\rm c}^+$, we need to work with different coordinates; we choose
\begin{subequations}
\begin{equation}
\label{EqMFRcCoord}
  \rho_\infty = \xi^{-1},\qquad
  \hat\eta=\frac{\eta}{\xi},\qquad
  \hat\zeta=\frac{\zeta}{\xi},
\end{equation}
and compute (using that $\hat\zeta=0$ and $|\hat\eta|_{k^{-1}}=\frac{1}{\sqrt 2}$ at $\pa\cR_{\rm c}^+$)
\begin{equation}
\label{EqMFLinC}
  \rho_\infty H_{G_\ebop^0} \equiv \rho_\infty\pa_{\rho_\infty} + \rho_0\pa_{\rho_0} - x_{\!\scri}\pa_{x_{\!\scri}} + \hat\zeta\pa_{\hat\zeta} \bmod (\cI_{\pa\cR_{\rm c}^+}+\cA^{(\ell_0,2\ell_{\!\scri})})\Vb(\ol{\Teb^*_{\cU_0}}M).
\end{equation}
\end{subequations}

Finally, near $\cR_{\rm in,+}^+$ and $\cR_{\rm out}^+$ and in the coordinates~\eqref{EqMFCoordIp} and
\begin{subequations}
\begin{equation}
\label{EqMFRinpCoord}
  \rho_\infty = |\zeta|^{-1},\qquad
  \hat\eta=\frac{\eta}{\zeta},\qquad
  \hat\xi=\frac{\xi}{\zeta},
\end{equation}
we have $\rho_\infty=\zeta^{-1}$ near, and $(\hat\eta,\hat\xi)=(0,2)$ at, $\pa\cR_{\rm in,+}^+$, and $\rho_\infty=\zeta^{-1}$ near, and $(\hat\eta,\hat\xi)=(0,0)$ at, $\pa\cR_{\rm out}^+$. Inserting this into~\eqref{EqMFHamIp} gives
\begin{align}
\label{EqMFLinInp}
  \rho_\infty H_{G_\ebop^+} &\equiv x_{\!\scri}\pa_{x_{\!\scri}} - 2\rho_+\pa_{\rho_+} + \hat\eta\pa_{\hat\eta} \bmod (\cI_{\pa\cR_{\rm in,+}^+}+\cA^{(2\ell_{\!\scri},\ell_+)})\Vb(\ol{\Teb^*_{\cU_+}}M), \\
\label{EqMFLinOut2}
  \rho_\infty H_{G_\ebop^+} &\equiv -x_{\!\scri}\pa_{x_{\!\scri}} - \hat\eta\pa_{\hat\eta} \bmod (\cI_{\pa\cR_{\rm out,+}}+\cA^{(2\ell_{\!\scri},\ell_+)})\Vb(\ol{\Teb^*_{\cU_+}}M).
\end{align}
\end{subequations}

From the expressions~\eqref{EqMFLinInm}, \eqref{EqMFLinOut1}, \eqref{EqMFLinC}, \eqref{EqMFLinInp}, and \eqref{EqMFLinOut2}, one can read off the dynamical nature (source, sink, saddle point) of the respective radial sets; see Figure~\ref{FigMF}. We determine the structure of the flow between these radial sets by computing the flows of $H_{G_\ebop^0}$ and $H_{G_\ebop^+}$ explicitly inside the future characteristic set $\Sigma^+$. Consider the time $s$ flow of
\[
  H^0:=H_{G_\ebop^0}|_{\Teb^*_{\scri^+}M}=(\zeta-\xi)\eta\pa_\eta+\xi\rho_0\pa_{\rho_0}-2|\eta|_{k^{-1}}^2\pa_\xi
\]
inside $(G_\ebop^0)^{-1}(0)=\{-\half\xi(\xi-2\zeta)+|\eta|_{k^{-1}}^2=0\}$. Dropping the coordinates $x_{\!\scri},y$ (which are constants of integration) from the notation and only keeping $(\rho_0,\zeta,\xi,\eta)$, we have for $\zeta=0$ (and thus $\xi=\sqrt{2}|\eta|_{k^{-1}}>0$)
\[
  e^{s H^0}(\rho_0,0,\xi,\eta) = \Bigl(\rho_0(1+s\xi), 0, \frac{\xi}{1+s\xi}, \frac{\eta}{1+s\xi}, \Bigr),
\]
with domain of definition given by $s\in(-\frac{1}{\xi},\frac{\rho_0^{-1}-1}{\xi})$ when $\rho_0>0$ (the upper bound ensuring that $\rho_0<1$, i.e.\ we stay in the coordinate patch); when $s\searrow-\frac{1}{\xi}$, this tends to a point on $\pa\cR_{\rm c}^+$. In the case $\rho_0=0$, the flow exists for all $s>-\frac{1}{\xi}$ and merely dilates the fiber variables.

When $\zeta\neq 0$ on the other hand, then $\xi\geq\max(0,2\zeta)$ on $\Sigma^+$ and
\[
  e^{s H^0}(\rho_0,\zeta,\xi,\eta) = \biggl( \rho_0\Bigl(1+\frac{\xi}{2\zeta}(e^{2\zeta s}-1)\Bigr), \zeta, \frac{2\xi\zeta}{\xi-(\xi-2\zeta)e^{-2\zeta s}}, \frac{2\eta\zeta e^{-\zeta s}}{\xi-(\xi-2\zeta)e^{-2\zeta s}} \biggr).
\]
When $\xi=2\zeta$ (and thus $\eta=0$ on $\Sigma^+$), this tends to $\cR_{\rm in,-}^+$ as $s\searrow-\infty$. When $\xi=0$ (and thus $\eta=0$, $\zeta<0$ on $\Sigma^+$), i.e.\ on $\cR_{\rm out}^+$, this is stationary. In the remaining case that $\xi\neq 0,2\zeta$ and $\zeta\neq 0$ (and thus $\eta\neq 0$), one has $e^{s H^0}(\rho_0,\zeta,\xi,\eta)\to\pa\cR_{\rm c}^+$ as $s\searrow\frac{1}{2\zeta}\log(\frac{\xi-2\zeta}{\xi})$ (the infimum of the maximal interval of existence of the flow); the flow leaves the local coordinate chart into $\rho_0\geq 1$ when $s$ exceeds a positive threshold. For $\rho_0=0$ finally, the flow is defined for all $s\geq 0$, and for $\zeta>0$, resp.\ $\zeta<0$ tends to $\cR_{\rm in,-}^+$, resp.\ $\cR_{\rm out}^+$ as $s\nearrow\infty$.

We now turn to the time $s$ flow of
\[
  H^+:=H_{G_\ebop^+}|_{\Teb^*_{\scri^+}M}=(\xi-\zeta)\eta\pa_\eta-\xi\rho_+\pa_{\rho_+}-2|\eta|_{k^{-1}}^2\pa_\xi
\]
inside $\Sigma^+$ over $\scri^+$, where $\half\xi(\xi-2\zeta)+|\eta|_{k^{-1}}^2=0$. Thus, $\zeta>0$ and therefore $0\leq\xi\leq 2\zeta$ in $\Sigma^+$, and we find (dropping $x_{\!\scri}\equiv 0$ and the constant $y$ from the notation)
\[
  e^{s H^+}(\rho_+,\zeta,\xi,\eta) = \biggl( \rho_+\Bigl(1+\frac{\xi}{2\zeta}(e^{-2\zeta s}-1)\Bigr), \zeta, \frac{2\xi\zeta}{\xi+(2\zeta-\xi)e^{2\zeta s}}, \frac{2\eta\zeta e^{\zeta s}}{\xi+(2\zeta-\xi)e^{2\zeta s}} \biggr).
\]
When $\xi=2\zeta$ and thus $\eta=0$, the fiber coordinates remain fixed and $\rho_+\to 0$ as $s\nearrow\infty$, thus this tends to $\cR_{\rm in,+}^+$. When $\xi\neq 2\zeta$, this tends to the point $(\rho_+(1-\frac{\xi}{2\zeta}),0,0,\zeta)\in\cR_{\rm out}^+$ as $s\nearrow\infty$.

The fact that the set $\xi=2\zeta$, $\eta=0$ (in either coordinate system) is preserved under the flow motivates the first half of the following definition:

\begin{definition}[Stable/unstable manifolds]
\label{DefMFStable}
  We define the following submanifolds of $\Sigma^\pm\cap(\Teb^*_{\scri^+}M\setminus o)$:
  \begin{align}
  \label{EqMFStable}
    \begin{split}
      \cW^\pm_{\rm in} &:= \{ (\rho_0,x_{\!\scri},y;\zeta,\xi,\eta) \colon x_{\!\scri}=0,\ \eta=0,\ \xi=2\zeta,\ \pm\xi>0 \}\ \ (\text{in the coordinates~\eqref{EqMFCoordI0}}) \\
        &= \{ (\rho_+,x_{\!\scri},y;\zeta,\xi,\eta) \colon x_{\!\scri}=0,\ \eta=0,\ \xi=2\zeta,\ \pm\xi>0 \}\ \ (\text{in the coordinates~\eqref{EqMFCoordIp}}),
    \end{split} \\
    \cW^\pm_{\rm c} &:= \{ (\rho_0,x_{\!\scri},y;\zeta,\xi,\eta) \colon \rho_0=x_{\!\scri}=0,\ \pm\zeta\geq 0 \} \cap \Sigma^\pm.
  \end{align}
\end{definition}

Thus, inside $\Sigma^+$, the set $\cW_{\rm in}^+$ is the unstable, resp.\ stable manifold of $\cR_{\rm in,-}^+$, resp.\ $\cR_{\rm in,+}^+$, while $\cW_{\rm c}^+$ is the intersection of the unstable manifold of $\cR_{\rm c}^+$ and the stable manifold of $\cR_{\rm in,-}^+$. Inside $\Sigma^-$, the analogous statements hold with `stable' and `unstable' switched. See Figure~\ref{FigMF}.

\begin{figure}[!ht]
\centering
\includegraphics{FigMF}
\caption{Illustration of the null-bicharacteristic flow of the rescaled Hamiltonian vector field $\sfH$, see~\eqref{EqMHamResc}, at fiber infinity over a fiber $\scri^+_y$, $y\in Y$, of $\scri^+$. We identify conic subsets of $\Teb^*M\setminus o$ with their boundaries at fiber infinity. We draw $\Sigma^+\cap\Seb^*_{\scri^+_y}M$ as a cylinder with base $\scri^+_y$ (a closed interval) and cross section $\Sigma^+\cap\Seb^*_p M\cong\Sph^{n-1}$, drawn here for $n=2$. The direction of the flow is indicated by arrows, and the boundaries at fiber infinity of the radial sets defined in Lemma~\ref{LemmaMFCrit} are drawn in bold black. We also show the boundary at fiber infinity of the two stable/unstable manifolds from Definition~\ref{DefMFStable}. Moreover, we sketch null-bicharacteristics in $(I^0)^\circ$ in red, null-bicharacteristics in $(I^+)^\circ$ in green, and null-bicharacteristics lying over $M^\circ$ in blue.}
\label{FigMF}
\end{figure}

%%%%%%%%%%%%%%%%%%%%%%%%%%%%%%%%%%%%%%%%%%%%%%%%%%
\subsection{Microlocal estimates}
\label{SsME}

We continue using the notation introduced at the beginning of~\S\ref{SM}. The structure of the linearizations of the Hamilton flow at the radial sets determines the structure of the corresponding microlocal propagation results, even at a quantitative level as far as the relative sizes of weights and differential orders are concerned. In addition, suitable subprincipal symbols of the operator $P$ affect the precise threshold conditions entering in the radial point estimates at $\cR_{\rm c}$ and $\cR_{\rm out}$; there, the endomorphism $p_1$ from Definition~\ref{DefOp}\eqref{ItOpNorm} enters (see also Remark~\ref{RmkMEThr}). We thus define:

\begin{definition}[Minimum of $p_1$]
\label{DefMEMin}
  Given $p_1\in(\CI+\cA^{(\ell_0,\ell_+)})(\scri^+;\upbeta^*\End(E))$, we set
  \[
    \ubar p_1 := \inf_{p\in\scri^+} \{\Re\mu \colon \mu\in\spec p_1(p)\}.
  \]
  Here, $\spec p_1(p)\subset\C$ is the spectrum (in the sense of linear algebra) of the linear map $p_1(p)\colon E_y\to E_y$, $y=\upbeta(p)$. We define $\ubar p_{1,+}$ by the same expression except for taking the infimum only over $p\in\scri^+\cap I^+$.
\end{definition}

We rewrite this in a manner more amenable for use in estimates:

\begin{lemma}[Characterization of $\ubar p_1$]
\label{LemmaMEMin}
  Let $p_1\in(\CI+\cA^{(\ell_0,\ell_+)})(\scri^+;\upbeta^*\End(E))$. For a positive definite fiber inner product $h_E$ on $E|_Y$, set
  \[
    \ubar p_1(h_E) := \inf_{p\in\scri^+} \spec\Bigl(\frac12\bigl(p_1(p)+p_1(p)^*\bigr)\Bigr),
  \]
  where $p_1(p)^*\in\End((\upbeta^*E)_p)=\End(E_y)$, $y=\upbeta(p)$, is the adjoint of $p_1(p)$ with respect to $h_E(y)$. Then $\ubar p_1=\sup_{h_E}\ubar p_1(h_E)$.
\end{lemma}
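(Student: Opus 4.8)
The plan is to prove the identity $\ubar p_1=\sup_{h_E}\ubar p_1(h_E)$ by establishing the two inequalities separately, working fiberwise over $Y$ and then taking infima.

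\emph{The inequality $\ubar p_1\geq\sup_{h_E}\ubar p_1(h_E)$.} First I would fix a fiber inner product $h_E$ on $E|_Y$ and a point $p\in\scri^+$, and show that for every eigenvalue $\mu\in\spec p_1(p)$ one has $\Re\mu\geq\lambda_{\min}\bigl(\tfrac12(p_1(p)+p_1(p)^*)\bigr)$, where the right-hand side denotes the smallest eigenvalue of the Hermitian part of $p_1(p)$ with respect to $h_E(y)$, $y=\upbeta(p)$. This is the standard fact that the real parts of the eigenvalues of a square matrix are bounded below by the smallest eigenvalue of its Hermitian part: indeed, if $A e=\mu e$ with $h_E$-unit vector $e$, then $\Re\mu=\Re\,h_E(A e,e)=h_E\bigl(\tfrac12(A+A^*)e,e\bigr)\geq\lambda_{\min}\bigl(\tfrac12(A+A^*)\bigr)$. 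Taking the infimum over $p\in\scri^+$ on the left and using the definition of $\ubar p_1(h_E)$ as an infimum over $p$ on the right gives $\ubar p_1\geq\ubar p_1(h_E)$; since $h_E$ was arbitrary, the claimed inequality follows.

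\emph{The inequality $\ubar p_1\leq\sup_{h_E}\ubar p_1(h_E)$.} This is the substantive direction. Fix $\eps>0$; I want to produce a fiber inner product $h_E$ with $\ubar p_1(h_E)\geq\ubar p_1-\eps$. The key point is that for a single endomorphism $A$ of a finite-dimensional vector space, and any $\eps>0$, there is an inner product with respect to which the Hermitian part of $A$ has all eigenvalues $\geq\min\{\Re\mu:\mu\in\spec A\}-\eps$: put $A$ in (complex) Jordan form and rescale the basis vectors within each Jordan block by powers of a small parameter $t$, so that the off-diagonal (nilpotent) part of $A$ becomes $\cO(t)$ in the rescaled inner product, while the diagonal part contributes exactly the eigenvalues; then the Hermitian part is a block-diagonal matrix with diagonal entries $\Re\mu$ plus an $\cO(t)$ perturbation, so by continuity of eigenvalues (e.g.\ Weyl's inequality for Hermitian matrices) its spectrum lies within $\eps$ of $\{\Re\mu\}$ once $t$ is small. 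The work then is to carry this out uniformly in $p\in\scri^+$. I would use compactness: $\scri^+$ is compact, and $p_1$ is continuous (being in $(\CI+\cA^{(\ell_0,\ell_+)})(\scri^+;\upbeta^*\End(E))$, in particular bounded and continuous up to the boundary), so one can cover $\scri^+$ by finitely many open sets on each of which a local choice of $h_E$ works with the $\eps$-bound, then patch these together with a partition of unity on $Y$ (pulling back along $\upbeta|_{\scri^+}\colon\scri^+\to Y$, noting that the relevant data live on $Y$ since $E$ is a bundle over $M_0$ and $\upbeta^*E|_{\scri^+}$ is pulled back from $Y$). Convex combinations of positive definite inner products are positive definite, and the smallest eigenvalue of a convex combination of Hermitian matrices is at least the minimum of the smallest eigenvalues, so the patched $h_E$ retains the bound $\ubar p_1(h_E)\geq\ubar p_1-\eps$. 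Letting $\eps\to 0$ gives $\sup_{h_E}\ubar p_1(h_E)\geq\ubar p_1$.

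\emph{Main obstacle.} The routine linear algebra is the fiberwise Jordan-form rescaling; the only genuinely delicate point is the uniformity over the compact manifold $\scri^+$, in particular ensuring that the finitely many local inner products can be combined into a single smooth (or at least continuous) fiber metric $h_E$ on $E|_Y$ while preserving the eigenvalue lower bound. This is handled cleanly by the partition-of-unity argument together with the convexity remark above. One should also note at the outset that both $\ubar p_1$ and $\ubar p_1(h_E)$ are finite real numbers — the former because the spectrum of a continuous family of endomorphisms over a compact set is bounded, the latter likewise — so that the suprema and infima in the statement are over nonempty bounded sets and the manipulations above are legitimate.
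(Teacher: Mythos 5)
Your easy inequality and the single-matrix step (Jordan rescaling of a basis by powers of a small parameter) are fine, and your overall skeleton---fiberwise linear algebra plus compactness of $\scri^+$ and a partition of unity---is precisely what the paper's own two-line proof delegates to the cited reference \cite[Proposition~B.1 and Lemma~B.2]{HintzNonstat}, using only that $(\CI+\cA^{(\ell_0,\ell_+)})(\scri^+)\subset\cC^0(\scri^+)$. The gap is in your patching step, and it comes from the mismatch between where your local bounds hold and where the inner product is allowed to live. Your local metrics $h_i$ are chosen on open subsets $U_i\subset\scri^+$ (that is where continuity of $p_1$ gives the $\eps$-bound), but your partition of unity lives on $Y$, and the Lemma requires $h_E$ to depend on $y=\upbeta(p)$ only. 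At a point $p$ in the fiber $\scri^+_y$, the patched metric $\sum_i\chi_i(y)h_i$ receives contributions from every $U_i$ meeting that fiber, not only from those containing $p$, so the bound you need at $p$ is simply not known for those summands. If you repair this by taking the $U_i$ to be unions of fibers, the difficulty moves into the local step: you then need one inner product on $E_y$ whose Hermitian-part bound holds simultaneously for the whole compact family $\{p_1(p)\colon p\in\scri^+_y\}$, and the Jordan rescaling is a one-matrix-at-a-time device with no uniformity for a noncommuting family. (A smaller point: your convexity statement concerns convex combinations of Hermitian matrices with respect to a fixed inner product; what patching actually uses is the quadratic-form version, namely that $\Re h_i(p_1(p)v,v)\ge c\,h_i(v,v)$ for each relevant $i$ implies the same for $h=\sum_i\chi_i h_i$.)

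Moreover, this is not a removable defect if one insists on $h_E$ living on $E|_Y$ while $p_1$ varies along the blow-up fibers. Take $E$ trivial of rank $2$ and let $p_1$, supported near a single fiber and constant in $y$ there, equal $\chi_0(t)\,e_{1 2}$ on one subinterval of the fiber and $\chi_1(t)\,e_{2 1}$ on a disjoint one, where $e_{1 2}=\left(\begin{smallmatrix}0&1\\0&0\end{smallmatrix}\right)$, $e_{2 1}=\left(\begin{smallmatrix}0&0\\1&0\end{smallmatrix}\right)$ and $\chi_0,\chi_1$ are cutoffs valued in $[0,1]$; every value of $p_1$ is then nilpotent, so $\ubar p_1=0$. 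For any inner product $h$ on the fiber, writing $H_h(A)=\frac12(A+A^{*_h})$, the smallest eigenvalue of $H_h$ is superadditive and is bounded above by the real part of any eigenvalue of the argument, so $\lambda_{\min}(H_h(e_{1 2}))+\lambda_{\min}(H_h(e_{2 1}))\le\lambda_{\min}(H_h(e_{1 2}+e_{2 1}))\le-1$, whence $\ubar p_1(h_E)\le-\tfrac12$ for \emph{every} $h_E$ on $E|_Y$. So no patching scheme can close your argument under that reading. The argument (and the identity) do go through if $h_E$ is allowed to vary over $\scri^+$ itself, i.e.\ is a fiber metric on $\upbeta^*E|_{\scri^+}$: then your cover and partition of unity are both taken on $\scri^+$, the quadratic-form patching applies verbatim, and this is exactly the setting of \cite[Proposition~B.1]{HintzNonstat} that the paper invokes. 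You should either prove that version (and note it suffices for the commutator arguments in \S\ref{SsME}), or add a hypothesis such as fiber-constancy of $p_1$ if you want the inner product to be pulled back from $Y$.
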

\begin{proof}
  For any $h_E$, one has $\ubar p_1(h_E)\leq\ubar p_1$, and hence the statement is that for any $\eps>0$, one can find an inner product $h_E$ so that $\ubar p_1-\eps<\ubar p_1(h_E)\leq\ubar p_1$. The Lemma is a general linear algebra result, which is a minor variant of \cite[Proposition~B.1 and Lemma~B.2]{HintzNonstat}: we need to use that due to $\ell_0,\ell_+>0$, we have $(\CI+\cA^{(\ell_0,\ell_+)})(\scri^+)\subset\cC^0(\scri^+)$, and thus the compactness argument of the proof of \cite[Proposition~B.1]{HintzNonstat} still applies.
\end{proof}

We shall identify conic subsets of $\Teb^*M\setminus o$ with their boundaries at fiber infinity. We shall moreover drop the vector bundle $\upbeta^*E\to M$, in which all distributions below are valued, from the notation. Finally, in order to state our estimates, we need to fix an integration density on $M$, which we take to be a positive element of
\begin{equation}
\label{EqMEDensity}
  \rho_0^{-n-1}x_{\!\scri}^{-2 n}\rho_+^{-n-1}\CI(M;\Omegab M).
\end{equation}
The motivation is that the lift of the metric density $|\dd g|$ on $M^\circ$ is given by $|\dd g|=\rho_0^{-n-1}x_{\!\scri}^{-n-1}\rho_+^{-n-1}|\dd(\rho_0^2 x_{\!\scri}^2\rho_+^2 g)|\in\rho_0^{-n-1}x_{\!\scri}^{-n-1}\rho_+^{-n-1}(\CI+\cA^{(\ell_0,2\ell_{\!\scri},\ell_+)})(M;\Omegaeb M)$, and we have $\CI(M;\Omegaeb M)=x_{\!\scri}^{-n+1}\CI(M;\Omegab M)$. (We may as well allow for a conormal term in~\eqref{EqMEDensity} as well, which makes no difference in the subsequent arguments; one could then simply use $|\dd g|$ as the metric density, which is of class~\eqref{EqMEDensity} plus a lower order conormal error term.)

The semiglobal microlocal propagation results near $\scri^+$ are then the following:

\begin{thm}[Propagation through $\scri^+$: forward direction]
\label{ThmMEFw}
  Let $s\in\R$ and a vector of weights $\alpha=(\alpha_0,2\alpha_{\!\scri},\alpha_+)$; put $\alpha'=\alpha+(2,2,2)$. Suppose $u\in\Heb^{-\infty,\alpha}(M)=\rho_0^{\alpha_0}x_{\!\scri}^{2\alpha_{\!\scri}}\rho_+^{\alpha_+}\Heb^{-\infty}(M)$, and let $f=P u\in\Heb^{-\infty,\alpha'}(M)$. Then $\WFeb^{s,\alpha}(u)\subset\Sigma\cup\WFeb^{s-2,\alpha'}(f)$. Moreover:
  \begin{enumerate}
  \item\label{ItMEFwThru}{\rm (Propagation through $\scri^+$.)} Suppose that\footnote{The condition on $s_0$ can be written as $s_0>-(\alpha_0-\alpha_{\!\scri})-(-\half+\ubar p_1-\alpha_{\!\scri})$, cf.\ the first inequality in~\eqref{EqMEFwThruThr} and the threshold condition \eqref{EqMEFwOutThr}.}
    \begin{equation}
    \label{EqMEFwThruThr}
      \alpha_+<\alpha_{\!\scri}-\frac12<\alpha_0,\qquad
      s>s_0>\frac12-\alpha_0+2\alpha_{\!\scri}-\ubar p_1.
    \end{equation}
    Assuming that\footnote{Carefully note that the assumption on $u$ at $\cR_{\rm c}$ requires only above-threshold regularity $s_0$ on $u$ (while the conclusion of the Theorem gives $s>s_0$ degrees of edge-b-differentiability at $\cR_{\rm c}$).}
    \begin{equation}
    \label{EqMEFwThru}
    \begin{alignedat}{3}
      &\WFeb^{s,\alpha}(u)&&\cap\,\Seb^*_{(I^0)^\circ}M&&=\emptyset, \\
      &\WFeb^{s_0,\alpha}(u)&&\cap\,\cR_{\rm c}&&=\emptyset, \\
      &\WFeb^{s-1,\alpha'}(f)&&\cap\,\Seb^*_{\scri^+}M&&\subset\cR_{\rm out},
    \end{alignedat}
    \end{equation}
    we then have $\WFeb^{s,\alpha}(u)\cap\Seb^*_{\scri^+}M\subset\cR_{\rm out}$.
  \item\label{ItMEFwOut}{\rm (Full control at $\scri^+$.)} Suppose that in addition to~\eqref{EqMEFwThruThr}, we have
    \begin{equation}
    \label{EqMEFwOutThr}
      \alpha_{\!\scri} < -\frac12+\ubar p_1,
    \end{equation}
    and that for an open neighborhood $\cU\subset\Seb^*M$ of $\cR_{\rm out}$, we have
    \begin{equation}
    \label{EqMEFwOut}
    \begin{alignedat}{3}
      &\WFeb^{s,\alpha}(u)&&\cap\,(\cU\setminus\Seb^*_{\scri^+}M)&&=\emptyset, \\
      &\WFeb^{s-1,\alpha'}(f)&&\cap\,\Seb_{\scri^+}^*M&&=\emptyset.
    \end{alignedat}
    \end{equation}
    Then $\WFeb^{s,\alpha}(u)\cap\Seb^*_{\scri^+}M=\emptyset$.
  \end{enumerate}
\end{thm}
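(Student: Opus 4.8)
\textbf{Proof strategy for Theorem~\ref{ThmMEFw}.}
The plan is to prove both assertions by a microlocal bootstrap along the null-bicharacteristic flow of $\sfH$, using the dynamical picture established in~\S\ref{SsMF} (Lemma~\ref{LemmaMFCrit}, Definition~\ref{DefMFStable}, and Figure~\ref{FigMF}) together with the standard toolkit of positive commutator estimates: real principal type propagation, elliptic regularity, and radial point estimates at the four radial sets $\cR_{\rm in,-}^\pm$, $\cR_{\rm c}^\pm$, $\cR_{\rm out}^\pm$, $\cR_{\rm in,+}^\pm$. The two global facts $\WFeb^{s,\alpha}(u)\subset\Sigma\cup\WFeb^{s-2,\alpha'}(f)$ and propagation within $\Sigma\setminus(\text{radial sets})$ are the usual consequences of ellipticity of $P$ off $\Sigma$ and Hörmander's theorem for the operator $P$, which is of real principal type on $\ol{\Teb^*}M\setminus o$ away from the radial sets; the only novelty is that the phase space is $\Teb^*M$, but the symbol calculus of~\S\ref{SsEBP} is set up precisely so that these arguments are off-the-shelf. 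The first thing I would do is fix, once and for all, convenient defining functions $\rho_0,x_{\!\scri},\rho_+$ and a rescaling function $\rho_\infty$ as in~\eqref{EqMHamResc}, and record the linearizations~\eqref{EqMFLinInm}, \eqref{EqMFLinOut1}, \eqref{EqMFLinC}, \eqref{EqMFLinInp}, \eqref{EqMFLinOut2}; these already contain all the quantitative information (eigenvalues of the linearized flow, hence threshold regularities and admissible weights) needed below.

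\textbf{Part~\eqref{ItMEFwThru}.}
The goal is to push microlocal regularity from $(I^0)^\circ$ across $\scri^+$ toward $\cR_{\rm out}$. I would organize the flow into stages following Figure~\ref{FigMF}. First, the hypothesis $\WFeb^{s,\alpha}(u)\cap\Seb^*_{(I^0)^\circ}M=\emptyset$ together with real principal type propagation gives regularity on all of $\Seb^*_{(I^0)^\circ}M$ up to (but not at) the radial sets lying over $I^0$, namely $\cR_{\rm in,-}^+$ and $\cR_{\rm c}^+$. Next, at $\cR_{\rm in,-}^+$: the linearization~\eqref{EqMFLinInm} is $2\rho_0\pa_{\rho_0}-x_{\!\scri}\pa_{x_{\!\scri}}-\hat\eta\pa_{\hat\eta}$, a saddle whose stable manifold inside $\Sigma^+$ is $\cW_{\rm in}^+$; since $\cW_{\rm in}^+$ approaches $\cR_{\rm in,-}^+$ from $\rho_0>0$ (the $(I^0)^\circ$ side) where we already have regularity, and the forcing hypothesis on $f$ near $\scri^+$ is satisfied, a radial point estimate of the standard saddle type (as in~\cite[\S2]{VasyMicroKerrdS} or~\cite[Appendix~E.4]{DyatlovZworskiBook}) propagates regularity through $\cR_{\rm in,-}^+$ into its unstable manifold; the weight condition $\alpha_0>\alpha_{\!\scri}-\tfrac12$ is exactly the sign condition needed for the $\rho_0$-weight in this estimate (the $2\rho_0\pa_{\rho_0}$ term being the unstable direction transverse to $\scri^+$), and no threshold on $s$ is needed here because the normal-to-$\scri^+$ direction is unstable. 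Then $\cR_{\rm c}^+$: here the linearization~\eqref{EqMFLinC} is $\rho_\infty\pa_{\rho_\infty}+\rho_0\pa_{\rho_0}-x_{\!\scri}\pa_{x_{\!\scri}}+\hat\zeta\pa_{\hat\zeta}$; the $\rho_\infty\pa_{\rho_\infty}$ coefficient is positive, so $\cR_{\rm c}^+$ is a \emph{source} in the fiber-radial direction, which forces the genuine \emph{high-regularity} radial point estimate: one needs the above-threshold regularity hypothesis $\WFeb^{s_0,\alpha}(u)\cap\cR_{\rm c}^+=\emptyset$ with $s_0>\tfrac12-\alpha_0+2\alpha_{\!\scri}-\ubar p_1$ (the threshold coming from the $\tfrac{n-1}{2}+p_1$ term in~\eqref{EqOpNorm}, the weight shift, and the normalization density~\eqref{EqMEDensity}), and then the estimate \emph{upgrades} $s_0$ to the full $s$ along the unstable manifold $\cW_{\rm c}^+$, the role of $\ubar p_1$ being handled via Lemma~\ref{LemmaMEMin} by choosing the fiber inner product $h_E$ so that $\tfrac12(p_1+p_1^*)$ is bounded below by $\ubar p_1-\eps$. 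Finally, having obtained regularity on $\cW_{\rm c}^+$ and $\cW_{\rm in}^+$ and, by real principal type propagation, on all bicharacteristics emanating from them, the flow inside $\Sigma^+\cap\Seb^*_{\scri^+}M$ (computed explicitly in~\S\ref{SsMF} via $e^{sH^0}$, $e^{sH^+}$) carries this regularity to every point of $\Seb^*_{\scri^+}M$ except $\cR_{\rm out}$ itself, which is the claimed conclusion. The analogous reasoning in $\Sigma^-$ is identical with stable/unstable exchanged; since the two components are disjoint and open in $\Sigma$, they may be handled separately.

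\textbf{Part~\eqref{ItMEFwOut} and the main obstacle.}
To get regularity \emph{at} $\cR_{\rm out}$ one applies the final radial point estimate there. The linearizations~\eqref{EqMFLinOut1} and~\eqref{EqMFLinOut2} are both $-x_{\!\scri}\pa_{x_{\!\scri}}-\hat\eta\pa_{\hat\eta}$: $\cR_{\rm out}$ is a sink within $\Seb^*_{\scri^+}M$, with the only unstable direction being $x_{\!\scri}\pa_{x_{\!\scri}}$ (i.e.\ transverse to $\scri^+$), and it is radial-\emph{neutral} in the $\rho_\infty$ direction to leading order, so it behaves like a sink-type radial set. The estimate therefore propagates regularity \emph{into} $\cR_{\rm out}$ provided regularity is known on a punctured neighborhood inside $\Seb^*M$ away from $\scri^+$ (the hypothesis $\WFeb^{s,\alpha}(u)\cap(\cU\setminus\Seb^*_{\scri^+}M)=\emptyset$), and — crucially — provided the weight satisfies the sink-side threshold $\alpha_{\!\scri}<-\tfrac12+\ubar p_1$, which is~\eqref{EqMEFwOutThr}; this is where the subprincipal term $p_1$ again enters, via the imaginary part of the subprincipal symbol of $P$ computed at $\cR_{\rm out}$, and again Lemma~\ref{LemmaMEMin} is used to realize $\ubar p_1$ as a supremum over fiber inner products. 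Combined with Part~\eqref{ItMEFwThru}, which already gives $\WFeb^{s,\alpha}(u)\cap\Seb^*_{\scri^+}M\subset\cR_{\rm out}$ (its hypotheses being implied by those of Part~\eqref{ItMEFwOut}), this yields $\WFeb^{s,\alpha}(u)\cap\Seb^*_{\scri^+}M=\emptyset$. The main technical obstacle is bookkeeping rather than conceptual: one must verify that the edge-b-positive-commutator machinery — construction of the commutant $A=\Op_\ebop(a)$ with $a$ a suitable power of $\rho_\infty$ times a localizer adapted to each radial set, and control of the error terms $\tilde H$ and $\tilde G_\ebop^{0/+}$ (which by~\eqref{EqMFHamI0}, \eqref{EqMFHamIp} are of class $x_{\!\scri}\CI+\cA^{(\ell_0,2\ell_{\!\scri})}$, hence vanish at $\scri^+$ or decay, so do not affect the leading threshold computations) — goes through uniformly in the noncompact directions along $\scri^+$, using that the estimates are semiglobal in $\rho_0$ (resp.\ $\rho_+$) on compact subintervals and that the weights $\alpha_\bullet$ provide the needed $L^2$-integrability; the regularization argument (inserting $\rho_\infty^{-\delta}$ and letting $\delta\downarrow 0$) at $\cR_{\rm c}^+$ and $\cR_{\rm out}$ must be run with the a priori regularity $s_0$, respectively $s$, genuinely available, which is why the hypotheses are stated as they are.
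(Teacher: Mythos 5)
Your overall architecture is the same as the paper's: the theorem is assembled from elliptic regularity, real principal type propagation, and radial point estimates at the four radial sets (the paper's Lemmas~\ref{LemmaMEc}--\ref{LemmaMEout}), with the regularity threshold $s_0>\frac12-\alpha_0+2\alpha_{\!\scri}-\ubar p_1$ attached to $\cR_{\rm c}$ and the weight threshold $\alpha_{\!\scri}<-\frac12+\ubar p_1$ attached to the sink $\cR_{\rm out}$, exactly as in the paper; your part~\eqref{ItMEFwOut} matches the paper's argument. However, the chain you describe for part~\eqref{ItMEFwThru} has two genuine gaps. First, your treatment of $\cR_{\rm in,-}$ rests on a geometric error: you write that ``$\cW_{\rm in}^+$ approaches $\cR_{\rm in,-}^+$ from $\rho_0>0$ (the $(I^0)^\circ$ side) where we already have regularity'', but $(I^0)^\circ$ is $\{\rho_0=0,\ x_{\!\scri}>0\}$, whereas $\cW_{\rm in}$ lies over $\scri^+$ ($x_{\!\scri}=0$) and is the \emph{unstable} manifold of $\cR_{\rm in,-}$. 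By the linearization~\eqref{EqMFLinInm} the stable directions at this saddle are $x_{\!\scri}$ and $\hat\eta$ (equivalently $2-\hat\xi$), so in the positive commutator argument the unfavorably-signed localizer terms are supported not only at $x_{\!\scri}\sim\delta_{\!\scri}$ (covered, via closedness of the wave front set, by the hypothesis over $(I^0)^\circ$) but also at points of $\Seb^*_{I^0\cap\scri^+}M$ with $\hat\xi$ bounded away from $2$. Those corner points are not controlled by the hypothesis over $(I^0)^\circ$, and dynamically they lie on bicharacteristics emanating from $\cR_{\rm c}$; hence the estimate at $\cR_{\rm c}$ (with its $s_0$ threshold) must come \emph{first}, followed by real principal type propagation along its unstable manifolds over $\scri^+\setminus I^+$, and only then the saddle estimate at $\cR_{\rm in,-}$, whose a priori hypothesis is regularity on a punctured neighborhood of $\cR_{\rm in,-}$ inside $\Seb^*_{I^0}M$ (not merely $\Seb^*_{(I^0)^\circ}M$). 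Your ordering reverses this, and the $\cR_{\rm in,-}$ step as you justify it would not close.

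Second, after controlling $\cW_{\rm in}$ and $\cW_{\rm c}$ you assert that the flow ``carries this regularity to every point of $\Seb^*_{\scri^+}M$ except $\cR_{\rm out}$'' by real principal type propagation. This skips $\cR_{\rm in,+}$, which is itself a radial set (a saddle, see~\eqref{EqMFLinInp}), where the propagation theorem does not apply; moreover the non-radial points of $\Seb^*_{\scri^+\cap I^+}M$ between $\cR_{\rm in,+}$ and $\cR_{\rm out}$ have backward flow limiting to $\cR_{\rm in,+}$, so they cannot be reached without first applying the radial point estimate there (Lemma~\ref{LemmaMEinp}\eqref{ItMEinpFw}). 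This is precisely where the hypothesis $\alpha_+<\alpha_{\!\scri}-\frac12$ enters, which your write-up never uses. With these two repairs -- run the estimates in the order $\cR_{\rm c}$, then $\cR_{\rm in,-}$, propagate along $\cW_{\rm in}$, then $\cR_{\rm in,+}$, then propagate over $\scri^+\cap I^+$ into a punctured neighborhood of $\cR_{\rm out}$ -- your argument coincides with the paper's proof.
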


All statements can be microlocalized to either of the two components $\Sigma^\pm$ of $\Sigma$. The terminology `forward' refers to the causal nature of the direction in which we are propagating. Relative to the rescaled Hamiltonian vector field, this means propagating regularity of $u$ in the forward direction on $\Sigma^+$ and in the backward direction in $\Sigma^-$.

If $f\in\Heb^{s-1,\alpha'}(M)$, an application of part~\eqref{ItMEFwThru} shows that control of $u$ in $(I^0)^\circ$ can be propagated through $\scri^+$ and gives $\Heb^{s,\alpha}$-control away from the outgoing radial set. In particular, one gets control near the ingoing radial set $\cR_{\rm in,+}$ at the future boundary of $\scri^+$. In order to gain full control of $u$, one needs to understand how the flow continues in $I^+$, which depends on the global geometry of the spacetime of interest; examples, including asymptotically stationary Minkowski spacetimes, are discussed in \cite{HintzNonstat}. Ultimately, if one can propagate control of $u$ (from global propagation through $I^+$ as well as from initial data) to a punctured neighborhood of $\cR_{\rm out}$, part~\eqref{ItMEFwOut} applies and yields full control of $u$ at $\scri^+$ in the differential order sense.

Setting up a solvability theory for $P$ requires estimates for the adjoint $P^*$---defined with respect to a density~\eqref{EqMEDensity} and any choice of positive definite fiber inner product on $E$---in which we propagate regularity in the opposite direction:

\begin{thm}[Propagation through $\scri^+$: backward direction]
\label{ThmMEBw}
  Let $\tilde s\in\R$ and $\tilde\alpha=(\tilde\alpha_0,2\tilde\alpha_{\!\scri},\tilde\alpha_+)$, and put $\tilde\alpha'=\tilde\alpha+(2,2,2)$. Suppose $\tilde u\in\Heb^{-\infty,\tilde\alpha}(M)$, and let $P^*\tilde u=\tilde f\in\Heb^{-\infty,\tilde\alpha'}(M)$. Then $\WFeb^{\tilde s,\tilde\alpha}(\tilde u)\subset\Sigma\cup\WFeb^{\tilde s-2,\tilde\alpha'}(\tilde f)$. Moreover:
  \begin{enumerate}
  \item\label{ItMEBwThru}{\rm (Propagation through $\scri^+$.)} Suppose that
    \begin{equation}
    \label{EqMEBwThru}
      \tilde\alpha_{\!\scri}>-\frac12-\ubar p_1,\qquad
      \WFeb^{\tilde s-1,\tilde\alpha'}(\tilde f)\cap\Seb^*_{\scri^+}M\subset\cW_{\rm c}\cup\cW_{\rm in}.
    \end{equation}
    Then $\WFeb^{\tilde s,\tilde\alpha}(\tilde u)\cap\Seb^*_{\scri^+}M\subset\cW_{\rm c}\cup\cW_{\rm in}$.
  \item\label{ItMEBwOut}{\rm (Full control at $\scri^+$.)} Suppose that in addition to~\eqref{EqMEBwThru}, we have
    \begin{equation}
    \label{EqMEBwOutThr}
      \tilde\alpha_0<\tilde\alpha_{\!\scri}-\frac12<\tilde\alpha_+,\qquad
      \tilde s<\frac12-\tilde\alpha_0+2\tilde\alpha_{\!\scri}+\ubar p_1,
    \end{equation}
    and that for an open neighborhood $\cU\subset\Seb^*M$ of $\cR_{\rm in,+}$, we have
    \begin{equation}
    \label{EqMEBwOut}
    \begin{alignedat}{3}
      &\WFeb^{\tilde s,\tilde\alpha}(\tilde u)&&\cap\,(\cU\cap\Seb^*_{(I^+)^\circ}M)&&=\emptyset, \\
      &\WFeb^{\tilde s-1,\tilde\alpha'}(\tilde f)&&\cap\,\Seb^*_{\scri^+}M&&=\emptyset.
    \end{alignedat}
    \end{equation}
    Then $\WFeb^{\tilde s,\tilde\alpha}(\tilde u)\cap\Seb^*_{\scri^+}M=\emptyset$.
  \end{enumerate}
\end{thm}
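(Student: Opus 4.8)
Theorem~\ref{ThmMEBw} is the formal adjoint counterpart of Theorem~\ref{ThmMEFw}, so the plan is to run exactly the same positive commutator machinery developed in~\S\ref{SsMF} (and whose forward version is used to prove Theorem~\ref{ThmMEFw}), but with all directions of propagation reversed. The key structural input is that, up to the positive weight $\rho_0^2 x_{\!\scri}^2\rho_+^2$ and a choice of density from~\eqref{EqMEDensity}, the operator $P$ is (to leading order) self-adjoint with real scalar principal symbol $G_\ebop$; consequently the Hamilton vector field $\sfH=\rho_\infty H_{G_\ebop}$ for $P^*$ is (up to an overall sign/positive rescaling) the same as for $P$, and the radial sets $\cR_{\rm in,-},\cR_{\rm c},\cR_{\rm out},\cR_{\rm in,+}$ are unchanged, only their source/sink/saddle roles swap because we are now propagating along $-\sfH$ on $\Sigma^+$ (resp.\ along $+\sfH$ on $\Sigma^-$). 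The subprincipal contribution of the endomorphism $p_1$ from Definition~\ref{DefOp}\eqref{ItOpNorm}, together with the weight contribution coming from the two $\rho_0\pa_{\rho_0}$ and $\rho_+\pa_{\rho_+}$ components in the linearizations~\eqref{EqMFLinInm}, \eqref{EqMFLinC}, \eqref{EqMFLinInp}, \eqref{EqMFLinOut1}, \eqref{EqMFLinOut2}, enters with the opposite sign relative to the forward estimate; this is exactly what flips the strict inequalities $\alpha_+<\alpha_{\!\scri}-\half<\alpha_0$ and the threshold on $s$ to the reversed ones in~\eqref{EqMEBwThru}, \eqref{EqMEBwOutThr}, and uses Lemma~\ref{LemmaMEMin} to reduce the bundle-valued thresholds to the scalar quantities $\ubar p_1,\ubar p_{1,+}$.

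\textbf{Key steps, in order.} First, I would establish the general microlocal propagation statement $\WFeb^{\tilde s,\tilde\alpha}(\tilde u)\subset\Sigma\cup\WFeb^{\tilde s-2,\tilde\alpha'}(\tilde f)$: microlocal elliptic regularity in $\Elleb(P)=\ol{\Teb^*}M\setminus(o\cup\Sigma)$, together with real principal type (Duistermaat--H\"ormander) propagation of $\WFeb$ along bicharacteristics of $G_\ebop$ away from the radial sets; both are standard consequences of the $\Psieb$-calculus of~\S\ref{SsEBP} applied to $P^*$, which has the same edge-b-principal symbol as $P$. Second, for part~\eqref{ItMEBwThru}, I would run the radial point commutator estimates at $\cR_{\rm out}$ and $\cR_{\rm c}$: since for $P^*$ the set $\cR_{\rm out}$ is now a \emph{source} (on $\Sigma^+$) the estimate there propagates regularity \emph{out of} $\cR_{\rm out}$ provided the a priori regularity/weight hypotheses in a punctured neighborhood hold---here the hypothesis $\tilde\alpha_{\!\scri}>-\half-\ubar p_1$ in~\eqref{EqMEBwThru} is precisely the below-threshold-in-the-reversed-sense condition that makes the source estimate at $\cR_{\rm out}$ the ``low regularity'' radial point estimate, requiring no a priori control at $\cR_{\rm out}$ itself. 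One then uses the explicit flow computations of~\S\ref{SsMF} (the formulas for $e^{sH^0}$ and $e^{sH^+}$ inside $\Sigma^+$) to see that, backward along the flow, every bicharacteristic in $\Seb^*_{\scri^+}M$ that is not on $\cW_{\rm c}\cup\cW_{\rm in}$ emanates from $\cR_{\rm out}$, so real-principal-type propagation plus the source estimate at $\cR_{\rm out}$ and the saddle estimate at $\cR_{\rm c}$ (propagating into $\cW_{\rm c}$) yield $\WFeb^{\tilde s,\tilde\alpha}(\tilde u)\cap\Seb^*_{\scri^+}M\subset\cW_{\rm c}\cup\cW_{\rm in}$. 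Third, for part~\eqref{ItMEBwOut}, I would add the radial point estimate at $\cR_{\rm in,+}$, which for $P^*$ is now a \emph{sink} on $\Sigma^+$: the hypotheses~\eqref{EqMEBwOut} provide control on the stable manifold (in a neighborhood $\cU\cap\Seb^*_{(I^+)^\circ}M$ off $\scri^+$), so the high-regularity sink estimate---valid under the reversed threshold $\tilde s<\half-\tilde\alpha_0+2\tilde\alpha_{\!\scri}+\ubar p_1$ and the ordering $\tilde\alpha_0<\tilde\alpha_{\!\scri}-\half<\tilde\alpha_+$ of~\eqref{EqMEBwOutThr}---gives control at $\cR_{\rm in,+}$ and hence, by propagation along $\cW_{\rm in}$ and $\cW_{\rm c}$ back to the rest of $\Seb^*_{\scri^+}M$, full emptiness of $\WFeb^{\tilde s,\tilde\alpha}(\tilde u)\cap\Seb^*_{\scri^+}M$. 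Throughout one microlocalizes separately to $\Sigma^\pm$, with the roles of ``forward/backward along $\sfH$'' reversed between the two components, exactly as in the remark following Theorem~\ref{ThmMEFw}.

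\textbf{Main obstacle.} The only genuinely delicate point, as in the forward case, is the bookkeeping of signs and thresholds at the radial sets: one must carefully track how the weight exponents $(\tilde\alpha_0,\tilde\alpha_{\!\scri},\tilde\alpha_+)$ and the differential order $\tilde s$ enter the commutator $\frac{i}{2}(P^* A^* A - A^* A P^*)$ through the $\rho_0\pa_{\rho_0}$, $x_{\!\scri}\pa_{x_{\!\scri}}$, $\rho_+\pa_{\rho_+}$ components of the linearized Hamilton flow (coefficients $2,-1,-1,\ldots$ read off from~\eqref{EqMFLinInm}--\eqref{EqMFLinOut2}) and through the subprincipal term $2i^{-1}(\frac{n-1}{2}+p_1)$ in~\eqref{EqOpNorm}, and verify that the definiteness of the commutator symbol at each radial set holds under precisely the stated inequalities~\eqref{EqMEBwThru}, \eqref{EqMEBwOutThr}---with the $p_1$-dependence handled by choosing, via Lemma~\ref{LemmaMEMin}, a fiber inner product $h_E$ for which $\frac12(p_1+p_1^*)$ is within $\eps$ of its infimum, so that the scalar threshold $\ubar p_1$ (resp.\ $\ubar p_{1,+}$ at $\cR_{\rm in,+}$) suffices. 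Since this is the formal dual of the forward computation already carried out for Theorem~\ref{ThmMEFw}, it amounts to replacing $P$ by $P^*$ and the weights by their negatives in the pairing---every sign flip is forced---so no new analytic difficulty arises beyond this sign accounting. The remaining ingredients (microlocal elliptic regularity, real-principal-type propagation, and the standard radial point estimates at a nondegenerate source/sink/saddle) are off-the-shelf, cf.\ \cite[\S9]{MelroseEuclideanSpectralTheory}, \cite[\S2]{VasyMicroKerrdS}, \cite[Appendix~E.4]{DyatlovZworskiBook}.
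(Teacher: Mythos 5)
Your overall route is the paper's: elliptic regularity plus real principal type propagation in the edge-b-calculus, combined with positive-commutator radial point estimates at $\cR_{\rm out}$, $\cR_{\rm in,+}$, $\cR_{\rm in,-}$, $\cR_{\rm c}$ for $P^*$, with the thresholds obtained from the forward ones by the sign reversal of the weight and $p_1$ contributions (and Lemma~\ref{LemmaMEMin} to reduce to $\ubar p_1$). Two points in your assembly need correction, though, because as written they misplace where the hypotheses are used.

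First, in part~\eqref{ItMEBwThru} you invoke a saddle estimate at $\cR_{\rm c}$. This is neither needed nor available there: since $\cR_{\rm c}\subset\cW_{\rm c}$, the conclusion permits wave front set on $\cW_{\rm c}$, and the paper's proof uses only the \emph{unconditional} backward estimate at $\cR_{\rm out}$ (Lemma~\ref{LemmaMEout}\eqref{ItMEoutBw}, requiring exactly $\tilde\alpha_{\!\scri}>-\half-\ubar p_1$) together with real principal type propagation in $\Seb^*_{\scri^+}M\setminus(\cW_{\rm c}\cup\cW_{\rm in})$; the backward estimate at $\cR_{\rm c}$ would require $\tilde s<\half-\tilde\alpha_0+2\tilde\alpha_{\!\scri}+\ubar p_1$, which is not assumed in part~\eqref{ItMEBwThru}. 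Second, in part~\eqref{ItMEBwOut} you attribute both conditions of~\eqref{EqMEBwOutThr} to the sink estimate at $\cR_{\rm in,+}$ and then finish by ``propagation along $\cW_{\rm in}$ and $\cW_{\rm c}$''. But the backward estimate at $\cR_{\rm in,+}$ (Lemma~\ref{LemmaMEinp}\eqref{ItMEinpBw}) uses only $\tilde\alpha_{\!\scri}<\tilde\alpha_++\half$, and real principal type propagation cannot cross the radial sets $\cR_{\rm in,-}$ and $\cR_{\rm c}$ sitting on $\cW_{\rm in}$ and $\cW_{\rm c}$, where the Hamilton vector field is radial. The missing (in your write-up, implicit) steps are precisely the backward radial point estimates there: Lemma~\ref{LemmaMEinm}\eqref{ItMEinmBw} at $\cR_{\rm in,-}$, which is where $\tilde\alpha_0<\tilde\alpha_{\!\scri}-\half$ enters, and Lemma~\ref{LemmaMEc}\eqref{ItMEcBw} at $\cR_{\rm c}$, which is where $\tilde s<\half-\tilde\alpha_0+2\tilde\alpha_{\!\scri}+\ubar p_1$ enters (with no a priori regularity assumption needed at $\cR_{\rm c}$, since in the backward direction the admissible amount of regularization is unlimited). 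With these attributions fixed, the chain $\cR_{\rm out}\to$ propagation $\to\cR_{\rm in,+}\to\cW_{\rm in}\to\cR_{\rm in,-}\to\cW_{\rm c}\to\cR_{\rm c}$ is exactly the paper's proof.
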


\begin{rmk}[Orders and duality]
\label{RmkMEBwDual}
  Given $s,\alpha,\alpha'$ as in Theorem~\ref{ThmMEFw}, the dual space of the space $\Heb^{s-1,\alpha'}$ (in which $f$ is estimated) is $\Heb^{\tilde s,\tilde\alpha}$ where $\tilde s=-s+1$ and $\tilde\alpha=(\tilde\alpha_0,2\tilde\alpha_{\!\scri},\tilde\alpha_+):=-\alpha'=-\alpha-(2,2,2)$ (thus $\tilde\alpha_0=-\alpha_0-2$, $\tilde\alpha_{\!\scri}=-\alpha_{\!\scri}-1$, $\tilde\alpha_+=-\alpha_+-2$). The threshold condition in~\eqref{EqMEBwThru} is then equivalent to~\eqref{EqMEFwOutThr}, and the threshold conditions~\eqref{EqMEBwOutThr} for $\tilde s,\tilde\alpha_0,\tilde\alpha_{\!\scri},\tilde\alpha_+$ are equivalent to~\eqref{EqMEFwThruThr} (except $s_0$ does not enter in~\eqref{EqMEBwOut}).
\end{rmk}

The wave front set statements are proved using positive commutator arguments, and are thus qualitative versions of quantitative statements. The quantitative version of Theorem~\ref{ThmMEFw}\eqref{ItMEFwThru}, restricted to $\Sigma^+$ for definiteness, is the following. Suppose $B,E,E_{\rm c},W\in\Psieb^0(M)$ satisfy the following conditions:
\begin{itemize}
\item $\WFeb'(B)\cap\cR_{\rm out}^+=\emptyset$;
\item all backward null-bicharacteristics starting at $\WFeb'(B)\cap\Sigma^+$ tend to $\cR_{\rm in,+}^+\cup\cR_{\rm in,-}^+\cup\cR_{\rm c}^+$ or enter $\Elleb(E)$ in finite time, all while remaining inside $\Elleb(W)$;
\item $W$ is elliptic also \emph{at} $\cR_{\rm in,+}^+\cup\cR_{\rm in,-}^+\cup\cR_{\rm c}^+$;
\item $E_{\rm c}$ is elliptic at $\cR_{\rm c}^+$.
\end{itemize}
Let $N\in\R$. Then under the assumptions~\eqref{EqMEFwThruThr}, the estimate
\begin{equation}
\label{EqMEFwEst}
  \|B u\|_{\Heb^{s,\alpha}(M)} \leq C\Bigl(\|W P u\|_{\Heb^{s-1,\alpha'}(M)} + \|E u\|_{\Heb^{s,\alpha}(M)} + \|E_{\rm c}u\|_{\Heb^{s_0,\alpha}(M)} + \|u\|_{\Heb^{-N,\alpha}(M)}\Bigr)
\end{equation}
holds in the strong sense that if all quantities on the right are finite, then so is the left hand side, and the estimate holds. While the estimate~\eqref{EqMEFwEst} can be recovered from Theorem~\ref{ThmMEFw}\eqref{ItMEFwThru} via the closed graph theorem \cite[\S4]{VasyMinicourse}, the direct proof via positive commutators provides control on the constant $C$ in terms of norms of the coefficients of $P$ which, in principle, can be made explicit; this is important in applications to quasilinear equations, see e.g.\ \cite{HintzQuasilinearDS}. We leave statements of quantitative versions of the other parts of the above Theorems to the reader.

In order to prove Theorems~\ref{ThmMEFw} and \ref{ThmMEBw}, we work microlocally near each of the radial sets separately. Throughout the remainder of this section, we shall assume
\begin{equation}
\label{EqMEAssm}
\begin{gathered}
  s\in\R,\qquad \alpha=(\alpha_0,2\alpha_{\!\scri},\alpha_+),\qquad \alpha'=(\alpha'_0,2\alpha'_{\!\scri},\alpha'_+):=\alpha+(2,2,2), \\
  \tilde s\in\R,\qquad \tilde\alpha=(\tilde\alpha_0,2\tilde\alpha_{\!\scri},\tilde\alpha_+),\qquad \tilde\alpha'=(\tilde\alpha'_0,2\tilde\alpha'_{\!\scri},\tilde\alpha'_+):=\tilde\alpha+(2,2,2), \\
  u\in\Heb^{-\infty,\alpha}(M),\qquad P u=f\in\Heb^{-\infty,\alpha'}(M), \\
  \tilde u\in\Heb^{-\infty,\tilde\alpha}(M),\qquad P^*\tilde u=\tilde f\in\Heb^{-\infty,\tilde\alpha'}(M).
\end{gathered}
\end{equation}
For conciseness, we only state the qualitative wave front set versions of the radial point estimates near each of the radial sets.

\begin{lemma}[Propagation at $\cR_{\rm c}$]
\label{LemmaMEc}
  Let $\cU\subset\Seb^*M$ be an open neighborhood of $\cR_{\rm c}$. 
  \begin{enumerate}
  \item\label{ItMEcFw}{\rm (Forward propagation.)} Let $s>s_0>\half-\alpha_0+2\alpha_{\!\scri}-\ubar p_1$. Suppose that $\WFeb^{s-1,\alpha'}(f)\cap\cR_{\rm c}=\emptyset$ and $\WFeb^{s_0,\alpha}(u)\cap\cR_{\rm c}=\emptyset$. If $\WFeb^{s,\alpha}(u)\cap(\cU\cap\Seb^*_{(I^0)^\circ}M)=\emptyset$, then we obtain $\WFeb^{s,\alpha}(u)\cap\cR_{\rm c}=\emptyset$.
  \item\label{ItMEcBw}{\rm (Backward propagation.)} Suppose that $\tilde s<\half-\tilde\alpha_0+2\tilde\alpha_{\!\scri}+\ubar p_1$, $\WFeb^{\tilde s-1,\tilde\alpha'}(\tilde f)\cap\cR_{\rm c}=\emptyset$. If $\WFeb^{\tilde s,\tilde\alpha}(\tilde u)\cap(\cU\cap\Seb^*_{\scri^+}M\setminus\cR_{\rm c})=\emptyset$, then $\WFeb^{\tilde s,\tilde\alpha}(\tilde u)\cap\cR_{\rm c}=\emptyset$.
  \end{enumerate}
\end{lemma}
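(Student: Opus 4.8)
The plan is to prove both parts by the standard positive commutator method for radial points, adapted to the edge-b-calculus and to the particular saddle structure of $\cR_{\rm c}^\pm$ recorded in~\eqref{EqMFLinC}. We work on $\Sigma^+$ for definiteness (the $\Sigma^-$ case is verbatim after reversing the flow direction), in the region $\cU_0=\cU_0(T)$ near $I^0\cap\scri^+$ with the coordinates~\eqref{EqOGeoCoordI0}, the canonical $1$-form written as in~\eqref{EqMFCoordI0}, and near $\cR_{\rm c}^+$ the fiber-infinity coordinates~\eqref{EqMFRcCoord}, in which the rescaled Hamiltonian vector field has the form~\eqref{EqMFLinC}. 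The key observation is that in these coordinates $\sfH=\rho_\infty H_{G_\ebop^0}$ has, modulo $(\cI_{\pa\cR_{\rm c}^+}+\cA^{(\ell_0,2\ell_{\!\scri})})\Vb$, the normal form $\rho_\infty\pa_{\rho_\infty}+\rho_0\pa_{\rho_0}-x_{\!\scri}\pa_{x_{\!\scri}}+\hat\zeta\pa_{\hat\zeta}$: the transverse eigenvalue structure is $+1$ (in the $\rho_\infty$, $\rho_0$, $\hat\zeta$ directions, so these are repelling under the forward flow) and $-1$ (in the $x_{\!\scri}$ direction, attracting); this is exactly a nondegenerate saddle over $\scri^+$, and the stable/unstable manifolds are those of Definition~\ref{DefMFStable}. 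Because the repelling directions include the fiber-infinity direction $\rho_\infty$ as well as the $I^0$-direction $\rho_0$, regularity (in the differential-order sense) propagates forward \emph{into} $\cR_{\rm c}^+$ from a punctured neighborhood that meets $\Seb^*_{(I^0)^\circ}M$; this is why the hypotheses of part~\eqref{ItMEcFw} only demand a priori $\Heb^{s,\alpha}$-control on $u$ over $(I^0)^\circ$ and below-threshold control right at $\cR_{\rm c}$.

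\textbf{Commutant and the main estimate.} First I would fix an order function and commutant. For part~\eqref{ItMEcFw}, choose $A=\Op_\ebop(a)\in\Psieb^{s-\frac12,\alpha}(M)$ with
\[
  a = \chi_0(\phi)\,\chi_1(\psi)\,\rho_\infty^{-(s-\frac12)}\rho_0^{-\alpha_0}x_{\!\scri}^{-2\alpha_{\!\scri}}\rho_+^{-\alpha_+},
\]
where $\phi$ is a nonnegative quadratic defining function of $\pa\cR_{\rm c}^+$ in the repelling directions composed appropriately with $x_{\!\scri}$ (so that $\sfH\phi\le -c\,\phi + \cdots$ pushes toward $\cR_{\rm c}^+$ along the stable direction, i.e.\ $\sfH$ has a definite sign on $\phi$ near the radial set away from $x_{\!\scri}=0$), $\psi=x_{\!\scri}$ localizes toward $\scri^+$, and $\chi_0,\chi_1$ are cutoffs of the usual Melrose type whose derivative terms either have a favorable sign or are supported where a priori control is assumed (in $\Seb^*_{(I^0)^\circ}M$, or where $\WFeb^{s_0,\alpha}(u)=\emptyset$). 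Then I compute $i(P^*A^*A - A^*A P)$ via the symbol calculus: its edge-b-principal symbol at $\pm\tfrac{\dd\rho_\infty}{\rho_\infty}\pm\cdots$-type points along $\cR_{\rm c}^+$ is, to leading order, $\bigl(-(s-\tfrac12) - (\alpha_0 - 2\alpha_{\!\scri}) + \ubar p_1 + \tfrac12 + o(1)\bigr)$ times $a^2$ (up to a positive factor), where the contribution $-\alpha_0$ comes from the $\rho_0\pa_{\rho_0}$ term, $+2\alpha_{\!\scri}$ from $-x_{\!\scri}\pa_{x_{\!\scri}}$, $-(s-\tfrac12)$ from $\rho_\infty\pa_{\rho_\infty}$ (the $-\tfrac12$ being the edge-b half-density shift coming from the density~\eqref{EqMEDensity}, i.e.\ from $\sigmaeb^1(\tfrac{P-P^*}{2i})$ together with $\Re p_1$), and $\ubar p_1$ from the subprincipal endomorphism $p_1$ as quantified by Lemma~\ref{LemmaMEMin}. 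The threshold condition $s>s_0>\tfrac12-\alpha_0+2\alpha_{\!\scri}-\ubar p_1$ is precisely what makes this leading coefficient strictly negative for all $s'\in[s_0,s]$ (after choosing $h_E$ per Lemma~\ref{LemmaMEMin} so that $\ubar p_1(h_E)>\ubar p_1-\eps$), yielding a sign-definite main term; the indefinite commutator term supported away from $\cR_{\rm c}^+$ along the flow is controlled by $E$-type error terms paired against $u$, and the terms supported over $(I^0)^\circ$ by the a priori hypothesis $\WFeb^{s,\alpha}(u)\cap(\cU\cap\Seb^*_{(I^0)^\circ}M)=\emptyset$. Pairing with $u$, using $Pu=f$, Cauchy--Schwarz, and an elliptic regularization argument (inserting $\rho_\infty^\delta$ and letting $\delta\to 0$, which is legitimate because the \emph{a priori} weight $\alpha$ is fixed and we start from $u\in\Heb^{-N,\alpha}$ plus the assumed below-threshold regularity $s_0>$ threshold at $\cR_{\rm c}$), gives $Au\in\Heb^{0,0}(M)$ near $\cR_{\rm c}^+$, hence $\WFeb^{s,\alpha}(u)\cap\cR_{\rm c}=\emptyset$. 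The standard references \cite[\S9]{MelroseEuclideanSpectralTheory}, \cite[\S2]{VasyMicroKerrdS}, \cite[Appendix~E.4]{DyatlovZworskiBook} carry out exactly this scheme, and the only adaptation is to the edge-b-symbol calculus of~\S\ref{SsEBP} and the computations~\eqref{EqMFHamI0}, \eqref{EqMFLinC}.

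\textbf{Backward direction and the main obstacle.} Part~\eqref{ItMEcBw} is the dual/time-reversed statement: for $P^*\tilde u=\tilde f$ one propagates regularity \emph{out of} $\cR_{\rm c}$ along the unstable manifold, so the roles of the repelling and attracting directions in the commutant are swapped; the sign of the leading commutator coefficient now flips, and the requirement becomes $\tilde s<\tfrac12-\tilde\alpha_0+2\tilde\alpha_{\!\scri}+\ubar p_1$ (note the adjoint contributes $+\ubar p_1$ since the relevant endomorphism is $-p_1^*$, whose least real eigenvalue is $-$ the greatest of $\Re p_1$, but with the sign flip built into the direction of propagation this again reads off against $\ubar p_1$ via Lemma~\ref{LemmaMEMin} — this bookkeeping is precisely Remark~\ref{RmkMEBwDual}). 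The a priori hypothesis is now $\WFeb^{\tilde s,\tilde\alpha}(\tilde u)\cap(\cU\cap\Seb^*_{\scri^+}M\setminus\cR_{\rm c})=\emptyset$, i.e.\ control on the punctured neighborhood \emph{within} $\scri^+$, because the unstable directions of $\cR_{\rm c}^+$ under the backward flow lie along $\scri^+$ (the $\hat\zeta$ and $\rho_0$ directions). I expect the main obstacle to be purely technical: carefully handling the error terms $\tilde H$ and $\tilde G_\ebop^0$ in~\eqref{EqMFSymbI0}--\eqref{EqMFHamI0} — these are of class $x_{\!\scri}\CI+\cA^{(\ell_0,2\ell_{\!\scri})}$, so the $\cA$-part contributes terms that are genuinely of lower order \emph{in decay} at $\scri^+$ but not in the differential sense, and one must verify that near $\cR_{\rm c}^+$ (where $x_{\!\scri}=0$) they are absorbed into the favorable main term or into the controlled regions — exactly as in the definition of $\tilde H$ being a \emph{b}-vector field (rather than edge-b) so that $x_{\!\scri}\pa_y$-terms count as errors. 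The second technical point is justifying the regularization argument uniformly: one must check the commutator $[A,\rho_\infty^\delta]$-type correction has the right sign or is an error, which is standard but requires the commutant $a$ to be chosen with the weight factors in the stated form so that the $\delta$-derivative lands on $\rho_\infty^{-(s-\frac12)}$ with a sign compatible with the below-threshold starting regularity. Once these are dispatched, the proof is a routine instance of the radial-point machinery.
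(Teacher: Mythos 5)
Your proposal follows essentially the same route as the paper: a positive commutator (radial point) argument at the saddle $\cR_{\rm c}$ using the normal form~\eqref{EqMFLinC}, with a commutant of order $s-\tfrac12$ built from cutoffs whose derivatives are either sign-definite or supported in the a priori controlled regions, the subprincipal term $p_1$ handled via a near-optimal fiber inner product (Lemma~\ref{LemmaMEMin}), the regularization limited by the threshold in the forward case (which is exactly the role of the $s_0$ hypothesis) and unlimited in the backward case, and the backward statement obtained by reversing signs with a priori control taken on $\cU\cap\Seb^*_{\scri^+}M\setminus\cR_{\rm c}$. This is the paper's proof in outline.

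However, as written your symbolic bookkeeping does not close, and you should fix three points. First, the weights in your commutant must be shifted: the paper takes $\check a$ with weights $\rho_0^{-\check\alpha_0}x_{\!\scri}^{-2\check\alpha_{\!\scri}}$, $\check\alpha=\alpha+(1,1)$, precisely to compensate the overall factor $\rho_0^2 x_{\!\scri}^2$ in $P$; with your choice $\rho_0^{-\alpha_0}x_{\!\scri}^{-2\alpha_{\!\scri}}$ the positive square in $\sigmaeb^{2s}(\sC)$ carries weight $\rho_0^{-2(\alpha_0-1)}x_{\!\scri}^{-4(\alpha_{\!\scri}-\frac12)}$ and the argument only yields microlocal membership in $\Heb^{s,\alpha-(1,1)}$, which does not remove points from $\WFeb^{s,\alpha}(u)$ (the wave front set is defined with the full weight $\alpha$). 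The fix is harmless because $-\check\alpha_0+2\check\alpha_{\!\scri}=-\alpha_0+2\alpha_{\!\scri}$, so the threshold is unchanged. Second, the leading coefficient you display, $-(s-\tfrac12)-(\alpha_0-2\alpha_{\!\scri})+\ubar p_1+\tfrac12$, has the wrong sign on $\ubar p_1$ (and a spurious $+\tfrac12$): the contribution of $\tfrac{P-P^*}{2 i}$ at $\cR_{\rm c}$ is $-\tfrac12(p_1+p_1^*)\rho_\infty^{-1}$ (see~\eqref{EqMEcImag} with $\hat\zeta=0$), so the correct coefficient is $-s+\tfrac12-\alpha_0+2\alpha_{\!\scri}-\tfrac12(p_1+p_1^*)+\cO(\eps)$; with your formula the stated threshold $s>\tfrac12-\alpha_0+2\alpha_{\!\scri}-\ubar p_1$ would \emph{not} make the coefficient negative, so your claim is internally inconsistent until this sign is corrected. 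Third, your conclusion ``$A u\in\Heb^{0,0}$ near $\cR_{\rm c}^+$, hence $\WFeb^{s,\alpha}(u)\cap\cR_{\rm c}=\emptyset$'' is half an order (and, with your weights, a full boundary order) short: the commutant has order $s-\tfrac12$, and it is the sign-definite main term of the commutator—of order $2 s$ and weight $2\alpha$ once the weights are corrected—that, written as $B^*B$ with $B\in\cA^{(-\alpha_0,-2\alpha_{\!\scri})}\Psieb^s$ elliptic at $\cR_{\rm c}$, delivers the full $\Heb^{s,\alpha}$ control as in~\eqref{EqMEcFinal}. None of these is a conceptual gap—the saddle structure, the placement of the a priori control, and the regularization discussion are all correct—but the displayed computation must be redone with the shifted weights and correct sign for the proof to yield the stated statement.
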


\begin{lemma}[Propagation at $\cR_{\rm in,-}$]
\label{LemmaMEinm}
  Let $\cU\subset\Seb^*M$ be an open neighborhood of $\cR_{\rm in,-}$. 
  \begin{enumerate}
  \item\label{ItMEinmFw}{\rm (Forward propagation.)} Suppose that $\alpha_{\!\scri}<\alpha_0+\half$ and $\WFeb^{s-1,\alpha'}(f)\cap\cR_{\rm in,-}=\emptyset$. If $\WFeb^{s,\alpha}(u)\cap(\cU\cap\Seb^*_{I^0}M\setminus\cR_{\rm in,-})=\emptyset$, then $\WFeb^{s,\alpha}(u)\cap\cR_{\rm in,-}=\emptyset$.
  \item\label{ItMEinmBw}{\rm (Backward propagation.)} Suppose that $\tilde\alpha_0<\tilde\alpha_{\!\scri}-\half$ and $\WFeb^{\tilde s-1,\tilde\alpha'}(\tilde f)\cap\cR_{\rm in,-}=\emptyset$. If $\WFeb^{\tilde s,\tilde\alpha}(\tilde u)\cap(\cU\cap\Seb^*_{(\scri^+)^\circ}M)=\emptyset$, then $\WFeb^{\tilde s,\tilde\alpha}(\tilde u)\cap\cR_{\rm in,-}=\emptyset$.
  \end{enumerate}
\end{lemma}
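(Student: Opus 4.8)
The statement is a pair of radial point propagation estimates at the radial set $\cR_{\rm in,-}$, whose linearization is given by~\eqref{EqMFLinInm}: in the coordinates~\eqref{EqMFRinmCoord}, $\sfH\equiv 2\rho_0\pa_{\rho_0}-x_{\!\scri}\pa_{x_{\!\scri}}-\hat\eta\pa_{\hat\eta}$ modulo the ideal $\cI_{\pa\cR_{\rm in,-}^+}$ (times b-vector fields) and the decaying conormal error. So $\cR_{\rm in,-}$ is a source/sink in the directions transverse to $\scri^+$ within $\scri^+$ (the $x_{\!\scri}$ and $\hat\eta$ directions), while the normal direction $\rho_0$ carries the \emph{unstable} (for $\Sigma^+$) eigenvalue with positive sign $2$. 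The flow analysis in~\S\ref{SsMF}---in particular the explicit flow of $H^0$ with $\xi=2\zeta$, $\eta=0$, which tends to $\cR_{\rm in,-}^+$ as $s\searrow-\infty$ and moves into $\rho_0>0$ (i.e.\ into $(I^0)^\circ$) under the forward flow---shows that the stable manifold of $\cR_{\rm in,-}^+$ inside $\Sigma^+$ is $\cW_{\rm in}^+\setminus\cR_{\rm in,-}^+$ together with points over $(I^0)^\circ$, and its unstable manifold is contained in $\cW_{\rm in}^+$ near $\scri^+$. This is exactly the source/sink dichotomy, so the plan is a standard positive commutator argument à la \cite[\S2]{VasyMicroKerrdS}, \cite[Appendix~E.4]{DyatlovZworskiBook}, adapted to the edge-b-setting.

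\textbf{Forward direction (part~\eqref{ItMEinmFw}).} First I would microlocalize near $\cR_{\rm in,-}^+$ using the coordinates~\eqref{EqMFRinmCoord} and construct a commutant $A=a(\rho_0,x_{\!\scri},y,\hat\xi,\hat\eta)\rho_\infty^{-(s-1)}\in\Psieb^{s-\frac12}(M)$ (weighted so that $A$ maps to the correct spaces; the precise weight bookkeeping follows the convention established in the other radial point lemmas), with symbol of the form $a=\chi_0(\rho_0/\delta)\chi_1(\varphi)\psi$, where $\varphi$ is a quadratic defining function of $\cR_{\rm in,-}^+$ within $\scri^+\cap\Sigma^+$ built from $x_{\!\scri}$, $\hat\xi-2$, $|\hat\eta|$ (using $\hat\xi=\xi/\zeta$, so $\hat\xi=2$ at $\cR_{\rm in,-}^+$), $\chi_1'\leq 0$, and $\psi$ a cutoff to a conic neighborhood of $\cR_{\rm in,-}^+$ that is preserved by the backward flow. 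Since $\cR_{\rm in,-}^+$ is a sink for the flow restricted to $\scri^+$ as one propagates in the \emph{backward} direction of $\sfH$ (equivalently, a source in the forward direction), and since the $\rho_0$-eigenvalue is positive on $\Sigma^+$ meaning the flow enters $\rho_0>0$, the factor $\chi_0(\rho_0/\delta)$ picks up a favorably-signed contribution $2\rho_0\chi_0'\chi_0\leq 0$ under $\sfH$, while $\chi_1(\varphi)$ contributes the main regularizing term. Computing $i[P,A^*A]$ via $\sigmaeb^{\cdot}(i[A_1,A_2])=H_{a_1}a_2$, the principal symbol of the commutator equals $\sfH$ applied to $a^2$ plus the contribution of the subprincipal symbol of $P$ (which at $\cR_{\rm in,-}$ is governed by $\Im p_1$ and the $\frac{n-1}{2}$-term, as in the normal operator~\eqref{EqOpNorm}) plus the weight-induced terms proportional to $2\alpha_0-4\alpha_{\!\scri}$ (from commuting the $\rho_0$- and $x_{\!\scri}$-weights past the first-order part). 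Pairing $\langle i[P,A^*A]u,u\rangle$ against $2\Re\langle A^*A f, u\rangle$ and using the standard microlocal elliptic/wave front regularization, I would extract the desired estimate provided the net coefficient of the main term has a definite sign. Tracking the eigenvalues $(2,-1,-1)$ from~\eqref{EqMFLinInm}, the threshold turns out to be governed by the \emph{normal} (i.e.\ $\rho_0$) direction: the condition $\alpha_{\!\scri}<\alpha_0+\frac12$ is precisely what makes the $\rho_0$-weighted term subdominant, so there is \emph{no} differential-order threshold here (unlike at $\cR_{\rm c}$ and $\cR_{\rm out}$), consistent with the statement. The input hypothesis $\WFeb^{s,\alpha}(u)\cap(\cU\cap\Seb^*_{I^0}M\setminus\cR_{\rm in,-})=\emptyset$ controls the error term $E u$ supported where $\chi_0'\neq 0$, i.e.\ away from $\scri^+$; propagating regularity out of $I^0$ is legitimate since, by the flow computation, backward bicharacteristics from a punctured neighborhood of $\cR_{\rm in,-}^+$ within $\Sigma^+$ reach either $\cR_{\rm in,-}^+$ itself or $(I^0)^\circ$.

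\textbf{Backward direction (part~\eqref{ItMEinmBw}).} This is the dual/adjoint version: one runs the same commutator construction for $P^*$ with the roles of source and sink exchanged, now propagating \emph{into} $\cR_{\rm in,-}$ from a punctured neighborhood within $\scri^+$ (hence the hypothesis $\WFeb^{\tilde s,\tilde\alpha}(\tilde u)\cap(\cU\cap\Seb^*_{(\scri^+)^\circ}M)=\emptyset$, rather than from $I^0$). The threshold $\tilde\alpha_0<\tilde\alpha_{\!\scri}-\frac12$ is the dual of $\alpha_{\!\scri}<\alpha_0+\frac12$ under the substitution $\tilde\alpha=-\alpha-(2,2,2)$ recorded in Remark~\ref{RmkMEBwDual}; the sign in the positive commutator estimate flips accordingly. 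Everything else is formally identical.

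\textbf{Main obstacle.} The one genuinely nontrivial point is handling the conormal (rather than smooth) error terms $\cA^{(\ell_0,2\ell_{\!\scri})}\Vb(\ol{\Teb^*_{\cU_0}}M)$ in~\eqref{EqMFLinInm}: these are decaying at $\scri^+$ and $I^0$ but only conormal, so they cannot simply be absorbed into the smooth calculus. The fix is that $\ell_0,\ell_{\!\scri}>0$ makes them $o(1)$ near $\cR_{\rm in,-}$, so after shrinking $\cU$ they are dominated by the positive main term with room to spare---but the bookkeeping must be arranged so that \emph{no threshold depends on $\ell_0,\ell_{\!\scri}$}, only on their positivity, which is why the statement has clean thresholds $\alpha_{\!\scri}<\alpha_0+\frac12$. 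A secondary technical annoyance is that, unlike $\cR_{\rm c}$ or $\cR_{\rm out}$, the radial set $\cR_{\rm in,-}$ sits at the corner $I^0\cap\scri^+$, so the commutant must be a genuine edge-b-operator with the correct behavior at \emph{two} boundary hypersurfaces; this is handled by the calculus of~\S\ref{SsEBP} but requires care in choosing $\rho_\infty$ and the cutoffs so that $A\in\Psieb$ with the stated wave front set properties.
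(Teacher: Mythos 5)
Your overall strategy (a positive commutator/radial point argument in the edge-b-calculus with a commutant built from weights $\rho_\infty^{-s+\frac12}\rho_0^{-\check\alpha_0}x_{\!\scri}^{-2\check\alpha_{\!\scri}}$ and cutoffs near $\cR_{\rm in,-}$, threshold depending only on the weights, duality/sign reversal for part~(2)) is the paper's strategy, but your dynamical bookkeeping at $\cR_{\rm in,-}$ is reversed, and this breaks the proof as written. By \eqref{EqMFLinInm}, the rescaled flow is $2\rho_0\pa_{\rho_0}-x_{\!\scri}\pa_{x_{\!\scri}}-\hat\eta\pa_{\hat\eta}$: along the \emph{forward} flow the set is attracting in the $(x_{\!\scri},\hat\eta)$-directions (these span the stable manifold, which lies in $\{\rho_0=0\}$, i.e.\ over $I^0$) and repelling in $\rho_0$ (so $\cW_{\rm in}\cap\{\rho_0>0\}$ is the \emph{unstable} manifold), whereas you assert the opposite ("a source in the forward direction" within $\scri^+$, stable manifold containing $\cW_{\rm in}^+\setminus\cR_{\rm in,-}^+$). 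Consequently you assign the favorable sign to the derivative of $\chi_1(\varphi)$ (the cutoff in $x_{\!\scri},\hat\xi-2,\hat\eta$) and place the a priori-controlled error at $\supp\chi_0'$, i.e.\ at $\rho_0\sim\delta$ — which, note, is away from $I^0$, not "away from $\scri^+$", so the hypothesis $\WFeb^{s,\alpha}(u)\cap(\cU\cap\Seb^*_{I^0}M\setminus\cR_{\rm in,-})=\emptyset$ does not control it. The correct computation (cf.\ \eqref{EqMEinmSymb}) gives exactly the opposite arrangement: the $\rho_0$-cutoff derivative produces the \emph{negative} square $-b_0^2$ (same sign as the main weight term, so regularity is gained there, with the flow exiting into $\rho_0>0$), while the $x_{\!\scri}$- and $(2-\hat\xi)$-cutoff derivatives produce positive squares $b_{\!\scri}^2,b_{\rm in}^2$ whose supports are precisely what the hypothesis over $\Seb^*_{I^0}M$ must (and, after choosing the cutoff scales in the right order, does) cover. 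With your arrangement the positive square from $\chi_1'$ is uncontrolled and the estimate does not close, and the lemma's hypothesis is not the one your argument would require; the same swap propagates into your part~(2), where the paper needs a priori control on $\supp b_0$, consistent with the hypothesis over $\Seb^*_{(\scri^+)^\circ}M$.

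A second gap: you state that the subprincipal contribution at $\cR_{\rm in,-}$ is "governed by $\Im p_1$ and the $\frac{n-1}{2}$-term" and then claim the clean threshold $\alpha_{\!\scri}<\alpha_0+\half$ without reconciling the two. The reason $p_1$ does not enter (cf.\ Remark~\ref{RmkMEThr}) is the specific computation \eqref{EqMEcImag}: $\sigmaeb^1\bigl(\rho_0^{-2}x_{\!\scri}^{-2}\tfrac{P-P^*}{2 i}\bigr)=-\tfrac12(p_1+p_1^*)\rho_\infty^{-1}(\hat\xi-2)$, which vanishes on $\cR_{\rm in,-}$ because $\hat\xi=2$ there; without this vanishing the threshold would involve $\ubar p_1$, contradicting the statement. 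Relatedly, the relevant weight combination produced by the flow eigenvalues $(+2,-1)$ acting on the commutant's weights is $-\check\alpha_0+\check\alpha_{\!\scri}$ (equivalently $\alpha_{\!\scri}-\alpha_0-\half$), not "$2\alpha_0-4\alpha_{\!\scri}$", and the absence of an $s$-threshold is due to the vanishing of the fiber-radial ($\rho_\infty\pa_{\rho_\infty}$) component in \eqref{EqMFLinInm}, not to the $\rho_0$-term being "subdominant". Fixing the stable/unstable identification, the placement of the a priori control, and the $p_1$-vanishing argument would bring your sketch in line with the paper's proof.
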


\begin{lemma}[Propagation at $\cR_{\rm in,+}$]
\label{LemmaMEinp}
  Let $\cU\subset\Seb^*M$ be an open neighborhood of $\cR_{\rm in,+}$. 
  \begin{enumerate}
  \item\label{ItMEinpFw}{\rm (Forward propagation.)} Suppose that $\alpha_+<\alpha_{\!\scri}-\half$ and $\WFeb^{s-1,\alpha'}(f)\cap\cR_{\rm in,+}=\emptyset$. If $\WFeb^{s,\alpha}(u)\cap(\cU\cap\Seb^*_{(\scri^+)^\circ}M)=\emptyset$, then $\WFeb^{s,\alpha}(u)\cap\cR_{\rm in,+}=\emptyset$.
  \item\label{ItMEinpBw}{\rm (Backward propagation.)} Suppose that $\tilde\alpha_{\!\scri}<\tilde\alpha_++\half$ and $\WFeb^{\tilde s-1,\tilde\alpha'}(\tilde f)\cap\cR_{\rm in,+}=\emptyset$. If $\WFeb^{\tilde s,\tilde\alpha}(\tilde u)\cap(\cU\cap\Seb^*_{I^+}M\setminus\cR_{\rm in,+})=\emptyset$, then $\WFeb^{\tilde s,\tilde\alpha}(\tilde u)\cap\cR_{\rm in,+}=\emptyset$.
  \end{enumerate}
\end{lemma}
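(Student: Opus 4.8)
The plan is to establish both parts by the standard weighted positive commutator (radial point) arguments, worked out microlocally near $\cR_{\rm in,+}$; compare \cite[\S9]{MelroseEuclideanSpectralTheory}, \cite[\S2]{VasyMicroKerrdS}, \cite[Appendix~E.4]{DyatlovZworskiBook}. By the reduction recorded after Theorem~\ref{ThmMEFw} it suffices to treat the component $\cR_{\rm in,+}^+\subset\Sigma^+$; the case of $\Sigma^-$ follows by the analogous computation (time reversal). I work in the coordinates~\eqref{EqOGeoCoordIp} on $\cU_+$ together with the fiber-infinity coordinates~\eqref{EqMFRinpCoord}, in which $\cR_{\rm in,+}^+$ is cut out by $\rho_+=x_{\!\scri}=0$, $\hat\eta=0$, $\hat\xi=2$ and the rescaled Hamiltonian vector field $\sfH=\rho_\infty H_{G_\ebop}$ from~\eqref{EqMHamResc} has the linearization~\eqref{EqMFLinInp}, i.e.\ $\sfH\equiv x_{\!\scri}\pa_{x_{\!\scri}}-2\rho_+\pa_{\rho_+}+\hat\eta\pa_{\hat\eta}$ modulo $(\cI_{\pa\cR_{\rm in,+}^+}+\cA^{(2\ell_{\!\scri},\ell_+)})\Vb(\ol{\Teb^*_{\cU_+}}M)$. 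Thus, inside $\Sigma^+$, the set $\cR_{\rm in,+}^+$ is a saddle: the single stable direction is $\rho_+$ (rate $-2$, motion towards $I^+$), while $x_{\!\scri}$ (motion off $\scri^+$ into $M^\circ$) and $\hat\eta$ (motion within $\scri^+$ towards $\cR_{\rm out}^+$) are unstable (rate $+1$). Crucially there is no $\rho_\infty\pa_{\rho_\infty}$ term, so the differential order $s$ will play no role in the threshold conditions — consistent with the absence of any hypothesis on $s$ in the statement. The one-dimensional stable manifold inside $\Seb^*_{\scri^+}M$ is $\cW_{\rm in}^+$ of Definition~\ref{DefMFStable}, which for $\rho_+>0$ lies in $\Seb^*_{(\scri^+)^\circ}M$; this is precisely the set on which part~\eqref{ItMEinpFw} supplies a priori control. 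Finally, since $P$ is principally scalar and, by Definition~\ref{DefOp}\eqref{ItOpNorm}, its edge normal operator at $\cR_{\rm in,+}^+$ (where $\hat\eta=0$, $\hat\xi=2$) is, microlocally and up to the elliptic factor $\xi$, the ``clean'' factor $x_{\!\scri}D_{x_{\!\scri}}-2\rho_+D_{\rho_+}$, which carries no subprincipal term and in particular does not involve $p_1$, the endomorphism $p_1$ and the bundle structure contribute here only through genuinely lower-order terms; this accounts for the absence of $\ubar p_1$ from the thresholds, in contrast to Lemma~\ref{LemmaMEc}.

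For part~\eqref{ItMEinpFw}, I would build a commutant $A=\Op_\ebop(a)\in\Psieb^0(M)$ with $\WFeb'(A)$ contained in an arbitrarily small conic neighborhood of $\cR_{\rm in,+}^+$, taking $a=\rho_0^{-\alpha_0}x_{\!\scri}^{-2\alpha_{\!\scri}}\rho_+^{-\alpha_+}\rho_\infty^{-s}\chi$, where $\chi$ is a cutoff to a neighborhood of $\cR_{\rm in,+}^+$ arranged, using the saddle structure above, so that $\sfH\chi$ is supported either in the region $\{\rho_+>0\}$ over $(\scri^+)^\circ$, where the hypothesis $\WFeb^{s,\alpha}(u)\cap(\cU\cap\Seb^*_{(\scri^+)^\circ}M)=\emptyset$ gives control, or in the elliptic set of $P$, where microlocal elliptic regularity together with $\WFeb^{s-1,\alpha'}(f)\cap\cR_{\rm in,+}=\emptyset$ applies. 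The principal part of the commutator $i(P^*A^*A-A^*A P)$ is governed by $\sfH$ acting on $a^2$, which by~\eqref{EqMFLinInp} produces the weight contribution $(-4\alpha_{\!\scri}+4\alpha_+)a^2$, no leading contribution from $\rho_\infty^{-2s}$, and a universal constant coming from the subprincipal symbol of the clean factor along $\Sigma$ together with the fixed integration density~\eqref{EqMEDensity}; the hypothesis $\alpha_+<\alpha_{\!\scri}-\tfrac12$ is exactly the condition under which this main term has the definite sign required for the absorption argument. The conormal remainders of class $\cA^{(2\ell_{\!\scri},\ell_+)}$ in~\eqref{EqMFLinInp}, which perturb both the location of the radial set and its linearization, are subleading since $\ell_{\!\scri},\ell_+>0$ and are absorbed after shrinking the neighborhood. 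A regularization — inserting $\Lambda_r\in\Psieb^{-\delta}$, uniformly bounded in $\Psieb^0(M)$ with $\Lambda_r\to\Id$ in $\Psieb^{\delta'}$ for some $\delta'>0$, pairing $i[P,A^*\Lambda_r^2 A]u$ with $u$, and letting $r\to0$ — makes the pairing rigorous and yields $\WFeb^{s,\alpha}(u)\cap\cR_{\rm in,+}^+=\emptyset$; the same computation furnishes a quantitative estimate in the style of~\eqref{EqMEFwEst}.

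Part~\eqref{ItMEinpBw} is the $L^2$-dual of part~\eqref{ItMEinpFw}: by Remark~\ref{RmkMEBwDual}, the hypothesis $\tilde\alpha_{\!\scri}<\tilde\alpha_++\tfrac12$ is equivalent, under $\tilde s=-s+1$ and $\tilde\alpha=-\alpha-(2,2,2)$, to $\alpha_+<\alpha_{\!\scri}-\tfrac12$. One may either deduce it directly from part~\eqref{ItMEinpFw} by the functional-analytic duality argument of \cite[\S4]{VasyMinicourse}, or repeat the commutator scheme with the direction of propagation reversed: for $P^*$ one propagates along the backward flow, for which $\cR_{\rm in,+}^+$ is a saddle with stable directions $x_{\!\scri}$ and $\hat\eta$ (the forward-unstable ones, lying over $I^+$ when $\rho_+=0$) and unstable direction $\rho_+$; hence the required a priori control is on $\cU\cap\Seb^*_{I^+}M\setminus\cR_{\rm in,+}$, exactly as in the statement, the weight derivatives along the reversed flow flip sign, and $\tilde\alpha_{\!\scri}<\tilde\alpha_++\tfrac12$ becomes the condition guaranteeing the correct sign of the commutator.

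The main obstacle is the orchestration of the commutant in this saddle situation. One must simultaneously (i) ensure that the contribution of the stable $\rho_+$-direction to $\sfH(a^2)$ is dominated by the weight term — this is what forces the threshold and leaves no slack; (ii) choose $\chi$ so that the $\sfH$-derivatives coming from the two unstable directions $x_{\!\scri}$ (into $M^\circ$) and $\hat\eta$ (towards $\cR_{\rm out}^+$) generate error terms supported only where control is already available — over $(\scri^+)^\circ$ for part~\eqref{ItMEinpFw}, over $(I^+)^\circ$ for part~\eqref{ItMEinpBw} — rather than in uncontrolled regions; it is precisely this feature that makes $\cR_{\rm in,+}$ a ``propagation'' radial set, requiring no a priori regularity hypothesis at the set itself, in contrast to $\cR_{\rm c}$; and (iii) verify that the conormal perturbations in~\eqref{EqMFLinInp} do not destroy the sign. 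Each ingredient is standard, but their combination is what the proof must execute with care.
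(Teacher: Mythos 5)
Your strategy is the same as the paper's (a positive commutator argument localized at $\cR_{\rm in,+}$, with the linearization \eqref{EqMFLinInp} supplying the weight term, no restriction on $s$ and hence unlimited regularization, $p_1$ dropping out, and reversed signs for part~(2)), and the thresholds and control regions you state are the correct ones; but two steps of your execution would fail as written. The main one concerns the cutoff error terms: you propose to arrange the commutant so that $\sfH\chi$ is supported either over $(\scri^+)^\circ$, where the hypothesis applies, or in the elliptic set of $P$, and in your item (ii) you assign the derivative terms from the \emph{unstable} directions to the controlled region. This cannot be arranged: any localizer near $\cR_{\rm in,+}$ must be cut off in $x_{\!\scri}$ and in $\hat\xi$ (or $\hat\eta$), and the resulting derivative terms are supported at $x_{\!\scri}\sim\delta_{\!\scri}$, $\rho_+\in[0,\eps)$ (meeting the characteristic set over $(I^+)^\circ$ and over interior points) and at $|\hat\xi-2|\sim\eps$ over $\scri^+\cap I^+$, respectively---regions where neither ellipticity nor the hypothesis gives anything. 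In the paper's proof these terms ($-b_{\!\scri}^2$, $-b_{\rm in}^2$) are harmless because monotone cutoffs ($\chi'\leq 0$, $\sqrt{-\chi'\chi}$ smooth) give them the \emph{same sign} as the main term; the only term using the a priori hypothesis is $+b_+^2$, coming from differentiating the cutoff in the stable direction $\rho_+$, which is indeed supported over $(\scri^+)^\circ$. So the roles of the stable and unstable directions in your items (i)--(ii) are swapped, and the sign mechanism that actually closes the estimate is missing from your plan (analogously in part~(2), where $-b_{\!\scri}^2$, $-b_{\rm in}^2$ become the terms estimated by the hypothesis over $I^+$, and $+b_+^2$ acquires the favorable sign).

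The second issue is the origin of the $\tfrac12$ in the threshold. The subprincipal symbol of the ``clean factor'' contributes no universal constant: as the paper computes, $\sigmaeb^1\bigl(x_{\!\scri}^{-2}\rho_+^{-2}\tfrac{P-P^*}{2 i}\bigr)=\tfrac12(p_1+p_1^*)\rho_\infty^{-1}(\hat\xi-2)$ vanishes at $\cR_{\rm in,+}$, which is precisely why $\ubar p_1$ is absent from this lemma. The $\tfrac12$ instead comes from the required shift of the commutant orders: one takes $\check a$ of order $s-\tfrac12$ with weights $\check\alpha=\alpha+(1,1)$ so that the main term $x_{\!\scri}^2\rho_+^2\rho_\infty^{-1}\check a^2$ is the square of a symbol of order $(s,\alpha)$ and hence controls the $\Heb^{s,\alpha}$-norm; the weight derivative along \eqref{EqMFLinInp} then yields the coefficient $-2\check\alpha_{\!\scri}+2\check\alpha_+$, i.e.\ exactly the condition $\alpha_+<\alpha_{\!\scri}-\tfrac12$. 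With your unshifted commutant, the computation you describe gives the sign condition $\alpha_+<\alpha_{\!\scri}$ while controlling a norm that is half an order and one power of decay weaker, so the stated threshold does not emerge without redoing this bookkeeping. Finally, part~(2) is not the $L^2$-dual of part~(1): duality converts a priori estimates into solvability of the adjoint problem, not into the adjoint propagation-of-regularity statement; the correct route is the one you also offer and the paper uses, namely rerunning the commutator argument with reversed signs, with a priori control now on $\cU\cap\Seb^*_{I^+}M\setminus\cR_{\rm in,+}$ and threshold $\tilde\alpha_{\!\scri}<\tilde\alpha_++\tfrac12$.
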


\begin{lemma}[Propagation at $\cR_{\rm out}$]
\fakephantomsection
\label{LemmaMEout}
  \begin{enumerate}
  \item\label{ItMEoutFw}{\rm (Forward propagation.)} Suppose that $\alpha_{\!\scri}<-\half+\ubar p_1$ and $\WFeb^{s-1,\alpha'}(f)\cap\cR_{\rm out}=\emptyset$. Let $\cU\subset\Seb^*M$ be an open neighborhood of $\cR_{\rm out}$. If $\WFeb^{s,\alpha}(u)\cap(\cU\setminus\cR_{\rm out})=\emptyset$, then $\WFeb^{s,\alpha}(u)\cap\cR_{\rm out}=\emptyset$.
  \item\label{ItMEoutFwLoc}{\rm (Localized forward propagation near $I^0$.)} Using the coordinates~\eqref{EqOGeoCoordI0} for any fixed $T\in\R$, let $0<\rho_{0,-}<\rho_{0,+}<1$ and suppose $\cU^\flat,\cU\subset\Seb^*M\cap\{\rho_0<\rho_{0,+}\}$ are open sets containing $\cR_{\rm out}\cap\{\rho_0<\rho_{0,-}\}$, and $\overline{\cU^\flat}\subset\cU$. If $\alpha_{\!\scri}<-\half+\ubar p_1$, $\WFeb^{s,\alpha}(u)\cap(\cU\setminus\cR_{\rm out})=\emptyset$, and $\WFeb^{s-1,\alpha'}(f)\cap\Seb^*_{\scri^+}M\subset\cR_{\rm out}\setminus\ol\cU$, then $\WFeb^{s,\alpha}(u)\cap\Seb^*_{\scri^+}M\subset\cR_{\rm out}\setminus\ol{\cU^\flat}$.
  \item\label{ItMEoutBw}{\rm (Backward propagation.)} Suppose that $\tilde\alpha_{\!\scri}>-\half-\ubar p_1$ and $\WFeb^{\tilde s-1,\tilde\alpha'}(\tilde f)\cap\cR_{\rm out}=\emptyset$. Then $\WFeb^{\tilde s,\tilde\alpha}(\tilde u)\cap\cR_{\rm out}=\emptyset$.
  \item\label{ItMEoutBwLoc}{\rm (Localized backward propagation near $I^+$.)} Using the coordinates~\eqref{EqOGeoCoordIp} for any fixed $T\in\R$, let $0<\bar\rho_+<1$ and let $\cU\subset\Seb^*M$ be an open set with $\cU\cap\cR_{\rm out}=\{\rho_+<\bar\rho_+\}\cap\cR_{\rm out}$. If $\tilde\alpha_{\!\scri}>-\half-\ubar p_1$ and $\WFeb^{\tilde s-1,\tilde\alpha'}(\tilde f)\cap\cU=\emptyset$, then $\WFeb^{\tilde s,\tilde\alpha}(\tilde u)\cap(\cU\cap\cR_{\rm out})=\emptyset$.
  \end{enumerate}
\end{lemma}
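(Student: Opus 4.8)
\textbf{Plan for the proof of Lemma~\ref{LemmaMEout}.} All four statements are radial point estimates at $\cR_{\rm out}$, which by the computations of~\S\ref{SsMF} is a \emph{source/sink} for the rescaled Hamilton flow $\sfH$ (more precisely: in $\Sigma^+$ it is a sink in the backward direction, i.e.\ nearby null-bicharacteristics tend to it as $s\nearrow\infty$, and a source in the forward direction; and vice versa in $\Sigma^-$). The relevant linearizations are~\eqref{EqMFLinOut1} near $I^0$ and~\eqref{EqMFLinOut2} near $I^+$, both of the form $-x_{\!\scri}\pa_{x_{\!\scri}}-\hat\eta\pa_{\hat\eta}$ modulo lower order terms, with \emph{no} normal direction along the base variables $\rho_0$ (resp.\ $\rho_+$) or along $\rho_\infty$: the flow is tangent to $\cR_{\rm out}$ along $\rho_0\pa_{\rho_0}$ (resp.\ $\rho_+\pa_{\rho_+}$) and along fiber dilations, and contracts in the transverse directions $(x_{\!\scri},\hat\eta)$ (up to sign). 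The weight $\alpha_{\!\scri}$ at $\scri^+=\{x_{\!\scri}=0\}$ and the subprincipal contribution $\ubar p_1$ (entering through the zeroth order term $p_0^0$, resp.\ $p_0^+$, in the normal operator~\eqref{EqOpNorm} — note the term $2 i^{-1}\bigl(\tfrac{n-1}{2}+p_1\bigr)$) together determine the threshold $\alpha_{\!\scri}\lessgtr-\tfrac12+\ubar p_1$, with the source direction requiring above-threshold regularity to propagate \emph{out of} $\cR_{\rm out}$ (forward, parts~\eqref{ItMEoutFw}, \eqref{ItMEoutFwLoc}) and below-threshold permitting propagation \emph{into} it with no a priori assumption (backward, parts~\eqref{ItMEoutBw}, \eqref{ItMEoutBwLoc}).

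The proof follows the standard positive commutator scheme, as in~\cite[\S9]{MelroseEuclideanSpectralTheory}, \cite[\S2]{VasyMicroKerrdS}, \cite[Appendix~E.4]{DyatlovZworskiBook}. First I would construct a commutant $A=\rho_0^{-\alpha_0}x_{\!\scri}^{-2\alpha_{\!\scri}}\rho_+^{-\alpha_+}\Op_\ebop(a)$ (with weights adjusted in the respective coordinate charts), where $a$ is a symbol microsupported in a small neighborhood $\cU$ of $\cR_{\rm out}$, elliptic at $\cR_{\rm out}$, of the form $a=\chi(\rho_\infty^{-1})^2\psi(x_{\!\scri}^2+|\hat\eta|^2)^2\phi$ with $\phi$ a cutoff in the base variable $\rho_0$ (resp.\ $\rho_+$); here $\chi$ localizes near fiber infinity and $\psi$ is a function decreasing in its argument, so that $\sfH$ applied to it has a definite sign. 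Computing $\tfrac{i}{2}[P^*A^*A - A^*A P]$ using Lemma~\ref{LemmaEBOHam} and the principal/subprincipal structure~\eqref{EqOpNorm} of $P$, one finds that on $\cR_{\rm out}$ the dominant contribution is a sum of: (i) the Hamilton-derivative term $-\sfH a^2 \sim$ a positive multiple of $a^2$ from the contracting directions; (ii) the weight term $(\sfH\log(\rho_0^{-\alpha_0}x_{\!\scri}^{-2\alpha_{\!\scri}}\rho_+^{-\alpha_+}))a^2$, which on $\cR_{\rm out}$ contributes $(\pm\,\text{sign})\cdot 2\alpha_{\!\scri}\cdot a^2$ since $\sfH\log x_{\!\scri}=\mp 1$ there while $\sfH\log\rho_0$ and $\sfH\log\rho_+$ vanish on $\cR_{\rm out}$; and (iii) the subprincipal term $2\,\sigmaeb^{1}\bigl(\tfrac{1}{2i}(P-P^*)\bigr)a^2$, which on $\cR_{\rm out}$ is governed by $\pm(\text{const})+\Re p_1$ — here the real number $\ubar p_1$ (via Lemma~\ref{LemmaMEMin}, choosing the fiber inner product $h_E$ nearly optimal) enters. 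The sign condition on the sum — for the forward parts, demanding positivity when $s-\alpha_0-\ldots$ together with the threshold inequality holds; for the backward parts, demanding the opposite sign, which holds automatically below threshold — yields, after the usual regularization argument (inserting $(1+\epsilon\rho_\infty^{-1})^{-N}$ weights and letting $\epsilon\downarrow0$), the a priori estimate controlling $\|Au\|$ by $\|WPu\|$, an elliptic/off-$\cR_{\rm out}$ error term $\|Eu\|$, and a lower-order term $\|u\|_{\Heb^{-N,\alpha}}$, giving the wave front set conclusion.

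The localized variants~\eqref{ItMEoutFwLoc} and~\eqref{ItMEoutBwLoc} require, in addition, that the $\rho_0$- (resp.\ $\rho_+$-) cutoff $\phi$ in the commutant be chosen so that $\sfH\phi$ has a favorable sign (or is supported where one already has control): since $\sfH\rho_0 = \xi\rho_0 + (\text{errors that vanish at }\cR_{\rm out})$ near $I^0$ — see~\eqref{EqMFHamI0} — and on $\Sigma^+$ near $\cR_{\rm out}$ one has $\xi\to 0$ but $\sfH\rho_0$ still has a sign determined by the flow structure depicted in Figure~\ref{FigMF} (the red null-bicharacteristics in $(I^0)^\circ$ flow toward $\cR_{\rm out}$ along $\scri^+$), one picks $\phi$ supported in $\{\rho_0<\rho_{0,+}\}$, identically $1$ on $\{\rho_0<\rho_{0,-}\}$, and either (a) absorbs the $\sfH\phi$ term into the main positive term where $\phi'\le0$ matches the flow direction, or (b) in the region where the sign is wrong, uses that the microsupport there lies outside $\ol{\cU^\flat}$ (resp.\ in $\cU\cap\cR_{\rm out}$) and contributes only to the controlled error. \textbf{The main obstacle} I anticipate is the bookkeeping of the several boundary weights simultaneously: at $\cR_{\rm out}$ one sits at the corner-adjacent locus $\scri^+$ but the full estimate sees $\rho_0$, $x_{\!\scri}$, $\rho_+$ (even though only $\alpha_{\!\scri}$ appears in the threshold), and one must verify carefully that the $\rho_0\pa_{\rho_0}$- and $\rho_+\pa_{\rho_+}$-derivatives of the commutant produce only error terms supported away from $\cR_{\rm out}$ or of the right sign — this is exactly where the tangency of $\sfH$ to $\cR_{\rm out}$ in those directions, established in~\S\ref{SsMF}, is essential, and where the conormal error terms $\tilde H\in(x_{\!\scri}\CI+\cA^{(\ell_0,2\ell_{\!\scri})})\Vb$ (resp.\ with weights at $I^+$) must be checked to be genuinely subleading; since $\ell_0,\ell_+>0$ and $\ell_{\!\scri}>0$ these errors carry positive powers of the relevant boundary defining functions and are harmless, but making this precise in the presence of the bundle endomorphism $p_1$ and the non-self-adjointness of $P$ is the delicate point.
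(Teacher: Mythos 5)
Your scheme for the non-localized statements \eqref{ItMEoutFw} and \eqref{ItMEoutBw} is essentially the paper's: a positive commutator estimate with a commutant microsupported near $\cR_{\rm out}$, the threshold arising from the $x_{\!\scri}$-weight derivative (since at $\cR_{\rm out}$ the rescaled Hamilton vector field has no $\rho_\infty$-, $\rho_0$- or $\rho_+$-components, so neither $s$ nor $\alpha_0,\alpha_+$ enter) combined with the skew part of $P$, with $\ubar p_1$ entering through a near-optimal fiber inner product as in Lemma~\ref{LemmaMEMin}, and with unlimited regularization because $s$ is absent from the threshold. Two bookkeeping points: the paper's commutant carries the order factor $\rho_\infty^{-s+\frac12}$ and the shifted weights $\check\alpha=\alpha+(1,1)$, which is exactly where the $-\half$ in $\alpha_{\!\scri}<-\half+\ubar p_1$ comes from (your ``weight term contributes $2\alpha_{\!\scri}$'' misses this shift); and the relevant subprincipal quantity is the first-order term $x_{\!\scri} D_{x_{\!\scri}}-2 i^{-1}\bigl(\tfrac{n-1}{2}+p_1\bigr)$ in~\eqref{EqOpNorm}, not $p_0^0,p_0^+$, which play no role in the symbolic estimate. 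Also, a localizer in $\hat\xi$ (the paper's $\chi(\hat\xi^2)$) is needed: $\psi(x_{\!\scri}^2+|\hat\eta|^2)$ alone is also elliptic near $\cW_{\rm in}$, where the threshold is different.

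The genuine gap is in your plan for the localized statements \eqref{ItMEoutFwLoc} and \eqref{ItMEoutBwLoc}. You propose a cutoff $\phi$ in $\rho_0$ (resp.\ $\rho_+$) alone, handling $\sfH\phi$ either by a sign argument or by declaring its support ``controlled''. Neither works. The transition region of such a cutoff necessarily meets $\cR_{\rm out}$ itself (the radial set extends over all of $\scri^+$ in the base direction), and there you have neither a priori control of $u$ (the hypotheses control $u$ only on the punctured neighborhood $\cU\setminus\cR_{\rm out}$; in part \eqref{ItMEoutBwLoc} there is no a priori control at all) nor control of $f$. Moreover $\sfH\rho_0$ has no definite sign near $\cR_{\rm out}$: by~\eqref{EqMFHamI0} it equals $-\hat\xi\rho_0$ plus indefinite error terms of size $x_{\!\scri}+\rho_0^{\ell_0}x_{\!\scri}^{2\ell_{\!\scri}}$, and on the characteristic set near $\cR_{\rm out}$ with $x_{\!\scri}>0$ one has $\hat\xi=\cO(x_{\!\scri})$, so the ``main'' term is comparable to and can be overwhelmed by the errors; nor can the bad contribution be absorbed into the definite main term, since it carries $\tilde\chi'\tilde\chi$ against the main term's $\tilde\chi^2$, and the ratio blows up at the edge of the cutoff. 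The device you are missing, and which the paper's proof hinges on, is to cut off in $\tilde\rho_0=\rho_0-(\rho_{0,0}+C x_{\!\scri}^{2\ell_{\!\scri}})$ (resp.\ in $\rho_+-(\rho_{+,0}-C x_{\!\scri}^{2\ell_{\!\scri}})$), whose differential is past timelike near its zero set by Lemma~\ref{LemmaOGeoHyp}: the correction $C x_{\!\scri}^{2\ell_{\!\scri}}$ with $C$ large (using $2\ell_{\!\scri}\le 1$) dominates the indefinite $\cO(x_{\!\scri}^{2\ell_{\!\scri}})$ errors, so $\sfH\tilde\rho_0$ has a definite sign on each of $\Sigma^\pm$ near $\cR_{\rm out}$, the cutoff-derivative term becomes a square with the same sign as the main term, and it can simply be dropped. (This is also why the paper's commutant has conormal rather than smooth coefficients.) Without this bent cutoff your localized estimates do not close.
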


\begin{rmk}[Threshold conditions and $\ubar p_1$]
\label{RmkMEThr}
  The threshold conditions for propagation at $\cR_{\rm in,-}$ (Lemma~\ref{LemmaMEinm}), resp.\ $\cR_{\rm in,+}$ (Lemma~\ref{LemmaMEinp}) only involve the (relative) weights at $I^0$ and $\scri^+$, resp.\ $\scri^+$ and $I^+$. This is consistent with the fact that these radial sets control the uniform behavior (i.e.\ the amplitude) of waves passing \emph{through} (a neighborhood of) $\scri^+$ (i.e.\ they are essentially transported across $\scri^+$ without any change to their amplitude). On the other hand, the radial point estimate at $\cR_{\rm out}$ (Lemma~\ref{LemmaMEout}) naturally \emph{does} involve $\ubar p_1$, as this quantity determines the decay rate at $\scri^+$ out waves tending \emph{towards} $\scri^+$ (cf.\ the first factor in each line of~\eqref{EqOpNorm}). The radial set $\cR_{\rm c}$ (Lemma~\ref{LemmaMEc}) controls waves featuring both aspects (cf.\ its unstable manifold in Figure~\ref{FigMF}).
\end{rmk}

\begin{proof}[Proof of Theorems~\usref{ThmMEFw} and \usref{ThmMEBw}, given Lemmas~\usref{LemmaMEc}--\usref{LemmaMEout}]
  Elliptic regularity in the edge-b-cal\-cu\-lus immediately gives $\WFeb^{s,\alpha}(u)\subset\Sigma\cap\WFeb^{s-2,\alpha'}(f)$. For part~\eqref{ItMEFwThru}, we first apply Lemma~\ref{LemmaMEc}\eqref{ItMEcFw} to get control of $u$ microlocally at $\cR_{\rm c}$. We can then propagate this control using standard real principal propagation along the unstable manifolds of $\cR_{\rm c}$, thus controlling $u$ microlocally in $\Seb^*_{\scri^+\setminus I^+}M\setminus(\cR_{\rm out}\cup\cW_{\rm in})$. In particular, the assumptions of Lemma~\ref{LemmaMEinm}\eqref{ItMEinmFw} are satisfied for a sufficiently small neighborhood of $\cR_{\rm in,-}$. Hence, we obtain control at $\cR_{\rm in,-}$, which can be propagated along $\cW_{\rm in}$. We can then apply Lemma~\ref{LemmaMEinp}\eqref{ItMEinpFw} to get control at $\cR_{\rm in,+}$; this can then be further propagated over $\scri^+\cap I^+$ to any punctured neighborhood of $\cR_{\rm out}$. Altogether, this proves the absence of $\Heb^{s,\alpha}$-wave front set of $u$ over $\scri^+$ except at $\cR_{\rm out}$. Part~\eqref{ItMEFwOut} of Theorem~\ref{ThmMEFw} is now an immediate application of Lemma~\ref{LemmaMEout}\eqref{ItMEoutFw}.

  Theorem~\ref{ThmMEBw}\eqref{ItMEBwThru} follows from Lemma~\ref{LemmaMEout}\eqref{ItMEoutBw} and real principal type propagation in $\Seb^*_{\scri^+}M\setminus(\cW_{\rm in}\cup\cW_{\rm c})$. For the proof of part~\eqref{ItMEBwOut}, one in addition applies Lemma~\ref{LemmaMEinp}\eqref{ItMEinpBw} to get control at $\cR_{\rm in,+}$, which propagates to a punctured neighborhood of $\cR_{\rm in,-}$ over $\scri^+\setminus I^0$. Propagation into $\cR_{\rm in,-}$ is accomplished by Lemma~\ref{LemmaMEinm}\eqref{ItMEinmBw}, from where one can then propagate regularity of $u$ to a punctured neighborhood of $\cR_{\rm c}$. An application of Lemma~\ref{LemmaMEc}\eqref{ItMEcBw} completes the argument.
\end{proof}

We now turn to the proofs of Lemmas~\ref{LemmaMEc}--\ref{LemmaMEout}.

\begin{proof}[Proof of Lemma~\usref{LemmaMEc}]
  We work near $\cR_{\rm c}^+$; the arguments near $\cR_{\rm c}^-$ are completely analogous. We omit the weight at $I^+$ from the notation. Let $\check\alpha=(\check\alpha_0,2\check\alpha_{\!\scri}):=\alpha+(1,1)=(\alpha_0+1,2\alpha_{\!\scri}+1)$. We consider a commutant
  \begin{equation}
  \label{EqMEcCommOp}
    \check A=\Op_\ebop(\check a)\in\Psieb^{s-\frac12,\check\alpha},\qquad A = \check A^*\check A,
  \end{equation}
  with $\check a$ defined momentarily, and shall estimate the $L^2$-pairing\footnote{Regarding the weights of $\sC$, note that $(-2,-2)+2\check\alpha=2\alpha$ indeed.}
  \begin{equation}
  \label{EqMEcPair}
    \Im\la P u,A u\ra = \la\sC u,u\ra,\qquad \sC=\frac{i}{2}[P,A] + \frac{P-P^*}{2 i}A\in\Psieb^{2 s,2\alpha}(M),
  \end{equation}
  in two different ways. Here, we define the $L^2$ inner product (and the adjoint $P^*$) with respect to the volume density~\eqref{EqMEDensity} and a fiber inner product $h_E$ on $E$ which is almost optimal, that is,
  \begin{equation}
  \label{EqMEcInner}
    \ubar p_1(h_E)\in(\ubar p_1-\eps_0,\ubar p_1]
  \end{equation}
  for any fixed $\eps_0>0$. Using the Cauchy--Schwarz inequality and elliptic estimates, the left hand side of~\eqref{EqMEcPair} can be bounded, for any $\delta>0$, by
  \begin{equation}
  \label{EqMEcPairLHS}
    \Im\la P u,A u\ra \geq -|\la\check A^*P u,\check A u\ra| \geq -C\delta^{-1}\|W P u\|_{\Heb^{s-1,\alpha'}}^2 - \delta\|\check A u\|_{\Heb^{\frac12,(-1,-1)}}^2 - C\|u\|_{\Heb^{-N,\alpha}}^2
  \end{equation}
  for a constant $C$ (independent of $u,f,\delta$), where $W\in\Psieb^0$ is any fixed operator that is elliptic on $\WFeb'(A)$.

  We next compute the principal symbol of the `commutator' $\sC$. We employ the coordinates $\xi,\eta_j,\zeta$ from~\eqref{EqMFCoordI0} in the base and the coordinates $\rho_\infty,\hat\eta,\hat\zeta$ from~\eqref{EqMFRcCoord} in the compactified fibers. Recall $G_\ebop^0$ from~\eqref{EqMFSymbI0}, and define, for $\delta_{\!\scri}\in(0,1)$ specified below,
  \[
    \check a=\rho_\infty^{-s+\frac12}\rho_0^{-\check\alpha_0}x_{\!\scri}^{-2\check\alpha_{\!\scri}}\chi(\rho_0)\chi\Bigl(\frac{x_{\!\scri}}{\delta_{\!\scri}}\Bigr)\chi(|\hat\zeta|^2)\chi(\rho_\infty^2 G_\ebop^0);
  \]
  here, for $\eps>0$ fixed below, the cutoff function $\chi\in\CIc((-\eps,\eps))$ is identically $1$ near $0$, and on $[0,\eps)$ satisfies $\chi'\leq 0$, $\sqrt{-\chi'\chi}\in\CI$. Thus, $\check a$ is supported in any given conic neighborhood of $\cR_{\rm c}^+$ upon choosing $\eps>0$ small enough. In view of~\eqref{EqMFLinC}, we have
  \begin{equation}
  \label{EqMEcCommSymb}
  \begin{split}
    \check a H_{G_\ebop^0}\check a &= \Bigl(-s+\frac12-\check\alpha_0+2\check\alpha_{\!\scri}+\cO(\eps)+\cO(\eps^{\ell_0}\eps^{2\ell_{\!\scri}})\Bigr)\rho_\infty^{-1}\check a^2 \\
      &\qquad + \rho_\infty^{-2 s}\rho_0^{-2\check\alpha_0}x_{\!\scri}^{-4\check\alpha_{\!\scri}}\bigl(-b_0^2-b_{\rm c}^2+b_{\!\scri}^2 + q\rho_\infty^2 G_\ebop^0\bigr).
  \end{split}
  \end{equation}
  Here $b_0,b_{\rm c},b_{\!\scri},q\in S^0+\cA^{(\ell_0,2\ell_{\!\scri})}S^0$ are symbols on $\Teb^*M$; the symbol $b_0$ arises from differentiation of $\chi(\rho_0)$ (which by~\eqref{EqMFLinC} gives a negative square when $\eps>0$, hence the support of $\check a$, is small enough); the symbol $b_{\rm c}$ from differentiation of $\chi(|\hat\zeta|^2)$ (which for sufficiently small $\eps>0$ gives a negative square at $x_{\!\scri}=0$, and thus on $\supp\check a$ when $\delta_{\!\scri}$ is sufficiently small); moreover, $b_{\!\scri}$ arises from differentiation of $\chi(\frac{x_{\!\scri}}{\delta_{\!\scri}})$, and $q$ from differentiation of the localizer $\chi(\rho_\infty^2 G_\ebop^0)$ to the characteristic set. In terms of the principal symbol $G=\rho_0^2 x_{\!\scri}^2 G_\ebop^0$ of $P$, this means
  \begin{equation}
  \label{EqMEcCommSymb2}
    \sigmaeb^{2 s,2\alpha}\Bigl(\frac{i}{2}[P,A]\Bigr) = \check a H_G\check a = \rho_0^2 x_{\!\scri}^2\check a H_{G_\ebop^0}\check a + \rho_\infty^{-2 s}\rho_0^{-2\alpha_0}x_{\!\scri}^{-4\alpha_{\!\scri}}\tilde q\cdot\rho_\infty^2\rho_0^{-2}x_{\!\scri}^{-2}G
  \end{equation}
  with $\tilde q\in S^0+\cA^{(\ell_0,2\ell_{\!\scri})}S^0$. By definition of $\check\alpha$, the expression~\eqref{EqMEcCommSymb2} is a symbol of class $S^{2 s,2\alpha}+\cA^{(\ell_0,2\ell_{\!\scri})}S^{2 s,2\alpha}$.

  Turning to the second term of $\sC$ in~\eqref{EqMEcPair}, we need to evaluate $\sigmaeb^{1,(-2,-2,-2)}(\frac{P-P^*}{2 i})$ near $\cR_{\rm c}^+$. In the notation of Definition~\ref{DefOp}, we may replace $P$ here by $P_0$, as the operator $\tilde P$ only contributes an element of $\cA^{(2+\ell_0,2+2\ell_{\!\scri})}S^1$. We have $P_0=\rho_0^2 x_{\!\scri}^2 P_{0,\ebop}$ where the normal operator of $2 P_{0,\ebop}$ is given by the first line of~\eqref{EqOpNorm}. Recall from~\eqref{EqMEDensity} that we are working with an integration density $\mu\rho_0^{-n-1}x_{\!\scri}^{-2 n}|\frac{\dd\rho_0}{\rho_0}\frac{\dd x_{\!\scri}}{x_{\!\scri}}\dd y|$, where $\mu>0$ is smooth. The normal operator of $\rho_0^{-2}x_{\!\scri}^{-2}(P_0-P_0^*)$ is then equal to that of $P_{0,\ebop}-\rho_0^{-2}x_{\!\scri}^{-2}P_{0,\ebop}^*\rho_0^2 x_{\!\scri}^2$ and thus, after a short calculation (see also Example~\ref{ExOpMink} and note that $\Box_{g_0}$ is symmetric with respect to $|\dd g_0|$), given by
  \begin{align*}
    {}^\eop N_{\scri^+,y_0}\Bigl(\rho_0^{-2}x_{\!\scri}^{-2}\frac{P-P^*}{2 i}\Bigr) &= -\frac12(p_1+p_1^*)(x_{\!\scri} D_{x_{\!\scri}}-2\rho_0 D_{\rho_0}) + \tilde N, \\
      &\hspace{5em} \tilde N\in(\CI+\cA^{\ell_0})(\scri^+_{y_0}\setminus I^+;\End(E_{y_0})).
  \end{align*}
  Therefore, over $\scri^+$,
  \begin{equation}
  \label{EqMEcImag}
    \sigmaeb^1\Bigl(\rho_0^{-2}x_{\!\scri}^{-2}\frac{P-P^*}{2 i}\Bigr) = -\frac12(p_1+p_1^*)(\xi-2\zeta) = -\frac12(p_1+p_1^*)\rho_\infty^{-1}(1-2\hat\zeta).
  \end{equation}
  Recall that at $\cR_{\rm c}^+$, we have $\hat\zeta=0$; in combination with~\eqref{EqMEcCommSymb}, we therefore obtain
  \begin{equation}
  \label{EqMEcCSymb}
  \begin{split}
    \sigmaeb^{2 s,2\alpha}(\sC) &= \Bigl(-s+\frac12-\check\alpha_0+2\check\alpha_{\!\scri}-\frac12(p_1+p_1^*)+\cO(\eps)+\cO(\eps^{\ell_0}\eps^{2\ell_{\!\scri}})\Bigr)\rho_0^2 x_{\!\scri}^2\rho_\infty^{-1}\check a^2 \\
      &\qquad + \rho_\infty^{-2 s}\rho_0^{-2\alpha_0}x_{\!\scri}^{-4\alpha_{\!\scri}}\bigl(-b_0^2-b_{\rm c}^2+b_{\!\scri}^2+(q+\tilde q)\rho_\infty^2 G_\ebop^0\bigr).
  \end{split}
  \end{equation}
  The first line is negative (and thus has the same sign as the terms $-b_0^2$ and $-b_{\rm c}^2$ in the second line) under the condition
  \begin{equation}
  \label{EqMEcThr}
    s+\alpha_0-2\alpha_{\!\scri}>\frac12-\ubar p_1\qquad\text{on}\ \supp\check a
  \end{equation}
  in the sense of quadratic forms on $E$. By assumption, this condition holds at $\cR_{\rm c}$ provided we work with small enough $\eps_0>0$ (see \eqref{EqMEcInner}), and therefore also in a sufficiently small neighborhood of $\cR_{\rm c}$ and thus on $\supp\check a$ if we choose $\eps>0$ small enough.

  This symbolic calculation can be turned into an estimate in the standard manner; we sketch this briefly. We give ourselves some extra room $\delta\in(0,s+\alpha_0-2\alpha_{\!\scri}-\half+\ubar p_1)$, and denote by $B_0,B_{\rm c},B_{\!\scri},Q\in\cA^{(-\alpha_0,-2\alpha_{\!\scri})}\Psieb^s$ quantizations of $\rho_\infty^{-s}\rho_0^{-\alpha_0}x_{\!\scri}^{-2\alpha_{\!\scri}}$ times $b_0,b_{\rm c},b_{\!\scri},q+\tilde q$; we further let $\Lambda\in\Psieb^{\frac12,(-1,-1)}$ with principal symbol $\rho_0 x_{\!\scri}\rho_\infty^{-1/2}$ near $\supp\check a$. Then we can write
  \[
    \sC = -\delta(\Lambda\check A)^*(\Lambda\check A) - B^*B - B_0^*B_0 - B_{\rm c}^*B_{\rm c} + B_{\!\scri}^*B_{\!\scri} + R,
  \]
  where $B\in\cA^{(-\alpha_0,-2\alpha_{\!\scri})}\Psieb^s$ is elliptic at $\cR_{\rm c}$ and such that the sum of the first two terms on the right here is a quantization of the first line in~\eqref{EqMEcCSymb}, and $R\in\cA^{(-\alpha_0,-2\alpha_{\!\scri})}\Psieb^{2 s-1}$ (with $\WFeb'(R)\subset\WFeb'(\check A)$) is the error term not captured by our principal symbol calculations. Plugging this into~\eqref{EqMEcPair} and using~\eqref{EqMEcPairLHS}, we obtain
  \begin{align*}
    &\delta\|\check A u\|_{\Heb^{\frac12,(-1,-1)}}^2 + \|B u\|_{L^2}^2 + \|B_0 u\|_{L^2}^2 + \|B_{\rm c}u\|_{L^2}^2 \\
    &\qquad\leq C\delta^{-1}\|W P u\|_{\Heb^{s-1,\alpha'}}^2 + \|B_{\!\scri} u\|_{L^2}^2 + \delta\|\check A u\|_{\Heb^{\frac12,(-1,-1)}}^2 + C\|u\|_{\Heb^{-N,\alpha}}^2 + |\la R u,u\ra|.
  \end{align*}
  Upon cancelling the terms involving $\check A$, dropping the third and fourth terms on the left, and estimating
  \[
    |\la R u,u\ra|\leq C\|W u\|_{\Heb^{s-\frac12,\alpha}}^2+C\|u\|_{\Heb^{-N,\alpha}}^2
  \]
  using elliptic regularity, this gives the estimate
  \begin{equation}
  \label{EqMEcFinal}
    \|B u\|_{L^2} \lesssim \|W P u\|_{\Heb^{s-1,\alpha'}} + \|B_{\!\scri} u\|_{L^2} + \|W u\|_{\Heb^{s-\frac12,\alpha}} + \|u\|_{\Heb^{-N,\alpha}}.
  \end{equation}
  This gives quantitative $\Heb^{s,\alpha}$-control of $u$ near $\cR_{\rm c}$ (where $B$ is elliptic), assuming $\Heb^{s-\frac12,\alpha}$-control of $u$ on the elliptic set of $W$. Note that we can take $\WFeb'(W)$ to be contained in any specified neighborhood of $\cR_{\rm c}^+$ by choosing $\eps>0$ small enough.
  
  In order to make sense of the integrations by parts in this argument, one needs to use a standard regularization argument \cite[\S4]{VasyMinicourse}; we only give a very brief sketch. One replaces $\check a$ by $\check a_r=(1+r\rho_\infty^{-1})^{-\gamma}\check a$, $r\in(0,1]$, where $\gamma>0$. For fixed $r>0$, this effectively replaces $s$ by $s-\gamma$ in the above symbolic calculations, hence the amount $\gamma$ of regularization one can do is limited in view of the requirement~\eqref{EqMEcThr} for $s_0=s-\gamma$ in place of $s$, and one needs to assume that $\WFeb^{s_0,\alpha}(u)\cap\cR_{\rm c}=\emptyset$ in order to justify the integrations by parts. For such $\gamma$ then, one obtains an $r$-dependent version of the estimate~\eqref{EqMEcFinal}, with an implicit constant that is uniform as $r\searrow 0$; upon letting $r\to 0$, a standard functional analysis argument using the weak compactness of the unit ball in $L^2$ implies that $B u\in L^2$ (i.e.\ one gets to \emph{conclude} regularity of $u$ on $\Elleb(B)$), and the estimate~\eqref{EqMEcFinal} holds. This finishes the proof of part~\eqref{ItMEcFw} of the Lemma.

  Turning to part~\eqref{ItMEcBw}, the roles of $P$ and $P^*$ are now reversed. We remark that the choice of fiber inner product on $E$ used to define the adjoint $P^*$ in~\eqref{EqMEAssm} is immaterial, as the adjoints with respect to any two choices are related via conjugation by a smooth bundle isomorphism on $E$; thus, we may as well use the near-optimal inner product already used in the first part of the proof. Thus, in the calculation~\eqref{EqMEcCSymb} (but now using $\tilde s,\tilde\alpha_0,\tilde\alpha_{\!\scri}$ instead of $s,\alpha_0,\alpha_{\!\scri}$, and writing $(\check{\tilde\alpha}_0,2\check{\tilde a}_{\!\scri})=\tilde\alpha+(1,1)$), the terms involving $p_1$ come with the opposite sign; and propagating backwards, we now want $-\tilde s+\frac12-\check{\tilde\alpha}_0+2\check{\tilde\alpha}_{\!\scri}+\frac12(p_1+p_1^*)$ to be \emph{positive} (thus to have the same sign as the term $b_{\!\scri}^2$, but the opposite sign as $-b_0^2,-b_{\rm c}^2$, which are the places where we need to assume a priori control). This leads to the condition
  \[
    \tilde s + \tilde\alpha_0-2\tilde\alpha_{\!\scri} < \frac12+\ubar p_1.
  \]
  Note that since this condition remains valid upon decreasing $\tilde s$, the allowed amount of regularization is now unlimited; this is the reason for the absence of an a priori regularity assumption on the solution $\tilde u$ of $P^*\tilde u=\tilde f$ at $\cR_{\rm c}$.
\end{proof}

The proofs of Lemmas~\ref{LemmaMEinm}--\ref{LemmaMEout} are similar, hence we shall only explain the symbolic aspects.

\begin{proof}[Proof of Lemma~\usref{LemmaMEinm}]
  We shall work near $\cR_{\rm in,-}^+$ and use the coordinates~\eqref{EqMFCoordI0}, \eqref{EqMFRinmCoord} (with $\rho_\infty=\zeta^{-1}$ near, and $\hat\xi=2$ at, $\cR_{\rm in,-}^+$). We shall define $A$ as in~\eqref{EqMEcCommOp}, where now
  \[
    \check a=\rho_\infty^{-s+\frac12}\rho_0^{-\check\alpha_0}x_{\!\scri}^{-2\check\alpha_{\!\scri}}\chi(\rho_0)\chi\Bigl(\frac{x_{\!\scri}}{\delta_{\!\scri}}\Bigr)\chi((2-\hat\xi)^2)\chi(\rho_\infty^2 G_\ebop^0),
  \]
  where $\chi$ is a cutoff function (with support near $0$) and $0<\delta_{\!\scri}\ll 1$ as in the proof of Lemma~\ref{LemmaMEc}. Recalling~\eqref{EqMFLinInm}, we find
  \begin{align*}
    \check a H_{G_\ebop^0}\check a &= \bigl(-2\check\alpha_0+2\check\alpha_{\!\scri} + \cO(\eps) + \cO(\eps^{\ell_0}\eps^{2\ell_{\!\scri}})\bigr)\rho_\infty^{-1}\check a^2 \\
      &\qquad + \rho_\infty^{-2 s}\rho_0^{-2\check\alpha_0}x_{\!\scri}^{-4\check\alpha_{\!\scri}}\bigl( -b_0^2 + b_{\!\scri}^2 + b_{\rm in}^2 + q\rho_\infty G_\ebop^0 \bigr),
  \end{align*}
  where $b_0$, $b_{\!\scri}$, $b_{\rm in}$ arise by differentiation of $\chi(\rho_0)$, $\chi(\frac{x_{\!\scri}}{\delta_{\!\scri}})$, $\chi((2-\hat\xi)^2)$ respectively; for the latter, note that $2-\hat\xi\geq 0$ is monotonically increasing along $H_{G_\ebop^0}$ near $\cR_{\rm in,-}^+$. Moreover, by~\eqref{EqMEcImag}, we have, over $\scri^+$,
  \[
    \sigmaeb^1\Bigl(\rho_0^{-2}x_{\!\scri}^{-2}\frac{P-P^*}{2 i}\Bigr) = -\frac12(p_1+p_1^*)(\xi-2\zeta) = -\frac12(p_1+p_1^*)\rho_\infty^{-1}(\hat\xi-2).
  \]
  At $\cR_{\rm in,-}$, this vanishes. Adding these two contributions to $\sigmaeb^{2 s,2\alpha}(\sC)$ in the notation of~\eqref{EqMEcPair} gives, similarly to~\eqref{EqMEcCSymb},
  \begin{equation}
  \label{EqMEinmSymb}
  \begin{split}
    \sigmaeb^{2 s,2\alpha}(\sC) &= \bigl(-2\check\alpha_0+2\check\alpha_{\!\scri}+\cO(\eps)+\cO(\eps^{\ell_0}\eps^{2\ell_{\!\scri}})\bigr)\rho_0^2 x_{\!\scri}^2\rho_\infty^{-1}\check a^2 \\
      &\qquad + \rho_\infty^{-2 s}\rho_0^{-2\alpha_0}x_{\!\scri}^{-4\alpha_{\!\scri}}\bigl(-b_0^2+b_{\!\scri}^2+b_{\rm in}^2 + (q+\tilde q)\rho_\infty^2 G_\ebop^0 \bigr).
  \end{split}
  \end{equation}
  When propagating in the forward direction, we assume a priori control on $\supp b_{\!\scri}\cup\supp b_{\rm in}$, and obtain control at $\cR_{\rm in,-}^+$ (as well as on the elliptic set of $b_0$), provided that $-\check\alpha_0+\check\alpha_{\!\scri}<0$, or equivalently $\alpha_{\!\scri}<\alpha_0+\half$. The proof is now finished by quantizing the lower case symbols as before. As for the regularization argument, note that the condition on $\alpha_0,\alpha_{\!\scri}$ does not involve the differential order $s$, and hence we can do an arbitrary amount of regularization; thus, there is no a priori regularity requirement at $\cR_{\rm in,-}$ here. This proves the first part of the Lemma.

  For the second part concerning propagation in the backward direction, we now need $-\check{\tilde\alpha}_0+\check{\tilde\alpha}_{\!\scri}>0$ (in the same notation as in the previous proof) to make the main, i.e.\ first, term of~\eqref{EqMEinmSymb} (with tildes added) positive, matching the sign of $b_{\!\scri},b_{\rm in}$, whereas now the term $-b_0^2$ has the opposite sign. Thus, we need microlocal $\Heb^{s,\alpha}$ regularity of $u$ on $\supp b_0$ in order to conclude regularity at $\cR_{\rm in,-}$.
\end{proof}

\begin{proof}[Proof of Lemma~\usref{LemmaMEinp}]
  We work near $\cR_{\rm in,+}^+$ and use the coordinates~\eqref{EqMFCoordIp}, \eqref{EqMFRinpCoord} (with $\rho_\infty=\zeta^{-1}$). Using the same notation as in the proof of Lemma~\ref{LemmaMEc}, We use the commutant
  \[
    \check a = \rho_\infty^{-s+\frac12}x_{\!\scri}^{-2\check\alpha_{\!\scri}}\rho_+^{-\check\alpha_+}\chi\Bigl(\frac{x_{\!\scri}}{\delta_{\!\scri}}\Bigr)\chi(\rho_+)\chi((2-\hat\xi)^2)\chi(\rho_\infty^2 G_\ebop^+).
  \]
  Using~\eqref{EqOpNorm} (and omitting the weight at $I^0$), we compute the normal operator of the imaginary part of $P$ to be
  \begin{align*}
    {}^\eop N_{\scri^+,y_0}\Bigl(x_{\!\scri}^{-2}\rho_+^{-2}\frac{P-P^*}{2 i}\Bigr) &= \frac12(p_1+p_1^*)(x_{\!\scri} D_{x_{\!\scri}}-2\rho_+ D_{\rho_+}) + \tilde N, \\
      &\hspace{5em} \tilde N\in(\CI+\cA^{\ell_+})(\scri^+_{y_0}\setminus I^0;\End(E_{y_0})).
  \end{align*}
  Its principal symbol is $\frac12(p_1+p_1^*)(\xi-2\zeta)=\frac12(p_1+p_1^*)\rho_\infty^{-1}(\hat\xi-2)$; this vanishes at $\cR_{\rm in,+}^+$. Using~\eqref{EqMFLinInp}, we then find (in the notation~\eqref{EqMEcPair})
  \begin{align*}
    \sigmaeb^{2 s,2\alpha}(\sC) &= \bigl(-2\check\alpha_{\!\scri}+2\check\alpha_+ + \cO(\eps)+\cO(\eps^{2\ell_{\!\scri}}\eps^{\ell_+}) \bigr)x_{\!\scri}^2\rho_+^2\rho_\infty^{-1}\check a^2 \\
      &\qquad \rho_\infty^{-2 s}x_{\!\scri}^{-4\alpha_{\!\scri}}\rho_+^{-2\alpha_+}\bigl(-b_{\!\scri}^2 + b_+^2 - b_{\rm in}^2 + (q+\tilde q)\rho_\infty^2 G_\ebop^+\bigr).
  \end{align*}
  Here $b_{\!\scri}$, $b_+$, $b_{\rm in}$ arises from differentiation of $\chi(\frac{x_{\!\scri}}{\delta_{\!\scri}})$, $\chi(\rho_+)$, $\chi((2-\hat\xi)^2)$, respectively. For forward propagation, and under the threshold condition $-\check\alpha_{\!\scri}+\check\alpha_+<0$, i.e.\ $\alpha_+<\alpha_{\!\scri}-\half$, we need to assume a priori control on $\supp b_+$. Furthermore, the amount of regularization is again unlimited since the edge-b-regularity $s$ does not enter in the threshold condition. Following the arguments of the proof of Lemma~\ref{LemmaMEc} yields the desired result. The proof of the backward propagation result is similar.
\end{proof}

\begin{proof}[Proof of Lemma~\usref{LemmaMEout}]
  We only give details for the localized forward propagation statement, i.e.\ part~\eqref{ItMEoutFwLoc}. We again work in the coordinates~\eqref{EqMFCoordI0} on the edge-b-cotangent bundle over $\cU_0(T)$, and we use the coordinates~\eqref{EqMFRinmCoord} on the compactified fibers near $\cR_{\rm out}^+$ (where now $\rho_\infty=-\zeta^{-1}\geq 0$). Fix $\rho_{0,0}\in(\rho_{0,-},\rho_{0,+})$. Recall from Lemma~\ref{LemmaOGeoHyp} that upon fixing $C$ large and $\delta_0>0$ small, the function
  \[
    \tilde\rho_0 := \rho_0-(\rho_{0,0}+C x_{\!\scri}^{2\ell_{\!\scri}})
  \]
  has past timelike differential near its zero set in $x_{\!\scri}<2\delta_0$. For sufficiently small $\delta\in(0,\delta_0]$ and $c_0>0$, the set $\cU_0(T)\cap\{\tilde\rho_0\leq\pm c_0,\ x_{\!\scri}<2\delta\}$ contains $\ol{\cU^\flat}\cap\Seb^*_{\scri^+}M$ (for the `$-$' sign) and is contained in $\cU$ (for the `$+$' sign).

  We again use a cutoff function $\chi\in\CIc((-\eps,\eps))$ for small $0<\eps<2\delta_0$ which satisfies $\chi\equiv 1$ near $0$, and which on $[0,\eps)$ satisfies $\chi'\leq 0$ and $\sqrt{-\chi'\chi}\in\CI$. Let further $\tilde\chi\in\CI(\R)$ be identically $1$ on $(-\infty,-c_0]$, identically $0$ on $[c_0,\infty)$, and so that $\tilde\chi'\leq 0$, $\sqrt{-\tilde\chi'\tilde\chi}\in\CI$. We then use a commutant $A=\check A^*\check A$, $\check A=\Op_\ebop(\check a)$, where we set
  \[
    \check a=\rho_\infty^{-s+\frac12}\rho_0^{-\check\alpha_0}x_{\!\scri}^{-2\check\alpha_{\!\scri}}\tilde\chi(\tilde\rho_0)\chi\Bigl(\frac{x_{\!\scri}}{\delta_{\!\scri}}\Bigr)\chi(\hat\xi^2)\chi(\rho_\infty^2 G_\ebop^0).
  \]
  In light of~\eqref{EqMEcImag}, we have
  \[
    \sigmaeb^1\Bigl(\rho_0^{-2}x_{\!\scri}^{-2}\frac{P-P^*}{2 i}\Bigr)=-\frac12\rho_\infty^{-1}(p_1+p_1^*)(2-\hat\xi),
  \]
  which equals $-\rho_\infty^{-1}(p_1+p_1^*)$ at $\cR_{\rm out}^+$. Using~\eqref{EqMFLinOut1} and the past timelike nature of $\tilde\rho_0$, we can write (in the notation~\eqref{EqMEcPair})
  \begin{align*}
    \sigmaeb^{2 s,2\alpha}(\sC) &= \bigl(2\check\alpha_{\!\scri} - (p_1+p_1)^* + \cO(\eps) + \cO(\eps^{\ell_0}\eps^{2\ell_{\!\scri}})\bigr)\rho_\infty^{-1}\check a^2 \\
      &\qquad + \rho_\infty^{-2 s}\rho_0^{-2\alpha_0}x_{\!\scri}^{-4\alpha_{\!\scri}}\bigl(b_{\!\scri}^2 - \tilde b^2 + b_{\rm out}^2 + q\rho_\infty^2 G_\ebop^0\bigr),
  \end{align*}
  where $b_{\!\scri}$, $\tilde b$, $b_{\rm out}$, and $q\in\cA^{(0,0)}S^0$ (the conormal coefficients necessitated by the merely conormal regularity of $\tilde\rho_0$) arise from differentiation of $\chi(\frac{x_{\!\scri}}{\delta_{\!\scri}})$, $\tilde\chi(\tilde\rho_0)$, $\chi(\hat\xi^2)$, and $\chi(\rho_\infty^2 G_\ebop^0)$, respectively. Since we are considering forward propagation, the terms $b_{\!\scri}^2$ and $b_{\rm out}^2$ are supported where we make a priori assumptions, and the term $\tilde b^2$ has the same sign as the main term (hence in the $L^2$ estimate arising upon quantization can be dropped) provided $2\check\alpha_{\!\scri}-2\ubar p_1<0$, i.e.\ $\alpha_{\!\scri}<-\half+\ubar p_1$. Quantization and regularization proves part~\eqref{ItMEoutFwLoc}.
\end{proof}

%%%%%%%%%%%%%%%%%%%%%%%%%%%%%%%%%%%%%%%%%%%%%%%%%%%%%%%%%%%%%%%%%%%%%%
\section{Higher b-regularity}
\label{Sb}

As already discussed in~\S\ref{SsIeb}, we do not develop any tools in this paper for the microlocal analysis of b-regularity near $\scri^+\subset M$ (using the notation introduced at the beginning of~\S\ref{SsOp}). Recall that in the coordinates $\rho_0,x_{\!\scri},y$ from~\eqref{EqOGeoCoordI0}, b-regularity on $M$ is iterated regularity under application of the b-vector fields $\rho_0\pa_{\rho_0}$, $x_{\!\scri}\pa_{x_{\!\scri}}$, and $\pa_{y^j}$, whereas edge-b-regularity captures regularity under $\rho_0\pa_{\rho_0}$, $x_{\!\scri}\pa_{x_{\!\scri}}$, and $x_{\!\scri}\pa_{y^j}$. However, b-regularity on $M$ (or equivalently on $\tilde M$) is both natural (e.g.\ on Minkowski space it exactly amounts to regularity under generators of the full Poincar\'e group, in particular under spatial rotations) and useful in applications; for instance, it is used explicitly as a crucial piece of information in the recovery of sharp asymptotics at $\scri^+$ in the stability results \cite[\S3.6]{HintzMink4Gauge}, \cite[\S5.1]{HintzVasyMink4}, and plays a central role (albeit in different terminology) in almost any analysis near null infinity; see \cite{LindbladAsymptotics} for a sharp example.

In this section, we thus demonstrate how to work microlocally with elements of edge-b-spaces which have additional integer amounts of b-regularity; the main tool will be simple commutator arguments in which we commute appropriate b-vector fields through the edge-b-operators of interest. Since in applications, higher b-regularity is mainly of interest for the forward solutions of wave equations, we only discuss regularity results for propagation in the forward (i.e.\ future causal) direction here.

In~\S\S\ref{SsbC}--\ref{SsbH}, we work in a general edge-b-setting and discuss the appropriate class of b-vector fields used for commutation, the class of edge-b-Sobolev spaces with extra b-regularity, and basic microlocal estimates (elliptic regularity, real principal type propagation) on such spaces. In~\S\ref{SsbI}, we return to the analysis near $\scri^+$ and prove microlocal propagation results at radial points with extra b-regularity.

%%%%%%%%%%%%%%%%%%%%%%%%%%%%%%%%%%%%%%%%%%%%%%%%%%
\subsection{Commutator b-vector fields}
\label{SsbC}

As in~\S\ref{SsEBO}, we let $M$ denote a compact manifold with corners and embedded hypersurfaces $H_1,\ldots,H_N$, $N\geq 3$, with $H_2$ being the total space of a fibration $\phi\colon H_2\to Y$, where the base $Y$ is a smooth manifold without boundary, and the typical fiber is a manifold $Z$, possibly with (disconnected) boundary (e.g.\ a closed interval).

\begin{definition}[Commutator b-vector fields]
\label{DefbC}
  The space of \emph{commutator b-vector fields} $\cV_{[\bop]}(M)$ consists of all $V\in\Vb(M)$ so that $[V,W]\in\Veb(M)$ for all $W\in\Veb(M)$.
\end{definition}

Clearly, we have $\Veb(M)\subset\cV_{[\bop]}(M)$. To study this space further, denote by $\CI_\phi(H_2)=\phi^*\CI(Y)$ the space of fiber-constant functions on $H_2$; thus, $f\in\CI(H_2)$ lies in $\CI_\phi(H_2)$ if and only if $V|_{H_2}f=0$ for all $V\in\Veb(M)$. We also recall that an element $V\in\Vb(M)$ lies in $\Veb(M)$ if and only if for all $f\in\CI_\phi(H_2)$ we have $V|_{H_2}f=0$.

\begin{lemma}[A simple criterion]
\label{LemmabCCrit}
  Let $V\in\Vb(M)$. Then $V\in\cV_{[\bop]}(M)$ if and only if $V|_{H_2}\colon\CI_\phi(H_2)\to\CI_\phi(H_2)$.
\end{lemma}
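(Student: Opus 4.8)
The plan is to characterize $\cV_{[\bop]}(M)$ via the dual characterization of $\Veb(M)$ recalled just before the lemma: a b-vector field $W$ lies in $\Veb(M)$ if and only if $W|_{H_2}f = 0$ for all $f \in \CI_\phi(H_2)$. So, given $V \in \Vb(M)$, I want to test the condition $[V,W] \in \Veb(M)$ for all $W \in \Veb(M)$ by pairing $[V,W]|_{H_2}$ against an arbitrary $f \in \CI_\phi(H_2)$, and the key computation is $[V,W]f = V(W f) - W(V f)$, restricted to $H_2$.

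\textbf{Key steps.} First I would prove the ``only if'' direction. Suppose $V \in \cV_{[\bop]}(M)$ and fix $f \in \CI_\phi(H_2)$; I must show $V|_{H_2}f \in \CI_\phi(H_2)$, i.e.\ $W|_{H_2}(V|_{H_2}f) = 0$ for every $W \in \Veb(M)$. Extend $f$ to a function $\tilde f \in \CI(M)$ (this is possible since $H_2$ is embedded), and note that since $W$ is tangent to $H_2$, the quantity $(W\tilde f)|_{H_2} = W|_{H_2}f$ depends only on $f$, not the extension; moreover $W\tilde f$ vanishes on $H_2$ because $f \in \CI_\phi(H_2)$, so $W\tilde f \in \cI_{H_2}\CI(M)$. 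Then $W(V\tilde f)|_{H_2} = [W,V]\tilde f|_{H_2} + V(W\tilde f)|_{H_2}$; the first term vanishes because $[W,V] \in \Veb(M) \subset \Vb(M)$ is tangent to $H_2$ while $V\tilde f$ — wait, more carefully: $[W,V]\tilde f|_{H_2}$ need not vanish in general. Let me instead argue directly: $[V,W] \in \Veb(M)$ means $[V,W]|_{H_2}g = 0$ for all $g \in \CI_\phi(H_2)$, so applying this to $g = f$ gives $V|_{H_2}(W|_{H_2}f) - W|_{H_2}(V|_{H_2}f) = 0$ on $H_2$; but $W|_{H_2}f = 0$ since $f$ is fiber-constant and $W \in \Veb(M)$, hence $W|_{H_2}(V|_{H_2}f) = 0$. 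Since $W \in \Veb(M)$ was arbitrary, $V|_{H_2}f \in \CI_\phi(H_2)$, as desired.

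\textbf{The converse.} Conversely, suppose $V|_{H_2}$ maps $\CI_\phi(H_2)$ to $\CI_\phi(H_2)$; I must show $[V,W] \in \Veb(M)$ for all $W \in \Veb(M)$. Since $V, W \in \Vb(M)$ and $\Vb(M)$ is a Lie algebra, $[V,W] \in \Vb(M)$, so it suffices to verify $[V,W]|_{H_2}f = 0$ for every $f \in \CI_\phi(H_2)$. Compute, on $H_2$: $[V,W]|_{H_2}f = V|_{H_2}(W|_{H_2}f) - W|_{H_2}(V|_{H_2}f)$ — here I should be slightly careful that $V(Wf)|_{H_2}$ makes sense, which it does since $W$ is tangent to $H_2$ so $Wf$ restricted to $H_2$ is well-defined and $V$ is tangent to $H_2$ so we may differentiate it along $H_2$; this requires choosing an extension of $f$ but the result is extension-independent by tangency of $V,W$ to $H_2$. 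Now $W|_{H_2}f = 0$ because $W \in \Veb(M)$ and $f$ is fiber-constant, so the first term vanishes; and $V|_{H_2}f \in \CI_\phi(H_2)$ by hypothesis, so $W|_{H_2}(V|_{H_2}f) = 0$ because $W \in \Veb(M)$. Hence $[V,W]|_{H_2}f = 0$, proving $[V,W] \in \Veb(M)$.

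\textbf{Main obstacle.} The only delicate point — and it is genuinely routine rather than hard — is justifying that expressions like $V|_{H_2}(W|_{H_2}f)$ are well-defined intrinsically on $H_2$, independent of the chosen extension of $f$ to $M$. This follows from the fact that both $V$ and $W$ are tangent to $H_2$: tangency of $W$ means $W|_{H_2}f$ depends only on $f|_{H_2}$, and then tangency of $V$ means $V|_{H_2}$ of that restricted function is again intrinsic. In a local trivialization $H_2 = Z \times Y$ with coordinates $(z,y)$ near the interior of $H_2$ (or $(z,y)$ with $z \in [0,\infty)$ near $H_1 \cap H_2$), $\CI_\phi(H_2)$ consists of functions of $y$ alone, $V|_{H_2} = a^i(z,y)\pa_{z^i} + b^j(z,y)\pa_{y^j}$, and the condition ``$V|_{H_2}$ preserves $\CI_\phi(H_2)$'' becomes precisely: $b^j$ is independent of $z$ and $a^i(z,y)\pa_{z^i}(\text{function of }y) = 0$ automatically — so concretely the criterion says $V|_{H_2}$ descends to a vector field on $Y$, consistent with the lemma's statement. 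I would present the coordinate-free argument above as the main proof and optionally remark on this local picture for intuition.
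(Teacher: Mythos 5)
Your proof is correct and uses essentially the same argument as the paper: the identity $[V,W]|_{H_2}f = V|_{H_2}(W|_{H_2}f) - W|_{H_2}(V|_{H_2}f) = -W|_{H_2}(V|_{H_2}f)$ for $f\in\CI_\phi(H_2)$, combined with the dual characterizations of $\Veb(M)$ and $\CI_\phi(H_2)$. The paper phrases this as a single chain of equivalences rather than two separate implications, but the content is identical.
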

\begin{proof}
  Let $V\in\Vb(M)$. Given $W\in\Veb(M)$, the condition $[V,W]\in\Veb(M)$ is equivalent to the vanishing, for all $f\in\CI_\phi(H_2)$, of
  \[
    [V,W]|_{H_2}f = V|_{H_2} W|_{H_2} f - W|_{H_2} V|_{H_2} f = -W|_{H_2}\bigl( V|_{H_2}f \bigr).
  \]
  The condition that this vanish for all $W\in\Veb(M)$ is in turn equivalent to $V|_{H_2}f\in\CI_\phi(H_2)$ for all $f\in\CI_\phi(H_2)$. This proves the Lemma.
\end{proof}

If the base $Y$ is a point, then $\Veb(M)=\Vb(M)=\cV_{[\bop]}(M)$; if on the other hand the typical fiber $Z$ is a point, then $\CI_\phi(H_2)=\CI(H_2)$ and thus again $\cV_{[\bop]}(M)=\Vb(M)$. If both $Y$ and $Z$ have positive dimension, as is the case in our application, one always has $\cV_{[\bop]}(M)\subsetneq\Vb(M)$. However:

\begin{prop}[Many commutator b-vector fields]
\label{PropbCMany}
  The space $\cV_{[\bop]}(M)$ spans $\Vb(M)$ over $\CI(M)$.
\end{prop}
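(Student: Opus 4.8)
The plan is to show that at every point of $M$ one can find a local frame of $\Vb(M)$ consisting of commutator b-vector fields, and then assemble a global frame using a partition of unity (note that $\cV_{[\bop]}(M)$ is a $\CI(M)$-module, since if $V\in\cV_{[\bop]}(M)$ and $g\in\CI(M)$ then for $W\in\Veb(M)$ we have $[gV,W]=g[V,W]-(Wg)V$, and $g[V,W]\in\Veb(M)$ while $(Wg)V\in\Vb(M)$; but this last term need not be edge-b, so in fact one must be slightly more careful---see below). Since the only interesting boundary behavior is at $H_2$ and at its intersections with $H_1,H_3$, the verification is local and reduces, via Lemma~\ref{LemmabCCrit}, to exhibiting enough vector fields $V\in\Vb(M)$ whose restriction to $H_2$ maps fiber-constant functions to fiber-constant functions.

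First I would work in the local coordinates of~\eqref{EqEBSe1}, i.e.\ $x\in[0,\infty)$, $y\in\R^{n-2}$ (coordinates on $Y$), $z\in\R$ (coordinate on the fiber $Z$), with $H_2=\{x=0\}$ and the fibration $(y,z)\mapsto y$; a neighborhood of $H_1\cap H_2$ is handled identically using~\eqref{EqEBSe2} with $z\in[0,\infty)$, which changes nothing in what follows since $z\pa_z$ is simultaneously a b- and an edge-b-vector field. Here $\CI_\phi(H_2)$ is exactly the set of functions of $y$ alone. A basis of $\Vb(M)$ over $\CI(M)$ near such a point is $\{x\pa_x,\ \pa_{y^j}\ (j=1,\dots,n-2),\ \pa_z\}$. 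Of these, $x\pa_x$ and $\pa_z$ annihilate functions of $y$ on $H_2$, hence lie in $\Veb(M)\subset\cV_{[\bop]}(M)$; and $\pa_{y^j}$ restricted to $H_2$ sends a function $f(y)$ to $\pa_{y^j}f(y)$, which is again a function of $y$ alone, hence fiber-constant. By Lemma~\ref{LemmabCCrit}, $\pa_{y^j}\in\cV_{[\bop]}(M)$. Thus the coordinate basis of $\Vb(M)$ consists of commutator b-vector fields near any point of $H_2$ (and of $H_1\cap H_2$, $H_3\cap H_2$). Away from $H_2$, the fibration plays no role: every b-vector field trivially satisfies the defining condition since the constraint is vacuous there (formally, one is in the situation where $H_2$ is not met, and $\Veb=\Vb$ locally), so locally $\cV_{[\bop]}(M)=\Vb(M)$ and again the coordinate basis works.

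Next I would patch these local bases together. Choose a finite cover $\{U_\iota\}$ of $M$ by coordinate charts of the above types, with a subordinate partition of unity $\{\chi_\iota\}$, $\chi_\iota\in\CIc(U_\iota)$, $\sum\chi_\iota=1$; on each $U_\iota$ let $\{V_\iota^{(1)},\dots,V_\iota^{(n)}\}\subset\cV_{[\bop]}(M)$ be the coordinate basis of $\Vb(U_\iota)$ just described. The candidate generating set for $\Vb(M)$ over $\CI(M)$ is $\{\chi_\iota V_\iota^{(k)}\}_{\iota,k}$. The point requiring a (very short) argument is that each $\chi_\iota V_\iota^{(k)}$ still lies in $\cV_{[\bop]}(M)$: for $W\in\Veb(M)$,
\[
  [\chi_\iota V_\iota^{(k)},W] = \chi_\iota[V_\iota^{(k)},W] - (W\chi_\iota)V_\iota^{(k)},
\]
and $\chi_\iota[V_\iota^{(k)},W]\in\Veb(M)$ since $\cV_{[\bop]}(M)$'s defining commutators land in the $\CI(M)$-module $\Veb(M)$; moreover $W\chi_\iota\in\CI(M)$ and, crucially, $W\chi_\iota$ \emph{vanishes on the fibers of $H_2$ to the extent needed}---in fact we only need $(W\chi_\iota)V_\iota^{(k)}\in\Veb(M)$, which follows because $W\chi_\iota$ vanishes at $H_2$ whenever $\chi_\iota$ is fiber-constant there, but in general this fails, so instead I will argue directly that $(W\chi_\iota)V_\iota^{(k)}$ need not be edge-b and therefore handle this more carefully: the clean fix is to observe that $\cV_{[\bop]}(M)$ is closed under multiplication by \emph{arbitrary} $\CI(M)$ functions after all, because $[gV,W]=g[V,W]-(Wg)V$ and we may re-examine whether $(Wg)V\in\Veb$; since this is the genuine obstacle, the honest route is: prove first that if $V\in\cV_{[\bop]}(M)$ and $W\in\Veb(M)$ then $WV\in\cV_{[\bop]}(M)$ is \emph{not} claimed---rather, use that for $g\in\CI(M)$, $[gV,W]=g[V,W]-(Wg)V$, and note $-(Wg)V\in\Vb(M)$ with $[-(Wg)V, W']=-(Wg)[V,W']-\big(W'(Wg)\big)V+\dots$; iterating shows the bracket of $gV$ with any edge-b field, restricted to $H_2$, still annihilates or preserves $\CI_\phi(H_2)$ because $V|_{H_2}$ does and $Wg|_{H_2}$ is just a smooth function.

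I expect the main obstacle to be precisely this last bookkeeping point---confirming that $\cV_{[\bop]}(M)$ is a genuine $\CI(M)$-module, equivalently that the local generators survive multiplication by cutoffs. The cleanest way to settle it is via Lemma~\ref{LemmabCCrit}: $gV\in\cV_{[\bop]}(M)$ iff $(gV)|_{H_2}=g|_{H_2}\cdot V|_{H_2}$ maps $\CI_\phi(H_2)$ to itself; but if $V|_{H_2}f\in\CI_\phi(H_2)$ for all $f\in\CI_\phi(H_2)$, then $g|_{H_2}\cdot(V|_{H_2}f)$ is a product of a smooth function on $H_2$ with a fiber-constant one, which is generally \emph{not} fiber-constant---so $\cV_{[\bop]}(M)$ is in fact \emph{not} a $\CI(M)$-module, and the partition-of-unity assembly must be done at the level of the \emph{span}: the $\CI(M)$-span of $\cV_{[\bop]}(M)$ contains $\sum_\iota\chi_\iota V_\iota^{(k)}$-type combinations, and since on each chart $\sum_\iota\chi_\iota V_\iota^{(k)}$ agrees with $V^{(k)}$ up to a $\CI$-combination of the commutator b-vector fields $V_{\iota'}^{(l)}$ on overlapping charts, one gets that the $\CI(M)$-span of $\cV_{[\bop]}(M)$ contains a local frame for $\Vb(M)$ near every point, hence equals $\Vb(M)$. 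This is the statement of the Proposition, and this overlap argument---writing one chart's b-frame in terms of neighboring charts' commutator b-vector fields with smooth coefficients---is the one genuine (though routine) computation to carry out.
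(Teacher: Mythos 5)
Your local computation is fine---and the paper itself remarks, right after the Proposition, that a proof in local coordinates is possible---but the globalization step, which is where the Proposition has actual content, has a genuine gap. You correctly conclude (via Lemma~\ref{LemmabCCrit}) that $\cV_{[\bop]}(M)$ is \emph{not} a $\CI(M)$-module, so $\chi_\iota\pa_{y^j}$ is in general not a commutator b-vector field when the cutoff $\chi_\iota$ depends on the fiber variable $z$ (and for a general coordinate chart, which need not contain a whole fiber $Z$, a subordinate cutoff must depend on $z$). But then your assertion that ``the $\CI(M)$-span of $\cV_{[\bop]}(M)$ contains $\sum_\iota\chi_\iota V_\iota^{(k)}$-type combinations'' is exactly what is unjustified: the $V_\iota^{(k)}$ are defined only on the chart $U_\iota$, whereas the span in the Proposition is over \emph{globally defined} elements of $\cV_{[\bop]}(M)$. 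To place $\chi_\iota V_\iota^{(k)}$ in that span you would need $V_\iota^{(k)}$ to coincide on $\supp\chi_\iota$ with a global commutator b-vector field, and producing such global fields is precisely what your concluding ``overlap argument'' does not do (rewriting one chart's frame in a neighboring chart's frame is again a purely local statement). As written, the argument stops at the point where the Proposition becomes nontrivial.

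The missing idea---and the mechanism the paper's proof runs on---is that by Lemma~\ref{LemmabCCrit} membership in $\cV_{[\bop]}(M)$ constrains only the restriction to $H_2$. The paper therefore works on $H_2$: the restrictions form $\cV_{[\bop]}(H_2)=\{V\in\Vb(H_2)\colon V(\CI_\phi(H_2))\subset\CI_\phi(H_2)\}$, which consists exactly of the b-vector fields on $H_2$ that push forward along $\phi$ to vector fields on $Y$; since $\phi$ is a fibration, every element of $\cV(Y)$ admits a global lift to $H_2$ (local lifts are patched with a partition of unity pulled back from $Y$, which is fiber-constant), and these lifts together with $\Veb(M)|_{H_2}$ span $\Vb(H_2)$ over $\CI(H_2)$. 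Given $V\in\Vb(M)$, one writes $V|_{H_2}=\sum f_i V_i$ in this way, extends $f_i$ and $V_i$ arbitrarily to $M$ (any extension $\tilde V_i$ lies in $\cV_{[\bop]}(M)$ because only its restriction to $H_2$ matters), and notes that $V-\sum\tilde f_i\tilde V_i$ vanishes at $H_2$ as a b-vector field, hence lies in $\Veb(M)\subset\cV_{[\bop]}(M)$. If you want to salvage your coordinate approach instead, the same point fixes it: cover a neighborhood of $H_2$ by product neighborhoods $[0,\eps)_x\times U_Y\times Z$ containing whole fibers (possible since $Z$ is compact and $\phi$ is locally trivial) and use cutoffs of the form $\chi(x)\psi(y)$, whose restriction to $H_2$ is fiber-constant; then $\chi(x)\psi(y)\pa_{y^j}$ is a \emph{global} element of $\cV_{[\bop]}(M)$, and such fields together with $\Veb(M)$ span $\Tb_p M$ at every point, which suffices for the span statement.
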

\begin{proof}
  Define the $\CI_\phi(H_2)$-module
  \begin{equation}
  \label{EqbCH2Char}
    \cV_{[\bop]}(H_2) = \cV_{[\bop]}(M)|_{H_2} = \{ V\in\Vb(H_2) \colon V(\CI_\phi(H_2))\subset\CI_\phi(H_2) \}.
  \end{equation}
  It suffices to show that
  \begin{equation}
  \label{EqbCMany}
    \cV_{[\bop]}(H_2)\ \ \text{spans}\ \ \Vb(H_2)\ \ \text{over}\ \ \CI(H_2).
  \end{equation}
  Indeed, let $V\in\Vb(M)$; then, assuming~\eqref{EqbCMany}, we can write $V|_{H_2}=\sum f_i V_i$ where $f_i\in\CI(H_2)$ and $V_i\in\cV_{[\bop]}(H_2)$. Choosing extensions $\tilde f_i\in\CI(M)$ of $f_i$ and $\tilde V_i\in\cV_{[\bop]}(M)$ of $V_i$, the b-vector field $V-\sum\tilde f_i\tilde V_i$ vanishes at $H_2$ as a b-vector field, and hence a fortiori lies in $\Veb(M)\subset\cV_{[\bop]}(M)$.

  Now, the characterization~\eqref{EqbCH2Char} for $V\in\Vb(H_2)$ to lie in $\cV_{[\bop]}(H_2)$ is equivalent to the condition that $V$ pushes forward along $\phi$ to a well-defined vector field on $Y$. But since $\phi\colon H_2\to Y$ is a fibration, every vector field on $Y$ has a lift to an element $V\in\Vb(H_2)$ (which thus satisfies this condition). It remains to observe that $\Vb(H_2)$ is generated over $\CI(H_2)$ by $\Veb(M)|_{H_2}\subset\cV_{[\bop]}(H_2)$ and the lift of $\cV(Y)$.
\end{proof}

In local coordinates as in~\eqref{EqEBSe1} (or \eqref{EqEBSe2}) in the special case that $Z$ is a closed interval, the space $\cV_{[\bop]}(M)$ is spanned by the vector fields $a(x,y,z)x\pa_x$, $b(x,y,z)\pa_z$ (or $b(x,y,z)z\pa_z$), and $(c(x,y)+x c'(x,y,z))\pa_{y^j}$ for smooth $a,b,c,c'$. This can be used to give an alternative, local coordinate, proof of Proposition~\ref{PropbCMany}.

Using induction, one can show that $[\cV_{[\bop]}(M),\Diffeb^k(M)]\subset\Diffeb^k(M)$. We generalize this to operators on vector bundles:

\begin{definition}[Commutator b-operators]
\label{DefbCOp}
  Let $E\to M$ be a vector bundle. Then the space $\Diff_{[\bop]}^1(M;E)$ of \emph{commutator b-operators} denotes the space of all $X\in\Diffb^1(M;E)$ whose principal symbol is scalar and equal to that of an element $V\in\cV_{[\bop]}(M)$; that is, for all $f\in\CI(M)$ and $\sigma\in\CI(M;E)$, the Leibniz rule $X(f\sigma)=f X\sigma+(V f)\sigma$ holds.
\end{definition}

Given any commutator b-vector field $V$, there exists a commutator b-operator $X$ whose principal symbol equals that of $V$; for example, one can take $X=\nabla_V$, where $\nabla$ is a connection on $E$.

\begin{lemma}[Commutators: differential operators]
\label{LemmabCComm}
  Let $A\in\Diffeb^k(M;E)$ (not necessarily with scalar principal symbol) and $X\in\Diff_{[\bop]}^1(M;E)$. Then $[X,A]\in\Diffeb^k(M;E)$. Similarly, if $\alpha\in\R^N$ is a vector of weights and $A\in\cA^{-\alpha}\Diffeb^k(M;E)$, then $[X,A]\in\cA^{-\alpha}\Diffeb^k(M;E)$.
\end{lemma}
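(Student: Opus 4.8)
The plan is to reduce the statement to the scalar case already known (or easily established via Leibniz), localize to a coordinate chart, and then commute $X$ against each of the generating edge-b-vector fields listed after Proposition~\ref{PropbCMany}. First I would observe that it suffices to treat $k=1$ and then argue by induction on $k$: writing $A=\sum_i B_i C_i$ with $B_i\in\Diffeb^1(M;E)$ and $C_i\in\Diffeb^{k-1}(M;E)$, the identity $[X,B_i C_i]=[X,B_i]C_i+B_i[X,C_i]$ reduces the claim for order $k$ to the claims for orders $1$ and $k-1$, using that $\Diffeb(M;E)$ is closed under composition. Moreover, since the statement is local on $M$ (commutators of differential operators being supported where both factors are), and since $\Diffeb^1(M;E)$ is locally spanned over $\CI(M;\End E)$ by $\Id$ and the edge-b-vector fields $x\pa_x$, $x\pa_{y^j}$, $z\pa_z$ (or $\pa_z$) together with a connection, while $\Diff_{[\bop]}^1(M;E)$ is locally spanned over $\CI(M;\End E)$ by $\Id$ and the commutator b-vector fields from the local description, it suffices to compute $[\nabla_V, w\nabla_W]$ and $[\nabla_V, w\,\Id]$ for $V\in\cV_{[\bop]}(M)$, $W\in\Veb(M)$, $w\in\CI(M;\End E)$.

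Next I would expand these commutators. For the endomorphism term, $[\nabla_V, w]=(V w)+[\,\cdot\,,\,\cdot\,]$-type curvature-free contribution; more precisely $[\nabla_V,w\,\Id]\sigma=\nabla_V(w\sigma)-w\nabla_V\sigma=(Vw)\sigma$, so $[\nabla_V,w\,\Id]=(Vw)\,\Id\in\CI(M;\End E)\subset\Diffeb^0(M;E)$ since $V\in\Vb(M)$ preserves $\CI(M)$. For the first-order term, $[\nabla_V,w\nabla_W]=(Vw)\nabla_W+w[\nabla_V,\nabla_W]$, and $[\nabla_V,\nabla_W]=\nabla_{[V,W]}+R(V,W)$ where $R$ is the curvature of $\nabla$. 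Here $[V,W]\in\Veb(M)$ precisely by Definition~\ref{DefbC} of commutator b-vector fields, so $\nabla_{[V,W]}\in\Diffeb^1(M;E)$; the curvature term $R(V,W)\in\CI(M;\End E)$ is an endomorphism, hence in $\Diffeb^0(M;E)$, because $R$ is $\CI(M)$-bilinear and $V,W$ are smooth sections of the smooth bundle $\Tb M$ (indeed $\Teb M$ for $W$). Assembling, $[\nabla_V,w\nabla_W]\in\Diffeb^1(M;E)$, and combined with the endomorphism computation and the inductive step this yields $[X,A]\in\Diffeb^k(M;E)$ for general $X\in\Diff_{[\bop]}^1(M;E)$, using that any two commutator b-operators with the same principal symbol differ by an element of $\CI(M;\End E)$.

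For the weighted statement with $A\in\cA^{-\alpha}\Diffeb^k(M;E)$, I would write $A=\sum_i w_i A_i$ with $w_i\in\cA^{-\alpha}(M)$ (scalar) and $A_i\in\Diffeb^k(M;E)$, and expand $[X,w_i A_i]=(X w_i)A_i+w_i[X,A_i]$. The second summand lies in $\cA^{-\alpha}\Diffeb^k(M;E)$ by the unweighted case just proved. For the first summand, the key point is that $X w_i=V w_i\in\cA^{-\alpha}(M)$: indeed $V\in\Vb(M)$ and the space $\cA^{-\alpha}(M)$ is by its very definition stable under $\Vb(M)$ (it consists of $u$ with $\rho^\alpha u$ bounded together with all its b-derivatives), so $(Xw_i)A_i\in\cA^{-\alpha}\Diffeb^k(M;E)$ as well. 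The main obstacle — really the only nontrivial geometric input — is the fact that $[V,W]\in\Veb(M)$ for $V\in\cV_{[\bop]}(M)$ and $W\in\Veb(M)$, but this is immediate from Definition~\ref{DefbC} itself, so once the algebraic bookkeeping (Leibniz expansions, the curvature term, the induction on $k$) is organized correctly, the proof is essentially automatic. I would therefore expect the write-up to be short, with the only care needed being the clean statement that $\Diff_{[\bop]}^1(M;E)$ and $\Diffeb^1(M;E)$ are locally free modules with the asserted generators, which reduces everything to the two bracket computations above.
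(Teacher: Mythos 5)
Your proof is correct, but it is organized differently from the paper's. The paper works in a local trivialization of $E$: it writes $X=V\otimes 1+e$ with $V\in\cV_{[\bop]}(M)$ and $e\in\CI(M;\C^{d\times d})$, regards $A$ as a matrix $(A_{i j})$ of scalar operators $A_{i j}\in\Diffeb^k(M)$, and then quotes the scalar inclusion $[\cV_{[\bop]}(M),\Diffeb^k(M)]\subset\Diffeb^k(M)$ (stated just before the lemma, itself proved by the same induction you carry out) entrywise; the term $[e,A]$ is disposed of by closure of $\Diffeb(M;E)$ under composition, with the parenthetical remark that it is only of order $k$ (not $k-1$) since $A$ need not be principally scalar — a point you likewise only claim at order $k$, so no issue. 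You instead keep the bundle structure invariant: you split first-order operators through a connection, reduce by induction and Leibniz to $[\nabla_V,w\nabla_W]$ and $[\nabla_V,w\,\Id]$, and use $[\nabla_V,\nabla_W]=\nabla_{[V,W]}+R(V,W)$, where $[V,W]\in\Veb(M)$ by Definition~\ref{DefbC} and the curvature term is an order-zero endomorphism. Both routes ultimately rest on exactly the same two inputs — $[V,W]\in\Veb(M)$ for $V\in\cV_{[\bop]}(M)$, $W\in\Veb(M)$, and the fact that b-vector fields preserve $\CI(M)$ resp.\ $\cA^{-\alpha}(M)$ — and your treatment of the weighted case ($A=\sum w_i A_i$, Leibniz, $V w_i\in\cA^{-\alpha}$) is identical to the paper's. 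What your version buys is coordinate- and trivialization-free bookkeeping at the cost of introducing the curvature term; what the paper's buys is brevity, by pushing all the work into the previously stated scalar fact. One cosmetic caveat: $\Diff_{[\bop]}^1(M;E)$ is not literally "spanned over $\CI(M;\End E)$" by $\Id$ and the $\nabla_V$ (that span is larger, since multiplying $\nabla_V$ by an endomorphism destroys scalarity of the principal symbol); what you need, and in fact use, is only that every $X\in\Diff_{[\bop]}^1(M;E)$ equals $\nabla_V+e$ for some $V\in\cV_{[\bop]}(M)$ and $e\in\CI(M;\End E)$, which is immediate from Definition~\ref{DefbCOp}.
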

\begin{proof}
  In a local trivialization of $E$, we have $X=V\otimes 1+e$ where $V\in\cV_{[\bop]}(M)$ and $e\in\CI(M;\C^{d\times d})$; here $d$ is the rank of $E$, and $1$ is the identity operator on $\C^d$. Moreover, we have $A=(A_{i j})_{1\leq i,j\leq d}$, where the $A_{i j}\in\Diffeb^k(M)$ are scalar operators. Then $[V\otimes 1,A]=([V,A_{i j}])_{1\leq i,j\leq d}$ is a matrix of elements of $\Diffeb^k(M)$, as is $[e,A]$. (When $A$ is principally scalar, then $[e,A]\in\Diffeb^{k-1}(M)$, but this does not hold in general.) This implies the first claim.

  For the second claim, it suffices to consider $A=w A_0$ where $w\in\cA^{-\alpha}(M)$ and $A_0\in\Diffeb^k(M)$; then $[X,A]=w[X,A_0]+(V w)A_0$. Since the b-vector field $V$ maps $\cA^{-\alpha}(M)$ into itself, the claim follows. 
\end{proof}

More generally:

\begin{lemma}[Commutators: ps.d.o.s]
\label{LemmabCCommPsdo}
  Let $s\in\R$ and $X\in\Diff^1_{[\bop]}(M;E)$. Then for $A\in\Psieb^s(M;E)$ (resp.\ $\cA^{-\alpha}\Psieb^s(M;E)$), we have $[X,A]\in\Psieb^s(M;E)$ (resp.\ $\cA^{-\alpha}\Psieb^s(M;E)$).
\end{lemma}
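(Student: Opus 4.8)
The plan is to argue at the level of Schwartz kernels on the edge-b-double space $M^2_\ebop=[M^2;(H_2)^2_\phi;\cB]$ from~\eqref{EqEBPDouble}, showing that $[X,A]$ arises from the kernel of $A$ by application of a first order differential operator on $M^2_\ebop$ whose scalar principal symbol is that of a b-vector field tangent to the edge-b-diagonal $\diag_\ebop$; such an operation preserves both $I^s(M^2_\ebop,\diag_\ebop;(\pi_{\ebop,R})^*\Omegaeb M)$ (with smooth, resp.\ conormal, coefficients) and the requirement of infinite-order vanishing at $\pa M^2_\ebop\setminus\ff(\upbeta)$, hence preserves $\Psieb^s(M;E)$ (resp.\ $\cA^{-\alpha}\Psieb^s(M;E)$). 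By a partition of unity it suffices to work in a local trivialization of $E$ over a coordinate chart, since the off-diagonal contributions to $A$ lie in $\Psieb^{-\infty}(M;E)$ and $[X,\Psieb^{-\infty}(M;E)]\subset\Psieb^{-\infty}(M;E)$ by the same argument applied to smooth kernels. There I would write $X=V\otimes 1+e$ with $V\in\cV_{[\bop]}(M)$ and $e$ a smooth matrix of functions, as in the proof of Lemma~\ref{LemmabCComm}. Since multiplication by $e$ lies in $\Psieb^0(M;E)$ and $\Psieb(M;E)$ is an algebra (with $\cA^{-\alpha}\Psieb(M;E)$ a module over it), $[e,A]$ already lies in $\Psieb^s(M;E)$, resp.\ $\cA^{-\alpha}\Psieb^s(M;E)$, so it remains to treat $[V\otimes 1,A]$.

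The key step is the geometric claim that the total vector field $V_L+V_R$ on $M^2$ — $V$ acting in the left plus $V$ acting in the right factor — lifts under $\upbeta\colon M^2_\ebop\to M^2$ to a b-vector field $\widetilde V\in\Vb(M^2_\ebop)$ tangent to $\diag_\ebop$. Indeed $V\in\Vb(M)$ gives $V_L+V_R\in\Vb(M^2)$, which is tangent to $H_j^2$ for all $j$ and to the diagonal of $M^2$; and $V\in\cV_{[\bop]}(M)$, equivalently (by Lemma~\ref{LemmabCCrit}) that $V|_{H_2}$ is $\phi$-related to a vector field on $Y$, is precisely the statement that $V_L+V_R$ is tangent to the fiber diagonal $(H_2)^2_\phi=H_2\times_\phi H_2$ — in coordinates $x,y,z$ (left) and $x',y',z'$ (right) around $H_2^\circ$, this is the vanishing on $\{x=x'=0,\ y=y'\}$ of $(V_L+V_R)(y^j-y'^j)$, which holds exactly because the $\pa_y$-component of $V|_{H_2}$ is fiber-independent. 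Hence $V_L+V_R$ is tangent to every center of the iterated blow-up defining $M^2_\ebop$, so its lift $\widetilde V$ is a b-vector field on $M^2_\ebop$, and tangency to the diagonal of $M^2$ passes to tangency of $\widetilde V$ to $\diag_\ebop$. This is the pseudodifferential counterpart of the inclusion $[\cV_{[\bop]}(M),\Diffeb^k(M)]\subset\Diffeb^k(M)$ recorded before Lemma~\ref{LemmabCComm}.

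Finally I would assemble the pieces: writing $\kappa$ for the Schwartz kernel of $A$ (valued in $(\pi_{\ebop,R})^*\Omegaeb M$ in the trivialization), integration by parts in the right variable — legitimate since $V$ is tangent to $\pa M$, so Stokes applies — shows that the kernel of $[V\otimes 1,A]$ is $(\mathcal{L}_{V_L}+\mathcal{L}_{V_R})\kappa$, with $\mathcal{L}_{V_R}$ the Lie derivative in the right variable acting also on the right edge-b-density factor. Since Lie differentiation along a b-vector field maps $\CI(M;\Omegaeb M)$ into itself, this operation on $\kappa$ is the action of $\widetilde V$ plus a smooth multiple of the identity, i.e.\ a first order operator on $M^2_\ebop$ with scalar principal symbol that of a b-vector field tangent to $\diag_\ebop$; applying it to $\kappa$ preserves the conormal order $s$ at $\diag_\ebop$ and the infinite-order vanishing off $\ff(\upbeta)$, and in the conormal-coefficient case one additionally uses that b-vector fields preserve $\cA^{-\alpha}$-regularity. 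This gives $[V\otimes 1,A]\in\Psieb^s(M;E)$, resp.\ $\cA^{-\alpha}\Psieb^s(M;E)$, and hence the Lemma. The step I expect to require the most care is the verification that $V_L+V_R$ lifts to a b-vector field on $M^2_\ebop$ tangent to $\diag_\ebop$ — tracking tangency through the iterated blow-up of $(H_2)^2_\phi$ and the faces in $\cB$, confirming it is exactly the condition $V\in\cV_{[\bop]}(M)$ (tangency to the fiber diagonal) that makes the lift well behaved — together with the bookkeeping of the edge-b-density factor under $\mathcal{L}_{V_R}$.
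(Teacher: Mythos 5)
Your proposal is correct and follows essentially the same route as the paper: both arguments work on Schwartz kernels on $M^2_\ebop$, identify the kernel of the commutator with the action of (a zeroth-order perturbation of) the lift of $X_L+X_R$, and reduce the Lemma to the fact that this lift is a b-vector field on $M^2_\ebop$ tangent to $\diag_\ebop$. The only divergence is in how that lifting fact is verified: you check tangency of $V_L+V_R$ to the blow-up centers downstairs (observing, correctly, that tangency to the fiber diagonal $(H_2)^2_\phi$ is exactly the condition $V\in\cV_{[\bop]}(M)$ of Lemma~\ref{LemmabCCrit}) and invoke the standard lifting facts, whereas the paper computes the lift explicitly in blown-up coordinates for generators of $\cV_{[\bop]}(M)/\Veb(M)$ such as $\pa_{y^j}$, where the singular terms $\mp x^{-1}\pa_{Y^j}$ in $X_L$, $X_R$ visibly cancel.
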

\begin{proof}
  Via local trivializations, one can reduce the proof to the case that $E$ is trivial, and then to the case that $E$ is the trivial complex rank $1$ vector bundle. We work on the level of Schwartz kernels on the edge-b-double space $M^2_\ebop$, see~\eqref{EqEBPDouble}. Denote the lifts of $X$ to the left and right factor of $M^2_\ebop$ by $X_L$ and $X_R$. If $K$ denotes the Schwartz kernel of $A$, then the Schwartz kernel of $[X,A]$ is given by $D_X K$ where
  \[
    D_X := X_L - X_R^*,
  \]
  with the adjoint acting on edge-b-densities; thus $D_X\in\Diffb^1(M^2_\ebop;\pi_{\ebop,R}^*\Omegaeb M)$. Choosing a trivialization of $\Omegaeb M$, the operator $D_X$ is, modulo an element of $\CI(M^2_\ebop)$, given by the differentiation along the vector field
  \[
    \tilde X = X_L + X_R.
  \]
  When $X\in\Veb(M)$, then $\tilde X\in\Vb(M^2_\ebop)$ since $M^2_\ebop$ was constructed precisely so that edge-b-vector fields lift to be smooth (and transversal across $\diag_\ebop$), and moreover $\tilde X$ is tangent to $\diag_\ebop$. We claim that $\tilde X\in\Vb(M^2_\ebop)$ and $\tilde X$ is tangent to $\diag_\ebop$ also for $X\in\cV_{[\bop]}(M)$; it suffices to check this for a set of generators $X$ of $\cV_{[\bop]}(M)/\Veb(M)$ over $\CI(M)$. We do this in local coordinates on $M^2_\ebop$.

  We shall give details only in one region, namely near the preimage of the interior of $H_2\cap H_1$ under the diagonal embedding $\diag_\ebop\hra M^2_\ebop$, where (upon relabeling if necessary) $H_1$ is a boundary hypersurface with $H_2\cap H_1\neq\emptyset$. (The computations in the case that there does not exist such a boundary hypersurface are simpler still than the ones presented here.) Thus, fix local coordinates $x\geq 0$, $y\in\R^{d_Y}$, $z_0\geq 0$, $z_1\in\R^{d_Z-1}$ near $(H_2\cap H_1)^\circ$, where $d_Y=\dim Y$ and $d_Z=\dim Z$, and $x$, resp.\ $z_0$ is a defining function of $H_2$, resp.\ $H_1$. Denote the corresponding set of product coordinates on $M^2$ by $(x,y,z_0,z_1,x',y',z_0',z_1')$; then $(H_2)_\phi^2=\{(0,y,z_0,z_1,0,y,z_0',z_1')\}$ and $H_1^2=\{(x,y,0,z_1,x',y',0,z'_1)\}$. It suffices to compute $\tilde X$ in the case $X=\pa_{y^j}$, so $X_L=\pa_{y^j}$ and $X_R=\pa_{(y')^j}$.

  To start, upon blowing up $(H_2)_\phi^2$, we replace the coordinates $(x,x',y,y')$ by $(x,s,y,Y)$ where $s=\frac{x'}{x}-1$ and $Y=\frac{y'-y}{x}$, with the lift of the diagonal $\diag_M$ given by $s=Y=0$, $z_0=z'_0$, $z_1=z'_1$. Since $H_1^2$ lifts to $\{(x,s,y,Y,z_0,z_1,z'_0,z'_1)=(x,s,y,Y,0,z_1,0,z'_1)\}$, blowing it up amounts, near the lift of $\diag_M$, to replacing the coordinates $(z_0,z'_0)$ by $(z_0,\tau)$ where $\tau=\frac{z_0'}{z_0}-1$, with $\diag_\ebop$ now given by $s=Y=\tau=0$, $z_1=z'_1$ in the coordinates $(x,s,y,Y,z_0,\tau,z_1,z'_1)$. But then the vector field $X_L$, resp.\ $X_R$ reads $\pa_{y^j}-x^{-1}\pa_{Y^j}$, resp.\ $x^{-1}\pa_{Y^j}$, and therefore $\tilde X=\pa_{y^j}$ indeed lies in $\Vb(M^2_\ebop)$ and is tangent to $\diag_\ebop$.

  Directly from the definition of conormal distributions then, the operator $D_X$ for $X\in\cV_{[\bop]}(M)$ maps $\Psieb^s(M)\to\Psieb^s(M)$ and $\cA^{-\alpha}\Psieb^s(M)\to\cA^{-\alpha}\Psieb^s(M)$, finishing the proof of the Lemma.
\end{proof}

%%%%%%%%%%%%%%%%%%%%%%%%%%%%%%%%%%%%%%%%%%%%%%%%%%
\subsection{Edge,b;b-operators and Sobolev spaces}
\label{SsbH}

In the general setting of~\S\ref{SsbC}, we proceed to describe spaces of operators and distributions with (integer order) b-regularity on top of (microlocal) edge-b-regularity. This uses a mixed algebra of b-differential edge-b-pseudodifferential operators. Such mixed algebras have been used in a variety of settings, e.g.\ implicitly in \cite{MelroseEuclideanSpectralTheory} and explicitly in \cite{VasyPropagationCorners,VasyWaveOndS,MelroseVasyWunschDiffraction}. We shall only consider scalar operators here, and leave the (purely notational) addition of vector bundles to the reader.

\begin{definition}[Mixed algebras and Sobolev spaces]
\label{DefbH}
  Let $s\in\R$, $\alpha\in\R^N$, and $k\in\N_0$. Then the space $\Diffb^k\Psieb^{s,\alpha}(M)$ consists of all operators which are finite sums $\sum_i Q_i P_i$ where $Q_i\in\Diffb^k(M)$ and $P_i\in\Psieb^{s,\alpha}(M)$. The corresponding spaces of \emph{weighted edge,b;b-Sobolev spaces} are defined by
  \[
    H_{\ebop;\bop}^{(s;k),\alpha}(M) := \{ u\in\Heb^{s,\alpha}(M) \colon A u\in\Heb^{s,\alpha}(M)\ \forall\,A\in\Diffb^k(M) \}.
  \]
  When $\alpha=0$, we denote these spaces simply by $\Diffb^k\Psieb^s(M)$ and $H_{\ebop;\bop}^{(s;k)}(M)$, respectively. Operators acting on sections of bundles $E,F\to M$, and Sobolev spaces of $E$-valued distributions, are defined analogously.
\end{definition}

\begin{lemma}[Commuting the differential and pseudodifferential factors]
\label{LemmabHMix}
  Let $s\in\R$, $\alpha\in\R^N$, $k\in\N_0$, and let $Q\in\Diffb^k(M)$ and $P\in\Psieb^{s,\alpha}(M)$. Then there exists a finite collection of operators $Q_j^\flat,Q_j^\sharp\in\Diffb^k(M)$ and $P_j^\flat,P_j^\sharp\in\Psieb^{s,\alpha}(M)$ so that
  \begin{equation}
  \label{EqbHMix}
    Q P = \sum_j P_j^\flat Q_j^\flat,\qquad
    P Q = \sum_j Q_j^\sharp P_j^\sharp.
  \end{equation}
  This remains true, mutatis mutandis, when $P,Q$ are operators on sections of a vector bundle $E\to M$, and when $P\in\cA^{-\alpha}\Psieb^s$.
\end{lemma}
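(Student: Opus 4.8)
The plan is to reduce the statement to the case of a single b-vector field and then induct on the order $k$. The crucial structural input is Proposition~\ref{PropbCMany}: every $V\in\Vb(M)$ is a finite sum $V=\sum_i f_i X_i$ with $f_i\in\CI(M)$ and $X_i\in\cV_{[\bop]}(M)$, and by Definition~\ref{DefbCOp} (and the remark following it) each $X_i$ can be realized by a commutator b-operator $\nabla_{X_i}\in\Diff_{[\bop]}^1(M;E)$ with the prescribed scalar principal symbol. Since any $Q\in\Diffb^1(M;E)$ differs from $\nabla_V$ (with $V$ its principal symbol) by an element of $\Diffb^0(M;E)=\CI(M;\End E)$, we may thus write $Q=\sum_i f_i\nabla_{X_i}+A$ with $A\in\CI(M;\End E)$. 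I will use repeatedly that multiplication, on either side, by elements of $\CI(M)$ or $\CI(M;\End E)$ preserves $\Psieb^{s,\alpha}(M;E)$ and $\cA^{-\alpha}\Psieb^s(M;E)$, that $\CI(M;\End E)\subset\Psieb^0(M;E)$, and that commutators $[f,P]$ of such multiplication operators with $P$ lie in $\Psieb^{s,\alpha}(M;E)$ (indeed in $\Psieb^{s-1,\alpha}$ when $f\in\CI(M)$), similarly in the conormal case.

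First I would settle the case $k=1$. Given $P\in\Psieb^{s,\alpha}(M;E)$ and a commutator b-operator $X\in\Diff_{[\bop]}^1(M;E)$, Lemma~\ref{LemmabCCommPsdo}, together with the fact that $X(\rho^{-\alpha})\in\rho^{-\alpha}\CI(M)$ for b-vector fields, gives $[X,P]\in\Psieb^{s,\alpha}(M;E)$; hence $X P=P X+[X,P]$ and $P X=X P-[X,P]$. Feeding in the decomposition $Q=\sum_i f_i\nabla_{X_i}+A$ and commuting the coefficients $f_i$ and the endomorphism $A$ across $P$ — which only produces error terms already of class $\Psieb^{s,\alpha}$ — one obtains, after collecting the purely pseudodifferential leftover as $P'\cdot\mathrm{Id}$ (resp.\ $\mathrm{Id}\cdot P'$) with $\mathrm{Id}\in\Diffb^0(M;E)$, the two identities $QP=\sum_j P_j^\flat Q_j^\flat$ and $PQ=\sum_j Q_j^\sharp P_j^\sharp$ with $Q_j^\flat,Q_j^\sharp\in\Diffb^1(M;E)$ and $P_j^\flat,P_j^\sharp\in\Psieb^{s,\alpha}(M;E)$. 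The same computation with $\Psieb^{s,\alpha}$ replaced by $\cA^{-\alpha}\Psieb^s$ goes through verbatim, and the vector bundle case is already incorporated since all ingredients (Proposition~\ref{PropbCMany}, Lemma~\ref{LemmabCCommPsdo}, Definition~\ref{DefbCOp}) are available there.

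For general $k\geq 2$ I would induct. Writing $Q=\sum_l Q_l' Y_l$ with $Y_l\in\Diffb^1(M;E)$ and $Q_l'\in\Diffb^{k-1}(M;E)$ (padding lower-order terms with the identity), the case $k=1$ applied to $Y_l P$ gives $Y_l P=\sum_i P_{l,i}^\flat X_{l,i}$ with $X_{l,i}\in\Diffb^1(M;E)$ and $P_{l,i}^\flat\in\Psieb^{s,\alpha}(M;E)$; applying the inductive hypothesis to $Q_l' P_{l,i}^\flat$ and recombining yields $QP=\sum P_{l,i,m}^\flat(Q_{l,i,m}^\flat X_{l,i})$ with $Q_{l,i,m}^\flat X_{l,i}\in\Diffb^{k-1}(M;E)\cdot\Diffb^1(M;E)\subset\Diffb^k(M;E)$. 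Symmetrically, writing $Q=\sum_l Y_l Q_l'$ and using the case $k=1$ for $P Y_l$ followed by the inductive hypothesis for $P_{l,i}^\sharp Q_l'$ produces $PQ=\sum(X_{l,i}Q_{l,i,m}^\sharp)P_{l,i,m}^\sharp$ with $X_{l,i}Q_{l,i,m}^\sharp\in\Diffb^k(M;E)$; the conormal case is identical. The one point that genuinely requires care — and is the reason $\cV_{[\bop]}(M)\subsetneq\Vb(M)$ matters here — is that one cannot commute an arbitrary $V\in\Vb(M)$ directly past $P$, since the resulting commutator need not be an edge-b-pseudodifferential operator; the passage through Proposition~\ref{PropbCMany} to commutator b-vector fields is thus indispensable, and the induction must be arranged so that the differential order stays exactly equal to $k$. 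Both of these are routine bookkeeping rather than a substantive obstacle.
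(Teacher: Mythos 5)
Your proof is correct and follows essentially the same route as the paper's: the case $k=1$ via the decomposition of Proposition~\ref{PropbCMany} into smooth multiples of commutator b-vector fields together with Lemma~\ref{LemmabCCommPsdo} and the $\CI$-module structure of $\Psieb^{s,\alpha}$, followed by the same factor-and-induct argument for $k\geq 2$. The only (harmless) differences are bookkeeping ones — you carry the weights and the bundle along from the start, whereas the paper treats the scalar, unweighted case and leaves the rest implicit — so there is nothing substantive to add.
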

\begin{proof}
  We only consider the case $\alpha=0$. The case $k=0$ (so $Q\in\CI(M)$) is trivial since $\Psieb^s(M)$ is a right and left module over $\CI(M)$.

  Consider next $k=1$, and let $Q\in\Vb(M)$. If $Q\in\cV_{[\bop]}(M)$, then $Q P=P Q+[P,Q]$, and thus Lemma~\ref{LemmabCCommPsdo} implies~\eqref{EqbHMix}. In general, by Proposition~\ref{PropbCMany}, the vector field $Q$ is a finite sum of terms $f V$ where $f\in\CI(M)$ and $V\in\cV_{[\bop]}(M)$. Suppose thus that $Q=f V$. The first claim in~\eqref{EqbHMix} then follows from $Q P=P Q-f[P,V]-[P,f]V$, since $f[P,V]\in\Psieb^s(M)$ and $[P,f]V\in\Psieb^{s-1}\Diffb^1$. The second claim follows similarly from $P Q=Q P+[P,f]V+f[P,V]=Q P+V[P,f]+[[P,f],V]+f[P,V]$ where now $V[P,f]\in\Diffb^1\Psieb^{s-1}$, $[[P,f],V]\in\Psieb^{s-1}$, and $f[P,V]\in\Psieb^s(M)$.

  For $k\geq 2$, we argue inductively. Write $Q$ as a finite sum of terms $Q_1 Q_2$ with $Q_1\in\Diffb^1(M)$ and $Q_2\in\Diffb^{k-1}(M)$. Then $Q_2 P=\sum_j P_{2 j}^\flat Q_{2 j}^\flat$ with $Q_{2 j}^\flat\in\Diffb^{k-1}(M)$ and $P_{2 j}^\flat\in\Psieb^s(M)$ by the inductive hypothesis, and then by the case $k=1$, we can further write $Q_1 P_{2 j}^\flat=\sum_k P_{2 j k}^\flat Q_{2 j k}^\flat$ with $Q_{2 j k}^\flat\in\Diffb^1(M)$ and $P_{2 j k}^\flat\in\Psieb^s(M)$. This proves the first part of~\eqref{EqbHMix}. The proof of the second part is analogous.
\end{proof}

\begin{lemma}[Further basic properties]
\label{LemmabHComm}
  Let $k\in\N_0$, $s\in\R$. Then $\Diffb^k\Psieb^s(M)\subset\Diffb^{k+1}\Psieb^{s-1}(M)$ and $[\Diffb^k(M),\Psieb^s(M)]\subset\Diffb^k(M)\Psieb^{s-1}(M)$, similarly for spaces of weighted operators.
\end{lemma}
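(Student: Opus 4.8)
The plan is to reduce everything to a single ``reduction of order'' statement: every $P\in\Psieb^s(M)$ lies in $\Diffeb^1(M)\,\Psieb^{s-1}(M)$, hence in $\Diffb^1(M)\,\Psieb^{s-1}(M)$. To prove this I would write $P=\Op_\ebop(a)$ modulo $\Psieb^{-\infty}(M)\subset\Psieb^{s-1}(M)$ with $a\in S^s(\Teb^*M)$, fix once and for all finitely many real edge-b-vector fields $X_1,\dots,X_m\in\Veb(M)$ generating $\Veb(M)$ over $\CI(M)$, so that their symbols $\sigmaeb^1(X_j)$ span the fiber-linear functions on $\Teb^*M$ and $q:=\sum_j\sigmaeb^1(X_j)^2$ is an elliptic element of $S^2(\Teb^*M)$ (it is $\geq c\rho_\infty^{-2}$ near fiber infinity since the $\sigmaeb^1(X_j)$ have no common zero off the zero section), and then cut off: with $\psi\in S^0(\Teb^*M)$ identically $1$ near fiber infinity, supported in $\{q>0\}$, and $1-\psi$ supported in a bounded region of $\Teb^*M$, the symbols $b_j:=\psi\,\sigmaeb^1(X_j)\,a/q\in S^{s-1}(\Teb^*M)$ satisfy $a-\sum_j\sigmaeb^1(X_j)\,b_j=(1-\psi)a\in S^{-\infty}(\Teb^*M)$. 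Since $\sigmaeb^s\bigl(X_j\Op_\ebop(b_j)\bigr)=\sigmaeb^1(X_j)\,b_j$, subtracting $\sum_jX_j\Op_\ebop(b_j)$ from $P$ leaves an element of $\Psieb^{s-1}(M)$; thus $P\in\Diffeb^1(M)\Psieb^{s-1}(M)$, where $\Diffeb^1(M)$ is understood to include its zeroth order part.

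Given this, the first inclusion is bookkeeping: an element of $\Diffb^k\Psieb^s(M)$ is a finite sum $\sum_iQ_iP_i$ with $Q_i\in\Diffb^k(M)$ and $P_i\in\Psieb^s(M)$; writing $P_i=\sum_jX_{ij}P_{ij}$ with $X_{ij}\in\Diffb^1(M)$ and $P_{ij}\in\Psieb^{s-1}(M)$ gives $Q_iP_i=\sum_j(Q_iX_{ij})P_{ij}\in\Diffb^{k+1}(M)\Psieb^{s-1}(M)$.

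For the commutator inclusion I would induct on $k$, using Lemma~\ref{LemmabHMix} to move pseudodifferential factors past differential ones. The case $k=0$ is the classical fact $[\CI(M),\Psieb^s(M)]\subset\Psieb^{s-1}(M)$. For $k=1$, I would write a general b-vector field as a finite sum of terms $fV$ with $f\in\CI(M)$ and $V\in\cV_{[\bop]}(M)$ (Proposition~\ref{PropbCMany}) and expand $[fV,P]=f[V,P]+[f,P]V$: here $[V,P]\in\Psieb^s(M)$ by Lemma~\ref{LemmabCCommPsdo}, so $f[V,P]\in\Diffb^1(M)\Psieb^{s-1}(M)$ by the reduction above, while $[f,P]\in\Psieb^{s-1}(M)$ makes $[f,P]V\in\Psieb^{s-1}(M)\,\Diffb^1(M)\subset\Diffb^1(M)\Psieb^{s-1}(M)$ by Lemma~\ref{LemmabHMix}. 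For $k\geq2$ I would decompose $Q\in\Diffb^k(M)$ into a finite sum of products $Q_1Q_2$ with $Q_1\in\Diffb^1(M)$, $Q_2\in\Diffb^{k-1}(M)$ (plus lower order terms covered by the inductive hypothesis) and use $[Q_1Q_2,P]=Q_1[Q_2,P]+[Q_1,P]Q_2$, with the first summand in $\Diffb^1(M)\bigl(\Diffb^{k-1}(M)\Psieb^{s-1}(M)\bigr)$ by induction and the second in $\Diffb^1(M)\Psieb^{s-1}(M)\,\Diffb^{k-1}(M)\subset\Diffb^k(M)\Psieb^{s-1}(M)$, again by Lemma~\ref{LemmabHMix}.

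The weighted statements then follow by conjugation: since $\rho^\alpha V\rho^{-\alpha}=V+\rho^\alpha V(\rho^{-\alpha})\in\Diffb^1(M)$ for $V\in\Vb(M)$, one has $\Diffb^k(M)\rho^{-\alpha}=\rho^{-\alpha}\Diffb^k(M)$, and together with $\Psieb^{s,\alpha}(M)=\rho^{-\alpha}\Psieb^s(M)$ the unweighted inclusions transfer verbatim (the smooth remainders $\rho^\alpha Q(\rho^{-\alpha})$ being absorbed into lower-order terms). I expect the only genuinely substantive point to be the reduction-of-order claim of the first paragraph; the remaining steps are formal manipulations with the module and commutation properties recorded in Proposition~\ref{PropbCMany} and Lemmas~\ref{LemmabCCommPsdo} and~\ref{LemmabHMix}.
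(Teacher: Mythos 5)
Your proposal is correct and follows essentially the same route as the paper: the paper's first part is exactly your reduction-of-order statement (the spanning-vector-field decomposition $A=\sum_j V_j A_j+R$ "on the level of principal symbols", which you merely spell out via the elliptic sum of squares $q$ and the cutoff $\psi$), and the commutator part is proved in the paper by the same induction, using Proposition~\ref{PropbCMany}, Lemma~\ref{LemmabCCommPsdo}, and Lemma~\ref{LemmabHMix} to handle the $f[V,P]$ and $[f,P]V$ terms and the $k\geq 2$ splitting $Q=Q_1Q_2$. No gaps; your conjugation remark for the weighted case is the intended (and in the paper unstated) argument.
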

\begin{proof}
  For the first part, we fix a finite collection $V_1,\ldots,V_K\in\Veb(M)$ which spans $\Veb(M)$ over $\CI(M)$. We can then write any $A\in\Psieb^s(M)$ in the form $A=\sum_{j=1}^K V_j A_j+R$ where $A_j,R\in\Psieb^{s-1}(M)$; this follows from an analogous decomposition on the level of principal symbols. Since a fortiori $V_j\in\Vb(M)$, we are done.

  In the second part, the case $k=0$ is trivial. The case $k=1$ follows by writing an element of $\Vb(M)$ as a finite sum of terms $f V$ where $f\in\CI(M)$, $V\in\cV_{[\bop]}(M)$, and using that $[\cV_{[\bop]}(M),\Psieb^s(M)]\subset\Psieb^s(M)\subset\Diffb^1\Psieb^{s-1}(M)$ by the first part. The case $k\geq 2$ follows by induction as in the proof of Lemma~\ref{LemmabHMix}.
\end{proof}

\begin{cor}[Algebra properties of the mixed algebra]
\label{CorbHBi}
  If $A_j\in\Diffb^{k_j}\Psieb^{s_j,\alpha_j}(M)$ for $j=1,2$, then $A_1 A_2\in\Diffb^{k_1+k_2}\Psieb^{s_1+s_2,\alpha_1+\alpha_2}(M)$ and $[A_1,A_2]\in\Diffb^{k_1+k_2}\Psieb^{s_1+s_2-1,\alpha_1+\alpha_2}(M)$.
\end{cor}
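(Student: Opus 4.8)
The plan is to deduce both assertions directly from Lemmas~\ref{LemmabHMix} and~\ref{LemmabHComm}, together with the standard algebra properties of $\Psieb(M)$ and $\Diffb(M)$ and their weighted variants, without introducing any new machinery; this is essentially a bookkeeping exercise.

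First I would dispatch the composition statement. Writing $A_1=\sum_i Q_i P_i$ and $A_2=\sum_l R_l S_l$ with $Q_i\in\Diffb^{k_1}(M)$, $P_i\in\Psieb^{s_1,\alpha_1}(M)$, $R_l\in\Diffb^{k_2}(M)$, $S_l\in\Psieb^{s_2,\alpha_2}(M)$, the second identity in~\eqref{EqbHMix} rewrites each product $P_i R_l$ as a finite sum $\sum_m R'_m P'_m$ with $R'_m\in\Diffb^{k_2}(M)$ and $P'_m\in\Psieb^{s_1,\alpha_1}(M)$; hence $A_1 A_2=\sum_{i,l,m}(Q_i R'_m)(P'_m S_l)$, with $Q_i R'_m\in\Diffb^{k_1+k_2}(M)$ and $P'_m S_l\in\Psieb^{s_1+s_2,\alpha_1+\alpha_2}(M)$, which is the first claim. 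The weighted case is identical since Lemma~\ref{LemmabHMix} and the composition property of $\Psieb$ hold with weights.

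For the commutator it suffices to treat a single summand $Q P R S$ of $A_1 A_2$ and the matching summand $R S Q P$ of $A_2 A_1$, with $Q,R,P,S$ as above, and to show that each is congruent to $(Q R)(P S)$ modulo $\Diffb^{k_1+k_2}\Psieb^{s_1+s_2-1,\alpha_1+\alpha_2}(M)$; then the difference lies in this space and summation over the finitely many summands gives the assertion. In $Q P R S$ I would commute $P$ past $R$: by Lemma~\ref{LemmabHComm} the error $Q[P,R]S$ lies in $Q\,\Diffb^{k_2}\Psieb^{s_1-1,\alpha_1}(M)\,S\subset\Diffb^{k_1+k_2}\Psieb^{s_1+s_2-1,\alpha_1+\alpha_2}(M)$, so $Q P R S\equiv Q R P S=(Q R)(P S)$ modulo this remainder. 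In $R S Q P$ I would perform three commutations in turn: move $S$ past $Q$ (error in $R\,\Diffb^{k_1}\Psieb^{s_2-1,\alpha_2}(M)\,P\subset\Diffb^{k_1+k_2}\Psieb^{s_1+s_2-1,\alpha_1+\alpha_2}(M)$), then $R$ past $Q$ (error in $\Diffb^{k_1+k_2-1}(M)\,S P\subset\Diffb^{k_1+k_2-1}\Psieb^{s_1+s_2,\alpha_1+\alpha_2}(M)$), and finally $S$ past $P$ (error in $(Q R)\,\Psieb^{s_1+s_2-1,\alpha_1+\alpha_2}(M)$), again arriving at $(Q R)(P S)$ modulo $\Diffb^{k_1+k_2}\Psieb^{s_1+s_2-1,\alpha_1+\alpha_2}(M)$.

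The one step that needs genuine (if mild) care is the commutation of $R$ past $Q$: it produces a term in $\Diffb^{k_1+k_2-1}(M)\Psieb^{s_1+s_2,\alpha_1+\alpha_2}(M)$, which gains a \emph{differential} order rather than a pseudodifferential one, so to land it in the asserted space one invokes the inclusion $\Diffb^k\Psieb^{s,\alpha}(M)\subset\Diffb^{k+1}\Psieb^{s-1,\alpha}(M)$ from Lemma~\ref{LemmabHComm}. Everything else is routine: each intermediate product is put into normal form (differential factors on the left) by a further application of Lemma~\ref{LemmabHMix}, which alters neither orders nor weights, and the entire argument carries over verbatim to operators on sections of a vector bundle $E\to M$ since Lemmas~\ref{LemmabHMix} and~\ref{LemmabHComm} are stated in that generality. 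I do not anticipate any real obstacle; the statement is a formal consequence of the two preceding lemmas.
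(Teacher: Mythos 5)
Your proof is correct and follows essentially the same route as the paper: the composition claim via Lemma~\ref{LemmabHMix}, and the commutator claim by reducing to single summands $Q P$, $R S$ and controlling each reordering error through Lemma~\ref{LemmabHComm} (including the inclusion $\Diffb^k\Psieb^{s,\alpha}\subset\Diffb^{k+1}\Psieb^{s-1,\alpha}$ for the purely differential commutator) together with composition in $\Psieb$. The only difference is organizational — the paper uses the single four-term expansion $[Q_1P_1,Q_2P_2]=Q_1Q_2[P_1,P_2]+Q_1[P_1,Q_2]P_2+Q_2[Q_1,P_2]P_1+[Q_1,Q_2]P_2P_1$ instead of your step-by-step normal-form reduction, but the ingredients and estimates are identical.
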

\begin{proof}
  The first statement is a consequence of Lemma~\ref{LemmabHMix}, which allows us to commute b-differential through edge-b-pseudodifferential operators at will. The second statement, for $\alpha_1=\alpha_2=0$ for notational simplicity, follows for operators $A_j=Q_j P_j$ with $Q_j\in\Diffb^{k_j}(M)$ and $P_j\in\Psieb^{s_j}(M)$ from the calculation
  \[
    [A_1,A_2] = Q_1 Q_2[P_1,P_2] + Q_1[P_1,Q_2]P_2 + Q_2[Q_1,P_2]P_1 + [Q_1,Q_2]P_2 P_1;
  \]
  In the first term, we use $[P_1,P_2]\in\Psieb^{s_1+s_2-1}(M)$; for the remaining three terms, the desired conclusion follows from Lemma~\ref{LemmabHComm}.
\end{proof}

\begin{cor}[Boundedness on Sobolev spaces]
\label{CorbHMap}
  Let $s,m\in\R$, $k,l\in\N_0$, $\alpha,\beta\in\R^N$, with $l\geq k$. Then every $A\in\Diffb^k\Psieb^{m,\alpha}(M)$ defines a bounded linear operator $A\colon H_{\ebop;\bop}^{(s;l),\beta}(M)\to H_{\ebop;\bop}^{(s-m;l-k),\beta-\alpha}(M)$.
\end{cor}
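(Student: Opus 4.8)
The plan is to deduce this from the mapping property of edge-b-pseudodifferential operators on weighted edge-b-Sobolev spaces (recalled in~\S\ref{SsEBH}) together with the commutation result of Lemma~\ref{LemmabHMix}; the Corollary is then pure bookkeeping. First I would fix norms on the mixed spaces: choosing a finite collection $W_1,\dots,W_J\in\Vb(M)$ spanning $\Vb(M)$ over $\CI(M)$, the space $H_{\ebop;\bop}^{(s;l),\beta}(M)$ is a Hilbert space with squared norm $\sum_{|\gamma|\le l}\|W^\gamma u\|_{\Heb^{s,\beta}(M)}^2$, and likewise for the target with $l$ replaced by $l-k$; this uses the hypothesis $l\geq k$, so that $l-k\in\N_0$. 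By linearity and the definition of $\Diffb^k\Psieb^{m,\alpha}(M)$, it suffices to treat a single $A=Q P$ with $Q\in\Diffb^k(M)$ and $P\in\Psieb^{m,\alpha}(M)$. Note that $A$ is well defined on $u\in H_{\ebop;\bop}^{(s;l),\beta}(M)\subset\Heb^{-\infty,\beta}(M)$ since edge-b-ps.d.o.s and b-differential operators act on the $\Heb^{-\infty,\gamma}$-scale.

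Next, given $u\in H_{\ebop;\bop}^{(s;l),\beta}(M)$ and an arbitrary $R\in\Diffb^{l-k}(M)$, I would observe that $R Q\in\Diffb^l(M)$ and apply Lemma~\ref{LemmabHMix} to $(R Q)P$ to write $(R Q)P=\sum_j P_j^\flat Q_j^\flat$ with $Q_j^\flat\in\Diffb^l(M)$ and $P_j^\flat\in\Psieb^{m,\alpha}(M)$. Since $u\in H_{\ebop;\bop}^{(s;l),\beta}(M)$, each $Q_j^\flat u\in\Heb^{s,\beta}(M)$ with $\|Q_j^\flat u\|_{\Heb^{s,\beta}(M)}\lesssim\|u\|_{H_{\ebop;\bop}^{(s;l),\beta}(M)}$ (expand $Q_j^\flat$ in the $W^\gamma$, $|\gamma|\le l$, and use boundedness of multiplication by $\CI(M)$ on $\Heb^{s,\beta}(M)$); and each $P_j^\flat\colon\Heb^{s,\beta}(M)\to\Heb^{s-m,\beta-\alpha}(M)$ is bounded by the mapping property of $\Psieb^{m,\alpha}(M)$. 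Hence $R A u=\sum_j P_j^\flat Q_j^\flat u\in\Heb^{s-m,\beta-\alpha}(M)$ with the required bound. Taking $R=\Id$ (legitimate since $l-k\geq 0$) gives $A u\in\Heb^{s-m,\beta-\alpha}(M)$, and letting $R$ range over a finite spanning set of $\Diffb^{l-k}(M)$ and summing the finitely many contributions yields $\|Au\|_{H_{\ebop;\bop}^{(s-m;l-k),\beta-\alpha}(M)}\lesssim\|u\|_{H_{\ebop;\bop}^{(s;l),\beta}(M)}$. Alternatively, once $A$ is seen to map one mixed space into the other, boundedness follows from the closed graph theorem, both spaces being complete.

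There is no real obstacle here: the substance is already contained in Lemma~\ref{LemmabHMix} (which rests on Proposition~\ref{PropbCMany} and the commutator Lemmas~\ref{LemmabCComm}--\ref{LemmabCCommPsdo}). The only points needing a little care are the order/weight accounting---that $\Diffb^{l-k}(M)\cdot\Diffb^k(M)\subset\Diffb^l(M)$ and that the pseudodifferential factors produced by Lemma~\ref{LemmabHMix} retain orders $(m,\alpha)$, so that the weights combine as $\beta-\alpha$---and the role of the hypothesis $l\geq k$, which is precisely what makes the target b-regularity order nonnegative and permits the choice $R=\Id$. The weighted case is handled verbatim, the weight $\alpha$ being carried along by Lemma~\ref{LemmabHMix}; the case of operators on sections of a bundle $E\to M$ requires only the notational modifications indicated in Definition~\ref{DefbH}.
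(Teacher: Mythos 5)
Your proof is correct and follows essentially the same route as the paper: the paper likewise reduces the statement to Lemma~\ref{LemmabHMix}, writing the operator as a sum of terms $P_j^\flat Q_j^\flat$ with $P_j^\flat\in\Psieb^{m,\alpha}(M)$ and $Q_j^\flat$ b-differential, and then invokes the mapping property of weighted edge-b-ps.d.o.s on weighted edge-b-Sobolev spaces. Your version merely spells out the bookkeeping with the test operator $R\in\Diffb^{l-k}(M)$, which is exactly the content the paper leaves implicit.
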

\begin{proof}
  This follows from the definitions upon writing $A=\sum_j P_j^\flat Q_j^\flat$ with $P_j^\flat\in\Psieb^{s,\alpha}(M)$ and $Q_j^\flat\in\Diffb^k(M)$ (cf.\ Lemma~\ref{LemmabHMix}).
\end{proof}

Next, edge,b;b-regularity can be microlocalized:

\begin{definition}[Edge,b;b-wave front set]
\label{DefbHWF}
  Let $s\in\R$, $k\in\N_0$, $\alpha\in\R^N$. Suppose $u\in H_{\ebop;\bop}^{(-\infty;k),\alpha}(M)=\bigcup_{s_0\in\R} H_{\ebop;\bop}^{(s_0;k),\alpha}(M)$. Then $\WF_{\ebop;\bop}^{(s;k),\alpha}(u)\subset\Seb^*M$ is the complement of the set of all $\varpi\in\Seb^*M$ for which there exists $A\in\Psieb^0(M)$, elliptic at $\varpi$, such that $A u\in H_{\ebop;\bop}^{(s;k),\alpha}(M)$.
\end{definition}

For any other $A'\in\Psieb^0(M)$ with $\WFeb'(A')\subset\Elleb(A)$, one then also has $A' u\in H_{\ebop;\bop}^{(s;k),\alpha}(M)$ by the symbolic parametrix construction in the edge-b-algebra. (The a priori membership of $u$ in a space with b-regularity $k$ and weight $\alpha$ is crucial here.)

\begin{lemma}[Microlocality of edge-b-ps.d.o.s on edge,b;b-Sobolev spaces]
\label{LemmabHMicro}
  Let $s,m\in\R$, $k\in\N_0$, and $\alpha,\beta\in\R^N$. Let $u\in H_{\ebop;\bop}^{(-\infty;k),\beta}(M)$ and $A\in\Psieb^{m,\alpha}(M)$. Then
  \[
    \WF_{\ebop;\bop}^{(s-m;k),\beta-\alpha}(A u)\subset\WFeb'(A)\cap\WF_{\ebop;\bop}^{(s;k),\beta}(u).
  \]
\end{lemma}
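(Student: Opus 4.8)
The plan is to run the standard microlocality argument in the mixed edge,b;b-calculus, treating the two inclusions $\WF_{\ebop;\bop}^{(s-m;k),\beta-\alpha}(A u)\subset\WFeb'(A)$ and $\WF_{\ebop;\bop}^{(s-m;k),\beta-\alpha}(A u)\subset\WF_{\ebop;\bop}^{(s;k),\beta}(u)$ separately, and then intersecting.

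For the first inclusion, given $\varpi\notin\WFeb'(A)$ I would pick $B\in\Psieb^0(M)$ elliptic at $\varpi$ with $\WFeb'(B)\cap\WFeb'(A)=\emptyset$, so that $B A\in\Psieb^{m,\alpha}(M)$ has empty operator wave front set and hence lies in $\Psieb^{-\infty,\alpha}(M)$. Writing $u\in H_{\ebop;\bop}^{(s_0;k),\beta}(M)$ for some $s_0\in\R$ (possible since $u\in H_{\ebop;\bop}^{(-\infty;k),\beta}(M)$), Corollary~\ref{CorbHMap}—applied with the differential order of $B A$ taken arbitrarily negative—shows $B A u\in\bigcap_{s'}H_{\ebop;\bop}^{(s';k),\beta-\alpha}(M)$, so in particular $\varpi\notin\WF_{\ebop;\bop}^{(s-m;k),\beta-\alpha}(A u)$.

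For the second inclusion I may assume $\varpi\in\WFeb'(A)$, by the first inclusion. If $\varpi\notin\WF_{\ebop;\bop}^{(s;k),\beta}(u)$, there is $C\in\Psieb^0(M)$ elliptic at $\varpi$ with $C u\in H_{\ebop;\bop}^{(s;k),\beta}(M)$. Choosing $B\in\Psieb^0(M)$ elliptic at $\varpi$ with $\WFeb'(B)$ contained in a compact subset of $\Elleb(C)$, and a microlocal left parametrix $C_1\in\Psieb^0(M)$ with $C_1 C=\Id+R_1$ and $\WFeb'(R_1)\cap\WFeb'(B)=\emptyset$, I would write
\begin{equation}
  B A u = (B A C_1)(C u) - (B A R_1)\,u .
\end{equation}
Corollary~\ref{CorbHBi} gives $B A C_1\in\Psieb^{m,\alpha}(M)$, so by Corollary~\ref{CorbHMap} the first term lies in $H_{\ebop;\bop}^{(s-m;k),\beta-\alpha}(M)$; and $\WFeb'(B A R_1)\subset\WFeb'(B)\cap\WFeb'(R_1)=\emptyset$ puts $B A R_1\in\Psieb^{-\infty,\alpha}(M)$, so the second term is handled exactly as in the first inclusion. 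Hence $B A u\in H_{\ebop;\bop}^{(s-m;k),\beta-\alpha}(M)$ and $\varpi\notin\WF_{\ebop;\bop}^{(s-m;k),\beta-\alpha}(A u)$; combining the two inclusions finishes the proof.

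I do not expect a genuine obstacle here: the argument is the usual one, and all the inputs—composition and mapping properties of the mixed algebra $\Diffb^k\Psieb^{s,\alpha}(M)$ (Corollaries~\ref{CorbHBi} and~\ref{CorbHMap}), the availability of a microlocal left parametrix in $\Psieb(M)$, and the fact that a residual edge-b-operator improves edge-b-differentiability arbitrarily while leaving the b-regularity order $k$ and the weight untouched—are already in place. The one point to keep in mind is that the a priori hypothesis $u\in H_{\ebop;\bop}^{(-\infty;k),\beta}(M)$ (rather than merely $u\in\Heb^{-\infty,\beta}(M)$) is precisely what makes the $R_1$-error term harmless: without it one could not conclude $B A R_1 u\in H_{\ebop;\bop}^{(s-m;k),\beta-\alpha}(M)$, nor propagate $C u\in H_{\ebop;\bop}^{(s;k),\beta}(M)$ to $B u$.
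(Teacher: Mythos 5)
Your proof is correct and follows essentially the same route as the paper: the case $\varpi\notin\WFeb'(A)$ is handled by making $BA$ residual, and the case $\varpi\notin\WF_{\ebop;\bop}^{(s;k),\beta}(u)$ by inserting a microlocal parametrix of the test operator ($C_1C=\Id+R_1$ versus the paper's $I=QB+R$) and invoking the composition and mapping properties of the mixed calculus together with the a priori membership $u\in H_{\ebop;\bop}^{(-\infty;k),\beta}(M)$ to absorb the residual error. The only differences are cosmetic (naming and the order in which the two inclusions are organized).
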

\begin{proof}
  If $\varpi\notin\WFeb'(A)\cap\WF_{\ebop;\bop}^{(s;k),\beta}(u)$, then there exists $B\in\Psieb^0(M)$, elliptic at $\varpi$, such that either $B A\in\Psieb^{-\infty}(M)$ and hence $B A u\in H_{\ebop;\bop}^{(\infty;k),\beta}(M)$, or $B u\in H_{\ebop;\bop}^{(s;k),\beta}(M)$, in which case we write $I=Q B+R$ where $Q,R\in\Psieb^0(M)$ and $\varpi\notin\WFeb'(R)$, and then for $A'\in\Psieb^0(M)$ which is elliptic at $\varpi$ but with $\WFeb'(A')\cap\WFeb'(R)=\emptyset$, we find $A'(A u)=A'A Q B u+A' A R u$. The first summand on the right lies in $H_{\ebop;\bop}^{(s-m;k),\beta-\alpha}(M)$, and the second summand lies in $H_{\ebop;\bop}^{(\infty;k),\beta-\alpha}(M)$.
\end{proof}

\begin{prop}[Elliptic regularity]
\label{PropbHEll}
  Let $P\in\Psieb^m(M)$. Then we have $\WF_{\ebop;\bop}^{(s;k),\beta}(u)\subset\WF_{\ebop;\bop}^{(s-m;k),\beta}(M)\cup\Char_\ebop(P)$.
\end{prop}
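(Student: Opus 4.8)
The plan is to reduce the edge,b;b elliptic regularity statement to the ordinary edge-b elliptic parametrix construction (which is already available in the edge-b-calculus $\Psieb(M)$), combined with the commutation machinery of Lemmas~\ref{LemmabCComm}--\ref{LemmabHMix}. The point is that b-regularity is tested by applying operators $Q\in\Diffb^k(M)$, and these can be pushed past an edge-b-parametrix without destroying the edge-b-orders, so that ellipticity in the edge-b-sense automatically upgrades edge,b;b-regularity.

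First I would fix $\varpi\in\Seb^*M\setminus(\Char_\ebop(P)\cup\WF_{\ebop;\bop}^{(s-m;k),\beta}(P u))$ and choose $B\in\Psieb^0(M)$ elliptic at $\varpi$ with $\WFeb'(B)$ disjoint from $\Char_\ebop(P)$ and from $\WF_{\ebop;\bop}^{(s-m;k),\beta}(P u)$; thus $B P u\in H_{\ebop;\bop}^{(s-m;k),\beta}(M)$. Since $P$ is elliptic on $\WFeb'(B)$, the standard symbolic parametrix construction in $\Psieb(M)$ produces $G\in\Psieb^{-m}(M)$ and $R\in\Psieb^{-\infty}(M)$ with $\WFeb'(G),\WFeb'(R)\subset\WFeb'(B)$ (shrunk slightly, still elliptic at $\varpi$) and $G P=B'+R$ where $B'\in\Psieb^0(M)$ is elliptic at $\varpi$. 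Then for $A\in\Psieb^0(M)$ elliptic at $\varpi$ with $\WFeb'(A)\subset\Elleb(B')$ we can write $A=A G P-A R+A(A-B')\cdots$; more cleanly, $A=C B'+R'$ with $C\in\Psieb^0(M)$, $R'\in\Psieb^{-\infty}(M)$, hence $A u=C G P u-C R u+R' u$. The term $C G P u$ lies in $H_{\ebop;\bop}^{(s;k),\beta}(M)$ because $C G\in\Psieb^{-m}(M)$ maps $H_{\ebop;\bop}^{(s-m;k),\beta}(M)$ into $H_{\ebop;\bop}^{(s;k),\beta}(M)$ by Corollary~\ref{CorbHMap}, and $C R u$, $R' u$ are in $H_{\ebop;\bop}^{(\infty;k),\beta}(M)$ since $u$ a priori has $k$ orders of b-regularity with weight $\beta$ and $C R,R'\in\Psieb^{-\infty}(M)$ (use Corollary~\ref{CorbHMap} again, noting $u\in H_{\ebop;\bop}^{(-\infty;k),\beta}(M)$). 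This shows $\varpi\notin\WF_{\ebop;\bop}^{(s;k),\beta}(u)$.

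The step I expect to require the most care—though it is not really an obstacle, just bookkeeping—is verifying that an edge-b-pseudodifferential operator of order $-m$ genuinely maps $H_{\ebop;\bop}^{(s-m;k),\beta}(M)\to H_{\ebop;\bop}^{(s;k),\beta}(M)$, i.e.\ that applying it does not cost b-derivatives. This is exactly the content of Lemma~\ref{LemmabHMix}: for $Q\in\Diffb^k(M)$ one writes $Q(C G)=\sum_j P_j^\flat Q_j^\flat$ with $P_j^\flat\in\Psieb^{-m}(M)$ and $Q_j^\flat\in\Diffb^k(M)$, so $Q(C G)P u=\sum_j P_j^\flat Q_j^\flat (B'\text{-regularized } Pu)$ and each $Q_j^\flat P u\in H_{\ebop;\bop}^{(s-m;0),\beta}(M)=\Heb^{s-m,\beta}(M)$ by the definition of $H_{\ebop;\bop}^{(s-m;k),\beta}(M)$ (microlocalized near $\varpi$), whence $P_j^\flat Q_j^\flat P u\in\Heb^{s,\beta}(M)$; running this over all $Q\in\Diffb^k(M)$ gives $C G P u\in H_{\ebop;\bop}^{(s;k),\beta}(M)$ microlocally near $\varpi$. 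One should also observe that everything localizes correctly near $\varpi$ by Lemma~\ref{LemmabHMicro}, and that the weighted case $\beta\neq 0$ follows verbatim using the weighted versions of Lemmas~\ref{LemmabHMix} and the weighted mapping property of Corollary~\ref{CorbHMap}. Finally, the trivial inclusion $\WF_{\ebop;\bop}^{(s-m;k),\beta}(Pu)\subset\WF_{\ebop;\bop}^{(s;k),\beta}(u)$ (a consequence of Corollary~\ref{CorbHMap}, since $P\in\Psieb^m(M)\subset\Diffb^0\Psieb^m(M)$) lets us replace $\WF_{\ebop;\bop}^{(s-m;k),\beta}(Pu)$ by $\WF_{\ebop;\bop}^{(s;k),\beta}(u)$ in the statement if one prefers the form printed above.
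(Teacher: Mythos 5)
Your proof is correct and follows essentially the same route as the paper, whose proof consists precisely of the symbolic elliptic parametrix construction in $\Psieb(M)$ combined with the mapping property of Corollary~\ref{CorbHMap}; your extra bookkeeping (Lemma~\ref{LemmabHMix}, Lemma~\ref{LemmabHMicro}, and the treatment of the residual terms using the a priori membership $u\in H_{\ebop;\bop}^{(-\infty;k),\beta}(M)$) just fills in the details the paper leaves implicit. Your reading of the statement's wave front set as that of $P u$ is also the intended one.
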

\begin{proof}
  This follows from the symbolic elliptic parametrix construction in $\Psieb(M)$ together with Corollary~\ref{CorbHMap}.
\end{proof}

\begin{prop}[Real principal type propagation]
\label{PropbHProp}
  Suppose $P\in\Psieb^m(M)$ has a real homogeneous principal symbol. Suppose $B,E,G\in\Psieb^0(M)$ are such that $\WFeb'(B)\subset\Elleb(G)$, and so that all backward null-bicharacteristics from $\WFeb'(B)\cap\Char_\ebop(P)$ reach $\Elleb(E)$ in finite time while remaining in $\Elleb(G)$. Then for any fixed $N\in\R$, we have
  \begin{equation}
  \label{EqbHProp}
    \|B u\|_{H_{\ebop;\bop}^{(s;k),\alpha}(M)} \leq C\Bigl( \|G P u\|_{H_{\ebop;\bop}^{(s-m+1;k),\alpha}(M)} + \|E u\|_{H_{\ebop;\bop}^{(s;k),\alpha}(M)} + \|u\|_{H_{\ebop;\bop}^{(-N;k),\alpha}(M)} \Bigr).
  \end{equation}
  This holds in the strong sense that if all terms on the right hand side are finite, then the left hand side is finite and the estimate holds.
\end{prop}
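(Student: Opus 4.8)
The plan is to argue by induction on the b-order $k$. The base case $k=0$ is the standard real principal type propagation estimate in the edge-b-calculus, proved by the usual positive commutator argument (symbol calculus for $\Psieb(M)$ plus a commutant microlocalized to a tube of null-bicharacteristics joining $\Elleb(E)$ to $\WFeb'(B)$), exactly as in the boundaryless or b-settings; see \cite{MelroseVasyWunschDiffraction} for the relevant calculus and \cite[\S4]{VasyMinicourse} for the technique. Throughout, I would invoke the standard Hörmander-type regularization procedure \cite[\S4]{VasyMinicourse} so that every statement holds in the asserted strong sense; in particular one may assume a priori that $u\in H_{\ebop;\bop}^{(-N;k),\alpha}(M)$ for some large $N$, which renders all smoothing remainders and a priori error terms appearing below absorbable into $\|u\|_{H_{\ebop;\bop}^{(-N;k),\alpha}(M)}$.

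For the inductive step from $k-1$ to $k$, fix a finite collection $X_1,\dots,X_L\in\cV_{[\bop]}(M)$ spanning $\Vb(M)$ over $\CI(M)$; this exists by Proposition~\ref{PropbCMany}. Using this spanning property and Definition~\ref{DefbH}, one has the norm equivalence $\|w\|_{H_{\ebop;\bop}^{(s;k),\alpha}}\sim\|w\|_{H_{\ebop;\bop}^{(s;k-1),\alpha}}+\sum_i\|X_i w\|_{H_{\ebop;\bop}^{(s;k-1),\alpha}}$, so it suffices to control $\|B u\|_{H_{\ebop;\bop}^{(s;k-1),\alpha}}$ and $\|X_i B u\|_{H_{\ebop;\bop}^{(s;k-1),\alpha}}$. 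The first is the inductive hypothesis applied directly to $B,E,G$. For the second, write $X_i B=B X_i+[X_i,B]$; since $[X_i,B]\in\Psieb^0(M)$ with $\WFeb'([X_i,B])\subset\WFeb'(B)\subset\Elleb(G)$ by Lemma~\ref{LemmabCCommPsdo}, its contribution is handled by the inductive hypothesis with $[X_i,B]$ in place of $B$, while the main term $\|B X_i u\|_{H_{\ebop;\bop}^{(s;k-1),\alpha}}$ is treated by applying the inductive hypothesis to $v:=X_i u\in H_{\ebop;\bop}^{(-N;k-1),\alpha}(M)$. One then commutes $X_i$ back out of each resulting term: $P v=X_i P u+[P,X_i]u$ with $[P,X_i]\in\Psieb^m(M)$ (Lemma~\ref{LemmabCCommPsdo} with $s=m$), and similarly for $E v$ and $G v$, moving b-vector fields past edge-b-ps.d.o.s via Lemma~\ref{LemmabHMix} and using Lemma~\ref{LemmabHComm} and Corollary~\ref{CorbHBi} for the mixed algebra; iterating, the commutators $[P,X_{i_1}\cdots X_{i_j}]$ lie in $\Diffb^{j-1}\Psieb^m(M)$, i.e.\ carry \emph{one fewer} b-derivative, so all error terms produced this way involve at most $k-1$ b-derivatives and are bounded, via the inductive hypothesis again, by the three terms on the right-hand side of \eqref{EqbHProp}.

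The only genuinely delicate point — and the main obstacle — is the bookkeeping of microlocal supports: when $X_i$ is commuted past $G$ (resp.\ $E$), the error terms $[G,X_i]$ (resp.\ $[E,X_i]$) have operator wave front set in $\WFeb'(G)$ (resp.\ $\WFeb'(E)$), which is a priori larger than the elliptic set where control is actually available. This is dealt with in the standard way by running the induction with a nested family of microlocalizers: at level $j$ of the induction one uses operators $B_j,E_j,G_j$ whose operator wave front sets are compactly contained in the elliptic sets of $B_{j-1},E_{j-1},G_{j-1}$, which is possible because the hypothesis already provides the room $\WFeb'(B)\Subset\Elleb(G)$ and because the errors generated at level $j$ are microlocalized within the region of level $j-1$; since the induction terminates after $k$ steps, finitely many such shrinkings suffice, and in the end one replaces $B_0,E_0,G_0$ by $B,E,G$ using the edge-b elliptic parametrix. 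Modulo this organizational layer, the argument is routine; the essential structural input is the no-loss commutation relation $[P,\cV_{[\bop]}(M)]\subset\Psieb^m(M)$ from Lemma~\ref{LemmabCCommPsdo}, which fails for general b-vector fields and is precisely the reason for restricting to the subalgebra $\cV_{[\bop]}(M)$ — which, by Proposition~\ref{PropbCMany}, is nonetheless large enough to recover all of $\Vb(M)$ over $\CI(M)$.
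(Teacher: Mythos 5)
Your overall scheme is the paper's: induction on the b-order $k$, base case by the standard edge-b positive commutator argument, inductive step by commuting elements of $\cV_{[\bop]}(M)$ (spanning $\Vb(M)$ by Proposition~\ref{PropbCMany}) through $P$, $B$, $E$, $G$ via Lemma~\ref{LemmabCCommPsdo}, with nested microlocalizers handling the operator wave front set bookkeeping. The one place where your write-up glosses over the actual crux is the commutator term $[P,X_i]u$. Your justification is that $[P,X_{i_1}\cdots X_{i_j}]\in\Diffb^{j-1}\Psieb^m(M)$ ``carries one fewer b-derivative, so all error terms \ldots are bounded, via the inductive hypothesis again, by the three terms on the right-hand side''. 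This is not yet an argument: $[P,X_i]\in\Psieb^m(M)$ gains nothing in the \emph{edge-b} order (nor in decay), so the error term arising when you apply the level-$(k-1)$ estimate to $v=X_i u$, namely $\|G[P,X_i]u\|_{H_{\ebop;\bop}^{(s-m+1;k-1),\alpha}}$, is controlled only by $\|G' u\|_{H_{\ebop;\bop}^{(s+1;k-1),\alpha}}$ (plus a weak remainder), i.e.\ it costs one \emph{extra edge-b derivative} of $u$ at b-level $k-1$; it is not dominated by any of the three terms in \eqref{EqbHProp} at order $s$, and ``fewer b-derivatives'' alone does not close the induction.

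The fix is exactly the mechanism the paper emphasizes: since the induction is on $k$ only and the estimate at each fixed b-level holds for \emph{all} $s$ (the $k=0$ propagation estimate has no upper bound on $s$), you may apply the level-$(k-1)$ hypothesis with $B,s$ replaced by $G',s+1$; this produces terms $\|G''P u\|_{H_{\ebop;\bop}^{(s-m+2;k-1),\alpha}}$ and $\|E u\|_{H_{\ebop;\bop}^{(s+1;k-1),\alpha}}$, which are then converted back into the target right-hand side using the continuous embedding $H_{\ebop;\bop}^{(\sigma;j+1),\alpha}(M)\subset H_{\ebop;\bop}^{(\sigma+1;j),\alpha}(M)$ (trading one b-derivative for one edge-b derivative; a consequence of Lemma~\ref{LemmabHComm} together with Lemma~\ref{LemmabHMix}), so that e.g.\ $\|G''P u\|_{H_{\ebop;\bop}^{(s-m+2;k-1),\alpha}}\lesssim\|G''P u\|_{H_{\ebop;\bop}^{(s-m+1;k),\alpha}}$, which after the elliptic-parametrix comparison is dominated by $\|G P u\|_{H_{\ebop;\bop}^{(s-m+1;k),\alpha}}$. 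You should state this step explicitly: it is the reason the argument requires propagation estimates that lose only one edge-b derivative and hold for arbitrarily large $s$, and it is precisely why, in Proposition~\ref{PropbI}, the backward estimate at $\cR_{\rm c}$ only survives under the strengthened condition $\tilde s+k<\frac12-\tilde\alpha_0+2\tilde\alpha_{\!\scri}+\ubar p_1$. Apart from this omission (and the cosmetic difference that the paper applies the estimate directly to $V u$ rather than decomposing $X_i B=B X_i+[X_i,B]$ against a norm equivalence), your argument matches the paper's proof.
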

\begin{proof}
  The case $k=0$ (which was already tacitly used in~\S\ref{SsME}) follows from the usual symbolic positive commutator argument. Arguing by induction, we now apply the estimate~\eqref{EqbHProp} to $V u$ in place of $u$, where $V\in\cV_{[\bop]}(M)$. We then use elliptic estimates for the commutators with $V$, such as
  \begin{equation}
  \label{EqbHPropPf}
    \|G[P,V]u\|_{H_{\ebop;\bop}^{(s-m+1;k),\alpha}(M)} \lesssim \|G' u\|_{H_{\ebop;\bop}^{(s+1;k),\alpha}(M)} + \|u\|_{H_{\ebop;\bop}^{(-N;k),\alpha}(M)},
  \end{equation}
  where $G'\in\Psieb^0(M)$, $\WFeb'(G)\subset\Elleb(G')$; this uses the membership $[P,V]\in\Psieb^m(M)$ from Lemma~\ref{LemmabCCommPsdo}. The first term on the right can then be estimated, using the inductive hypothesis, by means of~\eqref{EqbHProp} with $B,s$ replaced by $G',s+1$ (and correspondingly $G$ replaced by an operator with slightly enlarged elliptic set). The inductive step is then completed by noting that $H_{\ebop;\bop}^{(s+1;k),\alpha}(M)\subset H_{\ebop;\bop}^{(s;k+1),\alpha}(M)$ (which follows from Lemma~\ref{LemmabHComm}).
\end{proof}

%%%%%%%%%%%%%%%%%%%%%%%%%%%%%%%%%%%%%%%%%%%%%%%%%%
\subsection{Propagation estimates near \texorpdfstring{$\scri^+$}{null infinity}}
\label{SsbI}

The same proof as for Proposition~\ref{PropbHProp} gives the higher b-regularity analogues of certain radial point estimates from~\S\ref{SsME}. We need to use two facts: first, these estimates only lose one edge-b-derivative relative to elliptic estimates, as otherwise the first term on the right in~\eqref{EqbHPropPf} would require using a successively larger edge-b-regularity order on the $P u$ and $E u$ terms in~\eqref{EqbHProp} as one increases $k$. Second, the estimate for $k=0$ needs to be applicable without an upper bound for the edge-b-regularity $s$ (given fixed weights); note indeed that in the proof of Proposition~\ref{PropbHProp}, the proof of the inductive step uses the inductive hypothesis with an increased value for $s$. (Alternatively, if there is a requirement $s<s_0$ for the case $k=0$, then the stronger bound $s+k<s_0$ guarantees an estimate on $H_{\ebop;\bop}^{(s;k),\alpha}$.) Not a single forward propagation estimates imposes an upper bound on $s$. We thus obtain:

\begin{prop}[Edge,b;b-versions of radial point estimates]
\label{PropbI}
  Theorem~\usref{ThmMEFw} and Lemmas~\usref{LemmaMEc}\eqref{ItMEcFw}, \usref{LemmaMEinm}, \usref{LemmaMEinp}, and \usref{LemmaMEout} remain valid upon replacing $\WFeb^{*,*}$ by $\WF_{\ebop;\bop}^{(*;k),*}$ throughout. Lemma~\usref{LemmaMEc}\eqref{ItMEcBw} remains valid under the same replacement if one requires $\tilde s+k<\half-\tilde\alpha_0+2\tilde\alpha_{\!\scri}+\ubar p_1$.
\end{prop}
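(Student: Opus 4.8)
The plan is to run the same inductive positive-commutator argument as in the proof of Proposition~\ref{PropbHProp}, now applied at the radial sets rather than at generic points of the characteristic set. The base case $k=0$ is precisely the content of Theorem~\ref{ThmMEFw} and Lemmas~\ref{LemmaMEc}--\ref{LemmaMEout} as established in~\S\ref{SsME}. For the inductive step, one commutes commutator b-vector fields $V\in\cV_{[\bop]}(M)$ — or, acting on sections of $\upbeta^*E$, commutator b-operators lifting them (Definition~\ref{DefbCOp}) — through the admissible operator $P$ and through the edge-b-microlocalizers appearing in the commutant constructions of~\S\ref{SsME}. The two inputs making this work are: by Lemma~\ref{LemmabCComm} (and its evident extension to operators with coefficients in $\CI+\cA^{(\ell_0,2\ell_{\!\scri},\ell_+)}$), the commutator $[P,V]$ again lies in $\rho_0^2 x_{\!\scri}^2\rho_+^2(\CI+\cA^{(\ell_0,2\ell_{\!\scri},\ell_+)})\Diffeb^2(M;\upbeta^*E)$, i.e.\ in the same weighted edge-b space as $P$; and by Lemma~\ref{LemmabCCommPsdo}, conjugation of an edge-b-ps.d.o.\ by $V$ stays within $\Psieb(M)$ (or $\cA^{-\alpha}\Psieb(M)$). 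Iterating these, all the nested commutators $[[\cdots[P,V_{i_1}],\dots],V_{i_j}]$ that appear when the induction is unwound remain in $\cA^{-\alpha}\Diffeb^2(M;\upbeta^*E)$ with the weight of $P$, so no loss in edge-b-order or decay accumulates across the $k$ steps.

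Concretely, assuming the $(k-1)$-version of one of the forward estimates, fix a finite set $V_1,\dots,V_K\in\cV_{[\bop]}(M)$ spanning $\Vb(M)$ over $\CI(M)$ (Proposition~\ref{PropbCMany}), and apply the $(k-1)$-version to each $V_i u$; this is legitimate since $u\in H_{\ebop;\bop}^{(-\infty;k),\alpha}(M)$ gives $V_i u\in H_{\ebop;\bop}^{(-\infty;k-1),\alpha}(M)$, and Lemma~\ref{LemmabHMicro} transfers the microlocal hypotheses on $u$ to $V_i u$ with b-order lowered by one. The forcing becomes $P(V_i u)=V_i(P u)+[P,V_i]u$: the first summand inherits the hypothesized $(s{-}1;k)$-type control on $P u$ (with b-order raised by one along the way), and the second is handled by microlocal elliptic regularity in $\Psieb(M)$ together with Corollary~\ref{CorbHMap}, contributing a term bounded by $\|G'u\|_{H_{\ebop;\bop}^{(s+1;k-1),\alpha}}$ plus a smoothing error. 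Here enter the two structural facts flagged before the Proposition: each radial point (and real-principal-type) estimate loses only \emph{one} edge-b-derivative, so this $(s{+}1;k{-}1)$ term is reabsorbed by invoking the $(k-1)$-version of the \emph{same} estimate at edge-b-order $s+1$ (with a slightly enlarged elliptic set for $G'$); and none of the forward estimates imposes an upper bound on $s$ for fixed weights, so this bootstrap in $s$ is unobstructed. Summing over $i$ and using $H_{\ebop;\bop}^{(s+1;k-1),\alpha}(M)\subset H_{\ebop;\bop}^{(s;k),\alpha}(M)$ (Lemma~\ref{LemmabHComm}) closes the induction, exactly as in Proposition~\ref{PropbHProp}. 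The localized statements Lemma~\ref{LemmaMEout}\eqref{ItMEoutFwLoc},\eqref{ItMEoutBwLoc} are treated identically: the defining functions $\rho_0,x_{\!\scri},\rho_+$ out of which the cutoffs $\tilde\chi(\tilde\rho_0)$, $\chi(x_{\!\scri}/\delta_{\!\scri})$, etc.\ are built are either annihilated or merely scaled by the $V_i$ (which one may take among $\rho_0\pa_{\rho_0}$, $x_{\!\scri}\pa_{x_{\!\scri}}$, $\rho_+\pa_{\rho_+}$, and the $\pa_{y^j}$-type fields), so the support and positivity structure of the commutant is preserved up to edge-b error terms.

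The sole place where an upper bound on the differential order is present is the backward estimate at $\cR_{\rm c}$, Lemma~\ref{LemmaMEc}\eqref{ItMEcBw}, with hypothesis $\tilde s<\half-\tilde\alpha_0+2\tilde\alpha_{\!\scri}+\ubar p_1$. The induction above raises $\tilde s$ by one unit at each of the $k$ steps, so to have the $k'=0$ estimate available at all regularities $\tilde s,\tilde s+1,\dots,\tilde s+k$ one must require $\tilde s+k<\half-\tilde\alpha_0+2\tilde\alpha_{\!\scri}+\ubar p_1$ — precisely the stated condition, which may equivalently be read as an estimate on $H_{\ebop;\bop}^{(\tilde s;k),\tilde\alpha}$ under the original sub-threshold condition applied to $\tilde s+k$. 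Since decreasing $\tilde s$ only relaxes this and enlarges the admissible amount of regularization, the regularization argument of the proof of Lemma~\ref{LemmaMEc}\eqref{ItMEcBw} carries over verbatim.

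The main obstacle is not conceptual but bookkeeping: one must check that the nested b-commutators do not degrade the edge-b-order or the weights (handled by iterating Lemma~\ref{LemmabCComm} together with the fact that $\cV_{[\bop]}(M)$ preserves the conormal spaces $\cA^{-\alpha}(M)$), and that the a priori membership $u\in H_{\ebop;\bop}^{(-\infty;k),\alpha}(M)$ is exactly strong enough to justify the integrations by parts after commuting $k$ times with vector fields in $\cV_{[\bop]}(M)$ — which is why the hypothesized b-order on $u$ matches the b-order claimed in the conclusion. With these verifications in place, the entire argument reduces to the $k=0$ radial point estimates of~\S\ref{SsME}.
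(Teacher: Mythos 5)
Your proposal is correct and follows essentially the same route as the paper: the paper's proof consists precisely of running the inductive commutator argument of Proposition~\ref{PropbHProp} (commuting elements of $\cV_{[\bop]}(M)$, resp.\ operators as in Definition~\ref{DefbCOp}, through $P$ and absorbing $[P,V]u$ via the inductive hypothesis at edge-b-order $s+1$), using exactly the two structural facts you invoke — loss of only one edge-b-derivative, and the absence of an upper bound on $s$ in the forward estimates — together with the observation that the sole $s$-bounded case, Lemma~\ref{LemmaMEc}\eqref{ItMEcBw}, forces the strengthened condition $\tilde s+k<\half-\tilde\alpha_0+2\tilde\alpha_{\!\scri}+\ubar p_1$. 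Your extra bookkeeping (bundle lifts, preservation of weights under nested commutators, treatment of the localized statements) is consistent with, and fills in, what the paper leaves implicit.
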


\begin{rmk}[Choice of commutator vector fields]
\label{RmkbIVFs}
  In \cite[Proof of Proposition~4.7]{HintzVasyMink4}, we needed to exercise some care when choosing the b-vector fields $V$ (called $G$ there) for commutations: the commutator $[P,V]$ needed to be an operator whose coefficients decayed in a suitable manner. As far as spherical vector fields are concerned, this required $V$ to be (asymptotic to) a Killing vector field, i.e.\ a rotation. Here on the other hand, we can use arbitrary elements of $\cV_{[\bop]}(M)$. The reason is that we already have a sharp edge-b-regularity theory, and the arguments in the present section show how one can inductively upgrade edge-b- to b-regularity. In~\cite{HintzVasyMink4} on the other hand, b-regularity was proved directly, without first establishing edge-b-regularity.
\end{rmk}

%%%%%%%%%%%%%%%%%%%%%%%%%%%%%%%%%%%%%%%%%%%%%%%%%%%%%%%%%%%%%%%%%%%%%%
\section{Energy estimates and solvability \texorpdfstring{near $I^0$ and $\scri^+\setminus I^+$}{away from future timelike infinity}}
\label{SE}

We continue to denote by $g$ an $(\ell_0,2\ell_{\!\scri},\ell_+)$-admissible metric and $P$ a $g$-admissible operator on the $(n+1)$-dimensional manifold $M$, $n\geq 1$; see~\S\ref{SsOp}. We work in a product decomposition $[0,\eps)_\varrho\times(-\eps,\eps)_v\times Y$ near the light cone at infinity $Y$ on the blown-down manifold $M_0$, in which the underlying metric takes the form~\eqref{EqOGeoAdmDef} (or equivalently~\eqref{EqOGeoAdmDual}--\eqref{EqOGeoMetIp}), and write $\varrho=\frac{1}{t+r}$, $v=\frac{t-r}{t+r}$ (which defines coordinates $t,r$ on $M^\circ$). For every compact subset $K\subset\scri^+\setminus I^+$, there then exists $T\in\R$ so that the domain $\cU_0(T)$ from Definition~\ref{DefOGeoU0p} contains $K$. Define the coordinates $\rho_0=\frac{1}{T-t_*}$, $x_{\!\scri}=\sqrt{\frac{T-t_*}{r}}$ on $\cU_0(T)$ as in~\eqref{EqOGeoCoordI0}.

We work away from $I^+$; thus, we shall only record weights at $I^0$ and $\scri^+$. Lemma~\ref{LemmaOGeoHyp} shows that, given $\ubar\rho_0<\bar\rho_0\in(0,1)$, for large $C$ and small $\delta_0$ the domain
\begin{equation}
\label{EqEOmega}
  \Omega^0_{\delta,\rho} := \{ x_{\!\scri}<\delta,\ \rho_0<\rho+C x_{\!\scri}^{2\ell_{\!\scri}} \} \subset \cU_0(T) \subset M
\end{equation}
contains $K$ when $0<\delta\leq\delta_0$ and $\rho\in[\ubar\rho_0,\bar\rho_0]$, and its boundary hypersurfaces (other than those contained in $\pa M$)
\begin{equation}
\label{EqESigma}
\begin{split}
  \Sigma^{0,\rm in}_{\delta,\rho} &:= \{ x_{\!\scri}=\delta,\ \rho_0<\rho+C\delta^{2\ell_{\!\scri}} \}, \\
  \Sigma^{0,\rm out}_{\delta,\rho} &:= \{ x_{\!\scri}<\delta,\ \rho_0=\rho+C x_{\!\scri}^{2\ell_{\!\scri}} \}
\end{split}
\end{equation}
are spacelike. See Figure~\ref{FigESetup}.

\begin{figure}[!ht]
\centering
\includegraphics{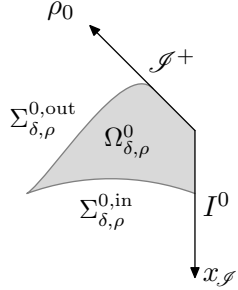}
\caption{The domain~\eqref{EqEOmega} and its boundary hypersurfaces~\eqref{EqESigma} used in the setup of Theorem~\ref{ThmE}.}
\label{FigESetup}
\end{figure}

Since the normal vector field at $\Sigma_{\delta,\rho}^{0,\rm in}$, resp. $\Sigma_{\delta,\rho}^{0,\rm out}$ pointing into, resp.\ out of $\Omega^0_{\delta,\rho}$ is future timelike, the Cauchy problem for $P u=f$ with initial data on $\Sigma_{\delta,\rho}^{0,\rm in}$ is well-posed in $\Omega^0_{\delta,\rho}$. We shall consider here forward, resp.\ backward problems with trivial data at $\Sigma_{\delta,\rho}^{0,\rm in}$, resp.\ $\Sigma_{\delta,\rho}^{0,\rm out}$. We remark here already that the forward problem is well-posed, whereas the backward problem is not since the backward domain of dependence of $\Sigma_{\delta,\rho}^{0,\rm out}$ is a proper subset of $\Omega_{\delta,\rho}^0$.

\begin{definition}[Function spaces]
\label{DefESpaces}
  Let $\delta<\delta_0$ and $\rho\in(\ubar\rho_0,\bar\rho_0)$, and $s\in\R$, $\alpha=(\alpha_0,2\alpha_{\!\scri})\in\R^2$. Fix on $M$ a volume density of the class~\eqref{EqMEDensity}. Then we let
  \begin{align*}
    \Heb^{s,\alpha}(\Omega^0_{\delta,\rho})^{\bullet,-} &:= \left\{ u|_{\Omega^0_{\delta,\rho}} \colon u\in\bar H_\ebop^{s,\alpha}(\Omega^0_{\delta_0,\bar\rho_0}),\ \supp u\subset \ol{\Omega^0_{\delta,\bar\rho_0}} \,\right\}, \\
    \Heb^{s,\alpha}(\Omega^0_{\delta,\rho})^{-,\bullet} &:= \left\{ u|_{\Omega^0_{\delta,\rho}} \colon u\in\bar H_\ebop^{s,\alpha}(\Omega^0_{\delta_0,\bar\rho_0}),\ \supp u\subset \ol{\Omega^0_{\delta_0,\rho}} \,\right\}.
  \end{align*}
\end{definition}

Thus, elements in the space $\Heb^{s,\alpha}(\Omega^0_{\delta,\rho})^{\bullet,-}$ have supported character at $\Sigma_{\delta,\rho}^{0,\rm in}$ and are extendible at $\Sigma_{\delta,\rho}^{0,\rm out}$, with the situation reversed for elements of $\Heb^{s,\alpha}(\Omega^0_{\delta,\rho})^{-,\bullet}$. Moreover, the $L^2$-dual of $\Heb^{s,\alpha}(\Omega^0_{\delta,\rho})^{\bullet,-}$ is $\Heb^{-s,-\alpha}(\Omega^0_{\delta,\rho})^{-,\bullet}$ \cite[Appendix~B]{HormanderAnalysisPDE3}.

Due to the inflexible nature of energy estimates---they are global in the fibers of $\Teb^*M$ and admit the use of differential operators only---as compared to the microlocal estimates of~\S\ref{SsME}, we impose here an additional condition on the edge normal operator of $P$ in order to get sharp results later on:

\begin{definition}[Special admissible operators]
\label{DefESpecial}
  Let $P$ be an admissible operator, and define $p_0\in\rho_0\rho_+(\CI+\cA^{(\ell_0,\ell_+)})(\scri^+;\upbeta^*\End(E))$ and $p_1\in(\CI+\cA^{(\ell_0,\ell_+)})(\scri^+;\upbeta^*\End(E))$ as in Definition~\ref{DefOp}\eqref{ItOpNorm}. We then say that $P$ is a \emph{special admissible operator} if there exists a bundle splitting $E|_Y=\bigoplus_{j=1}^J E_j$ so that $p_1$ is lower triangular, with diagonal entries $p_{1,j j}\in\End(E_j)$ having real spectrum, and so that $p_0$ is strictly lower triangular.
\end{definition}

\begin{rmk}[Discussion]
\label{RmkESpecialDisc}
  Special admissibility is loosely related to weak null structures \cite{LindbladRodnianskiWeakNull}, see also \cite[\S1.1.2]{HintzVasyMink4}. For scalar operators $P$, special admissibility is simply the requirement that $p_1$ be real and $p_0=0$. While special admissibility is a rather strong requirement on $p_0,p_1$, the linearized gauge-fixed Einstein operator considered in \cite[Proposition~3.29 and \S3.6]{HintzMink4Gauge} does satisfy it (see Example~\ref{ExOpMinkEin} on how to relate the notation in the reference to the one used presently); see also \cite[Appendix~A]{HintzMink4Gauge}, which implies the special admissibility of an appropriate gauge-fixed Maxwell equation on Minkowski space.
\end{rmk}

If $P$ is special admissible, then for all $\eps>0$ there exists a positive definite fiber inner product $h_E$ on $E$ so that, in the notation of Definition~\usref{DefMEMin} and Lemma~\usref{LemmaMEMin}, we have
\begin{equation}
\label{EqESpecial}
  \ubar p_1-\eps < \ubar p_1(h_E),\qquad
  -\eps<\frac{p_1-p_1^*}{2 i}<\eps,\qquad
  \sup_{\scri^+}\|p_0\|<\eps,
\end{equation}
where the second inequality is an inequality for quadratic forms on $(\upbeta^*E)|_{\scri^+}$ which we require to hold pointwise on $\scri^+$, and in the third inequality $\|\cdot\|$ denotes the operator norm on the space of linear maps $(E,h_E)\to(E,h_E)$. Indeed, this holds for $h_E$ equal to the rescaling of any fixed positive definite diagonal inner product on $E=\bigoplus_{j=1}^J E_j$ by $\diag(1,\eta,\ldots,\eta^{J-1})$ for sufficiently small $\eta>0$.\footnote{Similar choices of inner products also feature in \cite[\S3.4]{HintzPsdoInner} and towards the end of \cite[\S9.1]{HintzVasyKdSStability}.}

\begin{thm}[Solvability, uniqueness, sharp regularity near $\scri^+\setminus I^+$]
\label{ThmE}
  Let $P$ be a special admissible operator, and define $\ubar p_1$ as in Definition~\usref{DefMEMin}.
  \begin{enumerate}
  \item\label{ItEFw}{\rm (Forward solution.)} Let $s,\alpha_0,\alpha_{\!\scri}\in\R$, and put $\alpha=(\alpha_0,2\alpha_{\!\scri})$ and $\alpha'=\alpha+(2,2)$. Suppose that
    \begin{equation}
    \label{EqEFwAssm}
      s>\frac12-\alpha_0+2\alpha_{\!\scri}-\ubar p_1,\qquad
      \alpha_{\!\scri}<\alpha_0+\frac12,\qquad
      \alpha_{\!\scri}<-\frac12+\ubar p_1.
    \end{equation}
    Let $f\in\Heb^{s-1,\alpha'}(\Omega^0_{\delta,\rho};\upbeta^*E)^{\bullet,-}$. Then the unique distributional forward solution $u$ of $P u=f$ in $\Omega^0_{\delta,\rho}\cap M^\circ$ satisfies $u\in\Heb^{s,\alpha}(\Omega^0_{\delta,\rho};\upbeta^*E)^{\bullet,-}$, and
    \begin{equation}
    \label{EqEFwApriori}
      \|u\|_{\Heb^{s,\alpha}(\Omega^0_{\delta,\rho};\upbeta^*E)^{\bullet,-}}\leq C\|f\|_{\Heb^{s-1,\alpha'}(\Omega^0_{\delta,\rho};\upbeta^*E)^{\bullet,-}},
    \end{equation}
    where $C$ only depends on $P,s,\alpha_0,\alpha_{\!\scri},\delta,\rho$.
  \item\label{ItEBw}{\rm (Backward solution.)} Let $\tilde s,\tilde\alpha_0,\tilde\alpha_{\!\scri}\in\R$, and put $\tilde\alpha=(\tilde\alpha_0,2\tilde\alpha_{\!\scri})$ and $\tilde\alpha'=\tilde\alpha+(2,2)$. Suppose that
    \begin{equation}
    \label{EqEBwAssm}
      \tilde s<\frac12-\tilde\alpha_0+2\tilde\alpha_{\!\scri}+\ubar p_1,\qquad
      \tilde\alpha_0<\tilde\alpha_{\!\scri}-\frac12,\qquad
      \tilde\alpha_{\!\scri}>-\frac12-\ubar p_1.
    \end{equation}
    Let $\tilde u\in\Heb^{\tilde s,\tilde\alpha}(\Omega^0_{\delta,\rho};\upbeta^*E)^{-,\bullet}$ be such that $\tilde f:=P^*\tilde u\in\Heb^{\tilde s-1,\tilde\alpha'}(\Omega^0_{\delta,\rho};\upbeta^*E)^{-,\bullet}$. Then
    \begin{equation}
    \label{EqEBwApriori}
      \|\tilde u\|_{\Heb^{\tilde s,\tilde\alpha}(\Omega^0_{\delta,\rho};\upbeta^*E)^{-,\bullet}}\leq C\|\tilde f\|_{\Heb^{\tilde s-1,\tilde\alpha'}(\Omega^0_{\delta,\rho};\upbeta^*E)^{-,\bullet}},
    \end{equation}
    where $C$ only depends on $P,s,\tilde\alpha_0,\tilde\alpha_{\!\scri},\delta,\rho$. Conversely, given a forcing term $\tilde f\in\Heb^{\tilde s-1,\tilde\alpha'}(\Omega^0_{\delta,\rho};\upbeta^*E)^{-,\bullet}$, there exists $\tilde u$ which satisfies the equation $P^*\tilde u=\tilde f$ and the estimate~\eqref{EqEBwApriori}; this $\tilde u$ is the unique solution with these weights, in the sense that any other solution $\tilde u'\in\Heb^{-\infty,\tilde\alpha}(\Omega^0_{\delta,\rho};\upbeta^*E)^{-,\bullet}$ of $P^*\tilde u'=\tilde f$ is necessarily equal to $\tilde u$.
  \end{enumerate}
\end{thm}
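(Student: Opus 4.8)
\textbf{Plan of proof for Theorem~\ref{ThmE}.}
The strategy is the standard one for hyperbolic problems on manifolds with corners: combine an $L^2$-based energy estimate (for $s=1$, real principal order) with the microlocal propagation results of \S\ref{SsME} to upgrade to arbitrary $s$, and then use duality to pass between the forward and backward problems. We first set up the energy estimate. Using the future timelike multiplier vector field $V_\eps$ given by a weighted linear combination of the edge-b-vector fields $V_0 = \rho_0\pa_{\rho_0}-\half x_{\!\scri}\pa_{x_{\!\scri}}$ and $-x_{\!\scri}\pa_{x_{\!\scri}}$ (cf.\ \eqref{EqOGeoTimelike}, and the multiplier used in \cite[\S4.1]{HintzVasyMink4}), one contracts the energy-momentum tensor of $P$ (or more precisely of $\rho_0^{-2}x_{\!\scri}^{-2}P_0$, treating $\tilde P$ and the zeroth order terms $p_0,p_1$ as perturbations) against $V_\eps$ on the domains $\Omega^0_{\delta,\rho}$, and integrates by parts using Stokes' theorem. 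The boundary terms at the spacelike hypersurfaces $\Sigma^{0,\rm in}_{\delta,\rho}$ and $\Sigma^{0,\rm out}_{\delta,\rho}$ (Lemma~\ref{LemmaOGeoHyp}) have definite signs; the bulk term, after choosing the weight in the multiplier appropriately and using the near-optimal fiber inner product $h_E$ from~\eqref{EqESpecial} together with the special admissibility hypothesis, is coercive precisely under the threshold conditions $\alpha_{\!\scri}<-\half+\ubar p_1$, $\alpha_{\!\scri}<\alpha_0+\half$ (forward), resp.\ their mirror images (backward). This gives the a priori estimate~\eqref{EqEFwApriori}, resp.\ \eqref{EqEBwApriori}, for $s=1$.

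Next I would bootstrap in the differential order. For part~\eqref{ItEFw}: given $f\in\Heb^{s-1,\alpha'}$ with $s>\frac12-\alpha_0+2\alpha_{\!\scri}-\ubar p_1$, produce the forward solution $u$ in a low-regularity edge-b-space via the $s=1$ energy estimate applied to $P^*$ (to get solvability) and a duality/functional-analytic argument, then apply Theorem~\ref{ThmMEFw}\eqref{ItMEFwThru} and \eqref{ItMEFwOut}: the hypotheses~\eqref{EqMEFwThruThr} and \eqref{EqMEFwOutThr} are exactly~\eqref{EqEFwAssm}, and since we are working in $\cU_0(T)$ with trivial Cauchy data at $\Sigma^{0,\rm in}_{\delta,\rho}$, the incoming wave front set over $(I^0)^\circ$ is empty and $\cR_{\rm out}$ is cut off by the (spacelike) boundary $\Sigma^{0,\rm out}_{\delta,\rho}$, so that the microlocal estimates close up to give $u\in\Heb^{s,\alpha}$ together with~\eqref{EqEFwApriori}. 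Here one also uses elliptic regularity and real principal type propagation in the interior and at the spacelike boundary hypersurfaces, plus the standard localization that the supported/extendible structure at the spacelike hypersurfaces is respected by the propagation. Uniqueness of the forward solution is immediate from the well-posedness of the Cauchy problem (energy estimate applied in subdomains $\Omega^0_{\delta',\rho'}$).

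For part~\eqref{ItEBw}, the a priori estimate~\eqref{EqEBwApriori} for general $\tilde s$ follows symmetrically from Theorem~\ref{ThmMEBw} together with the $\tilde s=1$ energy estimate, using that~\eqref{EqEBwAssm} matches~\eqref{EqMEBwThru}, \eqref{EqMEBwOutThr}; note that at $\cR_{\rm out}$ one now has the backward estimate Lemma~\ref{LemmaMEout}\eqref{ItMEoutBw}, which requires no a priori regularity, consistent with the backward problem being solvable but not having a well-posed Cauchy problem at $\Sigma^{0,\rm out}_{\delta,\rho}$. The existence half is the Hahn--Banach/functional-analytic dual of the forward a priori estimate: the dual of $\Heb^{s,\alpha}(\Omega^0_{\delta,\rho})^{\bullet,-}$ is $\Heb^{-s,-\alpha}(\Omega^0_{\delta,\rho})^{-,\bullet}$, and with $\tilde s=-s+1$, $\tilde\alpha = -\alpha' = -\alpha-(2,2)$ the two sets of threshold conditions~\eqref{EqEFwAssm} and \eqref{EqEBwAssm} are equivalent (cf.\ Remark~\ref{RmkMEBwDual}); hence the forward estimate~\eqref{EqEFwApriori} for $P$ in these dual orders, applied to $P^*$ in place of $P$ via $(P^*)^* = P$, yields by the standard argument (e.g.\ \cite[\S4]{VasyMinicourse}) a solution $\tilde u\in\Heb^{\tilde s,\tilde\alpha}(\Omega^0_{\delta,\rho})^{-,\bullet}$ of $P^*\tilde u=\tilde f$ satisfying~\eqref{EqEBwApriori}. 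Uniqueness within $\Heb^{-\infty,\tilde\alpha}(\Omega^0_{\delta,\rho})^{-,\bullet}$ follows by applying the a priori estimate~\eqref{EqEBwApriori}, valid for \emph{any} $\tilde s$ with $\tilde s<\frac12-\tilde\alpha_0+2\tilde\alpha_{\!\scri}+\ubar p_1$ given the fixed weights, to the difference of two solutions. The main obstacle is the first step---arranging the energy estimate so that the bulk term is coercive with the \emph{sharp} thresholds; this is where special admissibility and the careful choice of $h_E$ (making $p_1$ nearly self-adjoint and $p_0$ small, \eqref{EqESpecial}) are essential, since energy methods cannot exploit the off-diagonal structure the way a microlocal/Fredholm argument could, and the weights in the multiplier must be tuned against the conormal error terms in $\tilde P$ and in the density~\eqref{EqMEDensity} using $\ell_0,\ell_{\!\scri}>0$.
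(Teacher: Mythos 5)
Your overall architecture---a weighted timelike edge-b multiplier energy estimate at order $1$, microlocal propagation to move in the differential order, and duality with the orders matched as in Remark~\ref{RmkMEBwDual}---is the paper's strategy, and you correctly locate where special admissibility and the near-optimal inner product~\eqref{EqESpecial} enter. There are, however, two genuine gaps. First, your claim that the bulk term is coercive for the backward problem ``precisely under the mirror images'' of~\eqref{EqEFwAssm} is not substantiated, and with the natural mirror-image multiplier it is false at the sharp threshold: repeating the computation of $\tilde K_{i Z}$ with weights $\tilde\alpha$, the coefficient of the $(x_{\!\scri}\pa_y)^2$ block is $4(\check{\tilde\alpha}_0-\check{\tilde\alpha}_{\!\scri}+\tfrac12)+\cO(c)$, whose \emph{negativity} requires $\tilde\alpha_0<\tilde\alpha_{\!\scri}-1$, a half order stronger than the hypothesis $\tilde\alpha_0<\tilde\alpha_{\!\scri}-\tfrac12$ in~\eqref{EqEBwAssm}. (This is exactly the phenomenon behind the strengthened condition~\eqref{EqNeFwAssm2} versus~\eqref{EqNeFwAssm} in Proposition~\ref{PropNe}; it is also why the backward problem, which is not well-posed since the backward domain of dependence of $\Sigma^{0,\rm out}_{\delta,\rho}$ is proper, is treated in the paper purely by duality and never by a direct energy estimate in this setting.) Second, even granting a backward energy estimate at $\tilde s=1$, your route to the a priori estimate~\eqref{EqEBwApriori} for general $\tilde s$ (``Theorem~\ref{ThmMEBw} together with the $\tilde s=1$ energy estimate'') and your uniqueness argument are circular in the range $\tilde s<1$: the energy estimate requires $\tilde u\in\Heb^{1,\tilde\alpha}$ a priori, propagation cannot upgrade $\tilde u$ past order $\tilde s$ because $P^*\tilde u$ is only in $\Heb^{\tilde s-1,\tilde\alpha'}$, the quantitative microlocal estimates carry error terms $\|\tilde u\|_{\Heb^{-N,\tilde\alpha}}$ with the \emph{same} weights (hence no compactness to absorb them), and applying ``the a priori estimate'' to a difference of solutions invokes the estimate at precisely the orders where it has not been proved. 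This also leaves the forward statement open for $s<1$ (allowed by~\eqref{EqEFwAssm}), where $f\in\Heb^{s-1,\alpha'}$ with $s-1<0$ and the solution must be produced by duality from a backward estimate at order $\tilde s=-s+1$, part of which lies in the problematic range.

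The paper closes these gaps with a ladder you should make explicit: forward energy estimate at $s=1$ (itself needing an exhaustion $\Omega^0_{\delta,\rho}\setminus\Omega^0_{\delta',\rho}$, $\delta'\searrow 0$, since the forward solution's growth at $\scri^+$ is not a priori controlled, plus the sharp-cutoff/stress-tensor sign argument at the extendible boundary); forward $s\geq 1$ by extending $f$ to a slightly larger domain, propagating (Lemmas~\ref{LemmaMEc}, \ref{LemmaMEinm}, \ref{LemmaMEout}\eqref{ItMEoutFwLoc}), and restricting; backward solvability with estimate for $\tilde s\leq 0$ by duality; backward \emph{uniqueness} not via an estimate but by pairing a kernel element against forward solutions $P u=f$ for arbitrary $f$, which is what converts the duality-produced bound into an a priori estimate for arbitrary $\tilde u$; backward $0\leq\tilde s$ below threshold by extending $\tilde f$, solving at order $0$, propagating up, restricting, and invoking uniqueness; and finally the forward result for all $s$ by duality from the backward a priori estimate. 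Your forward argument for $s\geq 1$ and your matching of the thresholds with Theorems~\ref{ThmMEFw}--\ref{ThmMEBw} are otherwise in line with the paper.
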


Note that the backward problem is not well-posed without restricting the space in which one seeks the solution, as we are not imposing any (asymptotic) data at $\scri^+\cap\Omega^0_{\delta,\rho}$; in Theorem~\ref{ThmE}\eqref{ItEBw}, the a priori assumption on the decay of $\tilde u'$ stands in for the missing data at null infinity.

\begin{rmk}[Modifications for non-special operators]
\label{RmkEp1No}
  If one keeps the assumption that $P$ be admissible, but drops the assumptions on $p_0,p_1$ made in Definition~\ref{DefESpecial}, then solvability, uniqueness, and sharp regularity can still be obtained by the arguments used in the proof, except the relationships between $\alpha_0,\alpha_{\!\scri}$ need to be modified appropriately; roughly speaking, the inequalities~\eqref{EqEFwAssm} and \eqref{EqEBwAssm} need to hold with enough room to spare so that various error terms arising in the energy (positive commutator) estimate can be absorbed. We shall not make this quantitative here.
\end{rmk}

\begin{proof}[Proof of Theorem~\usref{ThmE}]
  We shall omit the vector bundle $\upbeta^*E$ from the notation unless dealing with the case that $\upbeta^*E$ is nontrivial requires additional arguments. The proof starts with a simple energy estimate in which we estimate $u$ in a space with one order of edge-b-regularity. Higher regularity follows from the microlocal propagation results proved in~\S\ref{SsME}. Solvability of the adjoint (backward) problem on negative regularity spaces follows by duality; this can then be upgraded to the maximal amount of regularity allowed in~\eqref{EqEBwAssm} via microlocal propagation estimates, which will prove part~\eqref{ItEBw}. Another duality argument will then give part~\eqref{ItEFw} in the full range of $s$.

  %%%%%%%%%%
  \pfstep{Part~\eqref{ItEFw}, $s=1$.} We shall prove the energy estimate
  \begin{equation}
  \label{EqEEnergyEst0}
    \|u\|_{\Heb^{1,\alpha}(\Omega^0_{\delta,\rho})^{\bullet,-}} \leq C\|f\|_{\Heb^{0,\alpha'}(\Omega^0_{\delta,\rho})^{\bullet,-}}
  \end{equation}
   for the forward solution $u$ of the equation $P u=f\in\Heb^{0,\alpha'}(\Omega^0_{\delta,\rho})^{\bullet,-}$ by computing the $L^2$ pairing (using the volume density $|\dd g|$) $2\Im\la P u,Z u\ra$ for $Z=i^{-1}V$ where $V$ is a suitably chosen real vector field on $M^\circ$.
   
   %%%%%%%%%%
   \pfsubstep{\eqref{ItEFw}(i)}{Structural considerations for the energy estimate.} For $u\in\CIc((\Omega^0_{\delta,\rho})^\circ)$, we have
  \begin{equation}
  \label{EqEFwComm}
    2\Im\la P u,Z u\ra = \la \sC u,u\ra,\qquad
    \sC=i(P^*Z-Z^*P)=i[P,Z]-(\dv_g i Z)P+2\frac{P-P^*}{2 i}Z,
  \end{equation}
  where we use that $Z^*=Z+\dv_g Z$; here, writing $V=V^j\pa_j$ in local coordinates, we set $\dv_g(V)=|g|^{-1/2}\pa_j(|g|^{1/2}V^j)$ (so $V^*=-V-\dv_g V$). The principal symbol of $\sC$ in $M^\circ$ is
  \[
    \sigma^2(\sC) = H_G(\sigma^1(Z))-(\dv_g i Z)G+2\sigma^1(P_1)\sigma^1(Z),\qquad P_1:=\frac{P-P^*}{2 i},
  \]
  where $G$ is the dual metric function. A simple calculation gives at a point $\varpi\in T^*_p M^\circ$, $p\in M^\circ$, the expression
  \begin{equation}
  \label{EqEFwCommSymb}
    \sigma^2(\sC)(\varpi) = K_{i Z}(\varpi,\varpi) + 2\sigma^1(P_1)(\varpi)\cdot\sigma^1(Z)(\varpi) =: \tilde K_{i Z}(\varpi,\varpi)
  \end{equation}
  where we define for any vector field $V$ the following symmetric 2-cotensors on $M^\circ$:
  \begin{equation}
  \label{EqEFwDef}
  \begin{split}
    K_V&={}^V\!\pi-\frac12 g^{-1}\tr_g({}^V\!\pi),\qquad {}^V\!\pi:=-\cL_V(g^{-1}), \\
    \tilde K_V&=K_V+2\sigma^1(P_1)\otimes_s\sigma^1(i^{-1}V),
  \end{split}
  \end{equation}
  with $\cL_V$ denoting the Lie derivative along $V$.
   
   Concretely now, let $\check\alpha=(\check\alpha_0,2\check\alpha_{\!\scri}):=(\alpha_0,2\alpha_{\!\scri})+(1,1)$, and define the vector fields
  \begin{equation}
  \label{EqEVFs}
    W:=-x_{\!\scri}\pa_{x_{\!\scri}}+(2-c)\rho_0\pa_{\rho_0},\qquad
    V:=\rho_0^{-2\check\alpha_0}x_{\!\scri}^{-4\check\alpha_{\!\scri}}W,\qquad
    Z=i^{-1}V.
  \end{equation}
  Note that for $0<c<2$, the vector field $W$ is future timelike at (hence near) $\scri^+$ for the rescaled metric $g_\ebop:=\rho_0^2 x_{\!\scri}^2 g\in(\CI+\cA^{(\ell_0,2\ell_{\!\scri})})(M;S^2\,\Teb^*M)$ by~\eqref{EqOGeoTimelike}. Moreover, when the bundle $E$ is trivial, then
  \begin{equation}
  \label{EqEVMem}
  \begin{split}
    \sC &\in \rho_0^{-2\alpha_0}x_{\!\scri}^{-4\alpha_{\!\scri}}\bigl(\CI+\cA^{(\ell_0,(0,0))}+\cA^{(\ell_0,2\ell_{\!\scri})}\bigr)\Diffeb^2(M;\upbeta^*E), \\
    \tilde K_{i Z} &\in\rho_0^{-2\alpha_0}x_{\!\scri}^{-4\alpha_{\!\scri}}\bigl(\CI+\cA^{(\ell_0,(0,0))}+\cA^{(\ell_0,2\ell_{\!\scri})}\bigr)(M;S^2\,\Teb M),
  \end{split}
  \end{equation}
  where the $\cA^{(\ell_0,(0,0))}$ terms are contributions from $P_1$.
  
  When $E$ is not trivial, take $\tilde W\in\Diffeb^1(M;\upbeta^*E)$ to be any operator with scalar principal symbol $\sigmaeb^1(\tilde W)=\sigmaeb^1(W)\otimes 1_E$ and edge normal operator at $\scri^+$ equal to that of $W$. The edge normal operator of $\tilde W$ at a fiber $\scri^+_{y_0}$ acts on sections of the trivial bundle $E_{y_0}$, and differentiation of such sections along $W$ is well-defined; thus, any two choices of $\tilde W$ agree modulo $x_{\!\scri}\CI(M;\End(\upbeta^*E))$. We now drop the tilde and work with $W\in\Diffeb^1(M;\upbeta^*E)$ simply, with $V,Z\in\Diffeb^{1,(2\check\alpha_0,4\check\alpha_{\!\scri})}(M;\upbeta^*E)$ given in terms of $W$ as above; we then have~\eqref{EqEVMem} also when $E$ is not trivial. We shall also fix a connection $\nabla^E\in\Diffeb^1(M;\upbeta^*E,\Teb^*M\otimes\upbeta^*E)$ on $E$ whose normal operator at $\scri^+$ agrees with the canonical flat connection (differentiation of functions valued in a fixed vector space).

  On $E$, we fix a near-optimal positive definite fiber inner product $h_E$, i.e.\ \eqref{EqESpecial} holds for any desired $\eps>0$. Fix moreover a positive definite fiber inner product $g_R$ on $\Teb^*M$. Using $g_R$, we can identify $\tilde K_{i Z}$ with a section of $\End(\Teb^*M)$, and upon tensoring with the identity map on $\upbeta^*E$ we obtain a section of $\End(\Teb^*M\otimes\upbeta^*E)$ which we shall still denote by $\tilde K_{i Z}$. Define the adjoint $(\nabla^E)^*\in\Diffeb^1(M;\Teb^*M\otimes\upbeta^*E,\upbeta^*E)$ with respect to the fiber metric $g_R\otimes h_E$. Then $(\nabla^E)^*\tilde K_{i Z}\nabla^E$ and $\sC$ have the same principal symbol. Set
  \[
    \tilde Q=\rho_0^2 x_{\!\scri}^2(x_{\!\scri} D_{x_{\!\scri}}-2\rho_0 D_{\rho_0}) \equiv \tilde Q^* \bmod \cA^{(\ell_0,2\ell_{\!\scri})}(M),
  \]
  defined as an element of $\rho_0^2 x_{\!\scri}^2\Diffeb^1(M;\upbeta^*E)$ in the same fashion as $W$ before. We claim that one can then write
  \begin{equation}
  \label{EqEsCLot}
  \begin{split}
    &\sC = (\nabla^E)^*\tilde K_{i Z}\nabla^E + \tilde\sC + \tilde\sC', \\
    &\qquad \tilde\sC \in \rho_0^{-2\alpha_0}x_{\!\scri}^{-4\alpha_{\!\scri}}(x_{\!\scri}\CI+\cA^{(\ell_0,2\ell_{\!\scri})})\Diffeb^1(M;\upbeta^*E), \\
    &\qquad \tilde\sC' = \frac12\bigl(\tilde Q^*(p_1-p_1^*)Z-Z^*(p_1-p_1^*)\tilde Q\bigr) + \frac{i}{2}\rho_0^2 x_{\!\scri}^2(p_0^* Z - Z^* p_0) \\
    &\qquad\qquad \in \rho_0^{-2\alpha_0}x_{\!\scri}^{-4\alpha_{\!\scri}}(\CI+\cA^{(\ell_0,(0,0))}+\cA^{(\ell_0,2\ell_{\!\scri})})\Diffeb^1(M;\upbeta^*E).
  \end{split}
  \end{equation}
  Thus, the coefficients of $\tilde\sC$ vanish at $\scri^+$; similarly, we will regard $\tilde\sC'$ as an error term near $\scri^+$ since, by assumption, $p_1-p_1^*$ and $p_0,p_0^*$ can be made arbitrarily small for suitable choices of fiber inner products on $E$.

  Only the vanishing factor $x_{\!\scri}$ in the membership statement for $\tilde\sC$ requires proof. We have to show the vanishing of the edge normal operator of $\rho_0^{2\alpha_0}x_{\!\scri}^{4\alpha_{\!\scri}}\tilde\sC$; for this purpose, we may replace $g^{-1},P$ in the definition of $\sC$ and $\tilde K_{i Z}$ by their normal operators at a fiber $\scri^+_{y_0}$,
  \begin{equation}
  \label{EqENorm}
  \begin{split}
    g_0^{-1} &:= \rho_0^2 x_{\!\scri}^2\Bigl(-\frac12 x_{\!\scri}\pa_{x_{\!\scri}}\otimes_s(x_{\!\scri}\pa_{x_{\!\scri}}-2\rho_0\pa_{\rho_0}) + k^{i j}(y_0)(x_{\!\scri}\pa_{y^i})\otimes_s(x_{\!\scri}\pa_{y^j})\Bigr), \\
    P_0 &:= \frac12\rho_0^2 x_{\!\scri}^2\biggl(-\Bigl(x_{\!\scri} D_{x_{\!\scri}}-2 i^{-1}\Bigl(\frac{n-1}{2}+p_{1,y_0}\Bigr)\Bigr)(x_{\!\scri} D_{x_{\!\scri}}-2\rho_0 D_{\rho_0}) \\
      &\hspace{14em} + 2 k^{i j}(y_0)(x_{\!\scri} D_{y^i})(x_{\!\scri} D_{y^j}) + p^0_{0,y_0}\biggr),
  \end{split}
  \end{equation}
  where we use~\eqref{EqOGeoMetI0}, \eqref{EqOpNorm}, and set
  \[
    p_{0,y_0}^0:=p_0^0|_{\scri^+_{y_0}},\qquad p_{1,y_0}:=p_1|_{\scri^+_{y_0}}\in(\CI+\cA^{\ell_0})\bigl([0,1)_{\rho_0};\End(E_{y_0})\bigr).
  \]
  We may moreover work with the metric density $|\dd g_0|=\rho_0^{-n-1}x_{\!\scri}^{-2 n}|\frac{\dd\rho_0}{\rho_0}\frac{\dd x_{\!\scri}}{x_{\!\scri}}\dd y|$.

  In the case that $p_{0,y_0}^0=0$ and $p_{1,y_0}=0$, we have $P_0=P_0^*$. The operators
  \[
    \sC_0:=i(P_0^*Z-Z^*P_0)
  \]
  and $(\nabla^E)^*\tilde K_{0,i Z}\nabla^E$ (the subscripts `$0$' indicating that we are using $g_0,P_0$, $P_{0,1}=\frac{P_0-P_0^*}{2 i}=0$ in place of $g,P,P_1$, and with $\nabla^E$ denoting the trivial connection on $(\upbeta^*E)|_{\scri^+_{y_0}}=\scri^+_{y_0}\times E_{y_0}$) have the same principal symbol, real coefficients, and are symmetric. Hence their difference must in fact be an operator of order $0$; but since both operators annihilate constant sections of $E_{y_0}$, this difference is the zero operator. This proves~\eqref{EqEsCLot} in this case, with $\tilde\sC'=0$.

  For general $p_{1,y_0}$, but still assuming $p_{0,y_0}^0=0$, the operator $P_0$ gets an extra contribution
  \[
    Q = i^{-1}p_{1,y_0}\tilde Q
  \]
  and $P_{0,1}=\frac{Q-Q^*}{2 i}$ has principal symbol $-\half(p_{1,y_0}+p_{1,y_0}^*)\sigma^1(\tilde Q)$, Hence, compared to the case $p_{1,y_0}=0$, the difference $\sC_0-(\nabla^E)^*\tilde K_{i Z}\nabla^E$ gets an extra contribution
  \begin{equation}
  \label{EqEsCErr}
    i(Q^*Z-Z^*Q) - (\nabla^E)^*\Bigl(-2\sigma^1\bigl(\half(p_{1,y_0}+p_{1,y_0}^*)\tilde Q\bigr)\otimes_s\sigma^1(Z)\Bigr)\nabla^E,
  \end{equation}
  where we regard $\sigma^1(\half(p_{1,y_0}+p_{1,y_0})^*\tilde Q)\in P^{[1]}(\Teb^*M\otimes\End(E_{y_0}))\cong\Teb M\otimes\End(E_{y_0})$ and $\sigma^1(Z)\in P^{[1]}(\Teb^*M)\cong\Teb M$ (giving an element of $\Teb M\otimes\End(E_{y_0})$ upon tensoring with the identity on $E_{y_0}$), and further identify $S^2\,\Teb M\cong\Teb^*M\otimes_s\Teb M$ using the Riemannian edge-b-metric $g_R$ on $M$. Using that for any vector fields $V,W$ one has
  \[
   (\nabla^E)^*(2\sigma^1(V)\otimes_s\sigma^1(W))\nabla^E\equiv(V+\dv_{g_0}V)W+(W+\dv_{g_0}W)V
  \]
  where we write `$\equiv$' for the equality of edge normal operators (this identity depending only on the volume density $|\dd g_0|$, and not on the choice of $g_R$), and noting that $\tilde Q=\rho_0^2 x_{\!\scri}^2(x_{\!\scri} D_{x_{\!\scri}}-2\rho_0 D_{\rho_0})$ is symmetric with respect to $|\dd g_0|$, we find that~\eqref{EqEsCErr} is equal to
  \[
    \bigl(-\tilde Q p_{1,y_0}^*Z - Z^* p_{1,y_0}\tilde Q\bigr) - \Bigl( -\frac12\tilde Q(p_{1,y_0}+p_{1,y_0}^*)Z-\frac12 Z^*(p_{1,y_0}+p_{1,y_0}^*)\tilde Q\Bigr) = \tilde\sC'.
  \]
  Expanding the big parenthesis and collecting terms gives~\eqref{EqEsCLot} in the case $p_0=0$.

  Finally, if we allow also $p_{0,y_0}^0$ to be nonzero, the operator $P_0$ gets an extra contribution $\half\rho_0^2 x_{\!\scri}^2 p_{0,y_0}^0$. Relative to the case $p_{0,y_0}^0=0$ just considered, the map $\tilde K_{i Z}$ is unchanged, whereas $\sC_0$ gets an extra contribution $\half i\rho_0^2 x_{\!\scri}^2((p_{0,y_0}^0)^*Z-Z^* p_{0,y_0}^0)$. Putting this term into $\tilde\sC'$ gives~\eqref{EqEsCLot} in general.

  %%%%%%%%%%
  \pfsubstep{\eqref{ItEFw}(ii)}{Energy estimate for compactly supported waves.} We now proceed to compute the leading order term of $\tilde K_{i Z}$ (see~\eqref{EqEFwDef}) at $\scri^+$ for $Z$ defined in~\eqref{EqEVFs}; for this purpose, we may again work with the model metric and edge normal operator~\eqref{EqENorm}. We compute
  \begin{align*}
    {}^V\!\pi &= \rho_0^{-2\alpha_0}x_{\!\scri}^{-4\alpha_{\!\scri}} \Bigl( -4(2-c)\check\alpha_{\!\scri} (\rho_0\pa_{\rho_0})^2 \\
      &\qquad\quad + \bigl( (c-2)\check\alpha_0 + (6-2 c)\check\alpha_{\!\scri}+c-1 \bigr)\,2 \rho_0\pa_{\rho_0}\otimes_s x_{\!\scri}\pa_{x_{\!\scri}} \\
      &\qquad\quad + (2\check\alpha_0-4\check\alpha_{\!\scri} + 1 - c)(x_{\!\scri}\pa_{x_{\!\scri}})^2 + 2 c k^{i j}(x_{\!\scri}\pa_{y^i})\otimes_s(x_{\!\scri}\pa_{y^j}) \Bigr),
  \end{align*}
  and thus
  \begin{align*}
    \tilde K_{i Z} &\equiv \rho_0^{-2\alpha_0}x_{\!\scri}^{-4\alpha_{\!\scri}} \Bigl( (2-c)\bigl(-4\check\alpha_{\!\scri}+2(p_1+p_1^*)\bigr)(\rho_0\pa_{\rho_0})^2 \\
      &\qquad\qquad + \bigl(4\check\alpha_{\!\scri}-2(p_1+p_1^*)+\tfrac{c}{2}(-n+1-4\check\alpha_{\!\scri}+p_1+p_1^*)\bigr)\,2\rho_0\pa_{\rho_0}\otimes_s x_{\!\scri}\pa_{x_{\!\scri}} \\
      &\qquad\qquad + \bigl(-2\check\alpha_{\!\scri}+(p_1+p_1^*)+\tfrac{c}{2}(n-1+2\check\alpha_0)\bigr)(x_{\!\scri}\pa_{x_{\!\scri}})^2 \\
      &\qquad\qquad + \bigl(2-4\check\alpha_{\!\scri}+4\check\alpha_0+c(-n+1-2\check\alpha_0)\bigr) k^{i j}(x_{\!\scri}\pa_{y^i})\otimes_s(x_{\!\scri}\pa_{y^j}) \Bigr) \\
      &\qquad \bmod \rho_0^{-2\alpha_0}x_{\!\scri}^{-4\alpha_{\!\scri}}\cA^{(\ell_0,2\ell_{\!\scri})}\bigl(M;S^2\,\Teb M\otimes\End(\upbeta^*E)\bigr).
  \end{align*}
  Considering the big parenthesis at a point of $\scri^+$, we may diagonalize $p_1+p_1^*$ and thus consider each eigenspace, corresponding to an eigenvalue $2\lambda$, separately; for any desired $\eps>0$, we can ensure that all such eigenvalues satisfy $2\lambda>2\ubar p_1-2\eps$ by choosing a suitable fiber inner product on $E$. The $2\times 2$ minor of the big parenthesis, with respect to the basis $\frac{\dd\rho_0}{\rho_0},\frac{\dd x_{\!\scri}}{x_{\!\scri}}$, has trace $-10\check\alpha_{\!\scri}+10\lambda+\cO(c)$ as $c\searrow 0$, which is thus positive for small fixed $\eps>0$ and all small $c>0$ when $\check\alpha_{\!\scri}<\ubar p_1$, i.e.\ $\alpha_{\!\scri}<-\half+\ubar p_1$. The determinant of the $2\times 2$ minor is $8 c(\check\alpha_{\!\scri}-\lambda)(\check\alpha_{\!\scri}-\check\alpha_0)+\cO(c^2)$, which is positive for small $c>0$ if, in addition, we also have $\check\alpha_{\!\scri}<\check\alpha_0$, i.e.\ $\alpha_{\!\scri}<\half+\alpha_0$. The $(x_{\!\scri}\pa_y)^2$ term of the big parenthesis finally is $4(\check\alpha_0-\check\alpha_{\!\scri}+\half)+\cO(c)$ times a positive definite symmetric 2-tensor in $x_{\!\scri}\pa_{y^j}$ as $c\searrow 0$, the positivity of which is guaranteed upon reducing $c>0$ further if necessary.

  Given weights $\alpha_0,\alpha_{\!\scri}$ as in the statement of the Theorem, we may thus fix $\eps>0$ so that $\alpha_{\!\scri}<-\half+\ubar p_1-\eps$, then pick a near-optimal fiber inner product on $E$ to make $p_1+p_1^*\geq 2\ubar p_1-2\eps$, and then choose $c>0$ sufficiently small in~\eqref{EqEVFs} so that $\rho_0^{2\alpha_0}x_{\!\scri}^{4\alpha_{\!\scri}}\tilde K_{i Z}$ is positive definite at $\Omega^0_{\delta,\rho}\cap\scri^+$ (with a lower bound on its eigenvalues that remains positive as $\eps\searrow 0$), and hence in $\Omega^0_{\delta,\rho}$ for sufficiently small $\delta>0$. Thus, if $\delta>0$ is sufficiently small, then for $u\in\CIc((\Omega^0_{\delta,\rho})^\circ)$ we have
  \[
    \|\nabla^E_\ebop u\|_{\Heb^{0,\alpha}}^2 \leq C\la\nabla^E u,\tilde K_{i Z}\nabla^E u\ra;
  \]
  here, we write $\nabla^E_\ebop u=(\nabla^E_{V_j}u)_{j=1,\ldots,N}$, where $V_1,\ldots,V_N\in\Veb(M)$ denotes any fixed collection of vector fields which span $\Veb(M)$ over $\CI(M)$. Plugging~\eqref{EqEsCLot} into~\eqref{EqEFwComm}, estimating the left hand side of~\eqref{EqEFwComm} using Cauchy--Schwarz, and likewise for the contributions of $\tilde\sC,\tilde\sC'$ in~\eqref{EqEsCLot}, we thus obtain, for $w=x_{\!\scri}+\rho_0^{\ell_0}x_{\!\scri}^{2\ell_{\!\scri}}$,
  \[
    \|\nabla^E_\ebop u\|_{\Heb^{0,\alpha}}^2 \leq C\Bigl( \|P u\|_{\Heb^{0,\alpha'}}^2 + \|w^{1/2}\nabla^E_\ebop u\|_{\Heb^{0,\alpha}}\|w^{1/2}u\|_{\Heb^{0,\alpha}} + \eps'\|\nabla^E_\ebop u\|_{\Heb^{0,\alpha}}\|u\|_{\Heb^{0,\alpha}} \Bigr)
  \]
  for all $u\in\CIc((\Omega^0_{\delta,\rho})^\circ)$ when $\delta>0$ is sufficiently small; here $\eps'=\|p_1-p_1^*\|+\|p_0\|$ can be made arbitrarily small since $P$ is special admissible, while $C$ is independent of $\eps'$. For $\delta,\eps'>0$ sufficiently small, and noting that $w\lesssim x_{\!\scri}^{2\ell_{\!\scri}}\leq\delta^{2\ell_{\!\scri}}$ on $\Omega_{\delta,\rho}^0$ (recalling that $2\ell_{\!\scri}\leq 1$), we may then apply Cauchy--Schwarz to the second and third terms on the right and absorb $\|\nabla^E_\ebop u\|^2$ into the left hand side while retaining a small constant in front of $\|u\|_{\Heb^{0,\alpha}}^2$. We thus obtain
  \begin{equation}
  \label{EqEAlmost}
    \|\nabla^E_\ebop u\|_{\Heb^{0,\alpha}}^2 \leq C\Bigl(\|P u\|_{\Heb^{0,\alpha'}}^2 + (\delta^{\ell_{\!\scri}}+\eps')\|u\|_{\Heb^{0,\alpha}}^2\Bigr),
  \end{equation}
  with $C$ independent of $\delta,\eps'$.
  
  Finally then, we claim that one can estimate
  \begin{equation}
  \label{EqEPoincare}
    \|u\|_{\Heb^{0,\alpha}}\leq C'\|\nabla^E_\ebop u\|_{\Heb^{0,\alpha}}
  \end{equation}
  with a constant $C'$ independent of $\delta$ by integrating the vector field
  \[
    V_0:=\rho_0^{-2\alpha_0}x_{\!\scri}^{-4\alpha_{\!\scri}}(-x_{\!\scri}\pa_{x_{\!\scri}}+2\rho_0\pa_{\rho_0})
  \]
  starting at $x_{\!\scri}=\delta$ (where $u$ vanishes). Phrased as a commutator estimate, this means computing
  \begin{equation}
  \label{EqEPoincareComm}
    2 \Re\la u,V_0 u\ra = -\la(\dv V_0)u,u\ra,
  \end{equation}
  where with respect to the volume density $|\dd g_0|=\rho_0^{-n-1}x_{\!\scri}^{-2 n}|\frac{\dd\rho_0}{\rho_0}\frac{\dd x_{\!\scri}}{x_{\!\scri}}\dd y|$ of the model metric $g_0$ in~\eqref{EqENorm} we have
  \[
    \dv_{g_0} V_0=-(V_0+V_0^*)=\rho_0^2 x_{\!\scri}^2[x_{\!\scri}\pa_{x_{\!\scri}}-2\rho_0\pa_{\rho_0},\rho_0^{-2\check\alpha_0}x_{\!\scri}^{-4\check\alpha_{\!\scri}}] = (4\check\alpha_0-4\check\alpha_{\!\scri})\rho_0^{-2\alpha_0}x_{\!\scri}^{-4\alpha_{\!\scri}},
  \]
  which is a positive multiple of $\rho_0^{-2\alpha_0}x_{\!\scri}^{-4\alpha_{\!\scri}}$ since $\check\alpha_{\!\scri}<\check\alpha_0$. The Cauchy--Schwarz inequality then implies the estimate~\eqref{EqEPoincare}. Plugging this into~\eqref{EqEAlmost} and taking $\delta,\eps'$ sufficiently small, we obtain $\|\nabla^E_\ebop u\|_{\Heb^{0,\alpha}}\leq C\|P u\|_{\Heb^{0,\alpha'}}$ provided $\delta>0$ is sufficiently small, and thus~\eqref{EqEEnergyEst0} by applying~\eqref{EqEPoincare}, for $u\in\CIc((\Omega^0_{\delta,\rho})^\circ)$.

  %%%%%%%%%%
  \pfsubstep{\eqref{ItEFw}(iii)}{Energy estimate for distributions extendible at $\Sigma^{0,\rm out}_{\delta,\rho}$.} We proceed to remove the assumption that $u$ vanish near $\Sigma^{0,\rm out}_{\delta,\rho}$. Thus, in the integration by parts in~\eqref{EqEFwComm}, one now has to add a boundary term at $\Sigma^{0,\rm out}_{\delta,\rho}$. Equivalently, one can multiply the vector field multiplier $Z$ with a sharp cutoff $1_{\Omega^0_{\delta,\rho}}$ (the characteristic function of $\Omega^0_{\delta,\rho}$), in which case~\eqref{EqEFwComm} is valid still, but the operator $\sC$ now has distributional coefficients. Using the latter formulation, we note that for any distribution $\chi$, we have $\cL_{\chi V}g^{-1}=\chi(\cL_V g^{-1})-2 V\otimes_s\nabla\chi$ and thus
  \[
    \tilde K_{\chi V} = \chi\tilde K_V + 2 T(\nabla\chi,V),\qquad T(X,Y):=X\otimes_s Y-\half g(X,Y)g^{-1}.
  \]
  Here, the abstract stress-energy-momentum tensor $T(X,Y)$ is positive definite when $X,Y$ are both future timelike. Let now $\chi=1_{\Omega^0_{\delta,\rho}}$. The extra contribution to $\la\sC u,u\ra$ in~\eqref{EqEsCLot} arising from differentiation of $\chi$ is $2\la T(\nabla\chi,i Z)\nabla^E u,\nabla^E u\ra$. This is the sum of two terms: one from integration over $\Sigma^{0,\rm in}_{\delta,\rho}$ which vanishes since $u$ has supported character (i.e.\ vanishes) there; and another term, arising from integration over the final Cauchy surface $\Sigma^{0,\rm out}_{\delta,\rho}$, which is nonnegative due to the future timelike nature of $\nabla\chi$ (using the spacelike nature of $\Sigma_{\delta,\rho}^{0,\rm out}$) and $i Z$, and thus of the same sign as the main term (arising from $1_{\Omega^0_{\delta,\rho}}\tilde K_{i Z}$). Hence, it can be dropped.

  The estimate~\eqref{EqEPoincare} follows similarly for $u$ which are extendible at $\Sigma^{0,\rm out}_{\delta,\rho}$: in the commutator calculation~\eqref{EqEPoincareComm}, one replaces $V_0$ by $\chi V_0$, $\chi=1_{\Omega^0_{\delta,\rho}}$, which gives
  \[
    2\Re\la u,\chi V_0 u\ra = -\la\chi(\dv V_0)u,u\ra - \la(V_0\chi)u,u\ra.
  \]
  The second term (including the minus sign) is a sum of two terms; one arising from integration over $\Sigma^{0,\rm in}_{\delta,\rho}$ which vanishes due to the support assumption on $u$, and one from integration over $\Sigma^{0,\rm out}_{\delta,\rho}$ which is nonnegative, hence has the same sign as the main term (from $\dv V_0$) and hence can be dropped. This establishes~\eqref{EqEEnergyEst0} for smooth elements $u$ of $\Heb^{1,\alpha}(\Omega^0_{\delta,\rho})^{\bullet,-}$ whose support inside $M$ does not intersect $\scri^+$.

  %%%%%%%%%%
  \pfsubstep{\eqref{ItEFw}(iv)}{Unconditional estimate near $\scri^+$.} For $u\in\Heb^{2,\alpha}(\Omega^0_{\delta,\rho})^{\bullet,-}$, the estimate~\eqref{EqEEnergyEst0} now follows by a density argument. Next, given $f\in\Heb^{0,\alpha'}(\Omega^0_{\delta,\rho})^{\bullet,-}\cap H^1_\loc(M^\circ)$, the unique forward solution $u$ of $P u=f$ lies in $H^2_\loc((\Omega^0_{\delta,\rho})^\circ)$. In order to show that it satisfies the estimate~\eqref{EqEEnergyEst0}, we cannot directly apply the previous estimates, as a priori the growth of $u$ at $\scri^+$ is not controlled. Instead, we apply the above energy estimates on $\Omega^0_{\delta,\rho}\setminus\Omega^0_{\delta',\rho}$, i.e.\ using the sharp cutoff $\chi:=1_{\Omega^0_{\delta,\rho}}-1_{\Omega^0_{\delta',\rho}}$, for $\delta'\in(0,\delta)$. The boundary hypersurfaces of $\Omega^0_{\delta,\rho}\setminus\Omega^0_{\delta',\rho}$ are $\Sigma^{0,\rm in}_{\delta,\rho}$ (where $u$ has supported character), $\Sigma^{0,\rm out}_{\delta,\rho}\setminus\Sigma^{0,\rm out}_{\delta',\rho}$ (where the boundary term has a good sign, as shown before) and $\Sigma_{\delta',\rho}^{0,\rm in}$, where the boundary term again has a good sign (since $\nabla\chi$ is future timelike there). Thus, upon dropping these advantageous boundary terms, we have
  \[
    \|u\|_{\Heb^{1,\alpha}(\Omega^0_{\delta,\rho}\setminus\Omega^0_{\delta',\rho})^{\bullet,-}}\leq C\|P u\|_{\Heb^{0,\alpha'}(\Omega^0_{\delta,\rho}\setminus\Omega^0_{\delta',\rho})},
  \]
  where the constant $C$ is independent of $\delta'$; hence, upon taking $\delta'\searrow 0$ we conclude that $u\in\Heb^{1,\alpha}(\Omega^0_{\delta,\rho})^{\bullet,-}$, and the estimate~\eqref{EqEEnergyEst0} holds.

  Finally, for general $f\in\Heb^{0,\alpha'}(\Omega_{\delta,\rho}^0)^{\bullet,-}$, we pick a sequence $f_j\in\Heb^{1,\alpha'}(\Omega_{\delta,\rho}^0)^{\bullet,-}$ which converges to $f$ in $\Heb^{0,\alpha'}(\Omega_{\delta,\rho}^0)^{\bullet,-}$. The forward solution $u_j$ of $P u_j=f_j$ satisfies $u_j\in\Heb^{1,\alpha}(\Omega_{\delta,\rho}^0)^{\bullet,-}$. Applying~\eqref{EqEEnergyEst0} to the differences $u_j-u_k$ shows that $u_j$ is a Cauchy sequence, and the limit $u\in\Heb^{1,\alpha}(\Omega_{\delta,\rho}^0)^{\bullet,-}$ satisfies $P u=f$ together with the estimate~\eqref{EqEEnergyEst0}.
 
  %%%%%%%%%%
  \pfsubstep{\eqref{ItEFw}(v)}{Unconditional estimate in the original domain $\Omega^0_{\delta,\rho}$.} Finally, we need to prove~\eqref{EqEEnergyEst0} for the \emph{original} value of $\delta>0$, rather than for the (possibly much smaller) value fixed in the course of the above argument, which we denote by $\delta'\in(0,\delta]$ now for clarity. This, however, is straightforward, as one can estimate
  \begin{equation}
  \label{EqEEnergyEst1}
    \|u\|_{\Heb^{1,\alpha}(\Omega^0_{\delta,\rho}\setminus\Omega^0_{\delta',\rho})^{\bullet,-}}\leq C\|P u\|_{\Heb^{0,\alpha'}(\Omega^0_{\delta,\rho}\setminus\Omega^0_{\delta',\rho})^{\bullet,-}},
  \end{equation}
  with `$\bullet$' now indicating supported character at $\Sigma^{0,\rm in}_{\delta,\rho}$, and `$-$' indicating extendible character at $\Sigma^{0,\rm out}_{\delta,\rho}\setminus\Sigma^{0,\rm out}_{\delta',\rho}$ as well as at $\Sigma^{0,\rm in}_{\delta',\rho}$. Indeed, the domain $\Omega^0_{\delta,\rho}\setminus\Omega^0_{\delta',\rho}$ is disjoint from $\scri^+$, and hence the weight at $\scri^+$ is irrelevant, and the timelike function $x_{\!\scri}$ is restricted to a compact subset of $(0,\infty)$; thus,~\eqref{EqEEnergyEst1} is a standard finite time energy estimate. (Concretely, one can use the vector field multiplier $\rho_0^{-2\check\alpha_0}e^{\digamma x_{\!\scri}}W$, with $W=-x_{\!\scri}\pa_{x_{\!\scri}}+\rho_0\pa_{\rho_0}$, or any other smooth future timelike edge-b-vector field, and $\digamma>1$ sufficiently large---thus, the estimate allows for exponential growth of $u$ as $x_{\!\scri}$ decreases, which is acceptable since we are restricting to a domain where $x_{\!\scri}$ is bounded from below by the fixed positive constant $\delta'$. Cf.\ \cite[\S1.1.1 and \S4]{HintzVasyMink4}.)

  %%%%%%%%%%
  \pfstep{Part~\eqref{ItEFw}, $s>1$.} Fix $\rho'\in(\rho,\bar\rho_0)$. Given $f\in\Heb^{s-1,\alpha'}(\Omega^0_{\delta,\rho})^{\bullet,-}$, we first extend $f$ to $\tilde f\in\Heb^{s-1,\alpha'}(\Omega^0_{\delta,\rho'})^{\bullet,-}$ so that $\|\tilde f\|_{\Heb^{s-1,\alpha'}(\Omega^0_{\delta,\rho'})^{\bullet,-}}\leq C\|f\|_{\Heb^{s-1,\alpha'}(\Omega^0_{\delta,\rho})^{\bullet,-}}$ (which can be arranged for some constant $C$ depending only on $s,\alpha,\delta,\rho,\rho'$, but not on $f$). The forward solution $\tilde u$ of $P\tilde u=\tilde f$ then satisfies
  \begin{equation}
  \label{EqESolvEst}
    \|\tilde u\|_{\Heb^{1,\alpha}(\Omega^0_{\delta,\rho'})^{\bullet,-}} \leq C\|\tilde f\|_{\Heb^{0,\alpha'}(\Omega^0_{\delta,\rho'})^{\bullet,-}}
  \end{equation}
  by the first part of the proof. Using the higher regularity of $\tilde f$, we can then propagate $\Heb^{s,\alpha}$ regularity from $x_{\!\scri}>\delta$, where $\tilde f$ and $\tilde u$ vanish and hence are smooth, using (the quantitative estimate versions of) Lemmas~\ref{LemmaMEc}\eqref{ItMEcFw}, \ref{LemmaMEinm}\eqref{ItMEinmFw}, and \ref{LemmaMEout}\eqref{ItMEoutFwLoc}, thus proving local $\Heb^{s,\alpha}$ regularity in $(\Omega^0_{\delta,\rho'})^\circ$. Note here that with $s_0:=1$, the a priori $\Heb^{s_0,\alpha}$ regularity of $u$ at $\cR_{\rm c}$ is strong enough for an application of Lemma~\ref{LemmaMEc}\eqref{ItMEcFw} since, due to the assumptions~\eqref{EqEFwAssm}, we have
  \[
    \frac12+(\alpha_{\!\scri}-\alpha_0) + (\alpha_{\!\scri}-\ubar p_1) < \frac12+\frac12-\frac12 = \frac12 < s_0.
  \]
  Restricting to $\Omega^0_{\delta,\rho}$ gives, in view of~\eqref{EqESolvEst}, the desired estimate
  \begin{align*}
    \|u\|_{\Heb^{s,\alpha}(\Omega^0_{\delta,\rho})^{\bullet,-}} = \|\tilde u|_{\Omega^0_{\delta,\rho}}\|_{\Heb^{s,\alpha}(\Omega^0_{\delta,\rho})^{\bullet,-}} &\leq C\Bigl(\|\tilde f\|_{\Heb^{s-1,\alpha'}(\Omega^0_{\delta,\rho'})^{\bullet,-}}+\|\tilde u\|_{\Heb^{1,\alpha}(\Omega^0_{\delta,\rho'})^{\bullet,-}}\Bigr) \\
      &\leq C'\|f\|_{\Heb^{s-1,\alpha'}(\Omega^0_{\delta,\rho})^{\bullet,-}}.
  \end{align*}
  (This extension and restriction procedure is necessitated by the fact that the microlocal estimates are not sharply localized, unlike energy estimates. See also \cite[\S2.1.3]{HintzVasySemilinear}.)

  %%%%%%%%%%
  \pfstep{Part~\eqref{ItEBw}, $\tilde s\leq 0$.} Let $s=-\tilde s+1$, $(\alpha_0,2\alpha_{\!\scri})=(-\tilde\alpha'_0,-2\tilde\alpha'_{\!\scri})$. The a priori estimate
  \begin{equation}
  \label{EqEFwEst}
    \|u\|_{\Heb^{s,\alpha}(\Omega^0_{\delta,\rho})^{\bullet,-}} \leq C\|P u\|_{\Heb^{s-1,\alpha'}(\Omega^0_{\delta,\rho})^{\bullet,-}}
  \end{equation}
  (valid for all $u$ for which both norms are finite) implies, by duality, that the equation $P^*\tilde u=\tilde f$ for $\tilde f\in(\Heb^{s,\alpha}(\Omega^0_{\delta,\rho})^{\bullet,-})^*=\Heb^{\tilde s-1,\tilde\alpha'}(\Omega^0_{\delta,\rho})^{-,\bullet}$ has a solution $\tilde u\in(\Heb^{s-1,\alpha'}(\Omega^0_{\delta,\rho})^{\bullet,-})^*=\Heb^{\tilde s,\tilde\alpha}(\Omega^0_{\delta,\rho})^{-,\bullet}$ which moreover satisfies the estimate
  \begin{equation}
  \label{EqEBwEst}
    \|\tilde u\|_{\Heb^{\tilde s,\tilde\alpha}(\Omega^0_{\delta,\rho})^{-,\bullet}}\leq C\|\tilde f\|_{\Heb^{\tilde s-1,\tilde\alpha'}(\Omega^0_{\delta,\rho})^{-,\bullet}}
  \end{equation}
  with the same constant $C$ as in~\eqref{EqEFwEst}.

  As for the uniqueness claim, suppose that $\tilde u\in\Heb^{-\infty,\tilde\alpha}(\Omega^0_{\delta,\rho})^{-,\bullet}$ satisfies $P^*\tilde u=0$. Using the microlocal propagation estimates proved in~\S\ref{SsME}, we find that $\tilde u\in\Heb^{\tilde s,\tilde\alpha}(\Omega^0_{\delta',\rho})^{-,\bullet}$ for any $\delta'<\delta$. Given any $f\in(\Heb^{\tilde s,\tilde\alpha}(\Omega^0_{\delta',\rho})^{-,\bullet})^*$, the solvability of $P u=f$ with $u\in(\Heb^{\tilde s-1,\tilde\alpha'}(\Omega^0_{\delta',\rho})^{-,\bullet})^*$ implies that $\la \tilde u,f\ra=\la \tilde u,P u\ra=\la P^*\tilde u,u\ra=0$. Since $f$ is arbitrary, this implies $\tilde u=0$.

  In particular, given $\tilde u\in\Heb^{\tilde s,\tilde\alpha}(\Omega^0_{\delta,\rho})^{-,\bullet}$ for which $\tilde f:=P^*\tilde u$ satisfies $\tilde f\in\Heb^{\tilde s-1,\tilde\alpha'}(\Omega^0_{\delta,\rho})^{-,\bullet}$, the solution $\tilde u'\in\Heb^{\tilde s,\tilde\alpha}(\Omega^0_{\delta,\rho})^{-,\bullet}$ of $P^*\tilde u'=\tilde f$ constructed above by duality must be equal to $\tilde u$; but this implies that $\tilde u=\tilde u'$ obeys the norm bound~\eqref{EqEBwEst}. Thus, the estimate~\eqref{EqEBwEst} holds for all $\tilde u$ for which both sides are finite.

  %%%%%%%%%%
  \pfstep{Part~\eqref{ItEBw}, full range of $\tilde s$.} Let now $0\leq\tilde s<\half-\tilde\alpha_0+2\tilde\alpha_{\!\scri}+\ubar p_1$. Extending the desired forcing term $\tilde f\in\Heb^{\tilde s-1,\tilde a'}(\Omega_{\delta,\rho}^0)^{-,\bullet}$ to a slightly larger domain $\Omega^0_{\delta',\rho}$, with $\delta'>\delta$, to a distribution $\tilde f'$ with norm bounded by a fixed constant times that of $\tilde f$, then finding the solution $P^*\tilde u'=\tilde f'$ on the larger domain in the space $\Heb^{0,\tilde\alpha}(\Omega^0_{\delta',\rho})^{-,\bullet}$ using the previous step of the proof, subsequently using the propagation results in~\S\ref{SsME}, and finally restricting back to $\Omega^0_{\delta,\rho}$ produces a solution of $P^*\tilde u=\tilde f$ with the desired regularity, and gives a quantitative estimate
  \begin{equation}
  \label{EqEBwFullEst}
    \|\tilde u\|_{\Heb^{\tilde s,\tilde\alpha}(\Omega^0_{\delta,\rho})^{-,\bullet}}\leq C\|P^*\tilde u\|_{\Heb^{\tilde s-1,\tilde\alpha'}(\Omega^0_{\delta,\rho})^{-,\bullet}}.
  \end{equation}
  Propagation of regularity and uniqueness of solutions in $\Heb^{-\infty,\tilde\alpha}(\Omega^0_{\delta,\rho})^{-,\bullet}$ of $P^*\tilde u=\tilde f$ implies, as in the previous step, that~\eqref{EqEBwFullEst} holds for all $\tilde u$ for which both sides are finite.

  %%%%%%%%%%
  \pfstep{Part~\eqref{ItEFw}, full range of $s$.} Letting $\tilde s=-s+1$ and $(\tilde\alpha_0,2\tilde\alpha_{\!\scri})=(-\alpha_0',-2\alpha_{\!\scri}')$, this now follows by duality from the a priori estimate~\eqref{EqEBwFullEst}.
\end{proof}

To state the forward version with higher b-regularity, we shall use the Sobolev spaces $H_{\ebop;\bop}^{(s;k),\alpha}(\Omega_{\delta,\rho}^0;\upbeta^*E)^{\bullet,-}$ which are defined analogously to Definition~\ref{DefESpaces}.

\begin{cor}[Higher b-regularity]
\label{CorEFwb}
  Let $P$ be a special admissible operator, and let $\ubar p_1$ be as in Definition~\usref{DefMEMin}. Let $s,\alpha_0,\alpha_{\!\scri}\in\R$ and $k\in\N_0$, put $\alpha=(\alpha_0,2\alpha_{\!\scri})$ and $\alpha'=\alpha+(2,2)$, and suppose that
  \[
    s>\frac12-\alpha_0+2\alpha_{\!\scri}-\ubar p_1,\qquad
    \alpha_{\!\scri}<\alpha_0+\frac12,\qquad
    \alpha_{\!\scri}<-\frac12+\ubar p_1.
  \]
  Let $f\in H_{\ebop;\bop}^{(s-1;k),\alpha'}(\Omega_{\delta,\rho}^0;\upbeta^*E)^{\bullet,-}$. Then the unique distributional forward solution $u$ of $P u=f$ in $\Omega_{\delta,\rho}^0\cap M^\circ$ satisfies $u\in H_{\ebop;\bop}^{(s;k),\alpha}(\Omega_{\delta,\rho}^0;\upbeta^*E)^{\bullet,-}$, and
    \[
      \|u\|_{H_{\ebop;\bop}^{(s;k),\alpha}(\Omega^0_{\delta,\rho};\upbeta^*E)^{\bullet,-}}\leq C\|f\|_{H_{\ebop;\bop}^{(s-1;k),\alpha'}(\Omega^0_{\delta,\rho};\upbeta^*E)^{\bullet,-}},
    \]
    where $C$ only depends on $P,s,k,\alpha_0,\alpha_{\!\scri},\delta,\rho$.
\end{cor}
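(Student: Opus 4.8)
The plan is to bootstrap from Theorem~\ref{ThmE}\eqref{ItEFw} (the case $k=0$) using the commutator machinery of~\S\ref{Sb}, exactly mirroring the inductive argument in the proof of Proposition~\ref{PropbHProp}. First I would observe that for $k=0$ the statement is precisely Theorem~\ref{ThmE}\eqref{ItEFw}, since $H_{\ebop;\bop}^{(s;0),\alpha}=\Heb^{s,\alpha}$. For the inductive step, suppose the result holds for b-regularity order $k-1$, and let $f\in H_{\ebop;\bop}^{(s-1;k),\alpha'}(\Omega_{\delta,\rho}^0)^{\bullet,-}$ with forward solution $u$ of $P u=f$. By the $k=0$ case we already know $u\in\Heb^{s,\alpha}(\Omega_{\delta,\rho}^0)^{\bullet,-}$; it remains to show $V u\in H_{\ebop;\bop}^{(s;k-1),\alpha}$ for every commutator b-vector field $V\in\cV_{[\bop]}(M)$ (or rather, a suitable commutator b-operator $X\in\Diff^1_{[\bop]}(M;\upbeta^*E)$), since by Proposition~\ref{PropbCMany} such vector fields span $\Vb(M)$ over $\CI(M)$ and one reduces a general b-operator of order $k$ to iterated application of such $X$'s times smooth coefficients.

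The key computation is that $X$ applied to $u$ solves a forward wave equation of the same type: $P(X u)=X f+[P,X]u$. By Lemma~\ref{LemmabCComm}, $[P,X]\in\rho_0^2 x_{\!\scri}^2\rho_+^2(\CI+\cA^{(\ell_0,2\ell_{\!\scri},\ell_+)})\Diffeb^2(M;\upbeta^*E)$ — it loses the same weight as $P$ but is genuinely an edge-b-operator of order $2$ — so $[P,X]u\in\Heb^{s-2,\alpha'}(\Omega_{\delta,\rho}^0)^{\bullet,-}$ by the mapping properties of edge-b-operators. Unfortunately this is one order of edge-b-regularity short of what the $k=0$ solvability estimate requires on the right-hand side; the remedy, as in~\S\ref{SsbI}, is to note that the forward propagation estimates (Lemmas~\ref{LemmaMEc}\eqref{ItMEcFw}, \ref{LemmaMEinm}\eqref{ItMEinmFw}, \ref{LemmaMEinp}\eqref{ItMEinpFw}, \ref{LemmaMEout}\eqref{ItMEoutFwLoc}) lose only \emph{one} edge-b-derivative relative to elliptic regularity, and — crucially — none of them imposes an upper bound on $s$ (the threshold conditions \eqref{EqEFwAssm} are lower bounds on $s$, compatible with increasing $s$). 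Concretely I would prove a stronger inductive hypothesis: for the forward solution $u$ of $P u=f$ with $f\in H_{\ebop;\bop}^{(s-1;k),\alpha'}$, one has $u\in H_{\ebop;\bop}^{(s;k),\alpha}$ together with the quantitative estimate, where for the estimate on $X u$ one applies the case-$(k-1)$ estimate (with differential order $s$) to the equation $P(X u)=X f+[P,X]u$, controlling $\|X f\|_{H_{\ebop;\bop}^{(s-1;k-1),\alpha'}}\leq\|f\|_{H_{\ebop;\bop}^{(s-1;k),\alpha'}}$ and $\|[P,X]u\|_{H_{\ebop;\bop}^{(s-1;k-1),\alpha'}}\lesssim\|u\|_{H_{\ebop;\bop}^{(s;k-1),\alpha}}$, the latter being finite by the inductive hypothesis since $H_{\ebop;\bop}^{(s;k-1),\alpha}\subset H_{\ebop;\bop}^{(s-1;k),\alpha}$ by Lemma~\ref{LemmabHComm} (which also handles the off-by-one in b- versus edge-b-order). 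Then one sums over a finite spanning set of commutator b-vector fields and over the $k$-fold iterates.

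The main obstacle — and the reason the argument is slightly more than a one-line citation of~\S\ref{Sb} — is precisely this bookkeeping of orders: the commutator $[P,X]u$ naively sits only in $\Heb^{s-2,\alpha'}$-type spaces, so one cannot simply feed it into the energy estimate of Theorem~\ref{ThmE}\eqref{ItEFw} with source in $\Heb^{s-1,\alpha'}$. One must instead run the argument through the microlocal propagation estimates of~\S\ref{SsME} (which tolerate a source term that is one edge-b-order rougher than the solution) combined with the energy estimate only to get started at $x_{\!\scri}>\delta$ where $u,f$ vanish — exactly the extension/restriction scheme used in the proof of Theorem~\ref{ThmE}\eqref{ItEFw} for $s>1$. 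Thus I would actually phrase the proof as: extend $f$ to $\Omega^0_{\delta,\rho'}$ with $\rho'>\rho$, solve on the larger domain via Theorem~\ref{ThmE}\eqref{ItEFw} to get the $\Heb^{1,\alpha}$ seed, then apply Proposition~\ref{PropbI} (the edge,b;b-versions of the radial point estimates) together with edge,b;b real principal type propagation (Proposition~\ref{PropbHProp}) to propagate $H_{\ebop;\bop}^{(s;k),\alpha}$ from $x_{\!\scri}>\delta$ through $\cR_{\rm c}$, $\cR_{\rm in,-}$, and to $\cR_{\rm out}$; the a priori $H_{\ebop;\bop}^{(s_0;k),\alpha}$ regularity at $\cR_{\rm c}$ needed for Proposition~\ref{PropbI} is available with $s_0=1$ by the same inequality $\tfrac12+(\alpha_{\!\scri}-\alpha_0)+(\alpha_{\!\scri}-\ubar p_1)<\tfrac12<1=s_0$ used in the proof of Theorem~\ref{ThmE}. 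Restricting back to $\Omega^0_{\delta,\rho}$ then yields the claimed membership and estimate, with the constant depending only on $P,s,k,\alpha_0,\alpha_{\!\scri},\delta,\rho$ since each invoked estimate has a constant of that form.
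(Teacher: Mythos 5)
Your overall skeleton (induct on $k$, commute an operator $X\in\Diff_{[\bop]}^1(M;\upbeta^*E)$ through $P$, use Theorem~\ref{ThmE}\eqref{ItEFw} as the base case and Proposition~\ref{PropbCMany} to pass from commutator b-vector fields to all of $\Vb(M)$) is the same as the paper's, but your inductive step does not close as written. Since $[P,X]\in\rho_0^2 x_{\!\scri}^2\rho_+^2(\CI+\cA)\Diffeb^2$ is a \emph{second}-order weighted edge-b-operator, knowing only $u\in H_{\ebop;\bop}^{(s;k-1),\alpha}$ gives $[P,X]u\in H_{\ebop;\bop}^{(s-2;k-1),\alpha'}$ (or, converting one edge-b-factor into a b-derivative, $H_{\ebop;\bop}^{(s-1;k-2),\alpha'}$), never $H_{\ebop;\bop}^{(s-1;k-1),\alpha'}$; so your claimed bound $\|[P,X]u\|_{H_{\ebop;\bop}^{(s-1;k-1),\alpha'}}\lesssim\|u\|_{H_{\ebop;\bop}^{(s;k-1),\alpha}}$ is false, and the inclusion you invoke to patch it is quoted in the wrong direction: since $\Veb(M)\subset\Vb(M)$, one b-derivative controls an edge-b-derivative, so the correct inclusion is $H_{\ebop;\bop}^{(s-1;k),\alpha}\subset H_{\ebop;\bop}^{(s;k-1),\alpha}$, not the reverse — and even with the correct inclusion your estimate does not follow. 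The missing move — which you gesture at ("no upper bound on $s$") but never make — is precisely the paper's: because $f\in H_{\ebop;\bop}^{(s-1;k),\alpha'}\subset H_{\ebop;\bop}^{(s;k-1),\alpha'}$ and the hypotheses on $s$ are only lower bounds, apply the level-$(k-1)$ statement with $s$ replaced by $s+1$ to conclude $u\in H_{\ebop;\bop}^{(s+1;k-1),\alpha}$ \emph{first}; then $[P,X]u\in H_{\ebop;\bop}^{(s-1;k-1),\alpha'}$, $P(Xu)=Xf+[P,X]u\in H_{\ebop;\bop}^{(s-1;k-1),\alpha'}$, and applying the level-$(k-1)$ statement at order $s$ to the forward solution $Xu$ closes the induction. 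With this correction no re-run of the microlocal machinery is needed; the commutator $[P,X]$ is handled entirely by Lemma~\ref{LemmabCComm} and the inductive solvability statement.

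Your fallback phrasing (extend $f$, get the $\Heb^{1,\alpha}$ seed from the energy estimate, then propagate $H_{\ebop;\bop}^{(s;k),\alpha}$-regularity via Proposition~\ref{PropbI} and Proposition~\ref{PropbHProp}, mimicking the $s>1$ step of Theorem~\ref{ThmE}) has a separate gap: the edge,b;b-wave front set statements only make sense for $u\in H_{\ebop;\bop}^{(-\infty;k),\alpha}$ (Definition~\ref{DefbHWF}), and the corresponding quantitative estimates carry an error term of the form $\|u\|_{H_{\ebop;\bop}^{(-N;k),\alpha}}$ which must be finite before the estimate yields anything. The energy-estimate seed provides no b-regularity at all, so these $k$-level propagation results cannot be applied directly to the forward solution; supplying the a priori $k$ orders of b-regularity is exactly what the commutator induction is for, so this route is circular unless you run that induction anyway — at which point the corrected argument of the previous paragraph is both shorter and sufficient.
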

\begin{proof}
  For $k=0$, this is Theorem~\ref{ThmE}\eqref{ItEFw}. Suppose, by induction, that we have proved the Corollary for some amount $k\in\N_0$ of b-regularity, and let $f\in H_{\ebop;\bop}^{(s-1;k+1),\alpha'}\subset H_{\ebop;\bop}^{((s+1)-1;k),\alpha'}$. Then the inductive hypothesis gives $u\in H_{\ebop;\bop}^{(s+1;k),\alpha}$. Let $X\in\Diff_{[\bop]}^1(M;E)$ be a commutator b-operator (see Definition~\ref{DefbCOp}). Then
  \[
    P(X u) = X f + [P,X]u \in H_{\ebop;\bop}^{(s-1;k),\alpha'}
  \]
  since $[P,X]\in\cA^{(2,2,2)}\Diffeb^2(M)$ by Lemma~\ref{LemmabCComm}. Applying the inductive hypothesis again, we find $X u\in H_{\ebop;\bop}^{(s;k),\alpha}(M)$. Since $X$ was arbitrary, this implies $u\in H_{\ebop;\bop}^{(s;k+1),\alpha}(M)$ by Proposition~\ref{PropbCMany}.
\end{proof}

%%%%%%%%%%%%%%%%%%%%%%%%%%%%%%%%%%%%%%%%%%%%%%%%%%%%%%%%%%%%%%%%%%%%%%
\section{Control of edge-b-decay at \texorpdfstring{$\scri^+$}{null infinity}}
\label{SNe}

We use the notation $M,M_0,Y$, the coordinates $\varrho=\frac{1}{t+r}$, $v=\frac{t-r}{t+r}$ as laid out at the beginning of~\S\ref{SE}, and we continue to work with an admissible operator $P$ as in Definition~\ref{DefOp}. In the previous section, we obtained full control of forward solutions of $P u=f$ near $\scri^+\setminus I^+$ on the scale of weighted edge-b-Sobolev spaces. We shall now work near $I^+$; we use $t_*=t-r$ and the coordinates $\rho_+=\frac{1}{t_*-T}$, $x_{\!\scri}=\sqrt{\frac{t_*-T}{r}}$ as in~\eqref{EqOGeoCoordIp}, where $T\in\R$ is arbitrary but fixed. We drop the weight at $I^0$ from the notation, and also the vector bundle $\upbeta^*E\to M$ unless additional arguments are required in its presence.

As explained in~\S\ref{SI}, the behavior of waves---both regularity and decay---uniformly near $\scri^+\cap I^+$, cannot be analyzed locally, but rather depends on global information which is not captured by Definition~\ref{DefOp}. In a situation where regularity is controlled globally near $\scri^+$ however (see~\cite{HintzNonstat} for several classes of examples), growth/decay at $\scri^+$, as measured in edge-b-Sobolev spaces, is controlled by the edge normal operator of $P$. In~\S\ref{SsNe}, we prove a priori estimates and solvability results for the localization of the edge normal operator of $P$ near $\scri^+\cap I^+$. These are used in~\S\ref{SsNeLOC} to prove a priori estimates for $P$ near $\scri^+\cap I^+$ which control a solution $u$ of $P u=f$ in the sense of decay near $\scri^+$ (though with a loss of derivatives); see Theorem~\ref{ThmNeP}.

%%%%%%%%%%%%%%%%%%%%%%%%%%%%%%%%%%%%%%%%%%%%%%%%%%
\subsection{Analysis of the edge normal operator}
\label{SsNe}

Fixing $y_0\in Y$ and writing $p_{0,y_0}^+:=p_0^+|_{\scri^+_{y_0}}\in(\CI+\cA^{\ell_+})([0,1)_{\rho_+};\End(E_{y_0}))$, $p_{1,y_0}:=p_1|_{\scri^+_{y_0}}\in(\CI+\cA^{\ell_+})([0,1)_{\rho_+};\End(E_{y_0}))$, consider the edge normal operator of $P$ at $\scri^+$,\footnote{We write $P_{y_0}:={}^\eop N_{\scri^+,y_0}(P)$ for brevity.}
\[
  P_{y_0} = \frac12 x_{\!\scri}^2\rho_+^2\biggl(\Bigl(x_{\!\scri} D_{x_{\!\scri}}-2 i^{-1}\Bigl(\frac{n-1}{2}+p_{1,y_0}\Bigr)\Bigr)(x_{\!\scri} D_{x_{\!\scri}}-2\rho_+ D_{\rho_+}) + (x_{\!\scri} D_y)^2 + p_{0,y_0}^+\biggr),
\]
on ${}^+N\scri^+_{y_0}$ near $\rho_+=0$ (cf.\ \eqref{EqOpNorm}), i.e.\ on the domain\footnote{We are committing an minor abuse of notation here by writing $x_{\!\scri}$ for a fiber-linear coordinate on ${}^+N\scri^+_{y_0}$ whose differential at the zero section we take to agree with the differential of the coordinate function on $M$ denoted $x_{\!\scri}$ above.}
\[
  \cN := [0,1)_{\rho_+}\times[0,\infty)_{x_{\!\scri}}\times\R^{n-1}_y
\]
and acting on sections of the trivial bundle $E_{y_0}$; thus $p_{j,y_0}\in(\CI+\cA^{\ell_+})([0,1)_{\rho_+};\End(E_{y_0}))$ for $j=0,1$. Here, we made a linear change of the $y$-coordinates so that $\pa_{y^1},\ldots,\pa_{y^{n-1}}$ is an orthonormal basis for the metric $\half k(y_0)\in S^2 T_{y_0}^*Y$. Denote the underlying metric by
\begin{align*}
  g_{y_0} &:= 2 x_{\!\scri}^{-2}\rho_+^{-2}\Bigl(-2\frac{\dd x_{\!\scri}}{x_{\!\scri}}\otimes_s\frac{\dd\rho_+}{\rho_+}-\frac{\dd\rho_+^2}{\rho_+^2} + \frac{\dd y^2}{x_{\!\scri}^2}\Bigr), \\
  g_{y_0}^{-1} &= \frac12 x_{\!\scri}^2\rho_+^2\Bigl(x_{\!\scri}\pa_{x_{\!\scri}}\otimes_s(x_{\!\scri}\pa_{x_{\!\scri}}-2\rho_+\pa_{\rho_+}) + (x_{\!\scri}\pa_y)^2\Bigr).
\end{align*}
On $\cN$, we moreover fix the volume density $|\dd g_{y_0}|=2 x_{\!\scri}^{-2 n}\rho_+^{-n-1}|\frac{\dd x_{\!\scri}}{x_{\!\scri}}\frac{\dd\rho_+}{\rho_+}\dd y|$, cf.\ \eqref{EqMEDensity}.

Since $P_{y_0}\in\Diff_{\ebop,I}^2(\cN)$---i.e.\ it is invariant under dilations in $(x_{\!\scri},y)$ and translations in $y$---its analysis will utilize the invariant edge-b-notions (Sobolev spaces, ps.d.o.s, wave front sets) introduced in~\S\ref{SsEBI}. For $\rho\in(0,1)$, let
\[
  \Omega_\rho := \{ \rho_+<\rho \} \subset \cN.
\]

\begin{prop}[Invertibility of the edge normal operator near $I^+$]
\label{PropNe}
  Assume that $P$ is special admissible in the sense of Definition~\usref{DefESpecial}.
  \begin{enumerate}
  \item\label{ItNeFw}{\rm (Forward problem.)} Let $s,\alpha_{\!\scri},\alpha_+\in\R$, and put $\alpha=(2\alpha_{\!\scri},\alpha_+)$ and $\alpha'=\alpha+(2,2)$. Suppose that
    \begin{equation}
    \label{EqNeFwAssm}
      \alpha_+ + \frac12 < \alpha_{\!\scri} < -\frac12+\ubar p_1.
    \end{equation}
    Let $u\in\dot H_{\ebop,I}^{s,\alpha}(\Omega_\rho)$ be such that $f:=P_{y_0}u\in\dot H_{\ebop,I}^{s-1,\alpha'}(\Omega_\rho)$. Then
    \begin{equation}
    \label{EqNeFwApriori}
      \|u\|_{\dot H_{\ebop,I}^{s,\alpha}(\Omega_\rho)}\leq C\|f\|_{\dot H_{\ebop,I}^{s-1,\alpha'}(\Omega_\rho)},
    \end{equation}
    where $C$ only depends on $P,s,\alpha_{\!\scri},\alpha_+,\rho$. Moreover, for any $f\in\dot H_{\ebop,I}^{s-1,\alpha'}(\Omega_\rho)$, there exists $u$ satisfying the equation $P_{y_0}u=f$ and the estimate~\eqref{EqNeFwApriori}; this $u$ is unique in the sense that any other solution $u'\in\dot H_{\ebop,I}^{-\infty,\alpha}(\Omega_\rho)$ of $P_{y_0}u'=f$ is equal to $u$.
  \item\label{ItNeBw}{\rm (Backward problem.)} Let $\tilde s,\tilde\alpha_{\!\scri},\tilde\alpha_+\in\R$, and put $\tilde\alpha=(2\tilde\alpha_{\!\scri},\tilde\alpha_+)$ and $\tilde\alpha'=\tilde\alpha+(2,2)$. Suppose that
    \begin{equation}
    \label{EqNeBwAssm}
      -\frac12-\ubar p_1 < \tilde\alpha_{\!\scri} < \tilde\alpha_+ + \frac12.
    \end{equation}
    Let $\tilde u\in\bar H_{\ebop,I}^{\tilde s,\tilde\alpha}(\Omega_\rho)$ be such that $\tilde f:=P_{y_0}^*\tilde u\in\bar H_{\ebop,I}^{\tilde s-1,\tilde\alpha'}(\Omega_\rho)$. Then
    \begin{equation}
    \label{EqNeBwApriori}
      \|\tilde u\|_{\bar H_{\ebop,I}^{\tilde s,\tilde\alpha}(\Omega_\rho)} \leq C\|\tilde f\|_{\bar H_{\ebop,I}^{\tilde s-1,\tilde\alpha'}(\Omega_\rho)},
    \end{equation}
    where $C$ only depends on $P,\tilde s,\tilde\alpha_{\!\scri},\tilde\alpha_+$. Moreover, for any $\tilde f\in\bar H_{\ebop,I}^{\tilde s-1,\tilde\alpha'}(\Omega_\rho)$, there exists $\tilde u$ which satisfies the equation $P_{y_0}^*\tilde u=\tilde f$ and the estimate~\eqref{EqNeBwApriori}; this $\tilde u$ is unique in the sense that any solution $\tilde u'\in\bar H_{\ebop,I}^{-\infty,\tilde\alpha}(\Omega_\rho)$ of $P_{y_0}^*\tilde u'=\tilde f$ is equal to $\tilde u$.
  \end{enumerate}
\end{prop}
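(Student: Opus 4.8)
The plan is to prove Proposition~\ref{PropNe} by transplanting the proof of Theorem~\ref{ThmE} to the invariant edge-b-setting of~\S\ref{SsEBI} on $\cN$, now mirrored from $\scri^+\setminus I^+$ to $\scri^+\cap I^+$. The key structural observation is that $P_{y_0}={}^\eop N_{\scri^+,y_0}(P)$ is itself an admissible operator on the model manifold $\cN$, with coefficients of the exact model form~\eqref{EqOpNorm} at $\scri^+$ and only conormal-in-$\rho_+$ lower order terms coming from $p^+_{0,y_0},p_{1,y_0}$; and $P_{y_0}$ is special admissible whenever $P$ is. Hence all the ingredients of the proof of Theorem~\ref{ThmE}---a base-regularity energy estimate, the microlocal radial point and real principal type propagation estimates of~\S\ref{SsME}, and duality---are available, now in the invariant calculus. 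The one genuinely new feature is that $\cN=[0,1)_{\rho_+}\times[0,\infty)_{x_{\!\scri}}\times\R^{n-1}_y$ is noncompact in the $x_{\!\scri}$-direction; but the spaces $\bar H_{\ebop,I}^{s,\alpha}$ and $\dot H_{\ebop,I}^{s,\alpha}$ measure regularity and decay uniformly up to $x_{\!\scri}=\infty$, where $P_{y_0}$ has scattering behavior, and the invariant energy and microlocal estimates extend across $x_{\!\scri}=\infty$ with only notational changes (as anticipated in~\S\ref{SsEBI}).

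For the base energy estimate I would use the vector field multiplier $Z=i^{-1}\rho_+^{-2\check\alpha_+}x_{\!\scri}^{-4\check\alpha_{\!\scri}}W$ with $W=-x_{\!\scri}\pa_{x_{\!\scri}}-(2-c)\rho_+\pa_{\rho_+}$, where $\check\alpha=\alpha+(1,1)$ and $c\in(0,2)$ is to be chosen small; by~\eqref{EqOGeoTimelike}, $W$ is future timelike for $g_{y_0}$, and since $g_{y_0}$ is exactly scaling invariant this timelikeness is uniform in $x_{\!\scri}\in(0,\infty)$ (note also $g_{y_0}^{-1}(\tfrac{\dd\rho_+}{\rho_+},\tfrac{\dd\rho_+}{\rho_+})=0$, so $\{\rho_+=\mathrm{const}\}$ is null). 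Exactly as in steps (i)--(ii) of the proof of Theorem~\ref{ThmE}, one computes the edge normal operator of the commutator $\sC=i(P_{y_0}^*Z-Z^*P_{y_0})$, relates it to the abstract stress-energy tensor $\tilde K_{iZ}$ of~\eqref{EqEFwDef}, and diagonalizes $p_{1,y_0}+p_{1,y_0}^*$; using a near-optimal fiber inner product (available by special admissibility) to push $p_{1,y_0}+p_{1,y_0}^*$ above $2\ubar p_1-2\eps$ and to make $p_{1,y_0}-p_{1,y_0}^*$ and $p^+_{0,y_0}$ small, one finds that $\rho_+^{2\alpha_+}x_{\!\scri}^{4\alpha_{\!\scri}}\tilde K_{iZ}$ is positive definite on $\scri^+$, with room to spare, precisely under the threshold conditions~\eqref{EqNeFwAssm}. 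A Poincar\'e-type estimate, obtained by integrating a weighted radial vector field from the initial hypersurface $\{\rho_+=\rho\}$ (where elements of $\dot H_{\ebop,I}$ have supported character), closes the estimate for the base regularity $s$; the boundary term at the null hypersurface $\{\rho_+=\rho\}$ vanishes by the support condition when $Z$ is multiplied by the sharp cutoff $1_{\Omega_\rho}$, and the remaining boundary terms have a favorable sign, just as in steps (iii)--(v) of the proof of Theorem~\ref{ThmE}.

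Higher regularity, existence, and the full range of $s$ then follow as in Theorem~\ref{ThmE}. One propagates $H_{\ebop,I}^{s,\alpha}$-regularity from $\{\rho_+>\rho\}$, where forward solutions vanish, using the invariant versions of real principal type propagation and of the radial point estimates at $\cR_{\rm in,+}$ and $\cR_{\rm out}$ (Lemmas~\ref{LemmaMEinp}--\ref{LemmaMEout}), whose threshold conditions are again implied by~\eqref{EqNeFwAssm}; since these microlocal estimates are only semiglobal, one first extends $f$ to a slightly larger domain $\Omega_{\rho'}$, $\rho'>\rho$, solves there, and restricts back. Forward existence is a consequence of the standard well-posedness theory for the wave type operator $P_{y_0}$ on $\cN^\circ$ (finite propagation speed plus the support condition at $\{\rho_+=\rho\}$), combined with the a priori bound. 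The backward statement~\eqref{ItNeBw} follows from the forward a priori estimate by duality, with the weights reversed so that~\eqref{EqNeFwAssm} turns into~\eqref{EqNeBwAssm}, and is then upgraded in regularity by the invariant microlocal estimates; uniqueness in both directions is obtained, as in Theorem~\ref{ThmE}, by pairing a putative homogeneous solution against solutions of the opposite problem.

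The main obstacle---or rather, the point demanding the most care---is the energy estimate in the invariant, noncompact setting: one must verify that the positivity of $\tilde K_{iZ}$ holds uniformly in $x_{\!\scri}\in(0,\infty)$ and $\rho_+\in(0,\rho)$, with enough room to absorb both the conormal-at-$I^+$ error terms and the $p_{1,y_0},p^+_{0,y_0}$ contributions, and that the invariant edge-b-calculus supports the requisite positive commutator and regularization arguments, including near $x_{\!\scri}=\infty$ where the calculus degenerates to a scattering calculus. Making precise the text's assertion that these are "straightforward modifications" of the $M$-setting is the bulk of the work.
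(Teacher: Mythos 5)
There is a genuine gap at the central step of your proposal, namely the claim that the base-regularity energy estimate for the forward problem closes ``precisely under the threshold conditions~\eqref{EqNeFwAssm}''. This is false, and it is exactly the point where the situation at $\scri^+\cap I^+$ differs from the one near $\scri^+\cap I^0$ in Theorem~\ref{ThmE}. If you compute the deformation tensor $\tilde K_{i Z}$ for a multiplier $Z=i^{-1}x_{\!\scri}^{-4\check\alpha_{\!\scri}}\rho_+^{-2\check\alpha_+}(a\,x_{\!\scri}\pa_{x_{\!\scri}}+b\,\rho_+\pa_{\rho_+})$ with $a\,x_{\!\scri}\pa_{x_{\!\scri}}+b\,\rho_+\pa_{\rho_+}$ future timelike, the coefficient of the spherical block $(x_{\!\scri}\pa_y)^2$ is (up to a positive factor) $\lambda(\alpha_{\!\scri}-\alpha_+-1)-\mu(2\alpha_{\!\scri}+n-1)$, where $\lambda,\mu>0$ parameterize the timelike cone via $a\,x_{\!\scri}\pa_{x_{\!\scri}}+b\,\rho_+\pa_{\rho_+}=\lambda(-\rho_+\pa_{\rho_+}+\tfrac12 x_{\!\scri}\pa_{x_{\!\scri}})+\mu(-x_{\!\scri}\pa_{x_{\!\scri}})$. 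Under~\eqref{EqNeFwAssm} one only knows $\alpha_{\!\scri}-\alpha_+-1>-\tfrac12$, so for weights with $\alpha_{\!\scri}-\tfrac12>\alpha_+>\alpha_{\!\scri}-1$ (and, say, $2\alpha_{\!\scri}+n-1>0$, which happens already for the scalar wave operator with $\alpha_{\!\scri}$ slightly below $-\tfrac12$) this coefficient is negative for \emph{every} admissible choice of $\lambda,\mu$. The paper records this obstruction explicitly: positivity of the spherical term forces the strictly stronger condition $\alpha_++1<\alpha_{\!\scri}$, see~\eqref{EqNeFwAssm2}, so the direct forward energy estimate only works in that restricted range. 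Your specific mirrored multiplier $W=-x_{\!\scri}\pa_{x_{\!\scri}}-(2-c)\rho_+\pa_{\rho_+}$ makes matters worse, since it has $\mu\approx 2$ and hence picks up a large negative $(n-1)$-dependent contribution, whereas the paper's choice $x_{\!\scri}\pa_{x_{\!\scri}}-(2+c)\rho_+\pa_{\rho_+}$ is nearly null precisely so as to kill that term---and even then one needs $\alpha_++1<\alpha_{\!\scri}$.

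Because of this, the paper's proof cannot be (and is not) a straight transplantation of Theorem~\ref{ThmE}, and your subsequent duality step inherits the gap: you propose to deduce the backward statement from a forward a priori estimate which you do not actually have in the full range~\eqref{EqNeFwAssm}. The paper instead inverts the logic: it proves the energy estimate directly for the \emph{adjoint} $P_{y_0}^*$ in the full range~\eqref{EqNeBwAssm}, where the analogous spherical-term condition $\check{\tilde\alpha}_+>-\tfrac12+\check{\tilde\alpha}_{\!\scri}$ is automatic; by duality this yields forward \emph{solvability} at $s=0$ in the full range~\eqref{EqNeFwAssm}, which is then upgraded in regularity by the invariant microlocal estimates (this part of your outline is fine); and the forward a priori estimate~\eqref{EqNeFwApriori} is finally recovered by a uniqueness argument, applying the restricted-range forward energy estimate to the difference $u-u'$ only after relaxing the weight at $\rho_+=0$ so that the stronger condition~\eqref{EqNeFwAssm2} holds. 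Unless you either restrict to $\alpha_++1<\alpha_{\!\scri}$ or construct a genuinely different multiplier (none of the simple weighted combinations of $x_{\!\scri}\pa_{x_{\!\scri}}$ and $\rho_+\pa_{\rho_+}$ will do), your argument as written does not prove the proposition in the stated range of weights; the adjoint-first route of the paper, or some substitute for it, is needed. Your remaining concerns (noncompactness in $x_{\!\scri}$, the null character of $\{\rho_+=\rho\}$, propagation and regularization in the invariant calculus) are comparatively minor and handled as you expect.
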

\begin{proof}
  %%%%%%%%%%%%%%%%%%%%
  \pfstep{Restricted a priori estimate for $P_{y_0}$, $s=1$; solvability for $P_{y_0}^*$, $\tilde s=0$.} If we impose slightly stronger conditions on the weights (see~\eqref{EqNeFwAssm2} and~\eqref{EqNeBwAssm2} below), this follows from a variant of the proof of Theorem~\ref{ThmE}. Thus, we first prove part~\eqref{ItNeFw} for $s=1$ using an energy estimate, based on
  \[
    2\Im\la P_{y_0}u,Z u\ra = \la\sC u,u\ra,\qquad \sC:=i(P_{y_0}^*Z-Z^*P_{y_0}).
  \]
  Here, we let $\check\alpha=(2\check\alpha_{\!\scri},\check\alpha_+)=(2\alpha_{\!\scri},\alpha_+)+(1,1)$ and define
  \begin{equation}
  \label{EqNeVF}
    W:=x_{\!\scri}\pa_{x_{\!\scri}}-(2+c)\rho_+\pa_{\rho_+},\qquad
    V:=x_{\!\scri}^{-4\check\alpha_{\!\scri}}\rho_+^{-2\check\alpha_+}W,\qquad
    Z:=i^{-1}V;
  \end{equation}
  note that by~\eqref{EqOGeoTimelike}, $W$ is future timelike for $c>0$. Define
  \begin{equation}
  \label{EqNeFwsC}
  \begin{split}
    \tilde Q&:=x_{\!\scri}^2\rho_+^2(x_{\!\scri} D_{x_{\!\scri}}-2\rho_+ D_{\rho_+})=\tilde Q^*, \\
    \tilde K_V &:= K_V + (p_{1,y_0}+p_{1,y_0}^*)\sigma^1(\tilde Q)\otimes_s\sigma^1(i^{-1}V), \\
    \tilde\sC' &:= -\frac12\bigl(\tilde Q(p_{1,y_0}-p_{1,y_0}^*)Z - Z^*(p_{1,y_0}-p_{1,y_0}^*)\tilde Q\bigr) + \frac{i}{2}\rho_0^2 x_{\!\scri}^2\bigl((p_{0,y_0}^0)^*Z-Z^*p_{0,y_0}^0\bigr) \\
      &\qquad \in x_{\!\scri}^{-4\alpha_{\!\scri}}\rho_+^{-2\alpha_+}\bigl(\Diff_{\ebop,I}^1(\cN;E_{y_0})+\cA^{\ell_+}(Z)\Diff_{\ebop,I}^1(\cN;E_{y_0})\bigr).
  \end{split}
  \end{equation}
  Here, $K_V$ is given by~\eqref{EqEFwDef} for the metric $g=g_{y_0}$. We then have
  \[
    \sC = (\nabla^E)^*\tilde K_{i Z}\nabla^E + \tilde\sC'.
  \]
  Indeed, this is proved like~\eqref{EqEsCLot}, except in the present invariant setting, there are no terms which are of lower order in the sense of decay at $x_{\!\scri}=0$. (The various sign changes compared to~\eqref{EqEsCLot} and \eqref{EqEsCErr} are due to the difference of signs in front of the first terms in~\eqref{EqOpNorm}.) We compute
  \begin{align*}
    \tilde K_{i Z} &= x_{\!\scri}^{-4\alpha_{\!\scri}}\rho_+^{-2\alpha_+} \Bigl( \bigl(-2\check\alpha_{\!\scri}+(p_{1,y_0}+p_{1,y_0}^*) + \tfrac{c}{2}(-n+1-2\check\alpha_+)\bigr) (x_{\!\scri}\pa_{x_{\!\scri}})^2 \\
      &\qquad\quad + \bigl(4\check\alpha_{\!\scri}-2(p_{1,y_0}+p_{1,y_0}^*)+\tfrac{c}{2}(n-1+4\check\alpha_{\!\scri}-(p_{1,y_0}+p_{1,y_0}^*))\bigr)\,2 x_{\!\scri}\pa_{x_{\!\scri}}\otimes_s\rho_+\pa_{\rho_+} \\
      &\qquad\quad + (2+c)\bigl(-4\check\alpha_{\!\scri}+2(p_{1,y_0}+p_{1,y_0}^*)\bigr)(\rho_+\pa_{\rho_+})^2 \\
      &\qquad\quad + \bigl(-2+4\check\alpha_{\!\scri}-4\check\alpha_+ + c(-n+1-2\check\alpha_+)\bigr)(x_{\!\scri}\pa_y)^2 \Bigr).
  \end{align*}
  In the basis $\frac{\dd x_{\!\scri}}{x_{\!\scri}}$, $\frac{\dd\rho_+}{\rho_+}$, $\frac{\dd y}{x_{\!\scri}}$, and restricting to an eigenspace of $p_{1,y_0}+p_{1,y_0}^*$ with eigenvalue $2\lambda$ (satisfying $2\lambda>2\ubar p_1-2\eps$), the trace and determinant of the $2\times 2$ minor of $x_{\!\scri}^{4\alpha_{\!\scri}}\rho_+^{2\alpha_+}\tilde K_{i Z}$ are equal to $10(-\check\alpha_{\!\scri}+\lambda)+\cO(c)$ and $8(-\check\alpha_{\!\scri}+\lambda)(\check\alpha_{\!\scri}-\check\alpha_+)c+\cO(c^2)$, respectively, and therefore positive for $\check\alpha_{\!\scri}<\ubar p_1$ (provided $\eps>0$ is sufficiently small and we choose the fiber inner product on $E_{y_0}$ near-optimally) and $\check\alpha_+<\check\alpha_{\!\scri}$ (these are precisely the conditions~\eqref{EqNeFwAssm}), and then $c>0$ sufficiently small. (The term $\tilde\sC'$ in~\eqref{EqNeFwsC} can at the same time be made arbitrarily small by assumption on $p_{1,y_0}$.) However, the $(x_{\!\scri}\pa_y)^2$ term has a positive coefficient (for $c$ near $0$) only under the stronger condition $\check\alpha_+<-\half+\check\alpha_{\!\scri}$, i.e.
  \begin{equation}
  \label{EqNeFwAssm2}
    \alpha_+ + 1 < \alpha_{\!\scri}.
  \end{equation}

  Let us write $\nabla_{\ebop,I}^E u=(\nabla^E_{V_j}u)_{j=1,\ldots,N}$, where $V_1,\ldots,V_N$ spans $\cV_{\ebop,I}(\cN)$ over $\CI_I(\cN)$. An application of Cauchy--Schwarz and the invariant Poincar\'e inequality
  \[
    \|u\|_{\dot H_{\ebop,I}^{0,\alpha}} \leq C'\|\nabla_{\ebop,I}^E u\|_{\dot H_{\ebop,I}^{0,\alpha}}
  \]
  (proved via~\eqref{EqEPoincareComm} with $V_0:=x_{\!\scri}^{-4\alpha_{\!\scri}}\rho_+^{-2\alpha_+}(x_{\!\scri}\pa_{x_{\!\scri}}-2\rho_+\pa_{\rho_+})$ and using only that $\check\alpha_+<\check\alpha_{\!\scri}$, i.e. $\alpha_++\half<\alpha_{\!\scri}$), as in the proof of Theorem~\ref{ThmE}, then proves the a priori estimate~\eqref{EqNeFwApriori} for $s=1$. By duality, defining $\tilde\alpha=-\alpha'$ and $\tilde s=-(s-1)$, and under the strengthened assumption
  \begin{equation}
  \label{EqNeBwAssm2}
    \tilde\alpha_{\!\scri} < \tilde\alpha_+
  \end{equation}
  (which is equivalent to~\eqref{EqNeFwAssm2}), we obtain the solvability of $P_{y_0}^*\tilde u=\tilde f$ with the estimate~\eqref{EqNeBwApriori} for $\tilde s=0$.

  %%%%%%%%%%%%%%%%%%%%
  \pfstep{A priori estimate for $P_{y_0}^*$, $\tilde s=1$; solvability for $P_{y_0}$, $s=0$.} One can also prove an energy estimate for the adjoint problem, with extendible distributions at $\rho_+=\rho$ (which is a null hypersurface): with $W$ as before, $Z=i^{-1}V$, and $V=x_{\!\scri}^{-4\check{\tilde\alpha}_{\!\scri}}\rho_+^{-2\check{\tilde\alpha}_+}W$, where $(2\check{\tilde\alpha}_{\!\scri},\check{\tilde\alpha}_+)=(2\tilde\alpha_{\!\scri},\tilde\alpha_+)+(1,1)$, one now uses $1_{\Omega_\rho}Z$ as a multiplier. The symmetric 2-tensor $\tilde K_{i Z}$ (with $p_{0,y_0}^0,p_{1,y_0},p_{1,y_0}^*$ replaced by $(p_{0,y_0}^0)^*,-p_{1,y_0}^*,-p_{1,y_0}$) is then \emph{negative} definite under the assumptions $\check{\tilde\alpha}_{\!\scri}>-\ubar p_1$ and $\check{\tilde\alpha}_+>\check{\tilde\alpha}_{\!\scri}$; the condition for the negativity of the $(x_{\!\scri}\pa_y)^2$ term of $\tilde K_{i Z}$ now reads $\check{\tilde\alpha}_+>-\half+\check{\tilde\alpha}_{\!\scri}$, which now is automatic. The contribution from differentiation of the cutoff $1_{\Omega_\rho}$ at $\rho_+=\rho$ has the same (negative) sign since $\nabla 1_{\Omega_\rho}$, resp.\ $W$ is past, resp.\ future causal, and hence can be discarded. Thus, we obtain the a priori estimate~\eqref{EqNeBwApriori} under the assumptions~\eqref{EqNeBwAssm}.

  By duality, this implies the solvability of $P_{y_0}u=f$, as claimed in part~\eqref{ItNeFw}, for $s=0$ and the full range of weights, together with the estimate~\eqref{EqNeFwApriori}.

  %%%%%%%%%%%%%%%%%%%%
  \pfstep{A priori estimate for $P_{y_0}^*$, $\tilde s\geq 1$; solvability for $P_{y_0}$, $s\leq 0$.} Let $\tilde s\geq 1$. If we are given $\tilde u\in\bar H_{\ebop,I}^{\tilde s,\tilde\alpha}(\Omega_\rho)$ with $\tilde f=P_{y_0}^*\tilde u\in\bar H_{\ebop,I}^{\tilde s-1,\tilde\alpha'}(\Omega_\rho)$, then we have the a priori estimate~\eqref{EqNeBwApriori} for $\tilde s=1$. Let now $0<\rho'<\rho$. We claim that
  \begin{equation}
  \label{EqNeBwAprioriLarge}
    \|\tilde u\|_{\bar H_{\ebop,I}^{\tilde s,\tilde\alpha}(\Omega_{\rho'})} \leq C\bigl(\|\tilde f\|_{\bar H_{\ebop,I}^{\tilde s-1,\tilde\alpha'}(\Omega_\rho)} + \|\tilde u\|_{\bar H_{\ebop,I}^{1,\tilde\alpha}(\Omega_\rho)}\bigr).
  \end{equation}
  Indeed, this follows from variants of the microlocal regularity results in Lemmas~\ref{LemmaMEinp}\eqref{ItMEinpBw} and \ref{LemmaMEout}\eqref{ItMEoutBwLoc} in which wave front sets and function spaces are replaced by the invariant versions $\WF_{\ebop,I}$ and $H_{\ebop,I}$, and all localizations to neighborhoods of $x_{\!\scri}=0$ are omitted;\footnote{For example, the invariant version of Lemma~\ref{LemmaMEinp}\eqref{ItMEinpBw} reads as follows. Suppose that $\tilde\alpha_{\!\scri}<\tilde\alpha_++\half$ and $\WF_{\ebop,I}^{\tilde s-1,\tilde\alpha'}(\tilde f)\cap\cR_{\rm in,+,I}=\emptyset$, where, in the coordinates on $\cN$, we set $\cR_{\rm in,+,I}=\{(\rho_+,x_{\!\scri},y;\zeta,\xi,\eta)\colon\rho_+=0,\ \xi=2\zeta\}$ (similarly to~\eqref{EqMFCritInp} but without localization to $x_{\!\scri}=0$). If $\WF_{\ebop,I}^{\tilde s,\tilde\alpha}(\tilde u)\cap(\Seb^*_{\{\rho_+=0\}}M\setminus\cR_{\rm in,+,I})=\emptyset$, then $\WF_{\ebop,I}^{\tilde s,\tilde\alpha}(\tilde u)\cap\cR_{\rm in,+,I}=\emptyset$.} the proofs of these variants are the same upon removing cutoffs in $x_{\!\scri}$ and working with symbols and operators which are homogeneous with respect to dilations and invariant under translations on $\cN$. Estimating the second term on the right in~\eqref{EqNeBwAprioriLarge} using the already proved a priori estimate for $\tilde s=1$, we obtain 
  \begin{equation}
  \label{EqNeBwLarge1}
    \|\tilde u\|_{\bar H_{\ebop,I}^{\tilde s,\tilde\alpha}(\Omega_{\rho'})} \leq C\|\tilde f\|_{\bar H_{\ebop,I}^{\tilde s-1,\tilde\alpha'}(\Omega_\rho)}.
  \end{equation}

  It remains to obtain a quantitative estimate of $\tilde u\in\Hext_{\ebop,I}^{\tilde s,\tilde\alpha}(\Omega_\rho)$ on the full domain $\Omega_\rho\supsetneq\Omega_{\rho'}$. To this end, fix a cutoff $\phi\in\CI(\R)$ so that $\phi\equiv 0$ on $(-\infty,\rho']$ and $\phi\equiv 1$ on $[\rho,\infty)$. Then we have
  \[
    \tilde f':=P_{y_0}^*(\phi\tilde u)=\phi\tilde f+[P_{y_0}^*,\phi]\tilde u \in \bar H_{\ebop,I}^{\tilde s-1,(2\tilde\alpha_{\!\scri}',N+2)}(\Omega_\rho)
  \]
  for any $N$ since $\rho_+$ has a positive lower bound on $\supp\tilde f'$. Extend $\tilde f'$ to an element $\tilde f''\in\bar H_{\ebop,I}^{\tilde s-1,(2\tilde\alpha'_{\!\scri},N+2)}(\Omega_{\rho''})$ where $\rho''>\rho$, with norm bounded by a fixed constant times the norm of $\tilde f'$. Using the solvability of the adjoint problem stated after~\eqref{EqNeBwAssm2} above, we see that upon taking $N>\tilde\alpha_{\!\scri}$, we can solve $P_{y_0}^*\tilde u''=\tilde f''$ with $\tilde u''\in\bar H_{\ebop,I}^{0,(2\tilde\alpha_{\!\scri},N)}(\Omega_{\rho''})$ obeying a norm bound
  \[
    \|\tilde u''\|_{\bar H_{\ebop,I}^{0,(2\tilde\alpha_{\!\scri},N)}(\Omega_{\rho''})}\leq C\|\tilde f''\|_{\bar H_{\ebop,I}^{-1,(2\tilde\alpha'_{\!\scri},N+2)}(\Omega_{\rho''})}\leq C'\|\tilde f'\|_{\bar H_{\ebop,I}^{\tilde s-1,(2\tilde\alpha'_{\!\scri},N+2)}(\Omega_\rho)}.
  \]
  But then higher regularity on the smaller domain $\Omega_\rho\subsetneq\Omega_{\rho''}$ follows as above, showing that $\tilde u''|_{\Omega_\rho} \in \bar H_{\ebop,I}^{\tilde s,(2\tilde\alpha_{\!\scri},N)}(\Omega_\rho)$, with an estimate
  \begin{equation}
  \label{EqNeBwLarge2}
    \|\tilde u''|_{\Omega_\rho}\|_{\bar H_{\ebop,I}^{\tilde s,(2\tilde\alpha_{\!\scri},N)}(\Omega_\rho)} \leq C''\|\tilde f'\|_{\bar H_{\ebop,I}^{\tilde s-1,(2\tilde\alpha'_{\!\scri},N+2)}(\Omega_\rho)} \leq C'''\|\tilde f\|_{\bar H_{\ebop,I}^{\tilde s-1,\tilde\alpha'}(\Omega_\rho)}.
  \end{equation}
  Finally then, we note that $\tilde u''|_{\Omega_\rho}=\phi\tilde u$, since the difference $w:=\tilde u''|_{\Omega_\rho}-\phi\tilde u\in\bar H_{\ebop,I}^{1,\tilde\alpha}(\Omega_\rho)$ satisfies $P_{y_0}^*w=\tilde f'-\tilde f'=0$ and thus $w\equiv 0$ in view of the already proven a priori estimate~\eqref{EqNeBwApriori} in the case $\tilde s=1$. Thus, combining the estimates~\eqref{EqNeBwLarge1} and \eqref{EqNeBwLarge2} gives the a priori estimate~\eqref{EqNeBwApriori} for all $\tilde s\geq 1$.

  By duality, we get the solvability statement in part~\eqref{ItNeFw} for all $s\leq 0$.

  %%%%%%%%%%%%%%%%%%%%
  \pfstep{Solvability for $P_{y_0}$, $s\in\R$; a priori estimate for $P_{y_0}^*$, $\tilde s\in\R$.} It remains to prove solvability for $s>0$. Given $f\in\dot H_{\ebop,I}^{s-1,\alpha'}(\Omega_\rho)$, we have already proved that there exists a solution $u\in\dot H_{\ebop,I}^{0,\alpha}(\Omega_\rho)$. But then the invariant versions of the microlocal regularity results, Lemmas~\ref{LemmaMEinp}\eqref{ItMEinpFw} and \ref{LemmaMEout}\eqref{ItMEoutFw}, imply that in fact $u\in\dot H_{\ebop,I}^{s,\alpha}(\Omega_\rho)$, together with a quantitative estimate for its norm in terms of the norm of $f$.

  The a priori estimate~\eqref{EqNeBwApriori} now follows by duality also for $\tilde s=-(s-1)<1$, and hence is now proved for all $\tilde s$.

  %%%%%%%%%%%%%%%%%%%%
  \pfstep{A priori estimate for $P_{y_0}$, $s\in\R$.} Given $u\in\dot H_{\ebop,I}^{s,\alpha}(\Omega_\rho)$ with $f=P_{y_0}u\in\dot H_{\ebop,I}^{s-1,\alpha'}(\Omega_\rho)$, we have already shown that the equation $P_{y_0}u'=f$ can be solved with $u'\in\dot H_{\ebop,I}^{s,\alpha}(\Omega_\rho)$ obeying a bound
  \[
    \|u'\|_{\dot H_{\ebop,I}^{s,\alpha}(\Omega_\rho)}\leq C\|f\|_{\dot H_{\ebop,I}^{s-1,\alpha'}(\Omega_\rho)}.
  \]
  But then $P_{y_0}(u-u')=0$. Propagation of invariant edge-b-regularity shows that $u-u'$ has infinite edge-b-regularity; relaxing the weight at $\rho_+=0$, we can thus a fortiori regard $u-u'\in\dot H_{\ebop,I}^{1,(2\alpha_{\!\scri},N)}(\Omega_\rho)$ for $N<\min(\alpha_+,\alpha_{\!\scri}-1)$. But then the energy estimate proved in the first step of the proof applies (as $N+1<\alpha_{\!\scri}$, cf.\ \eqref{EqNeFwAssm2}) and implies that $u-u'\equiv 0$. Therefore, we have $u=u'$, and the proof is complete.
\end{proof}

\begin{rmk}[Alternative approach]
  The fact that one can directly prove an energy estimate for $P_{y_0}^*$ in the full range~\eqref{EqNeBwAssm} of weights allowed in the microlocal propagation estimates is somewhat fortuitous. A conceptually cleaner approach which does not take advantage of this fact starts with energy estimates with sharp weights at $x_{\!\scri}=0$ but with very weak weights at $\rho_+=0$ (i.e.\ very negative for $P_{y_0}$, very positive for $P_{y_0}^*$), which can then be improved by exploiting the invertibility properties of the normal operator of $P_{y_0}$ at $\rho_+=0$. We leave the implementation of this approach, which can be based on the (semiclassical) 0-analysis in the next section, to the interested reader.
\end{rmk}

%%%%%%%%%%%%%%%%%%%%%%%%%%%%%%%%%%%%%%%%%%%%%%%%%%
\subsection{Leading order control at \texorpdfstring{$\scri^+$}{null infinity}}
\label{SsNeLOC}

Using Proposition~\ref{PropNe}, we now show how to control solutions of $P u=f$ or $P^*\tilde u=\tilde f$ near $\scri^+$ to leading order in the sense of \emph{decay}; this complements the regularity results of~\S\ref{SM}. Since we already have full control near $\scri^+\setminus I^+$ by Theorem~\ref{ThmE}, we focus on a neighborhood of $\scri^+\cap I^+$. Fix a compact subset $K\subset\scri^+\setminus I^0$, and choose $T\in\R$ so that $\cU_+(T)\supset K$. Using the coordinates $\rho_+,x_{\!\scri}$ on $\cU_+(T)$ as in~\eqref{EqOGeoCoordIp}, we can fix $\delta_0>0$ small and $\ubar\rho_+<\bar\rho_+\in(0,1)$ so that
\[
  \Omega^+_{\delta,\rho} := \{ x_{\!\scri}<\delta,\ \rho_+<\rho \} \subset \cU_+(T) \subset M
\]
contains $K$ when $0<\delta\leq\delta_0$ and $\rho\in[\ubar\rho_+,\bar\rho_+]$. We do not arrange for the boundary hypersurfaces of $\Omega^+_{\delta,\rho}$ to have any particular causal structure, as questions of regularity and decay near $I^+$ have a global character (i.e.\ answering them requires information about the operator $P$ far from $\scri^+$); we can therefore only prove a priori estimates here. As for function spaces, we fix
\[
  \delta\in(0,\delta_0),\qquad
  \rho\in(\ubar\rho_+,\bar\rho_+),
\]
and weights $\alpha=(2\alpha_{\!\scri},\alpha_+)$, and work with the space
\[
  \bar H_\ebop^{s,\alpha}(\Omega^+_{\delta,\rho})
\]
of extendible distributions and the space
\[
  H_\ebop^{s,\alpha}(\Omega^+_{\delta,\rho})^{\bullet,-} := \bigl\{ u|_{\Omega^+_{\delta,\rho}} \colon u\in\bar H_\ebop^{s,\alpha}(\Omega^+_{\delta_0,\bar\rho_+}) \colon \supp u\subset\ol{\Omega^+_{\delta_0,\rho}}\,\bigr\}
\]
of distributions with supported character at $\Sigma^{+,\rm in}_{\delta,\rho}=\{x_{\!\scri}<\delta,\ \rho_+=\rho\}$. See Figure~\ref{FigNeP}.

\begin{figure}[!ht]
\centering
\includegraphics{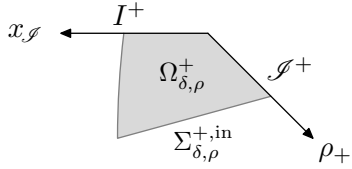}
\caption{The domain $\Omega_{\delta,\rho}^+$ on which Theorem~\ref{ThmNeP} takes place, and one of its boundary hypersurfaces $\Sigma_{\delta,\rho}^{+,\rm in}$.}
\label{FigNeP}
\end{figure}

\begin{thm}[A priori estimate for $P$ near $\scri^+$]
\label{ThmNeP}
  Under the same assumptions on $P$, $s$, $\alpha=(2\alpha_{\!\scri},\alpha_+)$, $\tilde s$, and $\tilde\alpha=(2\tilde\alpha_{\!\scri},\tilde\alpha_+)$ as in Proposition~\usref{PropNe}, and defining $\alpha'=\alpha+(2,2)$, $\tilde\alpha'=\tilde\alpha+(2,2)$, the following estimates hold for some constant $C$.
  \begin{enumerate}
  \item\label{ItNePFw}{\rm (Forward problem.)} For all $\eps>0$, there exists $C_\eps$ so that
    \begin{equation}
    \label{EqNePFw}
    \begin{split}
      \|u\|_{\Heb^{s,\alpha}(\Omega^+_{\delta,\rho})^{\bullet,-}} &\leq C\|P u\|_{\Heb^{s-1,\alpha'}(\Omega^+_{\delta,\rho})^{\bullet,-}} + C_\eps\|u\|_{\Heb^{s+1,(\alpha_{\!\scri}-2\ell_{\!\scri},\alpha_+)}(\Omega^+_{\delta,\rho})^{\bullet,-}} \\
        &\quad + \eps\|u\|_{\Heb^{s+1,\alpha}(\Omega^+_{\delta,\rho})^{\bullet,-}}
    \end{split}
    \end{equation}
    for all $u$ for which all norms are finite.
  \item\label{ItNePBw}{\rm (Backward problem.)} For all $\eps>0$, there exists $C_\eps$ so that
    \begin{equation}
    \label{EqNePBw}
    \begin{split}
      \|\tilde u\|_{\bar H_\ebop^{\tilde s,\tilde\alpha}(\Omega^+_{\delta,\rho})} &\leq C \|P^*\tilde u\|_{\bar H_\ebop^{\tilde s-1,\tilde\alpha'}(\Omega^+_{\delta,\rho})} + C_\eps\|\tilde u\|_{\bar H_\ebop^{\tilde s+1,\tilde\alpha-(2\ell_{\!\scri},0)}(\Omega^+_{\delta,\rho})} \\
        &\quad + \eps\|\tilde u\|_{\bar H_\ebop^{\tilde s+1,\tilde\alpha}(\Omega^+_{\delta,\rho})}
    \end{split}
    \end{equation}
    for all $\tilde u$ for which all norms are finite.
  \end{enumerate}
\end{thm}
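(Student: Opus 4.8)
The plan is to deduce Theorem~\ref{ThmNeP} from the invertibility of the edge normal operator (Proposition~\ref{PropNe}) via the standard normal operator argument in geometric microlocal analysis, treating $P$ near $\scri^+\cap I^+$ as a perturbation of its edge normal operator family $P_{y_0}={}^\eop N_{\scri^+,y_0}(P)$. Since the two parts are dual to each other (modulo the usual interchange $\tilde s=-s+1$, $\tilde\alpha=-\alpha'$; cf.\ Remark~\ref{RmkMEBwDual}), it suffices to prove part~\eqref{ItNePFw}, and part~\eqref{ItNePBw} then follows by the same argument applied to $P^*$, whose edge normal operator at $\scri^+_{y_0}$ is $P_{y_0}^*$ and which satisfies the hypotheses of Proposition~\ref{PropNe}\eqref{ItNeBw}.

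First I would localize: cover $\Omega^+_{\delta,\rho}\cap\scri^+$ by finitely many charts, each contained in the fibration-adapted coordinates $(\rho_+,x_{\!\scri},y)$ of~\eqref{EqOGeoCoordIp} over a small $y$-neighborhood of some $y_0\in Y$, and glue with a partition of unity; since the $\End(E)$ and metric coefficients of $P_0$ depend continuously on $y$ (and the remainder $\tilde P$ and the subprincipal term are $\cA^{(2\ell_{\!\scri},\ell_+)}$, hence $x_{\!\scri}^{2\ell_{\!\scri}}$-small as $x_{\!\scri}\to 0$), on such a chart we can write, in the coordinates above,
\[
  x_{\!\scri}^{-2}\rho_+^{-2}P = x_{\!\scri}^{-2}\rho_+^{-2}P_{y_0} + R_1 + R_2,
\]
where $P_{y_0}\in\Diff_{\ebop,I}^2$ is the edge normal operator (frozen at $y_0$), $R_1\in\cA^{(2\ell_{\!\scri},0)}\Diffeb^2$ collects the variation in $y$ of the coefficients of $P_0$ plus the genuine edge-b-error $x_{\!\scri}\Diffeb^2$ from Remark~\ref{RmkOpNorm} — these carry an extra factor vanishing at $\scri^+$ that, measured against the weight $\alpha_{\!\scri}$, gains $\min(2\ell_{\!\scri},1)$ orders of decay (here using $\ell_{\!\scri}\le\tfrac12$, so $x_{\!\scri}\lesssim x_{\!\scri}^{2\ell_{\!\scri}}$) — and $R_2\in\cA^{(2\ell_{\!\scri},\ell_+)}\Diffeb^2$ is genuinely small in operator norm on the relevant Sobolev spaces once $\delta$ is small, contributing the $\eps\|u\|_{\Heb^{s+1,\alpha}}$ term. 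Applying the a priori estimate~\eqref{EqNeFwApriori} of Proposition~\ref{PropNe}\eqref{ItNeFw} to $u$ (cut off to the chart and regarded, after identifying a collar of $\scri^+_{y_0}$ with a subset of $\cN$ via the coordinates, as a supported-at-$\Sigma^{+,\rm in}$ distribution on $\Omega_\rho\subset\cN$) gives $\|u\|_{\Heb^{s,\alpha}}\lesssim\|x_{\!\scri}^{-2}\rho_+^{-2}P_{y_0}u\|_{\Heb^{s-1,\alpha'-(2,2)}}$, and substituting $x_{\!\scri}^{-2}\rho_+^{-2}P_{y_0}u = x_{\!\scri}^{-2}\rho_+^{-2}Pu - R_1 u - R_2 u$ yields~\eqref{EqNePFw}: the $R_1 u$ term is bounded by $C_\eps\|u\|_{\Heb^{s+1,(\alpha_{\!\scri}-2\ell_{\!\scri},\alpha_+)}}$ (one extra edge-b-derivative, since $R_1$ has order $2$ and we land in $\Heb^{s-1}$, i.e.\ we must spend regularity $s+1$) and the $R_2 u$ term by $\eps\|u\|_{\Heb^{s+1,\alpha}}$ for $\delta$ small; the nonsharp loss of one edge-b-derivative in the error terms is exactly as predicted in the theorem statement and is harmless since it is offset by a gain in the weight (for the $C_\eps$ term) or a small constant (for the $\eps$ term).

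The main obstacle, and the step requiring genuine care, is the passage between the compact geometry of $M$ near $\scri^+\cap I^+$ and the noncompact model $\cN = [0,1)_{\rho_+}\times[0,\infty)_{x_{\!\scri}}\times\R^{n-1}_y$ on which Proposition~\ref{PropNe} is phrased: the estimate there is for \emph{invariant} edge-b-Sobolev spaces $\dot H_{\ebop,I}$, which encode uniform integrability over all of $\cN$, whereas on $\Omega^+_{\delta,\rho}$ the $y$-variable ranges over a compact neighborhood and $x_{\!\scri}$ is bounded, so I must check that cutting off in $y$ and in $x_{\!\scri}$ and transplanting to $\cN$ does not introduce spurious boundary terms — this works because cutoffs in $y$ produce commutators that are themselves of the form $R_1$ (conormal coefficients vanishing appropriately), cutoffs in $x_{\!\scri}$ near $x_{\!\scri}=\delta$ produce terms supported away from $\scri^+$ which are controlled by the already-established sharp estimates of Theorem~\ref{ThmE} near $\scri^+\setminus I^+$ (or more precisely in $\{x_{\!\scri}\sim\delta\}$, a region of bounded $x_{\!\scri}$ where ordinary finite-time energy estimates apply, cf.\ step \eqref{ItEFw}(v) of the proof of Theorem~\ref{ThmE}), and the supported character at $\Sigma^{+,\rm in}_{\delta,\rho}=\{\rho_+=\rho\}$ matches the $\dot H$ (supported-at-$\rho_+=\rho$) convention in Proposition~\ref{PropNe}\eqref{ItNeFw}. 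Note that, as emphasized in the theorem, no solvability or well-posedness is claimed — only the a priori inequality — precisely because the behavior at $\scri^+\cap I^+$ is global in character and the relevant information about $P$ far from $\scri^+$ is not available; so the argument genuinely only needs the \emph{a priori estimate} halves of Proposition~\ref{PropNe}, not the existence halves. Finally, the error term bookkeeping with the weight $\alpha_{\!\scri}-2\ell_{\!\scri}$ (rather than, say, $\alpha_{\!\scri}-\ell_{\!\scri}$) must track the fact that a defining function of $\tilde\scri^+\subset\tilde M$ is $\rho_{\!\scri}=x_{\!\scri}^2$, so the $x_{\!\scri}^1$ gain from $x_{\!\scri}\Diffeb$ is worth $2\ell_{\!\scri}$ only after comparing with the $\cA^{2\ell_{\!\scri}}$-type remainders and invoking $2\ell_{\!\scri}\le 1$; this is the one place where the constraint $\ell_{\!\scri}\le\tfrac12$ from Definition~\ref{DefOGeoAdm} is used.
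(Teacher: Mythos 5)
Your overall strategy (freeze the coefficients at a fiber, apply the a priori estimate of Proposition~\ref{PropNe} to a localization of $u$, and estimate the errors from $P-P_{y_0}$ and from the cutoffs) is the right one and is what the paper does, but your error decomposition contains a genuine mistake that breaks the argument. You place ``the variation in $y$ of the coefficients of $P_0$'' into $R_1\in\cA^{(2\ell_{\!\scri},0)}\Diffeb^2$, i.e.\ you claim it gains a factor vanishing at $\scri^+$. It does not: the coefficients of $P_0$ restricted to $\scri^+$ genuinely depend on the base variable $y$, so $x_{\!\scri}^{-2}\rho_+^{-2}(P_0-P_{y_0})$ contains terms of size $|y-y_0|$ with \emph{no} decay at $x_{\!\scri}=0$. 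With a cover by charts of a fixed size (independent of $\eps$), this term is merely $O(1)$, so it produces an error $C\|u\|_{\Heb^{s+1,\alpha}}$ with a constant that cannot be made small, and neither error slot in~\eqref{EqNePFw} can absorb it. The only way to make it small is to localize in $y$ at scale $\sim\eps$; this is exactly the paper's mechanism: one uses the dyadic cutoffs $\chi_{j,k}$ at scale $2^{-j}$, so that $\chi_{j,k}(P_0-P_{y_{j,k}})=\chi_{j,k}(x_{\!\scri} Q^1_{j,k}+\sum_l (y-y_{j,k})_l Q^{(l)}_{j,k})$ with the second sum contributing $2^{-j}\|u\|_{\Heb^{s+1,\alpha}}$ (the $\eps$ term), while the commutators with the fine-scale cutoffs, $[x_{\!\scri}\pa_y,\chi_1(2^j(y-y_{j,k}))]=2^j x_{\!\scri}\chi_1'$, are large ($\sim 2^j=\eps^{-1}$) but gain a factor of $x_{\!\scri}$ and therefore go into the weak-weight term $C_\eps\|u\|_{\Heb^{s+1,(\alpha_{\!\scri}-2\ell_{\!\scri},\alpha_+)}}$. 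Making this rigorous also requires controlling constants uniformly over the $\sim 2^{j(n-1)}$ pieces, which is the content of Lemmas~\ref{LemmaNeLocY} and \ref{LemmaNeLoc} (almost-orthogonality of the $\chi_{j,k}$ and uniform multiplier bounds); your proposal contains no substitute for this step.

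Two secondary points. First, you attribute the $\eps\|u\|_{\Heb^{s+1,\alpha}}$ term to the $\cA^{(2\ell_{\!\scri},\ell_+)}$ remainder $\tilde P$ being small ``once $\delta$ is small''; but the theorem is stated on a fixed domain $\Omega^+_{\delta,\rho}$ (with $\delta$ chosen before $\eps$), so you may not shrink $\delta$ with $\eps$. In the paper $\tilde P$ is simply absorbed into the weak-weight $C_\eps$ term via $\|\chi_{0,j}\chi_{j,k}\tilde P u\|\lesssim\|u\|_{\Heb^{s+1,\alpha-(2\ell_{\!\scri},\ell_+)}}$; the $\eps$-smallness comes exclusively from the $y$-variation at scale $2^{-j}$. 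Second, the terms produced by cutting off in $x_{\!\scri}$ do not need Theorem~\ref{ThmE} or any energy estimate (the statement is a pure a priori inequality, and Theorem~\ref{ThmE} concerns a different region and requires special admissibility): since these terms are supported in $x_{\!\scri}\gtrsim 2^{-j}$, the weight at $\scri^+$ is irrelevant there and they are bounded directly by the weak-weight norm, hence absorbed into the $C_\eps$ term.
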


Thus, if one ignores the edge-b-differential order, the right hand sides of~\eqref{EqNePFw} and \eqref{EqNePBw} involve norms on $u$ and $\tilde u$ which either feature a weaker weight at $\scri^+$ or have a small prefactor $\eps$. The loss in the edge-b-differential order is acceptable in applications, since regularity is already controlled by the results in~\S\ref{SM} (if supplemented by propagation estimates that are global near future timelike infinity and thus not covered by our $\scri^+$-local theory here); see the discussion following~\eqref{EqSAAdjEst} for an example. This loss arises from the fact that the edge normal operators of $P$ are nonelliptic and lose one order of regularity upon inversion, whereas the difference of $P$ and any of its normal operators is typically a \emph{second} order operator still.

\begin{rmk}[Better error term under stronger assumptions]
\label{RmkNePFwBetter}
 If $s\geq 1$ and $\alpha_++1<\alpha_{\!\scri}$, we may replace $u$ and $s+1$ in the final term on the right hand side of~\eqref{EqNePFw} by $(1-\chi)u$ and $s$, where $\chi$ is identically $1$ near $\scri^+$ and supported in $x_{\!\scri}<\delta'$ for sufficiently small $\delta'\in(0,\delta)$. This follows by writing $u=\chi u+(1-\chi)u$ and using an energy estimate to estimate $\chi u$ in terms of $P(\chi u)=\chi P u+[P,\chi]u\in\dot H_\ebop^{s-1,\alpha'}(\Omega^+_{\delta,\rho})$ using the multiplier vector field $Z$ from~\eqref{EqNeVF}; see also~\eqref{EqNeFwAssm2}.
\end{rmk}

The idea behind the proof of Theorem~\ref{ThmNeP} is to split $u$ into a sum of pieces localized near individual fibers $\scri^+_{y_0}$ of $\scri^+$, use the estimates for $P_{y_0}$ provided by Proposition~\ref{PropNe}, and estimating the error terms arising from $P-P_{y_0}$ and commutators of $P$ with the localizers. In order to implement this, we need to relate norms on edge-b-Sobolev spaces to norms of localizations.\footnote{We expect that one can drop the $\cO(\eps)$ error terms in~\eqref{EqNePFw} and \eqref{EqNePBw} by combining the normal operator inverses controlled by Proposition~\ref{PropNe} into a single operator on the edge-b-double space. This would however require analyzing the regularity of $P_{y_0}^{-1}$ in the parameter $y_0$. We do not pursue this further, as the $\cO(\eps)$ error terms are easily absorbed in applications; see the discussion after equation~\eqref{EqSAAdjEst}, or \cite[Step (i) in the proof of Proposition~5.19]{HintzNonstat}.} We first do this in a collar neighborhood $[0,1)_{x_{\!\scri}}\times\scri^+$ of all of $\scri^+$; we set
\[
  \Omega := [0,\half)_{x_{\!\scri}}\times\scri^+
\]
and shall only consider distributions with support in $\ol\Omega$. Denote the projection $[0,1)_{x_{\!\scri}}\times\scri^+\to\scri^+$ by $\pi$. Cover the (compact) base $Y$ of the fibration $\phi\colon\scri^+\to Y$ by a finite number of coordinate charts $\psi_1,\ldots,\psi_N\colon B(0,2)\to Y$ (where $B(0,R)\subset\R^{n-1}$ is the open $R$-ball) so that $Y=\bigcup_{i=1}^N \psi_i(B(0,1))$; let $\chi\in\CIc(B(0,2))$ be identically $1$ on $\ol{B(0,1)}$, and define $\chi_i\in\CI(M)$ to be equal to $\pi^*\phi^*(\chi\circ\psi_i^{-1})$ on $\pi^{-1}(\phi^{-1}(\psi_i(B(0,2))))$ and $0$ otherwise. Since $\sum_{i=1}^N\chi_i$ has a positive lower bound in the collar neighborhood, we immediately obtain:

\begin{lemma}[Localization to coordinate patches on $Y$]
\label{LemmaNeLocY}
  Let $s\in\R$ and $\alpha\in\R^3$. Then there exists a constant $C>1$ so that for all distributions $u$ with $\supp u\subset\ol\Omega$,
  \[
    C^{-1}\sum_{i=1}^N \|\chi_i u\|_{\Heb^{s,\alpha}(M)} \leq \|u\|_{\Heb^{s,\alpha}(M)} \leq C\sum_{i=1}^N \|\chi_i u\|_{\Heb^{s,\alpha}(M)}.
  \]
\end{lemma}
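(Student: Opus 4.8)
\textbf{Proof proposal for Lemma~\ref{LemmaNeLocY}.}

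The plan is to prove the two inequalities separately, with the left-hand inequality being essentially trivial and the right-hand inequality requiring a microlocal commutator argument. For the left-hand bound, note that each $\chi_i$ is a smooth function on $M$, hence multiplication by $\chi_i$ defines a bounded operator on $\Heb^{s,\alpha}(M)$ (indeed $\chi_i\in\Psieb^{0,0}(M)$); summing over the finitely many $i$ gives $\sum_i\|\chi_i u\|_{\Heb^{s,\alpha}(M)}\leq C\|u\|_{\Heb^{s,\alpha}(M)}$ with $C$ depending only on the $\chi_i$, $s$, and $\alpha$.

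For the right-hand inequality, the key observation is that $\chi:=\sum_{i=1}^N\chi_i^2$ (or $\sum_i\chi_i$; either works) is a smooth function on $M$ which, by construction of the $\psi_i$ and $\chi$, has a strictly positive lower bound $c_0>0$ on the collar neighborhood $[0,1)_{x_{\!\scri}}\times\scri^+$, in particular on $\ol\Omega$. Choose a smooth cutoff $\varphi\in\CI(M)$ which is identically $1$ on $\ol\Omega$ and supported in $[0,1)_{x_{\!\scri}}\times\scri^+$. Then $\varphi/\chi\in\CI(M)$ is well-defined and smooth (the denominator is bounded below by $c_0$ on the support of $\varphi$), so $(\varphi/\chi)\sum_i\chi_i^2$ acts as the identity on any distribution $u$ with $\supp u\subset\ol\Omega$. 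Writing $u=(\varphi/\chi)\sum_i\chi_i(\chi_i u)$ and using once more that multiplication by the smooth function $(\varphi/\chi)\chi_i$ is bounded on $\Heb^{s,\alpha}(M)$, we conclude $\|u\|_{\Heb^{s,\alpha}(M)}=\bigl\|\sum_i(\varphi/\chi)\chi_i(\chi_i u)\bigr\|_{\Heb^{s,\alpha}(M)}\leq C\sum_i\|\chi_i u\|_{\Heb^{s,\alpha}(M)}$, which is the desired bound.

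The only point requiring any care—and hence the main (mild) obstacle—is the boundedness of multiplication operators by smooth functions on the weighted edge-b-Sobolev spaces $\Heb^{s,\alpha}(M)$ for \emph{real} (possibly negative) $s$; for integer $s\geq 0$ this is immediate from the definition via testing with $\Diffeb^s(M)$, and for general $s$ it follows from the fact that $\CI(M)\subset\Psieb^{0,0}(M)$ together with the continuity of edge-b-pseudodifferential operators of order $(0,0)$ on $\Heb^{s,\alpha}(M)$, as recalled in~\S\ref{SsEBH}. Note also that the argument never uses that the support of $u$ avoids $x_{\!\scri}=\tfrac12$: although $\varphi$ is supported in the collar, the identity $u=(\varphi/\chi)\sum_i\chi_i^2 u$ holds because $\supp u\subset\ol\Omega\subset\{\varphi\equiv 1\}$. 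This completes the proof.
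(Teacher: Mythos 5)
Your proof is correct and is precisely the elaboration of the paper's argument, which deduces the lemma "immediately" from the fact that $\sum_i\chi_i$ has a positive lower bound on the collar neighborhood: the left inequality from boundedness of multiplication by smooth functions (as elements of $\Psieb^0(M)$) on $\Heb^{s,\alpha}(M)$, and the right inequality by writing $u$ as a sum of smooth multiples of the $\chi_i u$ using that lower bound. Your remarks on the division by $\chi$ and on real (possibly negative) $s$ correctly identify and settle the only points the paper leaves implicit.
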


Passing to local coordinates $y\in B(0,2)\subset\R^{n-1}$ on $\psi_i(B(0,2))\subset Y$, we now work in
\[
  \Omega_i:=[0,\half)_{x_{\!\scri}}\times B(0,2)\times Z \subset [0,1)_{x_{\!\scri}} \times \R^{n-1}_y \times Z,
\]
where $Z\cong[-1,1]$ is the typical fiber of $\scri^+$. Since $\ol{\Omega_i}$ is compact, edge-b-Sobolev spaces of distributions on $[0,1)\times\R^{n-1}\times Z$ with support in $\ol{\Omega_i}$ are well-defined; here, denoting a smooth positive b-density on $Z$ by $\mu_Z$, we fix the volume density
\begin{equation}
\label{EqNeDensity}
  \mu := \rho_0^{-n-1}x_{\!\scri}^{-2 n}\rho_+^{-n-1}\bigl|\tfrac{\dd x_{\!\scri}}{x_{\!\scri}}\dd y\,\mu_Z\bigr|
\end{equation}
on $\Omega_i$, mirroring the earlier choice~\eqref{EqMEDensity}. For $j\in\N$, put
\[
  I_j := \{ k 2^{-j} \colon k\in\Z,\ -3\cdot 2^j\leq k<3\cdot 2^j \},
\]
and denote the $K(j)=(6\cdot 2^j)^{n-1}$ points in $I_j^{n-1}\subset\R^{n-1}$ by
\[
  y_{j,1},\ldots,y_{j,K(j)}.
\]
Fix cutoff functions
\[
  \chi_0\in\CIc([0,1)),\quad \chi_0=1\ \text{on}\ [0,\half], \qquad
  \tilde\chi_1\in\CIc(\R),\quad \tilde\chi_1=1\ \text{on}\ [-1,1],
\]
and define $\chi_1\in\CIc(\R^{n-1})$ by $\chi_1(y)=\tilde\chi_1(y_1)\cdots\tilde\chi_1(y_{n-1})$. For $j\in\N$ and $1\leq k\leq K(j)$, we then set
\[
  \chi_{j,k}(y) := \chi_0(2^j x_{\!\scri})\chi_1\bigl(2^j(y-y_{j,k})\bigr),\qquad
  \chi_j^{\rm tot}(y) := \sum_{k=1}^{K(j)} \chi_{j,k}(y).
\]
Thus, $\chi_{j,k}$ localizes to a size $\sim 2^{-j}$ neighborhood of $\{0\}\times\{y_{j,k}\}\times Z\subset\Omega_i$, and $\chi_j^{\rm tot}$ localizes to a size $\sim 2^{-j}$ neighborhood of $\{0\}\times[-3,3]^{n-1}\times Z$.

\begin{lemma}[Properties of $\chi_{j,k}$ and edge-b-Sobolev norms]
\label{LemmaNeLoc}
  Let $s\in\R$. There exists a constant $C$, only depending on $s$, the choice of the cutoffs $\chi_0,\tilde\chi_1$, and the volume density~\eqref{EqNeDensity}, so that the following statements hold for all $j\in\N$.
  \begin{enumerate}
  \item\label{ItNeLocPOU} For $u\in\dot H_\ebop^s(\Omega_i)$ with $2^j x_{\!\scri}<\frac12$ on $\supp u$, we have
    \begin{equation}
    \label{EqNeLocPOU}
      C^{-1}\sum_{k=1}^{K(j)} \| \chi_{j,k}u \|_{\Heb^s}^2 \leq \|u\|_{\Heb^s}^2 \leq C\sum_{k=1}^{K(j)} \| \chi_{j,k}u \|_{\Heb^s}^2.
    \end{equation}
  \item\label{ItNeLocMultF} Let $f\in\CIc([0,\infty)\times\R^{n-1})$. Then
    \[
      \bigl\|f\bigl(2^j x_{\!\scri},2^j(y-y_{j,k})\bigr)u\bigr\|_{\Heb^s} \leq C\|u\|_{\Heb^s}.
    \]
  \end{enumerate}
  Analogous estimates hold for weighted edge-b-Sobolev spaces, as well as for spaces consisting of distributions with supported or extendible character on $[0,\delta)_{x_{\!\scri}}\times B(0,2)\times[0,\rho)_{\rho_+}\subset\Omega_i$.
\end{lemma}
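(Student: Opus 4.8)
The plan is to prove Lemma~\ref{LemmaNeLoc} by reducing both statements to standard properties of the edge-b-calculus, exploiting the fact that the cutoffs $\chi_{j,k}$ and $f(2^j x_{\!\scri},2^j(y-y_{j,k}))$ are obtained by a parabolic rescaling from \emph{fixed} smooth functions. First I would observe that the edge-b-vector fields $x_{\!\scri}\pa_{x_{\!\scri}}$, $x_{\!\scri}\pa_{y^j}$, and $\Vb(Z)$ are invariant under the family of diffeomorphisms $T_{j,k}\colon(x_{\!\scri},y,z)\mapsto(2^{-j}x_{\!\scri},y_{j,k}+2^{-j}y,z)$ of $[0,\infty)_{x_{\!\scri}}\times\R^{n-1}_y\times Z$; indeed these are precisely the generators of the (parabolic) dilation and translation symmetries of the model $\cN$, cf.\ Remark~\ref{RmkIParabolic} and~\S\ref{SsEBI}. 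Consequently, for $s\in\N_0$ the edge-b-Sobolev norm transforms under $T_{j,k}^*$ only by an explicit power of $2^{j}$ coming from the volume density~\eqref{EqNeDensity} (which is a smooth positive multiple of $x_{\!\scri}^{-2n}|\tfrac{\dd x_{\!\scri}}{x_{\!\scri}}\dd y\,\mu_Z|$, hence itself scales homogeneously up to a bounded factor as $x_{\!\scri}\to 0$), and the general real $s$ follows by duality and interpolation. Under $T_{j,k}^*$, the function $f(2^j x_{\!\scri},2^j(y-y_{j,k}))$ becomes the fixed function $f(x_{\!\scri},y)\in\CIc([0,\infty)\times\R^{n-1})$, independent of $j,k$; multiplication by a fixed compactly supported smooth function is bounded on $\Heb^s$, which gives part~\eqref{ItNeLocMultF} after undoing the rescaling and using the uniform equivalence of norms just described. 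The volume density not being \emph{exactly} scaling-invariant (only smooth times scaling-invariant) contributes a factor which is uniformly bounded above and below on $\supp\chi_{j,k}$, hence harmless.

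For part~\eqref{ItNeLocPOU}, the right inequality follows from part~\eqref{ItNeLocMultF} applied with $f=\chi_0\otimes\chi_1$ (suitably interpreted as a function of $x_{\!\scri}$ and of $y-y_{j,k}$) together with finite overlap: each point of $\R^{n-1}$ lies in the support of at most a fixed number $C_n$ of the $\chi_1(2^j(\cdot-y_{j,k}))$, so $\sum_k\|\chi_{j,k}u\|_{\Heb^s}^2\lesssim C_n\sum_k\|u\|_{\Heb^s(\supp\chi_{j,k})}^2\lesssim\|u\|_{\Heb^s}^2$; more carefully one uses a square-function / almost-orthogonality argument, again transported to the fixed scale by $T_{j,k}^*$ so that all constants are $j$-independent. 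For the left inequality, I would use that $\chi_j^{\rm tot}=\sum_k\chi_{j,k}$ is bounded below by a positive constant on the region $\{2^j x_{\!\scri}<\tfrac12\}\cap\{|y|\le 3\}$ containing $\supp u$, construct a parametrix by dividing by $\chi_j^{\rm tot}$ (which is smooth with derivatives bounded by the appropriate powers of $2^j$, again uniformly after rescaling), and then write $u=\sum_k b_k(\chi_{j,k}u)$ with $b_k=\chi_{j,k}/(\chi_j^{\rm tot})^2$ or a similar microlocally-smoothed version, estimating $\|u\|_{\Heb^s}^2\lesssim\sum_k\|\chi_{j,k}u\|_{\Heb^s}^2$ by duality against the square function; the key point is once more that each $b_k$ is a rescaled copy of a single fixed smooth function, so the constants do not degenerate with $j$.

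The main obstacle I anticipate is keeping all constants genuinely uniform in $j$ and $k$ for \emph{fractional and negative} $s$. For $s\in\N_0$ the argument is essentially a change of variables, but to handle general $s$ one must either interpolate/dualize carefully (noting that $(\Heb^{s,\alpha})^*=\Heb^{-s,-\alpha}$ with respect to the $L^2$-pairing of the density~\eqref{EqNeDensity}) or, cleaner, realize $\Heb^s$ via a fixed elliptic $A\in\Psieb^s$ and commute $A$ through $\chi_{j,k}$; the commutator $[A,\chi_{j,k}]\in\Psieb^{s-1}$ must be shown to have norm bounded \emph{uniformly in $j,k$} on the relevant spaces, which again comes down to the observation that $T_{j,k}$ conjugates the edge-b-calculus on $[0,\infty)\times\R^{n-1}\times Z$ into itself with $j$-independent seminorm bounds on symbols (the symbols $\chi_{j,k}$ and the rescaled $A$ lie in a fixed bounded subset of $S^0$, resp.\ $S^s$, after pullback). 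Granting this uniform symbol control—which is the technical heart and is a routine but somewhat lengthy check—parts~\eqref{ItNeLocPOU} and~\eqref{ItNeLocMultF} follow, and the final sentence of the Lemma (weighted spaces, and spaces of supported/extendible distributions on the truncated domain) is immediate since the weights $x_{\!\scri}^{2\alpha_{\!\scri}}\rho_+^{\alpha_+}$ are themselves smooth positive multiples of scaling-covariant functions on $\supp\chi_{j,k}$, and supported/extendible character is preserved under multiplication by the $\chi_{j,k}$.
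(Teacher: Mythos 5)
Your argument is correct in substance and rests on the same mechanism as the paper's proof, but it packages that mechanism differently. The paper never changes variables: it simply records the uniform bounds $\sup_{l+|\gamma|\le m}|(x_{\!\scri}D_{x_{\!\scri}})^l(x_{\!\scri}D_y)^\gamma\chi_{j,k}|\le C_m$ (which is exactly the coordinate-free content of your observation that the $\chi_{j,k}$ are pullbacks of fixed functions under your parabolic maps $T_{j,k}$, which preserve $x_{\!\scri}\pa_{x_{\!\scri}}$ and $x_{\!\scri}\pa_y$), together with the finite-overlap property and the two-sided bound on $\chi_j^{\rm tot}$, and then proves both parts by the Leibniz rule for integer $s$ followed by interpolation and duality; since only multiplication operators occur, no uniform control of conjugated pseudodifferential operators or of commutators $[A,\chi_{j,k}]$ is ever needed, so the "technical heart" you defer can be avoided entirely. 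Your conjugation route buys a cleaner conceptual reason for uniformity (everything becomes one fixed function at unit scale, and the density~\eqref{EqNeDensity} only contributes an overall power of $2^j$ that cancels on both sides), at the cost of working on the noncompact invariant model (cf.~\S\ref{SsEBI}) and, if you insist on realizing $\Heb^s$ through a fixed elliptic $A\in\Psieb^s$, of the uniform symbol/commutator estimates, which the duality--interpolation shortcut renders unnecessary. Two small repairs to your write-up: the reconstruction $u=\sum_k b_k(\chi_{j,k}u)$ with $b_k=\chi_{j,k}/(\chi_j^{\rm tot})^2$ is not an identity (since $\sum_k\chi_{j,k}^2\neq(\chi_j^{\rm tot})^2$); take $b_k=\chi_{j,k}/\sum_l\chi_{j,l}^2$, or argue as the paper does by writing $u=\bigl(\chi_0(2^j x_{\!\scri})/\chi_j^{\rm tot}\bigr)\sum_k\chi_{j,k}u$ at $s=0$ and expanding the square using finite overlap. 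Likewise, the bound $\sum_k\|\chi_{j,k}u\|_{\Heb^s}^2\lesssim\|u\|_{\Heb^s}^2$ should not be run through localized fractional norms over the supports; it follows from the pointwise bound $\sum_k\chi_{j,k}^2\le C'$ at $s=0$, Leibniz for $s\in\N$, and interpolation/duality for the vector-valued localization map. (Your labels "left"/"right" for the two inequalities in~\eqref{EqNeLocPOU} are also swapped, though both directions are in fact addressed.)
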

\begin{proof}
  We note the following three key properties of $\chi_{j,k}$: there exist constants $C$, $L$, and $C_m$, $m\in\N_0$, all independent of $j$, so that
  \begin{enumerate}
  \item for all $m\in\N_0$, we have
    \begin{equation}
    \label{EqNeLocchijkBound}
      \sup_{l+|\gamma|\leq m}|(x_{\!\scri} D_{x_{\!\scri}})^l(x_{\!\scri} D_y)^\gamma \chi_{j,k}|\leq C_m;
    \end{equation}
  \item whenever $k_1,\ldots,k_L\in\{1,\ldots,K(j)\}$ are distinct, we have $\bigcap_{l=1}^L\supp \chi_1(2^j(y-y_{j,k_l}))=\emptyset$;
  \item $C^{-1}\leq\chi_j^{\rm tot}(y)\leq C$ for $y\in B(0,2)\subset\R^{n-1}$.
  \end{enumerate}
  This implies uniform bounds
  \[
    \sum_{l+|\gamma|\leq m} \Bigl|(x_{\!\scri} D_{x_{\!\scri}})^l(x_{\!\scri} D_y)^\gamma\Bigl(\frac{\chi_0(2^j x_{\!\scri})}{\chi_j^{\rm tot}(y)}\Bigr)\Bigr| \leq C'_m,\qquad m\in\N_0.
  \]
  Thus, multiplication by $\chi_0(2^j x_{\!\scri})/\chi_j^{\rm tot}(y)$ is a uniformly bounded map on the space of $u\in\dot H_\ebop^s(\Omega_i)$ with $\supp u\subset\{2^j x_{\!\scri}<\half\}$: for $s\in\N_0$, this follows from the Leibniz rule, and then for general $s\in\R$ by duality and interpolation. The proof of part~\eqref{ItNeLocMultF} of the Lemma is completely analogous.

  Turning to part~\eqref{ItNeLocPOU}, consider first the case $s=0$. Then
  \[
    \|u\|_{L^2}^2 = \|\chi_0(2^j x_{\!\scri})u\|_{L^2}^2 = \biggl\|\frac{\chi_0(2^j x_{\!\scri})}{\chi_j^{\rm tot}(y)}\sum_{k=1}^{K(j)} \chi_{j,k}(y)u(x_{\!\scri},y) \biggr\|_{L^2}^2 \leq C\Re\sum_{k,k'=1}^{K(j)} \la \chi_{j,k}u,\chi_{j,k'}u\ra_{L^2}.
  \]
  But for each $k$, there are at most $L-1$ values of $k'$ besides $k'=k$ for which the inner product is nonzero; thus, we can apply Cauchy--Schwarz and bound the right hand side by $C(1+(L-1))\sum_{k=1}^{K(j)}\|\chi_{j,k}u\|_{L^2}^2$. More generally, for $s\in\N_0$, similar arguments together with the Leibniz rule imply $\|u\|_{\Heb^s}^2\leq C\sum\|\chi_{j,k}u\|_{\Heb^s}^2$, with $C$ depending on $s$ but not on $j$; interpolation then implies this inequality for all real $s\geq 0$.

  The converse estimate for $s=0$ follows from the $j$-independent bound $\sum_{k=1}^{K(j)}\chi_{j,k}^2\leq C'$, which implies
  \[
    \sum_{k=1}^{K(j)} \int \chi_{j,k}^2|u|^2\,\dd\mu \leq C'\int|u|^2\,\dd\mu = C'\|u\|_{L^2}^2.
  \]
  For $s\in\N$ one uses the Leibniz rule, and for real $s\geq 0$ interpolation. Having proved~\eqref{EqNeLocPOU} for real $s\geq 0$, the result for $s<0$ follows by duality.
\end{proof}

\begin{proof}[Proof of Theorem~\usref{ThmNeP}]
  We use the edge normal operator in a spirit somewhat similar to how the b-normal operator is used for example in \cite[\S2.1.2]{HintzVasySemilinear}, namely by means of estimating the localization of $u$ in terms of the edge normal operators of $P$, and bounding the error terms arising from localization and passing between $P$ and its normal operators. Rather than assembling the inverses of all edge normal operators\footnote{This is a key difference to the b-setting, where a single normal operator governs the behavior of a b-operator globally near the whole boundary.} into a single object---which in our non-elliptic setting would be technically rather delicate (unlike in the elliptic setting \cite{MazzeoEdge})---we only use estimates for the edge normal operators at an $\eps$-dense collection of fibers of $\scri^+$, with the estimates applied to a localization of $u$ to $\eps$-neighborhoods of individual fibers. Thus, the final error term in~\eqref{EqNePFw} bounds the difference between $P$ and this finite collection of normal operators. An important point is that the commutators of (the edge normal operators of) $P$ with cutoffs to such $\eps$-neighborhoods gain a power of $x$. The relevant local coordinate calculation is $[x\pa_y,\chi(y/\eps)]=\eps^{-1}x \chi'(y/\eps)$; the large constant $\eps^{-1}$ leads to the second term on the right in~\eqref{EqNePFw}
  
    We now turn to the details. Assume first that $u$ is supported in some $\ol{\Omega_i}$. For $j\in\N$ to be specified below, and putting $\chi_{0,j}(x_{\!\scri}):=\chi_0(2^{j+1}x_{\!\scri})$, we have
  \begin{align*}
    \|u\|_{\Heb^{s,\alpha}(\Omega^+_{\delta,\rho})^{\bullet,-}}^2 &\leq 2\bigl\|\bigl(\chi_0(x_{\!\scri})-\chi_0(2^{j+1} x_{\!\scri})\bigr)u\bigr\|_{\Heb^{s,\alpha}(\Omega^+_{\delta,\rho})^{\bullet,-}}^2  + 2\|\chi_0(2^{j+1}x_{\!\scri})u\|_{\Heb^{s,\alpha}(\Omega^+_{\delta,\rho})^{\bullet,-}}^2 \\
      &\leq C_{j N}\|u\|_{\Heb^{s,(N,\alpha_+)}(\Omega_{\delta,\rho}^+)^{\bullet,-}}^2 + C\sum_{k=1}^{K(j)} \|\chi_{0,j}\chi_{j,k}u\|_{\Heb^{s,\alpha}(\Omega^+_{\delta,\rho})^{\bullet,-}}^2.
  \end{align*}
  for any $N$, with $C$ independent of $j$. We estimate each term in the sum individually using Proposition~\ref{PropNe}, to wit,
  \begin{align}
    &\|\chi_{0,j}\chi_{j,k}u\|^2_{\Heb^{s,\alpha}} \nonumber\\
    &\quad \leq C\bigl\|P_{y_{j,k}}\bigl(\chi_{0,j}\chi_{j,k}u\bigr)\bigr\|^2_{\Heb^{s-1,\alpha'}} \nonumber\\
  \label{EqNePjk}
    &\quad\leq C\Bigl(\bigl\| [ P_{y_{j,k}},\chi_{0,j}\chi_{j,k}]u \bigr\|^2_{\Heb^{s-1,\alpha'}} + \bigl\| \chi_{0,j}\chi_{j,k}(P-P_{y_{j,k}})u \bigr\|^2_{\Heb^{s-1,\alpha'}}  + \bigl\| \chi_{0,j}\chi_{j,k}P u\bigr\|^2_{\Heb^{s-1,\alpha'}}\Bigr).
  \end{align}
  Recall that $\chi_{0,j}(x_{\!\scri})\chi_{j,k}(x_{\!\scri},y)=\chi_{0,j}(x_{\!\scri})\chi_1(2^j(y-y_{j,k}))$. The commutator of $P_{y_{j,k}}$ with $\chi_{0,j}$ is uniformly bounded in $j,k$ as an element of $\cA^{(2,2)}\Diffeb^1$ (since $\sup|(x_{\!\scri}\pa_{x_{\!\scri}})^l\chi_{0,j}|$ has a $j$-independent upper bound for all $l$), with coefficients supported in $x_{\!\scri}\geq 2^{-j-2}>0$. Therefore, for all $N$ there exists $C_{N,j}$ so that
  \[
    \sum_{k=1}^{K(j)} \bigl\|[P_{y_{j,k}},\chi_{0,j}]\chi_1(2^j(y-y_{j,k}))u\bigr\|_{\Heb^{s-1,\alpha'}}^2 \leq C_{N,j}\|u\|_{\Heb^{s,(2 N,\alpha_+)}}^2.
  \]
  On the other hand, if we define $\chi_{j,k}^\sharp=\chi_0(2^{j+1}x_{\!\scri})\chi_1^\sharp(2^j(y-y_{j,k}))$ where $\chi_1^\sharp\in\CIc(\R^{n-1})$ is identically $1$ on $\supp\chi_1$ (thus Lemma~\ref{LemmaNeLoc} applies with $\chi_{j,k}^\sharp$ in place of $\chi_{j,k}$), then $\chi_{0,j}[P_{y_{j,k}},\chi_1(2^j(y-y_{j,k}))]=\chi_{0,j}\chi_{j,k}^\sharp 2^j x_{\!\scri} Q_{j,k}$ where the operators $Q_{j,k}\in\cA^{(2,2)}\Diffeb^1$ are uniformly bounded in $j,k$ (as follows from~\eqref{EqNeLocchijkBound}); the factor $2^j x_{\!\scri}$ here arises via differentiation of $2^j(y-y_{j,k})$ along terms of $P_{y_{j,k}}$ involving $x_{\!\scri}\pa_y$; thus,
  \[
    \sum_{k=1}^{K(j)} \bigl\|\chi_{0,j}[P_{y_{j,k}},\chi_1(2^j(y-y_{j,k}))]u\bigr\|_{\Heb^{s-1,\alpha'}}^2 \leq C 2^{2 j}\|u\|_{\Heb^{s,\alpha-(1,0)}}^2.
  \]

  We now turn to the second term in~\eqref{EqNePjk}. The definition of $P_{y_{j,k}}$ via freezing the coefficients of $P_0=P-\tilde P$ (see Definition~\ref{DefOp}\eqref{ItOpStruct}) at $(x_{\!\scri},y)=(0,y_{j,k})$ as an edge-b-operator implies that we can write
  \[
    \chi_{j,k}(P_0-P_{y_{j,k}}) = \chi_{j,k}\biggl(x_{\!\scri} Q^1_{j,k}+\sum_{l=1}^{n-1}(y-y_{j,k})_l Q^{(l)}_{j,k}\biggr),
  \]
  where $x_{\!\scri}^{-2}\rho_+^{-2}Q_{j,k}^1,x_{\!\scri}^{-2}\rho_+^{-2}Q_{j,k}^{(l)}\in\cA^{(0,0)}\Diffeb^1$ are bounded independently of $j,k$ upon application of any fixed number of edge-b- (or even b-)derivatives. We can thus estimate
  \[
    \sum_{k=1}^{K(j)} \|\chi_{0,j}\chi_{j,k} x_{\!\scri} Q_{j,k}^1 u\|_{\Heb^{s-1,\alpha'}}^2 \leq C\|u\|_{\Heb^{s+1,\alpha-(1,0)}}^2.
  \]
  Moreover, in view of $\chi_{j,k}\cdot 2^j(y-y_{j,k})_l$ obeying uniform (in $j,k$) $L^\infty$-bounds (together with any finite number of edge-b-derivatives), we also have
  \[
    \sum_{k=1}^{K(j)} 2^{-2 j}\|\chi_{0,j}\chi_{j,k} 2^j(y-y_{j,k})_l Q_{j,k}^l u\|_{\Heb^{s-1,\alpha'}}^2 \leq 2^{-2 j}C\|u\|_{\Heb^{s+1,\alpha}}^2
  \]
  The contribution from the lower order (in the sense of decay) contribution $\tilde P$ to $P$ is estimated simply by
  \[
    \sum_k\|\chi_{0,j}\chi_{j,k}\tilde P u\|_{\Heb^{s-1,\alpha'}}^2\leq C\|u\|_{\Heb^{s+1,\alpha-(2\ell_{\!\scri},\ell_+)}}^2.
  \]

  Altogether, we have now proved that
  \begin{align*}
    &\sum_{k=1}^{K(j)} \|\chi_{0,j}\chi_{j,k}u\|_{\Heb^{s,\alpha}(\Omega^+_{\delta,\rho})^{\bullet,-}}^2 \\
    &\qquad \leq (C_{N,j}+C 2^{2 j})\|u\|_{\Heb^{s+1,\alpha-(2\ell_{\!\scri},0)}(\Omega^+_{\delta,\rho})^{\bullet,-}}^2 + C 2^{-2 j}\|u\|_{\Heb^{s+1,\alpha}(\Omega^+_{\delta,\rho})^{\bullet,-}}^2 \\
    &\qquad \qquad + C\|P u\|_{\Heb^{s-1,\alpha'}(\Omega^+_{\delta,\rho})^{\bullet,-}}^2,
  \end{align*}
  with $C$ independent of $j$. Upon choosing $j$ large enough so that $2^{-2 j}C<\eps^2$, this proves the estimate~\eqref{EqNePFw} for $u$ with $\supp u\subset\ol{\Omega_i}$.

  For general $u$, we apply Lemma~\ref{LemmaNeLocY} and note that the arguments given so far apply to $\phi_i u$, $i=1,\ldots,N$; we then only need to observe that $P(\phi_i u)=\phi_i P u+[P,\phi_i]u$, where the commutator $[P,\phi_i]$ gains a power of $x_{\!\scri}$ and loses a differential order (relative to $P$) as an edge-b-differential operator, so $\|[P,\phi_i]u\|_{\Heb^{s-1,\alpha'}}\leq C\|u\|_{\Heb^{s,\alpha-(1,0)}}$.

  The proof of the a priori estimate~\eqref{EqNePBw} is completely analogous.
\end{proof}

%%%%%%%%%%%%%%%%%%%%%%%%%%%%%%%%%%%%%%%%%%%%%%%%%%%%%%%%%%%%%%%%%%%%%%
\section{Microlocal estimates for the \texorpdfstring{$I^+$-normal operator}{normal operator at future timelike infinity}}
\label{SNp}

We let $P,M,M_0,Y$ be as in the beginning of~\S\ref{SE}, and use the coordinates $\rho_+,x_{\!\scri}$ on $\cU_+(T)$ ($T\in\R$ arbitrary) as in~\eqref{EqOGeoCoordIp}; thus, replacing $I^+$ for simplicity of notation by a collar neighborhood
\[
  I^+ = [0,\eps_{\!\scri})_{x_{\!\scri}} \times Y,\qquad \eps_{\!\scri}>0,
\]
of $I^+\cap\scri^+\subset I^+$, we work in a collar neighborhood
\[
  [0,1)_{\rho_+} \times I^+
\]
of $I^+$ inside $M$, and hence in a trivialization $[0,\infty)_{\rho_+}\times I^+$ of ${}^+N I^+$ (where we abuse notation by writing the fiber-linear coordinate $\dd\rho_+$ as $\rho_+$ simply). We only record weights at $\scri^+$ and $I^+$, and drop the weight at $I^0$ from the notation; we shall also drop the bundle $\upbeta^*E\to M$ from the notation unless it makes a technical difference.

In this section, we demonstrate that the information on $P$ captured by Definition~\ref{DefOp} is sufficient to obtain some control of the Mellin-transformed normal operator of $P$ at $I^+$ (see Definition~\ref{DefEBONormH1M}). Concretely, in the decomposition $P=P_0+\tilde P$ as in~\eqref{EqOpStruct}, only the first term of $P_0$ is of leading order in the sense of decay at $I^+$. We shall analyze the rescaled normal operator of $P_0$:

\begin{definition}[Rescaled normal operator at $I^+$]
\label{DefNpOp}
  Denote by $\pi\colon{}^+N I^+\to I^+$ the projection to the base. We then set
  \begin{align*}
    P_+ &:= N_{I^+}(x_{\!\scri}^{-2}\rho_+^{-2}P) = N_{I^+}(x_{\!\scri}^{-2}\rho_+^{-2}P_0) \\
      & \in \Diff_{\ebop,I}^2({}^+N I^+;\pi^*E) = \Diff_{\ebop,I}^2([0,\infty)_{\rho_+}\times I^+;\pi^*E),
  \end{align*}
  where the subscript `$I$' indicates the dilation-invariance of $P_+$ in the factor $[0,\infty)_{\rho_+}$, and the edge structure is associated with the fibration $[0,\infty)_{\rho_+}\times\pa I^+\cong[0,\infty)_{\rho_+}\times Y\to Y$. The Mellin-transformed normal operator is denoted
  \[
    \wh{P_+}(\lambda) = \rho_+^{-i\lambda}P_+\rho_+^{i\lambda} \in \Diff_0^2(I^+;\pi^*E).
  \]
\end{definition}

We recall that the principal symbol $G$ of $P$ is the dual metric function $G\colon\varpi\mapsto g^{-1}(\varpi,\varpi)$. But by Definition~\ref{DefOGeoAdm} (see also~\eqref{EqOGeoMetIp}), we have
\begin{align*}
  g^{-1} &\equiv g_0^{-1} \bmod \cA^{2+2\ell_{\!\scri},2+\ell_+}(M;S^2\,\Teb M), \\
  g_0^{-1}&\equiv x_{\!\scri}^2\rho_+^2\Bigl(\frac12 x_{\!\scri}\pa_{x_{\!\scri}}\otimes_s(x_{\!\scri}\pa_{x_{\!\scri}}-2\rho_+\pa_{\rho_+}) + k^{-1}(y,x_{\!\scri}\pa_y)\Bigr)\bmod x_{\!\scri}^3\rho_+^2\CI(M;S^2\,\Teb M);
\end{align*}
thus, the principal symbol of $P_+$ is the quadratic form associated with the dilation-invariant extension of $x_{\!\scri}^{-2}\rho_+^{-2}g_0^{-1}|_{I^+}\in\CI(I^+;S^2\,\Teb_{I^+}M)$. Concretely, this means that
\begin{equation}
\label{EqNpSymbol}
  \sigmaeb^2(P_+)(\varpi) =: G_{0,\ebop}(\varpi) := G_+(\varpi) + \tilde G_+(\varpi)
\end{equation}
with $G_+(\varpi)=g_+^{-1}(\varpi,\varpi)\in P^{[2]}(\Teb^*_{I^+}M)$ and $\tilde G_+(\varpi)=\tilde g_+^{-1}(\varpi,\varpi)\in x_{\!\scri} P^{[2]}(\Teb^*_{I^+}M)$, where
\begin{equation}
\label{EqNpGplus}
  g_+^{-1}=\frac12 x_{\!\scri}\pa_{x_{\!\scri}}\otimes_s(x_{\!\scri}\pa_{x_{\!\scri}}-2\rho_+\pa_{\rho_+}) + k^{-1}(y,x_{\!\scri}\pa_y),
\end{equation}
while $\tilde g_+$ is a linear combination of symmetric 2-tensors built out of $x_{\!\scri}\pa_{x_{\!\scri}}$, $\rho_+\pa_{\rho_+}$, and $x_{\!\scri}\pa_{y^j}$, with coefficients lying in $x_{\!\scri}\CI([0,\eps_{\!\scri})_{x_{\!\scri}}\times Y)$.

We shall work with the b-density
\begin{equation}
\label{EqNpBDensity}
  \mu_\bop := \Bigl|\frac{\dd x_{\!\scri}}{x_{\!\scri}}\dd k\Bigr| \in \CI(I^+;\Omegab I^+)
\end{equation}
on $I^+$. On the level of $L^2$-spaces on $[0,\infty)_{\rho_+}\times I^+$, the relationship of $\mu_\bop$ with the density $x_{\!\scri}^{-2 n}\rho_+^{-n-1}|\frac{\dd x_{\!\scri}}{x_{\!\scri}}\frac{\dd\rho_+}{\rho_+}\dd k|$ (see~\eqref{EqMEDensity}) used in~\S\S\ref{SM}--\ref{SNe} is
\begin{equation}
\label{EqNpL2}
\begin{split}
  &x_{\!\scri}^{2\alpha_{\!\scri}}\rho_+^{\alpha_+}L^2\Bigl([0,\infty)\times I^+, x_{\!\scri}^{-2 n}\rho_+^{-n-1}\Bigl|\frac{\dd x_{\!\scri}}{x_{\!\scri}}\frac{\dd\rho_+}{\rho_+}\dd k\Bigr|\Bigr) = x_{\!\scri}^{2\gamma_{\!\scri}}\rho_+^{\gamma_+}L^2\Bigl([0,\infty)\times I^+, \Bigl|\frac{\dd\rho_+}{\rho_+}\Bigr|\mu_\bop\Bigr), \\
  &\qquad \gamma_{\!\scri}=\alpha_{\!\scri}+\frac{n}{2},\qquad \gamma_+=\alpha_++\frac{n+1}{2}.
\end{split}
\end{equation}
Thus, by Plancherel's Theorem, the Mellin transform in $\rho_+$, defined as in~\eqref{EqNpHMellin}, gives an isometric isomorphism
\begin{equation}
\label{EqNpPlancherel}
\begin{split}
  \cM &\colon x_{\!\scri}^{2\alpha_{\!\scri}}\rho_+^{\alpha_+}L^2\Bigl([0,\infty)\times I^+, x_{\!\scri}^{-2 n}\rho_+^{-n-1}\Bigl|\frac{\dd x_{\!\scri}}{x_{\!\scri}}\frac{\dd\rho_+}{\rho_+}\dd y\Bigr|\Bigr) \\
    &\hspace{8em} \xra{\cong} L^2\bigl(\{\Im\lambda=-\gamma_+\}; x_{\!\scri}^{2\gamma_{\!\scri}}L^2(I^+,\mu_\bop)\bigr).
\end{split}
\end{equation}

%%%%%%%%%%%%%%%%%%%%%%%%%%%%%%%%%%%%%%%%%%%%%%%%%%
\subsection{Bounded frequency estimates}
\label{SsNpB}

The principal symbol of $\wh{P_+}(\lambda)$ is an element of $P^{[2]}({}^0 T^*I^+)$. Recall that for $\varpi\in{}^0 T^*I^+$, we have ${}^0\sigma_2(\wh{P_+}(\lambda))(\varpi)=\sigmaeb^2(P_+)(\varpi)$ where we recall the inclusion ${}^0 T^*I^+\subset\Teb^*_{I^+}M$. (In local coordinates, every $\varpi\in{}^0 T^*I^+$ is of the form $\varpi=\xi\frac{\dd x_{\!\scri}}{x_{\!\scri}}+\eta_j\frac{\dd y^j}{x_{\!\scri}}$.) In view of~\eqref{EqNpSymbol}--\eqref{EqNpGplus}, this means
\[
  {}^0\sigma_2(\wh{P_+}(\lambda))(\varpi) = \frac12\xi^2 + k^{i j}(y)\eta_i\eta_j + \tilde G_+(\varpi),\qquad
  \varpi=\xi\frac{\dd x_{\!\scri}}{x_{\!\scri}}+\eta_j\frac{\dd y^j}{x_{\!\scri}},
\]
and hence $\wh{P_+}(\lambda)$ is elliptic near $x_{\!\scri}=0$. Its refined analysis also requires control of its normal operator, which is the conjugation of the edge normal operator of $P_+$ (i.e.\ the normal operator at ${}^+ N_{\pa I^+}I^+$) by the Mellin transform in $\rho_+$; but the edge normal operator (at $I^+\cap\scri_{y_0}^+$) of the b-normal operator (at $I^+$) $P_+$ of $P_0$ is equal to the b-normal operator (at $\scri^+\cap I^+$) of the edge normal operator (at $\scri^+_{y_0}$) of $P_0$, and is thus fixed by Definition~\ref{DefOp}\eqref{ItOpNorm}. To wit, for $y_0\in Y$,
\begin{align*}
  &{}^0 N_{y_0}(\wh{P_+}(\lambda)) = \frac12\bigl(x_{\!\scri} D_{x_{\!\scri}}-2 i^{-1}q_{1,y_0}\bigr)(x_{\!\scri} D_{x_{\!\scri}}-2\lambda) + k^{i j}(y_0)(x_{\!\scri} D_{y^i})(x_{\!\scri} D_{y^j}) + p_{0,y_0}^+, \\
  &\qquad q_{1,y_0}:=\frac{n-1}{2}+p_1|_{\scri^+_{y_0}\cap I^+}\in\End(E_{y_0}),\qquad
          p_{0,y_0}^+:=p_0^+|_{\scri^+_{y_0}\cap I^+}\in\End(E_{y_0}).
\end{align*}
Recalling the definition of $\ubar p_{1,+}$ from Definition~\ref{DefMEMin}, we let
\begin{equation}
\label{EqNpHubarq}
  \ubar q_{1,+} := \frac{n-1}{2}+\ubar p_{1,+}.
\end{equation}

\begin{thm}[Estimates for $\wh{P_+}(\lambda)$ and $\wh{P_+^*}(\lambda)$ for bounded $\lambda$]
\label{ThmNpB}
  If $p_0^+|_{\scri^+\cap I^+}\neq 0$, assume that $P$ is special admissible in the sense of Definition~\usref{DefESpecial}; otherwise assume only that $P$ is admissible. Let $\chi_1,\chi_2\in\CIc(I^+)$, with $\chi_1\equiv 1$ near $\pa I^+$, $\chi_2\equiv 1$ near $\supp\chi_1$, and so that ${}^0 T_p^*I^+$ is spacelike (i.e.\ $G_{0,\ebop}$ in~\eqref{EqNpSymbol} is positive definite on ${}^0 T_p^*I^+$) for $p\in\supp\chi_1$. Use the b-density~\eqref{EqNpBDensity} on $I^+$ to define Sobolev spaces.
  \begin{enumerate}
  \item\label{ItNpBDirect}{\rm (Direct problem.)} Let $s,\gamma_{\!\scri}\in\R$, and suppose that
    \begin{subequations}
    \begin{equation}
    \label{EqNpBAssm}
      {-}\Im\lambda<\gamma_{\!\scri}<\ubar q_{1,+}.
    \end{equation}
    Then for all $N\in\R$, there exists a constant $C$ so that
    \begin{equation}
    \label{EqNpBEst}
      \| \chi_1 u \|_{H_0^{s,2\gamma_{\!\scri}}(I^+)} \leq C\Bigl( \|\chi_2\wh{P_+}(\lambda)u\|_{H_0^{s-2,2\gamma_{\!\scri}}(I^+)} + \|\chi_2 u\|_{H_0^{-N,-N}(I^+)}\Bigr).
    \end{equation}
    \end{subequations}
    Moreover, for all $u$ with $\chi_2 u\in H_0^{-\infty,2\gamma_{\!\scri}}(I^+)$ and $\chi_2\wh{P_+}(\lambda)u=0$, we have $\chi_1 u\in\cA^{2(\ubar q_{1,+}-\eps)}(I^+)$ for all $\eps>0$.
  \item\label{ItNpBAdjoint}{\rm (Adjoint problem.)} Let $\tilde s,\tilde\gamma_{\!\scri}\in\R$, and suppose that
    \begin{subequations}
    \begin{equation}
    \label{EqNpBAssmAdj}
      -\ubar q_{1,+}<\tilde\gamma_{\!\scri}<-\Im\lambda.
    \end{equation}
    Then for all $N\in\R$, there exists a constant $C$ so that
    \begin{equation}
    \label{EqNpBEstAdj}
      \| \chi_1 \tilde u \|_{H_0^{\tilde s,2\tilde\gamma_{\!\scri}}(I^+)} \leq C\Bigl( \|\chi_2\wh{P_+^*}(\lambda)\tilde u\|_{H_0^{\tilde s-2,2\tilde\gamma_{\!\scri}}(I^+)} + \|\chi_2\tilde u\|_{H_0^{-N,-N}(I^+)}\Bigr).
    \end{equation}
    \end{subequations}
  \end{enumerate}
\end{thm}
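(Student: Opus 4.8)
The plan is to prove Theorem~\ref{ThmNpB} by combining three microlocal/elliptic ingredients attached to the model operator $\wh{P_+}(\lambda)\in\Diff_0^2(I^+;\pi^*E)$ and its Mellin-conjugated normal operator family, exactly in the spirit of the Fredholm theory for 0-differential operators in \cite{MazzeoMelroseHyp,MazzeoEdge}, but here only \emph{near} $\pa I^+$ since $\chi_1,\chi_2$ are supported near the boundary. The three ingredients are: (i) \emph{elliptic regularity in the 0-calculus} on $\supp\chi_1$, where, as noted in the discussion following~\eqref{EqNpHubarq}, the principal symbol ${}^0\sigma_2(\wh{P_+}(\lambda))(\varpi)=\tfrac12\xi^2+k^{ij}(y)\eta_i\eta_j+\tilde G_+(\varpi)$ is positive definite for $x_{\!\scri}$ small (this is where the hypothesis that ${}^0T^*_p I^+$ be spacelike on $\supp\chi_1$, together with smallness of $\eps_{\!\scri}$, is used); (ii) \emph{invertibility of the normal operator family} ${}^0 N_{y_0}(\wh{P_+}(\lambda))$ in the appropriate strip, which governs the weight $2\gamma_{\!\scri}$ we may carry at $x_{\!\scri}=0$; and (iii) a standard 0-parametrix patching argument that upgrades the pointwise-in-$y_0$ normal operator information to a genuine estimate on a collar $[0,\delta)_{x_{\!\scri}}\times Y$, with the error absorbed into $\|\chi_2 u\|_{H_0^{-N,-N}}$.

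First I would set up ingredient (ii). The normal operator ${}^0 N_{y_0}(\wh{P_+}(\lambda))$ is, up to the factor $\tfrac12$ and the endomorphism pieces, the model Euler-type operator $\tfrac12(x_{\!\scri}D_{x_{\!\scri}}-2i^{-1}q_{1,y_0})(x_{\!\scri}D_{x_{\!\scri}}-2\lambda)+k^{ij}(y_0)(x_{\!\scri}D_{y^i})(x_{\!\scri}D_{y^j})+p^+_{0,y_0}$. After a further Mellin transform in $x_{\!\scri}$ (dual variable $\sigma$), or equivalently after a further Fourier transform in $y$ and rescaling as in~\S\ref{SsEBI}, this becomes a Bessel-type operator whose $L^2$-based invertibility on weighted spaces $x_{\!\scri}^{2\gamma_{\!\scri}}L^2$ is controlled by the location of the indicial roots: these are determined by the two factors $x_{\!\scri}D_{x_{\!\scri}}-2i^{-1}q_{1,y_0}$ and $x_{\!\scri}D_{x_{\!\scri}}-2\lambda$, giving roots (in the $x_{\!\scri}^{2(\cdot)}$ normalization) governed by $\Re\,\mathrm{spec}(q_{1,y_0})$ on the one hand and by $-\Im\lambda$ on the other. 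Invertibility of the normal operator on $x_{\!\scri}^{2\gamma_{\!\scri}}L^2$ holds precisely when $2\gamma_{\!\scri}$ lies strictly between these two families of roots, i.e.\ when $-\Im\lambda<\gamma_{\!\scri}<\ubar q_{1,+}$ after taking the infimum over $y_0$ and using Definition~\ref{DefMEMin}, \eqref{EqNpHubarq}; the endomorphism terms $q_{1,y_0},p^+_{0,y_0}$ are handled by the near-optimal fiber inner product of~\eqref{EqESpecial} (this is where special admissibility enters when $p_0^+\ne 0$), so that the relevant quadratic forms are positive. I would phrase this cleanly using the 0-calculus of \cite{MazzeoMelroseHyp}: $\wh{P_+}(\lambda)$ is an elliptic 0-operator whose normal operator is invertible in the stated strip, hence it is Fredholm, and more to the point admits a microlocal parametrix modulo $\Psi_0^{-\infty}$-type (in fact, compact, weight-gaining) errors—this is exactly the construction invoked in Remark~\ref{RmkOpWeak} and referenced after Definition~\ref{DefNpHWF}.

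Next, ingredients (i) and (iii) combine into the estimate~\eqref{EqNpBEst}: localizing with $\chi_1$ near $\pa I^+$, choosing $\chi_2\equiv1$ near $\supp\chi_1$, the elliptic 0-parametrix $B$ for $\wh{P_+}(\lambda)$ with $\WF_0'(B)$ near $\supp\chi_1$ satisfies $\chi_1 u=B\,\chi_2\wh{P_+}(\lambda)u+Ru$ with $R$ an operator gaining both a derivative and a weight (or smoothing and infinitely weight-gaining off-diagonal), whence $\|\chi_1 u\|_{H_0^{s,2\gamma_{\!\scri}}}\lesssim\|\chi_2\wh{P_+}(\lambda)u\|_{H_0^{s-2,2\gamma_{\!\scri}}}+\|\chi_2 u\|_{H_0^{s-1,2\gamma_{\!\scri}}}+\|\chi_2 u\|_{H_0^{-N,-N}}$; a standard iteration/absorption argument in the differential order $s$ (using that the region $x_{\!\scri}\in[\delta,\eps_{\!\scri})$ is a compact 0-region where elliptic estimates lose nothing and the weight is irrelevant, so $\chi_2 u$ there is controlled by $\chi_2 u\in H_0^{-N,-N}$ after one more localization) removes the $H_0^{s-1,2\gamma_{\!\scri}}$ term and yields~\eqref{EqNpBEst}. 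The polyhomogeneity/decay statement for solutions of $\chi_2\wh{P_+}(\lambda)u=0$ follows by the usual normal-operator iteration: the leading asymptotics of $u$ at $x_{\!\scri}=0$ are controlled by the indicial roots of ${}^0 N_{y_0}(\wh{P_+}(\lambda))$, and since the relevant root family is $\Re\,\mathrm{spec}(q_{1,y_0})\ge\ubar q_{1,+}$, one bootstraps the weight up to $2(\ubar q_{1,+}-\eps)$ for any $\eps>0$, i.e.\ $\chi_1 u\in\cA^{2(\ubar q_{1,+}-\eps)}(I^+)$. Part~\eqref{ItNpBAdjoint} is then immediate: $\wh{P_+^*}(\lambda)$ is the formal adjoint of $\wh{P_+}(\bar\lambda)$ with respect to $\mu_\bop$ (and the fiber inner product), so its normal operator indicial roots are reflected through the appropriate weight, the strip $-\ubar q_{1,+}<\tilde\gamma_{\!\scri}<-\Im\lambda$ being the dual of~\eqref{EqNpBAssm}; repeating the parametrix argument verbatim gives~\eqref{EqNpBEstAdj}.

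The main obstacle, as I see it, is ingredient (ii)—establishing invertibility of the \emph{operator-valued} normal operator family ${}^0 N_{y_0}(\wh{P_+}(\lambda))$, acting on $E_{y_0}$-valued functions, on the weighted space $x_{\!\scri}^{2\gamma_{\!\scri}}L^2$ in the full strip $(-\Im\lambda,\ubar q_{1,+})$, rather than just for $\gamma_{\!\scri}$ near one endpoint. When $p^+_{0,y_0}$ and the off-diagonal part of $q_{1,y_0}$ vanish this is a direct Bessel/Euler computation; in general one must either (a) invoke special admissibility to triangularize $q_{1,y_0}$, solve the scalar diagonal problems, and handle the strictly-lower-triangular coupling perturbatively (which is why the hypothesis is exactly ``special admissible if $p^+_{0,y_0}\ne0$''), or (b) redo the energy-estimate argument of Proposition~\ref{PropNe}/Theorem~\ref{ThmNeP} directly on the Mellin-transformed normal operator, which is essentially the ``alternative approach'' flagged in the Remark after Proposition~\ref{PropNe}. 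I would follow route (a): fix the near-optimal $h_E$ from~\eqref{EqESpecial}, so that on each diagonal block $\mathrm{Re}(q_{1,y_0}-\tfrac{n-1}{2})\ge\ubar p_{1,+}-\eps$ as a quadratic form and the coupling terms have norm $<\eps$, invert the diagonal scalar model operators explicitly via Mellin transform in $x_{\!\scri}$ (the indicial set being two arithmetic families of points whose gap contains $2\gamma_{\!\scri}$ under~\eqref{EqNpBAssm}), and absorb the $\mathcal O(\eps)$ coupling by a Neumann series once $\eps$ is small relative to the size of the spectral gap; uniformity in $y_0\in Y$ follows from compactness of $Y$ and continuity of $y_0\mapsto(q_{1,y_0},p^+_{0,y_0})$ (using $\ell_0,\ell_+>0$ as in Lemma~\ref{LemmaMEMin}). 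Everything else is routine 0-calculus bookkeeping.
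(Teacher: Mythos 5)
Your overall architecture (interior/symbolic ellipticity near $\supp\chi_1$, invertibility of the boundary model, a 0-calculus parametrix with weight-gaining error, cutoff bookkeeping for \eqref{EqNpBEst}, and duality in $\lambda\mapsto\bar\lambda$ for the adjoint) is the same as the paper's. But the heart of the matter --- your ``ingredient (ii)'' --- is exactly where your argument has a genuine gap. You assert that invertibility of ${}^0 N_{y_0}(\wh{P_+}(\lambda))$ on $x_{\!\scri}^{2\gamma_{\!\scri}}L^2$ ``holds precisely when $2\gamma_{\!\scri}$ lies strictly between the two families of indicial roots,'' and you propose to ``invert the diagonal scalar model operators explicitly via Mellin transform in $x_{\!\scri}$.'' Neither step is correct as stated. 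After Fourier transform in $y$ and the rescaling $x=x_{\!\scri}|\eta|_{k^{-1}}$, the model reduces to the operator $\cP_{y_0}=\frac12(x D_x-2 i^{-1}q_{1,y_0})(x D_x-2\lambda)+x^2+p_{0,y_0}^+$, which is \emph{not} dilation-invariant because of the $x^2$ term; the Mellin transform only diagonalizes its indicial (b-normal) operator at $x=0$ and therefore cannot produce an inverse of the model. The indicial-root condition gives only Fredholmness of $\cP_{y_0}$ on the b-scattering spaces; invertibility additionally requires ruling out kernel and cokernel, and this is the actual analytic content of the paper's proof: one conjugates, $\tilde u=x^{-q_{1,y_0}-i\lambda}u$, to reduce to $(\frac12(x D_x)^2-\tilde\lambda^2+x^2)\tilde u=0$ with $\tilde\lambda=\lambda+i q_{1,y_0}$, observes that the hypothesis $-\Im\lambda<\gamma_{\!\scri}<\Re q_{1,y_0}$ forces $\Im\tilde\lambda>0$ together with enough decay at $x=0$ and $x=\infty$ to justify a pairing, and then an integration by parts (taking imaginary parts when $\Re\tilde\lambda\neq 0$) kills $\tilde u$; the adjoint is treated the same way. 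Your proposal contains no substitute for this positivity/spectral argument, and without it the claimed equivalence ``weight in the indicial gap $\Leftrightarrow$ invertible'' is unproven --- indeed Remark~\ref{RmkNpBCont} signals that the condition $-\Im\lambda<\gamma_{\!\scri}$ is a resolvent-set condition for an asymptotically hyperbolic problem, not a mere indicial-root count. Since Mazzeo's construction of the parametrix $Q$ with weight-gaining error (which your ``$R$ gaining both a derivative and a weight'' presupposes) takes the invertibility of this reduced normal operator as input, the gap propagates to your derivation of \eqref{EqNpBEst} and of the $\cA^{2(\ubar q_{1,+}-\eps)}$ decay of kernel elements.

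Two smaller points. First, your handling of the endomorphism terms by a Neumann series with the rescaled inner product \eqref{EqESpecial} could be made to work for fixed $\lambda$, but it is unnecessary: the paper uses the (strictly) lower triangular structure of $q_{1,y_0}$ and $p_{0,y_0}^+$ exactly, solving block by block, which avoids any smallness-versus-spectral-gap bookkeeping; in any case this refinement is moot until the scalar (or Jordan-block) model inverse is actually established. Second, the ``indicial set being two arithmetic families of points'' is not what appears here: the boundary spectrum of $\cP_{y_0}$ is just $\{2\lambda\}\cup\spec(2 i^{-1}q_{1,y_0})$, finitely many points, and it is the relative position of the line $\Im\zeta=-2\gamma_{\!\scri}$ to this finite set, combined with the sign of $\Im\tilde\lambda$, that drives the proof.
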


\begin{rmk}[Weights]
\label{RmkNpBWeights}
  Identifying ${-}\Im\lambda$ with the weight $\gamma_+={-}\Im\lambda$ (cf.\ \eqref{EqNpPlancherel}) and relating $\gamma_{\!\scri},\gamma_+$ to $\alpha_{\!\scri},\alpha_+$ via~\eqref{EqNpL2}, the conditions in~\eqref{EqNpBAssm} are equivalent to $\alpha_+<-\half+\alpha_{\!\scri}$ and $\alpha_{\!\scri}<-\half+\ubar p_1$, except for the use of $\ubar p_{1,+}$ (determined from $p_1$ only at $\scri^+\cap I^+$) instead of $\ubar p_1$ (determined from $p_1$ along all of $\scri^+$). In this sense, Theorem~\ref{ThmNpB}\eqref{ItNpBDirect} applies in the full range of weights allowed in all previous estimates near $\scri^+\cap I^+$ (Lemmas~\ref{LemmaMEinp}, \ref{LemmaMEout}, Proposition~\ref{PropNe}, Theorem~\ref{ThmNeP}). A similar comment applies to Theorem~\ref{ThmNpB}\eqref{ItNpBAdjoint}.
\end{rmk}

\begin{rmk}[Restriction on $\Im\lambda$ and continuation of the resolvent]
\label{RmkNpBCont}
  The condition~\eqref{EqNpBAssm} provides a lower bound $\Im\lambda>-\gamma_{\!\scri}$ of Mellin dual frequencies $\lambda$ for which one can analyze $\wh{P_+}(\lambda)$ as a 0-operator in a straightforward manner. Upon inspection of the proof below, one sees that allowing $\lambda$ to cross the line $\Im\lambda=-\gamma_{\!\scri}$ is closely related to considering the spectral family on an asymptotically hyperbolic space at or across the continuous spectrum, the analysis of which is more delicate; see e.g.\ \cite{MazzeoMelroseHyp,GuillarmouMeromorphic,SaBarretoWangResolvent}.
\end{rmk}

\begin{proof}[Proof of Theorem~\usref{ThmNpB}]
  The proof is based on a standard elliptic parametrix construction in the 0-calculus \cite{MazzeoMelroseHyp,Hintz0Px}, hence we shall be rather brief. By assumption, $\wh{P_+}(\lambda)$ is elliptic near $\supp\chi_1$. 

  If $q_{1,y_0}$ is diagonalizable and $p_{0,y_0}^+=0$, then acting on the individual eigenspaces of $q_{1,y_0}$, the operator ${}^0 N_{y_0}(\wh{P_+}(\lambda))$ is related to the spectral family of $(n-1)$-dimensional hyperbolic space via conjugation by a suitable power of $x_{\!\scri}$ (which also appears in the proof of the invertibility of the reduced normal operator below, cf.\ \eqref{EqNpBConj1}--\eqref{EqNpBConj2}); its inverse is thus explicit \cite[\S6]{MazzeoMelroseHyp}. This can easily be generalized to the case that $q_{1,y_0}$ has nontrivial Jordan blocks. (In this manner, one can prove~\eqref{EqNpBEst} for general $\lambda$ except for an explicitly computable discrete set.)

  We analyze ${}^0 N_{y_0}(\wh{P_+}(\lambda))$ here (for $\Im\lambda>-\gamma_{\!\scri}$) instead in a more conceptual manner, using the general machinery of the 0-calculus. Thus, exploiting the translation invariance of ${}^0 N_{y_0}(\wh{P_+}(\lambda))$ by passing to the Fourier transform in $y$, and then introducing $x:=x_{\!\scri}|\eta|_{k^{-1}}$, $\hat\eta=\frac{\eta}{|\eta|_{k^{-1}}}$, we obtain the \emph{reduced normal operator} in the terminology of \cite{LauterPsdoConfComp}, given by
  \[
    \cP_{y_0} := \frac12(x D_x-2 i^{-1}q_{1,y_0})(x D_x-2\lambda) + x^2 + p_{0,y_0}^+.
  \]
  Considered as an operator on the radial compactification $[0,\infty]_x$ of $[0,\infty)$, it is a b-operator near $x=0$ with normal operator $N_0(\cP_{y_0})=\half(x D_x-2 i^{-1}q_{1,y_0})(x D_x-2\lambda)+p_{0,y_0}^+$, and a weighted scattering operator near $R:=x^{-1}=0$ with principal part $R^{-2}(\half(R^2 D_R)^2+1)$, which is elliptic. The Mellin-transformed normal operator of $\cP_{y_0}$ at $x=0$ is
  \[
    \hat N_0(\cP_{y_0},\zeta)=\half(\zeta-2 i^{-1}q_{1,y_0})(\zeta-2\lambda) + p_{0,y_0}^+,
  \]
  which is thus invertible for all $\zeta\in\C$ except for those lying in the boundary spectrum $\{2\lambda\}\cup \spec(2 i^{-1}q_{1,y_0})$. (For $p_{0,y_0}^+=0$, this is clear; for $p_{0,y_0}^+\neq 0$ on the other hand, one uses the lower triangular, resp.\ strictly lower triangular nature of $q_{1,y_0}$, resp.\ $p_{0,y_0}^+$ coming from the special admissibility assumption on $P$ to obtain the same conclusion.) In particular, the line $\Im\zeta=-2\gamma_{\!\scri}$ does not intersect the boundary spectrum provided $\gamma_{\!\scri}\neq-\Im\lambda$ and $\gamma_{\!\scri}\notin\Re\spec q_{1,y_0}$, which in particular allows for $\gamma_{\!\scri}$ in the range~\eqref{EqNpBAssm}.
  
  Define the b-scattering Sobolev spaces
  \[
    H_{\bop,\scop}^{s,(2\gamma_{\!\scri},\delta)}([0,\infty]_x) := \Bigl\{ u \colon \chi u\in\Hb^{s,2\gamma_{\!\scri}}\Bigl([0,\infty),\Bigl|\frac{\dd x}{x}\Bigr|\Bigr),\ (1-\chi)u \in \la x\ra^{-\delta}H^s(\R,|\dd x|) \Bigr\},
  \]
  where $\chi\in\CIc([0,\infty))$ is identically $1$ near $0$. Then we claim that for $\gamma_{\!\scri}$ in the range~\eqref{EqNpBAssm}, the operator
  \begin{equation}
  \label{EqNpBRedInv}
    \cP_{y_0}\colon H_{\bop,\scop}^{s,(2\gamma_{\!\scri},\delta)}([0,\infty])\to H_{\bop,\scop}^{s-2,(2\gamma_{\!\scri},\delta-2)}([0,\infty])
  \end{equation}
  is not merely Fredholm (which follows at once from elliptic theory in the b-setting near $s=0$ and in the scattering setting near $s^{-1}=0$), but invertible. For the proof, we first assume that $p_{0,y_0}^+=0$. Passing to the Jordan block decomposition of $q_{1,y_0}$, it is sufficient (by exploiting the upper triangular nature of Jordan blocks) to consider the case that $q_{1,y_0}\in\C$ is scalar; and the assumption on $\gamma_{\!\scri}$ reads ${-}\Im\lambda<\gamma_{\!\scri}<\Re q_{1,y_0}$. If $u\in H_{\bop,\scop}^{s,(2\gamma_{\!\scri},\delta)}([0,\infty])$ satisfies $\cP_{y_0}u=0$, then $u\in H_{\bop,\scop}^{\infty,(2\gamma_{\!\scri},\infty)}([0,\infty])$ by elliptic regularity in the scattering calculus, so in particular $u$ is rapidly decaying (together with all $x$-derivatives) as $x\to\infty$. Moreover,
  \begin{subequations}
  \begin{equation}
  \label{EqNpBConj1}
    \tilde u:=x^{-q_{1,y_0}-i\lambda}u\in H_{\bop,\scop}^{\infty,(2\tilde\gamma_{\!\scri},\infty)}([0,\infty]),\qquad
    \tilde\gamma_{\!\scri}:=\frac12\bigl(2\gamma_{\!\scri}+\Im\lambda-\Re q_{1,y_0}\bigr),
  \end{equation}
  satisfies the simpler equation
  \begin{equation}
  \label{EqNpBConj2}
    \Bigl(\frac12(x D_x)^2-\tilde\lambda^2 + x^2\Bigr)\tilde u=0,\qquad
    \tilde\lambda:=\lambda+i q_{1,y_0}.
  \end{equation}
  \end{subequations}
  Note that $\Im\tilde\lambda=\Im\lambda+\Re q_{1,y_0}>0$ and $2\tilde\gamma_{\!\scri}>-\Im\tilde\lambda$; therefore we in fact have $|\tilde u|,|x D_x\tilde u|\leq C x^{\Im\tilde\lambda}$ for $x\leq 1$. We can therefore multiply equation~\eqref{EqNpBConj2} by $\bar{\tilde u}$ and integrate by parts. If $\Re\tilde\lambda\neq 0$, taking imaginary parts then gives $0=2(\Re\tilde\lambda)(\Im\tilde\lambda)\|\tilde u\|^2$, whereas when $\Re\tilde\lambda=0$ one directly obtains $0=\|x\tilde u\|^2+\half\|x D_x\tilde u\|^2+\| |\tilde\lambda|\tilde u\|^2$; in both cases, we conclude that $\tilde u=0$ and hence $u=0$. This proves the injectivity of $\cP_{y_0}$. The surjectivity follows from the injectivity of the adjoint $\cP_{y_0}^*$, which is proved in the same manner. If $p_{0,y_0}^+\neq 0$, then upon passing to the bundle splitting of $E$ in which $q_{1,y_0}$ and $p_{0,y_0}^+$ are (strictly) lower triangular, these arguments apply step by step as one considers the projection of $\cP_{y_0}$ onto the first, second, etc.\ summand of the splitting of $E$.

  As shown in \cite{MazzeoEdge} (see also \cite[\S3.2]{Hintz0Px}, \cite[\S5.5]{AlbinLectureNotes} for detailed expositions, and also \cite{LauterPsdoConfComp}), the invertibility of $\cP_{y_0}$ implies the existence of a parametrix for $\wh{P_+}(\lambda)$ acting on $x_{\!\scri}^{2\gamma_{\!\scri}}L^2(I^+)$ in the large 0-calculus with bounds,
  \[
    Q \in \Psi_0^{-2,(\alpha_\lb,-(n-1),\alpha_\rb)}(I^+),\qquad
    Q\wh{P_+}(\lambda)=I+R_L,\quad R_L\in\Psi_0^{-\infty,(\emptyset,\emptyset,\alpha_\rb)}(I^+).
  \]
  Here, $\Psi_0^{s,(\alpha_\lb,\alpha_\ff,\alpha_\rb)}(I^+)$ is the space of operators whose Schwartz kernels are conormal distributions on the 0-double space $(I^+)^2_0:=[(I^+)^2;\diag_{\pa I^+}]$ where $\diag_{\pa I^+}\subset(\pa I^+)^2$ is the diagonal, valued in (the pullback to $(I^+)^2_0$ of) the right b-density bundle on $(I^+)^2$, which are conormal to the lift of the diagonal $\diag_{I^+}\subset(I^+)^2$, and which are conormal also at the left boundary (the lift of $\pa I^+\times I^+$) with decay rate $\alpha_\lb$ (the power to which a boundary defining function of the left boundary is raised), at the right boundary (the lift of $I^+\times\pa I^+$) with decay rate $\alpha_\rb$, and at the front face with decay rate $\alpha_\ff$; the shift by $(n-1)$ of the weight at the front face is due to the fact that we work with a b-density here, rather than with a 0-density---which is $x_{\!\scri}^{-(n-1)}$ times a b-density on the $n$-dimensional manifold $I^+$. Concretely, the computation of the boundary spectrum of the 0-normal operator above implies that we can take
  \begin{align*}
    \alpha_\lb\geq 2\ubar q_{1,+}-\eps, \qquad
    \alpha_\rb \geq 2\Im\lambda-\eps,
  \end{align*}
  for any $\eps>0$. We conclude that
  \begin{align*}
    \chi_1 u = \chi_1(\chi_2 u) &= \chi_1 Q\wh{P_+}(\lambda)(\chi_2 u) + \chi_1 R_L(\chi_2 u) \\
      &= \chi_1 Q(\chi_2\wh{P_+}(\lambda)u) + \chi_1 R_L(\chi_2 u) - \chi_1 Q[\wh{P_+}(\lambda),\chi_2]u.
  \end{align*}
  Taking the norm in $H_0^{s,2\gamma_{\!\scri}}(I^+)$ gives the estimate~\eqref{EqNpBEst} since the operators $\chi_1 R_L\chi_2$ and $\chi_1 Q[\wh{P_+}(\lambda),\chi_2]$ (noting that $\supp\chi_1\cap\,\supp\dd\chi_2=\emptyset$) are bounded maps $H_0^{-N,-N}(I^+)\to H_0^{s,2\gamma_{\!\scri}}(I^+)$ for all $N$.

  The proof of the adjoint estimate~\eqref{EqNpBEstAdj} is similar. The adjoint $\wh{P_+}(\lambda)^*=\wh{\cP_+^*}(\bar\lambda)$ satisfies~\eqref{EqNpBEstAdj} when $(\tilde s-2,2\tilde\gamma_{\!\scri})=-(s,2\gamma_{\!\scri})$ (the dual orders of the left hand side of~\eqref{EqNpBEst}), which leads to the requirement $-\Im\lambda<-\tilde\gamma_{\!\scri}<\ubar q_{1,+}$. Passing from $\lambda$ to $\bar\lambda$ switches the sign of $\Im\lambda$ and thus leads to the condition~\eqref{EqNpBAssmAdj}.
\end{proof}

%%%%%%%%%%%%%%%%%%%%%%%%%%%%%%%%%%%%%%%%%%%%%%%%%%
\subsection{High frequency estimates}
\label{SsNpH}

We now prove analogues of the estimates of Theorem~\ref{ThmNpB} when $|\Re\lambda|\to\infty$. In the notation of Theorem~\ref{ThmNpB}, let $\gamma_{\!\scri}<\ubar q_{1,+}$, and fix $\gamma_+<\gamma_{\!\scri}$; we shall from now on only consider $\lambda\in\C$ with $\Im\lambda=-\gamma_+$. As $|\Re\lambda|\to\infty$, the operator $\wh{P_+}(\lambda)$ can be regarded as a (uniformly degenerate) differential operator with large parameter $\lambda$ (see \cite[\S9]{ShubinSpectralTheory} for the case of manifolds without boundary). For present purposes, it is sufficient to ignore the symbolic behavior in $\lambda$ and instead consider the semiclassical rescaling
\begin{equation}
\label{EqNpHOp}
  P_{h,\hat\lambda} := h^2\wh{P_+}(h^{-1}\hat\lambda) \in \Diff_{0,\semi}^2(I^+;\pi^*E),\qquad
  h:=\la\lambda\ra^{-1},\quad \hat\lambda:=\frac{\lambda}{\la\lambda\ra}.
\end{equation}
We apply the general considerations of~\S\ref{SsEB0} to the operator $P_+\in\Diffeb^2([0,\infty)_{\rho_+}\times I^+)$ from Definition~\ref{DefNpOp}. With $\lambda$ restricted to the line $\Im\lambda=-\gamma_+$, we have, as $|\Re\lambda|\to\infty$ (and thus $h=|\Re\lambda|^{-1}+\cO(|\Re\lambda|^{-2})\to 0$),
\begin{equation}
\label{EqNpHImLambda}
  \hat\lambda=(\sgn\Re\lambda)-i h\gamma_+ + \cO(h^2).
\end{equation}
Let $\chi_1,\chi_2$ be as in the statement of Theorem~\ref{ThmNpB}, so in particular ${}^0 T^*_p I^+$ is spacelike for $p\in\supp\chi_1$. This implies that $P_{h,\hat\lambda}$ is elliptic (as a \emph{semiclassical} 0-operator) outside a compact subset of ${}^0 T^*_{\supp\chi_1}I^+$, in particular at fiber infinity; in other words, writing $p_{\hat\lambda}=\sigma_{0,\semi}^2(P_{h,\hat\lambda})$, the characteristic set
\[
  {}^0\Sigma_{\hat\lambda} := p_{\hat\lambda}^{-1}(0) \subset {}^0 T^*I^+
\]
has compact intersection with ${}^0 T^*_{\supp\chi_1}I^+$. As noted after Lemma~\ref{LemmaNpHOp}, the Hamiltonian vector field $H_{p_{\pm 1}}$ has a critical point at $\varpi\in{}^0\Sigma_{\pm 1}$ if and only if $H_{G_{0,\ebop}}$ (see~\eqref{EqNpSymbol}) is radial at $\pm\frac{\dd\rho_+}{\rho_+}+\varpi$ (which lies in the characteristic set of $P_+$). In a sufficiently small neighborhood of $x_{\!\scri}=0$, this is equivalent to the membership in one of the two sets
\begin{equation}
\label{EqNpHRadial}
\begin{split}
  {}^0\cR_{\rm in}^\pm &:= \{ (x_{\!\scri},y;\xi,\eta) \colon x_{\!\scri}=0,\ \xi=\pm 2,\ \eta=0 \}, \\
  {}^0\cR_{\rm out}^\pm &:= \{ (x_{\!\scri},y;\xi,\eta) \colon x_{\!\scri}=0,\ \xi=0,\ \eta=0 \}.
\end{split}
\end{equation}
by Lemma~\ref{LemmaMFCrit}. This can of course also be checked directly by noting that, at $\zeta=\pm 1$ (in the coordinates~\eqref{EqMFCoordIp} and using~\eqref{EqMFSymbIp}), we have
\begin{equation}
\label{EqNpHHamVF}
  H_{G^+_\ebop}=2(\xi\mp 1)(x_{\!\scri}\pa_{x_{\!\scri}}+\eta\pa_\eta)-2|\eta|^2\pa_\xi+x_{\!\scri}(4 k^{i j}\eta_i\pa_{y^j}-2(\pa_{y^m}k^{i j})\eta_i\eta_j\pa_{\eta_m}),
\end{equation}
while $H_{\tilde G_+}$ is a lower order contribution, cf.\ \eqref{EqMFHamIp}; that is, the expression~\eqref{EqNpHHamVF} is equal to $H_{p_{\pm 1}}$ up to an error term of class $x_{\!\scri}\cV_0({}^0 T^*I^+)$. Thus, ${}^0\cR_{\rm in}^\pm$ is a source, and ${}^0\cR_{\rm out}^\pm$ is a sink for the flow of $\pm H_{p_{\pm 1}}$ inside ${}^0\Sigma_{\pm 1}$.

\begin{thm}[Semiclassical regularity for $\wh{P_+}(\lambda)$]
\label{ThmNpH}
  Let $s,\gamma_{\!\scri},\gamma_+\in\R$. Consider $\lambda\in\C$ with $\Im\lambda=-\gamma_+$, and let $h=\la\lambda\ra^{-1}$, $\hat\lambda=\frac{\lambda}{\la\lambda\ra}$; thus $\hat\lambda=\pm 1-i h\gamma_++\cO(h^2)$ when $\pm\Re\lambda>0$ (see equation~\eqref{EqNpHImLambda}). Let\footnote{The subscript `$\loc$' means membership in the space $h^{-N}H_{0,h}^{-N,2\gamma_{\!\scri}}$ upon multiplication by any function in $\CIc(I^+)$. This is merely a technical detail; we are only interested in local (but uniform down to $\pa I^+$) semiclassical regularity here, for which the noncompact nature of $I^+$ is irrelevant.} $u,f\in h^{-N}H_{0,h,\loc}^{-N,2\gamma_{\!\scri}}(I^+)$ and $P_{h,\hat\lambda}u=f$ in the notation of~\eqref{EqNpHOp}. Then $\WF_{0,\semi}^{s,2\gamma_{\!\scri}}(u)\subset\WF_{0,\semi}^{s-2,2\gamma_{\!\scri}}(f)\cup{}^0\Sigma_\pm$.
  
  Moreover, recalling the quantity $\ubar q_{1,+}\in\R$ from~\eqref{EqNpHubarq}, we have:
  \begin{enumerate}
  \item\label{ItNpHin}{\rm (Propagation out of ${}^0\cR_{\rm in}^\pm$.)} Suppose that $\gamma_+<\gamma_{\!\scri}$. If $\WF_{0,\semi}^{s-2,2\gamma_{\!\scri}}(h^{-1}f)\cap{}^0\cR_{\rm in}^\pm=\emptyset$, then $\WF_{0,\semi}^{s,2\gamma_{\!\scri}}(u)\cap{}^0\cR_{\rm in}^\pm=\emptyset$.\footnote{Recall that since ${}^0\cR_{\rm in}^\pm\subset{}^0 T^*I^+$ is disjoint from fiber infinity, the wave front set conditions here are in fact independent of the differential orders.}
  \item\label{ItNpHout}{\rm (Propagation into ${}^0\cR_{\rm out}^\pm$.)} Suppose that $\gamma_{\!\scri}<\ubar q_{1,+}$. Let $\cU\subset{}^0 T^*I^+$ denote an open neighborhood of ${}^0\cR_{\rm out}^\pm$. If $\WF_{0,\semi}^{s-2,2\gamma_{\!\scri}}(h^{-1}f)\cap{}^0\cR_{\rm out}^\pm=\emptyset$ and $\WF_{0,\semi}^{s,2\gamma_{\!\scri}}(u)\cap(\cU\setminus\cR_{\rm out})=\emptyset$, then $\WF_{0,\semi}^{s,2\gamma_{\!\scri}}(u)\cap\cR_{\rm out}=\emptyset$.
  \end{enumerate}
\end{thm}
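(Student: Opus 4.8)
The plan is to run the standard positive‑commutator machinery in the small semiclassical $0$‑calculus of \S\ref{SssEB0Psdo}, transplanting the symbol computations from \S\S\ref{SsMF}--\ref{SsME}; the only genuinely new ingredient is the semiclassical rescaling~\eqref{EqNpHOp}, whose effect on the relevant principal and subprincipal symbols is already recorded in Lemmas~\usref{LemmaNpHOp} and \usref{LemmaNpHOpSub}. Throughout, we work in the region near $\scri^+\cap I^+$ in which ${}^0 T^*I^+$ is spacelike, as ensured by the cutoffs in the statement of Theorem~\usref{ThmNpB}. The first assertion is pure semiclassical $0$‑elliptic regularity: at fiber infinity the top part of the semiclassical principal symbol of $P_{h,\hat\lambda}$ is $\frac12\xi^2+k^{i j}(y)\eta_i\eta_j+\tilde G_+$, which is positive near $x_{\!\scri}=0$ by~\eqref{EqNpSymbol}--\eqref{EqNpGplus}; and at any $\varpi\in{}^0 T^*I^+$ with $\varpi\notin{}^0\Sigma_\pm$ — the real characteristic set of $p_{\pm 1}=\lim_{h\to 0}p_{\hat\lambda}$, which by Lemma~\usref{LemmaNpHOp} is $G_{0,\ebop}$ restricted to $\pm\frac{\dd\rho_+}{\rho_+}+{}^0 T^*I^+$ — the full symbol $p_{\hat\lambda}=p_{\pm 1}+\cO(h)$ is bounded away from $0$ for small $h$. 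Hence the elliptic parametrix construction of \S\ref{SssEB0Psdo} gives $\WF_{0,\semi}^{s,2\gamma_{\!\scri}}(u)\subset\WF_{0,\semi}^{s-2,2\gamma_{\!\scri}}(f)\cup{}^0\Sigma_\pm$.

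For parts~\eqref{ItNpHin} and \eqref{ItNpHout} I would work near ${}^0\cR_{\rm in}^\pm$, resp.\ ${}^0\cR_{\rm out}^\pm$ (recall~\eqref{EqNpHRadial}), with a commutant $A=\check A^*\check A$ whose symbol is $x_{\!\scri}^{-2\gamma_{\!\scri}}$ times cutoffs in $x_{\!\scri}$, in a function $\hat\xi=\xi/\zeta$ monotone along $\pm H_{p_{\pm 1}}$ near the radial set, and in $p_{\pm 1}$ — exactly as in the proofs of Lemmas~\usref{LemmaMEinp} and \usref{LemmaMEout}, with the edge‑b‑quantities replaced by their semiclassical $0$‑counterparts. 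One then pairs $\Im\la P_{h,\hat\lambda}u,A u\ra=\la\sC u,u\ra$ with $\sC$ the operator whose semiclassical principal symbol is $H_{p_{\hat\lambda}}(\check a^2)+2\,\sigma_{0,\semi}^1\bigl((2 i h)^{-1}(P_{h,\hat\lambda}-P_{h,\hat\lambda}^*)\bigr)\check a^2$. By Lemma~\usref{LemmaNpHOp} and~\eqref{EqNpHHamVF}, near $x_{\!\scri}=0$ one has $\pm H_{p_{\pm 1}}=\pm\bigl(2(\xi\mp 1)(x_{\!\scri}\pa_{x_{\!\scri}}+\eta\pa_\eta)-2|\eta|^2\pa_\xi\bigr)$ modulo $x_{\!\scri}\Vz({}^0 T^*I^+)$, so ${}^0\cR_{\rm in}^\pm$ is a radial source and ${}^0\cR_{\rm out}^\pm$ a radial sink with nondegenerate normal linearizations; the weight factor feeds the $x_{\!\scri}\pa_{x_{\!\scri}}$‑coefficient $2(\xi\mp 1)$ of $\pm H_{p_{\pm 1}}$ into the leading commutator term. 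By Lemma~\usref{LemmaNpHOpSub}, evaluated at $\pm\frac{\dd\rho_+}{\rho_+}+\varpi$ the subprincipal term is $\sigmaeb^1\bigl(\tfrac{P-P^*}{2 i}\bigr)+\tfrac{\Im\hat\lambda}{h}\rho_+^{-1}H_p\rho_+$, with $\tfrac{\Im\hat\lambda}{h}=-\gamma_++\cO(h)$ by~\eqref{EqNpHImLambda}, with $\rho_+^{-1}H_p\rho_+=-\xi$ from~\eqref{EqMFHamIp}, and with $\sigmaeb^1\bigl(\tfrac{P-P^*}{2 i}\bigr)=-\tfrac12(p_1+p_1^*)(\xi-2\zeta)$ from~\eqref{EqMEcImag}, where $\zeta=\pm 1$ on $\pm\frac{\dd\rho_+}{\rho_+}+{}^0 T^*I^+$.

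These contributions combine exactly as in \S\ref{SsME}, with bookkeeping dictated by the $b$‑density~\eqref{EqNpBDensity}, which accounts for the passage from $p_1$ to $q_{1,+}$ (cf.~\eqref{EqNpHubarq} and Remark~\usref{RmkNpBWeights}). At ${}^0\cR_{\rm in}^\pm$ one has $\xi=\pm 2$, $\zeta=\pm 1$, so $\xi-2\zeta=0$ and $\sigmaeb^1\bigl(\tfrac{P-P^*}{2 i}\bigr)$ drops out, leaving the $\Im\hat\lambda$‑term $\pm 2\gamma_+$; balancing this against the weight‑derivative term (proportional to $\gamma_{\!\scri}$) yields a leading commutator symbol of the correct definite sign precisely when $\gamma_+<\gamma_{\!\scri}$, which is the analogue of the threshold $\alpha_+<\alpha_{\!\scri}-\tfrac12$ at $\cR_{\rm in,+}$. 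At ${}^0\cR_{\rm out}^\pm$ one has $\xi=0$, so $\rho_+^{-1}H_p\rho_+=0$ and the $\Im\hat\lambda$‑term drops out, leaving $\sigmaeb^1\bigl(\tfrac{P-P^*}{2 i}\bigr)=\pm(p_1+p_1^*)$ (at $\scri^+\cap I^+$), and the sign works out precisely when $\gamma_{\!\scri}<\ubar q_{1,+}$, the analogue of $\alpha_{\!\scri}<-\tfrac12+\ubar p_1$ at $\cR_{\rm out}$. Since ${}^0\cR_{\rm in}^\pm$ and ${}^0\cR_{\rm out}^\pm$ lie at finite fiber momenta, neither the differential order $s$ nor the semiclassical order enters these thresholds, so the regularization argument of \cite[\S4]{VasyMinicourse} runs with no upper bound on the order; consequently part~\eqref{ItNpHin} (a source estimate) requires no a priori control of $u$ at the radial set, while part~\eqref{ItNpHout} (a sink estimate) needs the a priori control on the punctured neighborhood $\cU\setminus\cR_{\rm out}$, where the sign‑indefinite contributions from differentiating the $\hat\xi$‑ and $x_{\!\scri}$‑cutoffs are supported.

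I expect the main obstacle to be precisely this last round of sign and threshold bookkeeping: one must check that the change from the edge‑b‑density of \S\S\ref{SsME}--\ref{SsNe} to the $b$‑density~\eqref{EqNpBDensity} converts the $\ubar p_1$ of Lemma~\usref{LemmaMEout} into $\ubar q_{1,+}=\tfrac{n-1}{2}+\ubar p_{1,+}$, that the $\cO(h)$ piece of $\hat\lambda$ enters the subprincipal symbol with the coefficient $\pm 2\gamma_+$ at ${}^0\cR_{\rm in}^\pm$ and contributes nothing at ${}^0\cR_{\rm out}^\pm$ (since $\rho_+^{-1}H_p\rho_+$ vanishes there), and that the $\cO(h^2)$ error in~\eqref{EqNpHImLambda} — together with the $x_{\!\scri}\cdot$ (error) parts of $H_{p_{\hat\lambda}}$ and of the imaginary part — is harmlessly absorbed into the smallness constant. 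Once this is verified, the positive‑commutator estimates are routine modifications of those in \S\ref{SsME}; real‑principal‑type propagation in the semiclassical $0$‑calculus, if needed for applications, is another standard positive‑commutator estimate in the same framework.
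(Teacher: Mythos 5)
Your proposal is correct and follows essentially the same route as the paper: the paper's own proof is precisely the observation that one can rerun the positive-commutator arguments of Lemmas~\ref{LemmaMEinp} and \ref{LemmaMEout} after factoring out the weights and dropping the $\rho_+,\zeta$-localizations, with Lemma~\ref{LemmaNpHOpSub} and $\tfrac{\Im\hat\lambda}{h}=-\gamma_++\cO(h)$ showing that the term $-\gamma_+\rho_+^{-1}H_p\rho_+$ plays exactly the role of the weight $\rho_+^{-2\check\alpha_+}$ (with $\gamma_+$ in place of $\check\alpha_+$), which is your computation. The only quibble is a sign slip at ${}^0\cR_{\rm out}^\pm$, where $\tfrac12(p_1+p_1^*)(\xi-2\zeta)$ at $\xi=0$, $\zeta=\pm 1$ equals $\mp(p_1+p_1^*)$ rather than $\pm(p_1+p_1^*)$; since the propagation direction is $\pm H_{p_{\pm 1}}$ this does not affect the threshold $\gamma_{\!\scri}<\ubar q_{1,+}$ you obtain.
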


\begin{thm}[Semiclassical regularity for $\wh{P_+^*}(\lambda)$]
\label{ThmNpHAdj}
  Let $\tilde s,\tilde\gamma_{\!\scri},\tilde\gamma_+\in\R$. Consider $\lambda\in\C$ with $\Im\lambda=-\tilde\gamma_+$, and let $h=\la\lambda\ra^{-1}$, $\hat\lambda=\frac{\lambda}{\la\lambda\ra}$. Let $P^\dag_{h,\hat\lambda}:=h^2\wh{P_+^*}(h^{-1}\hat\lambda)$. Let $\tilde u,\tilde f\in h^{-N}H_{0,h,\loc}^{-N,2\tilde\gamma_{\!\scri}}(I^+)$ and $P_{h,\hat\lambda}^\dag\tilde u=\tilde f$. Then $\WF_{0,\semi}^{\tilde s,2\tilde\gamma_{\!\scri}}(\tilde u)\subset\WF_{0,\semi}^{\tilde s-2,2\tilde\gamma_{\!\scri}}(\tilde f)\cup{}^0\Sigma_\pm$.

  Moreover, we have:
  \begin{enumerate}
  \item\label{ItNpHAdjIn}{\rm (Propagation into ${}^0\cR_{\rm in}^\pm$.)} Suppose that $\tilde\gamma_+>\tilde\gamma_{\!\scri}$. Let $\cU\subset{}^0 T^*I^+$ denote an open neighborhood of ${}^0\cR_{\rm in}^\pm$. If $\WF_{0,\semi}^{\tilde s-2,2\tilde\gamma_{\!\scri}}(h^{-1}\tilde f)=\emptyset$ and $\WF_{0,\semi}^{\tilde s,2\tilde\gamma_{\!\scri}}(\tilde u)\cap(\cU\setminus{}^0\cR_{\rm in}^\pm)=\emptyset$, then $\WF_{0,\semi}^{\tilde s,2\tilde\gamma_{\!\scri}}(\tilde u)\cap{}^0\cR_{\rm in}^\pm=\emptyset$.
  \item\label{ItNpHAdjOut}{\rm (Propagation out of ${}^0\cR_{\rm out}^\pm$.)} Suppose that $\tilde\gamma_{\!\scri}>-\ubar q_{1,+}$. If $\WF_{0,\semi}^{\tilde s-2,2\tilde\gamma_{\!\scri}}(h^{-1}\tilde f)\cap{}^0\cR_{\rm out}^\pm=\emptyset$, then $\WF_{0,\semi}^{\tilde s,2\tilde\gamma_{\!\scri}}(\tilde u)\cap{}^0\cR_{\rm out}^\pm=\emptyset$.
  \end{enumerate}
\end{thm}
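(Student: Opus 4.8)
The plan is to follow the strategy of the proof of Theorem~\ref{ThmNpH}, propagating in the reverse direction along the bicharacteristic flow at each radial set. Since $P$ is principally scalar with real principal symbol, the Mellin-transformed normal operator $\wh{P_+^*}(\lambda)$, and hence $P^\dag_{h,\hat\lambda}$, has the same (real, scalar) semiclassical principal symbol $p_{\pm 1}$ as $P_{h,\hat\lambda}$. Therefore its semiclassical characteristic set is again ${}^0\Sigma_{\hat\lambda}$ (in particular $P^\dag_{h,\hat\lambda}$ is elliptic at fiber infinity), and the wave front set containment $\WF_{0,\semi}^{\tilde s,2\tilde\gamma_{\!\scri}}(\tilde u)\subset\WF_{0,\semi}^{\tilde s-2,2\tilde\gamma_{\!\scri}}(\tilde f)\cup{}^0\Sigma_\pm$ follows directly from elliptic regularity in the semiclassical $0$-calculus of~\S\ref{SssEB0Psdo}. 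Moreover, the Hamilton vector fields of the principal symbols of $P^\dag_{h,\hat\lambda}$ and $P_{h,\hat\lambda}$ coincide, so by the discussion preceding~\eqref{EqNpHRadial} the set ${}^0\cR_{\rm in}^\pm$ is a source and ${}^0\cR_{\rm out}^\pm$ a sink for $\pm H_{p_{\pm 1}}$ inside ${}^0\Sigma_{\pm 1}$; consequently, as the positive commutator estimates for the adjoint problem propagate in the opposite direction, ${}^0\cR_{\rm in}^\pm$ behaves as a \emph{sink} and ${}^0\cR_{\rm out}^\pm$ as a \emph{source}. This dictates the placement of the a priori hypotheses: at ${}^0\cR_{\rm in}^\pm$ one needs control on a punctured neighborhood (part~\eqref{ItNpHAdjIn}), whereas at ${}^0\cR_{\rm out}^\pm$ no a priori regularity of $\tilde u$ is assumed (part~\eqref{ItNpHAdjOut}).

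For each radial set I would run the standard semiclassical positive commutator argument (as in the proofs of Lemmas~\ref{LemmaMEinp} and~\ref{LemmaMEout}, now in the $0$-calculus of~\S\ref{SssEB0Psdo}): with a commutant $A=\check A^*\check A\in\Psi_{0,\semi}^{2\tilde s-1,2\tilde\gamma_{\!\scri}-1}(I^+)$ microlocalized near the set in question, one pairs $\tfrac{i}{h}[P^\dag_{h,\hat\lambda},A]+\tfrac{(P^\dag_{h,\hat\lambda})^*-P^\dag_{h,\hat\lambda}}{ih}A$ against $\tilde u$ and analyzes the sign of the principal symbol of this expression at the radial set. Since these radial sets lie at finite $0$-momenta, the thresholds are conditions on the weights $\tilde\gamma_{\!\scri},\tilde\gamma_+$ only (not on $\tilde s$); the new input relative to Theorem~\ref{ThmNpH} is the subprincipal contribution, which Lemma~\ref{LemmaNpHOpSub}, applied to $P^*$ (again principally scalar and real), evaluates at $\pm\tfrac{\dd\rho_+}{\rho_+}+\varpi$ as
\[
  \sigma_{0,\semi}^1\Bigl(\frac{P^\dag_{h,\hat\lambda}-(P^\dag_{h,\hat\lambda})^*}{2 i h}\Bigr) = \Bigl(-\sigmaeb^1\Bigl(\tfrac{P-P^*}{2 i}\Bigr) + \tfrac{\Im\hat\lambda}{h}\rho_+^{-1}H_p\rho_+\Bigr)\Big|_{\pm\tfrac{\dd\rho_+}{\rho_+}+\varpi},
\]
with $\tfrac{\Im\hat\lambda}{h}=-\tilde\gamma_++\cO(h)$ on the line $\Im\lambda=-\tilde\gamma_+$ (cf.~\eqref{EqNpHImLambda}). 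At ${}^0\cR_{\rm in}^\pm$ the symmetrized term $\sigmaeb^1(\tfrac{P-P^*}{2 i})$ vanishes (as at $\cR_{\rm in,+}$ in~\S\ref{SsME}, the relevant factor $\hat\xi-2$ being zero there) while $\rho_+^{-1}H_p\rho_+$ is a nonzero constant, so the threshold compares $\tilde\gamma_+$ with the weight offset $\tilde\gamma_{\!\scri}$ and the sink estimate holds precisely when $\tilde\gamma_+>\tilde\gamma_{\!\scri}$. At ${}^0\cR_{\rm out}^\pm$ the $\Im\hat\lambda/h$-term vanishes while $\sigmaeb^1(\tfrac{P-P^*}{2 i})$ contributes a multiple of $p_1+p_1^*$ restricted to $\scri^+\cap I^+$; combining this with the constant-order contribution coming from the $\tfrac{n-1}{2}$-normalization in~\eqref{EqOpNorm} yields the quantity $\ubar q_{1,+}=\tfrac{n-1}{2}+\ubar p_{1,+}$ from~\eqref{EqNpHubarq}, and the source estimate---for which therefore no a priori regularity assumption is needed and the regularization parameter may be taken arbitrarily large, exactly as in the backward estimates of~\S\ref{SsME}---holds as soon as $\tilde\gamma_{\!\scri}>-\ubar q_{1,+}$. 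In all cases the integrations by parts are justified by the usual regularization (replacing $\check a$ by $(1+r h^{-1})^{-\gamma}\check a$ and letting $r\to 0$) together with the assumed background membership $\tilde u\in h^{-N}H_{0,h,\loc}^{-N,2\tilde\gamma_{\!\scri}}(I^+)$.

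The part requiring the most care is this sign bookkeeping: one must track the sign of the $\Im\hat\lambda/h$ contribution through the passage to the adjoint and through the conjugation by $x_{\!\scri}^{2\tilde\gamma_{\!\scri}}$ implicit in working on $H_{0,h}^{\tilde s,2\tilde\gamma_{\!\scri}}(I^+)$, so that the conditions $\tilde\gamma_+>\tilde\gamma_{\!\scri}$ and $\tilde\gamma_{\!\scri}>-\ubar q_{1,+}$ emerge as the reflections, under $\gamma\mapsto-\tilde\gamma$ (up to the standard common constant shift, cf.\ Remark~\ref{RmkMEBwDual}), of the conditions $\gamma_+<\gamma_{\!\scri}$ and $\gamma_{\!\scri}<\ubar q_{1,+}$ of Theorem~\ref{ThmNpH}. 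Once this is settled, the remaining arguments are routine and parallel those of Theorem~\ref{ThmNpH} verbatim; alternatively, the two a priori estimates underlying parts~\eqref{ItNpHAdjIn} and~\eqref{ItNpHAdjOut} can be obtained from the quantitative forms of Theorem~\ref{ThmNpH} by a closed graph (duality) argument, with the wave front set statements then following in the standard way.
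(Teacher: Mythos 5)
Your proposal is correct and follows essentially the same route as the paper: the paper also proves Theorems~\ref{ThmNpH} and \ref{ThmNpHAdj} jointly by transplanting the positive commutator radial point arguments of Lemmas~\ref{LemmaMEc}--\ref{LemmaMEout} into the semiclassical 0-calculus, with the subprincipal contribution computed via Lemma~\ref{LemmaNpHOpSub} and $\frac{\Im\hat\lambda}{h}=-\tilde\gamma_++\cO(h)$ playing the role of the $\rho_+$-weight, exactly as in your sign bookkeeping. The duality/closed-graph alternative you mention is a reasonable variant but is not needed.
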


Mirroring Remark~\ref{RmkNpBWeights}, we recall the relationships $\alpha_{\!\scri}=\gamma_{\!\scri}-\frac{n}{2}$, $\alpha_+=\gamma_+-\frac{n+1}{2}$, $\ubar p_{1,+}=\ubar q_{1,+}-\frac{n-1}{2}$ from~\eqref{EqNpL2} and \eqref{EqNpHubarq}; then the conditions on $\gamma_+,\gamma_{\!\scri},\ubar q_{1,+}$ in Theorem~\ref{ThmNpH}\eqref{ItNpHin}, resp.\ \eqref{ItNpHout} are precisely those on $\alpha_+,\alpha_{\!\scri},\ubar p_1$ in Lemmas~\ref{LemmaMEinp}\eqref{ItMEinpFw}, resp.\ \ref{LemmaMEout}\eqref{ItMEoutFw} upon replacing $\ubar p_1$ by $\ubar p_{1,+}$. A similar comment applies to Theorem~\ref{ThmNpHAdj} and Lemmas~\ref{LemmaMEinp}\eqref{ItMEinpBw} and \ref{LemmaMEout}\eqref{ItMEoutBw}.

\begin{proof}[Proof of Theorems~\usref{ThmNpH} and \usref{ThmNpHAdj}]
  These results follow again from positive commutator arguments. One can in fact simply copy the arguments from the proofs of Lemmas~\ref{LemmaMEc}--\ref{LemmaMEout} but factor out the overall weights of the differential operators under study (cf.\ Definition~\ref{DefNpOp}), and drop localizations in $\rho_+$ and its dual momentum $\zeta$ in the commutator constructions and calculations. We omit the details here, but point out that the subprincipal terms which enter in threshold conditions for radial point estimates for $P_{h,\hat\lambda}$ can be computed from those of $P$ via the formula~\eqref{EqNpHOpSub} where $\frac{\Im\hat\lambda}{h}=-\gamma_++\cO(h)$. The upshot is that the term $\frac{\Im\hat\lambda}{h}\rho_+^{-1}H_p\rho_+\equiv-\gamma_+\rho_+^{-1}H_p\rho_++\cO(h)$ has precisely the same effect as the weight $\rho_+^{-2\gamma_+}$ of $\check a^2$ in the positive commutator proof of, for example, Lemma~\ref{LemmaMEinp} (where we are using $\check\alpha_+$ in place of $\gamma_+$).
\end{proof}

As in~\S\ref{SsME}, the positive commutator proofs give quantitative estimates. We state this for Theorem~\ref{ThmNpH}\eqref{ItNpHout}: suppose $B,W\in\Psi_{0,\semi}^0(I^+)$ have Schwartz kernels supported inside $(\supp\chi_1)^2$ where $\chi_1\in\CIc(I^+)$ is identically $1$ near $\pa I^+$, and their operator wave front sets are compact subsets of ${}^0 T^*I^+$ (thus, differential orders are irrelevant below). Furthermore, suppose that
\begin{itemize}
\item all backward (for the `$+$' sign), resp.\ forward (for the `$-$' sign) null-bicharacteristics starting at $\WF'_{0,\semi}(B)\cap\Sigma_{\pm 1}$ tend to ${}^0\cR_{\rm out}^\pm$ while remaining inside $\Ell_{0,\semi}^0(W)$;
\item $W$ is elliptic on $\WF'_{0,\semi}(B)$ and at ${}^0\cR_{\rm out}^\pm$.
\end{itemize}
Then for all $N\in\R$, and under the assumptions of Theorem~\ref{ThmNpH}\eqref{ItNpHout}, there exists a constant $C$ so that
\[
  \|B u\|_{H_{0,h}^{s,2\gamma_{\!\scri}}(I^+)} \leq C\Bigl( h^{-1}\|W P_{h,\hat\lambda}u\|_{H_{0,h}^{s-2,2\gamma_{\!\scri}}(I^+)} + h^N\|\chi u\|_{H_{0,h}^{-N,2\gamma_{\!\scri}}(I^+)}\Bigr).
\]

We leave the statements of estimates corresponding to the wave front set statements of the remaining parts of the above two Theorems to the reader.

%%%%%%%%%%%%%%%%%%%%%%%%%%%%%%%%%%%%%%%%%%%%%%%%%%%%%%%%%%%%%%%%%%%%%%
\section{Application to linear waves on asymptotically flat spacetimes}
\label{SA}

We now sketch how to use the black box results proved in~\S\S\ref{SM}--\ref{SNp} to obtain a Fredholm and solvability theory for linear wave operators on a simple class of (future geodesically complete) asymptotically Minkowskian spacetimes. The setting we consider here is closely related to that by Baskin--Vasy--Wunsch \cite{BaskinVasyWunschRadMink,BaskinVasyWunschRadMink2}, in that the metrics and wave operators are asymptotically homogeneous of degree $\pm 2$ with respect to dilations in the forward timelike cone. Compared to the reference, we do allow here for significantly more general behavior near the null infinity, in that the light cone at infinity (in the terminology of \cite{BaskinVasyWunschRadMink}) is blown up here to resolve the metric and operator coefficients. On the flipside, we shall only record basic (functional analytic) results here and do not extract sharp asymptotics of solutions of wave equations.

We work on $\R^{1+n}=\R_t\times\R^n_x$. Denote by $Y=\{\frac{r}{t}=1,\ r^{-1}=0\}\subset\pa\ol{\R^{n+1}}$ the light cone at infinity, and let $M$ denote the square root blow-up of $\tilde M=[\ol{\R^{n+1}};Y]$ at the front face $\tilde\scri^+$ (as in~\S\ref{SssOGeoMink}; see also Figure~\ref{FigOGeo}). Denote by $\rho_0,x_{\!\scri},\rho_+\in\CI(M)$ defining functions of the boundary hypersurfaces $I^0$ (the closure of $\frac{t}{r}<1$, $r^{-1}=0$), $\scri^+$ (the front face of the square root blow-up, i.e.\ the closure of $|t-r|<\infty$, $r^{-1}=0$), $I^+\subset M$ (the closure of $\frac{r}{t}<1$, $t^{-1}=0$).\footnote{Thus, one can, for example, take $\rho_+=t^{-1}$ in $t>1$, $r/t<c_0<1$, further $\rho_0=r^{-1}$ in $r>1$, $0\leq t/r<c_0<1$; and possible choices near $\scri^+$ are given in~\eqref{EqOGeoCoordI0} and \eqref{EqOGeoCoordIp}.} We shall consider forcing problems in the domain
\[
  \Omega := \ol{\{t\geq 0\}}\subset M.
\]
Let then $g$ be a smooth Lorentzian metric on $\R^{n+1}$ (with signature $(-,+,\ldots,+)$) with the following properties:
\begin{enumerate}
\item\label{EqSAMet1} the conformal rescaling $g_\ebop:=\rho_0^2 x_{\!\scri}^2\rho_+^2 g$ is a nondegenerate edge-b-metric on $M$,
  \[
    g_\ebop \in (\CI+\cA^{(\ell_0,2\ell_{\!\scri},\ell_+)})(M;S^2\,\Teb^*M),\qquad
    g_\ebop^{-1} \in (\CI+\cA^{(\ell_0,2\ell_{\!\scri},\ell_+)})(M;S^2\,\Teb M),
  \]
  where $\ell_0\in(0,1]$ and $\ell_{\!\scri}\in(0,\half]$, and moreover $g$ is an $(\ell_0,2\ell_{\!\scri},\ell_+)$-admissible metric (Definition~\ref{DefOGeoAdm});
\item the level sets of $t$ are timelike, with $\dd t$ past timelike;
\item there exists a smooth function $\tau\in\CI(M\setminus(\scri^+\cup I^+))$ which is equal to $t/r$ near $I^0$, and so that $\tau\geq 0$ has past timelike differential in $\Omega^\circ$;
\item\label{EqSAMet3} the b-normal (or \emph{scaling}) vector field\footnote{This is defined in any collar neighborhood of $I^+\subset M$, and its restriction to $I^+$ as a b-vector field is independent of choices.} $\rho_+\pa_{\rho_+}$ is past timelike for $g_\ebop$ at $(I^+)^\circ=I^+\setminus\scri^+$;
\item\label{EqSAMet4} $g$ is non-trapping in the following sense: every past null-bicharacteristic (i.e.\ lift of a past causal null-geodesic) of $g_\ebop$ starting over a point in $\Omega\cap(I^0)^\circ$ (where $(I^0)^\circ=I^0\setminus\scri^+$) reaches the closure of $t^{-1}(0)$; and any past null-bicharacteristic of $g_\ebop$ starting over a point in $\Omega\cap(I^+)^\circ$ tends to the radially compactified edge-b-cotangent bundle over $\scri^+$.
\end{enumerate}

Moreover, we consider a wave type operator $P\in\Diff^2(\R^{n+1})$ with the following properties:
\begin{enumerate}
\setcounter{enumi}{4}
\item\label{EqSAOp1} $P$ is a weighted edge-b-operator, $P\in\rho_0^2 x_{\!\scri}^2\rho_+^2(\CI+\cA^{(\ell_0,2\ell_{\!\scri},\ell_+)})\Diffeb^2(M)$ whose principal symbol is equal to the dual metric function $\zeta\mapsto g^{-1}(\zeta,\zeta)$ of $g$;
\item\label{EqSAOp2} $P$ is $g$-admissible (Definition~\ref{DefOp}). Define $\ubar p_1$ and $\ubar p_{1,+}$ in terms of $p_1$ from Definition~\ref{DefOp}\eqref{ItOpNorm} as in Definition~\ref{DefMEMin};
\item\label{EqSAOp3} the Mellin-transformed normal operator family $\wh{P_+}(\lambda)$ of the b-normal operator $N_{I^+}(x_{\!\scri}^{-2}\rho_+^{-2}P)\in\Diff_{\ebop,I}^2({}^+N I^+)$ (see Definition~\ref{DefNpOp}) has trivial kernel on $\cA^{2\gamma_{\!\scri}}(I^+)$ for all $\lambda\in\C$, $\gamma_{\!\scri}\in\R$ with $-\Im\lambda<\gamma_{\!\scri}<\bar q$ for some fixed $\bar q\leq\ubar p_{1,+}+\frac{n-1}{2}$.
\end{enumerate}

One can also consider special admissible operators $P$ (see Definition~\ref{DefESpecial}) acting on sections of a vector bundle over $M$ which is the pullback of a bundle $E\to\ol{\R^{n+1}}$; we leave the required notational modifications to the reader.

\begin{example}[The model example]
\label{ExSAMink}
  The Minkowski metric~\eqref{EqOMink} satisfies assumptions~\eqref{EqSAMet1}--\eqref{EqSAMet4}. Indeed, it served as the motivating example for admissible metrics near $\scri^+$ in~\S\ref{SsOGeo}. Also, the non-trapping condition is satisfied, since the backward null-bicharacteristics of $g_\ebop$ are limits of appropriate reparameterizations of backwards null-geodesics (lifted to $T^*\R^{n+1}$) at $\pa\ol{\R^{n+1}}$, which indeed have the required property; note here that differently (near $\scri^+$) rescaled maximally extended backwards null-geodesics on $\ol{\R^{n+1}}$ start at the light cone at future infinity and tend to the light cone at past infinity, and on their journey cross the light cone at future infinity, or the closure of $t^{-1}(0)$, or both. Furthermore, the scalar wave operator on Minkowski space satisfies assumptions~\eqref{EqSAOp1}--\eqref{EqSAOp2} with $\ubar p_1=\ubar p_{1,+}=0$ as shown in Example~\ref{ExOpMink}. Furthermore, the operator family $\wh{P_+}(\lambda)$ is a conjugation of the spectral family on hyperbolic $n$-space; see \cite[\S\S7 and 10.1]{BaskinVasyWunschRadMink}. Using this, one can then show that~\eqref{EqSAOp3} is satisfied for $\bar q=\frac{n-1}{2}$. (See also \cite{VasyMinkDSHypRelation,BaskinMarzuolaComp}.)
\end{example}

\begin{thm}[Solving wave-type equations]
\label{ThmSA}
  Let $\Omega\subset M$, $P$, and $g$ be as above. Define edge-b-Sobolev spaces on $M$ with respect to the volume density $|\dd g|$. Let $s,\alpha_0,\alpha_{\!\scri},\alpha_+\in\R$ and $k\in\N_0$, and suppose that
  \[
    \alpha_++\frac12 < \alpha_{\!\scri} < \min\Bigl(\alpha_0+\frac12,-\frac12+\ubar p_1\Bigr),\qquad
    s > -(\alpha_0-\alpha_{\!\scri})-\Bigl(-\frac12+\ubar p_1-\alpha_{\!\scri}\Bigr).
  \]
  Then the operator
  \begin{align*}
    P &\colon \bigl\{ u \in \dot H_{\ebop;\bop}^{(s;k),(\alpha_0,2\alpha_{\!\scri},\alpha_+)}(\Omega) \colon P u \in \dot H_{\ebop;\bop}^{(s-1;k),(\alpha_0+2,2\alpha_{\!\scri}+2,\alpha_++2)}(\Omega) \bigr\} \\
      &\hspace{16em} \to \dot H_{\ebop;\bop}^{(s-1;k),(\alpha_0+2,2\alpha_{\!\scri}+2,\alpha_++2)}(\Omega)
  \end{align*}
  is invertible, where $\dot H_{\ebop;\bop}^{(s;k),(\alpha_0,2\alpha_{\!\scri},\alpha_+)}(\Omega)$ is the subspace of $H_{\ebop;\bop}^{(s;k),(\alpha_0,2\alpha_{\!\scri},\alpha_+)}(M)$ consisting of those elements with support contained in $\Omega$. In other words,\footnote{The equivalence of this statement with the invertibility of $P$ follows from the uniqueness of (distributional) forward solutions of $P u=f$.} for each forcing term $f\in\Hsupp_{\ebop;\bop}^{(s-1;k),(\alpha_0+2,2\alpha_{\!\scri}+2,\alpha_++2)}(\Omega)$, the unique forward solution $u$ of $P u=f$ satisfies $u\in\Hsupp_{\ebop;\bop}^{(s;k),(\alpha_0,2\alpha_{\!\scri},\alpha_+)}(\Omega)$.
\end{thm}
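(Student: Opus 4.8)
The plan is to prove Theorem~\ref{ThmSA} by combining all of the microlocal and energy estimates established in \S\S\ref{SM}--\ref{SNp} into a single global propagation-of-regularity argument on $\Omega$, together with a uniqueness statement for forward solutions; the estimate for the adjoint $P^*$ (propagating regularity in the backward direction) then yields solvability by the usual functional-analytic duality argument. I would first record the two a priori estimates that drive everything: for $u$ supported in $\Omega$ with $P u = f$, an estimate
\begin{equation}
\label{EqSAPlanAp}
  \|u\|_{H_{\ebop;\bop}^{(s;k),\alpha}(\Omega)} \leq C\|f\|_{H_{\ebop;\bop}^{(s-1;k),\alpha'}(\Omega)},
\end{equation}
with $\alpha=(\alpha_0,2\alpha_{\!\scri},\alpha_+)$, $\alpha'=\alpha+(2,2,2)$, valid in the strong sense; and the dual estimate for $P^*$ with orders $\tilde s=-s+1$, $\tilde\alpha=-\alpha'$, which by Remark~\ref{RmkMEBwDual} satisfies threshold conditions equivalent to those imposed on $(s,\alpha)$. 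Given \eqref{EqSAPlanAp} and its adjoint analogue, the invertibility of $P$ between the stated spaces is standard (cf.\ \cite[\S4]{VasyMinicourse} or \cite[\S2.1.3]{HintzVasySemilinear}): the adjoint estimate gives, by Hahn--Banach, solvability of $P u = f$ with the right bound, and the estimate for $P$ gives uniqueness of the solution in the given space; the identification of this solution with the forward solution follows from uniqueness of distributional forward solutions, which in turn is a consequence of energy estimates on $\Omega$ restricted to suitable sub-domains (finite-in-$\tau$ energy estimates using the past timelike function $\tau$, together with the local energy estimate of Theorem~\ref{ThmMEFw}/Theorem~\ref{ThmE} near $\scri^+\setminus I^+$).

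The core of the argument is thus the a priori estimate \eqref{EqSAPlanAp}, and I would prove it by partitioning $\Omega$ into four overlapping regions and chaining estimates: (i) a region $\tau \in (0,\tau_0]$ away from $\scri^+$ and $I^+$, where $P$ is a (nondegenerate, non-trapping) wave operator with bounded coefficients and the classical propagation-of-singularities plus real-principal-type estimates, combined with a finite-time energy estimate using $\dd\tau$, give $H^s_{\loc}$-control of $u$ from the (vanishing) data at $\tau=0$; (ii) a neighborhood of $\scri^+\setminus I^+$, handled by Theorem~\ref{ThmE} (and Corollary~\ref{CorEFwb} for the extra $k$ orders of b-regularity), which propagates the control from region (i) across $\scri^+$ and, via Theorem~\ref{ThmMEFw}\eqref{ItMEFwThru} and Proposition~\ref{PropbI}, gives $H_{\ebop;\bop}^{(s;k),\alpha}$-control everywhere over $\scri^+$ except at $\cR_{\rm out}$ and near $\scri^+\cap I^+$; (iii) a full neighborhood of $I^+$ away from $\scri^+$, i.e.\ in $(I^+)^\circ$, where the scaling vector field $\rho_+\pa_{\rho_+}$ is past timelike (assumption~\eqref{EqSAMet3}) and the operator is a b-differential operator whose Mellin-transformed normal operator $\wh{P_+}(\lambda)$ is invertible on the relevant weighted spaces (assumption~\eqref{EqSAOp3} and Theorems~\ref{ThmNpB}, \ref{ThmNpH}): here one Mellin-transforms in $\rho_+$, using Lemma~\ref{LemmaNpHMellin} to convert edge-b-regularity on $M$ into semiclassical $0$-regularity on $I^+$, applies the bounded-frequency estimate (Theorem~\ref{ThmNpB}) and the high-frequency (semiclassical) estimate (Theorems~\ref{ThmNpH}--\ref{ThmNpHAdj}), using the non-trapping hypothesis~\eqref{EqSAMet4} to connect $\cR_{\rm in}$ to $\cR_{\rm out}$ along the bicharacteristic flow inside $(I^+)^\circ$, and then inverts the Mellin transform to deduce $\Heb^{s,\alpha}$-control near $(I^+)^\circ$ with a loss of one edge-b-derivative absorbed using the already-established regularity from region (i); (iv) a full neighborhood of $\scri^+\cap I^+$, controlled by Theorem~\ref{ThmNeP} (the a priori estimate near $\scri^+$ involving the edge normal operator, whose invertibility is Proposition~\ref{PropNe}), whose error terms --- a weaker weight at $\scri^+$ and an $\cO(\eps)$ term at the sharp weight --- are absorbed: the former into the control from region (iii) after finitely many weight-improvement iterations (using the gap $\alpha_{\!\scri} < -\tfrac12 + \ubar p_1$ and $2\ell_{\!\scri} \leq 1$), the latter by taking $\eps$ small relative to $C^{-1}$. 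Finally one assembles the regional estimates: starting from $\tau = 0$ one propagates through (i), (ii), and the union of (iii) and (iv) in order, closing the estimate by the standard "if all right-hand side norms are finite then the left is" argument at each step.

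The backward (adjoint) estimate for $P^*$ is entirely parallel: one uses Theorem~\ref{ThmMEBw} and Lemma~\ref{LemmaMEc}\eqref{ItMEcBw} near $\scri^+\setminus I^+$, Theorem~\ref{ThmE}\eqref{ItEBw} and Proposition~\ref{PropNe}\eqref{ItNeBw} for solvability/uniqueness, Theorem~\ref{ThmNeP}\eqref{ItNePBw} near $\scri^+\cap I^+$, and Theorems~\ref{ThmNpB}\eqref{ItNpBAdjoint}, \ref{ThmNpHAdj} near $(I^+)^\circ$, propagating regularity now in the backward causal direction (forward on $\Sigma^-$); the threshold conditions \eqref{EqMEBwThru}, \eqref{EqMEBwOutThr} for $\tilde s,\tilde\alpha$ are exactly the duals of those assumed on $s,\alpha$, which is why the single set of hypotheses in the theorem suffices for both. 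One subtlety is that for $P^*$, propagation into $\cR_{\rm in,-}$ requires the weight condition $\tilde\alpha_0 < \tilde\alpha_{\!\scri} - \tfrac12$, equivalent to $\alpha_{\!\scri} < \alpha_0 + \tfrac12$, which is one of our hypotheses; and propagation out of $\cR_{\rm out}$ needs $\tilde\alpha_{\!\scri} > -\tfrac12 - \ubar p_1$, equivalent to $\alpha_{\!\scri} < -\tfrac12 + \ubar p_1$, again assumed.

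\textbf{Main obstacle.} I expect the principal difficulty to be the careful bookkeeping at the transition region (iv), $\scri^+ \cap I^+$, and its interface with region (iii): Theorem~\ref{ThmNeP} controls $u$ near $\scri^+$ only modulo a loss of one edge-b-order \emph{and} modulo the weaker-weight and $\cO(\eps)$ error terms, while the $I^+$-normal operator estimates of \S\ref{SNp} control $u$ near $(I^+)^\circ$ but the passage between $P$ and $\wh{P_+}(\lambda)$ loses a full power of $x_{\!\scri}$ relative to the second-order nature of the difference $P - P_+$. Reconciling these --- showing that the combined system of estimates can be closed, i.e.\ that every error term produced in one region is genuinely controlled (with a fixed, not growing, loss in the edge-b-order $s$ and without any loss in the b-order $k$) by a term already estimated in another region --- is where the argument must be set up with the most care, exactly as anticipated in the discussion following \eqref{EqNePFw}. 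The non-trapping hypothesis~\eqref{EqSAMet4} is essential precisely to guarantee that the bicharacteristic flow over $(I^+)^\circ$ carries the "incoming" control (propagated in from $I^0$ across $\scri^+$) all the way to $\cR_{\rm out}$, so that the final application of the $\cR_{\rm out}$-estimate (Lemma~\ref{LemmaMEout}\eqref{ItMEoutFw} and Theorem~\ref{ThmNpH}\eqref{ItNpHout}) has the punctured-neighborhood control it requires; verifying that this holds under~\eqref{EqSAMet4} --- and in particular that no bicharacteristic gets trapped near $\scri^+ \cap I^+$ --- is a second point needing attention, though Example~\ref{ExSAMink} shows it does hold on Minkowski space.
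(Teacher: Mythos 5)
Your regional decomposition, choice of black boxes, and threshold bookkeeping match the intended use of the machinery, and the $k$-induction via commutator vector fields is right. But the global assembly has a genuine gap, concentrated in the existence step. Chaining the regional estimates does not produce the clean a priori bounds you posit: near $(I^+)^\circ$ and near $\scri^+\cap I^+$, every use of the $I^+$-normal operator (the difference $P-N_{I^+}(\rho_+^{-2}x_{\!\scri}^{-2}P)$ is still second order, merely $\cO(\rho_+^{\ell_+})$ better) and every use of Theorem~\ref{ThmNeP} leaves an error term $\|u\|$ in a space with weaker weights and lower regularity; iterating the weight improvement does not make this term disappear, it only relocates it, so at best you obtain an estimate with a compact error term, i.e.\ closed range, not invertibility. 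On the forward side this is repairable, since injectivity is elementary (uniqueness of forward solutions of the Cauchy problem from $t=0$), and removing the compact error then requires exactly the closed-range argument plus a density-of-range input. On the adjoint side, however, your plan is circular as stated: the error-free estimate for $P^*$ on the dual spaces, which is what Hahn--Banach needs, presupposes triviality of the kernel of $P^*$ at the sharp dual weights, and that is precisely the density of the range of $P$ --- essentially the statement being proved. There is no ``initial surface'' anchoring the adjoint problem; its data live purely in the decay rates at $I^+\cup\scri^+$, and ruling out nonzero elements of $\ker P^*$ with only the dual decay $\tilde\alpha_+=-\alpha_+-2$ at $I^+$ is not an energy-estimate fact (the backward energy estimate at $(I^+)^\circ$ only works for sufficiently strong decay) and is not supplied by assumption~\eqref{EqSAOp3} without an additional adjoint-side weight-improvement and uniqueness argument that you do not give. ``Entirely parallel'' is exactly where the forward/backward asymmetry bites.

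The paper avoids this by never dualizing globally: after solving up to a cross-section of $\scri^+$ (energy estimates in $\tau$ plus Theorem~\ref{ThmE}), it \emph{constructs} the solution near $I^+$ directly --- first for $s\geq 1$ with a very negative $I^+$-weight via an energy estimate exploiting the past-timelike scaling field $\rho_+\pa_{\rho_+}$ (a step absent from your plan; you use assumption~\eqref{EqSAMet3} only through the normal operator), then improves the $I^+$-weight in increments of $\ell_+$ by inverting $\wh{P_+}(\lambda)$ on shifted contours (Theorems~\ref{ThmNpB}, \ref{ThmNpH}, \ref{ThmNpHAdj}, with surjectivity of $\wh{P_+}(\lambda)$ obtained from large-$|\Re\lambda|$ invertibility and index constancy, not assumed), regaining the derivative lost at each step by the microlocal propagation results; for $s<1$ it then runs the closed-range argument you would also need (a priori estimate with error improved by Theorem~\ref{ThmNeP} and the $I^+$-normal operator until it is compact), concluding surjectivity from the density of the already-solved range and injectivity from forward uniqueness. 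If you want to keep a duality flavor, you must either prove adjoint injectivity independently (mirroring the weight-improvement argument for $P^*$ at $I^+$, with attention to which contours remain pole-free) or, more simply, replace the Hahn--Banach step by the paper's direct construction; as written, the existence half of your argument does not close.
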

\begin{proof}
  We first consider the case $k=0$.

  \pfstep{(1) Solution up to a cross section of $\scri^+$.} Let $f\in\dot H_\ebop^{s-1,(\alpha_0+2,2\alpha_{\!\scri}+2,\alpha_++2)}(\Omega)$. Using the timelike nature of $t$ and $\tau$, one can solve $P u_0=f$ in the region $t\geq 0$, $\tau\leq\tau_0$ for any fixed $\tau_0<1$, with the solution $u_0$ lying in $\Hb^{s,\alpha_0}$ near $I^0\cap\{t\geq 0,\ \tau\leq\tau_0\}$. (For $s=1$, this follows from an energy estimate with multiplier $e^{-\digamma\tau}\rho_0^{-2(\alpha_0+1)}\pa_\tau$ for large $\digamma>1$; for $s\geq 1$ one uses real principal type propagation of regularity in the b-setting, and to get the full range of $s\in\R$, one uses duality arguments completely analogous to those the proof of Theorem~\ref{ThmE}.)

  Choosing $\tau_0$ close enough to $1$, Theorem~\ref{ThmE} can be used to extend $u_0$ as a solution of $P u_0=f$ to a neighborhood of $I^0\cap\scri^+$, i.e.\ to a domain $\Omega_{\delta,\rho}^0$ for suitable $\delta,\rho$. Indeed, for a cutoff $\chi$ which is $0$ for $\tau\leq\tau_0$ and $1$ for $\tau\geq\half(1+\tau_0)$, Theorem~\ref{ThmE} produces a forward solution $u_1$ of $P u_1=f_1:=P(\chi u_0)=\chi f+[P,\chi]u_0$, which thus has supported character (i.e.\ vanishes) at $\Sigma_{\delta,\rho}^{0,\rm in}$; and necessarily $u_0=(1-\chi)u_0+u_1$, with the right hand side defining the desired extension of $u_0$ of class $\dot H_\ebop^{s,(\alpha_0,2\alpha_{\!\scri})}$ (omitting the weight at $I^+$ for now).

  Fix now a smooth function $\ft$ on $M^\circ$ whose level sets $\ft^{-1}(c)$ for $c\in[-1,1]$ are spacelike and intersect $\scri^+$ in its interior; we further arrange that $\dd\ft$ is past timelike, with $\ft$ an increasing function of $t$ except in a neighborhood of $\scri^+$ where we demand $\ft=\rho_0-(\rho+C x_{\!\scri}^{2\ell_{\!\scri}})$ near $\scri^+$ for some $\rho\in(0,1)$ and large $C$ (cf.\ Lemma~\ref{LemmaOGeoHyp}). Thus, the level sets of $\ft$ interpolate between the level sets of $t$ and the surfaces $\Sigma_{\delta,\rho}^{0,\rm out}$, see~\eqref{EqESigma} and Figure~\ref{FigESetup}. Let now $\chi\in\CI(M)$ denote a cutoff which is $0$ in the causal past of $\ft=0$, and $1$ in the closure of the causal future $\Omega^+\subset M$ of $\ft\geq\half$. Then the global forward solution $u$ of $P u=f$ can be written as $u=(1-\chi)u_0+u'$ where $u'$ is the forward solution of
  \begin{equation}
  \label{EqSAEqn}
    P u'=f':=\chi f+[P,\chi]u_0 \in \dot H_\ebop^{s-1,(\alpha_0+2,2\alpha_{\!\scri}+2,\alpha_++2)}(\Omega).
  \end{equation}
  Since $(1-\chi)u_0\in\dot H_\ebop^{s,(\alpha_0,2\alpha_{\!\scri},\alpha_+)}(\Omega)$ (with the weight at $I^+$ in fact arbitrary), we need to show that the forward solution $u'$ lies in this space as well (now with the weight at $I^0$ arbitrary).

  \pfstep{(2) Solution near $I^+$.} Dropping the weight at $I^0$ from the notation, we shall show that
  \begin{equation}
  \label{EqSAOpRed}
    P \colon \bigl\{ u \in \dot H_\ebop^{s,(2\alpha_{\!\scri},\alpha_+)}(\Omega^+) \colon P u \in \dot H_\ebop^{s-1,(2\alpha_{\!\scri}+2,\alpha_++2)}(\Omega^+) \bigr\} \to \dot H_\ebop^{s-1,(2\alpha_{\!\scri}+2,\alpha_++2)}(\Omega^+)
  \end{equation}
  is invertible, i.e.\ that the equation~\eqref{EqSAEqn} has a solution in $\dot H_\ebop^{s,(\alpha_0+2,2\alpha_{\!\scri}+2,\alpha_++2)}(\Omega)$. We shall in fact show the stronger statement that~\eqref{EqSAOpRed} is invertible for all $s\in\R$. (This is possible since the only microlocal propagation result placing a restriction on $s$ is the radial point estimate at $\cR_{\rm c}$---Lemma~\ref{LemmaMEc}---, which lies outside of $\Omega^+$.)

  We give an argument in the spirit of \cite[\S\S4.2 and 5.3]{HintzVasyMink4}, where we first prove surjectivity for $s=1$ and very negative weights at $I^+$ (i.e.\ allowing for fast polynomial growth) using a global (near $I^+$) energy estimate, which we upgrade to $s\geq 1$ using propagation estimates; to improve the $I^+$-weight to $\alpha_+$, we use the invertibility properties of the $I^+$-normal operator. The full range of $s$ is then obtained using an approximation argument. See Remark~\ref{RmkSAAlt} for an alternative argument which avoids the use of a global energy estimate.

  \pfsubstep{(2.i)}{Solution for $s\geq 1$ and very negative $I^+$-weight.} When $s=1$ and if one replaces $\alpha_+$ by $\alpha'_+\leq\alpha_+$ with $\alpha'_+$ sufficiently negative, a solution can be shown to exist via an energy estimate on $\Omega^+$ in which one exploits the timelike nature of $\rho_+\pa_{\rho_+}$ near $I^+\setminus\scri^+$ by using a vector field multiplier which transitions between $-\rho_+^{-2(\alpha'_++1)}\rho_+\pa_{\rho_+}$ and $x_{\!\scri}\pa_{x_{\!\scri}}-(2+c)\rho_+\pa_{\rho_+}$ near $\scri^+\cap I^+$ (cf.\ \eqref{EqNeVF}); see \cite[Proposition~4.11]{HintzVasyMink4} for such an argument. Higher regularity $s\geq 1$ then follows by propagating edge-b-regularity through $\scri^+$ using Theorem~\ref{ThmMEFw}\eqref{ItMEFwThru} (or indeed just Lemma~\ref{LemmaMEinp}), then along the characteristic set of $P$ in the b-cotangent bundle over $(I^+)^\circ$, and then into $\scri^+$ using Theorem~\ref{ThmMEFw}\eqref{ItMEFwOut} (or indeed just Lemma~\ref{LemmaMEout}\eqref{ItMEoutFwLoc}); the control of $u$ in a punctured neighborhood of the outgoing radial set $\cR_{\rm out}$ required for this final step uses the non-trapping assumption. (If one constructs the solution $u$ via a duality argument, one can cover all $s\geq 0$ in this fashion.)

  \pfsubstep{(2.ii)}{Solution for $s\geq 1$ and the $I^+$-weight $\alpha_+$.} We have so far obtained a solution $u\in\dot H_\ebop^{s,(2\alpha_{\!\scri},\alpha'_+)}(\Omega^+)$, and improve its decay using a standard normal operator, Mellin transform, and contour shifting argument near $I^+$. The key ingredient is the fact that
  \begin{equation}
  \label{EqSAMTInv}
    \wh{P_+}(\lambda)^{-1} \colon H_0^{s-2,2\gamma_{\!\scri}}(I^+) \to H_0^{s,2\gamma_{\!\scri}}(I^+)
  \end{equation}
  (where $\gamma_{\!\scri}=\alpha_{\!\scri}+\frac{n}{2}$ as in~\eqref{EqNpL2}, and the Sobolev spaces are defined with respect to a positive b-density on $I^+$) is an analytic family of bounded operators for $-\Im\lambda<\gamma_{\!\scri}<\bar q$, which moreover satisfies high energy estimates
  \begin{equation}
  \label{EqSAMTInvHi}
    \|v\|_{H_{0,h}^{s,2\gamma_{\!\scri}}(I^+)} \leq C h^{-1}\|\wh{P_+}(\lambda)v\|_{H_{0,h}^{s-2,2\gamma_{\!\scri}}(I^+)},\qquad h=\la\lambda\ra^{-1},
  \end{equation}
  when $-\Im\lambda=\gamma_+$, with $\gamma_+\in(-\infty,\gamma_{\!\scri})$ contained in a compact subinterval. To begin the proof of~\eqref{EqSAMTInv}, one combines Theorem~\ref{ThmNpB}\eqref{ItNpBDirect} and elliptic estimates for $\wh{P_+}(\lambda)$ in $(I^+)^\circ$; the ellipticity of $\wh{P_+}(\lambda)$ (as a differential operator on $(I^+)^\circ$) follows from assumption~\eqref{EqSAMet3} above. This gives the estimate
  \[
    \|v\|_{H_0^{s,2\gamma_{\!\scri}}(I^+)} \leq C\Bigl( \|\wh{P_+}(\lambda)v\|_{H_0^{s-2,2\gamma_{\!\scri}}(I^+)}+\|v\|_{H_0^{-N,-N}(I^+)}\Bigr).
  \]
  Moreover, an application of the parametrix used in the proof of Theorem~\ref{ThmNpB} shows that every $v\in H_0^{s,2\gamma_{\!\scri}}(I^+)$ with $\wh{P_+}(\lambda)v=0$ automatically satisfies $v\in\cA^{2\gamma_{\!\scri}}(I^+)$; since by assumption~\eqref{EqSAOp3} this implies $v=0$, we can drop the compact error term in this estimate upon increasing $C$. (One can choose $C$ uniformly when $\lambda\in\C$ is restricted to a compact set.) In order to get~\eqref{EqSAMTInv}, one also needs the surjectivity of $\wh{P_+}(\lambda)$; this is a consequence of the invertibility of $\wh{P_+}(\lambda)$ for large $|\Re\lambda|$ (when $\Im\lambda$ is contained in a compact subinterval of $(-\infty,\gamma_{\!\scri})$), to which we turn now.

  To wit, the estimate~\eqref{EqSAMTInvHi} follows by applying Theorem~\ref{ThmNpH}\eqref{ItNpHin} to control semiclassical 0-regularity at ${}^0\cR_{\rm in}^\pm$ (the sign corresponding to the sign of $\Re\lambda$), which due to the non-trapping assumption on $P$ can be propagated into a full punctured neighborhood of ${}^0\cR_{\rm out}^\pm$. There, Theorem~\ref{ThmNpH}\eqref{ItNpHout} applies. Thus, one gets~\eqref{EqSAMTInvHi} with an error term $C h^N\|v\|_{H_{0,h}^{-N,2\gamma_{\!\scri}}(I^+)}$ for any fixed $N$, which for sufficiently small $h$ can be absorbed into the left hand side.

  By analogous means, but propagating in the reverse direction using Theorem~\ref{ThmNpHAdj}, one can prove the adjoint estimate
  \[
    \|v\|_{H_{0,h}^{-s+2,-2\gamma_{\!\scri}}(I^+)} \leq C h^{-1}\|\wh{P_+^*}(\lambda)v\|_{H_{0,h}^{-s,-2\gamma_{\!\scri}}(I^+)},\qquad h=\la\lambda\ra^{-1},
  \]
  which in particular implies the triviality of the cokernel of $\wh{P_+}(\lambda)$ for sufficiently large $|\Re\lambda|$. Since the Fredholm index of $\wh{P_+}(\lambda)\colon H_0^{s,2\gamma_{\!\scri}}(I^+)\to H_0^{s-2,2\gamma_{\!\scri}}(I^+)$ is constant in $\lambda$, this completes the proof of~\eqref{EqSAMTInv}.

  Returning to the task of improving the decay of $u\in\dot H_\ebop^{s,(2\alpha_{\!\scri},\alpha'_+)}(\Omega^+)$, one writes
  \[
    N_{I^+}(\rho_+^{-2}x_{\!\scri}^{-2}P)u = \rho_+^{-2}x_{\!\scri}^{-2}f - \bigl(P-N_{I^+}(\rho_+^{-2}x_{\!\scri}^{-2}P)\bigr)u,
  \]
  with the second term on the right lying in $\dot H_\ebop^{s-2,(2\alpha_{\!\scri},\alpha'_++\ell_+)}(\Omega^+)$. Inverting $N_{I^+}(\rho_+^{-2}x_{\!\scri}^{-2}P)$ using the Mellin transform (see Lemma~\ref{LemmaNpHMellin}) and its inverse gives
  \[
    u\in\dot H_\ebop^{s-1,(2\alpha_{\!\scri},\min(\alpha_+,\alpha'_++\ell_+))}(\Omega^+),
  \]
  which improves on the weight of $u$ at $I^+$, at the expense of $1$ edge-b-derivative. The edge-b-regularity can then be improved to $s$ again using the microlocal propagation results as before (note that the propagation results near $\scri^+\cap I^+$ do not require any conditions on $s$). This improves the decay of $u$ at $I^+$ by $\ell_+$ until the decay rate $\alpha_+$ is obtained after a finite number of steps. This proves the Theorem for $s\geq 1$ and $k=0$.

  \pfsubstep{(2.iii)}{Full range of $s$.} Consider now the remaining case that $s<1$. The microlocal propagation estimates imply an a priori estimate for $P$,
  \begin{equation}
  \label{EqSAAdjEst}
    \|u\|_{\dot H_\ebop^{s,(2\alpha_{\!\scri},\alpha_+)}(\Omega^+)} \leq C\Bigl( \|P u\|_{\dot H_\ebop^{s-1,(2\alpha_{\!\scri}+2,\alpha_++2)}(\Omega^+)} + \|u\|_{\dot H_\ebop^{-N,(2\alpha_{\!\scri},\alpha_+)}(\Omega^+)} \Bigr),
  \end{equation}
  where we fix $N$ so that $-N<s-2$. We then apply Theorem~\ref{ThmNeP}\eqref{ItNePFw} to the second term on the right, with $\eps$ in~\eqref{EqNePBw} chosen so small that $C\eps<\half$, and therefore the error term $\eps\|u\|_{\dot H_\ebop^{-N+1,(2\alpha_{\!\scri},\alpha_+)}(\Omega^+)}$ from~\eqref{EqNePFw} can be absorbed into the left hand side of~\eqref{EqSAAdjEst}. This implies that~\eqref{EqSAAdjEst} holds (for a different constant) with the error term $\|u\|_{\dot H_\ebop^{-N+1,(2\alpha_{\!\scri}-2\ell_{\!\scri},\alpha_+)}(\Omega^+)}$ which is improved (i.e.\ weaker) at $\scri^+$. One next estimates $u$, localized to a collar neighborhood of $I^+$, in terms of the $I^+$-normal operator of $\rho_+^{-2}x_{\!\scri}^{-2}P$ applied to $u$ using the (inverse) Mellin transform. Since $\rho_+^{-2}x_{\!\scri}^{-2}P$ differs from its $I^+$-normal operator by an element of $\cA^{(0,\ell_+)}\Diffeb^2$, this improves the error term further to $\|u\|_{\dot H_\ebop^{-N+2,(2\alpha_{\!\scri}-2\ell_{\!\scri},\alpha_+-\ell_+)}(\Omega^+)}$. Since $\dot H_\ebop^{s,(2\alpha_{\!\scri},\alpha_+)}(\Omega^+)$ embeds compactly into this space, this now implies that $P$, as a map~\eqref{EqSAOpRed}, has closed range. But we have already shown that the range includes $\dot H_\ebop^{0,(2\alpha_{\!\scri},\alpha_+)}(\Omega^+)$, which is a dense subspace. Therefore, $P$ in~\eqref{EqSAOpRed} is surjective. The proof is complete in the case $k=0$.

  \pfstep{(3) Higher b-regularity.} The proof of the Theorem for $k\in\N$ follows from the same inductive argument as in the proof of Corollary~\ref{CorEFwb}.
\end{proof}

\begin{rmk}[Alternative proof of solvability]
\label{RmkSAAlt}
  An alternative proof of the invertibility of~\eqref{EqSAOpRed} proceeds as follows. First, one shows that $P$ in~\eqref{EqSAOpRed} is Fredholm (which again uses the $I^+$-normal operator and the Mellin transform as in step (2.iii) of the above proof). The $I^+$-normal operator is surjective on the spaces~\eqref{EqSAOpRed} since solutions can be written down using the (inverse) Mellin transform (using the Paley--Wiener theorem for the support condition). But the localization of the difference of $P$ and its $I^+$-normal operator to a neighborhood $\{\rho_+<\eps\}$ of $I^+$ tends to $0$ in $x_{\!\scri}^2\rho_+^2\cA^{(0,\delta)}\Diffeb^2$ as $\eps\searrow 0$ for any fixed $\delta\in(0,\ell_+)$, and one can then upgrade the Fredholm property of $P$ to invertibility on $\{\rho_+<\eps\}$ for small enough $\eps$. The \emph{finite time} solvability of $P$ on the remaining region $\Omega^+\setminus\{\rho_+<\eps\}$ is clear, and thus one obtains the invertibility of~\eqref{EqSAOpRed}. For a detailed implementation of this approach, see \cite[Proof of Theorem~5.23, and Appendix~A]{HintzNonstat}.
\end{rmk}

\begin{rmk}[Conormal regularity and pointwise decay]
\label{RmkSADecay}
  If we work with $k>\frac{n+1}{2}$ degrees of b-regularity, the pointwise bound on $u$ provided by Theorem~\ref{ThmSA} is $\cO(\rho_{\!\scri}^{\alpha_{\!\scri}+\frac{n}{2}}\rho_+^{\alpha_++\frac{n+1}{2}})$ (cf.\ \eqref{EqNpL2}) by Sobolev embedding (for b-Sobolev spaces). In Example~\ref{ExSAMink}, the strongest possible bound arises by taking $\alpha_{\!\scri}=-\half-\eps$ and $\alpha_+=-1-2\eps$, and hence one gets almost (namely, up to an $r^\eps$ loss for any $\eps>0$) the sharp $\cO(r^{-\frac{n-1}{2}})$ decay towards $\scri^+$, and almost $\cO((t-r)^{-\frac{n-1}{2}})$ decay towards future timelike infinity (which is far from the sharp $(t-r)^{-(n-1)}$ bound on Minkowski spacetimes with odd spacetime dimension, but matches what simple vector field methods give, see e.g.\ \cite{KlainermanUniformDecay}). Improved decay at $I^+$ requires estimates on the meromorphic continuation of $\wh{P_+}(\lambda)^{-1}$ across the line $-\Im\lambda=\ubar p_{1,+}+\frac{n-1}{2}$, which is a delicate problem, cf.\ Remark~\ref{RmkNpBCont}.
\end{rmk}

%%%%%%%%%%%%%%%%%%%%%%%%%%%%%%%%%%%%%%%%%%%%%%%%%%%%%%%%%%%%%%%%%%%%%%
\bibliographystyle{alphaurl}

%\bibliography{
%/scratch/users/hintzp/ownCloud/research/bib/math,
%/scratch/users/hintzp/ownCloud/research/bib/mathcheck,
%/scratch/users/hintzp/ownCloud/research/bib/phys
%}

\end{document}